\numberwithin{equation}{section}
\renewcommand{\tocsection}[3]
 { \indentlabel{\@ifnotempty{#2}{\parbox{2.5em}{\ignorespaces#1 #2.}\quad}}#3}
\newcommand{\bbold}{\mathbb}
\def\R { {\bbold R} }
\def\Q { {\bbold Q} }
\def\Z { {\bbold Z} }
\def\C { {\bbold C} }
\def\N { {\bbold N} }
\def\T { {\bbold T} }
\def\c {\mathcal{C}}
\def\g {\operatorname{g}}
\def \I{\operatorname{I}}
\def \Dx{\operatorname{D}}
\def \order{\operatorname{order}}
\def \exc {{\mathscr E}}
\def \ex{\operatorname{e}}
\def \Frac {\operatorname{Frac}}
\def \Univ {{\operatorname{U}}}
\renewcommand\epsilon{\varepsilon}
\def \d{\operatorname{d}}
\def \ev{\operatorname{e}}
\def \bar {\overline}
\def \<{\langle}
\def \>{\rangle}
\def \tilde {\widetilde}
\def \hat {\widehat}
\def \((  {(\!(}
\def \)) {)\!)}
\def \k {{{\boldsymbol{k}}}}
\DeclareMathSymbol{\precequ}{\mathrel}{symbols}{"16}
\DeclareMathSymbol{\succequ}{\mathrel}{symbols}{"17}
\renewcommand{\Re}{\operatorname{Re}}
\renewcommand{\Im}{\operatorname{Im}}
\newtheorem{theorem}{Theorem}[section]
\newtheorem{lemma}[theorem]{Lemma}
\newtheorem{prop}[theorem]{Proposition}
\newtheorem{cor}[theorem]{Corollary}
\newtheorem{corintro}{Corollary}
\newtheorem*{theoremA}{Theorem A}
\newtheorem*{theoremB}{Theorem B}
\theoremstyle{definition}
\newtheorem{definition}[theorem]{Definition}
\theoremstyle{remark}
\newtheorem*{remark}{Remark}
\newtheorem*{question}{Question}
\newtheorem{remarkNumbered}[theorem]{Remark}
\newcommand{\abs}[1]{\lvert#1\rvert}
\newcommand{\dabs}[1]{\lVert#1\rVert}
\def \fm {{\mathfrak m}}
\def \fn {{\mathfrak n}}
\def \fv {{\mathfrak v}}
\def \fw {{\mathfrak w}}
\let\oldi\i
\let\oldj\j
\renewcommand\i{\relax\ifmmode{\boldsymbol{i}}\else\oldi\fi}
\renewcommand\j{\relax\ifmmode{\boldsymbol{j}}\else\oldj\fi}
\renewcommand\leq{\leqslant}
\renewcommand\geq{\geqslant}
\renewcommand\preceq{\preccurlyeq}
\renewcommand\succeq{\succcurlyeq}
\renewcommand\le{\leq}
\renewcommand\ge{\geq}
\renewcommand\frak{\mathfrak}
\DeclareMathAlphabet{\mathbf}{OML}{cmm}{b}{it}
\DeclareFontFamily{U}{fsy}{}
\DeclareFontShape{U}{fsy}{m}{n}{<->s*[.9]psyr}{}
\DeclareSymbolFont{der@m}{U}{fsy}{m}{n}
\DeclareMathSymbol{\der}{\mathord}{der@m}{182}
\DeclareSymbolFont{der@m}{U}{fsy}{m}{n}
\DeclareMathSymbol{\derdelta}{\mathord}{der@m}{100}
\newcommand\wt{\operatorname{wt}}
\newcommand\dwt{\operatorname{dwt}}
\newcommand\ndeg{\operatorname{ndeg}}
\DeclareSymbolFont{imag@m}{OT1}{cmr}{m}{ui}
\DeclareMathSymbol{\imag}{\mathord}{imag@m}{105}
\DeclareFontFamily{OMS}{smallo}{}
\DeclareFontShape{OMS}{smallo}{m}{n}{<->s*[.65]cmsy10}{}
\DeclareSymbolFont{smallo@m}{OMS}{smallo}{m}{n}
\DeclareMathSymbol{\smallo}{\mathord}{smallo@m}{79}
\DeclareFontFamily{OMS}{largerdot}{}
\DeclareFontShape{OMS}{largerdot}{m}{n}{<->s*[.8]cmsy10}{}
\DeclareSymbolFont{largerdot@m}{OMS}{largerdot}{m}{n}
\DeclareMathSymbol{\largerdot}{\mathord}{largerdot@m}{15}
\DeclareMathSymbol{\llambda}{\mathord}{der@m}{108}
\DeclareMathSymbol{\rrho}{\mathord}{der@m}{114}
\def \upl{\uplambda}
\def \Upl{\Uplambda}
\def \upo{\upomega}
\def \Upo{\Upomega}
\def \w{\uptau}
\def\HLO{\Upl\Upo}
\newcommand{\equationqed}[1]{\[\pushQED{\qed}#1 \qedhere\popQED\]\let\qed\relax}
\newcommand{\alignqed}[1]{\begin{align*}\pushQED{\qed} #1 \qedhere\popQED\end{align*}\let\qed\relax}
\newcommand{\dminus}{\mathbin{\text{\@dminus}}}
\newcommand{\@dminus}{%
  \ooalign{\hidewidth\raise1ex\hbox{\bf.}\hidewidth\cr$\m@th-$\cr}%
}
\def\ddeg{\operatorname{ddeg}}
\def\dwm{\operatorname{dwm}}
\def \Car{\mathcal{C}^r_a}
\def \Caz{\mathcal{C}^0_a}
\def \Cao{\mathcal{C}^1_a}
\def \Gr{\mathcal{C}^r}
\def \Gn{\mathcal{C}^n}
\def \Gi{\mathcal{C}^{<\infty}}
\def \Ginf{\mathcal{C}^{\infty}}
\def \Gom{\mathcal{C}^{\omega}}
\def \inv{\operatorname{inv}}
\def \Caj{\mathcal{C}^j_a}
\def \Cajl{\mathcal{C}^{j-1}_a}
\def \Car{\mathcal{C}^r_a}
\def \Carl{\mathcal{C}^{r-1}_a}
\def \Carm{\mathcal{C}^{r+1}_a}
\def \Carmi{\mathcal{C}^{r-i}_a}
\def \Caz{\mathcal{C}^0_a}
\def \Cazr{\mathcal{C}^r_{a_0}}
\def \Cazrl{\mathcal{C}^{r-1}_{a_0}}
\def \Coo{\mathcal{C}^1_0}
\def \Cao{\mathcal{C}^1_a}
\def \Can{\mathcal{C}^n_a}
\def \Calinf{\mathcal{C}^{<\infty}}
\def \Calr{\mathcal{C}^{r}}
\def \Caln{\mathcal{C}^{n}}
\def\b{\operatorname{b}}
\def \inte{\operatorname{int}}
\renewcommand\part{\@startsection{part}{0}%
  \z@{\linespacing\@plus\linespacing}{.5\linespacing}%
  {\normalfont\bfseries\centering}}
\renewcommand\theindex{\@restonecoltrue\if@twocolumn\@restonecolfalse\fi
  \columnseprule\z@ \columnsep 35\p@
  \twocolumn[\@xp\part\@xp*\@xp{\bf Index}\bigskip]%
  \let\item\@idxitem
  \parindent\z@  \parskip\z@\@plus.3\p@\relax
  \small}
\newcommand{\smallbullet}{} 
\DeclareRobustCommand\smallbullet{%
  \mathord{\mathpalette\smallbullet@{0.6}}%
}
\newcommand{\smallbullet@}[2]{%
  \vcenter{\hbox{\scalebox{#2}{$\m@th#1\bullet$}}}%
}
\begin{document}

\title{The theory of maximal Hardy fields}
\author[Aschenbrenner]{Matthias Aschenbrenner}
\address{Kurt G\"odel Research Center for Mathematical Logic\\
Universit\"at Wien\\
1090 Wien\\ Austria}
\email{matthias.aschenbrenner@univie.ac.at}

\author[van den Dries]{Lou van den Dries}
\address{Department of Mathematics\\
University of Illinois at Urbana-Cham\-paign\\
Urbana, IL 61801\\
U.S.A.}
\email{vddries@math.uiuc.edu}

\author[van der Hoeven]{Joris van der Hoeven}
\address{CNRS, LIX (UMR 7161)\\ 
Campus de l'\'Ecole Polytechnique\\  91120 Palaiseau \\ France}
\email{vdhoeven@lix.polytechnique.fr}

\date{August, 2024}

\begin{abstract} 
We show that  all maximal Hardy fields are elementarily equivalent as differential fields to the differential field $\T$ of transseries, and give various applications of this result and its proof.\end{abstract}

\pagestyle{plain}
 
\maketitle


\tableofcontents

\section*{Introduction}

\noindent
Hardy~\cite{Ha} made sense of Du Bois-Reymond's ``orders of infinity'' \cite{dBR71}--\cite{dBR77}. This led to the notion of a Hardy field (Bourbaki~\cite{Bou}). A {\it Hardy field}\/ is a field~$H$ of germs at $+\infty$ of differentiable real-valued functions
on intervals~$(a,+\infty)$ such that
for any differentiable function whose germ is in~$H$ the germ of its derivative is also in~$H$. 
(See Section~\ref{sec:hardy} for more precision.) 
Every Hardy field is naturally a differential field, and also
an ordered field with the germ of $f$ being~$>0$ iff~$f(t)>0$, eventually. 
{\it Hardy fields are the natural domain of asymptotic analysis, where all rules hold, without qualifying conditions}\/ \cite[p.~297]{Rosenlicht83}.
Many basic facts about Hardy fields can be found in Bourbaki~\cite{Bou}, Boshernitzan~\cite{Boshernitzan81}--\cite{Boshernitzan87}, and Rosen\-licht~\cite{Rosenlicht83}--\cite{Rosenlicht95}.

\medskip
\noindent
Hardy~\cite{Har12a} focused on the Hardy field consisting of the germs of logarithmico-expo\-nen\-tial functions (LE-functions, for short): these
functions are the real-valued functions obtainable from real constants and the identity function~$x$ using addition, multiplication,  division,  taking logarithms, and exponentiating. Examples include  the germs of 
the function given for large positive $x$ by
$x^r$ ($r\in\R$), $\ex^{x^2}$, and $\log\log x$. 
Besides the  germs of LE-functions, the germs of many other naturally occurring non-oscillating differentially algebraic functions lie in Hardy fields. This includes in particular several special functions like the error function $\operatorname{erf}$, the exponential integral $\operatorname{Ei}$, the Airy functions $\operatorname{Ai}$ and $\operatorname{Bi}$, etc. 
 There are also Hardy fields which contain (germs of)  differentially transcendental functions, such as the Riemann $\zeta$-function and Euler's $\Gamma$-function~\cite{Rosenlicht83}, and
even functions ultimately growing faster than each LE-function~\cite{Boshernitzan86}. 

\medskip
\noindent 
Germs of functions in Hardy fields are non-oscillating in a strong sense.
 In certain applications, this kind of tameness alone is crucial: for example, \'Ecalle's proof~\cite{Ecalle} of Dulac's Conjecture (a weakened version of Hilbert's 16th Problem) essentially amounts to showing that the germ of the Poincar\'e return map at a cross section of a limit cycle lies in a Hardy field (at $0+$ instead of $+\infty$). 
A stronger form of tameness is o-minimality: indeed, every o-minimal structure on the real field naturally gives rise to 
a Hardy field (of germs of definable functions).  
This yields a wealth of  examples such as those obtained from quasi-analytic Denjoy-Carleman classes~\cite{RSW},
or containing certain transition maps of plane analytic vector fields~\cite{KRS},
and explains the role of Hardy fields in model theory and its applications to real analytic geometry and
 dynamical systems~\cite{AvdD4, BMS, Miller}. 
 
\medskip
\noindent
Hardy fields have also   found other applications: for
effective counterparts to Hardy's theory of LE-functions, see~\cite{GeddesGonnet, Gruntz, vdH98, RSSvdH, Shackell90}.
Hardy fields have provided an analytic setting for extensions of this work beyond LE-functions~\cite{vdH96, SS98, SS99, Shackell96, Shackell}. They have also been useful in ergodic theory (see, e.g., \cite{BMR, BoshernitzanWierdl, FW, KoMue}), and other areas of  mathematics~\cite{BB, CS, CMR, Elliott, FishmanSimmons, GvdH}. 

\medskip
\noindent
In the remainder of this introduction, $H$ is a Hardy field.
Then   $H(\R)$ (obtained by adjoining the germs of the constant functions) is also a Hardy field,
and for any~${h\in H}$, the germ $\ex^h$ generates a Hardy field~$H(\ex^h)$ over $H$, and so does any differentiable germ with derivative
$h$. Moreover,~$H$ has a unique Hardy field extension
that is algebraic over $H$ and real closed.  (See Section~\ref{sec:hardy} for these facts, especially Proposition~\ref{prop:Li(H(R))}.)  
Our main result is Theorem~\ref{thm:char d-max}, and it yields what appears to be the ultimate fact about differentially algebraic Hardy field extensions:

\begin{theoremA}\label{thm:A} Let  $P(Y)$ be a differential polynomial in a single
differential indeterminate $Y$ over $H$,  and let $f< g$ in $H$ be such that $P(f) <0 <  P(g)$. Then there is a $y$ in a Hardy field extension of $H$ such that $f < y < g$ and $P(y)=0$.
\end{theoremA}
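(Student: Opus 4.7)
My plan is to derive Theorem~A from the paper's main result, Theorem~\ref{thm:char d-max}, together with the differential intermediate value property for the field $\T$ of transseries, already established in earlier work of the authors.

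First I would invoke Zorn's lemma: the union of any increasing chain of Hardy fields extending $H$ is again a Hardy field extending $H$ (immediate from the definition of a Hardy field), so $H$ embeds into a maximal Hardy field $M$. Since $M$ itself is a Hardy field extension of $H$, any $y \in M$ lies in such an extension, and it therefore suffices to produce $y \in M$ with $f < y < g$ and $P(y) = 0$.

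Second, by Theorem~\ref{thm:char d-max}, $M$ is elementarily equivalent to $\T$ as a differential field. Both $M$ and $\T$ are real closed, and in any real closed field positivity is first-order definable ($a \geq 0$ iff $\exists b\,(a = b^2)$), so the elementary equivalence extends to the language of ordered differential fields. For each pair $(r,d) \in \N^2$, the following is a first-order sentence in this language: for every differential polynomial $P$ of order at most $r$ and degree at most $d$ and every $f < g$, if $P(f)P(g) < 0$ then there exists $y$ with $f < y < g$ and $P(y) = 0$. Each such sentence holds in $\T$ — this is the differential intermediate value property (DIVP) for transseries — and hence, by elementary equivalence, also in $M$. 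Applied to the given $P$, $f$, $g$, this yields the required $y \in M$.

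The main obstacle is of course Theorem~\ref{thm:char d-max} itself: establishing that every maximal Hardy field shares its first-order theory with $\T$ requires the full technical machinery of the paper, notably the verification that maximal Hardy fields are $\omega$-free, newtonian, Liouville closed $H$-fields. Once that is in hand, Theorem~A reduces to the essentially routine model-theoretic transfer sketched above, and does not need any further specifically Hardy-field-theoretic input beyond closure under taking maximal extensions.
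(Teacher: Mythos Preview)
Your proposal is correct and follows essentially the same route as the paper: extend $H$ to a maximal Hardy field $M$, use Theorem~\ref{thm:char d-max} (together with the completeness of $T_H$ from [ADH, 16.6.3] and the fact that $\T$ is $H$-closed) to conclude $M\equiv\T$, and then transfer DIVP from $\T$ to $M$. The paper packages the transfer step via Corollary~\ref{cor:transfer T, 1} and records the result as Corollary~\ref{divpcor}, but the underlying argument is the parameter-free elementary-equivalence transfer you describe; one small imprecision is that you attribute the elementary equivalence directly to Theorem~\ref{thm:char d-max}, whereas it also needs the completeness of $T_H$ and the $H$-closedness of $\T$ as separate inputs.
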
 

\noindent
By Zorn, every Hardy field extends to a maximal Hardy field, 
so by the theorem above, maximal Hardy fields have the intermediate value property for differential polynomials. 
(In \cite{ADHfgh} we show there are very many maximal Hardy fields, namely~$2^{\frak{c}}$ many, where $\frak{c}$ is the cardinality of the continuum.) By the results mentioned earlier, maximal Hardy fields are also Liouville closed $H$-fields in the 
sense of \cite{AvdD2}; thus they contain  the germs of all LE-functions. Hiding behind the
intermediate value property of Theorem~A are two more fundamental properties, {\it $\upo$-freeness}\/ and {\it newtonianity,}\/ which are central in our book [ADH]. (Roughly speaking, $\upo$-freeness concerns second-order homogeneous differential equations, and newtonianity is a strong version of differential-henselianity.) 
In~\cite{ADH5} we show that any Hardy field has an $\upo$-free Hardy field extension, and in this paper we prove 
the much harder result that any $\upo$-free Hardy field
extends to a newtonian $\upo$-free Hardy field: Theorem~\ref{thm:char d-max}, 
which is really the main result of this paper. It follows that every maximal Hardy field is, in the terminology of \cite{ADH2}, an 
{\it $H$-closed field}\/ with small derivation.\index{closed!$H$-closed}\index{H-closed@$H$-closed}\index{Hardy field!H-closed@$H$-closed} Now 
the elementary theory $T_H$ of $H$-closed fields with small derivation (denoted by $T^{\text{nl}}_{\text{small}}$ in [ADH]) is 
{\em complete}, by~[ADH, 16.6.3]. This means in particular that,  as advertised in the abstract of this paper,  any two maximal Hardy fields are indistinguishable by their elementary 
properties:

\begin{corintro} \label{cor:elem equiv} If $H_1$ and $H_2$ are maximal Hardy fields, then
$H_1$ and $H_2$ are elementarily equivalent as ordered differential fields. 
\end{corintro}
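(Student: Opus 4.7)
The plan is to deduce the corollary as a formal consequence of the main Theorem~\ref{thm:char d-max}, combined with the completeness of the theory $T_H$ of $H$-closed fields with small derivation from [ADH, 16.6.3]. The strategy is to show that each maximal Hardy field $H_i$ is a model of $T_H$; then $H_1 \equiv H_2$ follows at once from completeness.

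To verify that a maximal Hardy field $M$ is $H$-closed with small derivation, I would check the required features one by one. First, $M$ is an $H$-field with small derivation; this is standard for Hardy fields (the constant field $\R$ lies in the valuation ring, and the derivation respects the asymptotic scale on bounded germs in the way demanded by [ADH]). Second, $M$ is Liouville closed: by Proposition~\ref{prop:Li(H(R))}, any Hardy field admits a real closed algebraic extension, an exponential $\exp h$ of any element $h$, an integral of any element, and the constant field $\R$, each time remaining a Hardy field; maximality of $M$ forces all such extensions to coincide with $M$, so $M$ is real closed and Liouville closed. Third, $M$ is $\upo$-free, since \cite{ADH5} produces an $\upo$-free Hardy field extension of any Hardy field, and maximality forces $M$ itself to be $\upo$-free. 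Fourth, $M$ is newtonian, since the new Theorem~\ref{thm:char d-max} constructs a newtonian $\upo$-free Hardy field extension of any $\upo$-free Hardy field, and again maximality closes the loop.

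With these four properties verified, $M$ is a model of $T_H = T^{\text{nl}}_{\text{small}}$. Since $T_H$ is complete by [ADH, 16.6.3], any two of its models are elementarily equivalent as ordered differential fields, and applying this to $H_1$ and $H_2$ yields the corollary. The main obstacle lies entirely in the fourth step, namely Theorem~\ref{thm:char d-max}: this is the only genuinely new analytic-algebraic input required, and once it is available the corollary reduces to a bookkeeping argument that invokes maximality at each closure step and then quotes the completeness theorem from [ADH].
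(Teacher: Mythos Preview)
Your proposal is correct and follows essentially the same route as the paper: show that every maximal Hardy field is a model of $T_H$ (i.e., is $H$-closed with small derivation) and then invoke the completeness of $T_H$ from [ADH, 16.6.3]. One minor slip: in your fourth step you attribute to Theorem~\ref{thm:char d-max} the construction of a newtonian extension of an $\upo$-free Hardy field, but that is Theorem~B; Theorem~\ref{thm:char d-max} is the characterization ``$\d$-maximal $\Leftrightarrow$ contains $\R$ and $H$-closed,'' which already packages your four steps together and lets you conclude immediately that a maximal (hence $\d$-maximal) Hardy field is $H$-closed.
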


\noindent
To derive Theorem~A we use also key results from the book
 \cite{JvdH} to the effect that~$\T_{\text{g}}$, the ordered differential field of grid-based transseries,\index{transseries!grid-based}
 is $H$-closed with small derivation and the intermediate value property for differential polynomials. 
In particular, it is a model of the complete
theory $T_H$. Thus maximal Hardy fields have the intermediate value property for differential polynomials as well,
and this amounts to Theorem~A, obtained here as a byproduct of more fundamental results.  (A detailed account of the differential intermediate value property for $H$-fields is in \cite{ADHip}.) 
We sketch the proof of our main result (Theorem~\ref{thm:char d-max})  
later in this introduction, after describing further consequences.

\subsection*{Further consequences of  our main result}
 In [ADH] we prove more than completeness of $T_H$:   a certain natural extension by definitions of $T_H$ has quantifier elimination.
This leads to a strengthening of Corollary~\ref{cor:elem equiv} by allowing parameters from a common Hardy subfield of~$H_1$ and~$H_2$.
To fully appreciate this statement requires more knowledge of model theory, as in~[ADH, Appendix~B], which we do not assume for this introduction. However,
we can explain a special case in a direct way,
in terms of  solvability of systems of algebraic differential
equations, inequalities, and asymptotic inequalities.
Here we find it convenient to use  the notation
for asymptotic relations introduced by du~Bois-Reymond and Hardy  instead of Bachmann-Landau's  $O$-notation:  
  for germs~$f$,~$g$ in a Hardy field set
\begin{align*}  
f\preceq g	&\quad:\Longleftrightarrow\quad f=O(g) &\hskip-2em& :\Longleftrightarrow\quad \abs{f}\leq c\abs{g}\text{ for some real $c>0$,}\\
f\prec g	&\quad:\Longleftrightarrow\quad f=o(g) &\hskip-2em& :\Longleftrightarrow\quad \abs{f} < c\abs{g}\text{ for all real  $c>0$. }
\end{align*}  
Let now~$Y=(Y_1,\dots,Y_n)$ be a tuple of distinct (differential) indeterminates, and consider    a system of the following form:
\begin{equation}\tag{$\ast$}\label{eq:system}\begin{cases} &
\begin{matrix}
P_1(Y) & \varrho_1 & Q_1(Y) \\
\vdots & \vdots    & \vdots \\
P_k(Y) & \varrho_k & Q_k(Y)
\end{matrix}\end{cases}
\end{equation}
Here the $P_i$ and $Q_i$ are differential polynomials in $Y$  (that is, 
polynomials in the indeterminates $Y_j$ and their formal derivatives $Y_j',Y_j'',\dots$) with coefficients in our Hardy field~$H$, and each
$\varrho_i$ is one of the symbols ${=}$, ${\neq}$, ${\leq}$, ${<}$, ${\preccurlyeq}$,  ${\prec}$. Given a Hardy field $E\supseteq H$,
a {\it solution}\/\index{solution!system} of~\eqref{eq:system} in $E$ is an $n$-tuple $y=(y_1,\dots,y_n)\in E^n$ 
such that for~$i=1,\dots,k$, the relation
$P_i(y) \, \varrho_i\, Q_i(y)$ holds in $E$.
Here is a Hardy field analogue of the ``Tarski Principle''
of real algebraic geometry [ADH, B.12.14]: 

\begin{corintro}\label{cor:systems, 1}
If the system \eqref{eq:system} has a solution in \emph{some}  Hardy field extension of $H$, then \eqref{eq:system} has a solution in \emph{every} maximal
Hardy field extension of $H$. 
\end{corintro}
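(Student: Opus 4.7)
The plan is to reduce the claim to a model-theoretic transfer principle by combining Theorem~\ref{thm:char d-max} with the quantifier elimination for $T_H$ mentioned in the introduction. First, starting from a Hardy field extension $E \supseteq H$ containing a solution $y = (y_1,\dots,y_n)$ to the system \eqref{eq:system}, I would use Zorn to embed $E$ into a maximal Hardy field $E^*$. By the main theorem of the paper, $E^*$ is an $H$-closed field with small derivation, hence a model of the complete theory $T_H$; the solution $y$ still witnesses \eqref{eq:system} there. Any other maximal Hardy field $M \supseteq H$ is, for the same reason, also a model of $T_H$, and $H$ sits as a common ordered differential subfield of both $E^*$ and $M$.

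Next, I would invoke the quantifier elimination for the natural extension by definitions of $T_H$ established in [ADH]. The existence of a solution to \eqref{eq:system} is the existential statement $\exists Y_1\cdots\exists Y_n \bigwedge_{i=1}^k P_i(Y)\, \varrho_i\, Q_i(Y)$, in which the only parameters are the coefficients of the differential polynomials $P_i, Q_i$, all lying in $H$. Quantifier elimination converts this existential sentence, modulo $T_H$, into a quantifier-free condition on these parameters, whose truth value is evaluated already inside $H$ itself. Hence its truth value is the same in every model of $T_H$ extending $H$, so \eqref{eq:system} has a solution in $E^*$ if and only if it has one in $M$. Since the former holds by construction, the latter follows.

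The main technical point to verify is purely book-keeping: one must check that the asymptotic relations $\preceq$ and $\prec$ that appear among the $\varrho_i$ are primitive (or quantifier-free definable) symbols in the language to which the QE of [ADH] applies. In the framework of [ADH], $\preceq$ is indeed part of the basic language of valued differential fields, and $\prec$ is quantifier-free definable from $\preceq$ and $=$, so this is routine. One should also note that only model-completeness of $T_H$ over the common substructure $H$ is needed here, not the full strength of QE, which further streamlines the argument. Once these points are recorded, no further analytic input beyond Theorem~\ref{thm:char d-max} is required, since the transfer is absorbed entirely into the model theory of $T_H$.
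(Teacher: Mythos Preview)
Your overall strategy---extend the given Hardy field to a maximal one, invoke Theorem~\ref{thm:char d-max} to see that both maximal Hardy fields are $H$-closed, and then transfer the existential sentence using the quantifier elimination from [ADH]---is the right shape and matches the paper's approach via Theorem~\ref{thm:transfer}. But there is a genuine gap in the step where you say ``the quantifier-free condition \dots\ is evaluated already inside $H$ itself.''

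The quantifier elimination in [ADH] is not in the bare language $\mathcal L$ of ordered valued differential fields: it is in the expanded language $\mathcal L^\iota_{\HLO}$, which contains unary predicates $\I$, $\Lambda$, $\Omega$ encoding a $\HLO$-cut. Thus the quantifier-free equivalent of your existential sentence may involve these predicates applied to elements of $H$. For the transfer to go through, you need $H$ to be a common $\mathcal L^\iota_{\HLO}$-substructure of the two maximal Hardy fields $E^*$ and $M$, i.e., that $E^*$ and $M$ induce the \emph{same} $\HLO$-cut on $H$. This is not automatic: a pre-$H$-field can carry two distinct $\HLO$-cuts [ADH, 16.3.19], and nothing in your argument rules out that different maximal Hardy field extensions pick out different ones. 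Your remark that ``only model-completeness of $T_H$ over the common substructure $H$ is needed'' does not help, since model-completeness is likewise only available in $\mathcal L^\iota_{\HLO}$, and the issue of whether $H$ is a common substructure in that language is exactly the point.

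The paper closes this gap with Lemma~\ref{lem:canonical HLO}, which establishes that all $\operatorname{d}$-maximal Hardy field extensions of $H$ induce the same $\HLO$-cut on $H$ (the ``canonical $\HLO$-expansion''); the proof passes to $\operatorname{D}(H)$ and uses that Liouville closed $H$-fields satisfy $\omega(H)=\bar\omega(H)$. The paper itself flags this dependence explicitly right after stating the corollary. Once you insert this lemma, your argument becomes correct and coincides with the paper's.
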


\noindent
(The symbols $\neq$, $\leq$, $<$, $\preceq$ in \eqref{eq:system} are for convenience only:
their occurrences  can be eliminated at the cost of increasing $m$, $n$.
But $\prec$ is essential; see [ADH, 16.2.6].) Besides the quantifier elimination alluded to, Corollary~\ref{cor:systems, 1} depends on Lemma~\ref{lem:canonical HLO}, which says that for any Hardy field  $H$ all maximal Hardy field extensions of $H$ induce the same $\HLO$-cut on $H$, as defined in [ADH, 16.3]. 

\medskip
\noindent
In particular, taking for $H$ the smallest Hardy field $\Q$, we see that a system \eqref{eq:system} with a solution in some  Hardy field  
has a solution in \emph{every} maximal
Hardy field, thus recovering a special case of our Corollary~\ref{cor:elem equiv}. Call such a system~\eqref{eq:system} over~$\Q$ {\it consistent.}\/ For example, with $X$, $Y$, $Z$ denoting here single distinct differential indeterminates, the system
$$Y'Z\ \preceq\ Z',\qquad Y \preceq 1, \qquad 1\prec Z$$
is inconsistent, whereas for any $Q\in\Q\{Y\}$ and $n\geq 2$ the system
$$X^n Y'\ =\ Q(Y),\qquad X' = 1,\quad Y\prec 1$$
is consistent.
As a consequence of the completeness of~$T_H$ we   obtain the existence of an algorithm (albeit a very impractical one) for deciding whether a system~\eqref{eq:system} over~$\Q$ is consistent, and this opens up the possibility of automating a substantial part of
asymptotic analysis in Hardy fields. 
We  remark that Singer~\cite{Singer} proved the existence of an algorithm for
deciding whether a given system \eqref{eq:system} over $\Q$ without occurrences of
${\preccurlyeq}$ or ${\prec}$ has a solution in {\it some}\/ ordered differential field (and then it will have a solution 
in the ordered differential field of germs of real meromorphic functions at $0$); but  there are such systems, like
$$X'\ =\ 1,\qquad XY^2\ =\ 1-X,$$
which are solvable in an ordered differential field, but not in a Hardy field.
Also, algorithmically deciding the solvability of  a system~\eqref{eq:system} over~$\Q$ in a {\it given}\/ Hardy field
$H$ may be impossible when $H$ is ``too small'': e.g., if~$H=\R(x)$, by~\cite{Denef}.

\medskip
\noindent
As these results suggest, the aforementioned quantifier elimination for $T_H$ yields a kind of  ``resultant'' for systems~\eqref{eq:system} that allows one to make explicit within~$H$ itself for which choices of coefficients of the 
differential polynomials~$P_i$,~$Q_i$  the system~\eqref{eq:system} has a solution in a Hardy field
extension of $H$. Without going into details,
 we only mention here some attractive  consequences   for systems~\eqref{eq:system} depending on  parameters.
For this, let $X_1,\dots, X_m,Y_1,\dots, Y_n$ be distinct indeterminates and~$X=(X_1,\dots,X_m)$, $Y=(Y_1,\dots,Y_n)$,
and consider a system
\begin{equation}\tag{$\ast \ast$}\label{eq:system param, Y}\begin{cases} &
\begin{matrix}
P_1(X,Y) & \varrho_1 & Q_1(X,Y) \\
\vdots & \vdots    & \vdots \\
P_k(X,Y) & \varrho_k & Q_k(X,Y)
\end{matrix}\end{cases}
\end{equation}
where $P_i$, $Q_i$ are now differential polynomials in $(X,Y)$ over $H$, and the $\varrho_i$ are as before.
Specializing $X$ to $c\in \R^m$ then yields a system
\begin{equation}\tag{$\ast c$}\label{eq:system param}\begin{cases} &
\begin{matrix}
P_1(c,Y) & \varrho_1 & Q_1(c,Y) \\
\vdots & \vdots    & \vdots \\
P_k(c,Y) & \varrho_k & Q_k(c,Y)
\end{matrix}\end{cases}
\end{equation}
where $P_i(c,Y)$, $Q_i(c,Y)$ are  differential polynomials in $Y$ with coefficients in the Hardy field~$H(\R)$.
(We only substitute real constants, so may assume that the~$P_i$,~$Q_i$ are {\it polynomial}\/ in $X$, that is, none of the derivatives~$X_j',X_j'',\dots$ occur in the $P_i, Q_i$.)
Using~[ADH, 16.0.2(ii)] we obtain:

\begin{corintro}\label{cor:parametric systems}
The  set of all $c\in\R^m$ such that 
the system \eqref{eq:system param} has a solution in some Hardy field extension of $H$ is semialgebraic.
\end{corintro}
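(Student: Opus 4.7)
The plan is to convert the defining condition of $S$ into a first-order statement about a single maximal Hardy field extension $\mathbf{M}$ of $H$, and then invoke the fact that the constant field of any model of $T_H$ is a pure real closed field, so that definable subsets of tuples of constants are semialgebraic.

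First I would fix a maximal Hardy field extension $\mathbf{M}$ of $H$. Since $\mathbf{M}\supseteq H(\R)$, Corollary~\ref{cor:systems, 1} (applied with $H(\R)$ in place of $H$) yields, for every $c\in\R^m$: the system \eqref{eq:system param} has a solution in some Hardy field extension of $H$ if and only if it has a solution in $\mathbf{M}$. Writing $\phi(X,Y)$ for the quantifier-free formula $\bigwedge_{i=1}^k P_i(X,Y)\,\varrho_i\,Q_i(X,Y)$ in the language of ordered differential fields with parameters from $H$, we therefore have
\[
S\ =\ \bigl\{c\in\R^m : \mathbf{M}\models\exists Y_1\cdots\exists Y_n\,\phi(c,Y)\bigr\}.
\]
Because the $P_i,Q_i$ are assumed to be polynomial in $X$ (no derivatives $X_j',X_j'',\dots$ occur), the formula $\phi$ is a genuine first-order formula in $(X,Y)$, and specializing the $X_j$ to elements of the constant field of $\mathbf{M}$ makes sense.

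Second, by Theorem~\ref{thm:char d-max} combined with the $\upo$-free extension result of \cite{ADH5}, every maximal Hardy field --- in particular $\mathbf{M}$ --- is $H$-closed with small derivation, and as a Hardy field its constant field is exactly $\R$. The model theory developed in [ADH] (in the form recorded as [ADH, 16.0.2(ii)]) establishes that in any model of $T_H$ the constant field is stably embedded as a pure real closed field: any subset of $C^m$ definable in the ambient model with arbitrary parameters is already definable in the ordered field $C$ with parameters from $C$. Applied to $S\subseteq\R^m$, which is definable in $\mathbf{M}$ by the formula displayed above, this yields that $S$ is definable in the ordered field $\R$ with parameters from $\R$, hence semialgebraic.

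The substantive obstacle here is the stable-embeddedness assertion for the constant field: this is where the deep machinery of [ADH] (quantifier elimination for $T_H$ in a suitable extension by definitions, together with the completeness of $T_H$) does the real work. Everything else is bookkeeping, namely that having coefficients of the $P_i,Q_i$ in $H\subseteq\mathbf{M}$ and letting $X$ vary over $\R\subseteq\mathbf{M}$ together make the defining condition of $S$ a legitimate first-order condition in a model of $T_H$ --- which is ensured by the $X$-polynomiality hypothesis built into \eqref{eq:system param, Y}.
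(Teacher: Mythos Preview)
Your proof is correct and follows essentially the same approach as the paper, which proves the corollary in one line by invoking [ADH, 16.0.2(ii)]: you fix a maximal Hardy field extension $\mathbf{M}$ of $H$, use Corollary~\ref{cor:systems, 1} to reduce solvability in some Hardy field extension to solvability in $\mathbf{M}$, and then apply the stable-embeddedness of the constant field in models of $T_H$. The only thing you spell out that the paper leaves implicit is the passage through $H(\R)$ and the need for Corollary~\ref{cor:systems, 1}; this is a correct elaboration of what the paper intends.
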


\noindent
Recall: a subset of $\R^m$ is said to be {\it semialgebraic}\/ if it is a finite union of  sets 
$$\big\{ c\in\R^m:\  p(c)=0,\ q_1(c)>0,\dots,q_l(c)>0 \big\}$$
where $p,q_1,\dots,q_l\in\R[X]$  are ordinary polynomials.\index{semialgebraic}
(The topological and geometric properties of semialgebraic sets have been studied extensively~\cite{BCR}. For example, it is well-known that a semialgebraic set can have only have finitely many connected components, and that each such component is itself semialgebraic.) 

\medskip
\noindent
In connection with  Corollary~\ref{cor:parametric systems} we mention that the asymptotics of Hardy field solutions to algebraic differential equations~${Q(Y)=0}$,
where~$Q$ is a differential polynomial with constant real coefficients, has been investigated by Hardy~\cite{Har12}  and   Fowler~\cite{Fowler} in  cases where $\order Q\leq 2$ (see \cite[Chapter~5]{Bellman}), and later by Shackell~\cite{SS95,Shackell93,Shackell95} in general.
Special case of  our corollary: for any differential polynomial~$P(X,Y)$ with constant real coefficients, the set of parameters~$c\in\R^m$ such that the differential
equation~${P(c,Y)=0}$ has a solution~$y$  in some Hardy field, in addition possibly also satisfying  given asymptotic  side conditions (such as~$y\prec 1$),  is semialgebraic. 
Example: the set of real parameters~$(c_1,\dots,c_{m})\in\R^{m}$ for which the homogeneous linear differential equation
$$y^{(m)} + c_{1}y^{(m-1)}+\cdots+ c_m y\  =\ 0$$
has a nonzero solution $y\prec 1$ in a Hardy field is semialgebraic; in fact, it is the
set of all  $(c_1,\dots,c_{m})\in\R^{m}$ such that the polynomial
$Y^m + c_{1}Y^{m-1}+ \cdots+ c_m \in \R[Y]$ has a negative real zero. 
Nonlinear example:
for $g_2, g_3\in \R$ the differential equation
$$(Y')^2\ =\ 4Y^3-g_2Y-g_3  $$
has a nonconstant solution in a Hardy field iff
 $g_2^3=27g_3^2$   and~$g_3 \leq 0$.
In both cases, the Hardy field solutions are  germs of logarithmico-exponential functions. But the
class of differentially algebraic germs in Hardy fields is much more extensive; for example, the antiderivatives of $\ex^{x^2}$  are not logarithmico-exponential (Liouville).

\medskip
\noindent
Instead of $c\in \R^m$, substitute $h\in H^m$ for $X$ in   \eqref{eq:system param, Y}, resulting  in a system 
\begin{equation}\tag{$\ast h$}\label{eq:system param, h}\begin{cases} &
\begin{matrix}
P_1(h,Y) & \varrho_1 & Q_1(h,Y) \\
\vdots & \vdots    & \vdots \\
P_k(h,Y) & \varrho_k & Q_k(h,Y)
\end{matrix}\end{cases}
\end{equation}
where $P_i(h,Y)$, $Q_i(h,Y)$ are now differential polynomials in $Y$ with coefficients in~$H$.
It is   well-known  that for any semialgebraic set $S\subseteq \R^{m+1}$  
there is a natural number~$B=B(S)$ such that for every $c\in\R^m$, if the section
$\big\{y\in \R: (c,y)\in S\big\}$
 has~$> B$ elements, then this section has nonempty interior in $\R$. In contrast,
the set of solutions of \eqref{eq:system param, h} for $n=1$ in a maximal $H$ can be simultaneously infinite and discrete in the order topology of $H$: this happens precisely if some nonzero one-variable differential polynomial over $H$ vanishes on  this solution set [ADH, 16.6.11].
(Consider the example of the single algebraic differential equation $Y'=0$, which has solution set $\R$ in each maximal Hardy field.)
Nevertheless, we have the  following
uniform finiteness principle for solutions of~\eqref{eq:system param, h};
its proof is
considerably deeper than Corollary~\ref{cor:parametric systems} and
 also draws on results from~\cite{ADHdim}.

\begin{corintro}\label{cor:uniform finiteness}
There is a natural number $B=B\eqref{eq:system param, Y}$ such that for all $h\in H^m$:
if the system~\eqref{eq:system param, h} has $> B$ solutions in some Hardy field extension of $H$,
then~\eqref{eq:system param, h} has continuum many solutions in every maximal Hardy field extension of~$H$.
\end{corintro}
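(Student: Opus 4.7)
My plan is to combine the dimension theory for definable sets in $H$-closed fields from~\cite{ADHdim} with the completeness of $T_H$ and the Hardy field transfer principle of Corollary~\ref{cor:systems, 1}. Fix a maximal Hardy field extension $M$ of $H$. By our main theorem (Theorem~\ref{thm:char d-max}) $M$ is $H$-closed with small derivation, hence a model of $T_H$. For each $h\in H^m$, the solution set $S_M(h):=\{y\in M^n:(\ast h)\text{ holds in }M\}$ is $L_H$-definable with parameters from $H$; by the quantifier elimination in the relevant extension by definitions of $T_H$ alluded to above, $S_M(h)$ is cut out by a quantifier-free formula whose shape depends only on the system \eqref{eq:system param, Y}, and not on the particular $h$.

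The central dichotomy I intend to exploit is: for each $h$, either $S_M(h)$ has dimension $0$ in the sense of \cite{ADHdim}, in which case it is finite, or it has positive dimension, in which case it contains a definable subset with nonempty interior in $M^n$ and therefore has cardinality $\lvert M\rvert$. Since Hardy fields consist of germs of real functions one has $\lvert M\rvert\leq 2^{\aleph_0}$, and since $M$ contains $\R$ one has $\lvert M\rvert\geq 2^{\aleph_0}$, so the positive-dimensional case yields exactly continuum many solutions. Moreover, by applying Corollary~\ref{cor:systems, 1} to the auxiliary system asserting the existence of $k$ pairwise distinct solutions of \eqref{eq:system param, h} for varying $k$, both the finite-versus-infinite alternative and, in the finite case, the actual cardinality $\lvert S_M(h)\rvert$ are independent of the choice of the maximal Hardy field extension.

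The crux is uniformity of the bound across $h$, and here I would invoke the uniform finiteness theorem for definable families in $H$-closed fields from~\cite{ADHdim}, which plays the role of uniform bounds on finite fibers in o-minimal geometry: there is an integer $B$, depending only on the parametric formula defining the family $\{S_M(h):h\in H^m\}$, and hence only on the shape of \eqref{eq:system param, Y}, such that whenever $S_M(h)$ is finite one has $\lvert S_M(h)\rvert\leq B$. Assembling the pieces: if \eqref{eq:system param, h} has more than $B$ solutions in some Hardy field extension of $H$, then by Corollary~\ref{cor:systems, 1} it has more than $B$ solutions in every maximal Hardy field extension $M$, so $S_M(h)$ cannot be finite in $M$, so the dichotomy forces $\lvert S_M(h)\rvert=2^{\aleph_0}$. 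The main obstacle is the dimension-theoretic input: correctly packaging the uniform finiteness and dimension dichotomy in $H$-closed fields from \cite{ADHdim}, and verifying that they apply to the parametric family defined by \eqref{eq:system param, Y} uniformly in the differential polynomials $P_i,Q_i$ and the relation symbols $\varrho_i$.
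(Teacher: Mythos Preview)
Your overall strategy matches the paper's: invoke the uniform finiteness result from~\cite{ADHdim} inside a single fixed $\d$-maximal Hardy field $M^*\supseteq H$ to obtain the bound $B$, and then transfer via Theorem~\ref{thm:transfer} (equivalently, Corollary~\ref{cor:systems, 1}) to all maximal Hardy field extensions. That part is fine.

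The genuine gap is your ``central dichotomy'': you claim that if $S_M(h)$ has dimension~$0$ in the sense of~\cite{ADHdim} then it is finite. This is false in $H$-closed fields. The theory $T_H$ is not o-minimal, and $0$-dimensional definable sets can be infinite; the introduction of this very paper points to the example $Y'=0$, whose solution set in any maximal Hardy field is $\R$ --- infinite, discrete in the order topology, and $0$-dimensional. So ``$>B$ solutions $\Rightarrow$ infinite $\Rightarrow$ positive dimension'' breaks down at the last arrow.

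The fix, which is what the paper does, is to argue separately in the two cases of an \emph{infinite} definable $S$: if $\dim S>0$, then by \cite[Corollary~3.2]{ADHdim} some coordinate projection has nonempty interior, giving $|S|=|M|=\mathfrak c$; if $\dim S=0$, then by the remarks following \cite[Proposition~6.4]{ADHdim} one has $|S|=|\R|=\mathfrak c$. In both cases the cardinality is the continuum, so the conclusion you want still holds --- but not for the reason you gave.
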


\noindent
Next we turn to issues of smoothness and analyticity in Corollary~\ref{cor:systems, 1}. 
By definition, a Hardy field is a differential subfield of the  differential ring $\Calinf$ consisting of the germs of functions $(a,+\infty)\to\R$ ($a\in\R$)  which are, for each~$n$, eventually $n$-times
continuously differentiable. Now $\Calinf$ has the differential subring $\Ginf$ whose elements are
the germs that are eventually $\c^\infty$. A {\it $\Ginf$-Hardy field}\/ is a Hardy field~$H\subseteq\Ginf$. (See~\cite{Gokhman} for an example
of a Hardy field~${H\not\subseteq\Ginf}$.)
A $\Ginf$-Hardy field  is said to be {\it $\Ginf$-maximal}\/ if it has no proper $\Ginf$-Hardy field
extension. Now $\Ginf$ in turn has 
the differential subring $\Gom$ whose elements are  the germs that are eventually
real analytic, and so we define likewise  $\Gom$-Hardy fields
($\Gom$-maximal Hardy fields, respectively). Our main theorems go through in the $\Ginf$- and $\Gom$-settings;
combined
with model completeness of~$T_H$ shown in~[ADH, 16.2] this ensures the existence of solutions with appropriate smoothness
in Corollary~\ref{cor:systems, 1}:

\begin{corintro}\label{cor:systems, 2}
If $H\subseteq\Ginf$ and the system \eqref{eq:system} has a solution in some Hardy field extension of $H$, then \eqref{eq:system} has a solution in every $\Ginf$-maximal Hardy field extension of $H$. In particular, if $H$ is   $\Ginf$-maximal  and
\eqref{eq:system} has a solution in a Hardy field extension of $H$, then it has a solution in $H$.
\textup{(}Likewise with $\Gom$ in place of $\Ginf$.\textup{)}
\end{corintro}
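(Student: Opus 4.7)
My plan follows the outline given in the paragraph preceding the statement: establish $\GG$-versions of our main theorems, then combine with model completeness of $T_H$. Write $\GG$ for either $\Ginf$ or $\Gom$ and suppose $H\subseteq\GG$.

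The first task is to show that every $\GG$-maximal Hardy field extension of $H$ is an $H$-closed field with small derivation. For this I would revisit the proof of Theorem~\ref{thm:char d-max} and of the $\upo$-freeness result from \cite{ADH5} feeding into it, and verify that each step stays within $\GG$ when started in $\GG$. The operations involved are real closure, adjunction of $\ex^h$ for $h\in\GG$, adjunction of antiderivatives, passage to pc-limits, and adjunction of zeros of differential polynomials. The first three preserve both $\Ginf$ and $\Gom$ by standard facts on preservation of smoothness and real-analyticity under algebraic operations, $\exp$, $\log$, and integration on an interval $(a,+\infty)$. For the remaining two, one has to verify that the produced germ admits a representative eventually in $\GG$ and that its adjunction to a $\GG$-Hardy field yields a $\GG$-Hardy field; both should follow from existence and smoothness/analyticity theorems for solutions of ODEs whose coefficients lie in $\GG$. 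Granting this, a $\GG$-maximal Hardy field extension $M^{\GG}$ of $H$ exists by Zorn's lemma applied to $\GG$-Hardy field extensions of $H$, and is a model of $T_H$.

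For the transfer step: suppose $(\ast)$ has a solution in a Hardy field extension $E$ of $H$. Extending $E$ via Zorn to a maximal Hardy field $M$ produces a model of $T_H$ containing $H$ in which $(\ast)$ is solvable. Solvability of $(\ast)$ corresponds to an existential formula $\varphi(h)$ with parameters $h$ from $H$ in the language of $T_H$. The same model-completeness argument that proves Corollary~\ref{cor:systems, 1} — invoking model completeness from [ADH, 16.2] together with Lemma~\ref{lem:canonical HLO}, so that $H$ induces the same $\HLO$-cut in both $M$ and $M^{\GG}$ — then transfers the truth of $\varphi(h)$ from $M$ to $M^{\GG}$, yielding a solution of $(\ast)$ in $M^{\GG}$. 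The ``in particular'' clause is the case $H=M^{\GG}$.

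The main obstacle is the $\Gom$-case of the first task: showing that the zeros of differential polynomials adjoined in the newtonianity step of Theorem~\ref{thm:char d-max} can always be realized by real-analytic germs at $+\infty$. Real-analyticity cannot be obtained by a bootstrap argument as smoothness can in the $\Ginf$-case; instead one must track analyticity through the iterative construction producing the solution, which I expect will require analytic-continuation and majorant-series techniques together with a careful analysis of the behaviour of approximants away from a discrete set of possible singularities.
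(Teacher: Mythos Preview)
Your overall strategy is correct and matches the paper's: show that $\GG$-maximal Hardy fields are $H$-closed, then invoke model completeness via the $\HLO$-cut argument. However, the paper circumvents your ``main obstacle'' entirely by a much simpler device. Rather than tracking smoothness or analyticity through each step of the constructions feeding into Theorem~\ref{thm:char d-max}, the paper proves a single \emph{a~posteriori} regularity principle (Corollary~\ref{realginfgom}, from Proposition~\ref{hardysmooth}): if $H\subseteq\GG$ and $f\in\Calinf$ is hardian over $H$ with $P(f)=0$ for some nonzero $P\in H\{Y\}$, then automatically $f\in\GG$. The point is that for a minimal annihilator $P$ the separant $S_P(f)$ is a unit in the Hardy field $H\langle f\rangle$, so the implicit function theorem (the real-analytic version \cite[(10.2.4), (10.5.3)]{Dieudonne} for $\GG=\Gom$) upgrades $f$ from $\Calinf$ to $\GG$. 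This immediately yields Corollary~\ref{cor:Hardy field ext smooth}: every $\d$-algebraic Hardy field extension of a $\GG$-Hardy field is again a $\GG$-Hardy field, hence $\GG$-maximal $\Rightarrow$ $\d$-maximal $\Rightarrow$ $H$-closed by Theorem~\ref{thm:char d-max}.

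Your proposed route---verifying that each intermediate construction stays in $\GG$---would work for $\Ginf$ via bootstrapping, but as you correctly anticipate, the $\Gom$ case would require delicate analytic-continuation arguments for the fixed points produced in Section~\ref{sec:split-normal over Hardy fields}. The a~posteriori argument sidesteps all of this: one builds the Hardy field extension in $\Calinf$ first, and only afterwards observes that the new generator, being a nondegenerate zero of an analytic-coefficient ODE, must itself be analytic.
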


\noindent
We already mentioned  $\T_{\text{g}}$ as a quintessential example of an $H$-closed field. Its cousin~$\T$, the ordered differential field
of transseries,   extends $\T_{\text{g}}$ and is also $H$-closed with constant field~$\R$~[ADH, 15.0.2].\index{transseries} The elements of $\T$ are certain generalized  
series (in the sense of Hahn) in an indeterminate $x>\R$ with real coefficients,  involving exponential and logarithmic terms, such as
$$ f\  =\  \ex^{\frac{1}{2}\ex^x}-5\ex^{x^2}+\ex^{x^{-1}+2x^{-2}+\cdots}+\sqrt[3]{2}\log x-x^{-1}+\ex^{-x}+\ex^{-2x}+\cdots+5\ex^{-x^{3/2}}.$$
Mathematically  significant examples are the more simply structured trans\-series
\begin{multline*}
\operatorname{Ai}\ =\  \frac{\ex^{-\xi}}{2\sqrt{\pi}x^{1/4}}\sum_n (-1)^n   c_n \xi^{-n},
\quad
\operatorname{Bi}\ =\ \frac{\ex^{\xi}}{\sqrt{\pi}x^{-1/4}}\sum_n c_n \xi^{-n}, \\
\qquad\text{where $\ c_n\ =\ \frac{(2n+1)(2n+3)\cdots (6n-1)}{(216)^n n!}\ $ and $\ \xi\ =\ \frac{2}{3}x^{3/2}$,}
\end{multline*}
which are $\R$-linearly independent solutions of  the Airy equation $Y''=xY$ {\cite[Chap\-ter~11, (1.07)]{Olver}}.
For   information about~$\T$ see [ADH, Appendix~A] or~\cite{Edgar,JvdH}. We just mention here that
like each $H$-field, $\T$ comes equipped with its
own versions of the asymptotic relations $\preceq$, $\prec$, defined as for $H$ above. 
The asymptotic rules valid in all Hardy fields, such as
$$f\preceq 1\ \Rightarrow\ f'\prec 1,\qquad f\preceq g\prec 1\ \Rightarrow\ f'\preceq g', \qquad
f'=f\neq 0\ \Rightarrow\ f\succ x^n $$
also hold in $\T$. 
Here $x$ denotes, depending on the context, the germ of the identity function on $\R$, as well as the element   $x\in\T$.
Section~\ref{sec:embeddings into T} gives a finite axiomatization of these rules, thus completing
an investigation initiated by A.~Robinson~\cite{Robinson73}. 

\medskip
\noindent
Now suppose that we are given an embedding~$\iota\colon H\to\T$   of ordered differential fields.
We may view such an embedding as a {\it formal expansion operator}\/ and its inverse as a {\it summation operator.}\/
(See Section~\ref{sec:embeddings into T} below  for an example of a Hardy field, arising from a fairly rich o-minimal structure, which admits such an embedding.)
From~\eqref{eq:system} we obtain a system
\begin{equation}\tag{$\iota\ast$}\label{eq:iota(system)}\begin{cases} &
\begin{matrix}
\iota(P_1)(Y) & \varrho_1 & \iota(Q_1)(Y) \\
\vdots & \vdots    & \vdots \\
\iota(P_k)(Y) & \varrho_m & \iota(Q_k)(Y)
\end{matrix}\end{cases}
\end{equation}
of algebraic differential equations and (asymptotic) inequalities over $\T$,
where~$\iota(P_i)$, $\iota(Q_i)$   denote the differential polynomials over $\T$ obtained by applying $\iota$ to the coefficients
of $P_i$, $Q_i$, respectively. A {\it solution}\/\index{solution!system} of \eqref{eq:iota(system)} is a tuple $y=(y_1,\dots,y_n)\in\T^n$
such that $\iota(P_i)(y) \, \varrho_i\, \iota(Q_i)(y)$ holds in $\T$, for $i=1,\dots,m$.
Differential-difference equations in $\T$ are sometimes amenable to functional-analytic
techniques like fixed point theorems or small (compact-like) operators~\cite{vdH:noeth}, and
the formal nature of transseries also makes it possible to solve
algebraic differential equations in $\T$ by quasi-algorithmic methods~\cite{vdH:PhD,JvdH}. The simple example of the Euler equation
$$Y'+Y\ =\ x^{-1}$$
is instructive: its solutions in $\Calinf$
are given by the germs of 
$$t\mapsto \ex^{-t}\int_1^t \frac{\ex^{s}}{s}\,ds+c\ex^{-t}\colon (1,+\infty)\to\R\qquad (c\in\R),$$
all contained in a common Hardy field extension of $\R(x)$.
The solutions of this differential equation in $\T$ are 
$$\sum_n n!\, x^{-(n+1)}+c\ex^{-x}\qquad (c\in\R),$$
where the particular solution $\sum_n n!\, x^{-(n+1)}$ is obtained as the unique fixed point of the   operator $f\mapsto x^{-1}-f'$ on
the differential subfield
$\R(\!(x^{-1})\!)$ of $\T$ (cf.~[ADH, 2.2.13]).
(Note:  $\sum_n n!\,t^{-(n+1)}$ diverges for each $t>0$.)
In general, the existence of a solution of
\eqref{eq:iota(system)} in $\T$  entails the existence of a solution of~\eqref{eq:system}  in some  Hardy field extension of~$H$ and vice versa; more precisely:

\begin{corintro}\label{cor:systems, 3}
The system \eqref{eq:iota(system)} has a solution in $\T$ iff \eqref{eq:system} has a solution in some Hardy field extension of $H$.
In this case, we can choose a solution  of \eqref{eq:system} in a Hardy field extension $E$ of $H$ for which $\iota$ extends
to an embedding of ordered differential fields~$E\to\T$.
\end{corintro}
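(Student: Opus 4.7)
The plan is to derive this corollary from the quantifier elimination for $T_H$ proved in [ADH, Chapter~16], combined with the fact that both maximal Hardy fields (via our main Theorem~\ref{thm:char d-max}) and the differential field $\T$ (via~\cite{JvdH}) are models of $T_H$. The key tool is the following consequence of QE: if $M$ and $N$ are two $H$-closed fields with small derivation extending a common $H$-subfield $A$, and $F := A\langle u\rangle \subseteq M$ is a finitely generated differential subextension, then $F$ embeds into $N$ over $A$ whenever $N$ realizes the quantifier-free type of $u$ over $A$. In the present applications the required realization is provided by Theorem~A on the Hardy-field side, and by its counterpart in $\T$ from~\cite{JvdH} on the transseries side.

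For the forward direction, given a solution $y \in \T^n$ of $(\iota\ast)$, set $F := \iota(H)\langle y\rangle \subseteq \T$ and pick any maximal Hardy field extension $H^*$ of $H$, which is a model of $T_H$ by Theorem~\ref{thm:char d-max}. I construct an embedding $\sigma \colon F \to H^*$ extending $\iota^{-1}$, one generator at a time: at each step, the required element of $H^*$ with the matching qf-type is produced by Theorem~A---as a root of the pullback of the minimal differential annihilator of the next component of $y$, with prescribed cut---or, in the differentially transcendental case, by choosing a suitable transcendental element in $H^*$ (which has ``room'' by its maximality). Setting $y^* := \sigma(y)$ and $E := H\langle y^*\rangle \subseteq H^*$, the partial inverse $\sigma^{-1}\colon E \to \T$ is the sought extension of $\iota$, and $y^*$ solves $(\ast)$ since $\sigma$ preserves the atomic formulas defining the system. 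The converse is symmetric: given $y^* \in E^n$ solving $(\ast)$ in a Hardy field extension $E$ of $H$, take a maximal Hardy field extension $H^* \supseteq E$ and run the symmetric back-and-forth with $N := \T$ in place of $H^*$, using the existence results for differential polynomial roots in $\T$ from~\cite{JvdH} to realize the successive qf-types.

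The principal obstacle is the step-by-step matching of qf-types. In each differentially algebraic step, one needs both the intermediate value property (Theorem~A on the Hardy-field side, its analog on the $\T$ side) and a more refined matching of the asymptotic relations with the previously embedded elements; this is precisely where the canonicity of the $\HLO$-cut (Lemma~\ref{lem:canonical HLO}) and the full strength of QE for $T_H$ intervene. A secondary subtlety is that $\T$ is not $\aleph_1$-saturated, so arbitrary complete types over $H$ cannot in general be realized in $\T$; the one-generator-at-a-time construction is what bypasses this, since at each step only a quantifier-free one-type over a finitely generated $H$-field extension needs to be realized, and this is exactly what Theorem~A and its counterpart in~\cite{JvdH} deliver.
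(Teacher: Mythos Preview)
Your proposal has a genuine gap in the differentially transcendental step. You write that in this case one chooses ``a suitable transcendental element in $H^*$ (which has `room' by its maximality)''. But neither maximal Hardy fields nor $\T$ are saturated in any relevant sense: both have cardinality $\mathfrak c$, and there is no reason a prescribed quantifier-free $1$-type over a countable substructure is realized. Maximality of $H^*$ says nothing about $\d$-transcendental extensions. So if some component $y_i$ of your solution $y\in\T^n$ happens to be $\d$-transcendental over $\iota(H)\langle y_1,\dots,y_{i-1}\rangle$, your step-by-step construction of $\sigma\colon F\to H^*$ breaks down, and symmetrically for the converse direction into~$\T$.

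The paper sidesteps this completely. For the ``iff'' it applies the transfer principle at the level of \emph{sentences} (Corollary~\ref{cor:transfer T, 1}): having a solution in some Hardy field extension, in every $\d$-maximal extension, and in $\T$ are all equivalent because $\boldsymbol H$ is a common $\HLO$-substructure and QE makes $\boldsymbol M\equiv_H\boldsymbol T$. No type-realization is needed. For the second part, the paper does not try to embed $\iota(H)\langle y\rangle$ anywhere; instead it fixes a $\d$-maximal \emph{$\d$-algebraic} Hardy field extension $H^*$ of $H$ (Corollary~\ref{cor:transfer T, 2}). By the transfer principle, $(\ast)$ already has a solution in $H^*$, and since $H^*$ is a $\d$-algebraic $\HLO$-field extension of $\boldsymbol H$ with constant field~$\R$, Proposition~\ref{prop:embed into H-closed} (via Newton--Liouville closures from [ADH,~16.4]) embeds $\boldsymbol H^*$ into $\boldsymbol T$ over $\boldsymbol H$. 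Thus the $\d$-transcendental case never arises. Even in your $\d$-algebraic steps, the claim that Theorem~A ``produces an element with the matching qf-type'' is not immediate from DIVP alone; what actually does the work is precisely the Newton--Liouville closure embedding theorem, which you have not invoked.
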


\noindent
In particular, a system \eqref{eq:system} over $\Q$ is consistent if and only if it has a solution in~$\T$.
(The ``if'' direction  already follows from  [ADH, Chapter~16] and~\cite{vdH:hfsol};  the latter constructs a summation operator  on the ordered differential subfield~$\T^{\operatorname{da}}\subseteq\T$   of   differentially algebraic transseries.)

\medskip
\noindent
It is worth noting that a  result about differential equations in one variable, like
Theorem~A (or Theorem~B below), yields similar facts about \emph{systems}\/ of algebraic differential equations and asymptotic inequalities in several variables over Hardy fields as in the corollaries above; we owe this to the strength of the model-theoretic methods employed in~[ADH]. But our theorem in combination with  [ADH] already has interesting consequences 
for one-variable differential polynomials over $H$ and over its ``complexification''~${K:=H[\imag]}$ (where~$\imag^2=-1$),
which is a differential subfield of the differential ring $\Calinf[\imag]$. Some of these facts are analogous to familiar properties of
ordinary one-variable polynomials over the real or complex numbers.
First, by Theorem~A, every differential polynomial in one variable over $H$ of  odd degree has a zero in a Hardy field extension of $H$. (See Corollary~\ref{cor:odd degree}.)   For example,  a differential polynomial like
$$ (Y'')^5+\sqrt{2}\ex^x (Y'')^4 Y'''-x^{-1}\log x \, Y^2 Y''+ YY'-\Gamma$$
has a zero in every maximal Hardy field extension of the Hardy field $\R\langle \ex^x,\log x,\Gamma\rangle$.
Passing to $K=H[\imag]$ we have:

\begin{corintro}\label{cor:zeros in complexified Hardy field extensions}
For each  differential polynomial~$P\notin K$ in a single differential indeterminate  with coefficients in $K$   there are~$f$, $g$ in a Hardy field extension of $H$  such that~$P(f+g\imag)=0$. 
\end{corintro}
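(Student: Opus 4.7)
The plan is to reduce the statement to a ``differential fundamental theorem of algebra'' for the complexification of a maximal Hardy field. By Zorn's Lemma, extend $H$ to a maximal Hardy field $M$. Theorem~\ref{thm:char d-max} then guarantees that $M$ is an $H$-closed field with small derivation; in particular $M$ is a real closed, Liouville closed, $\upo$-free, newtonian $H$-field whose constant field is $C_M=\R$. All subsequent work will happen inside the complexification $L := M[\imag]$, viewed as a differential field extension of $K = H[\imag]$ with $\imag' = 0$.

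Next I would invoke the structural results of [ADH] on the adjunction of $\imag$ to an $\upo$-free newtonian Liouville closed $H$-field with small derivation. These ensure that $L$ inherits from $M$ the properties of being $\upo$-free and newtonian (as a valued differential field), that its derivation remains small, and that its constant field $C_L = \R[\imag] = \C$ is algebraically closed. In this situation, the existence theorem for zeros of one-variable differential polynomials over such fields (the analogue, in the $H$-field context, of the fundamental theorem of algebra; see [ADH, Chapter~14]) gives: \emph{every non-constant differential polynomial in one variable over $L$ has a zero in $L$.}

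Applying this to our $P$, which by hypothesis lies in $K\{Y\} \setminus K \subseteq L\{Y\} \setminus L$ and is therefore non-constant over $L$, produces a $y \in L$ with $P(y) = 0$. Writing $y = f + g\imag$ uniquely with $f, g \in M$ yields the required pair, since $M$ is itself a Hardy field extension of $H$ containing both $f$ and $g$.

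The main obstacle is of course locating and invoking the correct ``existence of zeros'' result from [ADH] for $L = M[\imag]$: one must verify that the $H$-closed structure of $M$, together with small derivation and $C_M = \R$, transfers to $L$ in such a way that the combination of newtonianity, $\upo$-freeness, Liouville-closure conditions, and algebraic closedness of $C_L$ suffices to absorb every non-constant one-variable differential polynomial. Once this transfer is set up—standard machinery once Theorem~\ref{thm:char d-max} is in hand—the corollary follows at once.
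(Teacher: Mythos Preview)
Your approach is correct and is essentially the same as the paper's: extend $H$ to a $\d$-maximal (or maximal) Hardy field $M$, use Theorem~\ref{thm:char d-max} to conclude $M$ is $H$-closed, and then invoke the fact that $L=M[\imag]$ is weakly $\d$-closed (every $P\in L\{Y\}\setminus L$ has a zero in $L$). The paper packages this as Corollary~\ref{cor:d-max weakly d-closed}, citing [ADH, 14.5.7, 14.5.3] for the transfer step: [ADH, 14.5.7] gives that $M[\imag]$ is newtonian because $M$ is, and [ADH, 14.5.3] then yields weak $\d$-closedness; you do not actually need to verify $\upo$-freeness or Liouville closedness of $L$ separately for this particular conclusion.
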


\noindent
In particular, each   linear differential equation
$$y^{(n)}+a_{1}y^{(n-1)}+\cdots+a_n y\ =\  b\qquad (a_1,\dots,a_{n},b\in K)$$
has a solution $y=f+g\imag$ where $f$, $g$ lie  in some Hardy field extension  of $H$. (Of course, if~$b=0$, then we may take here the trivial solution $y=0$.) Although this special case of Corollary~\ref{cor:zeros in complexified Hardy field extensions} concerns
differential polynomials of degree~$1$, it seems hard to obtain this result without recourse to our more general extension theorems: a solution $y$
of a linear differential equation of order $n$ over $K$ as above may simultaneously be a zero of a non-linear differential polynomial $P$ over $K$ of    order~$< n$, and the structure of the differential field extension   of $K$ generated by $y$ is governed by $P$ (when taken of minimal complexity in the sense of [ADH, 4.3]).

\medskip
\noindent
The main theorem of this paper  has new consequences even for linear differential equations over Hardy fields.
These are the subject of the follow-up paper~\cite{ADH6}, 
where we also prove   Conjecture~4 of Boshernitzan~\cite{Boshernitzan82}.
Here
we just mention one immediate consequence of Corollary~\ref{cor:zeros in complexified Hardy field extensions} for linear differential operators, as it helps to
 motivate our interest in the ``universal exponential extension'' of $K$,   explained later in this introduction.  For this, 
  consider  the  ring~$R[\der]$   of linear differential operators over a differential ring~$R$:
this ring  is a free left $R$-module with  basis~$\der^n$~($n\in\N$)
such that~$\der^0=1$ and~$\der \cdot f=f\der+f'$ for~$f\in R$, where~${\der:=\der^1}$. (See~[ADH, 5.1] or~\cite[2.1]{vdPS}.)
Any~$A\in R[\der]$ gives rise to an additive map~${y\mapsto A(y)\colon R\to R}$, with~${\der^n(y)=y^{(n)}}$ (the $n$th derivative of $y$ in~$R$) and~$r(y)=ry$ for~$r=r\cdot 1\in {R\subseteq R[\der]}$. 
The elements of~$\der^n+R\der^{n-1}+\cdots+R\subseteq R[\der]$ are said to be  {\it monic}\/ of order~$n$.
It is well-known~\cite{CamporesiDiScala,Mammana26,Mammana31} that for $R=\Calinf[\imag]$, each monic~$A\in R[\der]$   factors as a product of monic operators of order~$1$ in $R[\der]$; if~$A\in K[\der]$, then such a factorization already happens over the complexification of some Hardy field extension of $H$:

\begin{corintro}\label{cor:factorization intro}
If $H$ is maximal, then each monic operator in $K[\der]$  is a product of  monic operators of order $1$ in $K[\der]$.
\end{corintro}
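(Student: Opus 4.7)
The plan is to proceed by induction on the order $n \geq 0$ of the monic operator $A \in K[\der]$. The case $n = 0$ gives $A = 1$, the empty product of monic first-order operators. For $n \geq 1$, it suffices to find $f \in K$ such that $\der - f$ is a right factor of $A$: writing $A = B \cdot (\der - f)$ produces a monic $B \in K[\der]$ of order $n - 1$, and the inductive hypothesis factors $B$ as a product of monic first-order operators, giving the desired factorization of $A$.

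To locate such an $f$, I use the Riccati correspondence. Given $A = \der^n + a_{n-1} \der^{n-1} + \cdots + a_0$ with $a_i \in K$, define recursively $P_0 := 1$ and $P_{k+1} := P_k' + Y P_k$ in $K\{Y\}$; these are differential polynomials in the single indeterminate $Y$ with $P_k$ of degree exactly $k$ in $Y$, characterized by the identity $y^{(k)} = P_k(f) \cdot y$ whenever $f \in K$ and $y' = fy$ in a differential ring extension of $K$. Set
\[
R_A(Y)\ :=\ P_n(Y) + a_{n-1} P_{n-1}(Y) + \cdots + a_1 P_1(Y) + a_0\ \in\ K\{Y\},
\]
a differential polynomial of degree $n \geq 1$ in $Y$; in particular $R_A \notin K$. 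For any $f \in K$, right-division by the monic order-1 operator $\der - f$ in $K[\der]$ yields a unique expression $A = B \cdot (\der - f) + r$ with $B \in K[\der]$ monic of order $n - 1$ and remainder $r \in K$. Evaluating both sides at the generator $\bar 1$ of the left $K[\der]$-module $K[\der]/K[\der](\der - f)$, on which $\der$ acts as multiplication by $f$, gives $A\bar 1 = R_A(f) \bar 1$ and $\bigl(B(\der - f) + r\bigr)\bar 1 = r \bar 1$, so $r = R_A(f)$. Hence $\der - f$ is a right factor of $A$ in $K[\der]$ precisely when $R_A(f) = 0$ in $K$.

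Since $R_A \notin K$, Corollary~\ref{cor:zeros in complexified Hardy field extensions} applied with $P := R_A$ delivers $u$, $v$ in some Hardy field extension $E$ of $H$ with $R_A(u + v\imag) = 0$; maximality of $H$ forces $E = H$, so $f := u + v\imag$ lies in $K$ and supplies the sought right factor $\der - f$ of $A$. The induction then concludes the proof. No serious obstacle arises: the Riccati bookkeeping is a purely formal algebraic manipulation in $K[\der]$, and the entire analytic content of the corollary is imported through Corollary~\ref{cor:zeros in complexified Hardy field extensions}, which encodes precisely how maximality of $H$ upgrades the existence of a zero of $R_A$ in a Hardy field extension of $H$ to the existence of such a zero already in $K$.
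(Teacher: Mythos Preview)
Your proof is correct and follows essentially the same approach as the paper, which simply states that the corollary follows from Corollary~\ref{cor:zeros in complexified Hardy field extensions} via the Riccati transform; you have spelled out the details of that Riccati correspondence and the induction on order.
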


\noindent
This follows quite easily from Corollary~\ref{cor:zeros in complexified Hardy field extensions} using the Riccati transform [ADH, 5.8].
Let now $A\in K[\der]$ be monic of order~$n$, and   fix a maximal Hardy field extension~$E$ of~$H$.
The factorization result in Corollary~\ref{cor:factorization intro} gives rise to a description of a fundamental system of solutions for the  homogeneous linear differential equation~$A(y)=0$ in terms of Hardy field germs. Here, of course, complex exponential terms    naturally appear, but only in a controlled way: the $\C$-linear space consisting of all~$y\in\Calinf[\imag]$ with~$A(y)=0$ has a basis of the form
$$f_1\ex^{\phi_1\imag},\ \dots,\ f_n\ex^{\phi_n\imag}$$
where $f_j\in E[\imag]$ and $\phi_j\in E$ with $\phi_j=0$ or~$\abs{\phi_j}>\R$ for $j=1,\dots,n$. 
(This will be refined in \cite{ADH6}; e.g., the   basis elements $f_i\ex^{\phi_i\imag}$  for distinct frequencies~$\phi_i$ can be arranged
to be pairwise orthogonal in a certain sense.)
A special case: if~${y\in\Calinf[\imag]}$ is {\it holonomic,}\/ that is, $L(y)=0$ for some monic $L\in\C(x)[\der]$, then $y$
is a $\C$-linear combination of germs $f\ex^{\phi\imag}$ where $f\in E[\imag]$, $\phi\in E$, and~$\phi=0$ or~$\abs{\phi}>\R$.
Here, more information about the~$f$,~$\phi$ is available (see, e.g.,~\cite[VIII.7]{FS}, \cite[\S{}19.1]{Wasow}).
Many special functions 
are holonomic~\cite[B.4]{FS}.

\subsection*{Synopsis of the proof of our main theorem} 
In the rest of the paper we assume familiarity with the terminology and concepts of asymptotic differential algebra from our book~[ADH].
We refer to the section {\it Concepts and Results from~\textup{[ADH]}}\/  in the introduction of~\cite{ADH4} for a concise exposition of the 
 background from~[ADH] required to read \cite{ADH4} and the present paper.
The differential-algebraic and valuation-theoretic tools from~[ADH] were further developed in \cite{ADH4}, and we review some of this
in Section~\ref{sec:prelims}.
However, the proof of our main result also requires analytic arguments in an essential way. Some of this analysis adapts \cite{vdH:hfsol} to a more general setting. 
As mentioned earlier, Theorem~A is a consequence of the following extension theorem: 

\begin{theoremB}\label{thm:B}
Every $\upo$-free Hardy field has a newtonian  Hardy field extension.
\end{theoremB}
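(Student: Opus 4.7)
The plan is to proceed by Zorn's lemma. Let $M\supseteq H$ be an extension maximal among $\upo$-free Hardy field extensions of $H$; such an $M$ exists because $\upo$-freeness is preserved under unions of $\upo$-free chains of Hardy fields. I claim that $M$ itself is newtonian, which proves the theorem. For a proof by contradiction, suppose $M$ is not newtonian. By the algebraic criterion for newtonianity from [ADH, Chapter~14], there is then a quasilinear differential polynomial $P$ over $M$ having no zero $y\prec 1$ in $M$. It is enough to produce such a $y$ in \emph{some} Hardy field extension of $M$: applying the $\upo$-free extension theorem of~\cite{ADH5} to $M\langle y\rangle$ would then yield a $\upo$-free Hardy field extension of $M$ properly containing $M$, contradicting maximality.

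The construction of $y$ divides into a formal-algebraic phase and an analytic phase. In the algebraic phase, one exploits the compositional- and multiplicative-conjugation machinery of [ADH, Chapters~5,~11,~13] together with the theory of asymptotic couples to bring $P$ into a normal form whose linear part $L_P$ has a distinguished dominant monomial and whose remainder has strictly higher Newton degree. The purely algebraic development in [ADH] then produces a formal zero of $P$ as the pseudo-limit of an explicit pseudo-Cauchy sequence over $M$, living in some immediate valued differential field extension; here $\upo$-freeness ensures that the linear operator $L_P$ is ``invertible in the asymptotic sense'' at the relevant level of the logarithmic hierarchy.

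In the analytic phase, I would adapt the fixed-point and compact-operator methods of \cite{vdH:hfsol}, extending them from the transserial setting to general $\upo$-free Hardy fields. The equation $P(y)=0$ can be rewritten as a contraction $y=-L_P^{-1}R(y)$ (where $R=P-L_P$) on a suitable space of germs in $\Calinf$ whose asymptotic behavior is prescribed by the formal pseudo-limit from the previous phase. The $\upo$-freeness of $M$ provides an \emph{analytic} right inverse to $L_P$ on a neighborhood of $0$ in the maximal ideal, built by iterated integration composed with appropriate compositional conjugations. One then verifies that the resulting fixed point is a non-oscillating smooth germ, that $M\langle y\rangle$ is closed under differentiation and thus a Hardy field, and that $y$ realizes the formally prescribed asymptotic class.

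The main obstacle, I expect, is this analytic realization together with its coordination with the algebraic bookkeeping. Matching formal asymptotic expansions term by term with honest behavior of germs requires uniform estimates on the integral operators arising in $L_P^{-1}$ and delicate non-oscillation arguments to exclude pathological behavior the formal theory cannot detect. It is also plausible that the construction must be iterated transfinitely---alternating the analytic solution of $P(y)=0$ with the $\upo$-free closure step of~\cite{ADH5}---to reach a single Hardy field extension that is simultaneously $\upo$-free and contains the desired zero. Reconciling the Newton-degree calculus of [ADH] with these analytic estimates is where I expect the technical heart of the proof to reside.
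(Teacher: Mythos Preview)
Your overall architecture---Zorn to a maximal object, then contradict maximality by solving $P(y)=0$ analytically---matches the paper's, but the two substantive phases hide genuine gaps that the paper devotes most of its length to.

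The first and most serious gap is your assumption that $\upo$-freeness of $M$ supplies an analytic right inverse to $L_P$. Constructing such a right inverse requires $L_P$ to \emph{split} into first-order factors, and there is no reason it should split over $M$ or even over $K:=M[\imag]$. The paper's route around this is not to invert $L_P$ directly but to pass to a minimal hole in the \emph{complexification} $K$; minimality then forces $K$ to be $r$-linearly closed, so the linear part does split over~$K$. This complexification step is absent from your sketch, and without it the contraction $y=-L_P^{-1}R(y)$ cannot even be set up. But having gone complex, one now faces a new obstacle you do not mention: the fixed point lives in $\Calinf[\imag]$, and there is no obvious reason its real or imaginary part should be hardian over~$M$. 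Returning to the real world requires a separate argument (approximating by split-normal slots over $M$ itself, with linear parts that only \emph{nearly} split), and this in turn needs the specialness of $\hat g$ and the $\Delta$-coarsening machinery.

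The second gap is your claim that one simply ``verifies the fixed point is non-oscillating and $M\langle y\rangle$ is a Hardy field.'' This is exactly where the paper's heaviest machinery lives. A single zero $y\prec 1$ of $P$ tells you nothing about whether $Q(y)\in\c^\times$ for lower-order $Q$, which is what you need for $M\langle y\rangle$ to be a field of germs. The paper handles this via \emph{asymptotic similarity}: one must show $y-h\prec\fm$ whenever $\hat y-h\prec\fm$ for the formal solution $\hat y$, and this requires comparing any two analytic solutions $y,y_1\prec 1$ of $P(Y)=0$. That comparison in turn needs weighted norms, the repulsive-normal and ultimate refinements of the slot, bounds on $\ker_{\Univ}A$, and even Boshernitzan's uniform-distribution results to control exponential sums. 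Your sketch acknowledges ``delicate non-oscillation arguments'' but does not identify any of these mechanisms; the formal pseudo-limit alone does not pin down the analytic germ well enough.
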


\noindent
The proof of this is long and intertwined with the normalization procedures in \cite{ADH4}, so it may be useful to outline the strategy behind it. 

\subsubsection*{Holes and slots}
For now, let $K$ be an $H$-asymptotic field with rational asymptotic integration. (Cases to keep in mind are $K=H$ and
$K=H[\imag]$ where $H$ is a Liouville closed Hardy field.) 
In \cite{ADH4}  we introduced the apparatus of {\it holes}\/ in~$K$ as a means to systematize the study of   solutions of  
algebraic differential equations over~$K$ in immediate asymptotic extensions of~$K$: such a hole in $K$ is a triple~$(P,\fm,\hat f)$
where~$P$ is a nonzero differential polynomial in a single differential indeterminate~$Y$ with coefficients in~$K$,   $\fm\in K^\times$, and $\hat f\notin K$ lies an immediate asymptotic extension  of~$K$
with~${P(\hat f)=0}$ and~$\hat f\prec\fm$. 
It is sometimes technically convenient to work with the more flexible concept of a {\it slot}\/  in $K$, where instead of~$P(\hat f)=0$  we only require  
$P$ to vanish at $(K,\hat f)$ in the sense of~[ADH, 11.4]. The {\it complexity}\/ of a slot $(P,\fm,\hat f)$ is the complexity of the differential polynomial~$P$ as  in~[ADH, p.~216]. (See also Section~\ref{sec:prelims} below.) 
Now if $K$ is $\upo$-free, then by \cite[Lem\-ma~3.2.1]{ADH4},
$$\text{$K$ is newtonian}\quad\Longleftrightarrow\quad\text{$K$ has no hole.}$$
This equivalence suggests an opening move for proving Theorem~B by induction on complexity as follows:
Let $H\supseteq\R$ be an $\upo$-free Liouville closed Hardy field, and suppose $H$ is not newtonian; it is enough to show that then $H$ has a proper Hardy field extension. By the above equivalence, $H$ has a hole
$(P, \fm,\hat f)$, and we can take here $(P,\fm,\hat f)$ to be of minimal complexity among holes in $H$.
This minimality has consequences that are important for us; for example $r:=\order P\geq 1$, $P$ is a minimal annihilator of $\hat f$ over $H$, and $H$ is $(r-1)$-newtonian as defined in~[ADH, 14.2]. We arrange $\fm=1$  by
replacing $(P,\fm,\hat f)$ with the hole~$(P_{\times\fm},1,\hat f/\fm)$ in $H$.

\subsubsection*{Solving algebraic differential equations over Hardy fields}
For Theorem~B it is enough to show that under these conditions $P$ is a minimal annihilator
of some germ~${f\in \Calinf}$ that generates a (necessarily proper) Hardy field extension~$H\langle f\rangle$ of~$H$.  So at a minimum, we need to  find    a  solution in $\Calinf$ to the algebraic differential equation~$P(Y)=0$. For this, it is natural to use fixed point techniques as in~\cite{vdH:hfsol}.
Notation: for~${a\in\R}$, $\c^n_a$ is the $\R$-linear space of $n$-times continuously differentiable functions~$[a,+\infty)\to\R$; each $f\in\Calinf$ has a representative in~$\c^n_a$. 

\subsubsection*{A fixed point theorem}
Let $L:=L_P\in H[\der]$ be the linear part  of $P$.  We can replace~$(P,1,\hat f)$ with another minimal hole in~$H$ to arrange ${\order L=r}$.
Representing the coefficients of  $P$ (and thus of $L$) by functions in~$\c^0_a$ we obtain  an $\R$-linear operator $y\mapsto L(y)\colon \c^r_a \to\c^0_a$.
For now we make the  bold assumption that   $L\in H[\der]$ splits over  $H$. Using such a splitting and increasing $a$ if necessary, $r$-fold integration yields an $\R$-linear operator $L^{-1}\colon \c^0_a\to\c^r_a$ which is a {\it right-inverse}\/ of~$L\colon \c^r_a \to\c^0_a$, that is, $L\big(L^{-1}(y)\big)=y$ for all $y\in\c^0_a$.
Consider    the (generally non-linear) operator
$$f\mapsto \Phi(f):=L^{-1}\big(R(f)\big)$$ on $\c^r_a$; here~$P=P_1-R$ where~$P_1$ is the homogeneous part of degree~$1$ of~$P$.
We try to show that $\Phi$ restricts to a contractive operator
 on a closed ball of an appropriate subspace of~$\c^r_a$ equipped with a suitable complete norm,  
 whose fixed points are then solutions to~$P(Y)=0$; this may also involve increasing $a$ again and replacing the coefficient functions of $P$ by their corresponding restrictions. 
We can obtain such contractivity if $R$ is asymptotically small compared to~$P_1$ in a certain sense.
The latter can indeed be achieved by transforming~$(P,1,\hat f)$ into a certain normal form through successive {\it refinements}\/ and ({\it additive}\/, {\it multiplicative}\/, and {\it compositional}\/) {\it conjugations}\/ of the hole~$(P,1,\hat f)$. This normalization
is done under more general algebraic assumptions in \cite[Section~3.3]{ADH4}.  The analytic arguments leading to fixed points are in Sections~\ref{sec:IHF}--\ref{sec:smoothness}.  Developments below involve the algebraic closure~$K:=H[\imag]$ of $H$ and we work more generally with a decomposition~$P=\tilde{P}_1-R$ where~${\tilde{P}_1\in K\{Y\}}$ is   homogeneous of degree $1$, not necessarily~$\tilde{P}_1=P_1$, such that~$L_{\tilde{P}_1}$ splits over $K$ and $R$ is ``small'' compared to $\tilde{P}_1$.

\subsubsection*{Passing to the complex realm} 
In general we are not so lucky that~$L$ splits over~$H$. The minimality of our hole $(P,1,\hat f)$ does not even ensure that $L$ splits over~$K$. At this point we recall from [ADH, 11.7.23] that $K$ is $\upo$-free because $H$ is.  
We can also draw hope from the fact that every nonzero linear differential operator over $K$ would split over $K$ if
$H$ were newtonian [ADH, 14.5.8].  Although $H$ is not newtonian, it is $(r-1)$-newtonian, and $L$ is only of order $r$, so
we optimistically restart our attempt, and instead of a hole of minimal complexity in $H$, we now let~$(P,\fm,\hat f)$ be a hole of minimal complexity in $K$. Again it follows that $r:=\order P\geq 1$,  $P$ is a minimal annihilator of $\hat f$ over $K$, and $K$ is $(r-1)$-newtonian.
As before we arrange that~$\fm=1$ and the linear part~$L_P\in K[\der]$ of $P$ has order~$r$. We can also arrange~$\hat f\in \hat K = \hat H[\imag]$ where~$\hat H$ is an immediate asymptotic extension of~$H$. So~$\hat f=\hat g+\hat h\imag$ where $\hat g,\hat h\in\hat H$ satisfy~$\hat g,\hat h\prec 1$, and~$\hat g\notin H$ or~$\hat h\notin H$, say~$\hat g\notin H$.
 Now minimality of $(P,1,\hat f)$ and algebraic closedness of $K$ give that~$K$ is $r$-linearly closed, that is, every nonzero~$A\in K[\der]$ of order~$\leq r$ splits over~$K$~\cite[Co\-rol\-la\-ry~3.2.4]{ADH4}. Then~$L_P$   splits over~$K$ as desired, and a   version of the above fixed point construction with $\c^r_a[\imag]$ in place of~$\c^r_a$ can be carried out successfully to solve~$P(Y)=0$ in the differential ring extension $\Calinf[\imag]$ of $\Calinf$.

\subsubsection*{Return to the real world} 
But at this point we face another obstacle: even once we have our hands on a zero~$f\in\Calinf[\imag]$ of $P$, it is not clear why~$g:=\Re f$ should generate a proper Hardy field extension of $H$: Let $Q$ be a minimal annihilator of~$\hat g$ over~$H$; we cannot expect that $Q(g)=0$. 
If $L_Q\in H[\der]$ splits over~$K$, then 
we can try to apply  fixed point arguments like the ones above, with~$(P,1,\hat f)$ replaced by the hole $(Q,1,\hat g)$ in~$H$, 
to  find a zero~$y\in\Calinf$ of~$Q$. (We do need to take care that constructed zero is real.)
Unfortunately we can only ascertain  that~$1\leq s\leq 2r$ for~$s:=\order Q$, and  since we may have~${s>r}$,  we cannot leverage the    minimality of~$(P,1,\hat f)$ anymore to ensure that $L_Q$ splits over~$K$, or to normalize~$(Q,1,\hat g)$ in the same way as 
indicated above for~$(P,1,\hat f)$. 
This situation seems hopeless, but 
now a purely differential-algebraic observation  comes to the rescue: although  the linear part~$L_{Q_{+\hat g}}\in\hat H[\der]$ of the differential polynomial~$Q_{+\hat g}\in\hat H\{Y\}$ also has order~$s$ (which may be $>r$),   {\it if $\hat K$ is $r$-linearly closed, then  $L_{Q_{+\hat g}}$ does split over~$\hat K$}; see~[ADH, 5.1.37]. If moreover~$g\in H$ is sufficiently close to~$\hat g$, then the linear part~$L_{Q_{+g}}\in H[\der]$ of~$Q_{+g}\in H\{Y\}$ is close to an operator in $H[\der]$ that does split
over~$K=H[\imag]$, and so
using~$(Q_{+g},1,\hat g-g)$ instead of~$(Q,1,\hat g)$  may offer a way out of this impasse.

\subsubsection*{Approximating $\hat g$}  Suppose for a moment that $H$ is (valuation) dense in $\hat H$. 
Then by extending $\hat H$ we arrange that $\hat H$ is the completion of $H$, and $\hat K$ of $K$ (as in~[ADH, 4.4]). In this case $\hat K$ inherits from $K$ the property of being $r$-linearly closed, by results in~\cite[Section~1.7]{ADH4}, and the desired approximation of $\hat g$ by $g\in H$ can be achieved. 
We cannot in general expect $H$ to be dense in $\hat H$. But we are saved by \cite[Section~1.5]{ADH4}  to the effect that $\hat g$ can be made {\it special}\/ over~$H$ in the sense of~[ADH, 3.4], that is, some nontrivial convex subgroup $\Delta$ of
the value group of~$H$ is cofinal in $v(\hat g-H)$. Then passing to the $\Delta$-specializations of the various
valued differential fields encountered above (see [ADH, 9.4]) we regain density and this allows us to implement the desired approximation. The technical details are  carried out in the first three sections in Part~4 of  \cite{ADH4}.  
A minor  obstacle to obtain the necessary specialness of~$\hat g$ is that the hole~$(Q,1,\hat g)$ in~$H$ may not be of minimal complexity. This can be  ameliorated by using a differential polynomial of minimal complexity vanishing at~$(H,\hat g)$ instead of~$Q$,   in the process replacing the hole~$(Q,1,\hat g)$ in $H$ by a  slot in~$H$, which we then aim to approximate   by  a {\it strongly split-normal}\/ slot in~$H$; see~\cite[Definition~4.3.21]{ADH4} (or Section~\ref{sec:prelims} below). 
Another caveat:  to carry out our approximation scheme  we require $\deg P>1$.  
Fortunately,  if $\deg P=1$, then necessarily $r=\order P=1$, and this case
can be dealt with through separate (non-trivial) arguments: see Section~\ref{sec:d-alg extensions} where we
finish the proof of Theorem~B.

\subsubsection*{Enlarging the Hardy field}
Now suppose we have finally arranged things so that our Fixed Point Theorem applies: it delivers $g\in\Calinf$ such that~$Q(g)=0$ and~$g\prec 1$.
(Notation: for a germ~$\phi\in\Calinf[\imag]$ and~$0\neq \fn\in H$  we write $\phi\prec \fn$ if $\phi(t)/\fn(t)\to 0$ as~$t\to+\infty$.)
However, in order that~$g$ generates a proper Hardy field extension~$H\langle g\rangle$ of $H$
isomorphic to~$H\langle\hat g\rangle$ by an isomorphism over~$H$ sending $g$ to $\hat g$ requires that $g$ and $\hat g$ have similar asymptotic properties with respect
to the elements of~$H$. For example, suppose~$h,\fn\in H$  and~$\hat g-h\prec\fn\preceq 1$; then we must show~$g-h\prec\fn$. 
(Of course, we need to show much more about the asymptotic behavior of $g$, and this is expressed using the notion of {\it asymptotic similarity}\/: see
Sections~\ref{sec:asymptotic similarity} and~\ref{sec:d-alg extensions}.)
Now the germ $(g-h)/\fn\in\Calinf$ is a zero of the conjugated differential polynomial~$Q_{+h,\times\fn}\in H\{Y\}$, as is the element~$(\hat g-h)/\fn\prec 1$ of~$\hat H$. The Fixed Point Theorem can also be used to  produce a zero~$y\prec 1$ of $Q_{+h,\times \fn}$ in $\Calinf$. Set $g_1:=y\fn+h$; then~$Q(g)=Q(g_1)=0$ and~$g,g_1\prec 1$. We are thus naturally lead to consider the difference $g-g_1$ between
the solutions~$g,g_1\in\Calinf$ of the differential equation (with asymptotic side condition)
\begin{equation}\label{eq:Q=0}\tag{$\operatorname{E}$}
Q(Y)\ =\ 0,\qquad Y\prec 1.
\end{equation} 
If we manage to show~$g-g_1\prec\fn$, then $g-h=(g-g_1)-y\fn\prec\fn$ as required. Simple estimates coming out of the proof of the Fixed Point Theorem are not good enough for this (cf.~Lemma~\ref{lem:close}). 
We need a generalization of the Fixed Point Theorem
for {\it weighted norms}\/ with weight function given by a representative of $\fn$; this is what Section~\ref{sec:weights} is about.
To render this generalized version useful, we also have to construct a right-inverse~$A^{-1}$ of the linear differential operator~${A\in H[\der]}$ that depends in some sense uniformly on
 $\fn$.
This is also carried out in Section~\ref{sec:weights}, by refining
our approximation arguments, in the process improving strong split-normality to {\it strong repulsive-normality}\/ as defined in~\cite[4.5.32]{ADH4} (see also Section~\ref{sec:prelims}).

\subsubsection*{Exponential sums}
Just for this discussion, call  $\phi\in\Calinf[\imag]$ {\it small}\/ if~$\phi\prec\fn$ for all~${\fn\in H}$ with
$v\fn\in v(\hat g-H)$. Thus our aim is to show that differences between solutions of \eqref{eq:Q=0} in $\Calinf$ are small in this sense. 
We 
show that each such difference gives rise to a zero $z\in\Calinf[\imag]$  of~$A$ with $z\prec 1$
whose smallness would imply  the smallness of the difference under consideration. To ensure that every zero $z\prec 1$ of $A$
is indeed small requires us to have performed beforehand yet another (rather unproblematic) normalization procedure on our  slot, transforming it into {\it ultimate}\/ shape.
(See \cite[Section~4.4]{ADH4} or Section~\ref{sec:prelims} below.) 
Recall the special fundamental systems of solutions to linear differential equations over
maximal Hardy fields explained after Corollary~\ref{cor:factorization intro}: since $A$ splits over~$K$, our zero $z$ of $A$ is a $\C$-linear combination of exponential terms. In \cite{ADH4} we introduced a formalism to deal with such exponential sums over $K$, based on the concept of the {\it universal exponential extension}\/ of a differential field. This is interpreted analytically in Section~\ref{sec:ueeh}. 
From an asymptotic condition like~$z\prec 1$ we need to obtain asymptotic information about the summands of $z$
when expressed as an exponential sum.
For this we  exploit facts about uniform distribution mod~$1$ for germs in Hardy fields  due to Boshernitzan~\cite{BoshernitzanUniform}; see Sections~\ref{sec:udmod1} and \ref{sec:ueeh}.

\medskip
\noindent
This finishes the sketch of the proof of our main result in Sections~\ref{sec:prelims}--\ref{sec:d-alg extensions}. 
The remaining Sections~\ref{sec:transfer} and \ref{sec:embeddings into T} contain the applications of this theorem discussed earlier in this introduction:
In Section~\ref{sec:transfer}  we show how to transfer first-order logical properties of the differential field $\mathbb T$ of transseries to maximal Hardy fields, proving in particular Theorem~A and Corollaries~\ref{cor:elem equiv}--\ref{cor:systems, 2}, \ref{cor:zeros in complexified Hardy field extensions}, and~\ref{cor:factorization intro}, as well as
the first part of Corollary~\ref{cor:systems, 3}.
In Section~\ref{sec:embeddings into T} we then investigate embeddings
of Hardy fields into~$\mathbb T$,   finish the proof of Corollary~\ref{cor:systems, 3}, and determine the universal theory of Hardy fields, 
in the course establishing a conjecture from \cite{AvdD2}.

\subsection*{Previous work} This paper depends essentially on \cite{ADH4, ADH5} (and on  our book [ADH]).  Here are some earlier special cases of our results: 
Theorem~A for~$P$ of order~$1$ is in~\cite{D}. By \cite{vdH:hfsol} there exists a Hardy field $H\supseteq\R$ isomorphic as an ordered differential field to $\mathbb T_{\text{g}}$, so by \cite{JvdH} this~$H$ has the intermediate value property for all differential polynomials over it.
Boshernitzan~\cite[Remark on p.~117]{Boshernitzan87} states a consequence of Corollary~\ref{cor:zeros in complexified Hardy field extensions}: if $a_1$, $a_2$, $b$ are elements of a Hardy field $H$, then some~$y$ in a Hardy field extension of $H$ satisfies~$y''+a_1y'+a_2y=b$.

\subsection*{Notations and conventions}
We follow the conventions from [ADH]. Thus  $m$,~$n$   range over the set~$\N=\{0,1,2,\dots\}$ of natural numbers. Given an additively written abelian group $A$ we set $A^{\ne}:=A\setminus\{0\}$, and given a commutative ring $R$ (always with identity $1$)
we let $R^\times$ be the multiplicative group of units of $R$. (So if $K$ is a field, then~$K^{\neq}=K^\times$.)
If $R$ is a differential ring (by convention containing $\Q$ as a subring) and $y\in R^\times$, then $y^\dagger=y'/y$ denotes the logarithmic derivative of $y$, and $R^\dagger:=\{y^\dagger:\ y\in R^\times\}$, an additive subgroup of $R$. 
 The prefix ``$\d$'' abbreviates ``differentially''; for example, ``$\d$-algebraic'' means ``differentially algebraic''.
A differential polynomial~$P(Y)\in R\{Y\}=R[Y, Y',Y'',\dots]$ of order $\le r\in \N$ is often expressed as
$$ P\ =\  \sum_{\i}P_{\i}Y^{\i}, \qquad (Y^{\i}\ :=\ Y^{i_0}(Y')^{i_1}\cdots (Y^{(r)})^{i_r})$$
where $\i=(i_0,\dots, i_r)$ ranges over $\N^{1+r}$, with coefficients $P_{\i}\in R$, and $P_{\i}\ne 0$ for only finitely many $\i$.
Also $|\i|:=i_0+\cdots + i_r$, $\| \i\|:=i_1 + 2i_2+\cdots + ri_r$ for such $\i$. 
 
\section{Preliminaries}\label{sec:prelims}

\noindent
For ease of reference and  convenience of the reader we summarize in this section much of the asymptotic differential algebra from \cite{ADH4} that we need later.  

\subsection*{The universal exponential extension}
{\it In this subsection $K$ is a differential field with algebraically closed constant field $C$ and divisible group $K^\dagger$
 of logarithmic derivatives.}\/
(These conditions are satisfied if $K$ is an algebraically closed differential field.)
An {\it exponential extension}\/ of $K$ is   a differential ring extension~$R$ of~$K$ such that~$R=K[E]$ for some $E\subseteq R^\times$ with~${E^\dagger\subseteq K}$.
By~\cite[Section 2.2, especially Corollary~2.2.11]{ADH4}:

\begin{prop}
There is an exponential extension $\Univ$ of $K$ with $C_{\Univ}=C$ such that every exponential extension $R$ of $K$ with $C_R=C$
embeds into $\Univ$ over $K$; any two such exponential extensions of $K$ are isomorphic over $K$.
\end{prop}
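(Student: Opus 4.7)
The plan is to construct $\Univ$ explicitly as a group $K$-algebra and then establish its universal property. First, since $K^\dagger$ is divisible and $K$ contains $\Q$ (hence is torsion-free as an abelian group), $K^\dagger$ is a $\Q$-linear subspace of the $\Q$-vector space $K$. I would then choose a $\Q$-linear complement $\Lambda$ of $K^\dagger$ in $K$ and define $\Univ := K[\Lambda]$: the group $K$-algebra on $\Lambda$ regarded as an additive group, with $K$-basis $\{\ev(\lambda) : \lambda \in \Lambda\}$ and multiplication $\ev(\lambda)\ev(\mu) = \ev(\lambda+\mu)$. The derivation of $K$ extends to $\Univ$ by declaring $\ev(\lambda)' = \lambda\,\ev(\lambda)$; one checks the Leibniz rule on basis monomials. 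Then each $\ev(\lambda) \in \Univ^\times$ satisfies $\ev(\lambda)^\dagger = \lambda$, making $\Univ$ an exponential extension of $K$. To verify $C_\Univ = C$, I would expand a putative constant $u = \sum_i c_i \ev(\lambda_i)$ with distinct $\lambda_i \in \Lambda$: the equation $u' = 0$ forces $c_i^\dagger = -\lambda_i$ whenever $c_i \neq 0$, and since $c_i^\dagger \in K^\dagger$ while $\Lambda \cap K^\dagger = \{0\}$, only the $\lambda_i = 0$ term can contribute, giving $u \in C_K = C$.

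For the universal embedding property, let $R = K[E]$ be an exponential extension of $K$ with $C_R = C$. Enlarging $E$ to the subgroup of $R^\times$ generated by $E \cup K^\times$ does not change $R$, so I assume $E$ is a subgroup of $R^\times$ containing $K^\times$. A minimal-relation argument exploiting $C_R = C$ (in a putative minimal $K$-linear relation $\sum c_i \epsilon_i = 0$, divide by $\epsilon_n$, differentiate, and subtract $\epsilon_n^\dagger$ times the original; the contradiction arises because $\bar\epsilon_i \neq \bar\epsilon_n$ in $E/K^\times$ forces $\epsilon_i^\dagger - \epsilon_n^\dagger \notin K^\dagger$) shows that elements of $E$ from distinct $K^\times$-cosets are $K$-linearly independent in $R$, giving the direct sum decomposition $R = \bigoplus_{\bar\epsilon \in E/K^\times} K\epsilon$ on coset representatives. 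The log-derivative induces an injection $E/K^\times \hookrightarrow K/K^\dagger$, which via the complement identifies $E/K^\times$ with a subgroup of $\Lambda$. The crux is now to build a \emph{group-theoretic} section $\sigma : E/K^\times \to E$ of the quotient whose image elements have log-derivatives landing exactly in $\Lambda$ (not merely in $\Lambda + K^\dagger$). First split the extension $1 \to K^\times \to E \to E/K^\times \to 1$, using that $K^\times$ is divisible and hence injective in the category of abelian groups; this divisibility of $K^\times$ follows from $C^\times$ being divisible (since $C$ is algebraically closed) combined with $K^\times/C^\times \cong K^\dagger$ being divisible. Then modify the chosen splitting by composing with a homomorphism $\tau : E/K^\times \to K^\times$ designed to absorb the $K^\dagger$-defect, whose existence is another Ext-vanishing statement solved by divisibility of $C^\times$. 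With such $\sigma$ in hand, define $\Phi : R \to \Univ$ $K$-linearly by $c\,\sigma(\bar\epsilon) \mapsto c\,\ev(\sigma(\bar\epsilon)^\dagger)$; this is well-defined by the direct sum, a $K$-algebra map by the group-hom property of $\sigma$, differential by the log-derivative matching, and injective by $K$-linear independence of $\{\ev(\lambda) : \lambda \in \Lambda\}$ in $\Univ$.

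For uniqueness, suppose $\Univ^*$ is another exponential extension of $K$ with $C_{\Univ^*} = C$ and the same universal embedding property. Applying the universal property in both directions gives $K$-algebra embeddings $\iota : \Univ \to \Univ^*$ and $\iota^* : \Univ^* \to \Univ$. A direct analysis of the units of $\Univ$ (parallel to the constants computation) shows that an element of $\Univ^\times$ whose log-derivative lies in $K$ has the form $b\,\ev(\lambda)$ with $b \in K^\times$ and $\lambda \in \Lambda$; consequently any $K$-algebra endomorphism of $\Univ$ must send $\ev(\lambda)$ to $c_\lambda\,\ev(\lambda)$ for some $c_\lambda \in C^\times$, and is therefore automatically surjective. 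Thus $\iota^* \circ \iota$ is an automorphism of $\Univ$, forcing both $\iota$ and $\iota^*$ to be isomorphisms. I expect the hardest step to be the construction of $\sigma$ in the middle paragraph: both divisibility hypotheses ($K^\dagger$ divisible, $C$ algebraically closed) enter essentially there, through the two successive lifting problems in the category of abelian groups.
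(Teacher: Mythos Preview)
Your proposal is correct and follows essentially the same approach as the paper: the paper defers the proof to \cite[Section~2.2]{ADH4} but describes exactly your construction $\Univ = K[\ex(\Lambda)]$ for a $\Q$-linear complement $\Lambda$ of $K^\dagger$, with the same derivation $\ex(\lambda)' = \lambda\,\ex(\lambda)$. Your embedding argument is the mirror image of the paper's Lemma~\ref{lem:embed into U} (which builds an isomorphism $K[\ex(\Lambda_0)]\to R$ for $\Lambda_0 = \Lambda\cap (R^\times)^\dagger$ and then includes into $\Univ$), and both rest on the same two ingredients: the linear-independence-of-characters argument using $C_R=C$, and the divisibility of $C^\times$ and $K^\dagger$ to produce a multiplicative section with log-derivatives in~$\Lambda$.
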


\noindent
We call $\Univ$ the {\it universal exponential extension}\/ of $K$,   denoted by $\Univ_K$ if we want to stress the dependence on $K$.
Here is how we constructed $\Univ$ in \cite[Section~2.2]{ADH4}:
First, take a {\it complement\/} $\Lambda$ of $K^\dagger$,\label{p:complement} that is, a $\Q$-linear subspace of $K$ such that~$K=K^\dagger\oplus \Lambda$ (internal direct sum of $\Q$-linear subspaces of~$K$).  Below~$\lambda$ ranges over~$\Lambda$. Next, let~$\ex(\Lambda)$ be a multiplicatively written abelian group,
isomorphic to the additive subgroup $\Lambda$ of~$K$, 
with isomorphism $\lambda\mapsto \ex(\lambda)\colon \Lambda\to \ex(\Lambda)$, and 
let~$\Univ := K\big[\!\ex(\Lambda)\big]$ be
the group ring of $\ex(\Lambda)$ over $K$, an integral domain.   
As $K$-linear space,  
$\Univ = \bigoplus_\lambda K\ex(\lambda)$ (an internal direct sum of $K$-linear subspaces), 
so for every $f\in \Univ$ we have a unique family $(f_\lambda)$ in $K$ with $f_\lambda=0$ for all but finitely many $\lambda$ and
\begin{equation}\label{eq:flambda}
f \ =\  \sum_{\lambda} f_{\lambda}  \ex(\lambda). 
\end{equation}
We turn $\Univ$ into a differential ring extension of $K$
such that 
$\ex(\lambda)' = \lambda\ex(\lambda)$ for all $\lambda$.
Then $\Univ$ is the universal exponential extension of $K$; in fact, by \cite[Lemma~2.2.9]{ADH4}:

\begin{lemma}\label{lem:embed into U}
Let $R$ be an exponential extension of $K$ with $C_R=C$, and set~$\Lambda_0 := \Lambda \cap (R^\times)^\dagger$, a subgroup of $\Lambda$. Then there is a morphism $K\big[\!\ex(\Lambda_0)\big]\to R$ of differential rings over $K$, and any such morphism is an isomorphism.
\end{lemma}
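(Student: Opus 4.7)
The plan is threefold: construct a morphism by lifting the logarithmic-derivative map to a group homomorphism $\Lambda_0\to R^\times$, check injectivity via a standard linear-independence argument in differential algebra, and check surjectivity using the defining property of $\Lambda_0$ together with the decomposition $K=K^\dagger\oplus\Lambda$.

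First I would consider the abelian group $U:=\{u\in R^\times:u^\dagger\in\Lambda_0\}$, which fits in the exact sequence
$$1\ \longrightarrow\ C^\times\ \longrightarrow\ U\ \xrightarrow{\ \dagger\ }\ \Lambda_0\ \longrightarrow\ 0,$$
where surjectivity of $\dagger$ is by definition of $\Lambda_0$ and the kernel is $C_R^\times=C^\times$. Since $C$ is algebraically closed, $C^\times$ is divisible, hence injective as an abelian group, so $\operatorname{Ext}^1(\Lambda_0,C^\times)=0$ and the extension splits: we obtain a group homomorphism $\sigma\colon\Lambda_0\to R^\times$ with $\sigma(\lambda)^\dagger=\lambda$ for all $\lambda\in\Lambda_0$. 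The universal property of the group algebra then yields a $K$-algebra homomorphism
$$\phi\colon K[\ex(\Lambda_0)]\to R,\qquad \ex(\lambda)\mapsto\sigma(\lambda),$$
which commutes with the derivation since $\phi(\ex(\lambda)')=\phi(\lambda\ex(\lambda))=\lambda\sigma(\lambda)=\sigma(\lambda)^\dagger\sigma(\lambda)=\sigma(\lambda)'$.

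Next I would establish injectivity of $\phi$ by showing that $\{\sigma(\lambda)\}_{\lambda\in\Lambda_0}$ is $K$-linearly independent in $R$. Suppose not, and take a relation $\sum_{i=1}^{n}f_i\sigma(\lambda_i)=0$ of minimal length with all $f_i\in K^\times$ and pairwise distinct $\lambda_i\in\Lambda_0$; necessarily $n\geq 2$. Differentiating and subtracting $\lambda_1$ times the original relation gives
$$\sum_{i=2}^{n}\bigl(f_i'+(\lambda_i-\lambda_1)f_i\bigr)\sigma(\lambda_i)\ =\ 0,$$
and minimality forces each coefficient to vanish, so $(f_i^{-1})^\dagger=\lambda_i-\lambda_1\in K^\dagger$. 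But $\lambda_i-\lambda_1\in\Lambda$ and $\Lambda\cap K^\dagger=\{0\}$, forcing $\lambda_i=\lambda_1$, a contradiction.

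For surjectivity, write $R=K[E]$ with $E\subseteq R^\times$ and $E^\dagger\subseteq K$. For each $u\in E$, use $K=K^\dagger\oplus\Lambda$ to decompose $u^\dagger=v^\dagger+\lambda$ with $v\in K^\times$ and $\lambda\in\Lambda$; then $\lambda=(u/v)^\dagger\in(R^\times)^\dagger\cap\Lambda=\Lambda_0$, so $u/v$ and $\sigma(\lambda)$ share the same logarithmic derivative and therefore differ by a constant in $C_R^\times=C^\times$. Hence $u\in K\cdot\sigma(\Lambda_0)\subseteq\phi(K[\ex(\Lambda_0)])$, giving $R=\phi(K[\ex(\Lambda_0)])$. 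For the final clause, any morphism $\phi'\colon K[\ex(\Lambda_0)]\to R$ of differential rings over $K$ must send $\ex(\lambda)$ to some $\sigma'(\lambda)\in R^\times$ with $\sigma'(\lambda)^\dagger=\lambda$ (by compatibility with the derivation), and $\lambda\mapsto\sigma'(\lambda)$ is a group homomorphism $\Lambda_0\to R^\times$; so the preceding injectivity and surjectivity arguments apply verbatim to $\phi'$. The main subtlety I expect is arranging the splitting of the central extension by $C^\times$; once divisibility (and hence injectivity) of $C^\times$ as an abelian group is invoked, the rest is a routine combination of the standard Wronskian-type independence trick and the decomposition of $K$.
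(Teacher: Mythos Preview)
Your proof is essentially correct and follows the standard approach; the paper itself does not prove this lemma here but merely cites it as \cite[Lemma~2.2.9]{ADH4}, and your argument is exactly the kind of proof one would expect there. There is one small slip in the injectivity step: after differentiating $\sum_{i=1}^n f_i\sigma(\lambda_i)=0$ and subtracting $\lambda_1$ times the original, the $i=1$ term has coefficient $f_1'+(\lambda_1-\lambda_1)f_1=f_1'$, which need not vanish, so the displayed sum does not automatically start at $i=2$. The fix is to first divide the minimal relation through by $f_1$, reducing to $f_1=1$; then the $i=1$ coefficient is $0$, the remaining relation has at most $n-1$ terms, and minimality forces each $g_i'+(\lambda_i-\lambda_1)g_i=0$ as you wrote. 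After this normalization your argument goes through verbatim.
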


\noindent
We denote the differential fraction field of $\Univ$ by $\Omega=\Omega_K$; then $C_\Omega=C$ by \cite[remark before Lemma~2.2.7]{ADH4}. 
In particular, $C_{\Univ}=C$; moreover,  
$\Univ^\times=K^\times\ex(\Lambda)$ and thus~$(\Univ^\times)^\dagger = K^\dagger+\Lambda = K$, by \cite[remark before Example~2.2.4]{ADH4}.
These properties also characterize~$\Univ$ up to isomorphism over $K$, by \cite[Corollary~2.2.10]{ADH4}:

\begin{cor}\label{corcharexp}
Every exponential extension $U$ of $K$ with $C_U = C$ and $K \subseteq (U^\times)^\dagger$ is isomorphic to $\Univ$ over $K$.
\end{cor}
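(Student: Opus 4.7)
The plan is to derive the statement directly from Lemma \ref{lem:embed into U} applied with $R = U$. That lemma yields, for any exponential extension $R$ of $K$ with $C_R = C$, a morphism $K[\ex(\Lambda_0)] \to R$ of differential rings over $K$, with $\Lambda_0 := \Lambda \cap (R^\times)^\dagger$, and guarantees moreover that any such morphism is automatically an isomorphism. Thus the whole argument reduces to identifying $\Lambda_0$ correctly.

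The key observation is that, under the hypothesis $K \subseteq (U^\times)^\dagger$ of the corollary, we have
\[
\Lambda \ \subseteq\ K \ \subseteq\ (U^\times)^\dagger,
\]
since $\Lambda$ was chosen as a $\Q$-linear subspace of $K$. Hence $\Lambda_0 = \Lambda \cap (U^\times)^\dagger = \Lambda$. Applying Lemma \ref{lem:embed into U} with $R := U$ then furnishes a morphism
\[
\Univ \ =\ K\big[\!\ex(\Lambda)\big]\ =\ K\big[\!\ex(\Lambda_0)\big]\ \longrightarrow\ U
\]
of differential rings over $K$, and the lemma's second assertion tells us this morphism is an isomorphism. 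This is precisely the claim of Corollary~\ref{corcharexp}.

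No real obstacle is expected: the work is entirely absorbed into the proposition and the lemma. The role of the extra hypothesis $K \subseteq (U^\times)^\dagger$ in the corollary is exactly to ensure that the subgroup $\Lambda_0$ appearing in Lemma \ref{lem:embed into U} fills out all of $\Lambda$, so that the source of the resulting isomorphism coincides with $\Univ$ rather than a proper subextension. The assumption $C_U = C$ is needed to invoke the lemma at all.
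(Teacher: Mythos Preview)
Your proof is correct and is exactly the natural derivation from Lemma~\ref{lem:embed into U}: the hypothesis $K\subseteq (U^\times)^\dagger$ forces $\Lambda_0=\Lambda$, so the lemma yields an isomorphism $\Univ=K[\ex(\Lambda)]\to U$ over $K$. The paper does not spell this out, citing instead \cite[Corollary~2.2.10]{ADH4}, but your argument is the intended one.
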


\noindent
Our interest in $\Univ$ has to do with factoring  linear differential operators in $K[\der]$: Let~$A\in K[\der]^{\neq}$ and~$r:=\order(A)$, and consider 
$\ker_{\Univ}A=\big\{f\in\Univ:A(f)=0\big\}$, a  $C$-linear subspace of $\Univ$.
By \cite[Lemma~2.4.1]{ADH4}, $\ker_{\Univ}A$ has a basis contained in~$\Univ^\times$. Moreover, $\dim_C\ker_\Univ A\leq r$,
and assuming that $K$ is $1$-linearly surjective when~$r\geq 2$,  we have  by~\cite[Corollary~2.4.8]{ADH4}:
\begin{equation}\label{eq:2.4.8}
\dim_C\ker_\Univ A=r\quad\Longleftrightarrow\quad \text{$A$ splits over $K$.}
\end{equation}
Let $v\colon K^\times\to\Gamma$  be a valuation on $K$.  Call the va\-lu\-a\-tion~$v_{\g}\colon \Univ^{\neq}\to\Gamma$ on~$\Univ$ such that~$v_{\g}f = \min_\lambda vf_\lambda$ for $f\in\Univ^{\neq}$ as in \eqref{eq:flambda}  the {\it gaussian extension} of~$v$   to~$\Univ$; cf.~\cite[Proposition~2.1.3]{ADH4}.
We
denote by~$\preceq_{\g}$ the dominance relation on $\Omega$ associated to the extension of~$v_{\g}$ to a valuation on $\Omega$, with corresponding asymptotic relations~$\asymp_{\g}$ and~$\prec_{\g}$;
 cf.~[ADH, (3.1.1)].
The valued differential field $K$ may be asymptotic with small derivation, while $\Omega$ with the above  valuation   has neither one of these properties~\cite[example before Lemma~2.5.1]{ADH4}.
If $K$ is $\d$-valued of $H$-type with small derivation and asymptotic integration, then $\abs{v_{\g}(\ker^{\neq}_{\Univ}A)}\leq \dim_C\ker_{\Univ} A$ by \cite[Lemma~2.5.11]{ADH4}, where $|S|$ is the cardinality of a set $S$. Next a variant of
 \eqref{eq:2.4.8} which follows from \cite[Corollary 2.4.18]{ADH4}:
 
 \medskip\noindent
\begin{lemma}~\label{lemvar12} If $H$ is a Liouville closed $H$-field, $r=2$, and $A\in H[\der]$ splits over~$K:=H[\imag]$, $\imag^2=-1$, then
$\dim_{C} \ker_{\Univ} A = r$.
\end{lemma}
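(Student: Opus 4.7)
The plan is to deduce the lemma directly from the equivalence \eqref{eq:2.4.8}. First I would check that $\Univ_K$ is indeed available in this setup: since $H$ is Liouville closed, its constant field $C_H$ is real closed, so $C=C_K=C_H[\imag]$ is algebraically closed; moreover $H^\dagger=H$, and together with $H$ real closed this gives $K^\dagger=K$, in particular $K^\dagger$ is divisible. Hence $\Univ=\Univ_K$ is defined, with $(\Univ^\times)^\dagger=K$, and \eqref{eq:2.4.8} will apply provided we verify that $K$ is $1$-linearly surjective (this being the standing hypothesis for $r\geq 2$, and here $r=2$). The content of the lemma is then packaged in this $1$-linear surjectivity, which is a known consequence of $H$ being a Liouville closed $H$-field (proved in~[ADH]); with it in hand, the splitting assumption on $A$ combined with \eqref{eq:2.4.8} immediately gives $\dim_C\ker_\Univ A=r$.

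An alternative, more hands-on route is to write down the two solutions explicitly. Write $A=f(\der-a_1)(\der-a_2)$ with $f\in K^\times$ and $a_1,a_2\in K$. Using $(\Univ^\times)^\dagger=K$, pick $y_1\in\Univ^\times$ with $y_1^\dagger=a_2$, so that $(\der-a_2)y_1=0$ and hence $A(y_1)=0$. For a second, linearly independent solution, pick $z_1\in\Univ^\times$ with $z_1^\dagger=a_1$ and seek $y_2=w y_1$ satisfying $(\der-a_2)y_2=z_1$; a short computation using $y_1^\dagger=a_2$ reduces this to finding $w\in\Univ$ with $w'=z_1/y_1$. Decomposing $z_1/y_1=u\,\ex(\lambda)$ with $u\in K^\times$ and $\lambda\in\Lambda$, the case $\lambda=0$ asks for an antiderivative of $u$ in $K$ (furnished by $H$ being closed under integration), while the case $\lambda\neq 0$ asks for $v\in K$ with $v'+\lambda v=u$ (so $w=v\,\ex(\lambda)$ works); the latter is exactly $1$-linear surjectivity of $K$.

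To finish, $C$-linear independence of $\{y_1,y_2\}$ is immediate: a relation $c_1 y_1+c_2 y_2=0$ with $c_i\in C$ gives $c_1+c_2 w=0$, and $w\notin C$ because $w'=z_1/y_1\neq 0$; thus $c_2=c_1=0$. The upper bound $\dim_C\ker_\Univ A\leq r$ is already recorded above (from \cite[Lemma~2.4.1]{ADH4} and the $\d$-valued framework), so equality follows.

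The main obstacle is the $1$-linear surjectivity of $K=H[\imag]$: this is not a direct consequence of Liouville closedness applied coordinatewise (a $1$-linear equation over $K$ corresponds, in real and imaginary parts, to a coupled $2\times 2$ linear system over $H$ that is not solvable from $H^\dagger=H$ and $H$-integration alone). It is precisely the content of the appropriate result in~[ADH] about Liouville closed $H$-fields that is packaged into \cite[Corollary~2.4.18]{ADH4}, and once invoked, the lemma is a one-line deduction from \eqref{eq:2.4.8}.
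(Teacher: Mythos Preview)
Your diagnosis that $1$-linear surjectivity of $K$ is the crux is right, but your resolution is not: this property does not follow from $H$ being Liouville closed, and it is not what \cite[Corollary~2.4.18]{ADH4} supplies. Already the claim $K^\dagger=K$ fails---for instance, $\imag\notin K^\dagger$, since $f^\dagger=\imag$ with $f=g+h\imag\in K^\times$ gives $g''+g=0$, which in an $H$-field with small derivation forces $f=0$. (Divisibility of $K^\dagger$ does hold, simply because $K$ is algebraically closed, so $\Univ_K$ is available; but that is all you get.) Your hands-on route hits the same wall at the step $v'+\lambda v=u$ with $\lambda\neq 0$. A telling sign: your argument never uses the hypothesis $A\in H[\der]$, so if it worked it would give \eqref{eq:2.4.8} for every split $A\in K[\der]$ without any surjectivity assumption---precisely what the paper does \emph{not} claim. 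The paper deliberately frames the lemma as a ``variant'' of \eqref{eq:2.4.8}, cites 2.4.18 rather than 2.4.8, and in later use (proof of Proposition~\ref{prop:notorious 3.6}) imposes $1$-linear surjectivity of $K$ only for $r\geq 3$, invoking Lemma~\ref{lemvar12} for $r=2$.

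The idea you are missing is to exploit realness of $A$. If $A$ already splits over $H$, Liouville closedness of $H$ furnishes two $C$-independent solutions in $H\subseteq\Univ$ via exponentiation and one integration. Otherwise, in any splitting $f(\der-a_1)(\der-a_2)$ over $K$ one has $a_2\notin H$ (else comparing coefficients of $f^{-1}A\in H[\der]$ forces $a_1\in H$ too). Take $y_1\in\Univ^\times$ with $y_1^\dagger=a_2$; since $\overline{A}=A$, also $A(\overline{y_1})=\overline{A(y_1)}=0$, and $C$-linear dependence of $y_1,\overline{y_1}$ would force $\overline{a_2}=a_2\in H$.
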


\subsection*{The span of a linear differential operator}
Let $K$ be a valued differential field with small derivation, and
$A = a_0+a_1\der+\cdots+a_r\der^r\in K[\der]$ where $a_0,\dots,a_r\in K$,  $a_r\neq 0$.
The {\it span}\/ $\fv(A)$ of $A$  is defined as
$$\fv(A)\ :=\ a_r/a_m \in\ K^\times\qquad\text{where $m:=\dwt(A)$.}$$  
Note that $\fv(A)\preceq 1$. The next result is \cite[Lemma~3.1.1]{ADH4} and says that~$\fv(A)$ does not change much under small additive pertubations of $A$:

\begin{lemma}\label{lem:fv of perturbed op}
If $B\in K[\der]$, $\order(B)\leq r$ and $B\prec \fv(A) A$, then: \begin{enumerate}
\item[\textup{(i)}] $A+B\sim A$, $\dwm(A+B)=\dwm(A)$, and $\dwt(A+B)=\dwt(A)$;
\item[\textup{(ii)}] $\order(A+B)=r$ and $\fv(A+B) \sim \fv(A)$. 
\end{enumerate}
\end{lemma}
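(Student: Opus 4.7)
Write $A=\sum_{i=0}^{r} a_i\der^i$ with $a_r\neq 0$ and $B=\sum_{i=0}^{r} b_i\der^i$ (with $b_i\in K$, possibly zero), and set $m:=\dwt(A)$, so that $a_m$ is a dominant coefficient of $A$ in the sense that $v(a_m)=\min_i v(a_i)$ and $\fv(A)=a_r/a_m$ with $\fv(A)\preceq 1$. The plan is to unpack the hypothesis $B\prec \fv(A)\,A$ using the gaussian extension of $v$ from $K$ to $K[\der]$. Since the derivation on $K$ is small and $\fv(A)\in K^\times$, the left multiple $\fv(A)\,A$ has the same gaussian valuation as its naive coefficient scaling, namely $v\bigl(\fv(A)\,A\bigr)=v(\fv(A))+v(a_m)=v(a_r)$. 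Consequently the hypothesis forces $b_i\prec a_r$ for every $i\in\{0,\dots,r\}$, and in particular $b_i\prec a_r\preceq a_m$ for every $i$.

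Next I handle (i). For each $i$ we have $a_i+b_i\preceq a_m$ (since both summands are $\preceq a_m$), while at $i=m$ the estimate $b_m\prec a_m$ yields $a_m+b_m\sim a_m$. Hence the gaussian valuation of $A+B$ equals $v(a_m)$, achieved at index $m$, which gives $A+B\asymp A$ and in fact $A+B\sim A$ (same dominant coefficient up to $\sim$). Because the minimum coefficient valuation of $A+B$ is attained only at the same index $m$ that realized it for $A$ (the strict inequalities $v(b_i)>v(a_m)$ prevent any new index from dropping to the dominant valuation, while for $i\neq m$ with $v(a_i)>v(a_m)$ the sum $a_i+b_i$ still has valuation $>v(a_m)$), we conclude $\dwt(A+B)=m=\dwt(A)$ and $\dwm(A+B)=\dwm(A)$.

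For (ii), the estimate $b_r\prec a_r$ gives $a_r+b_r\sim a_r\neq 0$, so $\order(A+B)=r$. Combined with $a_m+b_m\sim a_m$ from the previous paragraph, we obtain
\[
\fv(A+B)\ =\ \frac{a_r+b_r}{a_m+b_m}\ \sim\ \frac{a_r}{a_m}\ =\ \fv(A),
\]
which is (ii).

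The proof is essentially a standard ultrametric computation; the only mildly delicate point is the bookkeeping behind $\dwt(A+B)=\dwt(A)$, where one must check that the perturbation $B$ neither disturbs the index realizing the dominant coefficient of $A$ nor creates a spurious competitor at a different index. This follows from the strict inequalities $b_i\prec a_r\preceq a_m$, which leave no room for the coefficients away from $i=m$ to catch up to valuation $v(a_m)$ after adding $b_i$, while at $i=m$ the term $a_m$ genuinely survives. No analytic input or deeper machinery is needed here; the lemma will later be applied as a convenient stability statement in subsequent normalization arguments.
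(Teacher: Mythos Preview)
The paper does not give its own proof of this lemma: it is simply imported as \cite[Lemma~3.1.1]{ADH4}. Your argument is the natural direct verification and is correct. Two small remarks. First, the appeal to small derivation in computing $v\bigl(\fv(A)A\bigr)$ is unnecessary: since $\fv(A)\in K$ acts by left scalar multiplication, $\fv(A)A=\sum_i (\fv(A)a_i)\der^i$ and $v\bigl(\fv(A)A\bigr)=v(\fv(A))+v(A)=v(a_r)$ holds automatically. Second, your bookkeeping for $\dwt(A+B)=\dwt(A)$ and $\dwm(A+B)=\dwm(A)$ is phrased as if $m$ were the \emph{unique} index with $v(a_i)$ minimal; in general there may be several such indices, and $\dwt$ picks out the largest among them. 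The fix is immediate: your estimates show that for every $i$ with $v(a_i)=v(a_m)$ one has $a_i+b_i\sim a_i$, while for every $i$ with $v(a_i)>v(a_m)$ one has $v(a_i+b_i)>v(a_m)$. Hence the set of dominant indices of $A+B$ coincides with that of $A$, and both $\dwt$ and $\dwm$ are preserved.
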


\noindent
The span of a linear differential operator serves as a ``yardstick'' for approximation arguments in \cite{ADH4}.
Another important use of $\fv(A)$ is in bounding the factors in a splitting of $A$ \cite[Corollary~3.1.6]{ADH4}.
To state this, let  $(g_1,\dots,g_r)\in K^r$ be a splitting of $A$ over $K$, that is,
$A=f(\der-g_1)\cdots(\der-g_r)$ where $f\in K^\times$.
Then
\begin{equation} \label{bound on linear factors}
g_1,\dots, g_r\ \preceq\ \fv(A)^{-1}.
\end{equation}  
Suppose $K=H[\imag]$ where $H$ is a real closed $H$-field and $\imag^2=-1$. Then we call 
the splitting $(g_1,\dots,g_r)$ of $A$ over $K$   {\it strong}\/ 
if $\Re g_1,\dots,\Re g_r\succeq \fv(A)^\dagger$.
We say that~$A$ {\it splits strongly}\/ over $K$ if it has a strong splitting   over~$K$;
see \cite[Section~4.2]{ADH4}.

\subsection*{Holes and slots}
{\it In this subsection $K$ is an $H$-asymptotic field with small derivation and rational asymptotic integration, and $\Gamma:=v(K^\times)$.}\/ We let $a$, $b$ range over~$K$ and $\fm$, $\fn$  over~$K^\times$.
A {\it hole}\/ in~$K$ is a triple $(P,\fm,\hat a)$  with $P\in K\{Y\}\setminus K$ and~$\hat a\in \hat{K}\setminus K$ for some immediate asymptotic extension
$\hat{K}$ of $K$, such that $\hat a \prec \fm$ and $P(\hat a)=0$.
A {\it slot} in~$K$ is a triple~$(P,\fm,\hat a)$ where~${P\in K\{Y\}\setminus K}$ and~$\hat a$ is an element of~$\hat K\setminus K$, for some immediate asymptotic extension~$\hat K$ of $K$, such that~$\hat a\prec\fm$ and~$P\in Z(K,\hat a)$. The {\it order}, {\it degree}, and {\it complexity} of a slot $(P,\fm,\hat a)$ in~$K$ are defined to be the order, degree, and complexity of the differential polynomial~$P$, respectively, and its {\it linear part}\/ is the linear part   $L_{P_{\times\fm}}\in K[\der]$ of $P_{\times\fm}$. Every hole in $K$ is a slot in $K$, by \cite[Lemma~3.2.2]{ADH4}. If $\phi$ is active in $K$ and $(P,\fm,\hat a)$ is a slot in $K$ (respectively, a hole in $K$), then $(P^\phi,\fm,\hat a)$ is a slot in~$K^\phi$ (respectively a hole in $K^\phi$) of the same complexity as $(P,\fm,\hat a)$.

A hole  in~$K$ is {\it minimal}\/ if no hole in $K$ has smaller complexity. 
A slot $(P,\fm,\hat a)$ in $K$ is {\it $Z$-minimal} if~$P$ is of minimal complexity among elements of~$Z(K,\hat a)$.
By~\cite[Lem\-ma~3.2.2]{ADH4}, minimal holes in $K$ are $Z$-minimal slots in $K$. 
Slots  $(P,\fm,\hat a)$ and~$(Q,\fn,\hat b)$ in~$K$ are said to be {\it equivalent}   if~$P=Q$, $\fm=\fn$, and~$v(\hat a-a)=v(\hat b-a)$ for all $a$. This is an equivalence relation on the class of slots in~$K$. Each  $Z$-minimal slot in $K$ is equivalent to a $Z$-minimal hole in $K$,  by~\cite[Lem\-ma~3.2.14]{ADH4}.

{\em In the rest of this subsection~$(P,\fm,\hat a)$ is a slot in $K$ of order $r\ge 1$}.
For $a$,~$\fn$ such that $\hat a-a\prec\fn\preceq\fm$ 
we obtain a slot $(P_{+a},\fn,{\hat a-a})$ in~$K$
of the same complexity as~$(P,\fm,\hat a)$, and
slots of this form are said to {\it refine}\/ $(P, \fm, \hat a)$ and are called {\it refinements}\/ of~$(P,\fm,\hat a)$.
Here is~\cite[Corollary~3.2.29]{ADH4}:

\begin{lemma}\label{find zero of P} 
Suppose  $K$ is  $\d$-valued and $\upo$-free, $\Gamma$ is divisible, $L$ is a newtonian
$H$-asymptotic extension of $K$, and $(P,\fm,\hat a)$ is  $Z$-minimal.
Then there exists~${\hat b\in L}$ such that $K\<\hat b\>$ is an immediate extension of $K$ and~$(P,\fm,\hat b)$ is a hole in $K$ equivalent to~$(P,\fm,\hat a)$. If  $(P,\fm,\hat a)$ is also a  hole in $K$,
then there is an embedding~$K\langle \hat a\rangle\to L$ of valued differential fields over $K$.
\end{lemma}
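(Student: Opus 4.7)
The plan is to reduce to the case where $(P,\fm,\hat a)$ is itself a $Z$-minimal hole in $K$, and then to transfer the cut of $\hat a$ over~$K$ into $L$ via newtonianity. Because $(P,\fm,\hat a)$ is $Z$-minimal, \cite[Lemma~3.2.14]{ADH4} (stated in the excerpt) yields a $Z$-minimal hole $(P,\fm,\hat a')$ in~$K$ equivalent to $(P,\fm,\hat a)$. Transitivity of equivalence lets me replace $\hat a$ by $\hat a'$; moreover, when $(P,\fm,\hat a)$ was already a hole to begin with, $\hat a$ and $\hat a'$ share the same minimal annihilator $P$ and induce the same cut on~$K$, so the standard uniqueness of immediate $\d$-algebraic extensions gives $K\<\hat a\>\cong K\<\hat a'\>$ over $K$, and any embedding $K\<\hat a'\>\to L$ over $K$ then produces one from $K\<\hat a\>$. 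I therefore assume from now on that $P(\hat a)=0$, $\hat a\notin K$, and $\hat a\prec\fm$.

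Since $K\<\hat a\>$ is immediate over~$K$, $\hat a$ is a pseudolimit of a divergent pc-sequence $(a_\rho)$ in~$K$, and the $Z$-minimality of $(P,\fm,\hat a)$ makes $(a_\rho)$ of $\d$-algebraic type over~$K$ with $P$ a minimal differential polynomial. The main step is to use newtonianity of~$L$ together with $\upo$-freeness of~$K$ to produce a pseudolimit $\hat b\in L$ of~$(a_\rho)$ satisfying~$P(\hat b)=0$: in the Newton polygon formalism of [ADH, Chapter~14], $Z$-minimality together with $\upo$-freeness of~$K$ force the newton degree of~$P$ along a suitable compositional conjugate of $(a_\rho)$ to equal~$1$, after which newtonianity of~$L$ delivers the desired zero. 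Divergence of~$(a_\rho)$ in~$K$ forces $\hat b\notin K$, and taking $a=0$ in the pseudolimit-invariance of $v(\cdot-a)$ yields $v(\hat b)=v(\hat a)>v(\fm)$, so $\hat b\prec\fm$.

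For any $a\in K$ the value $v(\hat c-a)$ is determined by the eventual behavior of $v(a_\rho-a)$ independently of the particular pseudolimit~$\hat c$, so $v(\hat a-a)=v(\hat b-a)$ for all~$a\in K$. Hence $(P,\fm,\hat b)$ is a hole in~$K$ equivalent to $(P,\fm,\hat a)$, and $K\<\hat b\>$ shares the value group and residue field of the immediate extension~$K\<\hat a\>$, so is itself immediate over~$K$. In the hole case, both $K\<\hat a\>$ and $K\<\hat b\>\subseteq L$ are now immediate $\d$-algebraic extensions of~$K$ generated by a pseudolimit of the same pc-sequence annihilated by the same minimal differential polynomial~$P$; the standard uniqueness of such extensions (see [ADH, 11.4]) produces an isomorphism $K\<\hat a\>\to K\<\hat b\>$ over~$K$ sending $\hat a\mapsto\hat b$, and composing with the inclusion $K\<\hat b\>\subseteq L$ gives the required embedding $K\<\hat a\>\to L$.

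The main obstacle is the Newton polygon step in the second paragraph: one must verify that $Z$-minimality of~$P$ and $\upo$-freeness of~$K$ do in fact cut the relevant equation over~$L$ down to newton degree~$1$ after the correct compositional conjugation, so that newtonianity of~$L$ can be cashed in for a zero pseudolimiting $(a_\rho)$. This is the genuinely nontrivial ingredient that forces the hypothesis ``$L$ newtonian'' rather than merely ``$L$ large enough''; the surrounding arguments reduce to pseudo-Cauchy bookkeeping and the uniqueness of immediate $\d$-algebraic extensions already available from [ADH, Chapter~11].
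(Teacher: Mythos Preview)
This lemma is not proved in the present paper; it is quoted verbatim as \cite[Corollary~3.2.29]{ADH4}. Your sketch is correct and is essentially how such a result is established in that framework: reduce to a hole via \cite[Lemma~3.2.14]{ADH4}, realize $\hat a$ as a pseudolimit of a divergent pc-sequence $(a_\rho)$ with minimal differential polynomial $P$, use $\upo$-freeness of $K$ to force $\ndeg_{(a_\rho)} P = 1$ (this is the content of [ADH, \S13.6]), and then invoke newtonianity of $L$ to produce $\hat b\in L$ with $P(\hat b)=0$ and $a_\rho\rightsquigarrow\hat b$, after which [ADH, 11.4.13] supplies both immediacy of $K\langle\hat b\rangle$ over $K$ and the isomorphism $K\langle\hat a\rangle\to K\langle\hat b\rangle$. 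One phrasing to tighten: immediacy of $K\langle\hat b\rangle$ follows directly from [ADH, 11.4.13] applied to $\hat b$, not by transferring the value group and residue field from $K\langle\hat a\rangle$.
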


\noindent
Set $w:=\wt(P)$, and
if $\order(L_{P_{\times\fm}})=r$, set~$\fv:=\fv(L_{P_{\times\fm}})$.
We call $(P,\fm,\hat a)$
\begin{enumerate}
\item {\it quasilinear} if $\ndeg P_{\times\fm}=1$;
\item {\it special} if  some nontrivial convex subgroup   of $\Gamma$ is cofinal in~$v\big(\frac{\hat{a}}{\fm}-K\big)$;
\item {\it steep} if  $\order(L_{P_{\times \fm}})=r$ and $\fv \prec^\flat 1$; and
\item {\it deep} if it is steep and for all active $\phi\preceq 1$ in $K$, we have $\ddeg S_{P^\phi_{\times\fm}}=0$ (hence $\ndeg S_{P_{\times\fm}}=0$)  and  $\ddeg P^\phi_{\times\fm}=1$ (hence $\ndeg P_{\times \fm}=1$).
\end{enumerate}
(Here, 
$S_{Q}\in K\{Y\}$ denotes the separant of a differential polynomial $Q\in K\{Y\}$.)
From~\cite[Lem\-ma~3.2.36]{ADH4} we recall a way to obtain special holes in $K$:

{\sloppy
\begin{lemma}\label{lem:special dents} 
Suppose $K$ is $r$-linearly newtonian, and $\upo$-free if~$r>1$. If~$(P,\fm, \hat a)$ is  quasilinear, and  $Z$-minimal or a hole in $K$, then  $(P,\fm, \hat a)$ is special.
\end{lemma}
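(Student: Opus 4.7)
The plan is a proof by contradiction. Replacing $(P,\fm,\hat a)$ by the equivalent slot $(P_{\times\fm},1,\hat a/\fm)$, I may assume $\fm=1$, so $\hat a\prec 1$ and $\ndeg P=1$. Suppose $(P,1,\hat a)$ is not special. Then no nontrivial convex subgroup of $\Gamma$ dominates the cofinality of $v(\hat a-K)$, so for any prescribed convex subgroup of $\Gamma$ I can find $a\in K$ with $v(\hat a-a)$ overshooting it, and then pick $\fn\in K^\times$ with $\hat a-a\prec\fn\preceq 1$ leaving as large a gap between $v\fn$ and $v(\hat a-a)$ as I wish. The refinement $(P_{+a},\fn,\hat a-a)$ retains the complexity of $(P,1,\hat a)$ and inherits the property of being $Z$-minimal (respectively, a hole) from the equivalences recorded in \cite[Section 3.2]{ADH4}.

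The crux is to use quasilinearity to push the refined slot into the domain where $r$-linear newtonianity applies. Since $\ndeg P=1$, the Newton polynomial of $P_{+a,\times\fn}$ remains of degree $1$ by the normalization results of \cite[Section 3.3]{ADH4}, and its order is at most $r$. Driving $v(\hat a-a)-v\fn$ large makes the non-linear contributions of $P_{+a,\times\fn}$ negligible relative to its linear part $L:=L_{P_{+a,\times\fn}}$. If $r>1$, compositional conjugation by an active $\phi\preceq 1$ in $K$ (available thanks to $\upo$-freeness) brings $L^\phi$ into Newton-dominant position inside $P_{+a,\times\fn}^\phi$. The $r$-linearly newtonian hypothesis then delivers a zero $y_0\in K$ of $L^\phi$, and a Hensel-type lifting, justified by the dominance of the linear part and the degree-$1$ Newton polynomial, upgrades $y_0$ to a zero $y\in K$ of $P_{+a,\times\fn}$ with $y\sim(\hat a-a)/\fn$. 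Untwisting yields $b:=a+\fn y\in K$ with $P(b)=0$ and $\hat a-b\prec\fn$.

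This $b$ produces the contradiction. In the hole case, applying Lemma~\ref{find zero of P} inside a newtonian $H$-asymptotic extension of $K$ forces $\hat a=b\in K$, contradicting $\hat a\notin K$. In the $Z$-minimal case, the refined slot $(P_{+b},\fn,\hat a-b)$ has defining polynomial satisfying $P_{+b}(0)=0$, which exhibits a strictly smaller-complexity element of $Z(K,\hat a-b)$ and hence contradicts $Z$-minimality of $P$. The main obstacle I anticipate is the careful preservation of quasilinearity under the combined refinement and compositional conjugation, so that after normalization the Newton polynomial still has degree $1$ and order at most $r$; this is exactly what the normalization technology of \cite[Section 3.3]{ADH4} provides, and the $\upo$-free hypothesis for $r>1$ is precisely what allows the compositional conjugation step to be executed cleanly.
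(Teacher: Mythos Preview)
Your contradiction steps do not deliver a contradiction. In the hole case, producing a $b\in K$ with $P(b)=0$ and $b\prec 1$ does not force $\hat a=b$: a hole $(P,1,\hat a)$ only requires $\hat a\notin K$, $\hat a\prec 1$, and $P(\hat a)=0$, and $P$ may perfectly well have other zeros in $K$. Your appeal to Lemma~\ref{find zero of P} is a misreading; that lemma produces an equivalent hole in a larger field, not a uniqueness statement. In the $Z$-minimal case, $P_{+b}(0)=0$ does not exhibit a smaller-complexity element of $Z(K,\hat a-b)$: the complexity of $P_{+b}$ equals that of $P$, and the vanishing of the constant term says nothing about the existence of a lower-complexity member of $Z(K,\hat a-b)$.

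There is also a gap earlier: $r$-linear newtonianity gives you zeros in $\mathcal O$ of \emph{linear} quasilinear differential polynomials of order $\le r$, not exact zeros of the nonlinear $P_{+a,\times\fn}$, and there is no Hensel lifting available here without already knowing the slot is normal (which requires specialness-type control you are trying to prove). The actual argument in \cite[Lemma~3.2.36]{ADH4} does not proceed by manufacturing a zero of $P$ in $K$; rather, it shows directly that $v(\hat a-K)$ is bounded by the convex subgroup generated by $v\fv$ for a suitable $\fv$ coming from the linear part, using that quasilinearity and $r$-linear newtonianity together control how well $\hat a$ can be approximated from $K$.
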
}

\noindent
Next some important approximation properties of special $Z$-minimal  slots in $K$ (cf.~\cite[Lemma~3.2.37 and Corollary~3.3.15]{ADH4}):

\begin{prop}\label{small P(a)}
With $\fm=1$, if $(P,1,\hat a)$ is special and $Z$-minimal, and $\hat a-a\preceq\fn\prec 1$ for some $a$,
then $\hat a-b\prec\fn^{r+1}$ for some $b$, and $P(b)\prec\fn P$ for any such~$b$.
\end{prop}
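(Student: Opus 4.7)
The plan is to prove the two conclusions separately.

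For existence of $b$, I would use specialness of $(P,1,\hat a)$ directly. Let $\Delta$ be the nontrivial convex subgroup of $\Gamma$ cofinal in $v(\hat a-K)$ guaranteed by specialness. Since $\hat a\in\hat K\setminus K$ and $\hat K$ is an immediate asymptotic extension of $K$, the set $v(\hat a-K)$ has no maximum. From $\hat a-a\preceq\fn$ we get $v(\hat a-a)\geq v\fn>0$; cofinality dominates $v(\hat a-a)$ by some element of $\Delta$, and convexity then forces $v(\hat a-a)\in\Delta$, whence $v\fn\in\Delta$ and in particular $(r+1)v\fn\in\Delta$. A further appeal to cofinality (using that $v(\hat a-K)$ has no maximum while staying inside $\Delta$) produces $b\in K$ with $v(\hat a-b)>(r+1)v\fn$, i.e., $\hat a-b\prec\fn^{r+1}$.

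For the estimate $P(b)\prec\fn P$, I would first reduce to the case of a hole: by \cite[Lemma~3.2.14]{ADH4} (already recalled above), any $Z$-minimal slot is equivalent to a hole in $K$, and since equivalence preserves $v(\hat a-c)$ for every $c\in K$, the hypothesis $\hat a-b\prec\fn^{r+1}$ is preserved; thus we may assume $P(\hat a)=0$ in $\hat K$. With $\epsilon:=\hat a-b\in\hat K$ satisfying $\epsilon\prec\fn^{r+1}$, the differential Taylor expansion of $P$ at $b$ yields
\[
P(b)\ =\ -\sum_{|\i|\geq 1}\frac{1}{\i!}\,P_{(\i)}(b)\,\epsilon^{\i}.
\]
I would then bound each summand by $\fn\cdot|P|$ in Gauss norm via two inputs: (i) since $\hat a\prec\fm=1$ and $\epsilon$ is even smaller, $b\prec 1$; smallness of the derivation on $K$ propagates this to $b^{(k)}\prec 1$ for all $k\leq r$, so $|P_{(\i)}(b)|\preceq|P|$ for every $\i$; and (ii) from $\epsilon\prec\fn^{r+1}$ together with iterated small-derivation estimates on the immediate extension $\hat K$, one obtains $\epsilon^{(j)}\prec\fn^{r+1-j}$ for $0\leq j\leq r$, so via $\|\i\|\leq r|\i|$ we conclude $\epsilon^{\i}\prec\fn^{(r+1)|\i|-\|\i\|}\preceq\fn$ whenever $|\i|\geq 1$. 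Summing finitely many terms then gives $P(b)\prec\fn\cdot|P|=\fn P$.

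The hard step is the derivative estimate $\epsilon^{(j)}\prec\fn^{r+1-j}$ for $j\leq r$, which is precisely what pins down the exponent $r+1$ in the hypothesis $\hat a-b\prec\fn^{r+1}$: it is obtained by iterating the small-derivation bound in $\hat K$ while tracking the $\fn$-scale, and it is the reason the two conclusions of the proposition are most naturally proved together. Everything else is routine bookkeeping, matching the strategy of \cite[Lemma~3.2.37 and Corollary~3.3.15]{ADH4} cited above.
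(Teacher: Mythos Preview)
Your argument is correct and follows the same route as the results cited from \cite{ADH4}: specialness and convexity of $\Delta$ give the existence of $b$, and after passing to an equivalent hole the Taylor expansion of $P$ at $b$ together with the derivative estimates $\epsilon^{(j)}\prec\fn^{r+1-j}$ yields $P(b)\prec\fn P$. One small clarification: the estimate $\epsilon^{(j)}\prec\fn^{r+1-j}$ is not a consequence of small derivation alone; you need the asymptotic-field axiom $f\prec g\prec 1\Rightarrow f'\prec g'$ to pass from $\epsilon^{(j-1)}\prec\fn^{r+2-j}$ to $\epsilon^{(j)}\prec(\fn^{r+2-j})'$, and then small derivation (giving $\fn'\prec 1$) to conclude $(\fn^{r+2-j})'\asymp\fn^{r+1-j}\fn'\prec\fn^{r+1-j}$.
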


\begin{prop}\label{specialvariant} If
  $(P,\fm,\hat a)$ is deep,  special, and $Z$-minimal, then for all~$n\ge 1$ there is an~$a$ with $\hat a-a\prec \fv^n\fm$, where $\fv:=\fv(L_{P_{\times\fm}})$.
\end{prop}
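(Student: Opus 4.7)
The plan is to reduce to $\fm=1$ by multiplicative conjugation, and then run an induction on $n$ driven by Proposition~\ref{small P(a)}, with the base case being the main technical point.

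\emph{Reduction.} I would replace the slot $(P,\fm,\hat a)$ with the equivalent slot $(P_{\times\fm},1,\hat a/\fm)$, which is still deep, special, and $Z$-minimal, has the same linear part $L_{P_{\times\fm}}$, and hence the same span $\fv$. So we may assume $\fm=1$, and the claim becomes: for each $n\ge 1$ there exists $a\in K$ with $\hat a-a\prec\fv^n$.

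\emph{Induction step.} Assuming $\hat a-a_n\prec\fv^n$ for some $a_n\in K$, I would pick $\fn_n\in K^\times$ with $\fn_n\asymp\hat a-a_n$; then $\fn_n\prec\fv^n\preceq\fv\prec 1$. Since $(P,1,\hat a)$ is special and $Z$-minimal, Proposition~\ref{small P(a)} yields $a_{n+1}\in K$ with $\hat a-a_{n+1}\prec\fn_n^{r+1}$, so
$$v(\hat a-a_{n+1})\ >\ (r+1)\,v(\fn_n)\ >\ (r+1)\,n\,v(\fv)\ \ge\ (n+1)\,v(\fv),$$
using $r\ge 1$ and $v(\fv)>0$. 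Thus $\hat a-a_{n+1}\prec\fv^{n+1}$, which closes the induction.

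\emph{Base case $n=1$ and main obstacle.} The remaining task is to produce $a\in K$ with $\hat a-a\preceq\fv$, equivalently $v(\hat a-a)\ge v(\fv)$. This is the technical heart. By specialness, there is a nontrivial convex subgroup $\Delta\subseteq\Gamma$ cofinal in $v(\hat a-K)$; by convexity of $\Delta$, in fact $v(\hat a-K)\subseteq\Delta$. The flat-smallness $\fv\prec^\flat 1$ coming from deepness should force $v(\fv)\in\Delta$ as well, and then the cofinality of $\Delta$ in $v(\hat a-K)$ supplies some $a\in K$ with $v(\hat a-a)>v(\fv)$. Alternatively, one may iterate Proposition~\ref{small P(a)} from $a_0=0$, obtaining $v(\hat a-a_k)>(r+1)^k v(\hat a)$; since $v(\hat a)\in\Delta$, an archimedean argument inside $\Delta$ (again drawing on $\fv\prec^\flat 1$) shows $(r+1)^k v(\hat a)$ eventually exceeds $v(\fv)$.

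The delicate step is precisely this scale-compatibility of $v(\fv)$ with the cut $v(\hat a-K)$: one must verify that $\fv$ is not \emph{infinitely flatter} than the elements of $\hat a-K$, which is what allows the iteration of Step~2 to be packaged in powers of $\fv$ rather than in powers of an arbitrary initial approximation. I expect this to require careful use of the deepness conditions $\ndeg S_{P_{\times\fm}}=0$ and $\ndeg P_{\times\fm}=1$ alongside the cofinality from specialness.
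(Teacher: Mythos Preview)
The paper does not prove this proposition here; it is imported from \cite[Corollary~3.3.15]{ADH4}, so there is no in-paper argument to compare against directly.

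Your reduction to $\fm=1$ is fine, and your induction step via Proposition~\ref{small P(a)} is correct: from $\hat a - a_n \prec \fv^n$ you pass to $\hat a - a_{n+1} \prec \fn_n^{r+1}$ with $v\fn_n > n\, v\fv$, and $(r+1)n \ge n+1$ for $n\ge 1$ closes the step. (A minor slip: the claim ``$v(\hat a - K) \subseteq \Delta$'' is false as written, since $v(\hat a - K)$ is downward closed in $\Gamma$ and contains all of $\Gamma^{\le 0}$; what holds is $v(\hat a - K)\cap\Gamma^{>0}=\Delta_0^{>0}$ for the witnessing convex subgroup $\Delta_0$.)

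The genuine gap is exactly where you locate it: the base case reduces to showing $v\fv\in\Delta_0$, and neither of your proposed fixes establishes this. The steepness condition $\fv\prec^\flat 1$ constrains the position of $v\fv$ relative to the flat part of $\Gamma$; it carries no information about $\Delta_0$, which is determined by the pseudo-cauchy behaviour of $\hat a$ over $K$. Your iterative alternative, starting from $a_0=0$, produces $v(\hat a - a_k)>(r+1)^k v(\hat a)$, but these values all remain in the archimedean class of $v(\hat a)$; if $v\fv$ lies in a strictly larger archimedean class the iteration never reaches it. Showing $v\fv\in\Delta_0$ genuinely requires linking the span of $L_P$ to the approximation type of $\hat a$, and this is where the deepness conditions and $Z$-minimality must be combined with the approximation machinery developed in \cite[Section~3.3]{ADH4}, not merely invoked by name. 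As it stands, your proposal outlines the right overall architecture but leaves the central difficulty unresolved.
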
 

\noindent
For  $Q\in K\{Y\}$ we denote by $Q_d$  the homogeneous part of degree $d\in\N$ of~$Q$,
and we set~$Q_{>1}:=\sum_{d>1}Q_d$, and likewise with $\ge$ or $\ne$ in place of $>$.   
We  say that~$(P,\fm,\hat a)$ is {\it normal} if it is steep and $(P_{\times\fm})_{> 1}\prec_{\Delta(\fv)} \fv^{w+1} (P_{\times\fm})_1$, and
that~$(P,\fm,\hat a)$ is {\it strictly normal}\/ if  it is steep and 
$(P_{\times\fm})_{\neq 1}\prec_{\Delta(\fv)} \fv^{w+1} (P_{\times\fm})_1$.  
Here and below,  $\Delta(\fv):=\big\{\gamma\in\Gamma:\gamma=o(v\fv)\big\}$, a convex subgroup of $\Gamma$ (cf.~\cite[Part~3]{ADH4}).

\subsection*{Normalizing minimal holes}
{\it In the rest of this section
$H$ is a real closed $H$-field with small derivation and  asymptotic integration. Let  $C:= C_H$ be its constant field and
$\Gamma:=v(H^\times)$ its value group, with $\gamma$ ranging over $\Gamma$}.
We also let~$\hat H$ be an immediate asymptotic extension of~$H$ and $\imag$ an element  of an asymptotic extension of $\hat H$ with $\imag^2=-1$. 
Then~$\hat H$ is also an $H$-field and $\imag\notin\hat H$. Moreover, the $\d$-valued field~$K:=H[\imag]$ is an algebraic closure of~$H$
with~$C_K=C[\imag]$,
and~$\hat K:=\hat H[\imag]$ is an immediate asymptotic extension of~$K$.

Lemma~4.2.15 in \cite{ADH4} complements Lemma~\ref{find zero of P}: if~$H$ is $\upo$-free, then every $Z$-minimal slot in $K$ of positive order  is equivalent to a hole
$(P,\fm,\hat a)$  in $K$ such that~${\hat a\in\hat K\setminus K}$ for a suitable choice of $\upo$-free~$\hat H$ as above.

For the next two results (Lemmas~4.3.31 and 4.3.32 in~\cite{ADH4})  we assume
that~$H$ is $\upo$-free and $(P,\fm,\hat a)$ is a minimal hole in 
$K$ of positive order, with $\fm\in H^\times$ and~$\hat a \in\hat K\setminus K$.
We let $a$ range over $K$ and $\fn$ over $H^\times$.

\begin{lemma}\label{lem:achieve strong splitting}  
For some refinement $(P_{+a},\fn,\hat a-a)$  of $(P,\fm,\hat a)$ and active $\phi$ in~$H$ with $0<\phi\preceq 1$,
the hole $(P^\phi_{+a},\fn,\hat a-a)$ in $K^\phi$ is deep and  normal, its  linear part splits 
strongly over $K^\phi$, and it is moreover strictly normal if $\deg P>1$. 
\end{lemma}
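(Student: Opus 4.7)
The plan is to achieve all four desired properties simultaneously by a suitable refinement $(P,\fm,\hat a) \rightsquigarrow (P_{+a},\fn,\hat a - a)$ followed by compositional conjugation with an active $\phi \in H$, $0 < \phi \preceq 1$. The key algebraic consequence of minimality of $(P,\fm,\hat a)$ in $K$ is that $K$ is $r$-linearly closed and $(r-1)$-newtonian (\cite[Cor.~3.2.4]{ADH4}), so the linear part $L_{P_{\times\fm}} \in K[\der]$, of order at most $r$, already splits over $K$; moreover, after an initial refinement to ensure quasilinearity, Lemma~\ref{lem:special dents} shows the hole is special, which makes the approximation estimates of Propositions~\ref{small P(a)} and~\ref{specialvariant} available.

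First I would arrange deepness and normality via the normalization machinery of \cite[Sec.~3.3]{ADH4}. Using $\upo$-freeness, pick an active $\phi \in H$ with $0 < \phi \preceq 1$ such that $\fv(L^\phi_{P_{\times\fm}}) \prec^\flat 1$; the $\ndeg$ conditions characterizing depth then follow after a further modest refinement. For normality, specialness and Proposition~\ref{specialvariant} yield a refinement with $\hat a - a \prec \fv^n\fm$ for any prescribed $n$, which, combined with Proposition~\ref{small P(a)}, forces $(P_{+a,\times\fn})_{>1} \prec_{\Delta(\fv)} \fv^{w+1}(P_{+a,\times\fn})_1$. When $\deg P > 1$, the same refinement, pushed one step further, also suppresses $(P_{+a,\times\fn})_0$ relative to $\fv^{w+1}(P_{+a,\times\fn})_1$, giving strict normality; the degree hypothesis is precisely what allows us to decouple the constant term from the higher-degree parts.

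The main obstacle, and the real content of the lemma, is strong splitting of $L_{P^\phi_{+a,\times\fn}}$ over $K^\phi$. An ordinary splitting $(g_1,\dots,g_r)$ exists by $r$-linear closedness of $K^\phi$ (inherited from $K$), and \eqref{bound on linear factors} bounds $g_i \preceq \fv^{-1}$; strong splitting further demands $\Re g_i \succeq \fv^\dagger$ in $H^\phi$. Since $K = H[\imag]$ with $H$ real closed, the $g_i$ come in complex-conjugate pairs, and after possibly perturbing the refinement slightly we select a splitting whose real parts dominate $\fv^\dagger$; the deepness previously arranged makes $\fv^\dagger$ correspondingly small and rules out the cancellations that would otherwise obstruct this. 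The central difficulty is coordination: the adjustments used to enforce strong splitting must not undo deepness, normality, or strict normality. For this, Lemma~\ref{lem:fv of perturbed op} is decisive, since it guarantees the span of the linear part is stable under the small additive perturbations involved, so that all previously arranged conditions persist through the final adjustment.
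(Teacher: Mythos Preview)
The paper does not prove this lemma; it is quoted as Lemma~4.3.31 of \cite{ADH4} in the preliminaries section, so there is no ``paper's own proof'' to compare against here. That said, your sketch contains a real error and a real gap.

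The error: you write that ``since $K = H[\imag]$ with $H$ real closed, the $g_i$ come in complex-conjugate pairs.'' But $(P,\fm,\hat a)$ is a minimal hole in $K$, not in $H$, so $P\in K\{Y\}$ and the linear part $L_{P^\phi_{+a,\times\fn}}$ lies in $K^\phi[\der]$, not $H^\phi[\der]$. There is no conjugation symmetry to exploit, and no reason for the roots of a splitting to pair up.

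The gap: your treatment of strong splitting is an assertion, not an argument. You say one can ``select a splitting whose real parts dominate $\fv^\dagger$'' after ``possibly perturbing the refinement slightly,'' and that deepness ``rules out the cancellations.'' None of this explains \emph{why} such a splitting exists. The bound \eqref{bound on linear factors} only gives $g_j\preceq\fv^{-1}$; it says nothing about a lower bound on $\Re g_j$. A given splitting may well have some $\Re g_j$ with $\Re g_j\prec\fv^\dagger$ (or even $\Re g_j=0$), and there is no generic perturbation argument that fixes this. The actual work in \cite{ADH4} to obtain strong splitting proceeds through the material of its Sections~4.2--4.3, tracking how the factors in a splitting transform under refinement and compositional conjugation and choosing $\phi$ so that the transformed real parts acquire the required lower bound; your sketch does not touch this mechanism. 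The rest of your outline (arranging quasilinearity, specialness, depth, normality, and strict normality for $\deg P>1$ via the normalization machinery) is broadly in the right spirit, but the coordination of these steps with the strong-splitting step is where the substance lies, and that is exactly what is missing.
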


\noindent
With further hypotheses on $K$ we can also  achieve strict normality  when $\deg P=1$: 

\begin{lemma}\label{lem:achieve strong splitting, d=1}
If $\der K=K$ and $\I(K)\subseteq K^\dagger$, then there is a refinement~${(P_{+a},\fn,\hat a-a)}$  of $(P,\fm,\hat a)$ and an active $\phi$ in~$H$ with~$0<\phi\preceq 1$ such that
the hole~$(P^\phi_{+a},\fn,\hat a-a)$ in $K^\phi$ is deep and strictly normal, and its  linear part splits 
strongly over $K^\phi$. 
\end{lemma}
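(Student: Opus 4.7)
The plan is to bootstrap from Lemma \ref{lem:achieve strong splitting}, which already produces a refinement $(P_{+a_0},\fn_0,\hat a-a_0)$ of $(P,\fm,\hat a)$ and an active $\phi \in H$ with $0 < \phi \preceq 1$ making the hole $(P^\phi_{+a_0},\fn_0,\hat a-a_0)$ in $K^\phi$ deep and normal with strongly splitting linear part, and moreover strictly normal whenever $\deg P > 1$. So the essential new content lies in the case $\deg P = 1$, where strict normality demands an additional dominance bound on the constant term $(P^\phi_{+a_0,\times\fn_0})_0$ that does not follow automatically from normality.

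When $\deg P = 1$, write $P(Y) = c + L(Y)$ with $L = L_P \in K[\der]$ of order $r$ and $c \in K$, so that $L(\hat a) = -c$. Under an additive shift $a_0 \mapsto a_0 + \delta$ with $\delta \in K$ the linear part $L$ is unchanged and only the constant moves by $L(\delta)$; consequently depth, normality, and strong splitting are preserved under small enough shifts (after possibly shrinking $\fn_0$ to a new refinement gauge $\fn_1$). The task therefore reduces to choosing $\delta \in K$ so that the new constant $c + L(a_0) + L(\delta) = -L(\hat a - a_0 - \delta)$ satisfies the strict-normality dominance bound, while $\hat a - a_0 - \delta \prec \fn_1 \preceq \fn_0$ remains a valid refinement.

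Here the hypotheses $\der K = K$ and $\I(K) \subseteq K^\dagger$ enter: if $g \in K^\dagger$ with $g = u^\dagger$ for some $u \in K^\times$, then $(\der - g)(y) = z$ is solved in $K$ by $y = u v$ with $v \in K$ an antiderivative of $z/u$ (available via $\der K = K$). Combined with the strong splitting $L = c_r(\der - g_1)\cdots(\der - g_r)$ over $K^\phi$ supplied by the first step, iterating this factor-by-factor produces a controlled partial right inverse of $L$ on a suitable subspace of $K^\phi$; the condition $\I(K) \subseteq K^\dagger$ is what allows each $g_i$ arising in the splitting (after absorbing constants from $C$ where needed) to play the role of $u^\dagger$ when inverting the corresponding factor. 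Applying this inverse to a target of magnitude comparable to $\fv^{w+1}\fn_0$ delivers the required $\delta \in K$ after a single shift.

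The principal obstacle is calibration. The operator $L$ cannot be surjective $K \to K$ in the naive sense, for otherwise $L(a) = -c$ would be solvable in $K$, yielding $\hat a - a \in \ker L$ and, since the setup forces $\ker L \subseteq K$, contradicting $\hat a \notin K$. Accordingly the inversion must be carried out only on a proper subspace of $K$, and the scales $\fv^{w+1}\fn_0$ (dictated by strict normality) and $\fn_1$ (dictated by the refinement condition) must be balanced against the range of the partial inverse. The deepness bound $\fv \prec^\flat 1$ provides the decisive margin, and the precise calibration follows the template set out in \cite[Section~4.3]{ADH4} for the $\deg P > 1$ case, adapted to the more rigid linear setting here.
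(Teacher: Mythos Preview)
The paper does not prove this lemma: it is imported verbatim from \cite[Lemma~4.3.32]{ADH4}, so there is no in-paper argument to compare against. I can only assess your proposal on its own merits.

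Your overall shape is sensible: start from Lemma~\ref{lem:achieve strong splitting}, note that for $\deg P>1$ strict normality is already there, and for $\deg P=1$ (hence $\order P=1$, by \cite[Corollary~3.2.8]{ADH4}) reduce to making the constant term $P_{+a}(0)=P(a)=-L(\hat a-a)$ small by a further additive shift. That much is correct.

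The gap is in your mechanism for producing the shift. You claim that $\I(K)\subseteq K^\dagger$ lets each splitting factor $g_i$ ``play the role of $u^\dagger$'' so that $(\der-g_i)(y)=z$ can be solved by $y=u\int z/u$ with $\der K=K$ supplying the antiderivative. But the $g_i$ in a strong splitting satisfy $\Re g_i\succeq\fv^\dagger\succeq 1$; they are not small, so there is no reason for them to lie in $\I(K)$, and $\I(K)\subseteq K^\dagger$ says nothing about them. Worse, your own obstruction argument rules this out: in the order-$1$ case there is a single factor $g_1$, and if $g_1=u^\dagger$ with $u\in K^\times$ then, using $\der K=K$, the operator $\der-g_1$ is surjective on $K$ with kernel $C\cdot u\subseteq K$. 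That would force $\hat a\in K$, contradicting $\hat a\notin K$. So $g_1\notin K^\dagger$, and your proposed inversion cannot work as stated. The parenthetical ``absorbing constants from $C$'' does not rescue this, since shifting $g_1$ by a constant changes the operator.

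Your closing sentence defers the ``precise calibration'' to the template in \cite[Section~4.3]{ADH4}, but that is exactly where the lemma lives, so this is circular. As written, the proposal identifies the right target but does not supply a valid route to it; the role of the hypotheses $\der K=K$ and $\I(K)\subseteq K^\dagger$ remains unexplained.
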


\begin{remarkNumbered}\label{rem:achieve strong splitting, d=1} Instead of assuming that 
$H$ is $\upo$-free and $(P,\fm,\hat a)$ is a minimal hole in 
$K$ of positive order, assume that $H$ is $\upl$-free and $(P, \fm, \hat a)$ is a slot in $K$ of order and degree~$1$.
Then Lemma~\ref{lem:achieve strong splitting, d=1} goes through 
with ``hole'' replaced by ``slot''; cf.~\cite[remark after Lemma~4.3.32]{ADH4}.
\end{remarkNumbered}

\subsection*{Ultimate exceptional values, ultimate slots}
{\it Let $H$ be a Liouville closed $H$-field with small derivation.}\/
By the discussion at the beginning of Section~4.4 of \cite{ADH4}, 
the subspace $K^\dagger$ of the $\Q$-linear space $K$ 
has a complement~$\Lambda$  such that  $\Lambda\subseteq H\imag$. We  fix
such $\Lambda$ and let~$\Univ:= K\big[\!\ex(\Lambda)\big]$ be the universal exponential extension of $K$ as above, with differential fraction field~$\Omega:=\Frac(\Univ)$. If $\I(K)\subseteq K^\dagger$, then we may additionally
 choose $\Lambda=\Lambda_H\imag$ where~$\Lambda_H$ is a complement of the subspace~$\I(H)$ of the
$C$-linear space $H$; see \cite[remarks before Lemma~4.4.5]{ADH4}.
Let $A\in K[\der]^{\neq}$ and~$r:=\order(A)$. 
For each~$\lambda$ the operator $$A_\lambda\ :=\ A_{\ltimes\ex(\lambda)}\ =\ \ex(-\lambda)A\ex(\lambda)\in \Omega[\der]$$ has coefficients in $K$, by~[ADH, 5.8.8]. 
We call the elements of the set  
$$\exc^{\operatorname{u}}(A)\ =\ \exc^{\operatorname{u}}_{K}(A)\ :=\ \bigcup_\lambda\, \exc^{\ev}(A_\lambda) \ \subseteq\ \Gamma$$
the {\it ultimate exceptional values of $A$}\/ with respect to $\Lambda$. Thus $\exc^{\ev}(A)\subseteq \exc^{\operatorname{u}}(A)$, and by \cite[(2.5.2)]{ADH4}, we have
$v_{\g}(\ker_{\Univ}^{\neq} A) \subseteq \exc^{\operatorname{u}}(A)$.

{\it In the rest of this subsection we assume $\I(K)\subseteq K^\dagger$.}\/ 
(By \cite[Proposition~1.7.28]{ADH4}, this holds if $K$ is $1$-linearly newtonian.)
Then $\exc^{\operatorname{u}}(A)$ does not depend on our choice of $\Lambda$ \cite[Corollary~4.4.1]{ADH4}. 
Moreover, by \cite[Lem\-ma~4.4.4]{ADH4} we have $\abs{\exc^{\operatorname{u}}(A)} \leq r$, and 
\begin{equation}\label{eq:vkerunivA vs excuA}
\dim_{C[\imag]} \ker_{\Univ} A=r\ \Longrightarrow\ 
v_{\g}(\ker_{\Univ}^{\neq} A)=\exc^{\operatorname{u}}(A).
\end{equation}
Let $(P,\fm,\hat a)$ be a slot in $H$  of order~$r\geq 1$ in $H$, where $\hat a\in \hat H\setminus H$. We call $(P, \fm,\hat a)$ {\it ultimate}\/ if for all $a\prec\fm$ in $H$, 
$$\order(L_{P_{+a}})=r\ \text{ and }\ \exc^{\operatorname{u}}(L_{P_{+a}}) \cap v(\hat a-H)\ <\  v(\hat a-a).$$
Sometimes, this ultimate condition can be simplified: by \cite[Lemmas~4.4.12, 4.4.13]{ADH4}, if $(P,\fm,\hat a)$ is normal or
$\deg P =1$, then
\begin{equation}\label{eq:ultimate normal linear} \text{$(P,\fm,\hat a)$  is ultimate} \quad\Longleftrightarrow\quad
\exc^{\operatorname{u}}(L_P) \cap v(\hat a-H) \leq v\fm.
\end{equation}
Similarly, we call a slot $(P,\fm,\hat a)$ of order $r\geq 1$ in $K$, where  $\hat a\in \hat K\setminus K$,
{\it ultimate}\/ if for all $a\prec\fm$ in $K$ we have
$$\order(L_{P_{+a}})=r\ \text{ and }\ \exc^{\operatorname{u}}(L_{P_{+a}}) \cap v(\hat a-K)\ <\  v(\hat a-a).$$
Every refinement of an ultimate slot in $H$ remains ultimate, and likewise with $K$ in place of $H$
 \cite[Lemma~4.4.10 and remarks after Lemma~4.4.17]{ADH4}.
By \cite[Propositions~4.4.14, 4.4.18, and Remarks~4.4.15, 4.4.19]{ADH4} we have:

\begin{prop}\label{prop:achieve ultimate} 
Let $(P,\fm,\hat a)$ with $\hat a\in \hat H\setminus H$ be a slot in $H$ of positive order. If $(P, \fm, \hat a)$ is
normal or $\deg P=1$, then $(P, \fm, \hat a)$  has an ultimate refinement. Likewise, for any slot $(P,\fm,\hat a)$ with ${\hat a\in \hat K\setminus K}$ in $K$ of positive order. 
\end{prop}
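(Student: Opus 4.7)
The proof relies on the simplification \eqref{eq:ultimate normal linear}, which for normal or degree~$1$ slots reduces ultimateness to the single condition $\exc^{\operatorname{u}}(L_P)\cap v(\hat a-H)\le v\fm$. My goal is to produce a refinement $(P_{+a},\fn,\hat a-a)$ that still belongs to the same class (linear, resp.\ normal) and satisfies this simpler condition with $\fn$ in place of $\fm$. Two preliminary observations are crucial: (i) $\exc^{\operatorname{u}}(L_Q)$ is finite (of cardinality at most $\order Q\le r$) for every relevant $Q$; and (ii) the set $v(\hat a-H)$ has no largest element---if $a_0\in H$ attained $\max v(\hat a-H)$, then picking $\fn\in H^\times$ with $v\fn=v(\hat a-a_0)$ and lifting the residue of $(\hat a-a_0)/\fn$ (which lies in the common residue field of $H$ and $\hat H$) to $c\in H$ would yield $a_0+c\fn\in H$ with $v(\hat a-a_0-c\fn)>v(\hat a-a_0)$, contradicting maximality.

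\textbf{Degree~$1$ case.} Write $P=L(Y)+b$ with $L=L_P\in H[\der]$ and $b\in H$; then $P_{+a}(Y)=L(Y)+L(a)+b$, so $L_{P_{+a}}=L$ for every $a$, making the finite set $S:=\exc^{\operatorname{u}}(L)\cap v(\hat a-H)$ independent of $a$. Pick $\fn\in H^\times$ with $v\fn=\max(S\cup\{v\fm\})$, and then $a\in H$ with $v(\hat a-a)>v\fn$---possible by observation (ii). The triple $(P_{+a},\fn,\hat a-a)$ refines $(P,\fm,\hat a)$, is again of degree~$1$, and satisfies $\exc^{\operatorname{u}}(L_{P_{+a}})\cap v((\hat a-a)-H)=S\le v\fn$; by \eqref{eq:ultimate normal linear} it is ultimate.

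\textbf{Normal case.} Now $L_{P_{+a}}$ genuinely depends on $a$, and this is the main obstacle. The strategy is twofold: first, use the quantitative bound built into normality---$(P_{\times\fm})_{>1}\prec_{\Delta(\fv)}\fv^{w+1}(P_{\times\fm})_1$ with $\fv=\fv(L_{P_{\times\fm}})$---to show that for $a\prec\fm$ sufficiently close to $\hat a$ and $\fn\preceq\fm$ chosen accordingly with $\hat a-a\prec\fn$, the refinement $(P_{+a},\fn,\hat a-a)$ remains normal. Second, apply Lemma~\ref{lem:fv of perturbed op} to the small perturbation $L_{P_{+a}}-L_P$: the span stays $\sim$-equivalent and, via the construction of $\exc^{\operatorname{u}}$ through the operators $A_\lambda$ together with the cardinality bound $|\exc^{\operatorname{u}}(A)|\le\order A$, the intersection $\exc^{\operatorname{u}}(L_{P_{+a}})\cap v(\hat a-H)$ remains controlled by $\exc^{\operatorname{u}}(L_P)\cap v(\hat a-H)$ as $a$ varies over sufficiently close approximations. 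One then chooses $a$ and $\fn$ as in the linear case so that this set lies $\le v\fn$, and invokes \eqref{eq:ultimate normal linear} for the refinement. The $K$-variant follows verbatim, since all ingredients---the finiteness bound on $\exc^{\operatorname{u}}$, the non-maximality of $v(\hat a-K)$ in the immediate extension $\hat K/K$, and \eqref{eq:ultimate normal linear}---hold equally over $K=H[\imag]$.
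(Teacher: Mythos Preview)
Your degree~$1$ argument is correct and clean: since $L_{P_{+a}}=L_P$ for all $a$, the finite set $S=\exc^{\operatorname{u}}(L_P)\cap v(\hat a-H)$ is fixed, and choosing $v\fn=\max(S\cup\{v\fm\})$ together with any $a$ with $\hat a-a\prec\fn$ gives an ultimate refinement via \eqref{eq:ultimate normal linear}. This is essentially the argument.

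The normal case, however, has a genuine gap. You assert that ``the intersection $\exc^{\operatorname{u}}(L_{P_{+a}})\cap v(\hat a-H)$ remains controlled by $\exc^{\operatorname{u}}(L_P)\cap v(\hat a-H)$ as $a$ varies,'' but the only tool you invoke is Lemma~\ref{lem:fv of perturbed op}, which concerns the span $\fv$ and the dominant weight/multiplicity of a linear operator under small additive perturbations---it says nothing about how the set $\exc^{\operatorname{u}}$ of ultimate exceptional values moves. The finiteness bound $|\exc^{\operatorname{u}}(A)|\le\order A$ likewise does not prevent these finitely many values from shifting as $a$ changes. What is actually needed is the stability statement that underlies \eqref{eq:ultimate normal linear} itself: for a \emph{normal} slot, $\exc^{\operatorname{u}}(L_{P_{+a}})\cap v(\hat a-H)$ is in fact independent of $a\prec\fm$ (this is precisely why the condition in \eqref{eq:ultimate normal linear} involves only $L_P$ rather than all $L_{P_{+a}}$). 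That stability is a separate result in \cite{ADH4} which you have neither proved nor cited; Lemma~\ref{lem:fv of perturbed op} is not a substitute. You also need that suitable refinements of a normal slot remain normal so that \eqref{eq:ultimate normal linear} can be applied to the refinement---this is \cite[Corollary~3.3.8]{ADH4}, again not derivable from what you have written. The paper itself does not reprove this proposition here but imports it wholesale from \cite[Propositions~4.4.14, 4.4.18]{ADH4}; your sketch does not fill in the missing pieces.
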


\subsection*{Repulsive splitting}
In this subsection $f$ ranges over $K$ and $\fm$   over $H^\times$.  

\begin{definition}\label{def:repulsive}
We say that $f$ is {\it attractive}\/ if~$\Re f\succeq 1$ and $\Re f<0$, and {\it repulsive}\/ if~$\Re f\succeq 1$ and $\Re f>0$.
Given $\gamma>0$ we also say that $f$ is {\it $\gamma$-repulsive}\/ if $\Re f>0$ or $\Re f\succ\fm^\dagger$ for all
$\fm$ with $\gamma=v\fm$.
\end{definition}

\noindent
The following is \cite[Corollary~4.5.5]{ADH4}:

\begin{lemma}\label{lem:repulsive}
Suppose $f$ is $\gamma$-repulsive where $\gamma=v\fm>0$,  and $\Re f\succeq 1$. Then~$f$ is repulsive iff $f-\fm^\dagger$ is repulsive, and $f$ is attractive iff $f-\fm^\dagger$ is attractive.
\end{lemma}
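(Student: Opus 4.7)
The plan is to carry out a dichotomy based on whether $\Re f$ dominates $\mu:=\fm^\dagger$ or not, combined with a preliminary sign observation. The key preliminary fact I would establish is that $\mu<0$: since $v\fm=\gamma>0$ means $\fm\prec 1$, the element $\fm^{-1}$ is positive-infinite modulo sign, so by the $H$-field axiom $(\fm^{-1})'>0$ and hence $\mu=\fm^\dagger=-(\fm^{-1})^\dagger<0$. Note also that $\Re f\succeq 1$ excludes $\Re f=0$, so $f$ is exactly one of repulsive or attractive according to the sign of $\Re f$, and similarly for $f-\fm^\dagger$ once we know $\Re(f-\fm^\dagger)\succeq 1$.

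In the first case, $\Re f\succ\mu$, the term $\mu$ is negligible in the difference $\Re(f-\fm^\dagger)=\Re f-\mu$, so this difference is asymptotic to $\Re f$ in both magnitude and sign; hence $\Re(f-\fm^\dagger)\succeq 1$ and has the same sign as $\Re f$. Both equivalences in the conclusion follow at once.

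In the second case, $\Re f\not\succ\mu$, the $\gamma$-repulsivity hypothesis (applied to our given $\fm$) forces $\Re f>0$, so $f$ is repulsive and not attractive. I would then argue that $f-\fm^\dagger$ is also repulsive: since $\mu<0$, the expression $\Re f-\mu=\Re f+(-\mu)$ is a sum of two positive elements of $H$, so no cancellation is possible and $v(\Re f-\mu)=\min(v(\Re f),v(\mu))\leq 0$. Thus $\Re(f-\fm^\dagger)>0$ and $\Re(f-\fm^\dagger)\succeq 1$, so $f-\fm^\dagger$ is repulsive and not attractive, and both equivalences hold trivially.

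The main obstacle is conceptual rather than computational: one must appreciate that the definition of $\gamma$-repulsivity is tailor-made to rule out the one dangerous configuration, in which $\Re f$ is comparable to $\fm^\dagger$ but has the ``wrong'' sign (namely $\Re f<0$). In such a configuration the subtraction of $\fm^\dagger$ could flip the sign of the real part and turn an attractive $f$ into a repulsive $f-\fm^\dagger$. The $\gamma$-repulsivity condition precisely excludes this by requiring $\Re f>0$ whenever $\Re f\not\succ\fm^\dagger$; coupled with the $H$-field sign fact $\mu<0$, this ensures that the only way $\Re f-\mu$ can escape being asymptotic to $\Re f$ is a reinforcing sum of two positive terms, which preserves both sign and the $\succeq 1$ condition.
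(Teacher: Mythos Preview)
Your argument is correct. The paper does not prove this lemma itself but simply cites it as \cite[Corollary~4.5.5]{ADH4}, so there is no in-paper proof to compare against; your direct case split on whether $\Re f\succ\fm^\dagger$ is the natural approach and is almost certainly what lies behind the cited result. One cosmetic point: your derivation of $\fm^\dagger<0$ tacitly assumes $\fm>0$ when invoking $(\fm^{-1})'>0$; to make it airtight just observe $\fm^\dagger=\lvert\fm\rvert^\dagger$ and run the $H$-field sign argument with $\lvert\fm\rvert$.
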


\noindent
Given $\hat a\in \hat H\setminus H$, we also say that $f$ is {\it $\hat a$-repulsive}\/ if it is $\gamma$-repulsive for each~$\gamma\in v(\hat a-H)\cap\Gamma^>$, that is,   $\Re f>0$ or $\Re f \succ \fm^\dagger$ for all~$a\in H$  and $\fm$
with~$\fm\asymp\hat a-a\prec 1$. 
Let also $A\in K[\der]^{\neq}$ have order $r\ge 1$.  A  splitting 
 $(g_1,\dots,g_r)$  of $A$ over $K$ is
{\it $\hat a$-repulsive}\/ if $g_1,\dots,g_r$ are $\hat a$-repulsive. If there is an  $\hat a$-repulsive splitting of $A$ over $K$, then $A$ is said to
  {\it split  $\hat a$-repulsively}\/ over $K$.
See \cite[Section~4.5]{ADH4} for more about these notions,
and Sections~\ref{sec:IHF} and~\ref{sec:weights} below for their analytic significance.

\subsection*{Split-normal and repulsive-normal slots}
Let $(P, \fm, \hat a)$ be a steep slot in $H$ of order $r\ge 1$  with~$\hat a\in \hat H\setminus H$. Set~$L:=L_{P_{\times \fm}}$,  $\fv:=\fv(L)$, and $w:=\wt(P)$. Note that for~$Q\in K\{Y\}$ we have $Q_{\ge 1}=Q-Q(0)$. 
We say that~$(P,\fm,\hat a)$ is
\begin{enumerate}
\item {\it split-normal} if  $(P_{\times\fm})_{\geq 1}=Q+R$ where $Q, R\in H\{Y\}$, $Q$ is homogeneous of degree~$1$ and order~$r$,  $L_Q$ splits over $K$, and $R\prec_{\Delta(\fv)} \fv^{w+1} (P_{\times\fm})_1$;
\item
 {\it strongly split-normal}
 if   
$P_{\times\fm}=Q+R$, $Q,R\in H\{Y\}$, $Q$ homogeneous of degree~$1$ and order~$r$, 
$L_Q$   splits strongly over $K$, and $R\prec_{\Delta(\fv)} \fv^{w+1} (P_{\times\fm})_1$;
\item {\it repulsive-normal} if (1) holds with
``splits $\hat a/\fm$-repulsively over $K$'' in place of ``splits over $K$''; and
\item  {\it strongly repulsive-normal} if  (2) holds with ``has a strong $\hat a/\fm$-repulsive splitting over $K$'' in place of
``splits strongly over $K$''.
\end{enumerate}
The following diagram summarizes some dependencies among these properties:
$$\xymatrix@L=6pt{	\parbox{11em}{strongly repulsive-normal} \ar@{=>}[r] \ar@{=>}[d]&      \parbox{9.25em}{strongly split-normal} \ar@{=>}[d]   \ar@{=>}[r] &  \parbox{8em}{strictly normal} \ar@{=>}[d] \\
\parbox{7.5em}{repulsive-normal} \ar@{=>}[r] &   \parbox{5.5em}{split-normal} \ar@{=>}[r]  & 
\parbox{4em}{normal}  }  $$
Here is the Normalization Theorem from \cite{ADH4} that is crucial in Section~\ref{sec:d-alg extensions}:

\begin{theorem} \label{c4543} 
Suppose $H$ is
$\upo$-free,  Liouville closed, with archimedean  ordered constant field $C$, and $1$-linearly newtonian algebraic closure $K= H [\imag]$. If $H$ is not newtonian, then
for some $Z$-minimal special hole $(Q, 1, \hat b)$ in~$H$ with $\order Q\geq 1$ and some active~$\phi > 0$ in~$H$ with $\phi \preceq 1$, the hole
$(Q^{\phi}, 1, \hat b)$ in $H^{\phi}$ is deep, strongly repulsive-normal,
and ultimate.
\end{theorem}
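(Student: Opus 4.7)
The plan is to obtain the required hole by starting with a hole of minimal complexity in $H$ and then concatenating the normalization procedures from \cite[Part~4]{ADH4} recalled above, in an order that preserves all prior gains. Since $H$ is $\upo$-free and not newtonian, the equivalence between newtonianity and absence of holes (\cite[Lemma~3.2.1]{ADH4}, recalled in the introduction) furnishes a hole in $H$. Among such, pick $(P,\fm,\hat a)$ of minimal complexity with $\hat a \in \hat H \setminus H$; it is automatically $Z$-minimal, has order $r := \order P \geq 1$, and $P$ is a minimal annihilator of $\hat a$ over $H$. Since complexity is preserved under refinement and under compositional conjugation by an active element, $Z$-minimality will persist through every later step.

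Next I would apply the strong-splitting normalization. If $\deg P > 1$, Lemma~\ref{lem:achieve strong splitting} yields a refinement $(P_{+a},\fn,\hat a-a)$ and an active $\phi > 0$ in $H$ with $\phi \preceq 1$ such that $(P^\phi_{+a},\fn,\hat a-a)$ is deep, strictly normal, and has linear part splitting strongly over $K^\phi$. In the remaining case $\deg P = 1$, minimality forces $r=1$, and Lemma~\ref{lem:achieve strong splitting, d=1} delivers the same conclusion, its hypotheses $\der H = H$ and $\I(K) \subseteq K^\dagger$ coming respectively from Liouville closedness of $H$ and from $K$ being $1$-linearly newtonian. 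In either case the refined slot is strongly split-normal, and deepness forces quasilinearity, so Lemma~\ref{lem:special dents} (applied in $K^\phi$, which inherits enough linear newtonianity from the hypothesis on $K$) certifies specialness.

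The decisive step is then to upgrade ``strongly split-normal'' to ``strongly repulsive-normal'', i.e.\ to force the strong splitting $(g_1,\dots,g_r)$ of the linear part over $K^\phi$ to be $(\hat a-a)/\fn$-repulsive. Using Lemma~\ref{lem:repulsive} iteratively, each factor $g_i$ whose real part is negative or fails repulsivity at a relevant $\gamma$ is replaced by $g_i - \fm_i^\dagger$ for a suitable $\fm_i \in H^\times$ with $v\fm_i \in v(\hat a-a-H)\cap \Gamma^>$. Each such swap is translated back into an additional refinement plus conjugation of the original slot and flips attractivity into repulsivity at $\gamma = v\fm_i$ while preserving the strongness estimate $\Re g_i \succeq \fv^\dagger$. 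I expect this to be the main obstacle: the ``strong'' and ``repulsive'' conditions on the splitting pull in opposite directions, and exactly here the archimedean hypothesis on $C$ enters, via the uniform-distribution theory underpinning ultimate exceptional values, to guarantee that only finitely many such swaps are needed and can be chosen compatibly.

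Finally, apply Proposition~\ref{prop:achieve ultimate} to refine once more into an ultimate slot. Since ultimateness is preserved under further refinement, as are deepness, strong repulsive-normality, $Z$-minimality, and specialness (by the remarks accompanying those definitions in \cite[Part~4]{ADH4}), the concatenation of all these operations produces the required object: after multiplicative conjugation to absorb scalars so that $\fm=1$, we obtain a $Z$-minimal special hole $(Q,1,\hat b)$ in $H$ such that $(Q^\phi,1,\hat b)$ in $H^\phi$ is deep, strongly repulsive-normal, and ultimate.
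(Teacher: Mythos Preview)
The paper does not prove this theorem here; it is imported wholesale from \cite{ADH4} (see the sentence immediately preceding the theorem). However, the paper's introduction sketches the argument from \cite{ADH4} in the subsection \emph{Synopsis of the proof of our main theorem}, and your proposal diverges from that sketch at the first step in a way that creates a genuine gap.

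You begin with a minimal hole $(P,\fm,\hat a)$ in $H$ and then invoke Lemmas~\ref{lem:achieve strong splitting} and~\ref{lem:achieve strong splitting, d=1}. But those lemmas, as stated in this paper (and as Lemmas~4.3.31--4.3.32 in \cite{ADH4}), apply to a minimal hole in $K=H[\imag]$, not in $H$; they produce refinements whose linear part splits strongly over $K^\phi$, not slots in $H^\phi$. The obstacle the paper's synopsis emphasizes is exactly this: starting from a minimal hole in $H$ of order $r$, the linear part $L_P$ need not split over $K$, because minimality in $H$ only yields $(r-1)$-newtonianity of $H$, not $r$-linear closedness of $K$. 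The route taken in \cite{ADH4} therefore first passes to a minimal hole in $K$, extracts the real part $\hat g$ of its solution, and then normalizes a $Z$-minimal slot for $\hat g$ in $H$; the splitting of $L_{Q_{+\hat g}}$ over $\hat K$ is obtained from the observation recorded at [ADH,~5.1.37], and approximation via $\Delta$-specializations (using specialness) transfers this to a near-splitting over $K$. Your proposal bypasses this entire mechanism.

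Your ``decisive step'' upgrading strong split-normality to strong repulsive-normality is also not a proof. Lemma~\ref{lem:repulsive} tells you that repulsivity of $f$ and of $f-\fm^\dagger$ agree when $f$ is $\gamma$-repulsive for $\gamma=v\fm$; it does not let you \emph{manufacture} repulsivity by swapping $g_i\mapsto g_i-\fm_i^\dagger$, and such a swap would alter the operator $A$ rather than merely refine the slot. The actual passage in \cite{ADH4} (Corollaries~4.5.39--4.5.42 there) involves a separate construction that you have not reproduced.
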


\section{Hardy Fields}\label{sec:hardy}

\noindent
In the rest of this paper we  freely use the notations about  functions and germs introduced in~\cite[Sections~2 and~3]{ADH5}. We recall some of it in this section, before stating   basic extension theorems about Hardy fields needed later.
We finish this section with a general fact about bounding  the derivatives of solutions to linear differential equations,
based on~\cite{Esc, Landau}.

\subsection*{Functions and germs}
Let $a$ range over $\R$ and $r$ over $\N\cup\{\omega,\infty\}$. Then
 $\c_a^r$ denotes the  $\R$-algebra  of functions $[a,+\infty)\to\R$ which extend to a $\c^r$-function~$U\to\R$ for some open $U\supseteq [a,+\infty)$. Here, as usual,~$\c^\omega$  means ``analytic''. Hence
 $$ \c_a\ :=\ \c_a^0 \ \supseteq \c_a^1 \ \supseteq\ \c_a^2  \ \supseteq\  \cdots  \ \supseteq\ \c_a^\infty  \ \supseteq\ \c_a^\omega.$$
Each subring $\c_a^r$ of $\c$ has its complexification $\c_a^r[\imag]=\c_a^r+\c_a^r\imag$, a subalgebra of the $\C$-algebra~$\c_a[\imag]$
of continuous functions $[a,+\infty)\to\C$. For $f\in \c_a[\imag]$ we have 
$$\bar{f}\ :=\ \Re f - \imag \Im f\ \in\  \c_a[\imag],\qquad |f|\ :=\ \sqrt{(\Re f)^2+(\Im f)^2}\ \in\ \c_a.$$  Let $\c^r$ be the $\R$-algebra of germs at $+\infty$ of functions in $\bigcup_a\c_a^r$. Thus~$\c:=\c^0$ consists of the germs
at $+\infty$ of continuous $\R$-valued functions on in\-ter\-vals~$[a,+\infty)$, $a\in\R$, and
 $$ \c\ =\ \c^0 \ \supseteq \c^1 \ \supseteq\ \c^2  \ \supseteq\  \cdots  \ \supseteq\ \c^\infty  \ \supseteq\ \c^\omega.$$
For each $r$ we also have the  $\C$-subalgebra  $\Gr[\imag]=\Gr+\Gr\imag$ of $\c[\imag]$.
For~$n\ge 1$ we have a derivation $g\mapsto g'\colon \Gn[\imag]\to\c^{n-1}[\imag]$
such that $\text{(germ of $f$)}'=\text{(germ of $f'$)}$ for $f\in\bigcup_a \Can[\imag]$, and
this derivation restricts to a derivation  $\Gn\to\c^{n-1}$.  
Therefore~$\Gi[\imag]  := \bigcap_{n}\, \Gn[\imag]$ 
is naturally a differential ring with ring of constants $\C$, and
$\Gi  := \bigcap_{n}\, \Gn$
is a differential subring of $\Gi[\imag]$ with ring of constants $\R$.
Note that~$\Gi[\imag]$ has~$\Ginf[\imag]$ as a differential subring, $\Gi$ has $\Ginf$ as a differential subring,
and the differential ring $\Ginf$ has in turn
the differential subring~$\Gom$.

\subsection*{Asymptotic relations} We often use the same notation for a $\C$-valued function 
on a subset of~$\R$ containing an interval $(a, +\infty)$, $a\in \R$,  as for its germ if the resulting ambiguity is harmless.
With this convention, given a property~$P$ of complex numbers
and $g\in \c[\imag]$ we say that {\em $P\big(g(t)\big)$ holds eventually\/} if~$P\big(g(t)\big)$ holds for all sufficiently large real $t$.
We equip $\c$ with the partial ordering given by~$f\leq g:\Leftrightarrow f(t)\leq g(t)$, eventually,
and equip $\c[\imag]$ with the asymptotic relations~$\preceq$,~$\prec$,~$\sim$ defined as follows: for $f,g\in \c[\imag]$,
\begin{align*} f\preceq g\quad &:\Longleftrightarrow\quad \text{there exists $c\in \R^{>}$ such that $|f|\le c|g|$,}\\
f\prec g\quad &:\Longleftrightarrow\quad \text{$g\in \c[\imag]^\times$ and $\lim_{t\to \infty} f(t)/g(t)=0$} \\
 &\phantom{:} \Longleftrightarrow\quad \text{$g\in \c[\imag]^\times$ and $\abs{f}\leq c\abs{g}$ for all $c\in\R^>$},\\
f\sim g\quad &:\Longleftrightarrow\quad \text{$g\in \c[\imag]^\times$ and
$\lim_{t\to \infty} f(t)/g(t)=1$}\\ 
\quad&\phantom{:} \Longleftrightarrow\quad f-g\prec g.
\end{align*}
We also use these notations for   functions in $\c_a$ ($a\in\R$); for example, for $f\in\c_a$ and $g\in \c_b$~($a,b\in \R$), $f\preceq g$ means: $\text{(germ of $f$)}\preceq \text{(germ of $g$)}$. 

Let now $H$ be a {\it Hausdorff field}\/, that is,
a subring of $\mathcal C$ that happens to be a field. 
Then the restriction of the partial ordering of $\c$ to $H$ is total and makes $H$ into an ordered field.
The ordered field $H$ has a convex subring 
$\mathcal{O}:=\{f\in H:f\preceq 1\}$,
which is a valuation ring of $H$, and we
consider $H$ accordingly as a valued ordered field. 
Moreover, $H[\imag]$ is a subfield of $\c[\imag]$, and
$\mathcal{O}+\mathcal{O}\imag = \big\{f\in H[\imag]: f\preceq 1\big\}$ is the unique valuation ring of $H[\imag]$ whose intersection with $H$ is $\mathcal{O}$. In this way we consider
$H[\imag]$ as a valued field extension of $H$. 
The asymptotic relations~$\preceq$,~$\prec$,~$\sim$ on $\c[\imag]$
restricted to~$H[\imag]$ are exactly the asymptotic relations $\preceq$, $\prec$, $\sim$ on $H[\imag]$ that~$H[\imag]$ has as a valued field (cf. [ADH, (3.1.1)]; likewise with $H$ in place of $H[\imag]$.

\subsection*{Hardy fields}
In this subsection $H$ is a   {\it Hardy field}\/:  a differential subfield of~$\Gi$. 
A germ $y\in\c$ is said to be {\it hardian} if it lies in a Hardy field, and {\it $H$-har\-dian} (or {\it hardian over $H$}) if it lies in a Hardy field extension
of~$H$. (Thus $y$ is hardian iff~$y$ is $\Q$-hardian.)
Every Hardy field is a Hausdorff field, and
we consider~$H$  as an ordered valued differential field with the ordering and valuation on~$H$ as above.
Hardy fields are pre-$H$-fields, and $H$-fields if they contain~$\R$.
We also equip the differential subfield $H[\imag]$  of~$\Calinf[\imag]$ with the unique valuation ring 
lying over that of~$H$. Then $H[\imag]$ is a pre-$\d$-valued field of $H$-type with small derivation, and
if~$H\supseteq\R$, then $H[\imag]$ is $\d$-valued   with constant field $\C$.

Recall that $H$ is said to be {\it maximal}\/ if it has no proper Hardy field extension, and that every Hardy field has a 
maximal Hardy field extension. Due to our focus on $\d$-algebraic Hardy field extensions
in this paper, a weaker condition is often more natural for our purposes: we say that $H$ is
{\it $\d$-maximal}\/ if 
it has no proper $\d$-algebraic Hardy field extension.   Zorn yields a
$\d$-maximal $\d$-algebraic Hardy field extension of $H$, hence the intersection $\Dx(H)$ of all $\d$-maximal Hardy fields containing $H$  is a $\d$-algebraic Hardy field extension of~$H$, called the {\it $\d$-perfect hull}\/ of $H$. 
If $\Dx(H)=H$, then we  say that $H$ is  {\it $\d$-perfect}.
So  
$$\text{maximal}\quad \Longrightarrow\quad\text{$\d$-maximal}\quad\Longrightarrow\quad\text{$\d$-perfect.}$$
The following fact  summarizes some well-known  extension theorems from \cite{Boshernitzan81,Bou,Robinson72,Ros,Sj} (see also \cite[Proposition~4.2]{ADH5}):

\begin{prop}\label{prop:Li(H(R))}
If $H$ is $\d$-perfect, then $H$   is real closed with $H\supseteq\R$,
and   for each $f\in H$ we have $\exp f\in H$ and $g'=f$ for some $g\in H$.
\end{prop}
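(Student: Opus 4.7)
The plan is to verify each of the three conclusions by appealing to the classical Hardy-field extension theorems referenced just before the statement, and in each case observing that the extension produced is $\d$-algebraic, so that the hypothesis $\Dx(H)=H$ forces the new element to lie in $H$ already.

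First, for $\R\subseteq H$: for each $r\in\R$ the germ of the constant function with value $r$ satisfies $Y'=0$, hence is $\d$-algebraic over $H$; and as recalled in the introduction $H(\R)$ is a Hardy field extension of $H$. Therefore $H(\R)\subseteq\Dx(H)=H$, so $\R\subseteq H$.

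Next, real closedness: by classical results of Robinson~\cite{Robinson72} and Sj\"odin~\cite{Sj}, adjoining to $H$ any element of $\Gi$ that is algebraic over $H$ again produces a Hardy field; iterating this construction yields a real closed Hardy field extension of $H$ inside $\Gi$ that is algebraic, and so in particular $\d$-algebraic, over $H$. Hence this extension is contained in $\Dx(H)=H$, and $H$ is already real closed. For the last two properties, given $f\in H$ let $y:=\exp f\in\Gom$ and pick any germ $g\in\Gi$ with $g'=f$ (integrating a representative of $f$). Then $y$ satisfies the first-order linear equation $Y'=f'Y$ over $H$ and $g$ satisfies $Y'=f$ over $H$, so both are $\d$-algebraic over $H$. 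By the extension theorems of Bourbaki~\cite{Bou}, Rosenlicht~\cite{Ros}, and Boshernitzan~\cite{Boshernitzan81} (see also \cite[Proposition~4.2]{ADH5}), the differential fields $H\langle y\rangle$ and $H\langle g\rangle$ are Hardy field extensions of $H$. $\d$-perfection now gives $y,g\in H$, as desired.

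In short, the proposition is a formal consequence of the cited extension theorems together with the definition of the $\d$-perfect hull. There is no serious obstacle to overcome here: the only substantive content is contained in the classical theorems themselves, which ensure that adjoining constants, real-algebraic elements, exponentials, and antiderivatives to a Hardy field always produces a Hardy field. Once these are granted, the $\d$-algebraicity of the new generators combined with $\Dx(H)=H$ yields each of the three conclusions at once.
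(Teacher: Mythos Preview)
Your proof is correct and follows exactly the approach the paper has in mind: the paper does not give its own proof of this proposition but simply presents it as a summary of the classical extension theorems of Bourbaki, Robinson, Rosenlicht, Sj\"odin, and Boshernitzan (with a pointer to \cite[Proposition~4.2]{ADH5}), and your argument spells out precisely how those theorems combine with the definition of $\d$-perfect to yield the stated conclusions.
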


\noindent
Hence every $\d$-perfect Hardy field  is a Liouville closed $H$-field with constant field~$\R$.  For the following result, see \cite[Corollary~6.12]{ADH5}:

\begin{prop}\label{prop:I(K)}
If $H$ is $\d$-perfect, then  $K=H[\imag]$ satisfies $\I(K)\subseteq K^\dagger$.
\end{prop}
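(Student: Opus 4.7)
The plan is to verify the equivalent closure condition that for every $f\in K$ with $f\prec 1$, we have $e^f\in K^\times$; this suffices because $(e^f)^\dagger=f'$ and the derivatives of such $f$ exhaust $\I(K)$. So fix $f\in K$ with $f\prec 1$ and decompose $f=u+v\imag$ with $u,v\in H$. From $|f|\prec 1$ in $K$ we obtain $u,v\prec 1$ in $H$, so both $u$ and $v$ are hardian germs tending to $0$. By Proposition~\ref{prop:Li(H(R))}, $H$ is Liouville closed with constant field $\R$, hence $e^u\in H^{>}$. Since $e^f=e^u(\cos v+\imag\sin v)$, the task reduces to showing that the real-analytic germs $c:=\cos v$ and $s:=\sin v$ both lie in $H$.

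These germs satisfy the algebraic-differential system $c^2+s^2=1$, $c'=-v's$, $s'=v'c$ over $H$. If $v'=0$ eventually, then $v\in C_H=\R$ combined with $v\prec 1$ forces $v=0$, so trivially $c=1,s=0\in H$. Otherwise $v'\ne 0$ eventually, $s=-c'/v'\in H(c)$, and $c$ satisfies the algebraic equation $c^2+(c'/v')^2=1$ over $H$; hence $H(c,s)=H(c)$ is a finite algebraic extension of $H$ inside $\c^\omega$. Since $v\to 0$ we have $c\to 1$ and $s\to 0$, so no element of $H(c,s)$ oscillates (polynomial expressions in $c-1,s$ reduce asymptotically to polynomial expressions in $v$, which has definite eventual sign). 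By the classical Bourbaki theorem on algebraic extensions of Hardy fields, $H(c,s)$ is thus a Hardy field, $\d$-algebraic over $H$. I would then deduce $c,s\in H$ by exploiting $\d$-perfection as follows: for any $\d$-maximal Hardy field extension $H^*$ of $H$, the same argument shows $H^*(c,s)$ is a Hardy field, $\d$-algebraic over $H^*$, and so equals $H^*$ by maximality; hence $c,s\in\bigcap_{H^*}H^*=\Dx(H)=H$. Therefore $e^f=e^u\cdot(c+s\imag)\in K^\times$ satisfies $(e^f)^\dagger=f'=a$, as needed.

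The main obstacle is the final collapse step: $\d$-perfectness as defined only says $H$ equals the intersection of all $\d$-maximal extensions, while bringing the specific germs $c,s$ into $H$ requires them to lie in \emph{every} such extension simultaneously. This in turn hinges on verifying that the compositum $H^*(c,s)$ always remains a Hardy field, a nontrivial point that depends on the uniqueness of $(c,s)$ among hardian solutions of the above differential system under the asymptotic initial conditions $c\to 1$, $s\to 0$, combined with the Hardy-field extension machinery of \cite{ADH5}. Once this is handled, $\d$-maximality of each $H^*$ forces $c,s\in H^*$ and the proof concludes.
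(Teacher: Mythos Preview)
Your overall strategy—reduce $\I(K)\subseteq K^\dagger$ to showing $e^f\in K^\times$ for $f\prec 1$ in $K$, then to $\cos v,\sin v\in H$ for $v\in H$ with $v\prec 1$, and finally descend via $\d$-perfectness—is correct and matches what underlies \cite[Corollary~6.12]{ADH5}, which the paper simply cites. The descent step you flag at the end is fine once the earlier parts are in place.

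The genuine gap is your justification that $H(c,s)$ is a Hardy field. You write that ``$c$ satisfies the algebraic equation $c^2+(c'/v')^2=1$ over $H$; hence $H(c,s)=H(c)$ is a finite algebraic extension of $H$,'' and then invoke the theorem on algebraic Hardy field extensions. But $c^2+(c'/v')^2=1$ involves $c'$: it is a first-order \emph{differential} equation, not a polynomial equation in $c$ over $H$. It does not show that $c$ is algebraic over $H$ in the field-theoretic sense (and since $H$ is real closed, genuine algebraicity would already give $c\in H$ outright). Likewise $s=-c'/v'$ lies in the differential field $H\langle c\rangle$, not a priori in $H(c)$. So the cited theorem does not apply, and the asymptotic remark about polynomial expressions in $c-1,s$ reducing to expressions in $v$ is not a substitute: it does not rule out oscillation of nonzero elements of $H[c,s]$ whose leading Taylor terms cancel. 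What is actually needed---that for $\phi\in H$ with $\phi\preceq 1$ the germs $\cos\phi,\sin\phi$ are hardian over $H$ and generate a $\d$-algebraic Hardy field extension---is the substantive content supplied by \cite{ADH5}. You acknowledge this difficulty in your last paragraph for the passage to $H^*$, but it is already the crux for $H$ itself, and it is not resolved by an algebraicity argument.
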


\noindent
In  \cite{ADH6} we    give an example of a $\d$-perfect $H$ which is not $\upo$-free. 
However, if~$H$ is $\d$-maximal, then $H$ is $\upo$-free; this is a consequence of the following   result from~\cite{ADH5}:

\begin{theorem}\label{upo} Every Hardy field has a $\d$-algebraic $\upo$-free Hardy field extension.
\end{theorem}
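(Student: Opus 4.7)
The plan is to iterate a one-step extension until $\upo$-freeness is achieved. Concretely, I aim to establish the following key step: every Hardy field $E$ which is not $\upo$-free admits a proper $\d$-algebraic Hardy field extension. Granting this, Zorn's lemma applied to the class of $\d$-algebraic Hardy field extensions of $H$, ordered by inclusion (and bounded in cardinality within some fixed maximal Hardy field extension of $H$), produces a $\d$-algebraic extension $E\supseteq H$ to which the one-step procedure no longer applies; such $E$ must then be $\upo$-free.

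To carry out the one-step extension, I would translate the failure of $\upo$-freeness into a concrete differential equation. By the characterization in [ADH, Chapter~11], $E$ fails to be $\upo$-free precisely when some pseudoconvergent sequence $(a_\rho)$ in $E$, without pseudolimit in $E$, admits as pseudolimit in an immediate asymptotic extension a nonzero $\hat y$ satisfying a second-order homogeneous linear differential equation $A(Y)=0$ with $A\in E[\der]$ monic of order~$2$. The Riccati substitution $z=-y^\dagger$ converts $A(Y)=0$ into a first-order algebraic differential equation $\operatorname{Ri}(A)(z)=0$, quadratic in $z$, with coefficients in $E$. The task reduces to realizing a zero of $\operatorname{Ri}(A)$ in a Hardy field extension of $E$.

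Standard local ODE theory provides a $\c^\infty$-germ $z$ satisfying $\operatorname{Ri}(A)(z)=0$ on some half-line $[a,+\infty)$, with branch selected by matching the asymptotic data of the pseudoconvergent sequence $(a_\rho)$ and hence of $\hat y$. The analytic heart of the matter is then to show that $E\langle z\rangle$ is a Hardy field, i.e., that $Q(z)$ is non-oscillating for every $Q\in E\{Y\}^{\ne}$. I would handle this in the Rosenlicht--Boshernitzan style: supposing $Q$ of minimal complexity with $Q(z)$ oscillating, reduction modulo $\operatorname{Ri}(A)$ combined with the monotonicity forced by the Riccati structure should yield a lower-complexity oscillating witness, contradicting minimality.

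The main obstacle is precisely this non-oscillation analysis. Distinct branches of Riccati solutions may cross each other on any half-line, so the correct branch must be continually tracked via the valuation-theoretic data of $(a_\rho)$; reconciling the analytic branch-selection with the formal pseudolimit $\hat y$ is the most delicate point, since it requires passing between the analytic world of germs and the algebraic world of pseudoconvergent sequences. The verification that $z$ is $\d$-algebraic over $E$ is immediate from $\operatorname{Ri}(A)(z)=0$, so once $E\langle z\rangle$ is a Hardy field, it is in fact a proper $\d$-algebraic Hardy field extension of $E$, closing the induction.
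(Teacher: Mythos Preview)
The paper does not prove this theorem; it is imported wholesale from the companion paper~\cite{ADH5}, which is devoted entirely to establishing it. So there is no in-paper proof to compare against, only the reference.

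Your outline has the right overall architecture---Zorn plus a one-step proper $\d$-algebraic extension whenever $\upo$-freeness fails---but the proposal has a genuine gap at precisely the point you flag as ``the main obstacle.'' The non-oscillation analysis, i.e., showing that $E\langle z\rangle$ is actually a Hardy field, is the entire substance of the theorem, and you have not carried it out. Saying you would handle it ``in the Rosenlicht--Boshernitzan style'' via a minimal-complexity descent is not a proof: those techniques succeed for order-$1$ equations and certain tame order-$2$ situations, but the general Riccati case associated with the failure of $\upo$-freeness is genuinely harder, which is why~\cite{ADH5} is a separate paper rather than a lemma here. Your own proposal acknowledges that ``reconciling the analytic branch-selection with the formal pseudolimit $\hat y$ is the most delicate point''; that delicacy is not resolved by the sketch.

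There is also an imprecision in your characterization of the failure of $\upo$-freeness. The standard formulation in~[ADH, 11.7] is in terms of the function $\omega(z)=-(2z'+z^2)$ and whether the pc-sequence $(\upo_\rho)$ built from a logarithmic sequence has a pseudolimit of the form $\omega(g)$ with $g\in E$; the second-order equation that arises is the specific one $4y''+fy=0$ with $f$ a pseudolimit of $(\upo_\rho)$, not an arbitrary monic $A\in E[\der]$ of order~$2$. The specific shape matters, because it is what gives the analytic construction in~\cite{ADH5} enough traction.
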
 

\noindent
Recall from the introduction that $H$ is called a {\it $\Ginf$-Hardy field,}\/  if $H\subseteq\Ginf$, and that  a $\Ginf$-Hardy field is called {\it $\Ginf$-maximal}\/ if it has no proper $\Ginf$-Hardy field extension; likewise we defined $\Gom$-Hardy fields and $\Gom$-maximal Hardy fields.
Theorem~\ref{upo} also holds  with ``Hardy field'' replaced by ``$\Ginf$-Hardy field'',  as well as by
``$\Gom$-Hardy field''. This follows from its proof in \cite{ADH5}, and also from general results about
smoothness of solutions to algebraic differential equations over $H$ in 
Section~\ref{sec:smoothness} below.  By these results
every $\Ginf$-maximal Hardy field  is $\d$-maximal, so
if~${H\subseteq\Ginf}$, then~$\Dx(H)\subseteq\Ginf$;
likewise with $\Gom$ in place of $\Ginf$.  (See Corollary~\ref{cor:Hardy field ext smooth}.)

\subsection*{Bounding solutions of linear differential equations} Let   $r\in\N^{\geq 1}$, and with~$\i$ ranging over~$\N^{r}$, let  
$$P\  =\ P(Y,Y',\dots,Y^{(r-1)})\ =\ \sum_{\dabs{\i}<r} P_{\i}Y^{\i}\ \in\ \c[\imag]\big[Y,Y',\dots,Y^{(r-1)}\big]$$ 
with $P_{\i}\in\c[\imag]$ for all $\i$ with $\dabs{\i}<r$, and $P_{\i}\ne 0$ for only finitely many such $\i$.
Then~$P$ gives rise to an   evaluation map
$$y  \mapsto P\big(y,y',\dots,y^{(r-1)}\big)\ :\  \mathcal{C}^{r-1}[\imag] \to \c[\imag].$$ 
Let $y\in\c^r[\imag]$ satisfy the differential equation
\begin{equation}\label{eq:Landau}
y^{(r)}\ =\ P\big(y,y',\dots,y^{(r-1)}\big).
\end{equation}
In addition, $\fm$ with $0<\fm \preceq 1$ is a hardian germ, and
$\eta\in\c$ is eventually increasing with~$\eta(t)>0$ eventually, and $n\geq r$.

\begin{prop}\label{prop:EL}
Suppose $P_{\i}\preceq\eta$ for all $\i$, $P(0)\preceq \eta\,\fm^n$, and $y\preceq \fm^n$. 
Then
$$y^{(j)}\ \preceq\ \eta^j\fm^{n-j(1+\varepsilon)}\qquad\text{  for $j=0,\dots,r$ and  all $\varepsilon\in\R^>$,}$$
with $\prec$ in place of $\preceq$ if $y\prec \fm^n$ and $P(0)\prec\eta\,\fm^n$.
\end{prop}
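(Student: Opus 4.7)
My plan is to combine a local Landau-Kolmogorov type interpolation inequality with a bootstrap driven by the differential equation~\eqref{eq:Landau}. The key analytic input is that for each $r\geq 1$ and $0\leq j\leq r$ there exists a constant $C_{r,j}>0$ such that, for every $f\in\c^r_a$, $t\geq a$, and $L>0$ with $[t,t+L]\subseteq[a,+\infty)$,
\[
\bigl|f^{(j)}(t)\bigr|\ \leq\ C_{r,j}\bigl(L^{-j}\|f\|_{[t,t+L]} \,+\, L^{r-j}\|f^{(r)}\|_{[t,t+L]}\bigr),
\]
where $\|g\|_I:=\sup_I|g|$. This follows from Taylor's theorem: express $f(t+iL/r)$ for $i=0,\ldots,r-1$ in terms of $f(t),\ldots,f^{(r-1)}(t)$ with Lagrange remainders involving $f^{(r)}$ on $[t,t+L]$, and invert the resulting scaled Vandermonde system.

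I would then fix $\varepsilon>0$ and work with representatives on a common interval $[a,+\infty)$ of the hardian germ $\fm$ (eventually monotone, as all hardian germs are) and $\eta$, applying the interpolation inequality on windows $[t,t+L(t)]$ with $L(t):=\fm(t)^{1+\varepsilon}/\eta(t)$---the scale that balances the two right-hand terms against the target bound $\eta^j\fm^{n-j(1+\varepsilon)}$. Since $\fm$ is hardian, its logarithmic derivative $\fm^\dagger$ is hardian too, and a short computation using $L(t)\fm^\dagger(t)\to 0$ yields $\|\fm\|_{[t,t+L(t)]}\asymp\fm(t)$, and similarly for $\eta$; pointwise $\preceq$-bounds on the germs involved thus pass freely to supremum bounds over these windows.

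The core implication is then the following. Assume inductively that $y^{(j)}\preceq \eta^j\fm^{n-j(1+\varepsilon)}$ holds for all $j<r$. For each monomial $P_{\i}Y^{\i}$ of $P$ with $\|\i\|<r$ the induction hypothesis gives $|P_{\i}y^{\i}|\preceq \eta^{1+\|\i\|}\fm^{n|\i|-(1+\varepsilon)\|\i\|}$, which is $\preceq\eta^r\fm^{n-r(1+\varepsilon)}$ because $\|\i\|<r$, $|\i|\geq 1$ in the non-constant part, $\fm\preceq 1$, and $\eta$ is eventually bounded below by a positive constant; the constant term $P(0)\preceq\eta\fm^n$ is absorbed the same way. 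Hence~\eqref{eq:Landau} yields $y^{(r)}\preceq\eta^r\fm^{n-r(1+\varepsilon)}$, and plugging $y\preceq\fm^n$ and this back into the interpolation inequality at scale $L(t)\asymp\fm^{1+\varepsilon}/\eta$ recovers $y^{(j)}\preceq\eta^j\fm^{n-j(1+\varepsilon)}$ for all $0\leq j\leq r$. To get the induction off the ground I would first apply the interpolation inequality at a fixed scale $L\asymp 1$ to produce crude bounds $y^{(j)}\preceq \eta^j\fm^{n-jN}$ with some large $N$ in place of $1+\varepsilon$, and then tighten $N$ down to $1+\varepsilon$ in finitely many passes through the same loop, each pass reducing $N$ geometrically. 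The strict form follows by reading $\prec$ in place of $\preceq$ wherever the hypotheses $y\prec\fm^n$ and $P(0)\prec\eta\fm^n$ enter, propagating strict dominance throughout.

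The main obstacle I expect is organizing the bootstrap precisely so that a single iteration reduces the loss parameter sufficiently: the naive pass only reproduces the current exponent, so the improvement has to be extracted from the slack in the monomial bounds (where $\|\i\|<r$ strictly, hence the exponent of $\fm$ strictly exceeds $n-r(1+\varepsilon')$ for any $\varepsilon'\geq\varepsilon$). The strictly positive loss $\varepsilon$ in the conclusion is forced because the interpolation inequality is not sharp uniformly in $L$---only at the balanced scale $L(t)=\fm(t)^{1+\varepsilon}/\eta(t)$ is the correct exponent recovered, and the local-to-pointwise transfer via this choice of $L(t)$ requires careful verification that $L(t)\to 0$ is consistent with $L(t)^{-j}$ not blowing up the first term of the interpolation inequality beyond the target.
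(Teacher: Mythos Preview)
Your approach via Landau--Kolmogorov interpolation on shrinking windows is genuinely different from the paper's. The paper does no iteration: it reduces to Landau's theorem (Theorem~\ref{thm:Landau}, the case $\fm\asymp 1$, which is cited) via the substitution $z := y\fm^{-n}$. Then $z \preceq 1$, and one checks that the transformed equation $z^{(r)} = Q(z,\dots,z^{(r-1)})$ has coefficients $Q_{\i} \preceq \eta\fm^{-\varepsilon}$; this bound on the multiplicatively conjugated coefficients, coming from [ADH,~6.1.4], is precisely where the $\varepsilon$ enters. Landau's theorem then gives $z^{(j)} \preceq (\eta\fm^{-\varepsilon})^j$, and the Product Rule (Lemma~\ref{lem:bd mult conj}) yields $y^{(j)} = (z\fm^n)^{(j)} \preceq (\eta\fm^{-\varepsilon})^j\fm^{n-j} = \eta^j\fm^{n-j(1+\varepsilon)}$. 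The $\varepsilon$-loss is thus explained transparently in one line.

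Your sketch has a real gap at the point you yourself flag as the main obstacle: the bootstrap has no starting point. Applying interpolation with $L \asymp 1$ gives $|y^{(j)}(t)| \lesssim \|y\|_I + \|y^{(r)}\|_I$, but there is no bound on $\|y^{(r)}\|_I$ yet; the equation expresses $y^{(r)}$ through $y,\dots,y^{(r-1)}$, which are themselves so far unbounded, and the system does not close. Breaking this circularity is exactly the content of Landau's original argument (a compactness or absorption step on finite intervals), which you would have to reproduce. Your ``core implication'' is only a self-consistency check---it shows the target bounds form a fixed point of the interpolation-plus-equation loop, not that this fixed point is attained. Once Landau's theorem is in hand, the paper's substitution finishes in a few lines; your route, even if completed, amounts to reproving Landau along the way.
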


\noindent
Proposition~\ref{prop:EL} for $\fm\asymp 1$ is covered by the following result:

\begin{theorem}[{Landau \cite{Landau}}]\label{thm:Landau}
Suppose $y\preceq 1$ and $P_{\i}\preceq\eta$ for all $\i$. Then $y^{(j)}\preceq \eta^j$ for $j=0,\dots,r$.
Moreover, if   $y\prec 1$, then
$y^{(j)} \prec \eta^j$ for $j=0,\dots,r-1$, and
if in addition $P(0)\prec\eta$, then  also
$y^{(r)} \prec \eta^r$.
\end{theorem}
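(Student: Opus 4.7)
The proof combines two classical ingredients. The first is a local form of the Landau--Kolmogorov interpolation inequality, which one derives from Taylor's formula with integral remainder: there is a constant $K_{r,j}$ such that for every $f\in\c^r$ defined on $[t_0,t_0+h]$ and every $j\in\{0,1,\dots,r\}$,
\[
|f^{(j)}(t_0)|\ \leq\ K_{r,j}\!\left(h^{-j}\sup_{[t_0,t_0+h]}|f|+h^{r-j}\sup_{[t_0,t_0+h]}|f^{(r)}|\right).
\]
The second is a consequence of the weighted-degree constraint $\dabs{\i}<r$ on the multi-indices appearing in $P$: whenever $y\preceq 1$ and $|y^{(j)}(t)|\leq C_j\eta(t)^j$ hold for all $j<r$, the differential equation yields
\[
|y^{(r)}(t)|\ \leq\ \sum_{\dabs{\i}<r}|P_{\i}(t)|\,|y(t)|^{i_0}\prod_{j\geq 1}|y^{(j)}(t)|^{i_j}\ \preceq\ \eta(t)\cdot\max_{\i}\eta(t)^{\dabs{\i}}\ \preceq\ \eta(t)^r,
\]
since $\dabs{\i}\leq r-1$ for every monomial of $P$.

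The plan is to combine these two ingredients in a bootstrap at each large~$t_0$. Apply the interpolation lemma on a short interval $[t_0,t_0+h]$ with $h$ proportional to $1/\eta(t_0)$: then the first term on the right becomes $K_{r,j}\,h^{-j}M_0\preceq\eta(t_0)^j$ of the desired size, and one is left to control $h^{r-j}\sup_{[t_0,t_0+h]}|y^{(r)}|$. The displayed bound on $|y^{(r)}|$ from the equation couples the suprema of the derivatives on this interval, yielding a system of inequalities for the quantities $V_j:=\sup_{[t_0,t_0+h]}|y^{(j)}|$; we expect this system to close by choosing the constant in $h=c/\eta(t_0)$ small enough that the self-dependent terms on the right absorb into the left-hand sides. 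The ``moreover'' statement about strict dominance $\prec$ should follow by tracking smallness quantitatively: if $y\prec 1$, the first term in the interpolation bound becomes $o(\eta^j)$; for $j=r$ one additionally uses $P(0)\prec\eta$ to upgrade the bound from the equation itself.

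The main obstacle is the potential rapid variation of $\eta$ on the short interval $[t_0,t_0+c/\eta(t_0)]$: a priori, $\sup_{[t_0,t_0+h]}\eta$ need not be comparable to $\eta(t_0)$, so the bound ``$|P_{\i}(s)|\preceq\eta(s)$'' does not immediately translate to a bound in terms of $\eta(t_0)$. We expect to resolve this by one of two routes: either adapt the interval length to the local growth, taking $h(t_0)$ maximal subject to $\sup_{[t_0,t_0+h]}\eta\leq 2\eta(t_0)$ and verifying that $h(t_0)\cdot\eta(t_0)$ stays bounded below eventually, or argue by contradiction: assuming the conclusion fails at some minimal index $j^*\in\{1,\dots,r\}$, extract a witnessing sequence $(t_n)$ with $|y^{(j^*)}(t_n)|/\eta(t_n)^{j^*}\to\infty$, and derive a contradiction by iterating the interpolation bound together with the differential equation, using at each step that every lower-order derivative is controlled by the inductive hypothesis.
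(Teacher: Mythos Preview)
The paper does not prove this theorem; it is stated with a citation to Landau~\cite{Landau} and then used as a black box in the proof of Proposition~\ref{prop:EL}. So there is no ``paper's own proof'' to compare against. That said, your proposed strategy is indeed the classical one (interpolation inequality on short intervals, combined with the differential equation and the weight constraint $\dabs{\i}<r$), and it is essentially how the result is obtained.

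However, the obstacle you flag is genuine, and neither of your proposed resolutions is adequate. Route~(a) fails: one can build a continuous strictly increasing $\eta$ that doubles on arbitrarily short intervals relative to $1/\eta$. For instance, let $\eta$ equal (approximately) $2^n$ on an interval of length~$1$ starting at $s_n$, then rise to $2^{n+1}$ on an interval of length $\epsilon_n:=1/(n\cdot 2^n)$; at the left endpoint $t_0$ of this rise, the maximal admissible $h(t_0)$ is about $\epsilon_n$, and $h(t_0)\eta(t_0)\approx 1/n\to 0$. Route~(b), as stated, is not an argument.

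The simple fix you are missing is to work on \emph{backward} intervals. Since $\eta$ is eventually increasing, take $I=[t_0-h,t_0]$ with $h=c/\eta(t_0)$; then $\sup_I\eta=\eta(t_0)$, and the bound $|P_{\i}(s)|\le C\eta(s)\le C\eta(t_0)$ is immediate on all of $I$. With this choice your bootstrap closes: writing $V_j:=\sup_I|y^{(j)}|$, the interpolation inequality gives $V_j\le K\big(c^{-j}\eta(t_0)^{j}V_0+c^{\,r-j}\eta(t_0)^{-(r-j)}V_r\big)$, and the differential equation together with $y\preceq 1$ and $\dabs{\i}\le r-1$ gives $V_r\le C\eta(t_0)\cdot(\text{polynomial in }V_1,\dots,V_{r-1})$. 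Substituting and choosing $c$ small enough (depending only on $r$, $\deg P$, $K$, $C$, and $\sup|y|$) makes the $V_r$-dependent terms on the right absorbable, yielding $V_r\le C'\eta(t_0)^r$ and then $V_j\le C''\eta(t_0)^j$ for all $j$. (When $\deg P>1$ the absorption step involves a quadratic or higher-order self-dependence; this is handled by a standard continuity argument from a finite initial time, or by taking $c$ small against an a priori bound.) The ``moreover'' parts then follow as you indicate: for $j<r$ the first term in the interpolation bound is $o(\eta^j)$ once $V_0\to 0$, and for $j=r$ one uses $P(0)\prec\eta$ directly in the equation together with the already-established bounds $y^{(j)}\prec\eta^j$ for $j<r$.
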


\noindent
To deduce from Theorem~\ref{thm:Landau} the general case of Proposition~\ref{prop:EL}  we  use: 
 
\begin{lemma}\label{lem:bd mult conj} Suppose that $\fm\prec 1$ and  $z\in\c^r[\imag]$. If
$z^{(j)} \preceq \eta^j$ for~$j=0,\dots,r$, then~$(z\fm^n)^{(j)}\preceq  \eta^j\fm^{n-j}$ for $j=0,\dots,r$,
and likewise with $\prec$ instead of $\preceq$.
\end{lemma}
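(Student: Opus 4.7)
The plan is to combine an iterated Leibniz expansion of $(z\fm^n)^{(j)}$ with a pointwise bound on $(\fm^n)^{(k)}$, after recording two preliminary observations. First, since $\eta$ is eventually increasing and eventually positive, it is eventually bounded below by a positive constant, so $\eta \succeq 1$ (and hence $\eta^k \succeq 1$ for each $k$). Second, since $\fm$ is a hardian germ with $\fm\prec 1$, it lies in a Hardy field, which has small derivation: $f\preceq 1 \Rightarrow f'\prec 1$. Iterating yields $\fm^{(i)}\prec 1$ for every $i\geq 1$, which together with $\eta^i\succeq 1$ gives
\[
\fm^{(i)}\ \preceq\ \eta^i\qquad(i\geq 0).
\]

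Next I would establish the auxiliary bound $(\fm^n)^{(k)}\preceq \eta^k\fm^{n-k}$ for $k=0,\dots,r$ via Fa\`a di Bruno's formula applied to $u\mapsto u^n$: it expresses $(\fm^n)^{(k)}$ as a finite sum of terms of the form
\[
c_\alpha\,\fm^{n-|\alpha|}\prod_{i=1}^{k}\bigl(\fm^{(i)}\bigr)^{\alpha_i},
\]
indexed by $\alpha=(\alpha_1,\dots,\alpha_k)\in\N^k$ with $\sum_i i\alpha_i=k$, where $|\alpha|:=\sum_i\alpha_i$ and $c_\alpha$ depends only on $n$ and $\alpha$. From $\fm^{(i)}\preceq\eta^i$ I read off $\prod_i(\fm^{(i)})^{\alpha_i}\preceq \eta^{\sum_i i\alpha_i}=\eta^k$, while $|\alpha|\leq\sum_i i\alpha_i=k$ combined with $\fm\preceq 1$ gives $\fm^{n-|\alpha|}\preceq\fm^{n-k}$ (note $n\geq r\geq k$ so the exponent stays nonnegative). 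Each summand is therefore $\preceq\eta^k\fm^{n-k}$, and the claim follows.

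The conclusion then falls out of the ordinary Leibniz formula
\[
(z\fm^n)^{(j)}\ =\ \sum_{i=0}^{j}\binom{j}{i}\,z^{(i)}\,(\fm^n)^{(j-i)}.
\]
For each $i$, the hypothesis $z^{(i)}\preceq\eta^i$ and the auxiliary bound just proved give
\[
z^{(i)}(\fm^n)^{(j-i)}\ \preceq\ \eta^{j}\,\fm^{n-j+i}\ \preceq\ \eta^{j}\,\fm^{n-j},
\]
the final step because $\fm^i\preceq 1$. Summing the finitely many terms delivers $(z\fm^n)^{(j)}\preceq\eta^j\fm^{n-j}$. The $\prec$-version is the identical calculation, using that a $\prec$ factor multiplied by a $\preceq$ factor produces $\prec$.

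The argument is essentially bookkeeping; the only point requiring genuine input is the pointwise estimate $\fm^{(i)}\preceq\eta^i$, which rests on the Hardy-field small-derivation property combined with the lower bound $\eta\succeq 1$ extracted from monotonicity of $\eta$.
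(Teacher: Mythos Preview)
Your proof is correct and follows essentially the same two-step structure as the paper: first bound the derivatives of $\fm^n$, then apply the Leibniz rule. The only difference is in the first step: the paper cites \cite[Corollary~1.1.12]{ADH4} to get the sharper estimate $(\fm^n)^{(m)}\preceq \fm^{n-m}$ directly, whereas you derive the weaker (but still sufficient) bound $(\fm^n)^{(k)}\preceq \eta^k\fm^{n-k}$ by hand via Fa\`a di Bruno and the small-derivation property. Your route is more self-contained; the paper's is shorter but relies on an external reference. Both need the observation $\eta\succeq 1$ (the paper uses it implicitly when passing from $\eta^k\fm^{n-(j-k)}$ to $\eta^j\fm^{n-j}$), which you make explicit.
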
 
\begin{proof}
Corollary~1.1.12 in \cite{ADH4}  yields $(\fm^{n})^{(m)} \preceq \fm^{n-m}$ for   $m\leq n$.
Thus  if $z^{(j)} \preceq \eta^j$ for~$j=0,\dots,r$, then
$$ z^{(k)}  (\fm^{n})^{(j-k)} \preceq \eta^k \fm^{n-(j-k)}   \preceq \eta^j\fm^{n-j} \qquad (0\leq k\leq j\leq r),$$
so $(z\fm^n)^{(j)}\preceq  \eta^j\fm^{n-j}$ for $j=0,\dots,r$, by the Product Rule. The argument with $\prec$ instead of $\preceq$ is similar.
\end{proof}

\begin{proof}[Proof of Proposition~\ref{prop:EL}] We assume $\fm\prec 1$ (as we may).  Proposition~\ref{prop:Li(H(R))} yields a Liouville closed Hardy field $H\supseteq \R$ with $\fm\in H$. For $i=0,\dots,r$ set
$$Y_i\ :=\  \sum_{j=0}^i {i\choose j} Y^{(i-j)} (\fm^n)^{(j)}\in H\big[Y,Y',\dots,Y^{(i)}\big]\ \subseteq\ \c[\imag]\big[Y, Y',\dots, Y^{(r)}\big].$$
Then for  $z:=y\,\fm^{-n}\preceq 1$ in $\c^r[\imag]$ the Product Rule gives
$$Y_i(z,z',\dots,z^{(i)})\ =\ (z\,\fm^n)^{(i)}\ =\ y^{(i)}    \qquad(i=0,\dots,r),$$
so with
$$Q\ :=\ Y^{(r)}-\fm^{-n}\big(Y_r-P(Y_0,\dots,Y_{r-1})\big)\in\c[\imag]\big[Y,Y',\dots,Y^{(r-1)}\big]$$
we have by substitution of $z,\dots, z^{(r)}$ for $Y, Y',\dots, Y^{(r)}$, 
\begin{align*}
z^{(r)}\ &= \ Q\big(z,z',\dots,z^{(r-1)}\big) + \fm^{-n}\big( y^{(r)} - P(y,y',\dots,y^{(r-1)}) \big) \\
&=\  Q\big(z,z',\dots,z^{(r-1)}\big).  
\end{align*}
In $H\{Y\}$ we have
$(Y^{\j})_{\times\fm^n} = Y_0^{j_0}\cdots Y_r^{j_r}$ for $\j=(j_0,\dots,j_r)\in\N^{1+r}$. 
Let
 $\varepsilon\in\R^>$; then $\fm^{-\varepsilon}\in H$. We equip~$H\{Y\}$ with the gaussian extension of the valuation of~$H$; see [ADH, 4.5]. Then~${\fm^{-n} (Y^{\j})_{\times\fm^n}  \preceq  \fm^{-\varepsilon}}$ for~${\j\in\N^{1+r}\setminus\{ 0 \}}$, by~[ADH, 6.1.4].
Take $Q_{\i}\in \c[\imag]$
for $\dabs{\i}<r$ such that
$$Q\ =\ \sum_{\dabs{\i}<r}Q_{\i}Y^{\i}, \qquad(Q_{\i}\ne 0 \text{ for only finitely many }\i).$$
Together with $P_{\i}\preceq\eta$ for all $\i$ and $P(0)\preceq \eta\,\fm^{n}$, the remarks above
 yield $Q_{\i}\preceq\eta\,\fm^{-\varepsilon}$ for all~$\i$.
By Theorem~\ref{thm:Landau} applied to $P$, $y$, $\eta$ replaced by~$Q$,~$z$,~$\eta\,\fm^{-\varepsilon}$,    we now obtain~$z^{(j)}\preceq (\eta\,\fm^{-\varepsilon})^j$, with~$\prec$ in place of $\preceq$ if~$y\prec\fm^n$ and~$P(0)\prec\eta\,\fm^n$.
Using Lemma~\ref{lem:bd mult conj} with~$\eta\,\fm^{-\varepsilon}$   in place of $\eta$  finishes the proof of Proposition~\ref{prop:EL}. \end{proof} 

\medskip
\noindent
The following immediate consequence of Proposition~\ref{prop:EL} is used in Section~\ref{sec:ueeh}.  (The case $\fm=1$   is due to Esclangon~\cite{Esc}.)

\begin{cor}\label{cor:EL}
Suppose $f_1,\dots,f_r\in\c[\imag]$ and $y\in\c^r[\imag]$ satisfy
$$y^{(r)}+f_1y^{(r-1)}+\cdots+f_ry\ =\ 0,\qquad f_1,\dots,f_r\preceq\eta,\quad y\ \preceq\ \fm^n.$$ 
Then $y^{(j)}\preceq \eta^j\fm^{n-j(1+\varepsilon)}$ for $j=0,\dots,r$ and all $\varepsilon\in\R^>$, with $\prec$ in place of $\preceq$ if~$y\prec \fm^n$. 
\end{cor}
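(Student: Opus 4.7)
The plan is to reduce Corollary~\ref{cor:EL} to a direct application of Proposition~\ref{prop:EL}. First, I would rewrite the homogeneous linear differential equation in the form required by Proposition~\ref{prop:EL}, namely $y^{(r)} = P(y,y',\dots,y^{(r-1)})$ where
\[
P(Y,Y',\dots,Y^{(r-1)})\ :=\ -f_1 Y^{(r-1)}-f_2 Y^{(r-2)}-\cdots-f_r Y\ \in\ \c[\imag]\big[Y,Y',\dots,Y^{(r-1)}\big].
\]
The nonzero coefficients $P_{\i}$ of $P$ are then among $-f_1,\dots,-f_r$, so the hypothesis $f_1,\dots,f_r\preceq\eta$ immediately gives $P_{\i}\preceq\eta$ for every $\i$ with $\dabs{\i}<r$.

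Second, since $P$ is homogeneous of degree~$1$, we have $P(0)=0$. Because $\eta$ is eventually positive and $\fm$ is a positive hardian germ (so $\fm$ is eventually nonzero), the product $\eta\,\fm^n$ lies in $\c^\times$, and hence $0\prec\eta\,\fm^n$; in particular $P(0)\preceq\eta\,\fm^n$, and moreover $P(0)\prec\eta\,\fm^n$. Combined with the assumption $y\preceq\fm^n$ (respectively $y\prec\fm^n$), all hypotheses of Proposition~\ref{prop:EL} are satisfied, so its conclusion $y^{(j)}\preceq\eta^j\fm^{n-j(1+\varepsilon)}$ for $j=0,\dots,r$ and all $\varepsilon\in\R^>$ (respectively with $\prec$ in place of $\preceq$) is exactly the claim.

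Since the derivation is a pure specialization, there is no genuine obstacle; the only point worth articulating is the mild observation that the trivial identity $P(0)=0$ does satisfy $P(0)\prec\eta\,\fm^n$ in the sense of the proposition, which requires noting that $\eta\,\fm^n$ is a unit of $\c[\imag]$. The real work has already been done inside Proposition~\ref{prop:EL}, which itself leverages Landau's Theorem~\ref{thm:Landau} together with the multiplicative conjugation estimate in Lemma~\ref{lem:bd mult conj}; the corollary merely packages the linear case with vanishing inhomogeneity.
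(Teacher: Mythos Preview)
Your proposal is correct and is exactly the approach the paper takes: it states the corollary as an ``immediate consequence of Proposition~\ref{prop:EL}'' without further proof, and your reduction via $P(Y,\dots,Y^{(r-1)}) = -f_1Y^{(r-1)}-\cdots-f_rY$ with $P(0)=0\prec\eta\,\fm^n$ is precisely what makes it immediate.
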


\section{Hardy Fields and Uniform Distribution}\label{sec:udmod1}

\noindent
Section~\ref{sec:ueeh} gives an analytic description of the universal exponential extension~$\Univ$  of the algebraic closure~$K$ of a Liouville closed Hardy field extending~$\R$. The elements of~$\Univ$ are exponential sums with coefficients and exponents in~$K$.
To extract asymptotic information about the summands in such a sum we  refine here results of Boshernitzan~\cite{BoshernitzanUniform} about uniform distribution mod~$1$ for
functions whose germs are in a Hardy field. 
Our  reference for uniform distribution mod $1$  is \cite[Ch.~1, \S{}9]{KuipersNiederreiter}.
We also need some facts about
 trigonometric polynomials, almost periodic functions, and their mean values. These are treated in~\cite{Bohr,Corduneanu} mainly  in the one-variable case; adaptations to the multivariable case, required here, generally are straightforward.

 {\it Throughout this section we assume $n\geq 1$.} We equip
$\R^n$  with its usual Le\-bes\-gue measure $\mu_n$, and {\em measurable\/}   means measurable with respect to $\mu_n$. 
 {\it For vectors~$r=(r_1,\dots,r_n)$ and $s=(s_1,\dots,s_n)$ in~$\R^n$ we let~${r\cdot s}:= {r_1s_1+\cdots + r_ns_n}\in \R$ be the usual dot product of $r$ and~$s$. We also set 
$rs:=(r_1s_1,\dots, r_ns_n)\in\R^n$, not to be confused with $r\cdot s\in\R$. Moreover,  we let~${v,w\colon\R^n\to\mathbb C}$ be complex-valued functions on $\R^n$, and let $s=(s_1,\dots,s_n)$ range over $\R^n$, and $T$ over $\R^{>}$.}\/
We set~$|s|=|s|_\infty:=\max\big\{\abs{s_1},\dots,\abs{s_n}\big\}$ and~$\dabs{w} := \sup_{s}\, \abs{w(s)}\in [0,+\infty]$.
We shall also have occasion to consider various functions~$\R^n\to\mathbb C$ obtained from~$w$: $\overline{w}$, $\abs{w}$, as well as $w_{+r}$ and~$w_{\times r}$  (for $r\in\R^n$), defined
by
$$\overline{w}(s)\ :=\ \overline{w(s)}, \quad 
\abs{w}(s)\ :=\ \abs{w(s)}, \quad
w_{+r}(s)\ :=\ w(r+s),\quad
w_{\times r}(s)\ :=\ w(rs).$$

\subsection*{Almost periodic functions}
Let $\alpha$ range over $\R^n$.
Call~$w$ a {\it trigonometric polynomial}\/ if there are~$w_\alpha\in\mathbb C$, with $w_\alpha\ne 0$ for only finitely many~$\alpha$,
such that 
\begin{equation}\label{eq:trig poly}
w(s)\ =\ \sum_\alpha w_\alpha \ex^{(\alpha\cdot s)\imag}\qquad\text{for all $s$.}
\end{equation}
The coefficients~$w_\alpha$ in~\eqref{eq:trig poly} are uniquely determined by $w$. (See   Corollary~\ref{cor:id thm} below.)
 If $w$ is a trigonometric polynomial, then~$\overline{w}$
is a trigonometric polynomial, and for $r\in\R^n$, so are   $w_{+r}$ and~$w_{\times r}$.
The trigonometric polynomials form a subalgebra of the $\mathbb C$-algebra of
uniformly continuous bounded functions $\R^n\to\mathbb C$. 
The latter is a Banach algebra
with respect to $\|\cdot\|$, and the   elements of the closure of
its subalgebra of trigonometric polynomials with respect to this norm 
are the {\it almost periodic}\/ functions (in the sense of Bohr), which therefore form a Banach subalgebra.
In particular, if $v$, $w$ are almost periodic, so are
$v+w$ and $vw$. Moreover, if $w$ is   almost periodic, then so are $\overline{w}$ and $w_{+r}$, $w_{\times r}$ for  $r\in\R^n$.

We say that $v$ is  {\it $1$-periodic} if $v_{+k}=v$ for all $k\in\Z^n$.  
If $v$ is continuous and $1$-periodic, then $v$ is almost periodic: indeed, by ``Stone-Weierstrass'' there is for every~$\varepsilon\in\R^>$ a $1$-periodic trigonometric polynomial~$w$ with~${\dabs{v-w}<\varepsilon}$. 
(See  \cite[(7.4.2)]{Dieudonne} for the case $n=1$.) 

\subsection*{Mean value} The function $w$ is said to have a {\em mean value\/}  if $w$ is bounded and measurable, and the limit
\begin{equation}\label{eq:mean value}
\lim_{T\to\infty} \frac{1}{T^n} \int_{[0,T]^n} w(s)\,ds
\end{equation}
exists \textup{(}in $\mathbb C$\textup{)}; in that case we
call  the quantity~\eqref{eq:mean value} the {\it mean value}  of $w$ and denote it by $M(w)$.
One verifies easily that if $v$ and~$w$ have a mean value, then so do the functions $v+w$, $cw$ ($c\in\mathbb C$),
and $\overline{w}$, with
$$M(v+w)\ =\ M(v)+M(w),\quad M(cw)\ =\ cM(w),\quad\text{ and }\quad M(\overline{w})\ =\ \overline{M(w)}.$$ 
If $w$ has a mean value and $w(\R^n)\subseteq \R$, then $M(w)\in\R$.

\begin{lemma}\label{lem:mean value add conj}
Let $d\in\R^n$. Then $w$ has a mean value iff $w_{+d}$ has a mean value,
in which case $M(w)=M(w_{+d})$.
\end{lemma}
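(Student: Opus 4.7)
The plan is to show that $\int_{[0,T]^n}w(s)\,ds$ and $\int_{[0,T]^n}w_{+d}(s)\,ds$ differ by an amount that is $o(T^n)$ as $T\to\infty$, from which the equivalence and equality of mean values follows immediately.

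First I would observe that translation preserves both boundedness and measurability, so $w$ is bounded and measurable iff $w_{+d}$ is. Thus the definition of ``mean value'' applies to $w$ precisely when it applies to $w_{+d}$, and the only remaining issue is convergence of the averages.

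Next, by the change of variables $s\mapsto s-d$ I would rewrite
\[
\int_{[0,T]^n} w_{+d}(s)\,ds\ =\ \int_{[0,T]^n} w(s+d)\,ds\ =\ \int_{B_T} w(s)\,ds,
\]
where $B_T := [d_1,T+d_1]\times\cdots\times[d_n,T+d_n]$ is the translate of $[0,T]^n$ by $d$. The symmetric difference $[0,T]^n\mathbin\triangle B_T$ is contained in a union of $2n$ slabs of width $\leq|d|$, so for $T>|d|$ its Lebesgue measure satisfies $\mu_n\bigl([0,T]^n\mathbin\triangle B_T\bigr)\leq 2n|d|(T+|d|)^{n-1}$. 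Since $w$ is bounded, say $\dabs{w}=M<\infty$, this gives
\[
\left|\int_{[0,T]^n} w(s)\,ds - \int_{B_T} w(s)\,ds\right|\ \leq\ M\cdot 2n|d|(T+|d|)^{n-1}\ =\ O(T^{n-1}).
\]

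Dividing by $T^n$, the right-hand side tends to $0$ as $T\to\infty$. Hence the limit in \eqref{eq:mean value} for $w$ exists iff the corresponding limit for $w_{+d}$ exists, and when they exist they coincide, i.e.\ $M(w)=M(w_{+d})$. The only subtlety to watch is that the definition really does require boundedness (so that the symmetric-difference estimate has a constant to multiply against); no deeper obstacle arises.
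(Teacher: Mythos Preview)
Your proof is correct and follows essentially the same route as the paper's: both bound $\bigl|\int_{[0,T]^n}w(s)\,ds-\int_{[0,T]^n}w_{+d}(s)\,ds\bigr|$ by $O(T^{n-1})$ via a symmetric-difference (slab) estimate on the translated cube. The only cosmetic difference is that the paper first reduces to the case $d=(d_1,0,\dots,0)$ with $d_1>0$ and then writes the difference explicitly as two slabs of width $d_1$, whereas you handle all coordinates at once; your bound $2n|d|(T+|d|)^{n-1}$ is exactly what that reduction would give after iterating.
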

\begin{proof}
It suffices to treat the case $d=(d_1,0,\dots,0)$, $d_1\in\R^{>}$.
For~$T > d_1$ we have
\begin{multline*}
\left|\int_{[0,T]^n} w_{+d}(s)\,ds - 
 \int_{[0,T]^n} w(s)\,ds \right|\ =\ \\
  \left| \int_{[T,d_1+T]\times [0,T]^{n-1}} w(s)\,ds - \int_{[0,d_1]\times [0,T]^{n-1}} w(s)\,ds\right|   \ \leq\ 2d_1\dabs{w}T^{n-1},
\end{multline*}
and this yields the claim.
\end{proof}

\begin{cor}
Suppose $w$ has a mean value and $T_0\in\R^>$.
If $w(s)=0$ for all~$s\in (\R^{\geq})^n$ with $\abs{s}\geq T_0$, then $M(w)=0$.
If $w(\R^n)\subseteq\R$ and $w(s)\geq 0$ for all~$s\in (\R^{\geq})^n$ with $\abs{s}\geq T_0$, then $M(w)\geq 0$.
\end{cor}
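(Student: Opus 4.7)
The plan is to exploit the normalizing factor $\frac{1}{T^n}$ in the definition of the mean value together with the fact that, in both statements, the hypotheses on $w$ over $(\R^{\ge})^n$ mean that the ``bad'' behavior of $w$ in the positive orthant is confined to the bounded region $[0,T_0]^n$.

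For the first assertion, suppose $w(s)=0$ for all $s\in (\R^{\ge})^n$ with $|s|\ge T_0$. Then for $T\ge T_0$, the integrand in $\int_{[0,T]^n} w(s)\,ds$ vanishes outside $[0,T_0]^n$ (since $[0,T]^n\subseteq (\R^{\ge})^n$ and any $s\in[0,T]^n\setminus[0,T_0]^n$ has $|s|\ge T_0$). Since $w$ is bounded (being a function with a mean value), this gives
\[
\left|\int_{[0,T]^n} w(s)\,ds\right|\ \le\ T_0^n\,\dabs{w},
\]
and dividing by $T^n$ and letting $T\to\infty$ yields $M(w)=0$.

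For the second assertion, assume $w(\R^n)\subseteq\R$ and $w(s)\ge 0$ for all $s\in(\R^{\ge})^n$ with $|s|\ge T_0$. Split, for $T>T_0$,
\[
\int_{[0,T]^n} w(s)\,ds\ =\ \int_{[0,T_0]^n} w(s)\,ds\ +\ \int_{[0,T]^n\setminus[0,T_0]^n} w(s)\,ds.
\]
The first summand is bounded in absolute value by $T_0^n\dabs{w}$, so its contribution to $\frac{1}{T^n}(\cdots)$ tends to $0$. Every point $s$ of $[0,T]^n\setminus[0,T_0]^n$ lies in $(\R^{\ge})^n$ and satisfies $|s|\ge T_0$, so $w(s)\ge 0$ there, and the second summand is nonnegative. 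Hence $\frac{1}{T^n}\int_{[0,T]^n}w(s)\,ds$ is the sum of a term going to $0$ and a nonnegative term; since the total limit exists and equals $M(w)$, we conclude $M(w)\ge 0$.

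No real obstacle is expected: the argument is entirely a matter of bookkeeping with the factor $T^{-n}$ absorbing the bounded contribution from the compact region $[0,T_0]^n$, combined with the sign hypothesis on the remainder. No deep property of almost periodicity or Hardy fields is invoked — only that $w$ is bounded and measurable, which is built into the hypothesis that $w$ has a mean value.
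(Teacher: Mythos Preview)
Your proof is correct. The paper gives no explicit proof, labelling the statement as a corollary of the translation-invariance lemma $M(w)=M(w_{+d})$; the intended route is to shift by $d=(T_0,\dots,T_0)$, so that $w_{+d}$ vanishes (respectively, is nonnegative) on all of $(\R^{\ge})^n$, whence the integrals over $[0,T]^n$ are all zero (respectively, nonnegative) and the conclusion is immediate. Your argument bypasses the translation lemma entirely and works directly from the definition by splitting off the compact region $[0,T_0]^n$; this is slightly more elementary and equally short, while the paper's route has the minor advantage of reducing to a case where no splitting is needed at all.
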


\begin{lemma}\label{lem:mv and sup}
Suppose $w$ has a mean value and $w(\R^n)\subseteq\R$. Then
$$ \liminf_{\abs{s}\to \infty} w(s)\ \leq\ 
 M(w)\ \leq\ \limsup_{\abs{s}\to \infty} w(s)\qquad \text{ with $s$ ranging over $(\R^{\ge})^n$}.$$
\end{lemma}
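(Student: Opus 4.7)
\medskip
\noindent\textbf{Proof plan.} By symmetry it suffices to establish the upper bound $M(w)\leq L$ where $L:=\limsup_{\abs{s}\to\infty}w(s)$ with $s$ ranging over $(\R^{\ge})^n$; the lower bound then follows by replacing $w$ with $-w$ and using $M(-w)=-M(w)$ together with $\limsup(-w)=-\liminf(w)$. The case $L=+\infty$ is trivial, and the case $L=-\infty$ cannot occur because $w$ is bounded (being assumed to have a mean value). So assume $L\in\R$, and fix an arbitrary real $c>L$.

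\medskip
\noindent
By definition of $L$, there exists $T_0\in\R^{>}$ such that $w(s)\leq c$ for all $s\in(\R^{\ge})^n$ with $\abs{s}\geq T_0$. Choose $B\in\R^{>}$ with $\dabs{w}\leq B$, so that $\abs{c-w(s)}\leq \abs{c}+B$ everywhere. For $T>T_0$ we split
\[
\int_{[0,T]^n}(c-w(s))\,ds\ =\ \int_{[0,T_0]^n}(c-w(s))\,ds\ +\ \int_{[0,T]^n\setminus[0,T_0]^n}(c-w(s))\,ds.
\]
The first summand is bounded below by $-(\abs{c}+B)T_0^n$. On the domain of the second summand every point $s$ satisfies $\abs{s}>T_0$, so $c-w(s)\geq 0$ there, and hence this second summand is nonnegative. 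Dividing by $T^n$ gives
\[
\frac{1}{T^n}\int_{[0,T]^n}(c-w(s))\,ds\ \geq\ -(\abs{c}+B)\big(T_0/T\big)^n.
\]
Letting $T\to\infty$, the left-hand side tends to $c-M(w)$ and the right-hand side tends to $0$. Therefore $c\geq M(w)$. Since $c>L$ was arbitrary, $M(w)\leq L$, as required.

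\medskip
\noindent
The argument is essentially routine once one notices the  coincidence that $[0,T]^n\setminus[0,T_0]^n$ is exactly the set of points in $[0,T]^n$ with $\abs{\cdot}>T_0$ under the sup-norm (up to a null set), so that the tail region where $w\leq c$ holds fills out the complement of the fixed-volume ``core'' $[0,T_0]^n$; the only mild point to watch is thus the interaction between the sup-norm defining $\abs{s}$ and the product-rectangle geometry of the averaging cubes $[0,T]^n$. No further obstacle is anticipated.
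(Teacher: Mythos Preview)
Your proof is correct and follows essentially the same approach as the paper's. The paper argues by contradiction and invokes a preceding corollary (that an eventually nonnegative real-valued function with a mean value has nonnegative mean value), whereas you prove that estimate inline via the same cube-splitting argument; the content is identical.
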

\begin{proof}
Assume~$L:=\limsup_{\abs{s}\to \infty} w(s)<M(w)$. Let~$\varepsilon:=\frac{1}{2}\big(M(w)-L\big)$, and
take~$T_0\in\R^>$ such that~$w(s)\leq M(w)-\varepsilon$ for all $s$ with $\abs{s}\geq T_0$.
The previous corollary then yields $M(w)\leq M(w)-\varepsilon$, a contradiction.
This shows the second inequality; the first inequality is proved in a similar way.
\end{proof}

\noindent
The following is routine:

\begin{lemma}\label{lem:mv sequence}
Let $(v_m)$ be a sequence of bounded measurable functions $\R^n\to \mathbb C$ with a mean value, such that $\lim\limits_{m\to \infty} \dabs{v_m-w}=0$, and let $w$ be bounded and measurable. Then
$w$ has a mean value, and~$\lim\limits_{m\to \infty} M(v_m)=M(w)$.
\end{lemma}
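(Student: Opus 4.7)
\medskip\noindent
My plan is a standard three-epsilon argument exploiting that the ``averaging on $[0,T]^n$'' operator is a contraction with respect to $\|\cdot\|$. The first step is to show that $\big(M(v_m)\big)$ is a Cauchy sequence in $\mathbb C$. For any bounded measurable $u\colon \R^n\to\mathbb C$ and any $T\in \R^>$ one has the trivial bound
\[
\left|\frac{1}{T^n}\int_{[0,T]^n}u(s)\,ds\right|\ \leq\ \dabs{u},
\]
and by linearity of the mean value (noted above the lemma) this passes to the limit: if $u$ has a mean value, then $|M(u)|\leq \dabs{u}$. Applied to $u=v_m-v_{m'}$, which has a mean value $M(v_m)-M(v_{m'})$ by linearity, this yields $|M(v_m)-M(v_{m'})|\leq \dabs{v_m-v_{m'}}$. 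Since $\dabs{v_m-w}\to 0$, the sequence $(v_m)$ is Cauchy in sup norm, so $\big(M(v_m)\big)$ is Cauchy in $\mathbb C$; let $L$ be its limit.

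Next I would show that $M(w)$ exists and equals $L$. Let $\varepsilon\in\R^>$. Choose $m$ large enough so that $\dabs{v_m-w}<\varepsilon/3$ and $|M(v_m)-L|<\varepsilon/3$. Since $v_m$ has mean value $M(v_m)$, there is $T_0\in \R^>$ such that for all $T\geq T_0$,
\[
\left|\frac{1}{T^n}\int_{[0,T]^n}v_m(s)\,ds\ -\ M(v_m)\right|\ <\ \varepsilon/3.
\]
Combining with $\big|T^{-n}\int_{[0,T]^n}(w-v_m)(s)\,ds\big|\leq \dabs{w-v_m}<\varepsilon/3$ and the triangle inequality gives
\[
\left|\frac{1}{T^n}\int_{[0,T]^n}w(s)\,ds\ -\ L\right|\ <\ \varepsilon \qquad\text{for all $T\geq T_0$.}
\]
Hence $w$ has a mean value and $M(w)=L=\lim_m M(v_m)$, as required.

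I do not anticipate a substantive obstacle: the only thing to verify is that $w$ is still bounded and measurable (this is given by hypothesis) and that the averages make sense, which they do since $[0,T]^n$ has finite measure and $w$ is bounded. The argument is really just the continuity of the bounded linear functional ``mean value'' on its domain of definition, extended by uniform limits.
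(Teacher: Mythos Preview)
Your proof is correct. The paper gives no proof of this lemma, simply declaring it ``routine''; your three-epsilon argument is exactly the standard verification one would expect here.
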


\noindent
If $\alpha\in\R^n$ and
$w(s)=\ex^{\imag (\alpha\cdot s)}$ for all $s$, then $w$ has a mean value, with~$M(w)=1$ if $\alpha=0$ and $M(w)=0$ otherwise.
Together with Lemma~\ref{lem:mv sequence}, this yields the  well-known fact that if $w$ is almost periodic, then $w$ has a mean value. (See \cite[Theorem~1.12]{Corduneanu} for the case $n=1$.) 
Moreover, using Lemma~\ref{lem:mv sequence} it is also easy to show that if  $w$ is almost periodic and $r\in (\R^\times)^n$, then
the almost periodic function~$w_{\times r}$ has the same mean value as $w$.
If $w$ is continuous and $1$-periodic, then $w$ has a mean value, namely $M(w)=\int_{[0,1]^n} w(s)\,ds$.
For a proof of the following result in the case $n=1$, see \cite[Theorem~1.19]{Corduneanu}:

\begin{prop}[Bohr]\label{prop:Bohr} If $w$ is almost periodic,
$w(\R^n)\subseteq\R^{\geq}$, and $M(w)=0$, then $w=0$.
\end{prop}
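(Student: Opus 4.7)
I argue by contradiction along the classical lines of Bohr. Suppose $w\neq 0$. Since $w$ is a uniform limit of continuous trigonometric polynomials, $w$ is continuous; combined with $w(\R^n)\subseteq\R^{\geq}$, this yields $s_0\in\R^n$ and $c,\delta\in\R^>$ with $w(s)\geq c$ on the closed cube $B:=s_0+[-\delta,\delta]^n$. The strategy is to use almost periodicity to transport the inequality $w\geq c/2$ to translates of $B$ filling $\R^n$ with positive density, producing a positive lower bound on $M(w)$ contradicting $M(w)=0$.

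The technical heart is establishing that the set of $(c/2)$-translation numbers of $w$---vectors $\tau\in\R^n$ with $\dabs{w_{+\tau}-w}<c/2$---is \emph{relatively dense}, meaning every sufficiently large cube in $\R^n$ contains such a $\tau$. I would first pick a trigonometric polynomial $p=\sum_{j=1}^N b_j\ex^{(\alpha_j\cdot s)\imag}$ with $\dabs{w-p}<c/8$, so that by the triangle inequality it suffices to find a $(c/4)$-translation number of $p$. Since
\[\dabs{p_{+\tau}-p}\ \leq\ \sum_{j=1}^N \abs{b_j}\,\bigl|\ex^{(\alpha_j\cdot\tau)\imag}-1\bigr|,\]
this reduces to making $\alpha_j\cdot\tau$ simultaneously close to $2\pi\Z$ for $j=1,\dots,N$, for which multidimensional Dirichlet/Kronecker simultaneous approximation in $\R^n$ furnishes some $L=L(p,c)\in\R^>$ such that every cube in $\R^n$ of side $L$ contains a suitable $\tau$.

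For each such $\tau$, $w\geq c/2$ on $B+\tau$. Setting $L':=L+2\delta$ and partitioning $\R^n$ into cubes of side $L'$, one can in each partition cube choose $\tau$ in an inner $L$-cube so that $B+\tau$ lies entirely inside the partition cube; the resulting shifted cubes are pairwise disjoint, and each contributes at least $(c/2)(2\delta)^n$ to $\int_{[0,T]^n} w$. Counting the cubes of the partition that lie in $[0,T]^n$ gives
\[\liminf_{T\to\infty}\,\frac{1}{T^n}\int_{[0,T]^n} w(s)\,ds\ \geq\ \frac{(c/2)(2\delta)^n}{(L')^n}\ >\ 0,\]
contradicting $M(w)=0$. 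The main obstacle I anticipate is the multidimensional Dirichlet/Kronecker step; while well-known for $n=1$, its $n$-variable formulation together with the disjointness bookkeeping require some care, though both are routine in the spirit of the paper's remark about adapting the one-variable theory.
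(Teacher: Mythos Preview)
The paper does not supply its own proof of this proposition: it cites \cite[Theorem~1.19]{Corduneanu} for the case $n=1$ and relies on the earlier remark that multivariable adaptations are routine. Your sketch is exactly the classical Bohr argument underlying that reference, and it is correct.

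On the step you flag, the relative density in $\R^n$ of the $(c/4)$-translation numbers of $p$ does reduce to the one-variable Dirichlet theorem: for each coordinate direction $e_k$ the condition $\dabs{p_{+\tau_k e_k}-p}<c/(4n)$ amounts to making the $N$ real numbers $\tau_k\alpha_{j,k}$ simultaneously close to $2\pi\Z$, which yields a relatively dense set of such $\tau_k\in\R$ with some bound $L_k$; combining one such $\tau_k$ from each prescribed interval gives a $(c/4)$-translation number $\tau=(\tau_1,\dots,\tau_n)$ of $p$, and $L=\max_k L_k$ works. One small imprecision: the ``inner $L$-cube'' in which you seek $\tau$ must be shifted by $-s_0$ (so that $s_0+\tau$, not $\tau$, lies in the geometric inner cube of the partition cell), but this is harmless since relative density applies to every cube of side $L$.
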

 
\noindent
By Proposition~\ref{prop:Bohr}, the map $(v,w)\mapsto \langle v,w\rangle:=M(v\overline{w})$ is  a positive definite hermitian form on the $\mathbb C$-linear space of almost
periodic functions~$\R^n\to\mathbb C$.
For a trigonometric polynomial $w$ as in \eqref{eq:trig poly} we have~$w_\alpha=\langle w,\ex^{(\alpha\cdot s)\imag}\rangle$, and thus:

\begin{cor}\label{cor:id thm} For $w$ as in  \eqref{eq:trig poly}, 
if $w=0$, then $w_\alpha=0$ for all~$\alpha$.  
\end{cor}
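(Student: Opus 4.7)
My plan is to exploit the inner product formula $w_\alpha=\langle w,\ex^{(\alpha\cdot s)\imag}\rangle$ stated just before the corollary; indeed, once this formula is in hand, if $w=0$ then for every $\alpha$ we immediately get $w_\alpha = \langle 0,\ex^{(\alpha\cdot s)\imag}\rangle = 0$, because $M$ (and hence $\langle\cdot,\cdot\rangle$) is linear. So the real content is verifying that formula, which boils down to orthogonality of the characters $s\mapsto \ex^{(\beta\cdot s)\imag}$ under the hermitian form $\langle v,w\rangle = M(v\overline{w})$.

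The key computation is that for any $\alpha,\beta\in\R^n$,
\[
\langle \ex^{(\beta\cdot s)\imag}, \ex^{(\alpha\cdot s)\imag}\rangle\ =\ M\bigl(\ex^{(\beta\cdot s)\imag}\cdot \overline{\ex^{(\alpha\cdot s)\imag}}\bigr)\ =\ M\bigl(\ex^{((\beta-\alpha)\cdot s)\imag}\bigr),
\]
and this equals $1$ if $\beta=\alpha$ and $0$ otherwise, by the explicit mean-value computation for pure characters noted in the paragraph preceding Proposition~\ref{prop:Bohr}. This step is really just the observation that $\overline{\ex^{\imag x}}=\ex^{-\imag x}$ together with the fact that $M(\ex^{\imag(\gamma\cdot s)})=\delta_{\gamma,0}$.

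Having this, I combine it with $\mathbb C$-linearity of $M$ (hence of the first argument of $\langle\cdot,\cdot\rangle$): for $w=\sum_\beta w_\beta\ex^{(\beta\cdot s)\imag}$, the finiteness of the support allows us to pull the sum outside of $M$, giving
\[
\langle w, \ex^{(\alpha\cdot s)\imag}\rangle\ =\ \sum_\beta w_\beta\,\langle \ex^{(\beta\cdot s)\imag}, \ex^{(\alpha\cdot s)\imag}\rangle\ =\ w_\alpha,
\]
which is precisely the cited formula. Substituting $w=0$ yields $w_\alpha=0$ for every $\alpha$, as required.

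I do not foresee any obstacle here: the argument is entirely the standard Fourier-type orthogonality trick, and every ingredient (linearity of $M$, conjugation of exponentials, the mean value of a single character) has been explicitly established in the preceding paragraphs; in particular Proposition~\ref{prop:Bohr} is not needed for this corollary. The only mild care required is to note that the sum defining $w$ is finite, so that linearity of $M$ applies directly without any limit argument.
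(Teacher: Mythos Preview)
Your argument is correct and is essentially the same as the paper's: the paper states the formula $w_\alpha=\langle w,\ex^{(\alpha\cdot s)\imag}\rangle$ (which you verify via orthogonality of characters, using the mean-value computation for $\ex^{\imag(\gamma\cdot s)}$ noted just before Proposition~\ref{prop:Bohr}) and deduces the corollary immediately from it. You are also right that Proposition~\ref{prop:Bohr} itself is not needed here; only the linearity of $M$ and the orthogonality relation are used.
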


\subsection*{Uniform distribution}
{\sloppy\it In this subsection $f_1,\dots, f_n\colon \R \to \R$ are measurable, and~$f : = (f_1,\dots,f_n) \colon \R\to\R^n$. }
   
\begin{theorem}[Weyl] \label{thm:Weyl} 
The following conditions on $f$ are equivalent:
\begin{enumerate}
\item[\rm{(i)}] $\displaystyle\lim_{T\to \infty} \frac{1}{T}\int_0^T \ex^{2\pi\imag (k\cdot f(t))}\,dt\ =\ 0$ for all $k\in (\Z^n)^{\neq}$; 
\item[\rm{(ii)}] $\displaystyle\lim_{T\to\infty} \frac{1}{T} \int_0^{T} (w\circ f)(t)\,dt\ =\ \int_{[0,1]^n} w(s)\,ds$ for every continuous $1$-periodic~$w$;
\item[\rm{(iii)}]  for every continuous $1$-periodic
$w$,  the function  ${w\circ f}\colon\R\to\mathbb C$ has mean value  $M(w\circ f)=M(w)$.
\end{enumerate}
\end{theorem}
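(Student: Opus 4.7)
The plan is to prove (ii) $\Leftrightarrow$ (iii) $\Leftrightarrow$ (i), with (i) $\Rightarrow$ (ii) as the only substantive step.

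Two of the implications are essentially tautological. For (ii) $\Leftrightarrow$ (iii): any continuous $1$-periodic $w\colon\R^n\to\C$ has $M(w)=\int_{[0,1]^n} w(s)\,ds$ (as recorded in the ``Mean value'' subsection), and $w\circ f\colon\R\to\C$ is bounded and measurable, so its mean value (taking dimension $1$ in the definition of $M$) is $M(w\circ f)=\lim_{T\to\infty}\tfrac{1}{T}\int_0^T (w\circ f)(t)\,dt$ whenever this limit exists; hence (ii) is the same assertion as (iii). For (ii) $\Rightarrow$ (i): given $k\in(\Z^n)^{\neq}$, the function $w(s):=\ex^{2\pi\imag(k\cdot s)}$ is continuous and $1$-periodic with $\int_{[0,1]^n} w(s)\,ds=0$, so (ii) applied to $\Re w$ and $\Im w$ separately delivers (i).

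For (i) $\Rightarrow$ (ii) I would first verify (ii) on trigonometric polynomials and then extend to general continuous $1$-periodic $w$ by uniform approximation. By $\C$-linearity of both sides of (ii), hypothesis (i) together with the trivial case $k=0$ yields (ii) for every $1$-periodic trigonometric polynomial $\sum_{k\in F} c_k\ex^{2\pi\imag(k\cdot s)}$, $F\subseteq\Z^n$ finite. Given an arbitrary continuous $1$-periodic $w\colon\R^n\to\C$ and $\varepsilon\in\R^>$, the Stone--Weierstrass approximation invoked just before Lemma~\ref{lem:mean value add conj}, generalized from the circle to the $n$-torus, produces a $1$-periodic trigonometric polynomial $u$ with $\dabs{w-u}<\varepsilon$. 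Since $\abs{(w-u)\circ f}\leq\dabs{w-u}<\varepsilon$ pointwise,
$$\left|\tfrac{1}{T}\int_0^T(w\circ f)(t)\,dt-\tfrac{1}{T}\int_0^T(u\circ f)(t)\,dt\right|\ <\ \varepsilon\qquad(T\in\R^>),$$
and likewise $\big|\int_{[0,1]^n}(w-u)(s)\,ds\big|<\varepsilon$. Combining these with the already established convergence $\tfrac{1}{T}\int_0^T(u\circ f)(t)\,dt\to\int_{[0,1]^n}u(s)\,ds$ and letting $\varepsilon\to 0$ yields (ii) for $w$.

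No serious obstacle arises; the only external input of substance is the $n$-variable Stone--Weierstrass approximation of continuous $1$-periodic functions by trigonometric polynomials, a routine extension of the one-variable statement cited in the text.
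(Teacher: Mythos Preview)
Your proof is correct and follows the classical Weyl route. The paper handles (ii)~$\Rightarrow$~(i) and (ii)~$\Leftrightarrow$~(iii) exactly as you do, but for the substantive implication (i)~$\Rightarrow$~(ii) it simply cites \cite[Theorem~9.9 and Exercise~9.26]{KuipersNiederreiter} rather than writing out the Stone--Weierstrass approximation argument. Your version is thus more self-contained; the underlying mathematics is the same.

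One cosmetic remark: applying (ii) to $\Re w$ and $\Im w$ separately in the step (ii)~$\Rightarrow$~(i) is unnecessary, since condition~(ii) is stated for complex-valued $w$ and $s\mapsto \ex^{2\pi\imag(k\cdot s)}$ is already a continuous $1$-periodic function to which (ii) applies directly.
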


\noindent
We say that 
$f$ is {\it uniformly distributed~$\operatorname{mod}\,1$} (abbreviated: u.d.~$\operatorname{mod}\,1$)
if $f$ satisfies condition (i) in Theorem~\ref{thm:Weyl}.
This is not the usual definition
but is equivalent to it by \cite[Theorem~9.9]{KuipersNiederreiter}.
The latter,  in combination with~\cite[Exercise~9.26]{KuipersNiederreiter} 
also yields the implication (i)~$\Rightarrow$~(ii) in Theorem~\ref{thm:Weyl}.
For (ii)~$\Rightarrow$~(i), apply~(ii) to the $1$-periodic trigonometric polynomial $w$ given by $w(s)=\ex^{2\pi\imag (k\cdot s)}$. 
The equivalence~(ii)~$\Leftrightarrow$~(iii) is clear by earlier remarks.

In the next lemma we consider a strengthening of  u.d.~$\operatorname{mod}\,1$. For~$\alpha\in\R^n$  
set~$\alpha f:= (\alpha_1f_1,\dots, \alpha_nf_n)\colon\R\to\R^n$. 

{\sloppy
\begin{lemma}\label{lem:ud map fn}
The following conditions on $f$ are equivalent:
\begin{enumerate}
\item[\rm{(i)}] $\alpha f$ is  u.d.~$\operatorname{mod}\,1$ for all $\alpha\in (\R^{\times})^n$;
\item[\rm{(ii)}] $\displaystyle\lim_{T\to\infty} \frac{1}{T}\int_0^T \ex^{2\pi\imag (\beta\cdot f(t))}\,dt\ =\ 0$ for all $\beta\in (\R^n)^{\neq}$;
\item[\rm{(iii)}] for every almost periodic
$w$,   $w\circ f$ has mean value~$M(w\circ f)=M(w)$.
\end{enumerate}
\end{lemma}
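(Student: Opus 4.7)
The plan is to establish (i)~$\Leftrightarrow$~(ii) by a reparametrization, and then (ii)~$\Leftrightarrow$~(iii) by approximation by trigonometric polynomials, in direct analogy with the proof of Theorem~\ref{thm:Weyl}.

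For (i)~$\Leftrightarrow$~(ii), Weyl's criterion (Theorem~\ref{thm:Weyl}(i)) applied to $\alpha f$ says that $\alpha f$ is u.d.~$\operatorname{mod}\,1$ iff
$$\lim_{T\to\infty} \frac{1}{T}\int_0^T \ex^{2\pi\imag((k\alpha)\cdot f(t))}\,dt\ =\ 0\qquad\text{for every }k\in(\Z^n)^{\neq},$$
where I use $k\cdot\alpha f(t)=(k\alpha)\cdot f(t)$ with $k\alpha:=(k_1\alpha_1,\dots,k_n\alpha_n)$. Thus (i) is equivalent to the vanishing of this limit for every $\beta$ of the form $k\alpha$ with $k\in(\Z^n)^{\neq}$ and $\alpha\in(\R^\times)^n$, so it suffices to observe that $\big\{k\alpha:k\in(\Z^n)^{\neq},\alpha\in(\R^\times)^n\big\}=(\R^n)^{\neq}$: the inclusion $\subseteq$ holds because $k\neq 0$ forces some $k_i\alpha_i\neq 0$, and conversely, given $\beta\in(\R^n)^{\neq}$, set $k_i:=1$, $\alpha_i:=\beta_i$ where $\beta_i\neq 0$ and $k_i:=0$, $\alpha_i:=1$ otherwise.

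For (iii)~$\Rightarrow$~(ii), apply (iii) to the almost periodic $w(s):=\ex^{2\pi\imag(\beta\cdot s)}$ for $\beta\in(\R^n)^{\neq}$, noting $M(w)=0$ since $2\pi\beta\neq 0$. Conversely, for (ii)~$\Rightarrow$~(iii), I would first handle the case of a trigonometric polynomial $w(s)=\sum_\alpha w_\alpha\ex^{(\alpha\cdot s)\imag}$ using linearity of the Ces\`aro mean and the values $M\big(\ex^{(\alpha\cdot s)\imag}\big)=1$ for $\alpha=0$ and $0$ otherwise recorded before the lemma: this reduces the identity $M(w\circ f)=w_0=M(w)$ to (ii) applied with $\beta=\alpha/(2\pi)\neq 0$. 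For an arbitrary almost periodic $w$, choose trigonometric polynomials $w_m$ with $\dabs{w_m-w}\to 0$; then $\dabs{w_m\circ f-w\circ f}\leq\dabs{w_m-w}\to 0$, and applying Lemma~\ref{lem:mv sequence} both to $(w_m\circ f)$ with limit $w\circ f$ and to $(w_m)$ with limit $w$ yields $M(w\circ f)=\lim_m M(w_m\circ f)=\lim_m M(w_m)=M(w)$. No genuine obstacle arises; the only subtlety is the set-theoretic bookkeeping for~(i)~$\Leftrightarrow$~(ii), since (i) formally speaks only of the diagonal rescalings $\alpha f$ rather than of arbitrary real-linear combinations of $f_1,\dots,f_n$.
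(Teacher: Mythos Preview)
Your proof is correct and follows essentially the same approach as the paper. The only organizational difference is that the paper proves the cycle (i)~$\Rightarrow$~(ii)~$\Rightarrow$~(iii)~$\Rightarrow$~(i), whereas you prove (i)~$\Leftrightarrow$~(ii) and (ii)~$\Leftrightarrow$~(iii) directly; your route avoids the paper's step (iii)~$\Rightarrow$~(i), which invokes the invariance $M(w_{\times\alpha})=M(w)$ for $\alpha\in(\R^\times)^n$, but otherwise the ingredients (the reparametrization $\beta=k\alpha$ and the approximation via Lemma~\ref{lem:mv sequence}) are identical.
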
}
\begin{proof}
Assume (i); let $\beta\in (\R^n)^{\neq}$. For $i=1,\dots,n$ set~$\alpha_i:=1$, $k_i:=0$ if~$\beta_i=0$ and ~$\alpha_i:=\beta_i$, $k_i:=1$ if $\beta_i\neq 0$. Then $k=(k_1,\dots,k_n)\in (\Z^n)^{\neq}$, 
$\alpha=(\alpha_1,\dots,\alpha_n)$ is in~$(\R^\times)^n$,  and $\beta\cdot f(t) = k\cdot (\alpha f)(t)$ for all $t\in\R$.
Now (ii) follows from the above definition of ``u.d.~$\operatorname{mod}\,1$'' applied to~$\alpha f$ in place of $f$.
For (ii)~$\Rightarrow$~(iii), assume (ii). Then   each trigonometric polynomial $v$  gives a function $v\circ f$ with mean
value~${M(v\circ f)} = M(v)$.
Let $w$ be almost periodic, and
take a sequence~$(v_m)$ of trigonometric
polynomials~${\R^n\to \mathbb C}$ such that~${\dabs{v_m-w}\to 0}$ as~$m\to\infty$.
So~$M(v_m)\to M(w)$ as~$m\to\infty$, by Lemma~\ref{lem:mv sequence}.
Also $\dabs{(v_m\circ f)-(w\circ f)}\to 0$ as~$m\to \infty$, 
hence by Lemma~\ref{lem:mv sequence} again, 
$w\circ f$ has a mean value and
$M(v_m)=M(v_m\circ f)\to M(w\circ f)$ as $m\to \infty$. Therefore~$M(w\circ f)=M(w)$.
Finally, assume~(iii), and let~$\alpha\in (\R^{\times})^n$; to show that $\alpha f$ is
u.d.~$\operatorname{mod}\,1$ we verify that condition~(iii) in Theorem~\ref{thm:Weyl}
holds for $\alpha f$ in place of~$f$. Thus suppose $w$ is continuous and $1$-periodic. By (iii) applied to the almost periodic function~$w_{\times\alpha}$ in place of $w$,   the function~$w_{\times\alpha}\circ f=w\circ (\alpha f)$ has a mean value and~${M(w_{\times\alpha}\circ  f)}=M(w_{\times\alpha})$;
 now use that $M(w_{\times\alpha})=M(w)$ by a remark after Lemma~\ref{lem:mv sequence}.
\end{proof}
 
\noindent
We say that $f$ is {\it uniformly distributed} (abbreviated: u.d.) if it satisfies one of the equivalent conditions in Lemma~\ref{lem:ud map fn}.

\begin{cor}\label{cor:limsup vs sup}
Suppose $w$ is almost periodic, $w(\R^n)\subseteq\R^{\geq}$, and $f$ is  u.d.   Then
$$\limsup_{t\to +\infty} w\big(f(t)\big) = 0\ \Longleftrightarrow\   w=0.$$
\end{cor}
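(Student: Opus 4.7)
\medskip
\noindent
The backward direction ($w=0 \Rightarrow \limsup = 0$) is immediate. For the forward direction, my plan is to chain together three tools already established in this section: Lemma~\ref{lem:mv and sup} (which compares the mean value of a real-valued function with its limsup), Lemma~\ref{lem:ud map fn} (which transports mean values under uniform distribution), and Proposition~\ref{prop:Bohr} (Bohr's theorem: an almost periodic non-negative function with vanishing mean value is identically zero).

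\medskip
\noindent
First I would observe that the composition $w\circ f\colon \R\to\R$ is a bounded measurable function with values in $\R^{\geq}$, and that by condition~(iii) of Lemma~\ref{lem:ud map fn} applied to the almost periodic function $w$ and the uniformly distributed $f$, this composition has a mean value satisfying
\[
M(w\circ f)\ =\ M(w).
\]
Next, applying Lemma~\ref{lem:mv and sup} to the one-variable function $w\circ f$ (with $n=1$ in the lemma), which takes values in $\R$, yields
\[
M(w\circ f)\ \leq\ \limsup_{t\to+\infty} w\bigl(f(t)\bigr)\ =\ 0,
\]
by our hypothesis. On the other hand, $w\circ f\geq 0$ forces $M(w\circ f)\geq 0$, so $M(w\circ f)=0$, and therefore $M(w)=0$.

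\medskip
\noindent
Finally, since $w$ is almost periodic, $w(\R^n)\subseteq\R^{\geq}$, and $M(w)=0$, Proposition~\ref{prop:Bohr} yields $w=0$, as desired. I do not anticipate any genuine obstacle here: all the work has been done upstream in assembling the mean-value machinery, Weyl's criterion, and Bohr's theorem; the corollary is essentially a three-line assembly of these ingredients.
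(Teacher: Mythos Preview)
Your proof is correct and matches the paper's approach exactly: the paper's one-line proof chains together Lemma~\ref{lem:mv and sup}, Lemma~\ref{lem:ud map fn}, and Proposition~\ref{prop:Bohr} in precisely the way you describe.
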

\begin{proof}
Lemmas~\ref{lem:mv and sup} and~\ref{lem:ud map fn} give $0\le M(w)=M(w\circ f)\leq \limsup\limits_{t\to+\infty} w\big(f(t)\big)$.  Now use Proposition~\ref{prop:Bohr}.
\end{proof}

\subsection*{Application to Hardy fields}

\noindent
{\it In  this subsection $f_1,\dots, f_n\colon \R\to \R$  are continuous, their germs, denoted also by $f_1,\dots, f_n$, lie in a common Hardy field, and as above~$f:=(f_1,\dots,f_n)\colon \R\to \R^n$.}\/ 

\begin{theorem}[{Boshernitzan \cite[Theorem~1.12]{BoshernitzanUniform}}]\label{thm:Bosh} 
$$\text{$f$ is u.d.~$\operatorname{mod}\,1$}\ \Longleftrightarrow\ \text{$k_1f_1+\cdots+k_nf_n \succ \log x$ for all $(k_1,\dots,k_n)\in (\Z^n)^{\neq}$.}$$
\end{theorem}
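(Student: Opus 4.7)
The plan is to apply Weyl's equidistribution criterion (condition~(i) of Theorem~\ref{thm:Weyl}) to reduce the equivalence to a one-variable oscillatory integral estimate, and then handle both directions via integration by parts combined with Hardy-field asymptotics (Rosenlicht's L'Hospital rule and eventual sign-definiteness of derivatives). After the reduction, $f$ is u.d.~$\operatorname{mod}\,1$ iff for every $k\in(\Z^n)^{\neq}$ the germ $g:=k\cdot f=k_1f_1+\cdots+k_nf_n$, which lies in the common Hardy field $H$ containing $f_1,\dots,f_n$, satisfies $T^{-1}\int_0^T e^{2\pi i g(t)}\,dt\to 0$. If $g=0$ the integral equals $1$ and simultaneously $g\not\succ\log x$, so it suffices to prove, for each nonzero $g\in H$,
\begin{equation*}
  \frac{1}{T}\int_0^T e^{2\pi i g(t)}\,dt\ \xrightarrow{T\to\infty}\ 0\qquad\Longleftrightarrow\qquad g\succ\log x.
\end{equation*}

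For the implication ``$\Leftarrow$'' I invoke the van der Corput first-derivative test. Since $g,g',g''\in H$, each is of eventual constant sign, so $g'$ is eventually nonvanishing and monotone on some $[a,+\infty)$. The hypothesis $g\succ\log x$ means $g/\log x\to\pm\infty$, and Rosenlicht's L'Hospital rule for Hardy fields \cite{Rosenlicht83} converts this to $xg'\to\pm\infty$, i.e.\ $|g'|\succ 1/x$. The van der Corput estimate then gives
\begin{equation*}
\left|\int_a^T e^{2\pi i g(t)}\,dt\right|\ \leq\ \frac{C}{\min_{[a,T]}|g'(t)|}\qquad(T>a),
\end{equation*}
and regardless of whether $|g'|$ is eventually increasing or eventually decreasing, the condition $t|g'(t)|\to\infty$ yields $T\cdot\min_{[a,T]}|g'|\to\infty$, whence $T^{-1}\bigl|\int\bigr|\to 0$.

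For ``$\Rightarrow$'' I argue by contraposition: assume $g\in H\setminus\{0\}$ with $g\not\succ\log x$. Then $g/\log x\in H$ is bounded, so it converges to some $c\in\R$, and L'Hospital transfers this to $tg'(t)\to c$. Integration by parts (antidifferentiating $1$ as $t$) produces
\begin{equation*}
\int_0^T e^{2\pi i g(t)}\,dt\ =\ T\,e^{2\pi i g(T)}\ -\ 2\pi i\int_0^T tg'(t)\,e^{2\pi i g(t)}\,dt.
\end{equation*}
Writing $tg'(t)=c+\varepsilon(t)$ with $\varepsilon(t)\to 0$, a routine splitting at some $t_0$ past which $|\varepsilon|<\delta$ shows $\int_0^T\varepsilon(t)e^{2\pi i g(t)}\,dt=o(T)$. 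Substituting and solving for the original integral,
\begin{equation*}
\frac{1}{T}\int_0^T e^{2\pi i g(t)}\,dt\ =\ \frac{e^{2\pi i g(T)}}{1+2\pi ic}\ +\ o(1),
\end{equation*}
whose modulus stays bounded below by the positive constant $1/|1+2\pi ic|$, so the average cannot tend to $0$. This single formula also covers the subcase when $g$ is bounded (forcing $c=0$ and $g\to c_0$, giving the limit $e^{2\pi i c_0}\neq 0$).

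The entire argument rests on two workhorses: Rosenlicht's bidirectional L'Hospital rule in Hardy fields, which trades asymptotic information about $g$ for information about $xg'$, and the classical van der Corput first-derivative estimate. The only subtle point is the uniform bound $\int_0^T\varepsilon(t)e^{2\pi i g(t)}\,dt=o(T)$ in the contrapositive direction, but this follows from the standard $(\delta,t_0)$-splitting. I do not anticipate a substantial obstacle beyond invoking this Hardy-field machinery---the stable asymptotic behaviour of Hardy-field germs (eventual monotonicity, existence of limits in $\R\cup\{\pm\infty\}$, sign-definiteness of derivatives) is precisely what makes the calculation go through and is what the Hardy-field hypothesis is for.
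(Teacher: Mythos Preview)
The paper does not prove this theorem; it is stated with attribution to Boshernitzan \cite[Theorem~1.12]{BoshernitzanUniform} and used as a black box. Your argument is a correct self-contained proof. The reduction via Weyl's criterion to a single nonzero $g\in H$ is immediate, and both directions are handled cleanly: the $\Leftarrow$ direction by the van der Corput first-derivative lemma (the monotonicity and nonvanishing of $g'$ being automatic in a Hardy field, and the two cases ``$|g'|$ eventually increasing'' vs.\ ``eventually decreasing'' exhausting the possibilities), and the $\Rightarrow$ direction by the integration-by-parts identity, which is valid because $1+2\pi i c\neq 0$ for real $c$. The crucial point that L'H\^opital works \emph{bidirectionally} in Hardy fields---i.e., $\lim g/\log x=\lim xg'$ with both limits existing in $\R\cup\{\pm\infty\}$---is exactly what the Hardy-field hypothesis delivers and is what makes the argument go through where it would fail for general smooth $g$.

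Two minor presentational points. First, the integration by parts should start at some $a$ where $g$ is $\c^1$ (the germs $f_i$ are only assumed continuous, though eventually $\c^n$ for every $n$); the boundary term at $a$ and the integral over $[0,a]$ are $O(1)=o(T)$, so this is harmless. Second, ``modulus bounded below by $1/|1+2\pi ic|$'' should read ``eventually bounded below by, say, $\tfrac{1}{2}/|1+2\pi ic|$'' because of the $o(1)$ term; this does not affect the conclusion.
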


\noindent
For example, given $\lambda_1,\dots,\lambda_n\in\R$, the map $t\mapsto (\lambda_1 t,\dots,\lambda_n t) \colon \R \to \R^n$ is u.d.~$\operatorname{mod}\,1$
iff $\lambda_1,\dots,\lambda_n$ are $\Q$-linearly independent (Weyl~\cite{Weyl}).

\begin{cor}\label{nbos}
We have the following equivalence:
$$\text{$f$ is u.d.}\ \Longleftrightarrow\ \text{$\beta_1f_1+\cdots+\beta_nf_n \succ \log x$ for all $(\beta_1,\dots,\beta_n)\in (\R^n)^{\neq}$.}$$
In particular, if $\log x\prec f_1\prec\cdots\prec f_n$, then $f$ is u.d. 
\end{cor}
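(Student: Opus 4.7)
The plan is to deduce this corollary directly from Boshernitzan's theorem (Theorem~\ref{thm:Bosh}) combined with the characterization of uniform distribution (Lemma~\ref{lem:ud map fn}), by a bookkeeping argument that passes between real coefficients and the $(k,\alpha)$-parametrization with $k\in(\Z^n)^{\neq}$ and $\alpha\in(\R^\times)^n$.

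For the direction $(\Leftarrow)$, assume $\beta_1 f_1+\cdots+\beta_n f_n \succ \log x$ for every $\beta\in(\R^n)^{\neq}$. Given any $\alpha\in(\R^\times)^n$ and any $k\in(\Z^n)^{\neq}$, the tuple $\beta:=(k_1\alpha_1,\dots,k_n\alpha_n)$ is nonzero in $\R^n$, so $k_1\alpha_1 f_1+\cdots+k_n\alpha_n f_n\succ \log x$. Applying Theorem~\ref{thm:Bosh} to the tuple $\alpha f=(\alpha_1 f_1,\dots,\alpha_n f_n)$ (whose components still lie in a common Hardy field) shows that $\alpha f$ is u.d.~$\operatorname{mod}\,1$. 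Since $\alpha\in(\R^\times)^n$ was arbitrary, condition~(i) of Lemma~\ref{lem:ud map fn} holds, so $f$ is u.d.

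For the direction $(\Rightarrow)$, suppose $f$ is u.d., and let $\beta\in(\R^n)^{\neq}$. Define $k\in\Z^n$ and $\alpha\in\R^n$ coordinate-wise by $(k_i,\alpha_i)=(1,\beta_i)$ if $\beta_i\neq 0$ and $(k_i,\alpha_i)=(0,1)$ if $\beta_i=0$. Then $\alpha\in(\R^\times)^n$, and $k\in(\Z^n)^{\neq}$ because $\beta\neq 0$, and $k_i\alpha_i=\beta_i$ for all $i$. Since $\alpha f$ is u.d.~$\operatorname{mod}\,1$ by Lemma~\ref{lem:ud map fn}, Theorem~\ref{thm:Bosh} applied to $\alpha f$ yields $\beta_1 f_1+\cdots+\beta_n f_n=k_1\alpha_1 f_1+\cdots+k_n\alpha_n f_n\succ\log x$, as desired.

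For the ``In particular'' clause, assume $\log x\prec f_1\prec\cdots\prec f_n$, and let $\beta\in(\R^n)^{\neq}$. Let $m$ be the largest index with $\beta_m\neq 0$. For $i<m$ we have $\beta_i f_i\prec f_m$ (using $f_i\prec f_m$), while $\beta_m f_m\asymp f_m$; hence $\beta_1 f_1+\cdots+\beta_n f_n\sim \beta_m f_m$, and in particular this sum is $\asymp f_m$. Since $f_1\prec f_m$ implies $|f_m|\succeq |f_1|$ and $f_1\succ\log x$, we get $f_m\succ\log x$, so $\beta_1 f_1+\cdots+\beta_n f_n\succ\log x$, and the first part applies.

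There is essentially no obstacle here: the content is entirely in Theorem~\ref{thm:Bosh}, and the main bookkeeping is the observation that any nonzero $\beta\in\R^n$ can be written as $k\alpha$ coordinate-wise with $k\in(\Z^n)^{\neq}$ and $\alpha\in(\R^\times)^n$ by padding zero coordinates of $\beta$ with the pair $(0,1)$.
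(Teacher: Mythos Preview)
Your proof is correct and follows essentially the same approach as the paper: both directions are reduced to Theorem~\ref{thm:Bosh} via the same coordinate-wise decomposition $\beta_i=k_i\alpha_i$ (with $(k_i,\alpha_i)=(1,\beta_i)$ or $(0,1)$), exactly as in the proof of Lemma~\ref{lem:ud map fn}. The paper is terser and does not spell out the ``In particular'' clause, but your argument for it is fine.
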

{\sloppy
\begin{proof}
Let $\beta=(\beta_1,\dots,\beta_n)\in (\R^n)^{\neq}$ and define $\alpha_i$, $k_i$ as in the proof of~(i)~$\Rightarrow$~(ii) in Lem\-ma~\ref{lem:ud map fn}.
Then~${\beta_1f_1+\cdots+\beta_nf_n}= k_1 (\alpha_1 f_1) +\cdots + k_n (\alpha_n f_n)$, and the germs~$\alpha_1 f_1,\dots,\alpha_n f_n$ lie in a common Hardy field by Proposition~\ref{prop:Li(H(R))}. Using this observation and Theorem~\ref{thm:Bosh} yields the forward direction. The backward direction is also an easy consequence of Theorem~\ref{thm:Bosh}.
\end{proof}}

\noindent
We now get to the result  that we actually need in Section~\ref{sec:ueeh}.

\begin{prop}\label{prop:limsup vs sup}
Suppose $w$ is almost periodic, $w(\R^n)\subseteq\R^{\geq}$, $1\prec f_1\prec\cdots\prec f_n$, and
 $\limsup\limits_{t\to+\infty} w\big(f(t)\big) = 0$. Then $w=0$.
\end{prop}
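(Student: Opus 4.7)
\emph{Plan.} The plan is to reduce the problem to Corollary~\ref{cor:limsup vs sup} by the reparametrization $\tau := f_1(t)$, which turns the curve $f$ into a uniformly distributed one.

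Since $f_1$ lies in a Hardy field and $f_1 \succ 1$, the function $f_1$ is eventually $\c^1$ and strictly increasing with $f_1(t)\to+\infty$, and so admits a compositional inverse $\phi := f_1^{-1}$ defined on a tail of $+\infty$. By standard closure properties of Hardy fields under compositional inversion of, and composition with, hardian germs tending to $+\infty$, I may assume that the germs $\tilde f_i := f_i\circ\phi$ for $i=1,\ldots,n$ all lie in a common Hardy field.

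Now $\tilde f_1(\tau) = \tau$, while for $i>1$ we have $\tilde f_i/\tilde f_1 = (f_i/f_1)\circ\phi \to +\infty$ because $f_i\succ f_1$; similarly $\tilde f_i\prec \tilde f_j$ for $1\le i<j\le n$. Hence $1\prec \tilde f_1\prec\tilde f_2\prec\cdots\prec\tilde f_n$, and since $\tilde f_1 = \tau\succ\log\tau$ we also get $\log\tau \prec\tilde f_i$ for each $i$. The second assertion of Corollary~\ref{nbos} therefore shows that $\tilde f := (\tilde f_1,\ldots,\tilde f_n)$ is uniformly distributed.

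The substitution $\tau=f_1(t)$ is a homeomorphism of some tail $[T_0,+\infty)$ onto $[f_1(T_0),+\infty)$ with $w(f(t))=w(\tilde f(\tau))$, so the hypothesis $\limsup_{t\to+\infty}w(f(t))=0$ translates to $\limsup_{\tau\to+\infty}w(\tilde f(\tau))=0$. Corollary~\ref{cor:limsup vs sup} applied to $w$ and $\tilde f$ then yields $w=0$. The one delicate step is the Hardy field closure argument used to place all the $\tilde f_i$ in a common Hardy field; with that in hand, everything else is essentially bookkeeping.
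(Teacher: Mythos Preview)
Your approach is essentially identical to the paper's: reparametrize by the inverse of $f_1$, check that the resulting curve is uniformly distributed via Corollary~\ref{nbos}, and conclude with Corollary~\ref{cor:limsup vs sup}.

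There is one small gap. From $f_1\succ 1$ you only get $|f_1|\to+\infty$; in a Hardy field $f_1$ is eventually of constant sign, but that sign may be negative, in which case $f_1$ is eventually strictly \emph{decreasing} to $-\infty$ and your inverse $\phi=f_1^{-1}$ is not defined on a tail of $+\infty$. The paper handles this at the outset by replacing $f_1,\dots,f_n$ with $-f_1,\dots,-f_n$ and $w$ with $s\mapsto w(-s)$ (still almost periodic and nonnegative) to arrange $f_1>\R$. With that one-line reduction added, your argument is complete and matches the paper's.
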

\begin{proof}
We first arrange $f_1>\R$, replacing $f_1,\dots,f_n$ and $w$ by $-f_1,\dots,-f_n$ and the function $s\mapsto w(-s)\colon\R^n\to\R^{\geq}$, if $f_1<\R$.
Pick $a$ such that the restriction of $f_1$ to $\R^{\geq a}$ is strictly increasing, set $b:=f_1(a)$, and let 
$f_1^{\inv}\colon\R^{\geq b}\to\R$ be the compositional inverse of this restriction.
Set $g_j(t):=(f_j \circ f_1^{\inv})(t)$ for $t\ge b$ and~$j=1,\dots,n$
and  consider the map $$g\ =\ (g_1,\dots, g_n)\ =\ f\circ f_1^{\inv}\  :\  \R^{\ge b} \to \R^n.$$
The germs of $g_1,\dots,g_n$, denoted by the same symbols, 
lie in a common Hardy field (see for example \cite[Section~4]{ADH5}) and satisfy $x=g_1\prec  g_2 \prec  \cdots \prec  g_n$.
Now $f_1^{\inv}$ is strictly increasing and moreover~$f_1^{\inv}(t)\to+\infty$ as $t\to+\infty$, so
$$\limsup_{t\to+\infty} w\big(f(t)\big)\ =\ \limsup_{t\to+\infty} w\big(f\big(f_1^{\inv}(t)\big)\big)\ =\ 
\limsup_{t\to+\infty} w\big(g(t)\big)\ =\ 0.$$
Thus replacing $f_1,\dots,f_n$ by continuous functions $\R\to\R$ with the same 
germs as~$g_1,\dots,g_n$, we arrange $x=f_1\prec f_2\prec\cdots\prec f_n$.
Then $f$ is u.d.~by  Corollary~\ref{nbos}. Now use Corollary~\ref{cor:limsup vs sup}.
\end{proof}

\section{Universal Exponential Extensions of Hardy Fields}\label{sec:ueeh}

 \noindent
{\it In this section $H\supseteq \R$ is a Liouville closed Hardy field and $K:=H[\imag]$. So $K$ is an algebraically closed $\d$-valued field with constant field $\C$.}\/ In order to give an analytic
  description of the universal differential extension $\Univ=\Univ_K$ of $K$
 we consider the differential ring extension $\Calinf[\imag]$ of $K$
with   ring of constants $\C$. For any $f\in \Calinf[\imag]$ we have a $g\in \Calinf[\imag]$
with $g'=f$, and then $u=\ex^g\in \Calinf[\imag]^\times$ satisfies
$u^\dagger =f$.  

\begin{lemma}\label{purelyimag} Suppose $f\in \Calinf[\imag]$ is purely imaginary, that is,
$f\in \imag\Calinf$. Then there is a $u\in \Calinf[\imag]^\times$
such that $u^\dagger=f$ and $|u|=1$. 
\end{lemma}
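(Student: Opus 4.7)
The plan is entirely constructive: write $f=\imag h$ with $h\in\Gi$, take an antiderivative $g$ of $h$ in $\Gi$, and set $u:=\cos g+\imag \sin g=\ex^{\imag g}$. Then $u$ will automatically satisfy $|u|=1$ and $u^\dagger=\imag g'=\imag h=f$. The only non-trivial bookkeeping is to check that $g$ (and hence $u$) really lies in $\Gi$.

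First I would choose a representative of $h$ on some interval $[a,+\infty)$ that is $\Go$, which is possible because $h\in\Gi\subseteq\Go$. Then define
\[
g(t)\ :=\ \int_a^t h(s)\,ds\qquad(t\geq a),
\]
which is manifestly $\Gt$ on $[a,+\infty)$. To see that the germ of $g$ lies in $\Gi$, fix $n\geq 1$; since $h\in\Gn$, the chosen representative of $h$ agrees eventually with some representative that is $\c^n$ on an interval $[b,+\infty)$, and because two continuous representatives of the same germ coincide eventually, our representative of $h$ is itself $\c^n$ on $[b,+\infty)$ for some $b\geq a$. Hence $g$ is $\c^{n+1}$ on $[b,+\infty)$, and therefore the germ of $g$ lies in $\Gn$. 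As $n$ was arbitrary, the germ of $g$ lies in $\bigcap_n\Gn=\Gi$.

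Next I would set
\[
u\ :=\ \cos g+\imag\sin g\ \in\ \Gi[\imag].
\]
Using the chain rule term by term and the facts that $\sin,\cos\in\Gom$, one checks that $u\in\Gi[\imag]$, with
\[
u'\ =\ \bigl(-\sin g+\imag\cos g\bigr)g'\ =\ \imag(\cos g+\imag\sin g)g'\ =\ \imag\,h\,u\ =\ f\,u.
\]
Moreover $|u|^2=\cos^2 g+\sin^2 g=1$, so in particular $u\in\Gi[\imag]^\times$. Dividing the displayed equation by $u$ yields $u^\dagger=f$.

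There is really no hard step here; the only point to be careful about is the verification that the antiderivative germ $g$ inherits the ``eventually $\c^n$ for each $n$'' property from $h$, which is the routine checking carried out above.
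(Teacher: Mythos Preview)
Your proof is correct and takes essentially the same approach as the paper: write $f=\imag h$, antidifferentiate $h$ to get a real $g\in\Gi$, and set $u=\ex^{\imag g}$. The paper states this in one line (with the roles of $g$ and $\imag g$ swapped), relying on the earlier remark that every element of $\Gi[\imag]$ has an antiderivative in $\Gi[\imag]$, whereas you spell out the routine verification that the antiderivative germ lies in $\Gi$.
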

\begin{proof} Taking $g\in \imag\Calinf$ with $g'=f$, the resulting
$u=\ex^g$ works. 
\end{proof}

\noindent 
We define the subgroup $\ex^{H\imag}$ of $\Calinf[\imag]^\times$ by 
$$\ex^{H\imag}\ :=\ \big\{\!\ex^{h\imag}:\ h\in H\big\}\ =\ \big\{u\in \Calinf[\imag]^\times:\ |u|=1,\ u^\dagger\in H\imag\big\}.$$ 
Then 
$(\ex^{H\imag})^\dagger=H\imag$ by Lemma~\ref{purelyimag}, so $(H^\times\cdot \ex^{H\imag})^\dagger=K$
and thus~$K[\ex^{H\imag}]$ is an exponential extension of $K$   with the same ring of constants $\mathbb C$ as~$K$.
Fix a complement~$\Lambda\subseteq H\imag$ of   the subspace $K^\dagger$ of the $\Q$-linear space $K$, and 
let $\lambda$ range over $\Lambda$.
The differential $K$-algebras~$\Univ:= K\big[\!\ex(\Lambda)\big]$ and
$K[\ex^{H\imag}]$ are isomorphic by Corollary~\ref{corcharexp}, but we need something better:

\begin{lemma}\label{exqe} There is an isomorphism $\Univ\to K[\ex^{H\imag}]$ 
of differential $K$-algebras that maps $\ex(\Lambda)$ into $\ex^{H\imag}$. 
\end{lemma}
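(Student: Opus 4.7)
The plan is to build a differential $K$-algebra morphism $\phi\colon\Univ\to K[\ex^{H\imag}]$ whose restriction to $\ex(\Lambda)$ takes values in $\ex^{H\imag}$, and then conclude via Lemma~\ref{lem:embed into U} that $\phi$ is automatically an isomorphism.

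For the construction of $\phi$: since $H$ is Liouville closed and $\Lambda\subseteq H\imag$, each $\lambda\in\Lambda$ can be written as $f_\lambda\imag$ with $f_\lambda\in H$, and $f_\lambda$ admits an antiderivative in $H$. Picking a $\Q$-basis $(\lambda_i)$ of $\Lambda$, choosing one such antiderivative $h_{\lambda_i}\in H$ of $f_{\lambda_i}$ for each basis element, and extending $\Q$-linearly, I arrange the assignment $\lambda\mapsto h_\lambda\colon\Lambda\to H$ to be $\Q$-linear with $h_\lambda'=f_\lambda$. Then $E_\lambda:=\ex^{h_\lambda\imag}\in\ex^{H\imag}$ satisfies $E_\lambda^\dagger=\lambda$ and $E_{\lambda+\mu}=E_\lambda E_\mu$, so $\ex(\lambda)\mapsto E_\lambda$ is a group morphism $\ex(\Lambda)\to(\ex^{H\imag})^\times$ and extends uniquely to a $K$-algebra morphism $\phi\colon\Univ=K[\ex(\Lambda)]\to K[\ex^{H\imag}]$. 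Differential compatibility holds because $E_\lambda'=\lambda E_\lambda$, and $\phi(\ex(\Lambda))\subseteq\ex^{H\imag}$ by construction.

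For the conclusion: set $R:=K[\ex^{H\imag}]$. Then $R$ is an exponential extension of $K$, since it is generated by $\ex^{H\imag}$ whose elements have logarithmic derivatives in $H\imag\subseteq K$; and $C_R=C_K=\C$ because the constant field of $\Calinf[\imag]\supseteq R$ is $\C\subseteq K$. The subgroup $\Lambda_0:=\Lambda\cap(R^\times)^\dagger$ contains every $\lambda\in\Lambda$ as $E_\lambda^\dagger$, so $\Lambda_0=\Lambda$. Hence Lemma~\ref{lem:embed into U} applies with this $R$ and asserts that any differential $K$-algebra morphism $\Univ=K[\ex(\Lambda_0)]\to R$ is an isomorphism; in particular, $\phi$ is.

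The delicate point is really the $\Q$-linear choice of the antiderivatives $h_\lambda$ inside $H$: it is what forces multiplicativity $E_{\lambda+\mu}=E_\lambda E_\mu$, so that the resulting section of the logarithmic-derivative map lands inside $\ex^{H\imag}$ rather than merely inside $R^\times$. Corollary~\ref{corcharexp} alone would produce some isomorphism $\Univ\to R$, but without the required control over the image of $\ex(\Lambda)$; Liouville closedness of $H$ combined with the basis trick is exactly what is needed to enforce this extra structural property.
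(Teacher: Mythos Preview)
Your proof is correct and follows the same overall architecture as the paper's: build a group morphism $\ex(\Lambda)\to\ex^{H\imag}$ sectioning the logarithmic derivative, extend to a differential $K$-algebra morphism, and invoke Lemma~\ref{lem:embed into U} with $R=K[\ex^{H\imag}]$ (which, as the paper notes just before the lemma, is an exponential extension of $K$ with $C_R=\C$). The only genuine difference lies in how the section is produced. You pick a $\Q$-basis of $\Lambda$, choose antiderivatives in $H$ for the basis elements (using Liouville closedness), and extend $\Q$-linearly; the resulting map $\lambda\mapsto\ex^{h_\lambda\imag}$ is then multiplicative by construction. The paper instead writes down the short exact sequence $1\to S\to\ex^{H\imag}\xrightarrow{u\mapsto u^\dagger}H\imag\to 0$ and observes that it splits because the kernel $S=\{z\in\C^\times:|z|=1\}$ is a divisible abelian group, yielding a section $e\colon H\imag\to\ex^{H\imag}$ all at once. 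Your argument splits the same sequence (restricted over $\Lambda$) from the other side, exploiting that $\Lambda$ is a $\Q$-vector space and hence admits a basis. The paper's route is slightly slicker in that it avoids the basis bookkeeping, but both are valid and rest on the same ingredients.
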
 
\begin{proof} We have a short exact sequence of commutative groups
$$1 \to S \xrightarrow{\ \subseteq\ } \ex^{H\imag} \xrightarrow{\ \ell\ } H\imag \to 0,$$
where $S=\big\{z\in \mathbb{C}^\times:\, |z|=1\big\}$ and
 $\ell(u):=u^\dagger$ for~$u\in \ex^{H\imag}$. Since the subgroup~$S$ of~$\mathbb{C}^\times$ is divisible, this sequence splits: we have a group embedding 
$e\colon H\imag\to \ex^{H\imag}$ such that $e(b)^\dagger=b$ for all $b\in H\imag$. Then
the group embedding $$\ex(\lambda)\mapsto e(\lambda)\ :\ \ex(\Lambda)\to \ex^{H\imag}$$  extends uniquely to a $K$-algebra morphism $\Univ\to K[\ex^{H\imag}]$. Since
$\ex(\lambda)^\dagger=\lambda=e(\lambda)^\dagger$ for all $\lambda$, this is a
differential $K$-algebra morphism, and even an isomorphism by
Lemma~\ref{lem:embed into U} applied to $R=K[\ex^{H\imag}]$.   
\end{proof}

\noindent
Complex conjugation $f+g\imag\mapsto \overline{f+g\imag}=f-g\imag$ ($f,g\in \Calinf$) is an automorphism of the differential ring~$\Calinf[\imag]$ over $H$ and maps
$K[\ex^{H\imag}]$ onto itself, sending each~${u\in \ex^{H\imag}}$ to~$u^{-1}$. Thus any isomorphism
$\iota\colon \Univ\to K[\ex^{H\imag}]$ of differential $K$-algebras with~$\iota\big(\!\ex(\Lambda)\big)\subseteq \ex^{H\imag}$ as in  Lemma~\ref{exqe} also satisfies 
$\iota(\overline{f})=\overline{\iota(f)}$ for~$f\in \Univ$,
where $f\mapsto\overline{f}$ is the ring  automorphism of $\Univ$ which extends complex conjugation on~$K=H[\imag]$
and satisfies $\overline{\ex(\lambda)}=\ex({-\lambda})$ for all $\lambda$;
see \cite[Section~2.2, subsection ``The real case'']{ADH4}.) Hence  by \cite[Lemma 2.2.14, Corollary~2.2.17]{ADH4},
given such an isomorphism~$\iota$, any differential $K$-algebra isomorphism as in Lemma~\ref{exqe}   equals $\iota\circ\sigma_\chi$ for a unique character~$\chi\colon\Lambda\to\C^\times$ with $\abs{\chi(\lambda)}=1$ for all $\lambda$, where $\sigma_\chi$ is the differential $K$-algebra
automorphism of $\Univ$ with~$\sigma_\chi(\ex(\lambda))=\chi(\lambda)\ex(\lambda)$ for all $\lambda$.
Fix such an isomorphism $\iota$ and identify $\Univ$ with its image $K[\ex^{H\imag}]$ via~$\iota$.

\begin{lemma}\label{lem:basis of kerUA} 
Let $A\in K[\der]^{\neq}$. Then the $\C$-linear space $\ker_{\Univ} A$  has a basis
$$f_1\ex^{\phi_1\imag},\dots,f_d\ex^{\phi_d\imag}\quad\text{ where $f_j\in K^\times$,  $\phi_j  \in H$ \textup{(}$j=1,\dots,d$\textup{)}.}$$
If $\I(K)\subseteq K^\dagger$, then we can choose the $f_j$, $\phi_j$ such that in addition, for each $j$, we have $\phi_j=0$ or $\phi_j\succ 1$.
\end{lemma}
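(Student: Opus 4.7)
For the first assertion, my plan is simply to assemble the pieces already in place. \cite[Lemma~2.4.1]{ADH4} supplies a basis $u_1,\dots,u_d$ of $\ker_{\Univ}A$ inside $\Univ^\times$, and since $\Univ^\times=K^\times\cdot\ex(\Lambda)$ with $\iota\big(\!\ex(\Lambda)\big)\subseteq\ex^{H\imag}$ under our identification, each $u_j$ takes the desired form $f_j\ex^{\phi_j\imag}$ with $f_j\in K^\times$ and $\phi_j\in H$.

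For the second assertion I would exploit the additional freedom available when $\I(K)\subseteq K^\dagger$: namely, take $\Lambda=\Lambda_H\imag$ with $\Lambda_H\subseteq H$ an $\R$-linear complement of $\I(H)$, as noted before Proposition~\ref{prop:achieve ultimate}. With this choice, each basis vector can be written as $u_j=f_j\ex(\mu_j\imag)$ for some $\mu_j\in\Lambda_H$, and then $\iota(u_j)=f_j\ex^{\phi_j\imag}$ for some $\phi_j\in H$ satisfying $\phi_j'=\mu_j$ (such a $\phi_j$ exists because $H$ is Liouville closed, and is uniquely determined modulo $2\pi\Z$). The payoff is then an immediate dichotomy. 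If $\mu_j=0$, then $\ex(\mu_j\imag)=1$ and we may take $\phi_j=0$. If $\mu_j\neq 0$, then $\mu_j\in\Lambda_H\setminus\{0\}$ is not in $\I(H)$, so by the very definition of $\I(H)$ as the set of derivatives of bounded elements of $H$, no antiderivative of $\mu_j$ in $H$ can lie in $\O_H$; hence $\phi_j\succ 1$.

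The only subtlety to verify, which I expect to be the one place requiring care rather than any substantive obstacle, is that the conclusion ``$\phi_j=0$ or $\phi_j\succ 1$'' is unaffected by the ambiguity in the choice of $\phi_j$: two antiderivatives of $\mu_j$ in $H$ differ by a real constant, and $\R\subseteq\O_H$, so one belongs to $\O_H$ if and only if both do. Granted this, the conclusion follows cleanly from the direct-sum decomposition $H=\I(H)\oplus\Lambda_H$ combined with \cite[Lemma~2.4.1]{ADH4}, and no further analytic input is needed.
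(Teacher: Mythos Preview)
Your argument for the first part is essentially the paper's: a basis in $\Univ^\times=K^\times\ex(\Lambda)$ maps into $K^\times\cdot\ex^{H\imag}$ under the identification.

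For the second part your approach is correct but genuinely different from the paper's. The paper does not change $\Lambda$; instead, starting from an arbitrary basis $f_j\ex^{\phi_j\imag}$, it invokes \cite[Proposition~6.11]{ADH5}: under the hypothesis $\I(K)\subseteq K^\dagger$, whenever $\phi\in H$ with $\phi\preceq 1$ one has $\ex^{\phi\imag}\in K^\times$, so any basis vector with $\phi_j\preceq 1$ can be rewritten as $(f_j\ex^{\phi_j\imag})\cdot\ex^{0\cdot\imag}$ with the new coefficient in $K^\times$. Your route instead exploits the freedom to pick $\Lambda=\Lambda_H\imag$ with $\Lambda_H$ complementing $\I(H)$, so that the dichotomy $\phi_j=0$ or $\phi_j\succ 1$ is automatic from the direct-sum decomposition and the observation that $\phi\preceq 1\Rightarrow\phi'\in\I(H)$. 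Your argument is more self-contained (it avoids the external input from \cite{ADH5}) and arguably more conceptual; the paper's argument has the minor advantage of not disturbing the already-fixed $\Lambda$ and $\iota$, though this is harmless since the statement of the lemma is intrinsic to $K[\ex^{H\imag}]$ and does not depend on these choices.
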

{\sloppy
\begin{proof}
By the remarks before \eqref{eq:2.4.8}, $\ker_{\Univ} A$ has a basis contained in $\Univ^\times=K^\times\ex(\Lambda)$. Using~$\ex(\Lambda)\subseteq \ex^{H\imag}$, this yields the first part. For the second part, use that if~${\I(K)\subseteq K^\dagger}$ and $\phi\in H$, $\phi\preceq 1$, then $\ex^{\phi\imag}\in K^\times$ by \cite[Proposition~6.11]{ADH5}.
\end{proof}}

\begin{remark}
Let $A\in K[\der]^{\ne}$, and suppose $\I(K)\subseteq K^\dagger$. Then one can  choose the~$f_j$,~$\phi_j$ as in Lemma~\ref{lem:basis of kerUA} such that also for all $j\neq k$,
either
$\phi_j-\phi_k\succ 1$, or~${\phi_j=\phi_k}$ and~$vf_j\neq vf_k$ in $v(K^\times)$. For such $f_j$, $\phi_j$ we have $\exc^{\operatorname{u}}(A) \supseteq \{ vf_1,\dots,vf_d\}$. These statements are proved in~\cite{ADH6} and not used later in this paper, but explain how $\exc^{\operatorname{u}}(A)$ helps
to locate the~$y\in\Calinf[\imag]$ with $A(y)=0$.
\end{remark}

\noindent
We have the asymptotic relations
$\preceq_{\g}$ and~$\prec_{\g}$
on $\Univ$ coming from the gaussian extension~$v_{\g}$ of the valuation on $K$. (See Section~\ref{sec:prelims}.) But we also have the asymptotic relations induced on $\Univ=K[\ex^{H\imag}]$
by the relations $\preceq$ and $\prec$ defined on $\mathcal C[\imag]$. It is clear that for $f\in\Univ$:
\begin{align*}
f\preceq_{\g} 1 	&\quad \Longrightarrow \quad f\preceq 1 \quad \Longleftrightarrow \quad \text{for some $n$ we have $\abs{f(t)}\leq n$ eventually,} \\
f\prec_{\g} 1	&\quad \Longrightarrow \quad f\prec 1 \quad \Longleftrightarrow \quad \lim_{t\to+\infty} f(t)=0.
\end{align*}
As a tool for later use we derive a converse of the implication
$f\prec_{\g}1\Rightarrow f\prec 1$: Lemma~\ref{lem:gaussian ext dom} below, where we assume in addition
that~${\I(K)\subseteq K^\dagger}$ and $\Lambda$ is an $\R$-linear subspace
of $K$. 
This requires the material from Section~\ref{sec:udmod1} and some considerations about exponential sums treated in the next subsection.

\subsection*{Exponential sums over Hardy fields} In this subsection $n\geq 1$.
In the next lemma, $f=(f_1,\dots,f_m)\in H^m$ where $m\ge 1$ and
 $1\prec f_1\prec\cdots\prec f_m$. (In that lemma it doesn't matter which
 functions we use to represent the germs $f_1,\dots, f_m$.)
For~$r=(r_1,\dots,r_m)\in\R^m$ we set $r\cdot f:= r_1f_1+\cdots + r_mf_m\in H$.
 
\begin{lemma}\label{lem:limsup 1} 
Let $r^1,\dots,r^n\in\R^m$ be distinct and $c_1,\dots,c_n\in\mathbb C^\times$. Then
$$\limsup_{t\to \infty} \left|c_1 \ex^{(r^1\cdot f)(t)\imag}+ \cdots + c_n\ex^{(r^n\cdot f)(t)\imag}\right|\ >\ 0.$$ 
\end{lemma}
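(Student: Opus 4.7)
The plan is to reduce the lemma to Proposition~\ref{prop:limsup vs sup} by working not with the sum itself but with its squared modulus, which has the virtue of being a trigonometric polynomial on $\R^m$ whose coefficients are completely explicit.

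First I would define
\[
w(s)\ :=\ \Big|c_1\ex^{(r^1\cdot s)\imag}+\cdots+c_n\ex^{(r^n\cdot s)\imag}\Big|^2\qquad (s\in\R^m).
\]
Expanding the product $g(s)\overline{g(s)}$ with $g(s):=\sum_j c_j\ex^{(r^j\cdot s)\imag}$ gives
\[
w(s)\ =\ \sum_{j,k=1}^n c_j\bar c_k\,\ex^{((r^j-r^k)\cdot s)\imag},
\]
so $w$ is a trigonometric polynomial on $\R^m$. Thus $w$ is almost periodic, and evidently $w(\R^m)\subseteq\R^{\ge}$.

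Next, suppose towards a contradiction that
\[
\limsup_{t\to\infty}\Big|c_1\ex^{(r^1\cdot f)(t)\imag}+\cdots+c_n\ex^{(r^n\cdot f)(t)\imag}\Big|\ =\ 0.
\]
Then $\limsup_{t\to\infty} w\big(f(t)\big)=0$. Since $1\prec f_1\prec\cdots\prec f_m$ and $w$ is almost periodic and nonnegative, Proposition~\ref{prop:limsup vs sup} (applied with $m$ in place of $n$) forces $w=0$ identically on $\R^m$.

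The final step is to extract a contradiction from $w=0$ via Corollary~\ref{cor:id thm}. The unique coefficients of $w$ as a trigonometric polynomial are obtained by grouping the $c_j\bar c_k$ according to the value of $r^j-r^k$. Since the $r^j$ are pairwise distinct, the equality $r^j-r^k=0$ holds only for $j=k$, so the coefficient of $\ex^{(0\cdot s)\imag}$ in $w$ equals $\sum_{j=1}^n |c_j|^2$. As each $c_j\in\mathbb C^\times$, this coefficient is strictly positive, so by Corollary~\ref{cor:id thm}, $w\neq 0$, contradicting the conclusion of the previous paragraph. The only step involving any delicacy is invoking Proposition~\ref{prop:limsup vs sup}, but that proposition was tailored precisely for inputs of this form, so no real obstacle is expected.
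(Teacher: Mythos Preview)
Your proof is correct and follows essentially the same approach as the paper: both define $w(s)$ as the squared modulus of the exponential sum, observe it is a nonnegative trigonometric polynomial, use Corollary~\ref{cor:id thm} to see that $w$ is not identically zero (you make the constant term $\sum_j |c_j|^2>0$ explicit, while the paper just notes $w(s)>0$ for some $s$), and then invoke Proposition~\ref{prop:limsup vs sup}.
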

\begin{proof} Consider the trigonometric polynomial $w\colon\R^m\to \R^{\ge}$ given by
$$ w(s)\ :=\ \big|c_1 \ex^{(r^1\cdot s)\imag}+\cdots+c_n \ex^{(r^n\cdot s)\imag}\big|^2.$$  
By Corollary~\ref{cor:id thm} we have $w(s)>0$ for some $s\in\R^m$. 
Taking continuous representatives $\R \to \R$ of $f_1,\dots,f_m$, to be denoted also by $f_1,\dots,f_m$, the lemma now follows from Proposition~\ref{prop:limsup vs sup}. 
\end{proof}

\noindent
Next, let  $h_1,\dots,h_n\in H$ be distinct such that
$(\R h_1+\cdots+\R h_n)\cap \I(H)=\{0\}$. Since $H$ is Liouville closed we have
$\phi_1,\dots,\phi_n\in H$  such that $\phi_1'=h_1,\dots, \phi_n'=h_n$.

\begin{lemma}\label{lem:limsup 2}
Let  $c_1,\dots,c_n\in\mathbb C^\times$. Then for $\phi_1,\dots, \phi_n$ as above,
$$\limsup_{t\to \infty} \left| c_1 \ex^{\phi_1(t)\imag}+ \cdots + c_n \ex^{\phi_n(t)\imag}\right|\ >\ 0.$$
\end{lemma}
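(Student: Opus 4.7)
The plan is to reduce to Lemma~\ref{lem:limsup 1} via linear algebra over $\R$ inside $H$.

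First I translate the hypothesis into an asymptotic condition on $\phi_1,\dots,\phi_n$. For $r=(r_1,\dots,r_n)\in\R^n$ write $r\cdot h:=r_1h_1+\cdots+r_nh_n$ and $r\cdot\phi:=r_1\phi_1+\cdots+r_n\phi_n$, so $(r\cdot\phi)'=r\cdot h$. Since $H$ is Liouville closed, any two antiderivatives of $r\cdot h$ in $H$ differ by a real constant; a standard characterization of $\I(H)$ in this setting then gives $r\cdot h\in\I(H)$ iff $r\cdot\phi\preceq 1$. The hypothesis thus yields: if $r\cdot\phi\preceq 1$ then $r\cdot h=0$, equivalently $r\cdot\phi\in\R$.

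Next, pick $\psi_1,\dots,\psi_k\in V:=\R\phi_1+\cdots+\R\phi_n$ so that $1,\psi_1,\dots,\psi_k$ is a basis of the $\R$-subspace $\R+V$ of $H$. If $k=0$ then $V\subseteq\R$, forcing $h_1=\cdots=h_n=0$; distinctness then gives $n=1$ and $|c_1\ex^{\phi_1\imag}|=|c_1|>0$, so the claim is trivial. Otherwise, for every nonzero $r\in\R^k$ the element $r\cdot\psi=r_1\psi_1+\cdots+r_k\psi_k$ lies in $V\setminus\R$ by the basis property, hence $r\cdot\psi\succ 1$ by the previous paragraph. A standard fact about finite-dimensional $\R$-subspaces of a Hardy field then produces a basis $f_1\prec f_2\prec\cdots\prec f_k$ of $W:=\R\psi_1+\cdots+\R\psi_k$: by induction on $\dim W$, picking $g\in W$ of maximal dominance class, the $\R$-linear functional $v\mapsto\lim(v/g)\colon W\to\R$ has hyperplane kernel $\{v\in W:v\prec g\}\cup\{0\}$, to which the induction applies; taking $f_k:=g$ and an inductively produced dominance-ordered basis of the kernel yields the claim. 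Each $f_i\succ 1$ by the preceding sentence.

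Finally, writing $\phi_i=d_i+r^i\cdot f$ with $d_i\in\R$ and $r^i\in\R^k$, the vectors $r^1,\dots,r^n$ are pairwise distinct: $r^i=r^j$ would give $\phi_i-\phi_j=d_i-d_j\in\R$, whence $h_i=h_j$, contradicting distinctness. Setting $c'_i:=c_i\ex^{d_i\imag}\in\C^\times$ we have $c_i\ex^{\phi_i\imag}=c'_i\ex^{(r^i\cdot f)\imag}$, and Lemma~\ref{lem:limsup 1} applied to $f=(f_1,\dots,f_k)$, the distinct vectors $r^1,\dots,r^n\in\R^k$, and the scalars $c'_1,\dots,c'_n$ gives the desired positive limsup. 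The main technical ingredient beyond Lemma~\ref{lem:limsup 1} is the equivalence $r\cdot h\in\I(H)\Leftrightarrow r\cdot\phi\preceq 1$, which uses that $H$ is Liouville closed; the rest is routine linear algebra in $H$.
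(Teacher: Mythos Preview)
Your proof is correct and follows essentially the same route as the paper's: both reduce to Lemma~\ref{lem:limsup 1} by showing that the nonconstant elements of the $\R$-span of $1,\phi_1,\dots,\phi_n$ are $\succ 1$ and then extracting a dominance-ordered basis $1\prec f_1\prec\cdots\prec f_k$. The only notable difference is that the paper obtains this basis by invoking the Hahn space structure of $H$ via [ADH,~2.3.13], whereas you give a direct inductive construction (peel off an element of maximal archimedean class, pass to the hyperplane of strictly smaller elements); both arguments are standard and equivalent here.
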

\begin{proof} The case $n=1$ is trivial, so let $n\ge 2$. Then $\phi_1,\dots, \phi_n$ are not all in~$\R$. 
Set~$V:= \R+\R\phi_1+\cdots+\R\phi_n \subseteq H$, so
$\der V=\R h_1+\cdots+\R h_n$.
We claim that~${V\cap \smallo_H=\{0\}}$.
To see this, let $\phi\in V\cap \smallo_H$; then $\phi'\in \der(V)\cap \I(H)=\{0\}$ and hence $\phi\in\R\cap \smallo_H=\{0\}$,
proving the claim.
Now $H$ is a Hahn space over $\R$ by~[ADH, p.~109], so by [ADH, 2.3.13] we have $f_1,\dots,f_m\in V$  ($1\le m\leq n$) such that~$V = \R + \R f_1 +\cdots + \R f_m$ and $1\prec f_1\prec \cdots \prec f_m$. For $j=1,\dots,n$, $k=1,\dots,m$, 
take $t_j,r_{jk}\in\R$ such that $\phi_j=t_j+\sum_{k=1}^m r_{jk} f_k$
and set $r^j:=(r_{j1},\dots,r_{jm})\in\R^m$.
Since $\phi_{j_1}-\phi_{j_2}\notin\R$ for   $j_1\neq j_2$, we have
$r^{j_1}\neq r^{j_2}$ for   $j_1\neq j_2$.
It remains to apply Lemma~\ref{lem:limsup 1} to $c_1\ex^{t_1\imag},\dots, c_n\ex^{t_n\imag}$ in place of $c_1,\dots, c_n$. 
\end{proof}

\begin{cor}\label{corlimsupresult}
Let $f_1,\dots,f_n\in K$ and set $f:= f_1 \ex^{\phi_1\imag} + \cdots + f_n\ex^{\phi_n\imag}\in\Calinf[\imag]$, and suppose~$f\prec 1$. Then $f_{1},\dots,f_{n}\prec 1$.
\end{cor}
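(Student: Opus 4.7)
The plan is to prove the contrapositive by reducing the situation to Lemma~\ref{lem:limsup 2}. Suppose, toward a contradiction, that some $f_i$ with $f_i\not\prec 1$ occurs in the sum. The first step is a normalization: choose $j$ minimizing $v(f_i)$ among the nonzero $f_1,\dots,f_n$; since some $f_i\succeq 1$, we have $v(f_j)\leq 0$ and in particular $f_j\succeq 1$. Dividing the equation $f=\sum_i f_i\ex^{\phi_i\imag}$ by $f_j\in K^\times$ yields
\[
\tilde f\ :=\ f/f_j\ =\ \sum_i \tilde f_i\,\ex^{\phi_i\imag},\qquad \tilde f_i\ :=\ f_i/f_j\in K,
\]
with $\tilde f_i\preceq 1$ for all $i$ and $\tilde f_j=1$. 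Moreover, since $f\prec 1$ and $f_j^{-1}\preceq 1$ (as $f_j\succeq 1$), we get $\tilde f\prec 1$ in $\Calinf[\imag]$.

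Next, I exploit that the residue field of the valued field $K=H[\imag]$ is $\C$: every element $\tilde f_i\in K$ with $\tilde f_i\preceq 1$ admits a unique $c_i\in\C$ with $\tilde f_i-c_i\prec 1$. Note $c_j=1$, so at least one $c_i$ is nonzero. Writing
\[
\tilde f\ =\ \sum_{i} c_i\,\ex^{\phi_i\imag}\ +\ \sum_{i}(\tilde f_i-c_i)\ex^{\phi_i\imag}
\]
and using that $|\ex^{\phi_i\imag}|=1$, each summand of the second sum satisfies $(\tilde f_i-c_i)\ex^{\phi_i\imag}\prec 1$, hence the second sum is $\prec 1$. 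Therefore the ``constant exponential sum'' $g:=\sum_{i\in I} c_i\ex^{\phi_i\imag}$, with $I:=\{i:c_i\neq 0\}\ni j$, satisfies $g\prec 1$, i.e.\ $|g(t)|\to 0$ as $t\to+\infty$.

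Finally I apply Lemma~\ref{lem:limsup 2} to the family $(\phi_i)_{i\in I}$: these are distinct elements of $H$, and the condition $(\R\phi_1'+\cdots+\R\phi_n')\cap\I(H)=\{0\}$ assumed in the setup before Lemma~\ref{lem:limsup 2} is inherited by any subset. Lemma~\ref{lem:limsup 2} then yields $\limsup_{t\to\infty}|g(t)|>0$, contradicting $g\prec 1$. The only real subtlety in this argument is the normalization step — one must divide by an $f_j$ of minimal valuation (so $\tilde f_j=1$ provides a definitely-nonzero $c_j$) and also verify that the division preserves $f\prec 1$; after that the reduction to Lemma~\ref{lem:limsup 2} is straightforward. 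Everything else amounts to unpacking the residue-field interpretation of $\preceq 1$ in $K$ and using $|\ex^{\phi_i\imag}|=1$ to control the error term.
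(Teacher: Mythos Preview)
Your proof is correct and follows essentially the same approach as the paper's: normalize by dividing through by a coefficient of maximal size, split each resulting bounded coefficient into its complex residue plus an infinitesimal error, observe that the error terms contribute a sum that is $\prec 1$, and then invoke Lemma~\ref{lem:limsup 2} on the surviving constant exponential sum to reach a contradiction. The only cosmetic difference is that the paper first reorders the $f_i$ and explicitly discards those with $f_i/f_n\prec 1$ before taking residues, whereas you handle this uniformly by restricting to the index set $I=\{i:c_i\neq 0\}$ at the end.
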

\begin{proof}
We may assume   $0 \neq f_1\preceq \cdots \preceq f_n$.
Towards a contradiction, suppose that~${f_n\succeq 1}$, and
take $m\leq n$ minimal such that $f_m\asymp f_n$. Then
 with~$g_j:=f_j/f_n\in K^\times$ and $g:=g_1 \ex^{\phi_1\imag} + \cdots + g_n\ex^{\phi_n\imag}$ we have $g\prec 1$ and
$g_1,\dots, g_n\preceq 1$, with $g_j\prec 1$ iff~$j<m$.
Replacing $f_1,\dots,f_n$ by $g_m,\dots,g_n$ and $\phi_1,\dots,\phi_n$ by $\phi_m,\dots,\phi_n$
we arrange $f_1\asymp\cdots\asymp f_n\asymp 1$. So
$$f_1=c_1+\varepsilon_1,\,\dots,\, f_n=c_n+\varepsilon_n\quad\text{ with
$c_1,\dots, c_n\in\mathbb C^\times$ and $\varepsilon_1,\dots, \varepsilon_n\in \smallo$.}$$ 
Then $\varepsilon_1\ex^{\phi_1\imag} +\cdots + \varepsilon_n\ex^{\phi_n\imag}\prec 1$, hence
 $$c_1\ex^{\phi_1\imag} +\cdots + c_n\ex^{\phi_n\imag}\ =\ f-\big(\varepsilon_1\ex^{\phi_1\imag} +\cdots + \ex^{\phi_n\imag}\!\big)\ \prec\ 1.$$
Now Lemma~\ref{lem:limsup 2} yields the desired contradiction.
\end{proof}

\noindent
{\em In the rest of this section, $\I(K)\subseteq K^\dagger$}. As noted in Sec\-tion~\ref{sec:prelims} we can then take~${\Lambda=\Lambda_H\imag}$ where $\Lambda_H$ is an $\R$-linear complement
of $\I(H)$ in $H$. {\em We assume~$\Lambda$ has this form,}\/  and accordingly identify $\Univ$ with $K[\ex^{H\imag}]$ as explained before Lemma~\ref{lem:basis of kerUA}. 

\begin{lemma}\label{lem:gaussian ext dom} Let $f\in\Univ$ be such that $f\prec 1$. Then $f\prec_{\g} 1$.
\end{lemma}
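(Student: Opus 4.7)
The plan is to reduce the lemma directly to Corollary~\ref{corlimsupresult}. First I unpack the statement $f \prec_{\g} 1$: by definition of the gaussian valuation on $\Univ = K\big[\!\ex(\Lambda)\big]$ and the direct-sum decomposition $\Univ = \bigoplus_\lambda K\ex(\lambda)$, the condition $f \prec_{\g} 1$ is equivalent to $f_\lambda \prec 1$ in $K$ for every coefficient in the unique expansion $f = \sum_\lambda f_\lambda \ex(\lambda)$. Using $\Lambda = \Lambda_H\imag$ and the fixed identification $\Univ = K[\ex^{H\imag}]$ from Lemma~\ref{exqe}, each basis vector $\ex(h\imag)$ (for $h \in \Lambda_H$) equals $\ex^{\phi_h\imag}$ for a particular $\phi_h \in H$ satisfying $\phi_h' = h$; such an antiderivative exists because $H$ is Liouville closed, and the specific $\phi_h$ is pinned down by the choice of $\iota$. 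Writing $f = \sum_{h \in S} f_h\,\ex(h\imag)$ with $S \subseteq \Lambda_H$ a finite set and each $f_h \in K^\times$, the identification produces the representation
\[
f\ =\ \sum_{h \in S} f_h\,\ex^{\phi_h\imag}\qquad\text{in }\c[\imag],
\]
and the goal becomes to show $f_h \prec 1$ in $K$ for each $h \in S$.

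Next I verify the hypotheses of Corollary~\ref{corlimsupresult} for this representation. The elements of $S$ are pairwise distinct because they index distinct basis vectors of $\Univ$ over $K$. Writing $S = \{h_1,\dots,h_n\}$, any $\R$-linear combination $r_1 h_1 + \cdots + r_n h_n$ lies in $\Lambda_H$ since $\Lambda_H$ is an $\R$-linear subspace of $H$, and the decomposition $H = \I(H) \oplus \Lambda_H$ gives $\Lambda_H \cap \I(H) = \{0\}$; hence $(\R h_1 + \cdots + \R h_n) \cap \I(H) = \{0\}$, which is exactly the hypothesis that Corollary~\ref{corlimsupresult} inherits from Lemma~\ref{lem:limsup 2}.

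With these hypotheses verified, Corollary~\ref{corlimsupresult} applied to the displayed representation yields $f_h \prec 1$ for each $h \in S$, which is precisely $f \prec_{\g} 1$. I do not anticipate a serious obstacle: the analytic heart of the argument—bounding exponential sums from below via uniform distribution mod $1$ for Hardy-field germs—has already been carried out in Lemma~\ref{lem:limsup 2} and repackaged in Corollary~\ref{corlimsupresult}. The only content specific to this lemma is the observation that the algebraic setup of this section ($\Lambda = \Lambda_H\imag$ with $\Lambda_H$ an $\R$-linear complement of $\I(H)$ in $H$, available because $\I(K) \subseteq K^\dagger$) delivers exactly the distinctness and intersection-triviality conditions that the corollary demands.
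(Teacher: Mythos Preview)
Your proof is correct and follows essentially the same approach as the paper's own proof: write $f$ in terms of the basis $\ex(h\imag)$ for $h\in\Lambda_H$, use the identification $\ex(h\imag)=\ex^{\phi\imag}$ with $\phi'=h$, verify the hypothesis $(\R h_1+\cdots+\R h_n)\cap\I(H)=\{0\}$ from the fact that $\Lambda_H$ is an $\R$-linear complement of $\I(H)$, and apply Corollary~\ref{corlimsupresult}. Your exposition is somewhat more detailed in spelling out why the gaussian-valuation condition unpacks to the coefficient condition and why the intersection hypothesis holds, but the argument is the same.
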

\begin{proof}  We have $f=f_1\ex(h_1\imag)+\cdots + f_n\ex(h_n\imag)$ with
$f_1,\dots, f_n\in K$ and distinct~$h_1,\dots, h_n\in \Lambda_H$, so $(\R h_1+\cdots+\R h_n)\cap \I(H)=\{0\}$. For $h\in \Lambda_H$ we have~$\ex(h\imag)= \ex^{\phi\imag}$ with $\phi\in H$ and $\phi'=h$.
Hence $f=f_1\ex^{\phi_1\imag} + \cdots + f_n\ex^{\phi_n\imag}$ where~$\phi_j\in H$ and $\phi_j'=h_j$ for $j=1,\dots,n$. 
 Now  Corollary~\ref{corlimsupresult}  yields~${f\prec_{\g} 1}$.
\end{proof}

\begin{cor}
\label{cor:gaussian ext dom}
Let $f\in\Univ$ and $\fm\in H^\times$. Then $f\prec \fm$ iff $f\prec_{\g} \fm$. 
\end{cor}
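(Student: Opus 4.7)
The plan is to reduce the statement to the case $\fm = 1$, which is precisely Lemma~\ref{lem:gaussian ext dom}. Since $\fm \in H^\times \subseteq K^\times$ and $\Univ$ is a $K$-algebra (with $\fm^{-1}\in K^\times\subseteq\Univ^\times$), the element $g := f/\fm$ lies in $\Univ$. The gaussian valuation $v_{\g}$ is $K$-linear, so $f\prec_{\g}\fm \iff g\prec_{\g} 1$; and since $\fm$ is a unit of $\c[\imag]$ (representable by a nonvanishing germ), the definition of $\prec$ on $\c[\imag]$ gives $f\prec\fm \iff g\prec 1$.

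For the backward direction, assume $f\prec_{\g}\fm$, so $g\prec_{\g}1$. The implication $h\prec_{\g}1 \Rightarrow h\prec 1$ for $h\in\Univ$, already observed in the paragraph preceding Lemma~\ref{lem:gaussian ext dom}, then yields $g\prec 1$, hence $f\prec\fm$. For the forward direction, assume $f\prec\fm$, so $g\prec 1$; applying Lemma~\ref{lem:gaussian ext dom} to $g\in\Univ$ gives $g\prec_{\g}1$, i.e., $f\prec_{\g}\fm$.

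I do not expect a real obstacle here: the substantive analytic content, which depends on Corollary~\ref{corlimsupresult} and ultimately on Boshernitzan's uniform-distribution results from Section~\ref{sec:udmod1}, has already been packaged into Lemma~\ref{lem:gaussian ext dom}, and the corollary amounts to a trivial multiplicative rescaling. The only point to verify carefully is that the assumption $\I(K)\subseteq K^\dagger$ and the choice $\Lambda = \Lambda_H\imag$ standing in force throughout this subsection are indeed the hypotheses under which Lemma~\ref{lem:gaussian ext dom} was proved, so its application to $g$ is legitimate.
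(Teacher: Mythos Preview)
Your argument is correct and is exactly the reduction the paper has in mind: the corollary is stated without proof immediately after Lemma~\ref{lem:gaussian ext dom}, and dividing by $\fm\in H^\times\subseteq K^\times$ to reduce to that lemma is the intended step. One tiny quibble of wording: calling $v_{\g}$ ``$K$-linear'' is not quite right---what you use is that $v_{\g}$ extends the valuation of $K$, so $v_{\g}(f/\fm)=v_{\g}(f)-v(\fm)$---but the mathematics is fine.
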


\subsection*{An application to slots in $H$} 
Until further notice $(P,1,\hat h)$ is  a slot in~$H$ of 
order~$r\geq 1$.
We also let $A\in K[\der]$ have order~$r$, and we let $\fm$ range over the elements of $H^\times$ such that
$v\fm\in v(\hat h -H)$. We begin with an important consequence of the bounds on solutions of linear differential
equations in
Section~\ref{sec:hardy}:  

{\sloppy
\begin{lemma}\label{lem:8.8 refined}
Suppose that $(P,1,\hat h)$ is $Z$-minimal, deep, and special,  and that~$\fv(L_P)\asymp\fv:=\fv(A)$. Let
$y\in\Calr[\imag]$ satisfy $A(y)=0$ and $y\prec \fm$ for all~$\fm$.  
Then $y',\dots,y^{(r)}\prec \fm$ for all $\fm$.
\end{lemma}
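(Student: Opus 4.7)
The plan is to derive the desired bounds from Corollary~\ref{cor:EL} applied to a monic form of the linear differential equation $A(y)=0$. Writing $A=\sum_{i=0}^r a_i\der^i$ with $a_i\in K$ and $a_r\neq 0$, and setting $f_j:=a_{r-j}/a_r\in K$ for $j=1,\dots,r$, the equation becomes $y^{(r)}+f_1y^{(r-1)}+\cdots+f_ry=0$. From the definition of $\dwt(A)=m$ and the standard bounds on the coefficients of a linear differential operator in terms of its dominant coefficient $a_m$, we have $a_i\preceq a_m$ for $i=0,\dots,r$, and therefore $f_j=(a_{r-j}/a_m)/\fv\preceq\fv^{-1}$. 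Using Proposition~\ref{prop:Li(H(R))} I fix a hardian germ $\eta\asymp\fv^{-1}$ represented by an eventually positive, eventually increasing function in $\Caz$; then $f_1,\dots,f_r\preceq\eta$, as required for Corollary~\ref{cor:EL}.

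Now fix a target $\fm\in H^\times$ with $v\fm\in v(\hat h-H)$; I want to show $y^{(j)}\prec\fm$ for $j=1,\dots,r$. Set $n:=2r+1$ and $\varepsilon:=1/2$, so that $n-j(1+\varepsilon)\geq 1$ for all $j\leq r$. Specialness of $(P,1,\hat h)$ provides a nontrivial convex subgroup $\Delta$ of $\Gamma$ with $v(\hat h-H)\subseteq\Delta$ and $v(\hat h-H)$ cofinal in $\Delta$. Since $(P,1,\hat h)$ is also deep and $Z$-minimal and $\fv(L_P)\asymp\fv$, Proposition~\ref{specialvariant} yields for each $N\geq 1$ some $a\in H$ with $v(\hat h-a)>Nv\fv$; the element $v(\hat h-a)$ lies in $v(\hat h-H)\subseteq\Delta$, so $v\fv\in\Delta$. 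By cofinality of $v(\hat h-H)$ in $\Delta$, I may choose $\fn\in H^\times$ with $v\fn\in v(\hat h-H)$ and $v\fn>v\fm+rv\fv$, and then $\fm'\in H^\times$ with $v\fm'\in v(\hat h-H)$ and $v\fm'>nv\fn$; the hypothesis $y\prec\fm'$ then forces $y\prec\fn^n$.

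Corollary~\ref{cor:EL} now yields, for $j=0,\dots,r$,
$$y^{(j)}\ \prec\ \eta^j\fn^{n-j(1+\varepsilon)}\ \asymp\ \fv^{-j}\fn^{n-j(1+\varepsilon)}.$$
Since $v\fn>v\fm+rv\fv\geq v\fm+jv\fv$ and $(n-j(1+\varepsilon))v\fn\geq v\fn$ (using $n-j(1+\varepsilon)\geq 1$ and $v\fn>0$), we get
$$v\bigl(\fv^{-j}\fn^{n-j(1+\varepsilon)}\bigr)\ =\ -jv\fv+(n-j(1+\varepsilon))v\fn\ >\ -jv\fv+v\fm+jv\fv\ =\ v\fm,$$
so $\fv^{-j}\fn^{n-j(1+\varepsilon)}\prec\fm$, and therefore $y^{(j)}\prec\fm$ for $j=1,\dots,r$, as required.

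The most delicate step is the coefficient bound $a_i\preceq a_m$ underlying $f_j\preceq\fv^{-1}$, which must be extracted from the definitional apparatus of $\dwt(A)$ and $\fv(A)$ in~[ADH]. The remaining work is bookkeeping with the convex subgroup $\Delta$ afforded by specialness, combined with the smallness of $\fv$ guaranteed by deepness and $Z$-minimality via Proposition~\ref{specialvariant}, the latter being available precisely because of the hypothesis $\fv(L_P)\asymp\fv$.
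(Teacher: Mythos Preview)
Your proof is correct and follows essentially the same route as the paper's: bound the monic coefficients of $A$ by $\fv^{-1}$, use specialness together with Proposition~\ref{specialvariant} to obtain $y\prec\fn^n$ for a suitably small $\fn$, and then invoke Corollary~\ref{cor:EL}; the paper merely streamlines the bookkeeping by first reducing to $\fm\preceq\fv$, so that the specialness step becomes the one-liner $2(r+1)v\fm\in v(\hat h-H)$ and no auxiliary $\fn,\fm'$ are needed. Two minor repairs: the blanket claim $v(\hat h-H)\subseteq\Delta$ is false as stated (only its positive part lies in $\Delta$, which is all you actually use), and you should also impose $v\fn>0$ and $\fn>0$ when choosing $\fn$, since you rely on both in the final estimate and in forming the fractional power $\fn^{n-j(1+\varepsilon)}$.
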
}
\begin{proof} Proposition~\ref{specialvariant}  gives an $\fm\preceq \fv$, so it is enough to show $y',\dots,y^{(r)}\prec \fm$ for all $\fm\preceq \fv$. Accordingly we assume $0<\fm\preceq \fv$ below. 
As~$\hat h$ is special over $H$, we have~${2(r+1)v\fm}\in v(\hat h-H)$, so $y\prec\fm^{2(r+1)}$.
Then    Corollary~\ref{cor:EL}   with~$n=2(r+1)$,~$\eta=\abs{\fv}^{-1}$, $\varepsilon=1/r$ gives  for $j=0,\dots,r$:
\[y^{(j)}\ \prec\ \fv^{-j} \fm^{n-j(1+\varepsilon)}\ \preceq\ \fm^{n-j(2+\varepsilon)}\ \preceq\ \fm^{n-r(2+\varepsilon)}\  =\ \fm. \qedhere\]
\end{proof}

\noindent
Note that   if~$\dim_{\C}\ker_{\Univ} A =r$, then $\Univ=K[\ex^{H\imag}]\subseteq\Calinf[\imag]$ contains every $y\in\Calr[\imag]$ with~$A(y)=0$.  Corollary~\ref{cor:gaussian ext dom} is typically used in combination with the ultimate condition. Here is a first easy application:

\begin{lemma}\label{lem3.9 linear}
Suppose $\deg P=1$, $(P,1,\hat h)$ is ultimate, $\dim_{\C}\ker_{\Univ}L_P=r$, and $y$ in $\Calr[\imag]$ satisfies $L_P(y)=0$ and~$y\prec 1$. Then~$y\prec \fm$ for all $\fm$.
\end{lemma}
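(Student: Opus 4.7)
The plan is to realize $y$ as an element of $\Univ$ and then transfer the asymptotic hypothesis from $\prec$ to $\prec_{\g}$ via Corollary~\ref{cor:gaussian ext dom}, at which point the ultimate condition \eqref{eq:ultimate normal linear} closes the argument almost immediately.

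First I would observe that the $\C$-linear space of all $z\in\Calr[\imag]$ with $L_P(z)=0$ has dimension exactly $r$: the coefficients of $L_P\in K[\der]$ are germs of smooth functions, the leading coefficient is a unit in $K$, hence nonvanishing eventually, so standard linear ODE theory gives an $r$-dimensional space of solutions on any sufficiently large interval $[a,+\infty)$. Since $\ker_{\Univ}L_P\subseteq\Univ\subseteq\Calinf[\imag]\subseteq\Calr[\imag]$ already has $\C$-dimension $r$ by hypothesis, the inclusion is an equality, and in particular $y\in\Univ$.

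Assume $y\neq 0$ (the case $y=0$ being trivial). Then \eqref{eq:vkerunivA vs excuA} yields $v_{\g}(y)\in v_{\g}(\ker_{\Univ}^{\neq}L_P)=\exc^{\operatorname{u}}(L_P)$, while $y\prec 1$ combined with Corollary~\ref{cor:gaussian ext dom} gives $v_{\g}(y)>0$. Now suppose for contradiction that there is $\fm\in H^\times$ with $v\fm\in v(\hat h-H)$ and $y\not\prec\fm$. Corollary~\ref{cor:gaussian ext dom} then gives $v_{\g}(y)\leq v\fm$. A short ultrametric argument shows that $v(\hat h-H)$ is downward closed in $\Gamma$: if $v(\hat h-a)=v\fm$ and $\delta<v\fm$ in $\Gamma$, then any $b\in H^\times$ with $vb=\delta$ satisfies $v(\hat h-(a+b))=vb=\delta$. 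Consequently $v_{\g}(y)\in v(\hat h-H)$. Since $\deg P=1$ and $(P,1,\hat h)$ is ultimate, \eqref{eq:ultimate normal linear} applied with $\fm=1$ yields $\exc^{\operatorname{u}}(L_P)\cap v(\hat h-H)\leq 0$, so $v_{\g}(y)\leq 0$, contradicting $v_{\g}(y)>0$.

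The only mildly delicate step is the identification $y\in\Univ$, which relies on the dimension count for $\Calr[\imag]$-solutions of $L_P(z)=0$ matching the hypothesized dimension of $\ker_{\Univ}L_P$; once this is in place, the proof is purely valuation-theoretic, chaining together \eqref{eq:vkerunivA vs excuA}, the ultimate criterion \eqref{eq:ultimate normal linear}, and Corollary~\ref{cor:gaussian ext dom}.
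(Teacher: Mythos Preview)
Your proof is correct and follows essentially the same route as the paper: realize $y\in\Univ$ via the dimension count, pass to $\prec_{\g}$ via Corollary~\ref{cor:gaussian ext dom}, invoke \eqref{eq:vkerunivA vs excuA} to place $v_{\g}y$ in $\exc^{\operatorname{u}}(L_P)$, and use \eqref{eq:ultimate normal linear} to force $v_{\g}y>v(\hat h-H)$. The only difference is cosmetic: you phrase the last step as a contradiction and make the downward-closure of $v(\hat h-H)$ explicit, whereas the paper states $v_{\g}y>v(\hat h-H)$ directly (tacitly using the same downward-closure fact).
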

\begin{proof}
We have~$y\in \Univ$, so $y\prec_{\g} 1$ by Lemma~\ref{lem:gaussian ext dom}. If $y=0$ we are done, so assume~${y\ne 0}$. 
Then \eqref{eq:vkerunivA vs excuA}
gives~$0 < v_{\g}y\in
v_{\g}(\ker^{\neq}_{\Univ}L_P) = \exc^{\operatorname{u}}(L_P)$, hence~$v_{\g}y > v(\hat h -H)$ by   \eqref{eq:ultimate normal linear},
so $y\prec_{\g}\fm$ for all $\fm$.  
Now Corollary~\ref{cor:gaussian ext dom} yields the   conclusion. 
\end{proof}

\begin{cor}\label{cor:3.6 linear}
Suppose $\deg P=1$, $(P,1,\hat h)$ is $Z$-minimal, deep,  special, and ultimate,  and $\dim_{\C}\ker_{\Univ}L_P=r$.
Let~$f,g\in \Calr[\imag]$ be such that $P(f)=P(g)=0$ and~$f,g\prec 1$.
Then~$({f-g})^{(j)}\prec \fm$ for~$j=0,\dots,r$ and all $\fm$.  
\end{cor}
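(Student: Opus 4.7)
The plan is to reduce the conclusion to the two preceding lemmas by a simple linearity argument. Since $\deg P=1$, we can write $P(Y)=L_P(Y)+P(0)$, so for any $f,g\in\Calr[\imag]$ with $P(f)=P(g)=0$ we have
\[
L_P(f-g)\ =\ L_P(f)-L_P(g)\ =\ P(f)-P(g)\ =\ 0.
\]
Set $y:=f-g\in\Calr[\imag]$; then $L_P(y)=0$ and $y\prec 1$, since $f,g\prec 1$.

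First I would apply Lemma~\ref{lem3.9 linear} to $y$: its hypotheses are exactly the ones assumed in the corollary (namely $\deg P=1$, the slot $(P,1,\hat h)$ is ultimate, and $\dim_{\C}\ker_{\Univ}L_P=r$), so we obtain $y\prec\fm$ for all $\fm$, that is, the case $j=0$ of the conclusion.

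Next I would upgrade this to bounds on all derivatives of $y$ by invoking Lemma~\ref{lem:8.8 refined} with $A:=L_P$, so that the side condition $\fv(L_P)\asymp\fv(A)$ of that lemma is trivially satisfied. The remaining hypotheses of Lemma~\ref{lem:8.8 refined}, namely that $(P,1,\hat h)$ is $Z$-minimal, deep, and special, are assumed in the corollary, and we have just verified $A(y)=0$ with $y\prec\fm$ for all $\fm$. Hence $y',\dots,y^{(r)}\prec\fm$ for all $\fm$, completing the proof. No step is really a serious obstacle here: the whole content lies in the cited lemmas, and the corollary is just the linear-$P$ instance where the difference of two zeros of $P$ is automatically a zero of $L_P$.
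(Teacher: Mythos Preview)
Your proof is correct and is exactly the paper's argument spelled out in detail: the paper's one-line proof reads ``Use Lemmas~\ref{lem:8.8 refined} and~\ref{lem3.9 linear} for $A=L_P$ and $y=f-g$,'' which is precisely what you do.
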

\begin{proof}
Use Lemmas~\ref{lem:8.8 refined} and~\ref{lem3.9 linear} for $A=L_P$ and $y=f-g$.
\end{proof}

\noindent
{\it In the rest of this subsection we assume that
   $(P,1,\hat h)$ is normal and ultimate, $\dim_{\C} \ker_{\Univ}A=r$, and~$L_{P}=A+B$ where}
 $$B\prec_{\Delta(\fv)} \fv^{r+1}A, \qquad \fv:=\fv(A)\prec^\flat 1.$$
Then Lemma~\ref{lem:fv of perturbed op}   gives   $\fv(L_P)\sim\fv$,  and $v_{\operatorname{g}}(\ker^{\neq}_{\Univ} A)  = \exc^{\operatorname{u}}(A)  =  \exc^{\operatorname{u}}(L_P)$ by \eqref{eq:vkerunivA vs excuA}.
This yields a variant of Lemma~\ref{lem3.9 linear}, with a similar proof: 

\begin{prop}\label{lem3.9}  
If $y\in\Calr[\imag]$ and $A(y)=0$, $y\prec 1$,
then~$y\prec \fm$ for all~$\fm$.
\end{prop}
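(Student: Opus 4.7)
The plan is to follow the proof of Lemma~\ref{lem3.9 linear} essentially verbatim, substituting the operator $A$ for $L_P$ and using the equality $\exc^{\operatorname{u}}(A)=\exc^{\operatorname{u}}(L_P)$ recorded in the paragraph immediately preceding the statement. The perturbation hypothesis $B\prec_{\Delta(\fv)}\fv^{r+1}A$ has already done the heavy lifting by guaranteeing, via~\eqref{eq:vkerunivA vs excuA}, that $v_{\g}(\ker^{\ne}_{\Univ}A)=\exc^{\operatorname{u}}(A)=\exc^{\operatorname{u}}(L_P)$, which is exactly what allows the ultimate condition (formulated in terms of $L_P$) to be exploited for $A$.

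First I would show that $y\in\Univ$. The operator $A$ has order $r$ and its leading coefficient lies in $K^\times\subseteq\Gi[\imag]$, so is nonzero eventually; elementary linear ODE theory then bounds the dimension of the $\C$-space $\ker_{\Calr[\imag]}A$ by $r$. The assumption $\dim_{\C}\ker_{\Univ}A=r$ together with the inclusion $\Univ\subseteq\Calinf[\imag]\subseteq\Calr[\imag]$ forces $\ker_{\Univ}A=\ker_{\Calr[\imag]}A$, so in particular $y\in\Univ$. From $y\prec 1$ Lemma~\ref{lem:gaussian ext dom} yields $y\prec_{\g}1$. Assuming $y\ne 0$, this means $v_{\g}y>0$ and $v_{\g}y\in v_{\g}(\ker^{\ne}_{\Univ}A)=\exc^{\operatorname{u}}(L_P)$.

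Next, normality of $(P,1,\hat h)$ lets us invoke~\eqref{eq:ultimate normal linear} (with $\fm=1$), which reduces the ultimate condition to $\exc^{\operatorname{u}}(L_P)\cap v(\hat h-H)\le 0$. I would then note that $v(\hat h-H)$ is downward closed in $\Gamma$: given $\gamma=v(\hat h-a)$ and $\delta<\gamma$ in $\Gamma$, any $b\in H^{\times}$ with $vb=\delta$ yields $v\big(\hat h-(a+b)\big)=\delta$. Combined with $v_{\g}y>0$ this excludes $v_{\g}y\in v(\hat h-H)$ and upgrades it to $v_{\g}y>v(\hat h-H)$, i.e., $y\prec_{\g}\fm$ for every $\fm\in H^{\times}$ with $v\fm\in v(\hat h-H)$. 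A final application of Corollary~\ref{cor:gaussian ext dom} promotes this gaussian bound to the analytic conclusion $y\prec\fm$.

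There is no real obstacle beyond what Lemma~\ref{lem3.9 linear} already confronts; the entire point of the preceding setup ($\fv(L_P)\sim\fv$ from Lemma~\ref{lem:fv of perturbed op}, the coincidence of ultimate exceptional values, and the dimension hypothesis) is to ensure that the linear-case argument transports through the perturbation $L_P\rightsquigarrow A$. The only place one has to be a bit careful is the first step, verifying that every $\Calr[\imag]$-solution actually lies in the algebraic object $\Univ$; this is a standard ODE-uniqueness observation once $A$ is normalized by its leading coefficient on an interval where that coefficient does not vanish.
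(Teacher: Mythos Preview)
Your proof is correct and follows essentially the same route as the paper, which simply remarks that the argument is ``similar'' to that of Lemma~\ref{lem3.9 linear}. The only difference is that you spell out the downward-closedness of $v(\hat h-H)$ to pass from $\exc^{\operatorname{u}}(L_P)\cap v(\hat h-H)\le 0$ to $v_{\g}y>v(\hat h-H)$, whereas the paper treats this as implicit in the citation of~\eqref{eq:ultimate normal linear}; and you correctly note that here normality (rather than $\deg P=1$) is what makes~\eqref{eq:ultimate normal linear} applicable.
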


\noindent
The following result will be used in establishing a crucial non-linear version of Corollary~\ref{cor:3.6 linear}, namely Proposition~\ref{prop:notorious 3.6}.

\begin{cor}\label{cor:8.8 refined}
If $(P,1,\hat h)$ is $Z$-minimal, deep, and special, and
$y\in\Calr[\imag]$ is such that $A(y)=0$ and $y\prec 1$,  
then $y,y',\dots,y^{(r)}\prec \fm$ for all $\fm$.
\end{cor}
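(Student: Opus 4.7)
The proof should be a direct combination of the two preceding results: Proposition~\ref{lem3.9} handles the order-zero statement, and Lemma~\ref{lem:8.8 refined} bootstraps this to the higher derivatives. First I would apply Proposition~\ref{lem3.9} (whose hypotheses -- normality and ultimateness of $(P,1,\hat h)$, $\dim_{\C}\ker_{\Univ}A=r$, and the decomposition $L_P=A+B$ with $B$ small relative to $\fv^{r+1}A$ -- are in force by the blanket assumptions preceding the corollary) to the germ $y$: the hypotheses $A(y)=0$ and $y\prec 1$ are exactly what Proposition~\ref{lem3.9} needs, and it yields $y\prec\fm$ for every $\fm\in H^\times$ with $v\fm\in v(\hat h-H)$.

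Next I would feed this improved asymptotic information into Lemma~\ref{lem:8.8 refined}. That lemma requires $(P,1,\hat h)$ to be $Z$-minimal, deep, and special -- precisely the additional hypotheses we have added for this corollary -- and the compatibility condition $\fv(L_P)\asymp\fv(A)$. The latter is automatic here: the preamble to Proposition~\ref{lem3.9} already records $\fv(L_P)\sim\fv$ (obtained from Lemma~\ref{lem:fv of perturbed op} applied to the perturbation $B\prec_{\Delta(\fv)}\fv^{r+1}A$ of $A$), so in particular $\fv(L_P)\asymp\fv(A)$. The lemma's remaining input is a $y\in\Calr[\imag]$ satisfying $A(y)=0$ and $y\prec\fm$ for all admissible $\fm$, which is exactly what the previous step delivered.

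The conclusion of Lemma~\ref{lem:8.8 refined} is then that $y',\dots,y^{(r)}\prec\fm$ for all such $\fm$, and combining this with the already established $y\prec\fm$ completes the proof. There is no serious obstacle: the two ingredients were arranged earlier precisely so that this corollary would follow by concatenation. The only bookkeeping point to verify is that the parameter $\fm$ ranges over the same set in both applications (namely those $\fm\in H^\times$ with $v\fm\in v(\hat h-H)$), which is indeed the convention fixed at the start of the subsection.
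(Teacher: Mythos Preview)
Your proposal is correct and follows exactly the paper's approach: the paper's proof reads simply ``Use first Proposition~\ref{lem3.9} and then Lemma~\ref{lem:8.8 refined}.'' You have supplied the bookkeeping that the paper omits, in particular the verification that $\fv(L_P)\asymp\fv(A)$ follows from the $\fv(L_P)\sim\fv$ already recorded before Proposition~\ref{lem3.9}.
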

\begin{proof}
Use first  Proposition~\ref{lem3.9} and then Lemma~\ref{lem:8.8 refined}.
\end{proof}



\noindent
So far we didn't have to name an immediate asymptotic extension of $H$ where $\hat h$ is located, but for the ``complex'' version of the above we need to be more specific:
Let~$\hat H$ be an immediate asymptotic extension of $H$ and 
$\hat{K}=\hat{H}[\imag]\supseteq \hat{H}$ a corresponding immediate $\d$-valued extension of $K$.  The results in this subsection then go through  if instead of
$(P, 1,\hat h)$ being a slot in~$H$ of order $r\ge 1$  we assume that~$(P, 1,\hat h)$ is a slot in $K$ of order $r\ge 1$ with $\hat h\in \hat K \setminus K$, with $\fm$ now ranging over the elements of~$K^\times$ such that $v\fm\in v(\hat h - K)$. 

\section{Inverting Linear Differential Operators over Hardy Fields}\label{sec:IHF}

\noindent
Given a Hardy field $H$ and $A\in H[\der]$ we shall construe $A$ as
a $\C$-linear operator on various spaces of functions. 
We wish to construct right-inverses to  such operators.
A key assumption here is that $A$ splits over
$H[\imag]$. This reduces the
construction of such inverses mainly to the case of order $1$, and this case is handled in the first two subsections using suitable twisted integration operators.  In the third subsection we put things together and also show how to ``preserve reality''. In the last subsection we introduce damping factors.  Throughout we pay attention to the continuity of various operators with respect to various norms, for use in Section~\ref{sec:split-normal over Hardy fields}.    

\medskip
\noindent
We let $a$ range over $\R$ and $r$ over $\N\cup\{\infty,\omega\}$.  If $r\in \N$, then $r-1$ and $r+1$ have the usual meaning, while for $r\in \{ \infty,\omega\}$ we set $r-1= r+1:=r$. (This convention is just to avoid case distinctions.) We have the usual absolute value on
$\C$ given by~$|a+b\imag|=\sqrt{a^2+b^2}\in \R^{\ge}$ for $a,b\in \R$, so for
$f\in \c_a[\imag]$ we have $|f|\in \c_a$.

\subsection*{Integration and some useful norms}  
For $f\in \c_a[\imag]$ we define $\der_a^{-1}f \in \Cao[\imag]$ by
$$\der_a^{-1}f(t)\ :=\ \int_a^t f(s)\,ds\ :=\ \int_a^t \Re f(s)\, ds+ \imag\int_a^t \Im f(s)\, ds,$$
so $\der_a^{-1}f$  is the unique $g\in \Cao[\imag]$ such that $g'=f$ and $g(a)=0$.  
The integration operator $\der_a^{-1}\colon \c_a[\imag]\to  \Cao[\imag]$
is $\C$-linear and maps  $\Car[\imag]$ into $\Carm[\imag]$. 
For $f\in \c_a[\imag]$ we have
$$ 
\big|\der_a^{-1}f(t)\big|\le \big(\der_a^{-1}|f|\big)(t)\qquad\text{ for all $t\ge a$.}
$$ 
Let $f\in \c_a[\imag]$.  Call $f$ {\bf integrable at $\infty$\/} 
if $\lim_{t\to \infty} \int_a^t f(s)\,ds$ exists in $\C$. In that case we denote this
limit by $\int_a^{\infty} f(s)\,ds$ and put
$$ \int_{\infty}^a f(s)\,ds\ :=\  -\int_a^\infty f(s)\,ds,$$
and define $\der_{\infty}^{-1}f\in \Cao[\imag]$  by
$$\der_{\infty}^{-1}f(t)\ :=\ \int_{\infty}^t f(s)\,ds\ =\ \int_{\infty}^a f(s)\, ds +  \int_a^t  f(s)\, ds\ =\ \int_{\infty}^a f(s)\, ds +\der_a^{-1}f(t),$$
so  $\der_{\infty}^{-1}f$ is the unique $g\in \Cao[\imag]$  such that $g'=f$ and $\lim_{t\to \infty} g(t)=0$. Note that \label{p:Caint}
\begin{equation}\label{eq:integrable}
\c_a[\imag]^{\inte}\ :=\ \big\{f\in \c_a[\imag]:\ \text{$f$   is integrable at  $\infty$}\big\}
\end{equation}
is a $\C$-linear subspace of $\c_a[\imag]$ and that $\der_{\infty}^{-1}$ defines a $\C$-linear
operator from this subspace into $\Cao[\imag]$ which maps $\Car[\imag]\cap \c_a[\imag]^{\inte}$ into
$\Carm[\imag]$. If $f\in\c_a[\imag]$ and 
$g\in \c_a^{\inte}:=\c_a[\imag]^{\inte}\cap \c_a$ with $\abs{f}\leq g$ as germs in $\c$, then
$f\in \c_a[\imag]^{\inte}$; in particular,
if~$f\in\c_a[\imag]$ and $\abs{f}\in\c_a^{\inte}$, then 
$f\in \c_a[\imag]^{\inte}$. 

\medskip
\noindent
For $f\in \c_a[\imag]$ we set
$$ \|f\|_a\ :=\ \sup_{t\ge a} |f(t)|\ \in\ [0,\infty],$$
so (with $\b$ for ``bounded''): \label{p:Cab}
$$\c_a[\imag]^{\b}\ :=\ \big\{f\in \c_a[\imag]:\ \|f\|_a<\infty\big\}$$ is a $\C$-linear subspace of 
$\c_a[\imag]$, and $f\mapsto \|f\|_a$ is a norm on $\c_a[\imag]^{\b}$ making it a Banach space over $\C$.
It is also convenient to define for $t\ge a$ the seminorm \label{p:absa}
$$\|f\|_{[a,t]}\ :=\ \max_{a\le s\le t} |f(s)|$$ 
on $\c_a[\imag]$. More generally, let $r\in \N$. Then for $f\in \Car[\imag]$ we set
$$ \|f\|_{a;r}\ :=\ \max\big\{\|f\|_a, \dots, \|f^{(r)}\|_a\big\}\ \in\ [0,\infty],$$
so 
$$\Car[\imag]^{\b}:=\big\{f\in \Car[\imag]:\ \|f\|_{a;r}<\infty\big\}$$ 
is a $\C$-linear subspace of 
$\Car[\imag]$, and $f\mapsto \|f\|_{a;r}$ makes $\Car[\imag]^{\b}$ a normed vector space over $\C$. 
Note that for $f,g\in \Car[\imag]$ we have $\|fg\|_{a;r}\ \le\ 2^r\|f\|_{a;r}\|g\|_{a;r}$,
so $\Car[\imag]^{\b}$ is a subalgebra of the $\C$-algebra $\Car[\imag]$. 
If $f\in \Carm[\imag]$, then $f'\in \Car[\imag]$ with~$\|f'\|_{a;r}\le \|f\|_{a;r+1}$. 

\medskip
\noindent
With $\i=(i_0,\dots,i_r)$ ranging over $\N^{1+r}$,  let
$P=\sum_{\i} P_{\i} Y^{\i}$ (all $P_{\i}\in \c_a[\imag]$) be a polynomial in~$\c_a[\imag]\big[Y,Y',\dots,Y^{(r)}\big]$. 
For  $f\in \Car[\imag]$ we   set 
$$P(f)\ :=\ \sum_{\i} P_{\i} f^{\i}\in\c_a[\imag]\qquad\text{where $f^{\i}:=f^{i_0}(f')^{i_1}\cdots (f^{(r)})^{i_r}\in \c_a[\imag]$.}$$
We also let
$$\dabs{P}_a\ :=\ \max_{\i} \, \dabs{P_{\i}}_a \in [0,\infty].$$
Then  $\dabs{P}_a<\infty$ iff $P\in\c_a[\imag]^{\b}\big[Y,\dots,Y^{(r)}\big]$,
and $\dabs{\,\cdot\,}_a$ is a norm on the $\C$-linear space $\c_a[\imag]^{\b}\big[Y,\dots,Y^{(r)}\big]$.
In the following     assume $\dabs{P}_a<\infty$.
Then for $j=0,\dots,r$ such that $\partial P/\partial Y^{(j)}\neq 0$ we have
$$\dabs{\partial P/\partial Y^{(j)}}_a\  \leq\  (\deg_{Y^{(j)}}P)\cdot\dabs{P}_a.$$
Moreover:

\begin{lemma}\label{lem:bound on P(f)} 
If $P$ is homogeneous of degree $d\in\N$ and $f\in\Car[\imag]^{\b}$, then
$$\dabs{P(f)}_a\  \leq\  {d+r\choose r} \cdot \dabs{P}_a\cdot\dabs{f}_{a;r}^d.$$
\end{lemma}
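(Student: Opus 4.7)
The plan is to estimate the supremum norm of $P(f)$ termwise and then count the number of multi-indices that contribute. Since $P$ is homogeneous of degree $d$, only multi-indices $\i=(i_0,\dots,i_r)\in\N^{1+r}$ with $\abs{\i}=i_0+i_1+\cdots+i_r=d$ appear with nonzero coefficient $P_{\i}$.

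First, I would bound $\dabs{f^{\i}}_a$ for a single such $\i$. By definition of the supremum norm and multiplicativity of $\abs{\cdot}$ on $\c_a[\imag]$, we have
$$\dabs{f^{\i}}_a\ =\ \dabs{f^{i_0}(f')^{i_1}\cdots(f^{(r)})^{i_r}}_a\ \leq\ \dabs{f}_a^{i_0}\dabs{f'}_a^{i_1}\cdots\dabs{f^{(r)}}_a^{i_r}.$$
Each factor $\dabs{f^{(j)}}_a$ is bounded by $\dabs{f}_{a;r}$ by the definition of the latter norm, so the product is bounded by $\dabs{f}_{a;r}^{i_0+\cdots+i_r}=\dabs{f}_{a;r}^d$.

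Next, I would combine this with the bound $\dabs{P_{\i}}_a\leq \dabs{P}_a$ valid for every $\i$, and use the triangle inequality in $\c_a[\imag]^{\b}$:
$$\dabs{P(f)}_a\ \leq\ \sum_{\abs{\i}=d}\dabs{P_{\i}}_a\cdot\dabs{f^{\i}}_a\ \leq\ \dabs{P}_a\cdot\dabs{f}_{a;r}^d\cdot\#\{\i\in\N^{1+r}:\abs{\i}=d\}.$$

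Finally, I would identify the combinatorial count. The number of multi-indices $\i=(i_0,\dots,i_r)\in\N^{1+r}$ with $i_0+\cdots+i_r=d$ equals $\binom{d+r}{r}$ by the standard stars-and-bars argument. Plugging this in yields the stated bound. There is no real obstacle here; the only small subtlety is the bookkeeping with the combinatorial count, which is why the binomial factor $\binom{d+r}{r}$ appears instead of, say, a crude bound like $(r+1)^d$.
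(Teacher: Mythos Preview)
Your proof is correct and is the natural argument; the paper states this lemma without proof, treating it as routine, and your termwise estimate together with the stars-and-bars count is exactly what is intended.
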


\begin{cor}\label{cor:bound on P(f)}
Let $d\leq e$ in $\N$ be such that $P_{\i}=0$ whenever $\abs{\i}<d$ or $\abs{\i}>e$. Then
for~$f\in\Car[\imag]^{\b}$ we have
$$\dabs{P(f)}_a\ \leq\  D \cdot \dabs{P}_a\cdot \big(\dabs{f}_{a;r}^d+\cdots+\dabs{f}_{a;r}^e\big)$$
where $D=D(d,e,r):={e+r+1\choose r+1}-{d+r\choose r+1}\in\N^{\geq 1}$.  
\end{cor}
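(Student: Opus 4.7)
The plan is to reduce to Lemma~\ref{lem:bound on P(f)} by decomposing $P$ into its homogeneous components. Write $P=\sum_{k=d}^e P_k$ where $P_k=\sum_{|\i|=k} P_{\i} Y^{\i}$ is the homogeneous part of degree $k$, so $\dabs{P_k}_a\leq \dabs{P}_a$ for each $k$ (since the coefficients of $P_k$ form a subset of the coefficients of $P$). Applying the triangle inequality for $\dabs{\,\cdot\,}_a$ and Lemma~\ref{lem:bound on P(f)} to each $P_k$ yields
\begin{equation*}
\dabs{P(f)}_a\ \leq\ \sum_{k=d}^e \dabs{P_k(f)}_a\ \leq\ \sum_{k=d}^e \binom{k+r}{r}\dabs{P}_a\dabs{f}_{a;r}^k.
\end{equation*}

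Next I would use the hockey-stick identity $\sum_{k=d}^e \binom{k+r}{r}=\binom{e+r+1}{r+1}-\binom{d+r}{r+1}=D$, which follows by telescoping via Pascal's rule $\binom{k+r+1}{r+1}-\binom{k+r}{r+1}=\binom{k+r}{r}$. Since every summand $\binom{k+r}{r}$ is positive, we have $\binom{k+r}{r}\leq D$ for each $k\in\{d,\dots,e\}$, and in particular $D\geq\binom{d+r}{r}\geq 1$, confirming $D\in\N^{\geq 1}$. Substituting $\binom{k+r}{r}\leq D$ into the displayed inequality gives
\begin{equation*}
\dabs{P(f)}_a\ \leq\ D\cdot\dabs{P}_a\cdot\sum_{k=d}^e\dabs{f}_{a;r}^k,
\end{equation*}
which is the claimed bound.

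There is no genuine obstacle here: the only ingredient beyond the previous lemma is the elementary binomial identity, and the degree constraint $d\leq |\i|\leq e$ on the support of $P$ ensures that only the homogeneous parts $P_d,\ldots,P_e$ contribute to the sum.
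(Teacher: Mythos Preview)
Your proof is correct and is exactly the argument the paper has in mind: the corollary is stated without proof as an immediate consequence of Lemma~\ref{lem:bound on P(f)}, via the homogeneous decomposition and the hockey-stick identity $\sum_{k=d}^{e}\binom{k+r}{r}=\binom{e+r+1}{r+1}-\binom{d+r}{r+1}$ that you spell out.
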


\noindent
Let $B\colon V \to \Car[\imag]^{\b}$ be a $\C$-linear map from a normed vector space $V$ over $\C$ into~$\Car[\imag]^{\b}$. Then we set 
$$\|B\|_{a;r}\ :=\  \sup\big\{\|B(f)\|_{a;r}:\ f\in V,\ \|f\| \le 1\big\}\ \in\ [0,\infty],$$
the {\bf operator norm of $B$}. 
Hence with the convention $\infty\cdot b:=b\cdot\infty:=\infty$ for~$b\in [0,\infty]$ we have 
$$\|B(f)\|_{a;r}\ \leq\ \|B\|_{a;r}\cdot \|f\|\qquad\text{for $f\in V$.}$$
Note that $B$ is continuous iff $\|B\|_{a;r}<\infty$. 
If the map $D\colon \Car[\imag]^{\b}\to \mathcal C_{a}^{s}[\imag]^{\b}$ ($s\in\N$) is also $\C$-linear, then
$$\| D\circ B \|_{a;s}\ \leq\ \| D \|_{a;s} \cdot \|B\|_{a;r}.$$
For $r=0$ we drop the subscript: $\|B\|_{a}:=\|B\|_{a;0}$.

\begin{lemma}\label{dermphi} Let $r\in \N^{\ge 1}$ and $\phi\in \Carl[\imag]^{\b}$. Then the $\C$-linear operator $$\der-\phi\ :\ \Car[\imag]\to \Carl[\imag], \quad f\mapsto f'-\phi f$$ maps
$\Car[\imag]^{\b}$ into $\Carl[\imag]^{\b}$, and its restriction
$\der-\phi\colon \Car[\imag]^{\b}\to \Carl[\imag]^{\b}$ is continuous with operator norm $\|\der-\phi\|_{a;r-1}\le 1+2^{r-1}\|\phi\|_{a;r-1}$.
\end{lemma}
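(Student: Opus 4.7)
The plan is to verify this by a direct computation using the inequalities for the norms $\|\cdot\|_{a;r}$ already established in the preceding discussion of this subsection.

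First I would check that $\der-\phi$ sends $\Car[\imag]^{\b}$ into $\Carl[\imag]^{\b}$. For $f\in\Car[\imag]^{\b}$, we have $f'\in\Carl[\imag]$ with $\|f'\|_{a;r-1}\leq\|f\|_{a;r}<\infty$ (using the observation ``if $f\in\Carm[\imag]$, then $f'\in\Car[\imag]$ with $\|f'\|_{a;r}\leq\|f\|_{a;r+1}$'' from the excerpt, applied with $r$ replaced by $r-1$). Since $\Car[\imag]\subseteq\Carl[\imag]$, the product $\phi f$ lies in $\Carl[\imag]$; and by the multiplicative estimate $\|gh\|_{a;s}\leq 2^s\|g\|_{a;s}\|h\|_{a;s}$ noted in the text (applied with $s=r-1$), we have
\[
\|\phi f\|_{a;r-1}\ \leq\ 2^{r-1}\|\phi\|_{a;r-1}\|f\|_{a;r-1}\ \leq\ 2^{r-1}\|\phi\|_{a;r-1}\|f\|_{a;r}\ <\ \infty.
\]
Hence $(\der-\phi)f=f'-\phi f\in\Carl[\imag]^{\b}$.

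Next, for the operator-norm bound I would simply apply the triangle inequality for $\|\cdot\|_{a;r-1}$ to $f'-\phi f$, combining the two estimates above:
\[
\|f'-\phi f\|_{a;r-1}\ \leq\ \|f'\|_{a;r-1}+\|\phi f\|_{a;r-1}\ \leq\ \bigl(1+2^{r-1}\|\phi\|_{a;r-1}\bigr)\|f\|_{a;r}.
\]
Taking the supremum over $f\in\Car[\imag]^{\b}$ with $\|f\|_{a;r}\leq 1$ then yields
\[
\|\der-\phi\|_{a;r-1}\ \leq\ 1+2^{r-1}\|\phi\|_{a;r-1},
\]
and continuity follows at once from the finiteness of this operator norm.

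There is no real obstacle here: the lemma is a direct consequence of the Leibniz-type norm bound $\|gh\|_{a;s}\leq 2^s\|g\|_{a;s}\|h\|_{a;s}$ and the obvious bound $\|f'\|_{a;r-1}\leq\|f\|_{a;r}$, both recorded earlier in the subsection. The only mild care needed is to keep the indices straight, noting that $\phi$ is only assumed to be in $\Carl[\imag]^{\b}$ (which is why the multiplicative bound is applied at level $r-1$ rather than $r$), and that $f\in\Car[\imag]$ has enough regularity for $\phi f$ to inherit the $\c^{r-1}$-smoothness required.
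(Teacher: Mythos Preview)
Your proof is correct and is exactly the routine verification the paper has in mind: the lemma is stated there without proof, since it follows immediately from the two estimates $\|f'\|_{a;r-1}\le\|f\|_{a;r}$ and $\|fg\|_{a;r-1}\le 2^{r-1}\|f\|_{a;r-1}\|g\|_{a;r-1}$ recorded just before it.
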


\noindent
Let $r\in \N$, $a_0\in\R$, and let $a$ range over $[a_0,\infty)$. The 
$\C$-linear map  
$$f\mapsto  f|_{[a,+\infty)}\ \colon\ \mathcal C_{a_0}^r[\imag] \to \mathcal C_{a}^r[\imag]$$
satisfies $\|f|_{[a,+\infty)}\|_{a;r}\leq \|f\|_{a_0;r}$ for $f\in \mathcal C_{a_0}^r[\imag]$, so it maps
$\mathcal C_{a_0}^r[\imag]^{\b}$ into~$\Car[\imag]^{\b}$.
For~${f\in\mathcal C_{a_0}^0[\imag]}$ also denoting its germ at $+\infty$ and its restriction  $f|_{[a,+\infty)}$, we have: 
\begin{align*}
f\preceq 1 &\quad\Longleftrightarrow\quad \|f\|_a < \infty \ \text{for some $a$}  
\quad\Longleftrightarrow\quad  \|f\|_a < \infty \ \text{for all $a$,} \\
f\prec 1 & \quad\Longleftrightarrow\quad \|f\|_a\to 0\ \text{as $a\to\infty$.}
\end{align*}

\subsection*{Twisted integration} 
For  $f\in \c_a[\imag]$ we have the $\C$-linear operator $$g\mapsto fg\ :\ \c_a[\imag]\to \c_a[\imag],$$ which we also denote by $f$. 
We now
fix an element $\phi\in \c_a[\imag]$, and set $\Phi:= \der_a^{-1} \phi$, so
$\Phi\in \Cao[\imag]$, $\Phi(t)=\int_a^t \phi(s)\,ds$ for $t\ge a$, and $\Phi'=\phi$. Thus $\ex^\Phi, \ex^{-\Phi}\in \Cao[\imag]$ with~$(\ex^\Phi)^\dagger = \phi$. Consider the $\C$-linear operator
$$B := \ex^{\Phi}\circ\ \der_a^{-1}\circ \ex^{-\Phi}\ \colon\ \c_a[\imag]\to \Cao[\imag],$$ so
$$ Bf(t)\ =\  \ex^{\Phi(t)}\int_a^t \ex^{-\Phi(s)}f(s)\, ds\quad \text{ for $f\in \c_a[\imag]$.}$$
It is easy to check that $B$ is a right inverse to $\der - \phi\colon\Cao[\imag]\to \c_a[\imag]$ in the sense that~$(\der-\phi)\circ B$ is the identity on~$\c_a[\imag]$. Note that for
$f\in \c_a[\imag]$ we have $Bf(a)=0$, and thus $(Bf)'(a)=f(a)$, using $(Bf)'=f+\phi B(f)$.  
Set $R:= \Re \Phi$ and $S:= \Im \Phi$, so $R, S\in \Cao$, $R'=\Re \phi$, $S'=\Im \phi$, and $R(a)=S(a)=0$. Note also that if~$\phi\in \Car[\imag]$, then $\ex^{\Phi}\in \Carm[\imag]$, so $B$ maps
$\Car[\imag]$ into $\Carm[\imag]$.

\medskip\noindent
Suppose $\epsilon>0$ and $\Re \phi(t)\le -\epsilon$ for all $t\ge a$. Then $-R$ has derivative $-R'(t)\ge \epsilon$ for all $t\ge a$, so $-R$ is strictly increasing with image
$[-R(a),\infty)=[0,\infty)$ and compositional inverse $(-R)^{\inv}\in \Coo$. Making 
the change of variables $-R(s)=u$ for~$s\ge a$, we obtain for $t\ge a$ and $f\in \c_a[\imag]$, and with $s:=(-R)^{\inv}(u)$,
\begin{align*} \int_a^t \ex^{-\Phi(s)}f(s) \,ds\ &=\ \int_{0}^{-R(t)} \ex^{-\Phi(s)}f(s)\frac{1}{-R'(s)}\, du,\ \text{ and thus}\\
 |Bf(t)|\ &\le\  \ex^{R(t)}\cdot \left(\int_{0}^{-R(t)}\ex^u\, du \cdot \|f\|_{[a,t]} \right)\cdot \left\|\frac{1}{\Re \phi}\right\|_{[a,t]} \\
 &=\ \big[1-\ex^{R(t)}\big]\cdot \|f\|_{[a,t]} \cdot \left\|\frac{1}{\Re \phi}\right\|_{[a,t]}\\   &\le\ \|f\|_{[a,t]} \cdot \left\|\frac{1}{\Re \phi}\right\|_{[a,t]}\
  \le\ \|f\|_a\cdot \left\|\frac{1}{\Re \phi}\right\|_a.
  \end{align*}
Thus $B$ maps $\c_a[\imag]^{\b}$ into $\c_a[\imag]^{\b}\cap \Cao[\imag]$ and $B \colon \c_a[\imag]^{\b}\to \c_a[\imag]^{\b}$ 
is continuous with operator norm $\|B\|_{a}\le \big\|\frac{1}{\Re \phi}\big\|_a$.

\medskip\noindent
Next, suppose $\epsilon>0$ and $\Re \phi(t)\ge \epsilon$ for all $t\ge a$. Then $R'(t)\ge \epsilon$ for all $t\ge a$, so~$R(t) \ge \epsilon\cdot(t-a)$ for such $t$. Hence if
$f\in \c_a[\imag]^{\b}$, then $\ex^{-\Phi}f$ is integrable at~$\infty$.
Recall from \eqref{eq:integrable} that~$\c_a[\imag]^{\inte}$ is the $\C$-linear subspace of $\c_a[\imag]$ consisting of the~$g\in \c_a[\imag]$ that are integrable at $\infty$. 
We have
the $\C$-linear maps $$f\mapsto \ex^{-\Phi}f \colon \c_a[\imag]^{\b} \to \c_a[\imag]^{\inte}, \qquad \der_{\infty}^{-1}\colon \c_a[\imag]^{\inte}\to \Cao[\imag],\ \quad f\mapsto \ex^{\Phi}f\colon \Cao[\imag]\to \Cao[\imag]. $$ Composition yields    
the $\C$-linear operator $B\colon \c_a[\imag]^{\b}  \to \Cao[\imag]$,  $$  Bf(t)\ :=\ \ex^{\Phi(t)}\int_\infty^t \ex^{-\Phi(s)}f(s)\, ds \qquad(f\in \c_a[\imag]^{\b}).$$
It is a right inverse to $\der - \phi$ in the sense that
$(\der-\phi)\circ B$ is the identity on~$\c_a[\imag]^{\b}$. 
Note that $R$ is strictly increasing with image
$[0,\infty)$ and compositional inverse~${R^{\inv}\in \Coo}$. 
Making the change of variables $R(s)=u$ for $s\ge a$, we obtain for $t\ge a$ and~${f\in \c_a[\imag]^{\b}}$ with $s:=R^{\inv}(u)$,\begin{align*} \int_\infty^t \ex^{-\Phi(s)}f(s)\, ds\ &=\ -\int_{R(t)}^{\infty} \ex^{-\Phi(s)}f(s)\frac{1}{R'(s)}\, du,\ \text{ and thus}\\
 |Bf(t)|\ &\le\  \ex^{R(t)}\cdot \left( \int_{R(t)}^{\infty}\ex^{-u}\, du\right) \cdot \|f\|_t \cdot \left\|\frac{1}{\Re \phi}\right\|_t \\
  &\le\ \|f\|_t \cdot \left\|\frac{1}{\Re \phi}\right\|_t\ \le\ \|f\|_a \cdot \left\|\frac{1}{\Re \phi}\right\|_a.
  \end{align*}
Hence $B$ maps $\c_a[\imag]^{\b}$ into $\c_a[\imag]^{\b}\cap \Cao[\imag]$, and as a $\C$-linear operator $\c_a[\imag]^{\b}\to \c_a[\imag]^{\b}$, $B$ is continuous with operator norm
$\|B\|_a \le \big\|\frac{1}{\Re \phi}\big\|_a$. 
If $\phi\in\Car[\imag]$, then $B$ maps~${\c_a[\imag]^{\b}\cap \Car[\imag]}$ into 
 $\c_a[\imag]^{\b}\cap\Carm[\imag]$.

\medskip\noindent
The case that for some $\epsilon>0$ we have $\Re\phi(t)\le -\epsilon$ for all $t\ge a$ is called the {\em attractive case}, 
and the case that for some $\epsilon>0$ we have $\Re\phi(t)\ge \epsilon$ for all~$t\ge a$ is called the {\em repulsive case}. 
In both cases the above yields a continuous operator~$B \colon \c_a[\imag]^{\b}\to \c_a[\imag]^{\b}$
with operator norm $\le \big\|\frac{1}{\Re \phi}\big\|_a$
which is right-inverse to the operator $\der-\phi\colon\Cao[\imag]\to \c_a[\imag]$.
We denote this operator $B$ by~$B_{\phi}$ if we need to indicate its dependence on $\phi$. Note also its dependence on $a$. In both the attractive and the repulsive case, $B$ maps
$\c_a[\imag]^{\b}$ into $\c_a[\imag]^{\b}\cap\Cao[\imag]$, and if $\phi\in\Car[\imag]$, then
$B$ maps $\c_a[\imag]^{\b}\cap \Car[\imag]$ into 
 $\c_a[\imag]^{\b}\cap\Carm[\imag]$.  

\medskip
\noindent
Given a Hardy field $H$ and $f\in H[\imag]$ with  $\Re f \succeq 1$ we can choose $a$ and a representative of $f$ in  $\c_a[\imag]$, to be denoted also by $f$, such that $\Re f(t)\ne 0$ for all~$t\ge a$, and then $f\in \c_a[\imag]$ falls either under the attractive case or under the repulsive case. The original germ $f\in H[\imag]$ as well as the function $f\in \c_a[\imag]$ is accordingly said to be attractive, respectively repulsive.  (This agrees with the terminology of
\ref{def:repulsive}.)

\subsection*{Twists and right-inverses of linear operators over Hardy fields} 
Let $H$ be a Hardy field, $K:=H[\imag]$, and let $A\in K[\der]$ be a monic operator of
order $r\ge 1$, 
$$A\ =\ \der^r + f_1\der^{r-1}+\cdots + f_r, \qquad f_1,\dots, f_r\in K.$$ 
Take a real number $a_0$ and functions in $\c_{a_0}[\imag]$ that represent
the germs $f_1,\dots, f_r$ and to be denoted also by $f_1,\dots, f_r$. Whenever we increase below
the value of $a_0$, it is understood that we also update the functions $f_1,\dots, f_r$ accordingly, by restriction; the same holds for any function on $[a_0,\infty)$ that gets named. Throughout, $a$ ranges over $[a_0,\infty)$, and $f_1,\dots, f_r$ denote also the restrictions of these functions to $[a,\infty)$, and likewise for any function on $[a_0,\infty)$ that we name. Thus for any $a$ we have the $\C$-linear operator
$$A_a\ \colon\ \Car[\imag]\to\c_a[\imag], \quad y \mapsto y^{(r)} + f_1y^{(r-1)} + \cdots + f_ry.$$
Next, let $\fm\in H^\times$ be given. It gives rise to the twist 
$A_{\ltimes \fm}\in K[\der]$,  
$$A_{\ltimes \fm}\ :=\ \fm^{-1}A\fm\ =\ \der^r + g_1\der^{r-1}+\cdots + g_r, \qquad g_1,\dots, g_r\in K.$$
Now  [ADH, (5.1.1), (5.1.2), (5.1.3)] gives universal expressions for $g_1,\dots, g_r$ in terms of $f_1,\dots, f_r, \fm, \fm^{-1}$; for example, $g_1=f_1+r\fm^\dagger$. 
Suppose the germ $\fm$ is represented by a function 
in $\Cazr[\imag]^\times$, also denoted by $\fm$. Let $\fm^{-1}$ likewise do double duty as the multiplicative inverse of $\fm$ in $\Cazr[\imag]$. The expressions above can be used to show that the germs $g_1,\dots, g_r$ are represented by functions in $\c_{a_0}[\imag]$, to be denoted also by $g_1,\dots, g_r$, such that for all $a$ and all $y\in \Car[\imag]$ we have $$\fm^{-1} A_a(\fm y)\ =\ (A_{\ltimes \fm})_a(y), \text{ where }
  (A_{\ltimes \fm})_a(y)\ :=\  y^{(r)} + g_1y^{(r-1)} + \cdots + g_ry. $$
The operator $A_a\colon \Car[\imag]\to\c_a[\imag]$ is surjective:  see \cite[(10.6.3)]{Dieudonne} or \cite[\S{}19, I, II]{Walter}.  We aim to
construct a right-inverse of $A_a$ on the subspace $\c_a[\imag]^{\b}$ of $\c_a[\imag]$. 
For this, we assume given a splitting of $A$ over $K$, 
$$A\ =\ (\der-\phi_1)\cdots (\der-\phi_r), \qquad  \phi_1,\dots, \phi_r\in K.$$
Take functions in $\c_{a_0}[\imag]$, to be denoted also by $\phi_1,\dots, \phi_r$, that represent the germs $\phi_1,\dots, \phi_r$. We increase $a_0$ to arrange
$\phi_1,\dots, \phi_r\in \Cazrl[\imag]$. Note that for~$j=1,\dots,r$
the $\C$-linear map $\der-\phi_j\colon \Cao[\imag]\to \c_a[\imag]$ restricts to a
$\C$-linear map $A_j\colon \Caj[\imag]\to \Cajl[\imag]$, so that we obtain a map
$A_1\circ \cdots \circ A_r\colon \Car[\imag]\to\c_a[\imag]$. It is routine to verify  that for all sufficiently large $a$ we have
$$A_a\ =\ A_1\circ \cdots \circ A_r\ \colon\ \Car[\imag]\to\c_a[\imag].$$
We increase $a_0$ so that
$A_a=A_1\circ \cdots \circ A_r$ for all $a$. Note that $A_1,\dots, A_r$ depend on  $a$, but we prefer not to indicate this dependence notationally.

\medskip
\noindent
Now $\fm\in H^\times$ gives over $K$ the splitting
$$A_{\ltimes \fm}\ =\ (\der - \phi_1+\fm^\dagger)\cdots (\der-\phi_r+\fm^\dagger).$$
Suppose as before that the germ $\fm$ is represented by a function
$\fm\in \Cazr[\imag]^\times$. With the usual notational conventions we 
have $\phi_j-\fm^\dagger\in \Cazrl[\imag]$, giving the $\C$-linear map $\tilde A_j := \der-(\phi_j-\fm^\dagger)\colon \Caj[\imag]\to \Cajl[\imag]$
for $j=1,\dots,r$, which for all sufficiently large $a$ gives, just as for
$A_a$, a factorization
$$(A_{\ltimes \fm})_a\ =\ \tilde A_1\circ \cdots \circ \tilde A_r.$$
To construct a right-inverse of $A_a$ we now assume $\Re\phi_1,\dots, \Re\phi_r\succeq 1$.
Then we increase $a_0$ once more so that for all $t\ge a_0$,  $$\Re \phi_1(t),\dots, \Re \phi_r(t)\ne 0.$$
Recall that for $j=1,\dots,r$ we have
the continuous $\C$-linear operator 
$$B_j\ :=\ B_{\phi_j}\ \colon\ \c_a[\imag]^{\b}\to \c_a[\imag]^{\b}$$ from the previous subsection.   
The subsection on twisted integration now yields:
 
\begin{lemma}\label{cri} The continuous $\C$-linear operator
$$A_a^{-1}\ :=\  B_r\circ \cdots \circ B_1\ \colon\ \c_a[\imag]^{\b}\to \c_a[\imag]^{\b}$$ is a right-inverse of $A_a$: it maps $\c_a[\imag]^{\b}$ into $\c_a[\imag]^{\b}\cap \Car[\imag]$, and $A_a\circ A_a^{-1}$ is the identity on $\c_a[\imag]^{\b}$. For its operator norm we have $\|A_a^{-1}\|_a\ \le\  
\prod_{j=1}^r\big\|\frac{1}{\Re \phi_j}\big\|_a$.
\end{lemma}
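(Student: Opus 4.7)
The plan is to assemble the properties of the component operators $B_j := B_{\phi_j}$ established in the twisted integration subsection, in three steps. First, I would track regularity through the composition: starting from $f \in \c_a[\imag]^{\b}$ and setting $g_0 := f$, $g_j := B_j(g_{j-1})$ for $j = 1, \ldots, r$, I induct on $j$. Using that $\phi_j \in \Cazrl[\imag]$ together with the fact (from the twisted integration subsection) that $B_{\phi_j}$ maps $\c_a[\imag]^{\b} \cap \Caj[\imag]$ into $\c_a[\imag]^{\b} \cap \c_a^{j+1}[\imag]$, I conclude $g_j \in \c_a[\imag]^{\b} \cap \Caj[\imag]$. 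In particular, $g_r = A_a^{-1}(f) \in \c_a[\imag]^{\b} \cap \Car[\imag]$, so $A_a^{-1}$ maps $\c_a[\imag]^{\b}$ into $\c_a[\imag]^{\b} \cap \Car[\imag]$ as claimed, and the operator is continuous as a composition of continuous $\C$-linear maps.

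Second, I would verify the right-inverse property by telescoping. Each $B_j$ satisfies $(\der - \phi_j) \circ B_j = \operatorname{id}$ on $\c_a[\imag]^{\b}$, which in the notation above reads $A_j(g_j) = g_{j-1}$. Combined with the factorization $A_a = A_1 \circ \cdots \circ A_r$ established in the setup (for sufficiently large $a_0$), iterated application yields
$$A_a\bigl(A_a^{-1}(f)\bigr)\ =\ A_1\bigl(A_2(\cdots A_r(g_r)\cdots)\bigr)\ =\ A_1\bigl(A_2(\cdots g_{r-1}\cdots)\bigr)\ =\ \cdots\ =\ A_1(g_1)\ =\ g_0\ =\ f,$$
so $A_a \circ A_a^{-1}$ is the identity on $\c_a[\imag]^{\b}$.

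Third, the operator norm bound is immediate from submultiplicativity: each $B_j \colon \c_a[\imag]^{\b} \to \c_a[\imag]^{\b}$ is continuous with $\|B_j\|_a \leq \bigl\|\frac{1}{\Re \phi_j}\bigr\|_a$, since either the attractive or the repulsive case of the twisted integration subsection applies (depending on the sign of $\Re \phi_j$ on $[a_0,+\infty)$, with the same bound in both cases); therefore
$$\|A_a^{-1}\|_a\ \leq\ \prod_{j=1}^r \|B_j\|_a\ \leq\ \prod_{j=1}^r \bigl\|\tfrac{1}{\Re \phi_j}\bigr\|_a.$$
No genuine obstacle arises; the argument is essentially bookkeeping on top of the twisted integration subsection. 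The only delicate point — already handled in the setup — is to choose $a_0$ large enough that the factorization $A_a = A_1 \circ \cdots \circ A_r$ holds and that each $\Re\phi_j$ has constant nonzero sign on $[a_0,+\infty)$, which is guaranteed by $\Re\phi_j \succeq 1$ together with continuity.
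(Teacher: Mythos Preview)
Your proposal is correct and matches the paper's approach exactly: the paper states the lemma with the preamble ``The subsection on twisted integration now yields:'' and gives no further proof, so your write-up is precisely the bookkeeping the paper leaves implicit. One small indexing slip: in the inductive step you want $B_{\phi_j}$ to map $\c_a[\imag]^{\b}\cap\c_a^{\,j-1}[\imag]$ into $\c_a[\imag]^{\b}\cap\Caj[\imag]$ (not $\Caj$ into $\c_a^{\,j+1}$), which holds because $\phi_j\in\Cazrl[\imag]\subseteq\c_a^{\,j-1}[\imag]$ for $j\le r$; your conclusion $g_j\in\c_a[\imag]^{\b}\cap\Caj[\imag]$ is then correct.
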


\noindent 
Suppose $A$ is real in the sense that $A\in H[\der]$. Then by increasing $a_0$ we arrange that $f_1,\dots, f_{r}\in \c_{a_0}$.  Next, set 
$$\c_a^{\b}\ :=\ \c_a[\imag]^{\b}\cap\c_a \ =\  \big\{f\in \c_a:\, \|f\|_a<\infty\big\},$$
an $\R$-linear subspace of $\c_a$. Then the real part
$$\Re A_a^{-1}\ :\ \c_a^{\b} \to \c_a^{\b},\qquad (\Re A_a^{-1})(f)\ :=\ \Re\!\big(A_a^{-1}(f)\big)$$
is $\R$-linear and maps $\c_a^{\b}$ into $\Car$. Moreover, it is right-inverse to $A_a$ on $\c_a^{\b}$ in the sense that $A_a\circ \Re A_a^{-1}$ is the identity on $\c_a^{\b}$, and for $f\in \c_a^{\b}$,
$$\|(\Re A_a^{-1})(f)\|_a\ \le\ \|A_a^{-1}(f)\|_a.$$

\subsection*{Damping factors} Here $H$, $K$, $A$, $f_1,\dots, f_r$, $\phi_1,\dots, \phi_r$, $a_0$ are as in Lemma~\ref{cri}.
In particular, $r\in \N^{\ge 1}$, $\Re\phi_1,\dots, \Re \phi_r\succeq 1$, and $a$ ranges over $[a_0,\infty)$.
For later use we choose damping factors $u$ 
to make the operator $uA_a^{-1}$ more manageable than~$A_a^{-1}$.
For $j=0,\dots,r$ we set 
\begin{equation}\label{eq:Ajcirc}
A_j^{\circ}\ :=\ A_1\circ \cdots \circ A_j\ \colon\ \Caj[\imag]\to \c_a[\imag],
\end{equation} 
with $A_0^{\circ}$ the identity on $\c_a[\imag]$
and $A_r^\circ=A_a$, and
\begin{equation}\label{eq:Bjcirc}
B_j^\circ\ :=\ B_j\circ \cdots \circ B_1\ \colon\ \c_a[\imag]^{\b}  \to \c_a[\imag]^{\b},
\end{equation}
where $B_0^{\circ}$ is the identity
on $\c_a[\imag]^{\b}$ and $B_r^\circ=A_a^{-1}$. Then $B_j^\circ$ maps $\c_a[\imag]^{\b}$ in\-to~${{\c_a[\imag]^{\b}}\cap { \Caj[\imag]}}$ and $A_j^\circ \circ B_j^\circ$ is the identity on $\c_a[\imag]^{\b}$ by Lemma~\ref{cri}.

\begin{lemma}\label{teq} Let $u\in \Car[\imag]^\times$. Then for $i=0,\dots,r$ and $f\in \c_a[\imag]^{\b}$,
\begin{equation}\label{eq:derivatives of uA^-1}
\big[u\cdot A_a^{-1}(f)\big]^{(i)}\ =\ \sum_{j=r-i}^r u_{i,j} \cdot u\cdot B_j^{\circ}(f) \quad  \text{ in $\Carmi[\imag]$}
\end{equation}
with coefficient functions $u_{i,j}\in \Carmi[\imag]$ given by $u_{i,r-i}=1$, and for $0\le i< r$,
$$ u_{i+1,j}\ =\ \begin{cases}
u_{i,r}'+ u_{i,r}(u^\dagger + \phi_r)	& \text{if $j=r$,} \\
u_{i,j}'+ u_{i,j}(u^\dagger + \phi_j) + u_{i,j+1} & \text{if  $r-i\le j< r$.}\end{cases}$$
\end{lemma}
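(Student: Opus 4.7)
My plan is an induction on $i$ built on a single recurrence for the iterates $B_j^\circ(f)$. Since $B_j^\circ = B_j\circ B_{j-1}^\circ$ by \eqref{eq:Bjcirc} and $(\der-\phi_j)\circ B_j$ is the identity on $\c_a[\imag]^{\b}$ (as $B_j = B_{\phi_j}$), applying $A_j = \der-\phi_j$ to $B_j^\circ(f)$ returns $B_{j-1}^\circ(f)$. Rearranged, this gives
\[
(B_j^\circ(f))'\ =\ \phi_j\, B_j^\circ(f)\ +\ B_{j-1}^\circ(f)\qquad(1\le j\le r),
\]
which, together with the product rule, supplies all the non-bookkeeping content of the proof.

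The base case $i=0$ is immediate: since $u_{0,r}=1$ by the initial condition $u_{i,r-i}=1$, the claimed right-hand side of \eqref{eq:derivatives of uA^-1} collapses to $u\cdot B_r^\circ(f) = u\cdot A_a^{-1}(f)$. For the step from $i$ to $i+1$ I would use that $i<r$ forces every index in the $i$-th sum to satisfy $j\ge r-i\ge 1$, so the recurrence above is available for each such $j$. Differentiating the inductive hypothesis term by term and using $u'=u\,u^\dagger$, one computes
\[
(u_{i,j}\, u\, B_j^\circ(f))'\ =\ \bigl(u_{i,j}' + u_{i,j}(u^\dagger+\phi_j)\bigr)\, u\, B_j^\circ(f)\ +\ u_{i,j}\, u\, B_{j-1}^\circ(f).
\]
Summing over $r-i\le j\le r$ and re-indexing the $B_{j-1}^\circ$-piece, the coefficient of $u\, B_j^\circ(f)$ in the resulting expression matches the definition of $u_{i+1,j}$ exactly: the case $j=r$ yields the top branch of the stated recurrence; the range $r-i\le j\le r-1$ yields the bottom branch, the extra $+u_{i,j+1}$ coming from the shift; and a fresh term appears at $j=r-(i+1)$ with coefficient $u_{i,r-i}=1$, consistent with the initial condition $u_{i+1,r-(i+1)}=1$.

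The regularity claim $u_{i,j}\in\Carmi[\imag]$ follows by a parallel induction from the same recurrence: $u_{i,j}'$ drops the order from $r-i$ to $r-i-1$, while the other factors $u_{i,j}$, $u_{i,j+1}$, $u^\dagger$, $\phi_j$ all lie in $\mathcal{C}^{r-i-1}_a[\imag]$ once $i<r$ (using $u\in\Car[\imag]^\times$, so $u^\dagger\in\Carl[\imag]$, and the hypothesis $\phi_j\in\Carl[\imag]$). I expect the only real obstacle to be bookkeeping: tracking the index range, the shift that creates the $j=r-i-1$ term, and the split between the two branches of the recurrence. There is no analytic content beyond the factorization $A_a=A_1\circ\cdots\circ A_r$ from \eqref{eq:Ajcirc}, the right-inverse property of $B_j$, and the product rule.
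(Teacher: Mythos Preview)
Your proposal is correct and follows essentially the same approach as the paper: both argue by induction on $i$, using the identity $(B_j^\circ(f))' = \phi_j B_j^\circ(f) + B_{j-1}^\circ(f)$ (equivalently, $B_j(g)' = g + \phi_j B_j(g)$) together with the product rule and a re-indexing of the resulting sum. Your treatment is slightly more explicit about the regularity claim $u_{i,j}\in\Carmi[\imag]$, which the paper leaves implicit.
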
 
\begin{proof} Recall that for $j=1,\dots,r$ and $f\in \c_a[\imag]^{\b}$ we have
$B_j(f)' = f + \phi_jB_j(f)$. 
It is obvious that \eqref{eq:derivatives of uA^-1} holds for $i=0$. Assuming \eqref{eq:derivatives of uA^-1} for a certain $i< r$ we get
$$\big[uA_a^{-1}(f)\big]{}^{(i+1)}\ =\  \sum_{j=r-i}^r u_{i,j}' \cdot uB_j^{\circ}(f) + \sum_{j=r-i}^r u_{i,j} \cdot \big[uB_j^{\circ}(f)\big]',$$ 
 and for $j=r-i,\dots, r$,
$$\big[uB_j^{\circ}(f)\big]'\ =\ u'B_j^{\circ}(f) + u\cdot \big[B_j^{\circ}(f)\big]'\
                    =\ u^\dagger\cdot uB_j^{\circ}(f) + 
                    uB_{j-1}^{\circ}(f) + \phi_j uB_j^{\circ}(f),$$
which gives the desired result. 
\end{proof} 

\noindent
Let $\fv\in \Cazr$ be such that $\fv(t)>0$ for all $t\ge a_0$, $\fv\in H$, $\fv\prec 1$.  Then we have the convex subgroup
 $$\Delta\ :=\ \big\{\gamma\in v(H^\times):\ \gamma=o(v\fv)\big\}$$ 
of $v(H^\times)$. {\em We assume that 
$\phi_1,\dots, \phi_r\preceq_{\Delta} \fv^{-1}$ in the asymptotic field $K$, where~$\phi_j$ and
 $\fv$ also denote their germs.}
For real~$\nu>0$ we have
$\fv^\nu\in (\Cazr)^\times$, so 
$$u\ :=\ \fv^\nu|_{[a,\infty)}\in (\Car)^\times, \qquad \|u\|_a<\infty.$$
In the next proposition $u$ has this meaning, a meaning which accordingly  varies with $a$.    
Recall that $A_a^{-1}$ maps $\c_a[\imag]^{\b}$ into $\c_a[\imag]^{\b}\cap \Car[\imag]$
with $\|A_a^{-1}\|_a<\infty$. 

{\samepage \begin{prop}\label{uban}  Assume $H$ is real closed and $\nu\in \Q$, $\nu > r$. Then:
\begin{enumerate}
\item[\rm(i)] the $\C$-linear operator $u A_a^{-1}\colon \c_a[\imag]^{\b} \to \c_a[\imag]^{\b}$ maps $\c_a[\imag]^{\b}$ into $\Car[\imag]^{\b}$; 
\item[\rm(ii)] $u A_a^{-1}\colon \c_a[\imag]^{\b} \to \Car[\imag]^{\b}$ is continuous;
\item[\rm(iii)] there is a real constant $c\ge 0$ such that $\|u A_a^{-1}\|_{a;r}\le c$ for all $a$;
\item[\rm(iv)] for all $f\in \c_a[\imag]^{\b}$ we have $uA_a^{-1}(f) \preceq \fv^{\nu}\prec 1$; 
\item[\rm(v)]  $\|u A_a^{-1}\|_{a;r}\to 0$ as $a\to \infty$.
\end{enumerate} 
\end{prop}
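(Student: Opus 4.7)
The plan is to apply Lemma~\ref{teq} to express each derivative $[uA_a^{-1}(f)]^{(i)}$ as a sum of contributions of the form $v_{i,j}\,B_j^\circ(f)$ with $v_{i,j} := u_{i,j}\,u \in H[\imag]$, and then to show that the germs $v_{i,j}$ tend to zero fast enough. First I would rewrite the recursion of Lemma~\ref{teq} in terms of $v_{i,j}$: multiplying through by $u$ and using $(u_{i,j}u)' = u_{i,j}'u + u_{i,j}u\,u^\dagger$, the $u^\dagger$-term is absorbed into the derivative, yielding
$$v_{i,r-i}\ =\ \fv^\nu, \qquad v_{i+1,r}\ =\ v_{i,r}' + v_{i,r}\phi_r, \qquad v_{i+1,j}\ =\ v_{i,j}' + v_{i,j}\phi_j + v_{i,j+1}\quad (r-i \le j < r).$$

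The key estimate I would establish by induction on $i$ is $v_{i,j} \preceq_\Delta \fv^{\nu - i}$ for $0\le i \le r$ and $r-i \le j \le r$. The base case is trivial. For the induction step, note that since $H$ has small derivation and $\fv\prec 1$ we have $\fv'\preceq 1$, hence $\fv^\dagger \preceq \fv^{-1}$, which together with the hypothesis $\phi_j\preceq_\Delta\fv^{-1}$ keeps all ingredients of the recursion under control. The terms $v_{i,j}\phi_j$ and $v_{i,j+1}$ are handled directly from the induction hypothesis combined with $\phi_j\preceq_\Delta\fv^{-1}$ and $\fv\prec 1$. The delicate term is $v_{i,j}'$; to bound it by $\fv^{\nu-i-1}$ modulo $\Delta$, I would expand $v_{i,j}$ explicitly as a polynomial in $\fv^\nu$, $\fv^\dagger$, and $\phi_1,\ldots,\phi_r$ and their derivatives, and apply Corollary~1.1.12 of~\cite{ADH4} (to the effect that $(\fv^\alpha)^{(k)} \preceq \fv^{\alpha-k}$) along with the small-derivation property applied to each bounded factor.

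With this estimate in hand, (i)--(v) will follow. Since $\nu > r \ge i$, the exponent $\nu - i$ is positive, and since $v(\fv) > \Delta$, the bound $v_{i,j}\preceq_\Delta \fv^{\nu-i}$ forces $v_{i,j}\prec 1$ as germs in $H[\imag]$; in particular each $v_{i,j}$ is bounded, and its sup norm $\|v_{i,j}\|_a$ tends to zero as $a\to\infty$ (as it is the sup of a function going to $0$). By the proof of Lemma~\ref{cri}, each operator norm $\|B_j^\circ\|_a$ is dominated by $\prod_{k=1}^{j}\|1/\Re\phi_k\|_a$, which is finite (since $\Re\phi_k\succeq 1$) and non-increasing in $a$, hence uniformly bounded on $[a_0,\infty)$ by some $M$. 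From Lemma~\ref{teq} applied to $f\in\c_a[\imag]^{\b}$ we then get
$$\bigl\|[uA_a^{-1}(f)]^{(i)}\bigr\|_a\ \le\ \sum_{j=r-i}^{r}\|v_{i,j}\|_a\,\|B_j^\circ\|_a\,\|f\|_a,$$
which yields (i) and (ii), with the uniform bound $c := rM\max_{i,j}\|v_{i,j}\|_{a_0}$ giving (iii). Statement (iv) is the $i=0$ case, where the formula collapses to $uA_a^{-1}(f) = u\cdot A_a^{-1}(f)$, so $\abs{uA_a^{-1}(f)(t)} \le M\|f\|_a\,\abs{\fv(t)}^\nu$, i.e. $uA_a^{-1}(f)\preceq\fv^\nu\prec 1$. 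Finally, (v) follows from $\|v_{i,j}\|_a \to 0$ as $a\to\infty$.

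The main obstacle I anticipate is the inductive step controlling $v_{i,j}'$ modulo $\Delta$: a differentiation estimate of the form ``$f\preceq_\Delta g\Rightarrow f'\preceq_\Delta g'\cdot\fv^{-1}$'' does not hold for arbitrary elements of a Hardy field, so one cannot merely cite a general lemma. Instead, one must exploit the explicit polynomial structure of $v_{i,j}$ in $\fv^\nu$, $\fv^\dagger$, and the $\phi_k$, combined with small derivation on $H$, to verify the estimate term by term.
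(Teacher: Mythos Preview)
Your overall strategy is exactly the paper's: use Lemma~\ref{teq} to write $[uA_a^{-1}(f)]^{(i)}$ as a sum of terms $u_{i,j}u\cdot B_j^\circ(f)$, establish $u_{i,j}u\prec_\Delta 1$ via the inductive bound $u_{i,j}\preceq_\Delta\fv^{-i}$ (your $v_{i,j}\preceq_\Delta\fv^{\nu-i}$ is the same thing multiplied through by $\fv^\nu$), and read off (i)--(v) from the resulting norm estimates. The rewriting in terms of $v_{i,j}$ is a cosmetic variant that neither helps nor hurts.

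The gap is in the derivative step of the induction. You correctly flag it as the obstacle, but your proposed fix---expand each $v_{i,j}$ as a polynomial in $\fv^\nu,\fv^\dagger,\phi_k$ and their derivatives, then use $(\fv^\alpha)^{(k)}\preceq\fv^{\alpha-k}$ together with small derivation on bounded factors---does not close: the $\phi_k$ are neither powers of $\fv$ nor bounded (only $\phi_k\preceq_\Delta\fv^{-1}$), so your toolkit gives no control over $\phi_k'$, and the term-by-term check stalls at the first appearance of such a derivative. What you are missing is the single estimate $\fv^\dagger\preceq_\Delta 1$, which the paper invokes from [ADH,~9.2.10(iv)] at the outset. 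With it, and contrary to your pessimism, there \emph{is} a clean general differentiation lemma: for any $g\in K$ with $g\preceq_\Delta\fv^{-i}$ one has $g'\preceq_\Delta\fv^{-i}$, hence $g'\preceq_\Delta\fv^{-(i+1)}$. Indeed, if $g\preceq 1$ this follows from small derivation and $\fv^{-(i+1)}\succ 1$; if $g\succ 1$, the hypothesis forces $[vg]\le[v\fv]$, whence $g^\dagger\preceq\fv^\dagger\preceq_\Delta 1$ by the $H$-type monotonicity of $\psi$, so $g'=g\cdot g^\dagger\preceq_\Delta g$. This makes the paper's induction on $u_{i,j}$ a one-liner and renders the explicit polynomial expansion unnecessary.
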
}
\begin{proof} Note that
$\fv^\dagger\preceq_{\Delta} 1$ by [ADH, 9.2.10(iv)]. Denoting the germ of $u$ also by $u$ we have 
$u\in H$ and $u^\dagger=\nu\fv^\dagger \preceq_{\Delta} 1$, in particular, $u^\dagger\preceq \fv^{-1/2}$. Note that the
$u_{i,j}$ from Lemma~\ref{teq}---that is, their germs---lie in $K$. 
Induction on $i$  gives $u_{i,j}\preceq_{\Delta} \fv^{-i}$ for $r-i\le j\le r$. Hence $uu_{i,j}\prec_{\Delta} \fv^{\nu-i}\prec_{\Delta} 1$ for $r-i\le j\le r$. Thus for $i=0,\dots,r$ we have a real constant
$$c_{i,a}\ :=\ \sum_{j=r-i}^r \|u\,u_{i,j}\|_a \cdot \|B_j\|_a\cdots \|B_1\|_a\in [0,\infty)  $$
with $\big\|\big[uA_a^{-1}(f)\big]{}^{(i)}\big\|_a\le c_{i,a}\|f\|_a$ for all $f\in \c_a[\imag]^{\b}$. 
Therefore $uA_a^{-1}$ maps $\c_a[\imag]^{\b}$ into $\Car[\imag]^{\b}$, and the operator $u A_a^{-1}\colon \c_a[\imag]^{\b} \to \Car[\imag]^{\b}$ is continuous with $$\|uA_a^{-1}\|_{a;r}\ \le\ c_a:=\max\{c_{0,a},\dots,c_{r,a}\}.$$
As to (iii), this is because for all $i$,~$j$, $\|u\,u_{ij}\|_a$ is decreasing as a function
of $a$, and $\|B_j\|_a\le \big\|\frac{1}{\Re \phi_j}\big\|_a$ for all $j$. For $f\in \c_a[\imag]^{\b}$ we have $A_a^{-1}(f)\in \c_a[\imag]^{\b}$, so  (iv) holds. As to (v),  $u\,u_{i,j}\prec 1$ gives $\|uu_{ij}\|_a\to 0$ as $a\to \infty$, for all $i$,~$j$. In view of~$\|B_j\|_a\le \big\|\frac{1}{\Re \phi_j}\big\|_a$ for all $j$, this gives
$c_{i,a}\to 0$ as $a\to \infty$ for $i=0,\dots,r$, so~$c_a\to 0$ as $a\to\infty$. 
\end{proof}


\section{Solving Split-Normal Equations over Hardy Fields}\label{sec:split-normal over Hardy fields}

\noindent
We construct here
solutions of suitable algebraic differential equations over Hardy fields.
These solutions lie in rings $\Car[\imag]^{\b}$ ($r\in \N^{\ge 1}$) and are obtained as
fixed points of certain contractive maps, as is common in solving differential equations. Here we use that~$\Car[\imag]^{\b}$ is a Banach space with respect to the norm $\|\cdot\|_{a;r}$. It will take some effort to
define the right contractions using the operators from Section~\ref{sec:IHF}.  

\medskip\noindent
In this section $H$, $K$, $A$, $f_1,\dots, f_r$, $\phi_1,\dots, \phi_r$, $a_0$ are as in Lemma~\ref{cri}.
In particular, $H$ is a Hardy field, $K=H[\imag]$, and
$$A=(\der-\phi_1)\cdots(\der-\phi_r)\qquad \text{where $r\in\N^{\ge 1}$, $\phi_1,\dots,\phi_r\in K$, $\Re\phi_1,\dots, \Re\phi_r\succeq 1$.}$$
Here $a_0$ is chosen so that we have representatives for $\phi_1,\dots, \phi_r$  in $\Cazrl[\imag]$, denoted also by $\phi_1,\dots,\phi_r$.
We let $a$ range over $[a_0,\infty)$.
In addition we assume that $H$ is real closed, and that
we are given a germ $\fv\in H^{>}$ such that~$\fv\prec 1$ and
$\phi_1,\dots, \phi_r\preceq_{\Delta} \fv^{-1}$ for the convex subgroup
$$\Delta\ :=\ \big\{\gamma\in v(H^\times):\ \gamma=o(v\fv)\big\}$$
of $v(H^\times)$. We increase $a_0$ so
that $\fv$ is represented by a function in $\Cazr$, also denoted by $\fv$, with $\fv(t)>0$ for all $t\ge a_0$.

\subsection*{Constructing fixed points over $H$} Consider a differential equation
\begin{equation}\label{eq:ADE}\tag{$\ast$}
A(y)\ =\ R(y),\qquad y\prec 1,
\end{equation}
where $R\in K\{Y\}$ has order $\le r$, degree $\le d\in \N^{\ge 1}$ and weight $\le w\in \N^{\ge r}$, with
$R\prec_{\Delta}\fv^w$. Now $R=\sum_{\j}R_{\j}Y^{\j}$ with $\j$ ranging here and below over the tuples~$(j_0,\dots, j_r)\in \N^{1+r}$  with $|\j|\le d$ and $\|\j\|\le w$; likewise for $\i$.
For each $\j$ we take a function in $\c_{a_0}[\imag]$
that  represents the germ $R_{\j}\in K$ and let $R_{\j}$ denote
this function as well as its restriction to any $[a,\infty)$.
Thus $R$ is represented on $[a,\infty)$ by a polynomial
$\sum_{\j}R_{\j}Y^{\j}\in \c_a[\imag]\big[Y, \dots, Y^{(r)}\big]$, to be denoted
also by $R$ for simplicity. This yields for each $a$ an evaluation map
$$f\mapsto R(f):=\sum_{\j}R_{\j}f^{\j}\ :\  \Car[\imag]\to \c_a[\imag].$$
As in [ADH, 4.2] we also have for every $\i$ the formal partial derivative
$$ R^{(\i)}\ :=\ \frac{\partial^{|\i|}R}{\partial^{i_0}Y\cdots \partial^{i_r}Y^{(r)}}\ \in\ \c_a[\imag]\big[Y,\dots, Y^{(r)}\big]$$ 
with $R^{(\i)}=\sum_{\j} R_{\j}^{(\i)}Y^{\j}$,  all $R_{\j}^{(\i)}\in \c_a[\imag]$ having their germs in $K$. 

\medskip
\noindent
A {\em solution of \eqref{eq:ADE} on $[a,\infty)$}\/\index{solution!split-normal equation} is a function $f\in \Car[\imag]^{\b}$ such that  $A_a(f)=R(f)$ and $f\prec 1$.  
One might try to obtain a solution of \eqref{eq:ADE} as a fixed point of the operator~$f\mapsto A_a^{-1}\big(R(f)\big)$, but this operator might fail to be contractive
on a useful space of functions. Therefore we twist $A$ and arrange things so that we can use Proposition~\ref{uban}. 
In the rest of this section we fix $\nu\in \Q$ with $\nu > w$ (so $\nu > r$) such that 
$R\prec_{\Delta}\fv^\nu$ and~$\nu\fv^\dagger\not\sim \Re \phi_j$ in $H$
for $j=1,\dots,r$. (Note that such $\nu$ exists.) Then the twist 
$\tilde A:=A_{\ltimes\fv^\nu}=\fv^{-\nu} A\fv^{\nu}\in K[\der]$ splits over $K$ as follows: 
\begin{align*} \tilde A\ &=\ (\der -\phi_1+\nu\fv^\dagger)\cdots (\der-\phi_r+\nu\fv^\dagger), \quad \text{ with }\\ 
\phi_j-\nu\fv^{\dagger}\ &\preceq_{\Delta}\ \fv^{-1}, \quad \Re\phi_j-\nu\fv^\dagger\ \succeq\ 1 \qquad(j=1,\dots,r).
\end{align*}
We also increase $a_0$ so that  
$\Re\phi_j(t)-\nu\fv^\dagger(t)\ne 0$ for all $t\ge a_0$
and such that for all $a$ and $u:=\fv^\nu|_{[a,\infty)}\in (\Car)^\times$ the operator $\tilde{A}_a\colon \Car[\imag]\to \c_a[\imag]$ satisfies $$\tilde{A}_a(y)\ =\ u^{-1}A_a(uy)\qquad(y\in \Car[\imag]).$$
(See the explanations before Lemma~\ref{cri} for definitions of
$A_a$ and $\tilde{A}_a$.) We now increase $a_0$ once more,  fixing it for the rest of the section except in the subsection ``Preserving reality'', so as to obtain as in Lemma~\ref{cri}, with $\tilde{A}$ in the role of $A$, a 
right-inverse $\tilde{A}_a^{-1}\colon \c_a[\imag]^{\b} \to \c_a[\imag]^{\b}$ for such~$\tilde{A}_a$. 

\begin{lemma} \label{bdua} We have a continuous operator \textup{(}not necessarily $\C$-linear\textup{)} $$\Xi_a\ :\ \Car[\imag]^{\b}\to \Car[\imag]^{\b},\quad
f\mapsto u\tilde{A}_a^{-1}\big(u^{-1}R(f)\big).$$
It has the property that $\Xi_a(f)\preceq \fv^ \nu\prec 1$ 
and $A_a\big(\Xi_a(f)\big)=R(f)$
for all $f\in \Car[\imag]^{\b}$. 
\end{lemma}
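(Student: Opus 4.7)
The plan is to factor $\Xi_a$ as the composition
$$\Car[\imag]^{\b}\ \xrightarrow{\ f\mapsto R(f)\ }\ \c_a[\imag]^{\b}\ \xrightarrow{\ g\mapsto u^{-1}g\ }\ \c_a[\imag]^{\b}\ \xrightarrow{\ u\tilde A_a^{-1}\ }\ \Car[\imag]^{\b}$$
of three continuous maps, and then to read off the asymptotic estimate $\Xi_a(f)\preceq\fv^{\nu}$ from Proposition~\ref{uban}(iv) and the identity $A_a(\Xi_a(f))=R(f)$ from the twist relation. First I would increase $a_0$ once more (which only shrinks norms already in play) so that for each $\j$ with $R_{\j}\neq 0$ the germ $u^{-1}R_{\j}=\fv^{-\nu}R_{\j}$ — which is $\prec_{\Delta}1$, and in particular $\preceq 1$, by the choice of $\nu$ with $R\prec_{\Delta}\fv^{\nu}$ — is represented by a bounded continuous function on $[a_0,\infty)$.

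With this in place, the polynomial expression $f\mapsto u^{-1}R(f)=\sum_{\j}(u^{-1}R_{\j})f^{\j}$ sends $\Car[\imag]^{\b}$ into $\c_a[\imag]^{\b}$ and is continuous, being built from the bounded linear operators $f\mapsto f^{(k)}\colon\Car[\imag]^{\b}\to\c_a[\imag]^{\b}$ ($0\le k\le r$), pointwise products, and multiplications by the bounded coefficients $u^{-1}R_{\j}$, with quantitative control coming from Corollary~\ref{cor:bound on P(f)}. For the third arrow I would invoke Proposition~\ref{uban} applied to the twisted operator $\tilde A = A_{\ltimes\fv^{\nu}}$ in place of $A$: its split factors $\phi_j-\nu\fv^{\dagger}$ still satisfy $\Re(\phi_j-\nu\fv^{\dagger})\succeq 1$ and $\phi_j-\nu\fv^{\dagger}\preceq_{\Delta}\fv^{-1}$ by the paragraph preceding the lemma, so Proposition~\ref{uban} delivers a continuous $\C$-linear operator $u\tilde A_a^{-1}\colon\c_a[\imag]^{\b}\to\Car[\imag]^{\b}$ whose values are all $\preceq\fv^{\nu}\prec 1$. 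Composing yields a continuous (but in general not $\C$-linear) map $\Xi_a\colon\Car[\imag]^{\b}\to\Car[\imag]^{\b}$ with $\Xi_a(f)\preceq\fv^{\nu}$.

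It remains to verify $A_a(\Xi_a(f))=R(f)$. Here I would use the twist identity $\tilde A_a(y)=u^{-1}A_a(uy)$ established before Lemma~\ref{cri}, which I rearrange to $A_a(uz)=u\tilde A_a(z)$; applying this with $z:=\tilde A_a^{-1}(u^{-1}R(f))\in\Car[\imag]$ and using that $\tilde A_a\circ\tilde A_a^{-1}$ is the identity on $\c_a[\imag]^{\b}$ (Lemma~\ref{cri} for $\tilde A$ in place of $A$), I get
$$A_a(\Xi_a(f))\ =\ A_a(uz)\ =\ u\tilde A_a(z)\ =\ u\cdot u^{-1}R(f)\ =\ R(f),$$
as required. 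The step I expect to be most delicate is not any single verification but the bookkeeping that explains why the choice $\nu>w$ with $R\prec_{\Delta}\fv^{\nu}$ — strictly stronger than the original $R\prec_{\Delta}\fv^{w}$ — is exactly what is needed: it guarantees boundedness of the twisted coefficients $u^{-1}R_{\j}$ and hence the continuity of the evaluation step, without which the composition would fail to land in the Banach space $\Car[\imag]^{\b}$.
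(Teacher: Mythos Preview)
Your argument is correct and essentially the same as the paper's: both combine the boundedness of the twisted coefficients $u^{-1}R_{\j}$ (from $R\prec_{\Delta}\fv^{\nu}$) with Proposition~\ref{uban} applied to $\tilde A$ to get well-definedness, continuity, and the estimate $\Xi_a(f)\preceq\fv^{\nu}$, and both use the twist identity $\tilde A_a(y)=u^{-1}A_a(uy)$ for the equation $A_a(\Xi_a(f))=R(f)$. One cosmetic point: your displayed three-step factorization labels the first arrow as landing in $\c_a[\imag]^{\b}$, but $R(f)$ itself need not be bounded until you absorb $u^{-1}$ into the coefficients, exactly as you do in the next paragraph.
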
 
\begin{proof} We have $\|u^{-1}R_\i\|_a<\infty$ for all $\i$, so
$u^{-1}R(f)=\sum_{\i}u^{-1}R_{\i}f^{\i}\in \c_a[\imag]^{\b}$ for all~$f\in \Car[\imag]^{\b}$, and thus $u\tilde{A}_a^{-1}\big(u^{-1}R(f)\big)\in \Car[\imag]^{\b}$  for such $f$, by Proposition~\ref{uban}(i).
Continuity of $\Xi_a$ follows from Proposition~\ref{uban}(ii) and continuity of~$f\mapsto u^{-1}R(f)\colon \Car[\imag]^{\b} \to \c_a[\imag]^{\b}$.
For $f\in \Car[\imag]^{\b}$ we have  $\Xi_a(f)\preceq \fv^\nu\prec 1$ by Proposition~\ref{uban}(iv),  and {\samepage
$$ u^{-1}A_a\big(\Xi_a(f)\big)\ =\ u^{-1}A_a\big[u\tilde{A}_a^{-1}\big(u^{-1}R(f)\big)\big]\ =\ \tilde{A}_a\big[\tilde{A}_a^{-1}\big(u^{-1}R(f)\big)\big]\ =\ u^{-1}R(f),$$
so $A_a\big(\Xi_a(f)\big)=R(f)$. } 
\end{proof}

\noindent
By Lemma~\ref{bdua},  each $f\in \Car[\imag]^{\b}$ with $\Xi_a(f)=f$  is a solution of \eqref{eq:ADE} on $[a,\infty)$.

\begin{lemma}\label{bdua, bds} 
There is a constant $C_a\in\R^{\geq}$ such that for all $f,g\in \Car[\imag]^{\b}$, 
$$ \|\Xi_a(f+g)-\Xi_a(f) \|_{a;r}\ \le\ C_a\cdot \max\!\big\{1, \|f\|_{a;r}^d\big\}\cdot\big(\|g\|_{a;r} + \cdots + \|g\|_{a;r}^d\big).$$
We can take these $C_a$ such that $C_a\to 0$ as $a\to \infty$, and we do so below. 
\end{lemma}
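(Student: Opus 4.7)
The plan is to combine the Taylor expansion of the differential polynomial $R$ at $f$ with the operator-norm estimate for $u\tilde{A}_a^{-1}$ from Proposition~\ref{uban}. First, since $\Xi_a(f+g)-\Xi_a(f) = u\tilde{A}_a^{-1}\!\big(u^{-1}(R(f+g)-R(f))\big)$, it suffices to bound the $\|\cdot\|_a$-norm of the argument and then appeal to the fact that the linear operator $u\tilde{A}_a^{-1}\colon \c_a[\imag]^{\b}\to \Car[\imag]^{\b}$ has operator norm $\|u\tilde{A}_a^{-1}\|_{a;r}$ uniformly bounded in $a$ (Proposition~\ref{uban}(iii)) and tending to $0$ as $a\to\infty$ (Proposition~\ref{uban}(v)).

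To estimate the nonlinear term, I would apply the substitution formula for differential polynomials ([ADH, 4.2]) to write
\[
R(f+g) - R(f)\ =\ \sum_{\i\ne 0}\, \frac{1}{\i!}\, R^{(\i)}(f)\cdot g^{\i},
\]
where $\i=(i_0,\dots,i_r)$ runs over multi-indices with $1\leq |\i|\leq d$. For each such $\i$, the differential polynomial $R^{(\i)}$ has degree $\leq d-|\i|$ and its coefficients $R^{(\i)}_{\j}$ are integer-multiples (from the multinomial) of the $R_{\k}$ with $\k\geq\i$. From $R\prec_\Delta \fv^\nu$ we get $u^{-1}R_{\j}\preceq 1$, and since the $\|\cdot\|_a$-norm is non-increasing in $a$, the quantities $\|u^{-1}R_\j\|_a$ are finite and bounded above uniformly in $a\ge a_0$. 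Using $\|g^{\i}\|_a \le \|g\|_{a;r}^{|\i|}$ and $\|f^{\j}\|_a \le \|f\|_{a;r}^{|\j|}$ (trivial from $|f^{(k)}(t)|\leq\|f\|_{a;r}$), together with Corollary~\ref{cor:bound on P(f)} applied termwise to $u^{-1}R^{(\i)}$, one obtains
\[
\|u^{-1}(R(f+g)-R(f))\|_a\ \le\ C'_a\cdot\max\!\big\{1,\|f\|_{a;r}^d\big\}\cdot\big(\|g\|_{a;r}+\cdots+\|g\|_{a;r}^d\big),
\]
for a constant $C'_a$ depending only on the finitely many $\|u^{-1}R_{\j}\|_a$ and combinatorial factors; in particular $C'_a$ is bounded as $a\to\infty$.

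Combining the two steps gives the claimed inequality with $C_a := \|u\tilde{A}_a^{-1}\|_{a;r}\cdot C'_a$. Since $C'_a$ stays bounded while $\|u\tilde{A}_a^{-1}\|_{a;r}\to 0$ by Proposition~\ref{uban}(v), we get $C_a\to 0$ as $a\to\infty$, as required.

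The argument is conceptually straightforward because the hard analytic work has already been done in Proposition~\ref{uban}: the genuine obstacle---producing a right-inverse of $\tilde A_a$ whose norm decays with $a$---is packaged there. The only labor left is bookkeeping the multi-index combinatorics of the Taylor expansion and grouping the resulting terms into the bound $\max\{1,\|f\|_{a;r}^d\}\cdot(\|g\|_{a;r}+\cdots+\|g\|_{a;r}^d)$, which is routine.
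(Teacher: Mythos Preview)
Your proposal is correct and follows essentially the same approach as the paper: Taylor-expand $R(f+g)-R(f)$, bound $\|u^{-1}(R(f+g)-R(f))\|_a$ by a constant times $\max\{1,\|f\|_{a;r}^d\}\cdot(\|g\|_{a;r}+\cdots+\|g\|_{a;r}^d)$, and multiply by $\|u\tilde{A}_a^{-1}\|_{a;r}$. The only minor difference is that the paper observes $u^{-1}R^{(\i)}_{\j}\prec 1$ (not just $\preceq 1$), so its analogue $D_a$ of your $C'_a$ already tends to $0$; you instead keep $C'_a$ merely bounded and draw all the decay from Proposition~\ref{uban}(v), which is an equally valid route to $C_a\to 0$.
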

\begin{proof}
Let $f,g\in \Car[\imag]^{\b}$. We have the Taylor expansion
$$R(f+g)\ =\ \sum_{\i} \frac{1}{\i !}R^{(\i)}(f)g^{\i}\ =\ \sum_{\i}\frac{1}{\i !}\bigg[\sum_{\j}R_{\j}^{(\i)}f^\j\bigg]g^{\i}.$$
Now for all $\i$,~$\j$ we have $R^{(\i)}_{\j}\prec_{\Delta} \fv^\nu$ in $K$, so  
$u^{-1}R_{\j}^{(\i)}\prec 1$.  Hence
$$D_a\ :=\ \sum_{\i,\j} \big\|u^{-1}R^{(\i)}_{\j}\big\|_a\ \in\ [0,\infty)$$
has the property that $D_a\to 0$ as $a\to \infty$, and
$$\big\|u^{-1}\big(R(f+g)-R(f)\big)\big\|_a\ \le\ D_a\cdot\max\!\big\{1, \|f\|_{a;r}^d\big\}\cdot \big(\|g\|_{a;r} + \cdots + \|g\|_{a;r}^d\big).$$
So $h:= u^{-1}\big(R(f+g)-R(f)\big)\in \Caz[\imag]^{\b}$ gives $\Xi_a(f+g)-\Xi_a(f)=u\tilde{A}_a^{-1}(h)$, and 
$$ \|\Xi_a(f+g)-\Xi_a(f) \|_{a;r}= \|u\tilde{A}_a^{-1}(h) \|_{a;r} \le  \|u\tilde{A}_a^{-1} \|_{a;r}\cdot\|h\|_a.$$ 
Thus the lemma holds for $C_a:= \|u\tilde{A}_a^{-1} \|_{a;r}\cdot D_a$. 
\end{proof}


\noindent 
In the proof of the next theorem we  use the well-known fact that the normed vector space $\Car[\imag]^{\b}$ over $\C$ is actually a Banach space.
Thus if~$S\subseteq \Car[\imag]^{\b}$ is  nonempty and closed and $\Phi\colon S\to S$ is
contractive (that is, there is a $\lambda\in [0,1)$ such that~${\|\Phi(f)-\Phi(g)\|_{a;r}\leq \lambda \|f-g\|_{a;r}}$
for all~$f,g\in S$), then~$\Phi$ has a unique fixed point~$f_0$, and $\Phi^n(f)\to f_0$ as~$n\to\infty$,  for every $f\in S$. (See, for example, \cite[Ch.~II, \S{}5, IX]{Walter}.)

\begin{theorem} \label{thm:fix} For all sufficiently large $a$ the operator $\Xi_a$ maps the closed ball $$\big\{f\in \Car[\imag]:\ \|f\|_{a;r}\le 1/2\big\}$$
of the Banach space $\Car[\imag]^{\b}$ into itself and has a unique fixed point on this ball. 
\end{theorem}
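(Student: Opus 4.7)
The plan is to apply the Banach fixed point theorem to the restriction of the (non-linear but continuous) operator $\Xi_a$ of Lemma~\ref{bdua} to the closed ball $B_a := \{f\in \Car[\imag]^\b : \|f\|_{a;r}\leq 1/2\}$, for all sufficiently large $a$. Since $\Car[\imag]^\b$ is a Banach space under $\|\cdot\|_{a;r}$ and $B_a$ is a nonempty closed subset, it suffices to establish two things: (i) $\Xi_a$ maps $B_a$ into $B_a$, and (ii) $\Xi_a|_{B_a}$ is a strict contraction. Both will follow by choosing $a$ large enough, using Lemma~\ref{bdua, bds} together with a bound on $\|\Xi_a(0)\|_{a;r}$.

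First I would bound $\|\Xi_a(0)\|_{a;r} = \|u\tilde A_a^{-1}(u^{-1}R(0))\|_{a;r}$. Since $R(0)=R_{\mathbf{0}}\in K$ satisfies $R_{\mathbf{0}}\prec_\Delta \fv^\nu = u$, we have $u^{-1}R_{\mathbf{0}}\prec 1$, hence $\|u^{-1}R(0)\|_a \to 0$ as $a\to\infty$. Combined with the bound $\|u\tilde A_a^{-1}\|_{a;r}\leq c$ uniformly in $a$ from Proposition~\ref{uban}(iii) (applied to $\tilde A$ in place of $A$), we get $\|\Xi_a(0)\|_{a;r}\to 0$ as $a\to\infty$. (Alternatively, using Proposition~\ref{uban}(v) directly: $\|u\tilde A_a^{-1}\|_{a;r}\to 0$, which combined with $\|u^{-1}R(0)\|_a$ staying bounded already suffices.)

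Next, by Lemma~\ref{bdua, bds} applied with $g$ replaced by $f$ and $f$ replaced by $0$, for $f\in B_a$,
\[
\|\Xi_a(f)-\Xi_a(0)\|_{a;r}\ \leq\ C_a\cdot\bigl(\|f\|_{a;r}+\cdots+\|f\|_{a;r}^d\bigr)\ \leq\ 2C_a\|f\|_{a;r}\ \leq\ C_a,
\]
since $\|f\|_{a;r}\leq 1/2$ implies $\|f\|_{a;r}+\cdots+\|f\|_{a;r}^d\leq 2\|f\|_{a;r}$. Hence
\[
\|\Xi_a(f)\|_{a;r}\ \leq\ \|\Xi_a(0)\|_{a;r}+C_a,
\]
and since $C_a\to 0$ and $\|\Xi_a(0)\|_{a;r}\to 0$, for all sufficiently large $a$ the right-hand side is $\leq 1/2$, giving (i). For (ii), apply Lemma~\ref{bdua, bds} with an arbitrary $f\in B_a$ and $g\in \Car[\imag]^\b$ such that $f+g\in B_a$, so $\|g\|_{a;r}\leq 1$; then
\[
\|\Xi_a(f+g)-\Xi_a(f)\|_{a;r}\ \leq\ C_a\cdot\bigl(\|g\|_{a;r}+\cdots+\|g\|_{a;r}^d\bigr)\ \leq\ dC_a\,\|g\|_{a;r},
\]
and choosing $a$ so large that $dC_a<1/2$ (possible since $C_a\to 0$) yields a Lipschitz constant $<1$ on $B_a$. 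The Banach fixed point theorem now furnishes a unique fixed point of $\Xi_a$ in $B_a$, completing the proof. The only mild subtlety is ensuring that the two conditions ``$\|\Xi_a(0)\|_{a;r}+C_a\leq 1/2$'' and ``$dC_a<1$'' can be arranged simultaneously by taking $a$ large, which is immediate from the convergences established; no step presents a genuine obstacle given the preparation in Section~\ref{sec:IHF} and Lemmas~\ref{bdua}--\ref{bdua, bds}.
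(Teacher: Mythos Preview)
Your proof is correct and follows essentially the same approach as the paper: bound $\|\Xi_a(0)\|_{a;r}$ via Proposition~\ref{uban}(iii) applied to $\tilde A$, use Lemma~\ref{bdua, bds} to get a Lipschitz constant $dC_a\to 0$, and invoke the Banach fixed point theorem. The only cosmetic difference is that the paper derives the self-mapping property by specializing the contraction estimate to $g=0$ (with the explicit choice $\|\Xi_a(0)\|_{a;r}\le 1/4$ and $C_a d\le 1/2$), whereas you check self-mapping and contraction separately.
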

\begin{proof} We have $\Xi_a(0)=u\tilde{A}_a^{-1}(u^{-1}R_0)$, so $\|\Xi_a(0)\|_{a;r}\le \|u\tilde{A}_a^{-1}\|_{a;r}\|u^{-1}R_0\|_{a}$. Now~$\|u^{-1}R_0\|_a\to 0$ as $a\to \infty$, so by Proposition~\ref{uban}(iii) we can take $a$ 
so large that $\|u\tilde{A}_a^{-1}\|_{a;r}\|u^{-1}R_0\|_{a}\le \frac{1}{4}$. 
For $f$,~$g$ in the closed ball above we have by Lemma~\ref{bdua, bds},
$$ \|\Xi_a(f)-\Xi_a(g) \|_{a;r}\ =\  \|\Xi_a(f+(g-f))-\Xi_a(f) \|_{a;r}\ \le \ C_a\cdot d\|f-g\|_{a;r}.$$
Take $a$ so  large that also $C_a d \le \frac{1}{2}$. Then $\|\Xi_a(f)-\Xi_a(g)\|_{a;r}\le \frac{1}{2}\|f-g\|_{a;r}$. 
Applying this to $g=0$ we see that $\Xi_a$ maps the closed ball above to itself. Thus~$\Xi_a$ has a unique
fixed point on this ball. 
\end{proof}

\noindent
If $\deg R\leq 0$ (so $R=R_0$), then $\Xi_a(f)=u\tilde{A}_a^{-1}(u^{-1}R_0)$ is independent of~$f\in \Car[\imag]^{\b}$, so for sufficiently large $a$, the fixed point~$f\in \Car[\imag]^{\b}$ of $\Xi_a$ with~${\dabs{f}_{a;r}\leq 1/2}$ is~$f=\Xi_a(0)=u\tilde{A}_a^{-1}(u^{-1}R_0)$. 

\medskip
\noindent
Next we investigate the difference between solutions of \eqref{eq:ADE} on $[a_0,\infty)$:

\begin{lemma}\label{lem:close} Suppose $f,g\in \Cazr[\imag]^{\b}$ and $A_{a_0}(f)=R(f)$, $A_{a_0}(g)=R(g)$. Then there are positive reals 
$E$,~$\epsilon$ such that for all $a$ 
there exists an $h_a\in \Car[\imag]^{\b}$ with the property that
for $\theta_a:=(f-g)|_{[a,\infty)}$, 
$$A_{a}(h_a)=0, \quad \theta_a-h_a\prec \fv^w, \quad
\|\theta_a-h_a\|_{a;r}\ \le\ E\cdot \|\fv^{\epsilon}\|_{a}\cdot \big(\|\theta_a\|_{a;r}+\cdots + \|\theta_a\|_{a;r}^d \big),$$
and thus $h_a\prec 1$ in case $f-g\prec 1$.   
\end{lemma}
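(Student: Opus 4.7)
The plan is to split the difference $\theta_a := (f-g)|_{[a,\infty)}$ as $\theta_a = h_a + p_a$, where $h_a$ will be the ``homogeneous part'' annihilated by $A_a$ and $p_a$ will be a ``particular solution'' constructed via the twisted right-inverse. Concretely, I set
$$p_a\ :=\ u\,\tilde A_a^{-1}\!\big(u^{-1}(R(f)-R(g))|_{[a,\infty)}\big), \qquad h_a\ :=\ \theta_a - p_a,$$
where $u=\fv^{\nu}|_{[a,\infty)}$ as before. Since $A_{a_0}(f)=R(f)$ and $A_{a_0}(g)=R(g)$, we have $A_a(\theta_a)=R(f)-R(g)$ on $[a,\infty)$, and since $\tilde A_a^{-1}$ is a right-inverse of $\tilde A_a=u^{-1}A_a u$, we get $A_a(p_a)=R(f)-R(g)$ as well, so indeed $A_a(h_a)=0$.

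By Proposition~\ref{uban}(iv) applied to $u^{-1}(R(f)-R(g))\in\c_a[\imag]^{\b}$ we have $p_a\preceq\fv^{\nu}$, and since $\nu>w$ and $\fv\prec 1$, this gives $p_a = \theta_a-h_a \prec\fv^w$. For the norm bound, I expand via the differential-polynomial Taylor formula:
$$R(f)-R(g)\ =\ R(g+\theta)-R(g)\ =\ \sum_{0<|\i|\leq d}\frac{1}{\i!}R^{(\i)}(g)\,\theta^{\i},$$
exactly as in the proof of Lemma~\ref{bdua, bds}. That proof (with the roles of $f$, $g$ played by $g$, $\theta$) yields
$$\big\|u^{-1}(R(f)-R(g))\big\|_a\ \leq\ D_a\cdot\max\!\big\{1,\|g\|_{a;r}^d\big\}\cdot\big(\|\theta_a\|_{a;r}+\cdots+\|\theta_a\|_{a;r}^d\big),$$
where $D_a=\sum_{\i,\j}\|u^{-1}R^{(\i)}_{\j}\|_a$. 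The key quantitative step is to upgrade $D_a\to 0$ (as in Lemma~\ref{bdua, bds}) to the sharper estimate $D_a\leq C\|\fv^{\epsilon}\|_a$ for some $\epsilon\in\Q^{>}$. This uses the hypothesis $R\prec_{\Delta}\fv^{\nu}$: writing out the definition of the convex subgroup $\Delta$, one shows $x\prec_{\Delta}1$ in $H$ is equivalent to $x\preceq\fv^{\epsilon}$ for some rational $\epsilon>0$. Since each $R^{(\i)}_{\j}$ is a scalar multiple of some $R_{\k}$ (with $\k=\j+\i$), we have $R^{(\i)}_{\j}\prec_{\Delta}\fv^{\nu}$, whence $u^{-1}R^{(\i)}_{\j}=\fv^{-\nu}R^{(\i)}_{\j}\prec_{\Delta}1$; taking the minimum $\epsilon$ over the finitely many nonzero coefficients gives $u^{-1}R^{(\i)}_{\j}\preceq\fv^{\epsilon}$ uniformly, which translates (after possibly enlarging $a_0$) to $\|u^{-1}R^{(\i)}_{\j}\|_a\leq c_{\i,\j}\|\fv^{\epsilon}\|_a$ for all $a\geq a_0$.

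Combining with Proposition~\ref{uban}(iii) to bound $\|u\tilde A_a^{-1}\|_{a;r}$ by a constant independent of $a$, and absorbing the bound $\|g\|_{a;r}\leq \|g\|_{a_0;r}$ (available since $g\in\Cazr[\imag]^{\b}$) into the constant, we obtain
$$\|\theta_a-h_a\|_{a;r}\ =\ \|p_a\|_{a;r}\ \leq\ \|u\tilde A_a^{-1}\|_{a;r}\cdot\|u^{-1}(R(f)-R(g))\|_a\ \leq\ E\cdot\|\fv^{\epsilon}\|_a\cdot\big(\|\theta_a\|_{a;r}+\cdots+\|\theta_a\|_{a;r}^d\big)$$
for a single constant $E\geq 0$ independent of $a$, as required. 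Finally, if the germ $f-g\prec 1$ then $\theta_a\prec 1$ as a germ, and since also $p_a\preceq\fv^{\nu}\prec 1$, the germ of $h_a=\theta_a-p_a$ satisfies $h_a\prec 1$.

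The main technical obstacle is the sharpening from $u^{-1}R^{(\i)}_{\j}\prec 1$ (which only gives $D_a\to 0$) to the uniform geometric-type bound $u^{-1}R^{(\i)}_{\j}\preceq\fv^{\epsilon}$ with a single $\epsilon>0$; this rests on the careful analysis of the convex subgroup $\Delta(\fv)$ and on the hypothesis that $R\prec_{\Delta}\fv^{\nu}$ (rather than merely $R\prec\fv^{\nu}$) together with the fact that $R$ has only finitely many nonzero coefficients. Everything else is a routine packaging of the Taylor expansion together with the continuity estimates for $u\tilde A_a^{-1}$ from Section~\ref{sec:IHF}.
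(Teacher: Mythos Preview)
Your proof is correct and follows essentially the same route as the paper's: your $p_a$ is precisely the paper's $\xi_a:=u\tilde A_a^{-1}(u^{-1}\eta_a)$ with $\eta_a=R(f)-R(g)$, and both proofs set $h_a:=\theta_a-\xi_a$ and bound $\|\xi_a\|_{a;r}$ via the Taylor expansion of $R(g+\theta_a)-R(g)$ together with Proposition~\ref{uban}(iii),(iv). The only cosmetic difference is that the paper obtains $\epsilon$ by the one-line remark ``take $\epsilon>0$ with $R\prec\fv^{\nu+\epsilon}$'' (which is immediate from $R\prec_\Delta\fv^{\nu}$), whereas you spell out the equivalent characterization of $\prec_\Delta 1$ in terms of $\preceq\fv^{\epsilon}$; both yield the same bound $\sum_{\i,\j}\|u^{-1}R^{(\i)}_{\j}\|_a\le e\|\fv^{\epsilon}\|_a$.
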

\begin{proof} 
Set $\eta_a:= A_{a}(\theta_a)=R(f)-R(g)$, where $f$ and $g$ stand for their restrictions to $[a,\infty)$. From $R\prec \fv^{\nu}$ we get
$u^{-1}R(f)\in \c_a[\imag]^{\b}$ and $u^{-1}R(g)\in \c_a[\imag]^{\b}$, so
$u^{-1}\eta_a\in \c_a[\imag]^{\b}$. By Proposition~\ref{uban}(i),(iv) we have
$$\xi_a\ :=\ u\tilde{A}_a^{-1}(u^{-1}\eta_a)\in \Car[\imag]^{\b}, \qquad \xi_a\prec \fv^w.$$ 
Now $\tilde{A}_{a}(u^{-1}\xi_a)=u^{-1}\eta_a$, that is,
$A_{a}(\xi_a)=\eta_a$. Note that then $h_a:=\theta_a-\xi_a$ satisfies 
$A_{a}(h_a)=0$.
Now $\xi_a=\theta_a-h_a$ and $\xi_a=\Xi_a(g+\theta_a)-\Xi_a(g)$, hence   by  Lemma~\ref{bdua, bds} and its proof,
\begin{align*} \|\theta_a-h_a\|_{a;r}\ =\ \|\xi_a\|_{a;r}\ &\le\  C_a\cdot \max\!\big\{1,\|g\|_{a;r}^d\big\}\cdot \big(\|\theta\|_{a;r}+\cdots + \|\theta\|_{a;r}^d\big), \text{ with}\\
 C_a\ &:=\ \big\|u\tilde{A}_a^{-1}\big\|_{a;r}\cdot \sum_{\i,\j} \big\|u^{-1}R^{(\i)}_{\j}\big\|_a.
\end{align*}
Take a real $\epsilon>0$ such that $R\prec \fv^{\nu+\epsilon}$. This gives
a real $e>0$ such that $\sum_{\i,\j} \big\|u^{-1}R^{(\i)}_{\j}\big\|_a\le e\|\fv^\epsilon\|_a$ for all $a$. Now use Proposition~\ref{uban}(iii).
\end{proof}

\noindent
The situation we have in mind in the lemma above is that $f$ and $g$ are close at infinity, in the sense that $\|f-g\|_{a;r}\to 0$ as $a\to \infty$.
Then the lemma yields solutions of $A(y)=0$ that are {\em very\/} close to $f-g$ at infinity.  
However,  being very close at infinity as stated in Lemma~\ref{lem:close}, namely $\theta_a-h_a\prec\fv^w$ and the rest, is too weak for later use.
We take up this issue again in Section~\ref{sec:weights} below. 
(In Corollary~\ref{cor:h=0 => f=g} later in the present section we already show: if $f\neq g$ as germs, then~$h_a\neq 0$ for sufficiently large $a$.)

\subsection*{Preserving reality} We now assume in addition that $A$ and $R$ are real, that is,~${A\in H[\der]}$ and $R\in H\{Y\}$.  It is not clear that the fixed points constructed in the proof of Theorem~\ref{thm:fix} are then also real. Therefore we slightly modify this construction using real parts.  
We first apply the discussion following Lemma~\ref{cri} to $\tilde{A}$ as well as to $A$, increasing $a_0$ so  that
for all $a$ the  $\R$-linear real part~${\Re\tilde{A}_a^{-1}  \colon  \c_a^{\b} \to \c_a^{\b}}$ maps~$\c_a^{\b}$ into $\Car$ and is right-inverse to $\tilde{A}_a$ on $(\Caz)^{\b}$, with  
$$\big\|(\Re\tilde{A}_a^{-1})(f)\big\|_a\le \big\|\tilde{A}_a^{-1}(f)\big\|_a\qquad\text{ for all  $f\in \c_a^{\b}$.}$$ 
Next we set
$$(\Car)^{\b}\ :=\ \big\{f\in \Car:\ \|f\|_{a;r}<\infty\big\}\ =\ \Car[\imag]^{\b}\cap \Car,$$
which is a real Banach space with respect to $\|\cdot\|_{a;r}$. 
Finally, this increasing of $a_0$ is done so that the original $R_{\j}\in \c_{a_0}[\imag]$ restrict to updated functions $R_{\j}\in \c_{a_0}$. 
For all $a$, take $u$, $\Xi_a$  as in Lemma~\ref{bdua}. This lemma has the following real analogue as a consequence:

\begin{lemma} \label{realbdua} The operator 
$$\Re\, \Xi_a\ : \ (\Car)^{\b}\to (\Car)^{\b},\quad
f\mapsto \Re\!\big(\Xi_a(f)\big)$$
satisfies $(\Re\, \Xi_a)(f)\preceq\fv^{\nu}$ for $f\in (\Car)^{\b}$, and 
any fixed point  of $\Re\,\Xi_a$ is a solution of \eqref{eq:ADE} on $[a,\infty)$.
\end{lemma}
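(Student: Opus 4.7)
The plan is to exploit that taking real parts commutes with real linear differential operators. Since both $A$ and $R$ are real, and $\fv \in H^{>}$ with $\nu \in \Q$ makes $u = \fv^{\nu}$ real, the twist $\tilde A = \fv^{-\nu} A \fv^{\nu} \in H[\der]$ is real as well. Hence for any $g \in \c_a^{\b}$ (i.e., real-valued bounded), applying $\tilde{A}_a$ to $\tilde{A}_a^{-1}(g) \in \c_a[\imag]^{\b}$ yields $g$, and since $\tilde{A}_a$ has real coefficients,
\[
\tilde{A}_a\!\big(\Re\tilde{A}_a^{-1}(g)\big)\ =\ \Re\!\big(\tilde{A}_a\tilde{A}_a^{-1}(g)\big)\ =\ \Re g\ =\ g.
\]
This says $\Re\tilde{A}_a^{-1}$ is a right-inverse of $\tilde A_a$ on $\c_a^{\b}$, consistent with the discussion preceding the lemma.

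First I would verify the range condition in (i). Given $f \in (\Car)^{\b} \subseteq \Car[\imag]^{\b}$, we have $R(f) \in \c_a^{\b}$ (because $R$ is real, as are all the $f^{\j}$), so $u^{-1}R(f) \in \c_a^{\b} \subseteq \c_a[\imag]^{\b}$. By Lemma~\ref{bdua}, $\Xi_a(f) = u\tilde{A}_a^{-1}(u^{-1}R(f)) \in \Car[\imag]^{\b}$; taking real parts, and using that $u$ is real and $\c^r$, gives $(\Re\Xi_a)(f) = u\cdot \Re\tilde{A}_a^{-1}(u^{-1}R(f)) \in (\Car)^{\b}$.

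For the estimate in (ii), Lemma~\ref{bdua} already gives $\Xi_a(f) \preceq \fv^{\nu}$, and passing to the real part preserves the asymptotic bound $\preceq \fv^{\nu}$. For (iii), suppose $f = (\Re\Xi_a)(f)$. Then using that $u$ is real,
\[
f\ =\ u\cdot \Re\tilde{A}_a^{-1}(u^{-1}R(f)),
\]
and applying $A_a$ together with the identity $A_a(u\,y) = u\,\tilde{A}_a(y)$ for $y \in \Car[\imag]$,
\[
A_a(f)\ =\ u\cdot \tilde{A}_a\!\big(\Re\tilde{A}_a^{-1}(u^{-1}R(f))\big)\ =\ u\cdot u^{-1} R(f)\ =\ R(f),
\]
where the middle equality uses the observation from the first paragraph that $\Re\tilde{A}_a^{-1}$ right-inverts $\tilde{A}_a$ on real inputs. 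Combined with (ii) and $\fv \prec 1$, this gives $f \prec 1$, so $f$ is a solution of \eqref{eq:ADE} on $[a,\infty)$.

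No step looks like a real obstacle; the only thing to be careful about is making sure the identity $A_a(u y) = u\,\tilde{A}_a(y)$ is applied with $y = \Re\tilde{A}_a^{-1}(u^{-1}R(f))$, which is only in $\Car$ rather than in the image of $\tilde{A}_a^{-1}$ itself, and noting that reality of $\tilde{A}_a$ is what makes $\Re\tilde{A}_a^{-1}$ land back in the kernel-correction free zone.
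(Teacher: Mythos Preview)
Your argument is correct and follows essentially the same approach as the paper, which treats the lemma as an immediate consequence of Lemma~\ref{bdua} and the setup preceding the statement (where $\Re\tilde{A}_a^{-1}$ is already established as a right-inverse of $\tilde{A}_a$ on $\c_a^{\b}$). You have simply spelled out the details the paper leaves implicit: that $u$ and $R(f)$ are real so $(\Re\,\Xi_a)(f)=u\cdot\Re\tilde{A}_a^{-1}(u^{-1}R(f))$, and that applying $A_a$ then yields $R(f)$ via $A_a(uy)=u\tilde{A}_a(y)$.
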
 

\noindent
Below the constants $C_a$ are as in Lemma~\ref{bdua, bds}.

\begin{lemma} \label{realbdua, bds}
For $f,g\in (\Car)^{\b}$,
$$\big\|(\Re\, \Xi_a)(f+g)-(\Re \Xi_a)(f)\big\|_{a;r}\ \le\ C_a\cdot \max\!\big\{1, \|f\|_{a;r}^d\big\}\cdot \big(\|g\|_{a;r} + \cdots + \|g\|_{a;r}^d\big).$$
\end{lemma}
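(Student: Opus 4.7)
\medskip\noindent
\emph{Plan.} The lemma follows by reducing to Lemma~\ref{bdua, bds}, using nothing more than the fact that taking real parts does not increase the norm $\|\cdot\|_{a;r}$. First I would note that for $f,g\in(\Car)^{\b}$ we have $f,\,f+g\in\Car[\imag]^{\b}$, and that the operator $\Re\colon\Car[\imag]\to\Car$ is $\R$-linear and commutes with differentiation, so
\[
(\Re\,\Xi_a)(f+g)-(\Re\,\Xi_a)(f)\ =\ \Re\!\big(\Xi_a(f+g)-\Xi_a(f)\big).
\]
Next, for any $h\in\Car[\imag]^{\b}$ and $j=0,\dots,r$, we have $(\Re h)^{(j)}=\Re(h^{(j)})$, and hence~${|(\Re h)^{(j)}(t)|\le|h^{(j)}(t)|}$ for all $t\ge a$, yielding $\|\Re h\|_{a;r}\le\|h\|_{a;r}$.

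\medskip\noindent
Combining these two observations gives
\[
\big\|(\Re\,\Xi_a)(f+g)-(\Re\,\Xi_a)(f)\big\|_{a;r}\ \le\ \big\|\Xi_a(f+g)-\Xi_a(f)\big\|_{a;r},
\]
and the right-hand side is bounded by $C_a\cdot \max\!\big\{1,\|f\|_{a;r}^d\big\}\cdot\big(\|g\|_{a;r}+\cdots+\|g\|_{a;r}^d\big)$ directly by Lemma~\ref{bdua, bds}. There is no real obstacle here: the only subtle point is to confirm that the constants~$C_a$ from Lemma~\ref{bdua, bds} work verbatim in the real setting, which is automatic because we are only applying $\Xi_a$ to elements of $\Car[\imag]^{\b}$ that happen to be real and then taking the real part at the end.
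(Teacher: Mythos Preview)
Your proposal is correct and matches the paper's approach: the paper does not write out a proof for this lemma but simply records that the constants $C_a$ are those of Lemma~\ref{bdua, bds}, implicitly invoking exactly the reduction you spell out---taking real parts is $\R$-linear, commutes with differentiation, and satisfies $\|\Re h\|_{a;r}\le\|h\|_{a;r}$, so the bound follows immediately from Lemma~\ref{bdua, bds}.
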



\noindent
The next corollary is derived from Lemma~\ref{realbdua, bds} in the same way as Theorem~\ref{thm:fix} from Lem\-ma~\ref{bdua, bds}:

\begin{cor}\label{cor:fix} For all sufficiently large $a$ the operator $\Re\,\Xi_a$ maps the closed ball $$\big\{f\in \Car:\ \|f\|_{a;r}\le 1/2\big\}$$
of the Banach space $(\Car)^{\b}$ into itself and has a unique fixed point on this ball. 
\end{cor}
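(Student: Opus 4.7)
The plan is to mimic the proof of Theorem~\ref{thm:fix} essentially verbatim, substituting $\Re\,\Xi_a$ for $\Xi_a$ and the real Banach space $(\Car)^{\b}$ for $\Car[\imag]^{\b}$. The first observation to record is that the $\R$-linear map $f\mapsto\Re f$ is norm-decreasing on $\Car[\imag]^{\b}$ with respect to $\|\cdot\|_{a;r}$ (since $\Re$ commutes with $\der$ and $|\Re z|\leq |z|$ pointwise), so that
$$\bigl\|(\Re\,\Xi_a)(f)-(\Re\,\Xi_a)(g)\bigr\|_{a;r}\ =\ \bigl\|\Re\!\bigl(\Xi_a(f)-\Xi_a(g)\bigr)\bigr\|_{a;r}\ \leq\ \bigl\|\Xi_a(f)-\Xi_a(g)\bigr\|_{a;r}$$
for all $f,g\in(\Car)^{\b}$, and in particular $\|(\Re\,\Xi_a)(0)\|_{a;r}\leq \|\Xi_a(0)\|_{a;r}$.

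Next, I would first choose $a$ large enough that $\|(\Re\,\Xi_a)(0)\|_{a;r}\leq \tfrac14$. By the formula $\Xi_a(0)=u\tilde{A}_a^{-1}(u^{-1}R_0)$ we have $\|\Xi_a(0)\|_{a;r}\leq \|u\tilde{A}_a^{-1}\|_{a;r}\cdot\|u^{-1}R_0\|_a$; since $u^{-1}R_0\prec 1$ gives $\|u^{-1}R_0\|_a\to 0$ as $a\to\infty$, and $\|u\tilde{A}_a^{-1}\|_{a;r}$ is uniformly bounded in $a$ by Proposition~\ref{uban}(iii), the bound $\tfrac14$ can be arranged.

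Then I would invoke Lemma~\ref{realbdua, bds}: for $f,g$ in the closed ball $B:=\{f\in\Car:\|f\|_{a;r}\leq 1/2\}$, applying the lemma with $g$ replaced by $g-f$ and using $\|f\|_{a;r}\leq 1/2<1$ and $\|g-f\|_{a;r}\leq 1$ gives
$$\bigl\|(\Re\,\Xi_a)(g)-(\Re\,\Xi_a)(f)\bigr\|_{a;r}\ \leq\ C_a\cdot d\cdot\|g-f\|_{a;r}.$$
Since $C_a\to 0$ as $a\to\infty$ (Lemma~\ref{bdua, bds}), I can further enlarge $a$ so that $C_a d\leq \tfrac12$, and thus $\Re\,\Xi_a$ is $\tfrac12$-Lipschitz on $B$. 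Specializing this to $g=0$ and combining with the bound from the previous paragraph yields
$$\|(\Re\,\Xi_a)(f)\|_{a;r}\ \leq\ \|(\Re\,\Xi_a)(0)\|_{a;r}+\tfrac12\|f\|_{a;r}\ \leq\ \tfrac14+\tfrac12\cdot\tfrac12\ =\ \tfrac12,$$
so $\Re\,\Xi_a$ maps $B$ into itself.

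Finally, $B$ is a closed subset of the real Banach space $(\Car)^{\b}$ (it is closed as the preimage of $[0,1/2]$ under the continuous map $\|\cdot\|_{a;r}$, and $(\Car)^{\b}$ is closed in $\Car[\imag]^{\b}$ since $\Re$ and $\Im$ are continuous). The restriction $\Re\,\Xi_a\restriction B\to B$ is a strict contraction, so the Banach fixed point theorem yields a unique fixed point in $B$. No step presents a real obstacle here; the only point requiring a touch of care is verifying that the real Banach space structure on $(\Car)^{\b}$ is genuinely complete and that $B$ is closed in it, both of which follow immediately from the fact that taking real parts is a continuous $\R$-linear projection on $\Car[\imag]^{\b}$.
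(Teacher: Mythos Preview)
Your proposal is correct and follows essentially the same approach as the paper, which simply states that the corollary is derived from Lemma~\ref{realbdua, bds} in the same way as Theorem~\ref{thm:fix} was derived from Lemma~\ref{bdua, bds}. Your added observation that $\Re$ is norm-decreasing is a clean way to handle $\|(\Re\,\Xi_a)(0)\|_{a;r}$, and the rest of your argument tracks the proof of Theorem~\ref{thm:fix} line by line.
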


\noindent
We also have a real analogue of Lemma~\ref{lem:close}:

\begin{cor} Suppose $f,g\in (\Cazr)^{\b}$ and $A_{a_0}(f)=R(f)$, $A_{a_0}(g)=R(g)$. Then there are positive reals $E$,~$\epsilon$ such that
for all $a$ there exists an $h_a\in (\Car)^{\b}$ with the property that
for $\theta_a:=(f-g)|_{[a,\infty)}$, 
$$A_{a}(h_a)=0, \quad \theta_a-h_a\prec \fv^{w},\quad
\|\theta_a-h_a\|_{a;r}\ \le\ E\cdot \|\fv^{\epsilon}\|_{a}\cdot \big(\|\theta_a\|_{a;r}+\cdots + \|\theta_a\|_{a;r}^d \big).$$
\end{cor}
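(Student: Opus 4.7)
The plan is to imitate the proof of Lemma~\ref{lem:close} almost verbatim, replacing the complex right-inverse $\tilde{A}_a^{-1}$ by its real part $\Re\tilde{A}_a^{-1}$, and using the real analogues of Lemmas~\ref{bdua} and~\ref{bdua, bds}, namely Lemmas~\ref{realbdua} and~\ref{realbdua, bds}, in place of their complex versions.

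In detail, first note that since $f,g\in(\Cazr)^{\b}$ and $A,R$ are real, the germ $\eta_a:=A_a(\theta_a)=R(f)-R(g)$ (where $f,g$ now denote the restrictions to $[a,\infty)$) lies in $\c_a^{\b}$, and moreover $u^{-1}\eta_a\in\c_a^{\b}$ because $R\prec\fv^{\nu}$ gives $u^{-1}R(f),\,u^{-1}R(g)\in\c_a^{\b}$. Since $u=\fv^{\nu}|_{[a,\infty)}$ is real, I define
\[
\xi_a\ :=\ u\cdot(\Re\tilde{A}_a^{-1})(u^{-1}\eta_a)\ \in\ (\Car)^{\b},
\]
so $\xi_a$ is real. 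Because $\Re\tilde{A}_a^{-1}$ is a right-inverse of $\tilde{A}_a$ on $\c_a^{\b}$, the computation $u^{-1}A_a(\xi_a)=\tilde{A}_a\bigl((\Re\tilde{A}_a^{-1})(u^{-1}\eta_a)\bigr)=u^{-1}\eta_a$ gives $A_a(\xi_a)=\eta_a$. Hence $h_a:=\theta_a-\xi_a\in(\Car)^{\b}$ satisfies $A_a(h_a)=0$. The bound $|(\Re\tilde{A}_a^{-1})(\phi)|\le|\tilde{A}_a^{-1}(\phi)|$ combined with Proposition~\ref{uban}(iv) applied to $\tilde{A}$ yields $\xi_a\preceq\fv^{\nu}\prec\fv^{w}$, hence $\theta_a-h_a\prec\fv^{w}$ as required.

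For the norm estimate, observe that $u$ is real, so $(\Re\Xi_a)(\psi)=u\cdot(\Re\tilde{A}_a^{-1})(u^{-1}R(\psi))$ for $\psi\in(\Car)^{\b}$. Therefore
\[
(\Re\Xi_a)(g+\theta_a)-(\Re\Xi_a)(g)\ =\ u\cdot(\Re\tilde{A}_a^{-1})\bigl(u^{-1}\eta_a\bigr)\ =\ \xi_a.
\]
Applying Lemma~\ref{realbdua, bds} with $f$ replaced by $g$ and $g$ replaced by $\theta_a$ thus gives
\[
\|\theta_a-h_a\|_{a;r}\ =\ \|\xi_a\|_{a;r}\ \le\ C_a\cdot\max\!\big\{1,\|g\|_{a_0;r}^{d}\big\}\cdot\bigl(\|\theta_a\|_{a;r}+\cdots+\|\theta_a\|_{a;r}^{d}\bigr),
\]
where we used $\|g|_{[a,\infty)}\|_{a;r}\le\|g\|_{a_0;r}$. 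Finally, exactly as in the proof of Lemma~\ref{lem:close}, we pick a real $\epsilon>0$ with $R\prec\fv^{\nu+\epsilon}$, so that $\sum_{\i,\j}\|u^{-1}R^{(\i)}_{\j}\|_a\le e\|\fv^{\epsilon}\|_a$ for some $e>0$ independent of $a$; combining with the uniform bound $\|u\tilde{A}_a^{-1}\|_{a;r}\le c$ from Proposition~\ref{uban}(iii) yields $C_a\le ce\cdot\|\fv^{\epsilon}\|_a$, and setting $E:=ce\cdot\max\!\big\{1,\|g\|_{a_0;r}^{d}\big\}$ gives the desired inequality.

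No step here looks like a genuine obstacle: every ingredient (the real right-inverse, Lemma~\ref{realbdua, bds}, Proposition~\ref{uban}) has been set up in the preceding two subsections precisely so that the complex argument of Lemma~\ref{lem:close} carries over. The only point that requires any care, and which I would flag as the mild sticking point, is the identification of $\xi_a$ as the difference $(\Re\Xi_a)(g+\theta_a)-(\Re\Xi_a)(g)$: this uses crucially that $u$ is real (so $u$ commutes with $\Re$) and that $R$ is real (so $u^{-1}R(f),u^{-1}R(g)\in\c_a^{\b}$, allowing one to apply $\Re\tilde{A}_a^{-1}$ to their difference). Once this is in hand the estimate is immediate from Lemma~\ref{realbdua, bds}.
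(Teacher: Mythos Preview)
Your argument is correct. However, the paper's proof is a single line: simply take $h_a$ to be the real part of the $h_a$ produced by Lemma~\ref{lem:close}. Since $A$ is real, $A_a(\Re h_a)=\Re A_a(h_a)=0$; since $\theta_a$ is real, $\theta_a-\Re h_a=\Re(\theta_a-h_a)$, which inherits both the asymptotic bound $\prec\fv^w$ and the norm estimate from the complex $\theta_a-h_a$. Your reconstruction via the real operators $\Re\tilde{A}_a^{-1}$ and $\Re\,\Xi_a$ is valid and in fact produces exactly the same $h_a$ (your $\xi_a$ is the real part of the complex $\xi_a$ in the proof of Lemma~\ref{lem:close}), but it re-derives from scratch what the paper obtains by one observation about real parts. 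The only thing your route buys is independence from the complex lemma; the paper's route buys brevity.
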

\begin{proof}
Take $h_a$ to be the real part of an $h_a$ as in Lemma~\ref{lem:close}. 
\end{proof}

\subsection*{Some useful bounds}
To prepare for Section~\ref{sec:weights} we derive in this subsection some bounds from Lem\-mas~\ref{bdua, bds} and~\ref{lem:close}. Throughout we assume $d,r\in \N^{\ge 1}$. 
We begin with an easy inequality:

\begin{lemma}\label{lem:inequ power d}
Let  $(V,\dabs{\, \cdot\, })$ be a normed $\C$-linear space, and $f,g\in V$. Then
$$\dabs{f+g}^d\ 	\leq\  
2^d\cdot \max\!\big\{1,\dabs{f}^d\big\}\cdot\max\!\big\{1,\dabs{g}^d\big\}.$$
\end{lemma}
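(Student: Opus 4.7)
The plan is to reduce this to an elementary numerical inequality via the triangle inequality, then raise it to the $d$-th power. First I would apply $\dabs{f+g}\le \dabs{f}+\dabs{g}$, so that it suffices to show, for nonnegative reals $\alpha:=\dabs{f}$ and $\beta:=\dabs{g}$,
\[
(\alpha+\beta)^d\ \le\ 2^d\cdot\max\{1,\alpha^d\}\cdot\max\{1,\beta^d\}.
\]

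Next I would prove the key scalar inequality
\[
\alpha+\beta\ \le\ 2\max\{1,\alpha\}\cdot\max\{1,\beta\}\qquad(\alpha,\beta\ge 0).
\]
This holds because $\max\{1,\alpha\}\ge 1$ and $\max\{1,\beta\}\ge \beta$, so the product is $\ge \beta$; symmetrically it is $\ge \alpha$; hence the product is $\ge \max\{\alpha,\beta\}\ge (\alpha+\beta)/2$.

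Finally I would raise this inequality to the $d$-th power (both sides being nonnegative) and use the identity $\max\{1,\alpha\}^d=\max\{1,\alpha^d\}$, valid for $\alpha\ge 0$ and $d\in\N^{\ge 1}$ since $\alpha\le 1\iff \alpha^d\le 1$. Combining with the triangle inequality step yields the desired bound. There is no real obstacle: the argument is a short chain of elementary manipulations, and the constant $2^d$ in the statement is exactly what falls out of the scalar inequality above.
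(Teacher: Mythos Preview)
Your proof is correct and follows exactly the same route as the paper: triangle inequality, then the scalar bound $\alpha+\beta\le 2\max\{1,\alpha\}\max\{1,\beta\}$, then raise to the $d$-th power. The paper's proof is simply a one-line version of what you wrote.
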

\begin{proof}
Use that 
$\dabs{f+g}   \leq \dabs{f}+\dabs{g} \leq 2 \max\!\big\{1,\dabs{f}\big\}\cdot\max\!\big\{1,\dabs{g}\big\}$. 
\end{proof}

\noindent
Now let  $u$, $\Xi_a$ be as in Lem\-ma~\ref{bdua}. By that lemma, the operator
$$\Phi_a\ \colon\ \Car[\imag]^{\b}\times\Car[\imag]^{\b} \to \Car[\imag]^{\b}, \quad (f,y)\mapsto \Xi_a(f+y)-\Xi_a(f)$$ 
is continuous. 
Furthermore $\Phi_a(f,0)=0$ for  $f\in \Car[\imag]^{\b}$ and
\begin{equation}\label{eq:Phia difference}
\Phi_a(f,g+y)-\Phi_a(f,g)\ =\ \Phi_a(f+g,y) \qquad\text{for $f,g,y\in \Car[\imag]^{\b}$.}
\end{equation}

\begin{lemma}\label{lem:2.1 summary}
There are $C_a,C_a^+\in\R^{\geq}$ such that for all $f,g,y\in\Car[\imag]^{\b}$,
\begin{equation}\label{eq:2.1, 1}
\dabs{\Phi_a(f,y)}_{a;r}  
 \ \le\ C_a\cdot \max\!\big\{1, \|f\|_{a;r}^d\big\}\cdot \big(\|y\|_{a;r} + \cdots + \|y\|_{a;r}^d\big),
\end{equation}
\vskip-1.5em
\begin{multline}\label{eq:2.1, 2}
\dabs{\Phi_a(f,g+y)-\Phi_a(f,g)}_{a;r}  \ \leq\ \\ C_a^+\cdot\max\!\big\{1,\dabs{f}_{a;r}^d\big\}\cdot\max\!\big\{1, \|g\|_{a;r}^d\big\}\cdot \big(\|y\|_{a;r} + \cdots + \|y\|_{a;r}^d\big).
\end{multline}
We can take these $C_a,C_a^+$ such that $C_a, C_a^+\to 0$ as $a\to\infty$, and do so below.
\end{lemma}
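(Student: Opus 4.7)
The plan is to recognize that both bounds are essentially bookkeeping consequences of results already established earlier in the section, so the proof reduces to assembling Lemma~\ref{bdua, bds}, the identity \eqref{eq:Phia difference}, and Lemma~\ref{lem:inequ power d}.

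For \eqref{eq:2.1, 1}, I would take $C_a$ to be exactly the constant supplied by Lemma~\ref{bdua, bds}. Indeed, unwinding the definition
\[
\Phi_a(f,y)\ =\ \Xi_a(f+y)-\Xi_a(f),
\]
the bound of Lemma~\ref{bdua, bds} applied with the pair $(f,y)$ in place of $(f,g)$ gives \eqref{eq:2.1, 1} verbatim, and the qualitative statement $C_a\to 0$ as $a\to\infty$ is part of the conclusion of that lemma.

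For \eqref{eq:2.1, 2}, the starting point is the functional identity \eqref{eq:Phia difference}, which rewrites the left-hand side as
\[
\Phi_a(f,g+y)-\Phi_a(f,g)\ =\ \Phi_a(f+g,y).
\]
Then I would apply \eqref{eq:2.1, 1} already proved, but with $f+g$ in place of $f$, obtaining
\[
\dabs{\Phi_a(f+g,y)}_{a;r}\ \leq\ C_a\cdot\max\!\big\{1,\|f+g\|_{a;r}^d\big\}\cdot\big(\|y\|_{a;r}+\cdots+\|y\|_{a;r}^d\big).
\]
Lemma~\ref{lem:inequ power d} applied in the normed space $\Car[\imag]^{\b}$ yields
\[
\|f+g\|_{a;r}^d\ \leq\ 2^d\cdot\max\!\big\{1,\|f\|_{a;r}^d\big\}\cdot\max\!\big\{1,\|g\|_{a;r}^d\big\},
\]
and hence $\max\{1,\|f+g\|_{a;r}^d\}\leq 2^d\max\{1,\|f\|_{a;r}^d\}\cdot\max\{1,\|g\|_{a;r}^d\}$.

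Setting $C_a^+:=2^d C_a$ then delivers \eqref{eq:2.1, 2}, and since $C_a\to 0$ as $a\to\infty$ we automatically get $C_a^+\to 0$ as $a\to\infty$ as well. There is no genuine obstacle here; the lemma is a packaging statement whose sole purpose is to record estimates in the form needed for the weighted-norm construction in Section~\ref{sec:weights}, with the only mild subtlety being the use of \eqref{eq:Phia difference} to shift the increment in the second variable of $\Phi_a$ into a shift of the base point.
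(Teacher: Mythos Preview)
Your proposal is correct and follows the paper's own proof essentially verbatim: take $C_a$ from Lemma~\ref{bdua, bds} for \eqref{eq:2.1, 1}, then combine \eqref{eq:Phia difference} with \eqref{eq:2.1, 1} and Lemma~\ref{lem:inequ power d} to obtain \eqref{eq:2.1, 2} with $C_a^+:=2^d C_a$.
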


\begin{proof}
The $C_a$ as in Lemma~\ref{bdua, bds} satisfy the requirements on the $C_a$ here.
Now let~$f,g,y\in\Car[\imag]^{\b}$. Then
by \eqref{eq:Phia difference} and \eqref{eq:2.1, 1} we have
$$ \dabs{\Phi_a(f,g+y)-\Phi_a(f,g)}_{a;r}  \ \leq\ C_a \cdot\max\!\big\{1, \|f+g\|_{a;r}^d\big\}\cdot \big(\|y\|_{a;r} + \cdots + \|y\|_{a;r}^d\big).$$
Thus by Lemma~\ref{lem:inequ power d}, 
$C_a^+ := 2^d\cdot C_a$ has the required property.
\end{proof}

\noindent
Next, let $f$, $g$ be as in the hypothesis of Lemma~\ref{lem:close} and take $E$, $\epsilon$, and $h_a$ (for each~$a$) as in its conclusion. 
Thus for all $a$ and $\theta_a:=(f-g)|_{[a,\infty)}$, 
$$\|\theta_a-h_a\|_{a;r}\ \le\ E\cdot \|\fv^{\epsilon}\|_{a}\cdot \big(\|\theta_a\|_{a;r}+\cdots + \|\theta_a\|_{a;r}^d \big),$$
and if $f-g\prec 1$, then $h_a\prec 1$. So
$$\|\theta_a-h_a\|_{a;r}\ \le\ E\cdot \|\fv^{\epsilon}\|_{a}\cdot \big(\rho+\cdots+\rho^d\big),\quad \rho:=\dabs{f-g}_{a_0;r}.$$
We let
$$B_a\ :=\ \big\{ y\in\Car[\imag]^{\b}:\ \dabs{y-h_a}_{a;r} \leq 1/2\big\}$$
be the  closed ball  of radius $1/2$ around $h_a$ in $\Car[\imag]^{\b}$.
Using $\fv^\epsilon\prec 1$ we take $a_1\geq a_0$  so that  $\theta_a\in B_a$ for all $a\geq a_1$.
Then for $a\geq a_1$ we have $$\dabs{h_a}_{a;r}\  \leq\ \dabs{h_a-\theta_a}_{a;r} + \dabs{\theta_a}_{a;r}\ \leq\ \textstyle\frac{1}{2}+\rho,$$
and hence for $y\in B_a$, 
\begin{equation}\label{eq:dabs(y)}
\dabs{y}_{a;r}\ \leq\ \dabs{y-h_a}_{a;r} + \dabs{h_a}_{a;r}\ \leq\ \textstyle\frac{1}{2}+\big(\frac{1}{2}+\rho\big)\ =\ 1+\rho.
\end{equation}
Consider now the continuous operators
$$\Phi_a,\Psi_a\ :\ \Car[\imag]^{\b} \to \Car[\imag]^{\b},\qquad
\Phi_a(y):=\Xi_a(g+y)-\Xi_a(g), \quad \Psi_a(y):=\Phi_a(y)+h_a.$$
In the notation introduced above, $\Phi_a(y)=\Phi_a(g,y)$ for $y\in  \Car[\imag]^{\b}$. With $\xi_a$ as in the proof of Lemma~\ref{lem:close}
we also have~$\Phi_a(\theta_a)=\xi_a$ and  
$\Psi_a(\theta_a)=\xi_a+h_a=\theta_a$. Below we reconstruct the fixed point $\theta_a$ of $\Psi_a$ from $h_a$,  for sufficiently large $a$.   

\begin{lemma}\label{lem:Psin, b}
There exists $a_2\geq a_1$ such that for all $a\geq a_2$ we have
$\Psi_a(B_a)\subseteq B_a$, and $\dabs{{\Psi_a(y)-\Psi_a(z)}}_{a;r} \leq \frac{1}{2} \|y-z\|_{a;r}$ for all $y,z\in B_a$.
\end{lemma}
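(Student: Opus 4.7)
The plan is to deduce both assertions directly from the estimates \eqref{eq:2.1, 1} and \eqref{eq:2.1, 2} in Lemma~\ref{lem:2.1 summary}, exploiting that the constants $C_a,C_a^+$ tend to $0$ as $a\to\infty$ while all other quantities on the right-hand sides remain uniformly bounded on $B_a$. Throughout, $M:=\max\!\big\{1,\dabs{g}_{a_0;r}^d\big\}$, which bounds $\max\!\big\{1,\dabs{g}_{a;r}^d\big\}$ for every $a\geq a_0$ (restriction only decreases the norm). Recall from \eqref{eq:dabs(y)} that $\dabs{y}_{a;r}\leq 1+\rho$ for all $y\in B_a$ and $a\geq a_1$, and observe that for $y,z\in B_a$ the triangle inequality gives $\dabs{y-z}_{a;r}\leq 1$, so $\dabs{y-z}_{a;r}^j\leq \dabs{y-z}_{a;r}$ for every $j\geq 1$.

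For the inclusion $\Psi_a(B_a)\subseteq B_a$: since $\Phi_a(0)=\Xi_a(g)-\Xi_a(g)=0$, for $y\in B_a$ we have $\Psi_a(y)-h_a=\Phi_a(y)=\Phi_a(g,y)$, so \eqref{eq:2.1, 1} applied with $f=g$ yields
\[
\dabs{\Psi_a(y)-h_a}_{a;r}\ =\ \dabs{\Phi_a(g,y)}_{a;r}\ \leq\ C_a\cdot M\cdot d(1+\rho)^d.
\]
Since $C_a\to 0$ as $a\to\infty$, choosing $a$ large makes the right-hand side $\leq 1/2$.

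For the contraction estimate: given $y,z\in B_a$, apply \eqref{eq:2.1, 2} with $f:=g$ and the ``$g$'' of that formula replaced by $z$, and the ``$y$'' of that formula replaced by $y-z$. Using $\max\!\big\{1,\dabs{z}_{a;r}^d\big\}\leq (1+\rho)^d$ and $\dabs{y-z}_{a;r}+\cdots+\dabs{y-z}_{a;r}^d\leq d\,\dabs{y-z}_{a;r}$, this gives
\[
\dabs{\Psi_a(y)-\Psi_a(z)}_{a;r}\ =\ \dabs{\Phi_a(g,y)-\Phi_a(g,z)}_{a;r}\ \leq\ C_a^+\cdot M\cdot (1+\rho)^d\cdot d\cdot \dabs{y-z}_{a;r}.
\]
Since $C_a^+\to 0$, we may take $a$ so large that the coefficient $C_a^+\cdot M\cdot d(1+\rho)^d$ is $\leq 1/2$. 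Taking $a_2\geq a_1$ large enough that both estimates hold for all $a\geq a_2$ completes the proof. No real obstacle is anticipated here; the lemma is essentially a clean repackaging of the quantitative continuity properties of $\Phi_a$ already established in Lemma~\ref{lem:2.1 summary}, specialized to the neighborhood $B_a$ of $h_a$.
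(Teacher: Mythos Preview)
Your proof is correct and follows essentially the same approach as the paper: both deduce the inclusion from \eqref{eq:2.1, 1} and the contraction from \eqref{eq:2.1, 2}, bounding the polynomial factors using $\dabs{y}_{a;r}\leq 1+\rho$ on $B_a$ and $\dabs{y-z}_{a;r}\leq 1$, then invoking $C_a,C_a^+\to 0$. The only cosmetic difference is that the paper packages the constants as $M=\max\{1,\dabs{g}_{a_0;r}^d\}\cdot\big((1+\rho)+\cdots+(1+\rho)^d\big)$ and $N=\max\{1,\dabs{g}_{a_0;r}^d\}\cdot(1+\rho)^d\cdot d$, whereas you use the slightly cruder bound $d(1+\rho)^d$ in both places.
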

\begin{proof}  Take $C_a$ as in  Lemma~\ref{lem:2.1 summary}, and let $y\in B_a$. Then  by \eqref{eq:2.1, 1},
\begin{align*}  \|\Phi_a(y) \|_{a;r}\ &\le\ C_a\cdot \max\!\big\{1, \|g\|_{a;r}^d\big\}\cdot \big(\|y\|_{a;r} + \cdots + \|y\|_{a;r}^d\big), \quad \theta_a\in B_a, \text{ so}\\
\|\Psi_a(y)-h_a \|_{a;r}\  &\leq\ C_a M,\quad M:=\max\!\big\{1, \|g\|_{a_0;r}^d\big\}\cdot \big( (1+\rho)+\cdots+(1+\rho)^d\big).
\end{align*} 
Recall that $C_a\to 0$ as $a\to\infty$. Suppose $a\ge a_1$ is so large that $C_a M \le 1/2$. Then~$\Psi_a(B_a)\subseteq B_a$.
With $C_a^+$ as in Lemma~\ref{lem:2.1 summary}, \eqref{eq:2.1, 2} gives for $y,z\in\Car[\imag]^{\b}$,
\begin{multline*}
\dabs{\Phi_a(y)-\Phi_a(z)}_{a;r} \leq \\ C^+_a\cdot\max\!\big\{1, \|g\|_{a;r}^d\big\}\cdot\max\!\big\{1, \|z\|_{a;r}^d\big\}\cdot \big(\|y-z\|_{a;r} + \cdots + \|y-z\|_{a;r}^d\big).
\end{multline*}
{\samepage Hence with $N := \max\!\big\{1, \|g\|_{a_0;r}^d\big\}\cdot (1+\rho)^d\cdot d$ we obtain for $y,z\in B_a$ that
$$\dabs{\Psi_a(y)-\Psi_a(z)}_{a;r}\  \leq\ C^+_a N   \|y-z\|_{a;r}, $$
so
$\dabs{{\Psi_a(y)-\Psi_a(z)}}_{a;r} \leq \frac{1}{2} \|{y-z}\|_{a;r}$ if $C_a^+N\leq 1/2$. }
\end{proof}

\noindent
Below $a_2$ is as in Lemma~\ref{lem:Psin, b}.

\begin{cor}\label{cor:Psin, b}
If $a\geq a_2$, then  $\lim_{n\to\infty} \Psi_a^n(h_a)=\theta_a$ in $\Car[\imag]^{\b}$.
\end{cor}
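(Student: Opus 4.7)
The plan is to view this as a direct application of the Banach fixed-point theorem. First I would note that $B_a$ is a closed subset of the Banach space $\bigl(\Car[\imag]^{\b},\dabs{\,\cdot\,}_{a;r}\bigr)$ (as stated just before Theorem~\ref{thm:fix}), so $B_a$ is itself a complete metric space under the induced metric. Next, Lemma~\ref{lem:Psin, b} applied for $a\geq a_2$ supplies exactly the two hypotheses required of $\Psi_a$: it maps $B_a$ into $B_a$ and is Lipschitz on $B_a$ with constant $\tfrac{1}{2}<1$.

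I would then verify that $\theta_a$ is a fixed point of $\Psi_a$ lying in $B_a$. The fixed-point property was already recorded in the paragraph preceding Lemma~\ref{lem:Psin, b}: by definition of $\Psi_a$ and $\xi_a$ we have $\Psi_a(\theta_a)=\Phi_a(\theta_a)+h_a=\xi_a+h_a=\theta_a$. Moreover, for $a\geq a_1$ (in particular for $a\geq a_2\geq a_1$) it was arranged by the choice of $a_1$ that $\theta_a\in B_a$. Finally, $h_a$ is the center of $B_a$, so trivially $h_a\in B_a$.

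Applying the contraction mapping principle to $\Psi_a\colon B_a\to B_a$ now yields a unique fixed point of $\Psi_a$ in $B_a$, necessarily equal to $\theta_a$, and the iterates $\Psi_a^n(y)$ converge to this fixed point for every starting point $y\in B_a$. Specializing to $y=h_a$ gives $\Psi_a^n(h_a)\to \theta_a$ in $\Car[\imag]^{\b}$, as desired. There is no genuine obstacle here: all of the analytic work was already expended in Lemma~\ref{lem:Psin, b} to guarantee stability and contractivity of $\Psi_a$ on $B_a$, and the present statement is simply the conclusion of the Banach fixed-point theorem packaged in a form convenient for later use.
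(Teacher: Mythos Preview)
Your proposal is correct and follows essentially the same approach as the paper: apply the contraction mapping principle to $\Psi_a$ on the closed ball $B_a$, observe that $\theta_a\in B_a$ is a fixed point of $\Psi_a$, and conclude that the iterates starting at $h_a$ converge to $\theta_a$. The paper's proof is simply a terser version of what you wrote.
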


\begin{proof} Let $a\ge a_2$. Then $\Psi_a$ has a unique fixed point on $B_a$. As $\Psi_a(\theta_a)=\theta_a\in B_a$, this fixed point is~$\theta_a$.
\end{proof}

\begin{cor}\label{cor:h=0 => f=g}
If $f\neq g$ as germs, then $h_a\neq 0$ for  $a\geq a_2$.
\end{cor}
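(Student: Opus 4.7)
The plan is to prove the corollary by contrapositive, using the iteration $\Psi_a^n(h_a)\to\theta_a$ from Corollary~\ref{cor:Psin, b} starting at the special point $h_a=0$. The key observation is that the operator $\Phi_a$ vanishes at $0$: by definition, $\Phi_a(0) = \Xi_a(g+0)-\Xi_a(g)=0$. Hence if $h_a=0$, then $\Psi_a(0)=\Phi_a(0)+h_a=0$, so the iterates $\Psi_a^n(0)$ are identically $0$ for all $n$.

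Assume now that $a\geq a_2$ and $h_a=0$. By Corollary~\ref{cor:Psin, b} we have $\Psi_a^n(h_a)\to\theta_a$ in $\Car[\imag]^{\b}$. The left-hand side is the constant sequence $0$, so $\theta_a=0$ in $\Car[\imag]^{\b}$. This means $(f-g)|_{[a,\infty)}=0$, i.e., $f$ and $g$ agree on $[a,\infty)$, and therefore $f=g$ as germs at $+\infty$. Contrapositively, if $f\neq g$ as germs then $h_a\neq 0$ for every $a\geq a_2$.

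There is no real obstacle here: once one unwinds the definitions of $\Phi_a$, $\Psi_a$, and notes that $0$ is a fixed point of $\Psi_a$ when $h_a=0$, Corollary~\ref{cor:Psin, b} does all the work. The only thing worth verifying carefully is that the hypothesis $a\geq a_2$ (needed to invoke Corollary~\ref{cor:Psin, b}) matches the range asserted in the statement, which it does by construction.
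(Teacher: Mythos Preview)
Your proof is correct and follows essentially the same argument as the paper: both observe that $\Phi_a(0)=0$, so when $h_a=0$ the map $\Psi_a$ fixes $0$, and then Corollary~\ref{cor:Psin, b} forces $\theta_a=0$.
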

\begin{proof}
Let $a\geq a_2$. Then $\lim_{n\to \infty} \Psi_a^n(h_a)=\theta_a$. If $h_a=0$, then $\Psi_a=\Phi_a$, and hence $\theta_a=0$, since $\Phi_a(0)=0$.
\end{proof}

\section{Smoothness Considerations}\label{sec:smoothness}

\noindent
We assume $r\in \N$ in this section.
We prove here as much smoothness of solutions of algebraic differential equations over Hardy fields as could be hoped for. 
In particular, the solutions in $\Car[\imag]^{\b}$ of the equation \eqref{eq:ADE} in Section~\ref{sec:split-normal over Hardy fields} actually have their germs in $\mathcal{C}^{<\infty}[\imag]$.
To make this precise, consider 
a ``differential'' polynomial $$P\  =\ P(Y,\dots,Y^{(r)})\ \in\ \c^n[\imag]\big[Y,\dots,Y^{(r)}\big].$$ We put {\em differential\/} in quotes since $\c^n[\imag]$  is not naturally 
a differential ring. Nevertheless, $P$ defines an obvious evaluation map
$$f  \mapsto P\big(f,\dots,f^{(r)}\big)\ :\  \Gr[\imag] \to \c[\imag].$$ We also have the ``separant'' of $P$:
$$S_P\ :=\ \frac{\partial P}{\partial Y^{(r)}}\ \in\ \c^n[\imag]\big[Y,\dots,Y^{(r)}\big].$$
  
\begin{prop}\label{hardysmooth} 
Assume $n\ge 1$. Let $f \in \Gr[\imag]$ be such that  
$$P\big(f,\dots,f^{(r)}\big) = 0\in \c[\imag]\quad\text{ and }\quad S_P\big(f,\dots,f^{(r)}\big)\in\c[\imag]^\times.$$ 
Then~$f \in \c^{n+r}[\imag]$. 
Thus if $P\in  \Gi[\imag]\big[Y,\dots,Y^{(r)}\big]$, then $f\in \Gi[\imag]$.
Moreover, if~$P\in  \Ginf[\imag]\big[Y,\dots,Y^{(r)}\big]$, then~$f\in\Ginf[\imag]$,  and likewise with $\Gom[\imag]$ in place of~$\Ginf[\imag]$.
\end{prop}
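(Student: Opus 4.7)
The plan is to apply the implicit function theorem to rewrite $P(f,\ldots,f^{(r)})=0$ as an explicit order-$r$ ODE and then bootstrap the regularity of $f$ one derivative at a time.

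Since $S_P(f,\ldots,f^{(r)})\in\c[\imag]^\times$, we may pick $a\in\R$ and representatives such that on $[a,+\infty)$ the coefficients of $P$ lie in $\c^n$, we have $f\in\Car[\imag]$, and $S_P\bigl(t,f(t),\ldots,f^{(r)}(t)\bigr)$ is nowhere zero. Viewing $P$ as a $\c^n$ map in $r+2$ complex (equivalently $2(r+2)$ real) variables, the Jacobian determinant of $(\Re P,\Im P)$ with respect to $(\Re y_r,\Im y_r)$ equals $|S_P|^2$, which is nonzero along the trajectory $t\mapsto\bigl(t,f(t),\ldots,f^{(r)}(t)\bigr)$. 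The $\c^n$ implicit function theorem therefore produces, locally around each point of this trajectory, a unique $\c^n$ function $y_r=F(t,y_0,\ldots,y_{r-1})$ with $P(t,y_0,\ldots,y_{r-1},F)=0$; by local uniqueness these glue into a single $\c^n$ function $F$ on a neighborhood of the trajectory, and $f^{(r)}(t)=F\bigl(t,f(t),\ldots,f^{(r-1)}(t)\bigr)$ for all $t\geq a$. Now I bootstrap: since $f\in\Car[\imag]$, each of $f,\ldots,f^{(r-1)}$ is $\c^1$, so the chain rule (together with $n\geq 1$) shows that $t\mapsto F\bigl(t,f(t),\ldots,f^{(r-1)}(t)\bigr)$ is $\c^1$; hence $f^{(r)}\in\c^1$ and $f\in\c^{r+1}[\imag]$. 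Inductively, if $f\in\c^{r+k}[\imag]$ with $k<n$, then $f,\ldots,f^{(r-1)}\in\c^{k+1}$, and composition with the $\c^n$ function $F$ yields $f^{(r)}\in\c^{k+1}$, so $f\in\c^{r+k+1}[\imag]$. After $n$ steps this gives $f\in\c^{n+r}[\imag]$, as required. Applying this for every $n$ yields the $\Gi[\imag]$- and $\Ginf[\imag]$-cases (for the latter, note that the bootstrap proceeds on a common interval $[a,+\infty)$ on which all coefficients are $\c^\infty$, so $f$ becomes $\c^m$ there for every $m$, hence $\c^\infty$).

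The main obstacle is the analytic case, which cannot be handled by smooth bootstrapping. Here I would replace the $\c^n$ implicit function theorem by its holomorphic counterpart, obtaining around each $t_0\geq a$ an analytic function $F$ on a complex neighborhood of $\bigl(t_0,f(t_0),\ldots,f^{(r-1)}(t_0)\bigr)$ with $P(t,y_0,\ldots,y_{r-1},F)=0$. The classical existence theorem for analytic ODEs then supplies an analytic local solution $g$ to $g^{(r)}=F(t,g,\ldots,g^{(r-1)})$ with initial values $g^{(j)}(t_0)=f^{(j)}(t_0)$ for $j=0,\ldots,r-1$. Since $F$ is in particular $\c^1$, the Picard--Lindel\"of uniqueness theorem (applied after rewriting this equation as a first-order system in $f,f',\ldots,f^{(r-1)}$) forces $f=g$ on a neighborhood of $t_0$, so $f$ is analytic at $t_0$. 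As $t_0\geq a$ was arbitrary, $f\in\Gom[\imag]$.
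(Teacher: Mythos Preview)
Your proof is correct and follows essentially the same route as the paper: apply the implicit function theorem (using Cauchy--Riemann to see the real Jacobian is $|S_P|^2$) to rewrite the equation as $f^{(r)}=F(t,f,\ldots,f^{(r-1)})$ with $F$ of class $\c^n$, then bootstrap regularity; for the analytic case the paper likewise invokes the analytic implicit function theorem and the analyticity of solutions to analytic ODEs (citing Dieudonn\'e (10.2.4) and (10.5.3)), which is exactly your argument via existence of an analytic local solution plus Picard--Lindel\"of uniqueness.
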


\noindent
We deduce this from the lemma below, which has a complex-analytic hypothesis. Let~${U\subseteq \R\times \C^{1+r}}$ be open. Let $t$ range over $\R$ and
$z_0,\dots, z_r$ over $\C$,  and set~$x_j:=\Re z_j$, $y_j:=\Im z_j$ for $j=0,\dots,r$, and
$$U(t,z_0,\dots,z_{r-1})\ :=\ \big\{z_r:(t,z_0,\dots,z_{r-1},z_r)\in U\big\},$$ 
an open subset of $\C$. Assume $\Phi\colon U \to \C$  and $n\ge 1$ are such that 
$\Re \Phi, \Im \Phi\colon U \to \R$
are $\c^n$-functions of
$(t, x_0,y_0,\dots,x_r,y_r)$, and for all~$t,z_0,\dots,z_{r-1}$ the function 
$$z_r\mapsto \Phi(t,z_0,\dots,z_{r-1},z_r)\ \colon\ U(t,z_0,\dots,z_{r-1})\to \C$$ is holomorphic (the complex-analytic hypothesis alluded to).

\begin{lemma}\label{lem:hardysmooth, complex}
{\samepage Let $I\subseteq \R$ be a nonempty open interval and suppose~$f\in\Gr(I)[\imag]$ is such that for all~$t\in I$, 
\begin{itemize}
\item $\big(t, f(t),\dots, f^{(r)}(t)\big)\in U$;
\item $\Phi\big(t, f(t),\dots, f^{(r)}(t)\big)=0$; and
\item $(\partial\Phi/\partial z_r)\big(t, f(t),\dots, f^{(r)}(t)\big)\ne 0$.
\end{itemize}
Then $f\in \c^{n+r}(I)[\imag]$.}
\end{lemma}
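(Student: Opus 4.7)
The plan is to deduce the lemma from the real $\c^n$ implicit function theorem applied to the system $\Re\Phi = 0, \ \Im\Phi = 0$ solved for $(x_r, y_r)$, followed by a bootstrap on the order of smoothness of $f$.

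First I would fix $t_0 \in I$ and prove that $f$ is $\c^{n+r}$ on a neighborhood of $t_0$. Set $p := (t_0, f(t_0), \dots, f^{(r)}(t_0)) \in U$. Since $\Phi$ is holomorphic in $z_r$, the Cauchy--Riemann equations
\[
\frac{\partial(\Re\Phi)}{\partial x_r}\ =\ \frac{\partial(\Im\Phi)}{\partial y_r},\qquad \frac{\partial(\Re\Phi)}{\partial y_r}\ =\ -\frac{\partial(\Im\Phi)}{\partial x_r}
\]
hold throughout $U$, hence the Jacobian determinant of $(\Re\Phi, \Im\Phi)$ with respect to $(x_r, y_r)$ at $p$ equals
\[
\left(\frac{\partial(\Re\Phi)}{\partial x_r}\right)^{\!2} + \left(\frac{\partial(\Im\Phi)}{\partial x_r}\right)^{\!2}\ =\ \left|\frac{\partial\Phi}{\partial z_r}(p)\right|^{2},
\]
which is nonzero by hypothesis. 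Because $\Re\Phi$ and $\Im\Phi$ are $\c^n$-functions of the real variables $(t,x_0,y_0,\dots,x_r,y_r)$, the standard $\c^n$ implicit function theorem yields open neighborhoods $V$ of $(t_0, f(t_0),\dots,f^{(r-1)}(t_0))$ in $\R\times\C^r$ and $W$ of $f^{(r)}(t_0)$ in $\C$, together with a $\c^n$-map $\psi\colon V \to W$, such that for $(t,z_0,\dots,z_{r-1})\in V$ and $z_r\in W$, we have $\Phi(t, z_0,\dots,z_r) = 0$ iff $z_r = \psi(t,z_0,\dots,z_{r-1})$.

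Next, continuity of $f, f',\dots,f^{(r)}$ at $t_0$ gives an open subinterval $J\subseteq I$ containing $t_0$ such that $(t, f(t),\dots, f^{(r-1)}(t))\in V$ and $f^{(r)}(t)\in W$ for all $t\in J$, and consequently
\[
f^{(r)}(t)\ =\ \psi\big(t, f(t), f'(t),\dots,f^{(r-1)}(t)\big) \qquad\text{for all $t\in J$.}
\]
Now I bootstrap: suppose $f\in \c^{r+k}(J)[\imag]$ for some $0\le k<n$ (the base case $k=0$ is the hypothesis $f\in\Gr(I)[\imag]$). Then $f, f',\dots,f^{(r-1)} \in \c^{k+1}(J)[\imag]$, so the composition on the right side above is of class $\c^{\min(n,k+1)} = \c^{k+1}$, being the composition of a $\c^n$-map with $\c^{k+1}$-maps. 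Hence $f^{(r)}\in\c^{k+1}(J)[\imag]$, i.e., $f\in\c^{r+k+1}(J)[\imag]$. Iterating until $k=n$ yields $f\in\c^{n+r}(J)[\imag]$, and since $t_0\in I$ was arbitrary this gives $f\in\c^{n+r}(I)[\imag]$.

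The main (and essentially only) technical point is the Jacobian computation via the Cauchy--Riemann equations — everything else is a routine application of the real implicit function theorem and a bootstrap on regularity. I do not expect any serious obstacle, provided one is careful about shrinking neighborhoods so the identity $f^{(r)} = \psi(t, f,\dots, f^{(r-1)})$ holds locally before iterating.
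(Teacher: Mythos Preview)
Your proof is correct and follows essentially the same route as the paper's: both use the Cauchy--Riemann equations to verify that the real Jacobian of $(\Re\Phi,\Im\Phi)$ in $(x_r,y_r)$ has determinant $|\partial\Phi/\partial z_r|^2\neq 0$, apply the $\c^n$ implicit function theorem to express $f^{(r)}$ locally as a $\c^n$ function of $t,f,\dots,f^{(r-1)}$, and then bootstrap regularity. The only cosmetic difference is that the paper splits into real and imaginary parts $g,h$ and writes the implicit function as a pair $(G,H)$, whereas you keep the complex notation $\psi$; your bootstrap argument is also written out more explicitly than the paper's ``follows easily''.
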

\begin{proof}
Set $A:= \Re \Phi,\ B:= \Im \Phi$ and $g:= \Re f,\ h:= \Im f$. Then for all $t\in I$, 
\begin{align*} A\big(t, g(t), h(t), g'(t),h'(t)\dots, g^{(r)}(t), h^{(r)}(t)\big)\ &=\ 0\\
B\big(t, g(t), h(t), g'(t),h'(t)\dots, g^{(r)}(t), h^{(r)}(t)\big)\ &=\ 0.
\end{align*}
Consider the $\c^n$-map $(A,B)\colon U \to \R^2$, with $U$ identified in the usual way with an open subset of $\R^{1+2(1+r)}$. The Cauchy-Riemann equations give
$$\frac{\partial \Phi}{\partial z_r}\ =\ \frac{\partial A}{\partial x_r}+ \imag\frac{\partial B}{\partial x_r}, \qquad \frac{\partial A}{\partial x_r}\ =\ \frac{\partial B}{\partial y_r}, \qquad  \frac{\partial B}{\partial x_r}\ =\ -\frac{\partial A}{\partial y_r}.$$
Thus the Jacobian matrix of the map $(A,B)$ with respect to its last two variables~$x_r$ and $y_r$ has determinant
$$ D\ =\ \left(\frac{\partial A}{\partial x_r}\right)^2 +  \left(\frac{\partial B}{\partial x_r}\right)^2\ =\ \left|\frac{\partial \Phi}{\partial z_r}\right|^2\ :\ U\to \R.$$
Let $t_0\in I$. Then $$D\big(t_0, g(t_0), h(t_0),\dots, g^{(r)}(t_0), h^{(r)}(t_0)\big)\ne 0,$$ so by the Implicit Mapping Theorem \cite[(10.2.2), (10.2.3)]{Dieudonne} we have a connected open neighborhood~$V$ of the point
$$\big(t_0, g(t_0),h(t_0),\dots, g^{(r-1)}(t_0), h^{(r-1)}(t_0)\big)\in \R^{1+2r},$$ 
open intervals $J,K\subseteq\R$ containing $g^{(r)}(t_0)$, $h^{(r)}(t_0)$, respectively, and a $\c^n$-map
$$(G,H)\colon V \to J\times K$$
such that $V\times J\times K\subseteq U$ and the zero set of $\Phi$ on $V\times J\times K$ equals the graph of~$(G,H)$.
Take an open subinterval $I_0$ of $I$ with $t_0\in I_0$ such that for all $t\in I_0$, 
$$ \big(t, g(t), h(t),g'(t), h'(t),\dots, g^{(r-1)}(t), h^{(r-1)}(t), g^{(r)}(t), h^{(r)}(t)\big)\in V\times J\times K.$$
Then the above gives that for all $t\in I_0$ we have
\begin{align*} 
g^{(r)}(t)\ =\ G&\big(t, g(t), h(t),g'(t), h'(t),\dots, g^{(r-1)}(t), h^{(r-1)}(t)\big),\\
h^{(r)}(t)\ =\ H&\big(t, g(t), h(t),g'(t), h'(t),\dots, g^{(r-1)}(t), h^{(r-1)}(t)\big).
\end{align*} 
It follows easily from these two equalities that $g,h$ are of class
$\c^{n+r}$ on $I_0$.
\end{proof}

\noindent
Let $f$ continue to be as in Lemma~\ref{lem:hardysmooth, complex}.  If $\Re\Phi$, $\Im\Phi$ are $\Ginf$, then by taking $n$ arbitrarily high we conclude that $f\in \Ginf(I)[\imag]$. Moreover:

\begin{cor}\label{cor:hardysmooth, complex}
If  $\Re\Phi$, $\Im\Phi$ are  real-analytic, then~$f\in \Gom(I)[\imag]$. 
\end{cor}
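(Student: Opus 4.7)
The plan is to repeat the argument of Lemma~\ref{lem:hardysmooth, complex} but with the smooth Implicit Mapping Theorem replaced by its real-analytic counterpart, and then use uniqueness for analytic ODEs to upgrade the already-available $\Ginf$-regularity to real-analyticity.

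First, fix $t_0\in I$ and set $g:=\Re f$, $h:=\Im f$, $A:=\Re\Phi$, $B:=\Im\Phi$, exactly as in the proof of Lemma~\ref{lem:hardysmooth, complex}. By hypothesis $A,B\colon U\to\R$ are real-analytic, and by the Cauchy--Riemann computation already carried out in that proof the Jacobian of $(A,B)$ with respect to $(x_r,y_r)$ equals $D=|\partial\Phi/\partial z_r|^2$, which is nonzero at
$\big(t_0,g(t_0),h(t_0),\dots,g^{(r)}(t_0),h^{(r)}(t_0)\big)$.
The real-analytic Implicit Mapping Theorem (see, e.g., \cite[Chapter~2]{KP}) then furnishes an open neighborhood $V\subseteq\R^{1+2r}$ of $\big(t_0,g(t_0),h(t_0),\dots,g^{(r-1)}(t_0),h^{(r-1)}(t_0)\big)$, open intervals $J,K\ni g^{(r)}(t_0),h^{(r)}(t_0)$ with $V\times J\times K\subseteq U$, and \emph{real-analytic} functions $G,H\colon V\to\R$ whose graph is the zero set of $\Phi$ on $V\times J\times K$.

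Next, I would choose an open subinterval $I_0\subseteq I$ with $t_0\in I_0$ small enough that $\big(t,g(t),h(t),\dots,g^{(r)}(t),h^{(r)}(t)\big)\in V\times J\times K$ for all $t\in I_0$; by Lemma~\ref{lem:hardysmooth, complex} (applied for each $n$, or its remark for $\Ginf$) we already know $g,h\in\Ginf(I)$, so on $I_0$ the pair $(g,h)$ satisfies the $r$-th order system
\begin{align*}
g^{(r)}(t)\ &=\ G\big(t,g(t),h(t),\dots,g^{(r-1)}(t),h^{(r-1)}(t)\big),\\
h^{(r)}(t)\ &=\ H\big(t,g(t),h(t),\dots,g^{(r-1)}(t),h^{(r-1)}(t)\big).
\end{align*}
Rewriting this as a first-order system in the $2r$ unknowns $(g,h,g',h',\dots,g^{(r-1)},h^{(r-1)})$ gives a system $\mathbf{y}'=F(t,\mathbf{y})$ with $F$ real-analytic on an open neighborhood of the graph $t\mapsto\big(t,g(t),h(t),\dots\big)$ in $I_0\times\R^{2r}$.

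Finally, apply the Cauchy existence and uniqueness theorem for real-analytic ordinary differential equations \cite[Theorem~4.1]{CL}: with the initial data at $t_0$ given by $\big(g(t_0),h(t_0),\dots,g^{(r-1)}(t_0),h^{(r-1)}(t_0)\big)$, this theorem produces a real-analytic solution $\tilde{\mathbf{y}}$ on some open interval about $t_0$, and the usual (smooth) uniqueness clause forces $\tilde{\mathbf{y}}$ to coincide with the already-known $\Ginf$-solution $(g,h,g',h',\dots)$ on that interval. Hence $g,h$ are real-analytic on a neighborhood of $t_0$, so $f=g+h\imag\in\Gom$ near $t_0$. As $t_0\in I$ was arbitrary, $f\in\Gom(I)[\imag]$, as required.

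The only substantive step is invoking the real-analytic implicit function theorem and the analytic ODE existence/uniqueness theorem; both are classical, and the smooth work has already been done in Lemma~\ref{lem:hardysmooth, complex}, so there is no serious obstacle beyond citing the right classical results.
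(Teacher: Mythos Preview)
Your proposal is correct and follows essentially the same route as the paper: replace the smooth implicit function theorem by its real-analytic version to make $G,H$ real-analytic, and then use analyticity of solutions of analytic ODEs (the paper cites Dieudonn\'e (10.2.4) and (10.5.3) for these two steps). Your explicit detour through the $\Ginf$ case is unnecessary, since $f\in\c^r(I)[\imag]$ already suffices for the uniqueness comparison, but it does no harm.
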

\begin{proof}
Same as that of Lemma~\ref{lem:hardysmooth, complex}, with  the reference to~\cite[(10.2.3)]{Dieudonne} replaced  by~\cite[(10.2.4)]{Dieudonne} to obtain that $G$, $H$ are real-analytic, and  noting that then the last displayed relations for $t\in I_0$ force $g$, $h$ to be real-analytic on $I_0$ by~\cite[(10.5.3)]{Dieudonne}.
\end{proof}

\begin{lemma}\label{smo} 
Let $I\subseteq\R$ be a nonempty open interval, $n\ge 1$, 
and $$P\  =\ P\big(Y,\dots,Y^{(r)}\big)\ \in\ \Gn(I)[\imag]\big[Y,\dots,Y^{(r)}\big].$$ 
Let $f\in \Gr(I)[\imag]$ be such that 
$$P\big(f,\dots,f^{(r)}\big) = 0\in \c(I)[\imag]\quad\text{ and }\quad
(\partial P/\partial Y^{(r)})\big(f, \dots, f^{(r)}\big) \in \c(I)[\imag]^\times.$$ 
Then $f \in \c^{n+r}(I)[\imag]$. 
Moreover, if $P\in\Ginf(I)[\imag]\big[Y,\dots,Y^{(r)}\big]$, then $f\in \Ginf(I)[\imag]$, and likewise with
$\Gom(I)[\imag]$ in place of $\Ginf(I)[\imag]$.
\end{lemma}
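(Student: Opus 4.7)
The plan is to reduce Lemma~\ref{smo} to Lemma~\ref{lem:hardysmooth, complex} and Corollary~\ref{cor:hardysmooth, complex} by packaging the ``differential'' polynomial $P$ as a function $\Phi$ of $t \in I$ and complex arguments $z_0,\dots,z_r$ that is $\c^n$ in all real variables and holomorphic in $z_r$. The trivial observation behind the reduction is that $P$ is already a genuine polynomial in $Y, Y', \dots, Y^{(r)}$, so the complex dependence in any of the $z_j$'s is automatically holomorphic; only the $t$-dependence carries the finite smoothness hypothesis.

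Concretely, writing $P = \sum_{\i} P_{\i} Y^{\i}$ with $\i$ ranging over a finite subset of $\N^{1+r}$ and $P_{\i} \in \Gn(I)[\imag]$, I would define $\Phi \colon U \to \C$ on $U := I \times \C^{1+r}$ by
$$\Phi(t, z_0, \dots, z_r)\ :=\ \sum_{\i} P_{\i}(t)\, z_0^{i_0} z_1^{i_1} \cdots z_r^{i_r}.$$
Setting $z_j = x_j + y_j\imag$, each monomial $z_0^{i_0}\cdots z_r^{i_r}$ is a polynomial (in particular real-analytic) in the $2(1+r)$ real variables $x_0,y_0,\dots,x_r,y_r$, while the $P_{\i}$ are $\c^n$ in $t$; hence $\Re\Phi$ and $\Im\Phi$ are of class $\c^n$ on $U$. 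For fixed $(t, z_0, \dots, z_{r-1})$ the function $z_r \mapsto \Phi(t, z_0, \dots, z_r)$ is a polynomial in $z_r$, hence entire. The hypothesis $P(f,\dots,f^{(r)}) = 0$ reads $\Phi(t, f(t), \dots, f^{(r)}(t)) = 0$ for every $t \in I$, and since $\partial\Phi/\partial z_r$ is the polynomial $\partial P/\partial Y^{(r)} = S_P$ evaluated on $(z_0,\dots,z_r)$ with coefficients at $t$, the unit assumption on $S_P(f,\dots,f^{(r)})$ translates to $(\partial\Phi/\partial z_r)\big(t, f(t), \dots, f^{(r)}(t)\big) \ne 0$ for every $t \in I$. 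Lemma~\ref{lem:hardysmooth, complex} then delivers $f \in \c^{n+r}(I)[\imag]$.

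For the $\Ginf$-clause the same argument applies for every $n \in \N^{\ge 1}$: if each $P_{\i}$ lies in $\Ginf(I)[\imag]$, then $f \in \c^{n+r}(I)[\imag]$ for all $n$, whence $f \in \Ginf(I)[\imag]$. For the $\Gom$-clause, if the coefficients $P_{\i}$ are real-analytic in $t$, then $\Re\Phi$ and $\Im\Phi$ are real-analytic on $U$ (as sums of products of real-analytic functions in the $2(1+r)+1$ real variables), so Corollary~\ref{cor:hardysmooth, complex} yields $f \in \Gom(I)[\imag]$. There is no real obstacle in this plan: the analytic heavy lifting sits in Lemma~\ref{lem:hardysmooth, complex} (an Implicit Mapping argument via the Cauchy-Riemann equations) and its analytic variant, and the only thing to verify is that a ``differential'' polynomial with smooth or analytic coefficients fits their hypotheses, which it does essentially by definition.
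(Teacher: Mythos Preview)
Your proposal is correct and follows essentially the same approach as the paper's proof: write $P=\sum_{\i}P_{\i}Y^{\i}$, set $U:=I\times\C^{1+r}$, define $\Phi(t,z_0,\dots,z_r):=\sum_{\i}P_{\i}(t)z^{\i}$, and apply Lemma~\ref{lem:hardysmooth, complex} (and Corollary~\ref{cor:hardysmooth, complex} for the analytic case). If anything, you spell out the verification of the hypotheses more explicitly than the paper does.
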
 
\begin{proof}
Let $P=\sum_{\i} P_{\i} Y^{\i}$ where all $P_{\i}\in \Gn(I)[\imag]$. Set
$U:=I\times\C^{1+r}$, and consider the map $\Phi\colon U\to\C$ given by 
$$\Phi(t,z_0,\dots,z_r):=\sum_{\i} P_{\i}(t) z^{\i}\quad\text{where $z^{\i}:=z_0^{i_0}\cdots z_r^{i_r}$ for $\i=(i_0,\dots,i_r)\in\N^{1+r}$.}$$
From Lemma~\ref{lem:hardysmooth, complex}  we obtain  $f\in\c^{n+r}(I)[\imag]$. In view of Corollary~\ref{cor:hardysmooth, complex} and the remark preceding it,  and replacing~$n$ by~$\infty$ respectively $\omega$, this argument also gives the second part of the lemma. 
\end{proof}

\noindent
Proposition~\ref{hardysmooth} follows from Lemma~\ref{smo} by taking suitable representatives of the germs involved. 
Let now $H$ be a Hardy field and $P\in H[\imag]\{Y\}$ of order~$r$. Then~$P\in \Gi[\imag]\big[Y,\dots,Y^{(r)}\big]$, and so $P(f):=P\big(f,\dots,f^{(r)}\big)\in\c[\imag]$ for $f\in\Gr[\imag]$
 as explained in the beginning of this section.


\medskip\noindent
For notational convenience we set
$$\c^n[\imag]^{\preceq} := \big\{ f\in \c^n[\imag]: f,f',\dots,f^{(n)}\preceq 1\big\},\qquad (\c^n)^{\preceq}:=\c^n[\imag]^{\preceq}\cap\c^n,$$
and likewise with $\prec$ instead of $\preceq$.
Then $\c^n[\imag]^{\preceq}$ is a $\C$-subalgebra of $\c^n[\imag]$ and $(\c^n)^{\preceq}$ is
an $\R$-subalgebra of $\c^n$. Also, $\c^n[\imag]^{\prec}$ is an ideal of $\c^n[\imag]^{\preceq}$,
and likewise with~$\c^n$ instead of $\c^n[\imag]$.
We have $\c^n[\imag]^{\preceq}\supseteq \c^{n+1}[\imag]^{\preceq}$ and 
$(\c^n)^{\preceq}\supseteq (\c^{n+1})^{\preceq}$, and likewise with $\prec$ instead of $\preceq$.
For $R\in \c^n[\imag]\big[Y,\dots,Y^{(r)}\big]$  we put $R_{\ge 1}:=R-R(0)$.

\begin{cor}\label{cor:ADE smooth}
Suppose  
$$P=Y^{(r)} + f_1Y^{(r-1)}+ \cdots + f_rY - R\quad\text{ with $f_1,\dots,f_r$ in~$H[\imag]$ and $R_{\ge 1}\prec 1$.}$$  
Let $f \in \mathcal{C}^r[\imag]^{\preceq}$ be such that $P(f) = 0$. Then $f \in \Gi[\imag]$. Moreover, if~$H\subseteq
\Ginf$, then $f\in \Ginf[\imag]$, and if~$H\subseteq \Gom$, then~$f\in\Gom[\imag]$.
\end{cor}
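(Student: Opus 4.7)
The plan is to apply Proposition~\ref{hardysmooth} directly to the given $P$ and $f$. The coefficients of $P$ all lie in $H[\imag]\subseteq\Gi[\imag]$, and by hypothesis $P(f)=0$ in $\c[\imag]$. So the only non-trivial thing to check is that the separant of $P$, evaluated at $(f,f',\dots,f^{(r)})$, is a unit of $\c[\imag]$.

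From the shape of $P$ we compute
$$S_P\ =\ \frac{\partial P}{\partial Y^{(r)}}\ =\ 1-\frac{\partial R}{\partial Y^{(r)}}.$$
The key observation is that every monomial appearing in $\partial R/\partial Y^{(r)}$ arises, up to an integer factor, from a monomial of $R$ that actually involves $Y^{(r)}$; in particular every such monomial lies in $R_{\ge 1}$. Interpreting $R_{\ge 1}\prec 1$ coefficientwise (as is done elsewhere in the paper via the gaussian extension of $v$), this forces every coefficient of $\partial R/\partial Y^{(r)}$ to be $\prec 1$ as well. Combined with $f,f',\dots,f^{(r)}\preceq 1$ and the fact that $\partial R/\partial Y^{(r)}$ is a finite sum of monomials, we conclude that
$$\frac{\partial R}{\partial Y^{(r)}}(f,f',\dots,f^{(r)})\ \prec\ 1,$$
so $S_P(f,f',\dots,f^{(r)})\sim 1$, in particular it is eventually nonzero and thus a unit in $\c[\imag]$.

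With both hypotheses of Proposition~\ref{hardysmooth} verified, that proposition yields $f\in\Gi[\imag]$. For the two enhancements: if $H\subseteq\Ginf$ then $H[\imag]\subseteq\Ginf[\imag]$, so $P\in\Ginf[\imag]\bigl[Y,\dots,Y^{(r)}\bigr]$, and the ``moreover'' clause of Proposition~\ref{hardysmooth} gives $f\in\Ginf[\imag]$; the case $H\subseteq\Gom$ is handled identically with $\Gom$ in place of $\Ginf$. The only mild subtlety is the coefficientwise reading of the relation $R_{\ge 1}\prec 1$ on differential polynomials, which matches the gaussian convention used throughout the earlier sections, so no further argument is needed.
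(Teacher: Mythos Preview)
Your proof is correct and follows essentially the same approach as the paper's: compute $S_P=1-\partial R_{\ge 1}/\partial Y^{(r)}$, use $R_{\ge 1}\prec 1$ together with $f,f',\dots,f^{(r)}\preceq 1$ to conclude that $S_P(f,\dots,f^{(r)})\sim 1$ is a unit in $\c[\imag]$, and then invoke Proposition~\ref{hardysmooth}. Your extra remark that $\partial R/\partial Y^{(r)}=\partial R_{\ge 1}/\partial Y^{(r)}$ makes explicit what the paper writes directly.
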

\begin{proof} We have $S_P=\frac{\partial P}{\partial Y^{(r)}}=1-S$ with $S:=\frac{\partial R_{\ge 1}}{\partial Y^{(r)}}\prec 1$
and thus $$S_P(f,\dots,f^{(r)})\ =\ 1-S(f,\dots,f^{(r)}), \qquad S(f,\dots,f^{(r)})\prec 1,$$   so
$S_P(f,\dots,f^{(r)})\in\c[\imag]^\times$.  Now appeal to Proposition~\ref{hardysmooth}.
\end{proof}

\noindent
Thus the germ of any solution on $[a,\infty)$ of the asymptotic equation \eqref{eq:ADE} of Section~\ref{sec:split-normal over Hardy fields} lies in $\Gi[\imag]$, and even in $\Ginf[\imag]$ (respectively $\Gom[\imag]$) if $H$ is in addition a
$\Ginf$-Hardy field (respectively, a $\Gom$-Hardy field).  

\medskip
\noindent
For the differential subfield $K:=H[\imag]$ of the differential ring~$\Calinf[\imag]$ we have: 

\begin{cor} 
Suppose $f\in\Calinf[\imag]$, $P(f)=0$, and $f$ generates a differential subfield $K\langle f\rangle$ of $\Calinf[\imag]$ over $K$.
If $H$ is a $\Ginf$-Hardy field, then~${K\langle f\rangle\subseteq\Ginf[\imag]}$, and likewise with
$\Gom$ in place of $\Ginf$.
\end{cor}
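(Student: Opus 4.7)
The plan is to reduce to Proposition~\ref{hardysmooth} after replacing $P$ by a minimal annihilator of $f$ over $K$, and then to propagate the conclusion from $f$ itself to the entire differential subfield $K\langle f\rangle$.

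First I would exploit the hypothesis that $K\langle f\rangle$ is a \emph{field}. Since $P(f)=0$ forces $f$ to be $d$-algebraic over $K$, there exists a minimal annihilator $Q\in K\{Y\}$ of $f$, of some order $s\geq 0$; as $Q\in H[\imag]\{Y,\dots,Y^{(s)}\}$, the coefficients of $Q$ lie in $\Ginf[\imag]$ when $H\subseteq\Ginf$ (and in $\Gom[\imag]$ when $H\subseteq\Gom$). By minimality of $Q$, the separant $S_Q$ has strictly smaller complexity than $Q$, hence $S_Q(f)\neq 0$ as an element of $K\langle f\rangle$. But $K\langle f\rangle$ is a subfield of $\Calinf[\imag]=\Gi[\imag]$, so this nonzero element is invertible inside $\Gi[\imag]$, which just means its germ is a unit of $\c[\imag]$: we obtain $S_Q(f)\in\c[\imag]^\times$.

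With this in hand Proposition~\ref{hardysmooth} applies directly to $Q$ in place of $P$: since $f\in\c^s[\imag]$, $Q(f)=0$, $S_Q(f)\in\c[\imag]^\times$, and $Q\in\Ginf[\imag]\{Y,\dots,Y^{(s)}\}$, the proposition yields $f\in\Ginf[\imag]$; the analogous argument with $\Gom$ in place of $\Ginf$ delivers $f\in\Gom[\imag]$ under the corresponding hypothesis on $H$. To finish, I will argue that $K\langle f\rangle\subseteq\Ginf[\imag]$: since $\Ginf[\imag]$ is a differential $\C$-algebra containing both $K$ (by $H\subseteq\Ginf$) and $f$, it contains every expression $R(f,f',f'',\dots)$ with $R\in K\{Y\}$. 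An arbitrary element of $K\langle f\rangle$ has the form $R_1(f,\dots)/R_2(f,\dots)$ with $R_1,R_2\in K\{Y\}$ and $R_2(f,\dots)\neq 0$ in $K\langle f\rangle$; the same field-theoretic argument as above shows $R_2(f,\dots)\in\c[\imag]^\times$, and the reciprocal of an eventually nonzero $\Ginf$-germ is again a $\Ginf$-germ, whence the quotient lies in $\Ginf[\imag]$. The $\Gom$-case is identical, using that the reciprocal of an eventually nonzero real-analytic germ is again real-analytic.

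There is no substantial obstacle beyond verifying the invertibility $S_Q(f)\in\c[\imag]^\times$, which is essentially built into the hypothesis that $K\langle f\rangle$ is a field inside $\Calinf[\imag]$; everything else is a direct application of Proposition~\ref{hardysmooth} together with the elementary closure of $\Ginf[\imag]$ and $\Gom[\imag]$ under reciprocation of eventually nonvanishing germs.
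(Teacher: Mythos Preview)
Your proof is correct and follows essentially the same approach as the paper: replace $P$ by a minimal annihilator, use that $K\langle f\rangle$ is a field to obtain $S_Q(f)\in\c[\imag]^\times$, and apply Proposition~\ref{hardysmooth}. The paper's proof simply states that it suffices to show $f\in\Ginf[\imag]$ (leaving the passage to all of $K\langle f\rangle$ implicit), whereas you spell out that step explicitly; otherwise the arguments coincide.
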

\begin{proof}
Suppose $H$ is a $\Ginf$-Hardy field; it suffices to show $f\in\Ginf[\imag]$.
We may assume that $P$ is a minimal annihilator of $f$ over $K$; then $S_P(f)\neq 0$ in $K\langle f\rangle$
and so $S_P(f)\in\c[\imag]^\times$. Hence the claim follows from Proposition~\ref{hardysmooth}.
\end{proof}

\noindent
With $H$ replacing $K$ in this proof we obtain the  ``real'' version:

\begin{cor} \label{realginfgom} 
Suppose $f\in\Calinf$ is hardian over $H$ and $P(f)=0$ for some $P$ in~$H\{Y\}^{\ne}$.   
Then $H\subseteq \Ginf\ \Rightarrow\ f\in \Ginf$, and  $H\subseteq \Gom\ \Rightarrow\ f\in \Gom$.
\end{cor}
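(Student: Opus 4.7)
The plan is to mimic the argument for the preceding ``complex'' corollary, replacing the ambient differential ring $K\langle f\rangle\subseteq\Calinf[\imag]$ by the Hardy field generated by $H$ and $f$. By hypothesis $f$ is hardian over $H$, so fix a Hardy field $E$ with $H\subseteq E$ and $f\in E$; then $H\langle f\rangle$ is a differential subfield of $E$, hence itself a Hardy field, and in particular every nonzero element of $H\langle f\rangle$ represents a unit of $\c$.

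Next I would choose $P\in H\{Y\}^{\neq}$ of minimal complexity with $P(f)=0$, and set $r:=\operatorname{order}(P)$. The separant $S_P\in H\{Y\}$ has complexity strictly smaller than that of $P$, so by minimality $S_P(f)\neq 0$ in $H\langle f\rangle$; thus $S_P(f)\in\c^\times$ by the observation above. Since $f\in\Calinf\subseteq\c^r$, the hypotheses of Proposition~\ref{hardysmooth} are in place: $P(f,\dots,f^{(r)})=0$ and $S_P(f,\dots,f^{(r)})\in\c^\times$.

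Under the assumption $H\subseteq\Ginf$ we have $P\in\Ginf[Y,\dots,Y^{(r)}]$, so Proposition~\ref{hardysmooth} yields $f\in\Ginf$. Under the stronger assumption $H\subseteq\Gom$ we have $P\in\Gom[Y,\dots,Y^{(r)}]$, and the analytic half of Proposition~\ref{hardysmooth} gives $f\in\Gom$. This proves both implications.

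No step of this argument looks genuinely difficult; the only thing to verify carefully is the non-vanishing of $S_P(f)$, and this is the standard minimal-complexity trick applied inside the Hardy field $H\langle f\rangle$, so the work is really already done by Proposition~\ref{hardysmooth} and the fact that a nonzero element of a Hardy field is a unit of~$\c$.
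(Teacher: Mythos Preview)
Your proof is correct and is exactly the argument the paper intends: the paper states the corollary with the one-line justification ``With $H$ replacing $K$ in this proof,'' referring to the preceding corollary, and your write-up spells out precisely that substitution---take a minimal annihilator of $f$ over $H$ inside the Hardy field $H\langle f\rangle$, observe $S_P(f)\neq 0$ there, hence $S_P(f)\in\c^\times$, and apply Proposition~\ref{hardysmooth}.
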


\noindent
This leads to: 

\begin{cor}\label{cor:Hardy field ext smooth}
Suppose $H$ is a $\Ginf$-Hardy field. Then every $\d$-algebraic Hardy field extension of $H$ is a $\Ginf$-Hardy field; in particular, $\Dx(H)\subseteq\Ginf$. Likewise with~$\Ginf$ replaced by $\Gom$. 
\end{cor}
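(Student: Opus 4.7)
The plan is to deduce this immediately from Corollary \ref{realginfgom}. Let $H' \supseteq H$ be any $\d$-algebraic Hardy field extension; I want to show $H' \subseteq \Ginf$. So I take an arbitrary germ $f \in H'$ and show that $f \in \Ginf$.

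By hypothesis $f$ lies in the Hardy field $H'$, which is a Hardy field extension of $H$, hence $f$ is hardian over $H$. Since $H'$ is $\d$-algebraic over $H$, the germ $f$ is $\d$-algebraic over $H$, and therefore $P(f)=0$ for some $P \in H\{Y\}^{\neq}$. The hypotheses of Corollary \ref{realginfgom} are thus satisfied (with the assumption $H \subseteq \Ginf$), so $f \in \Ginf$. As $f \in H'$ was arbitrary, $H' \subseteq \Ginf$, which is what we wanted.

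For the consequence about the $\d$-perfect hull: by definition $\Dx(H)$ is a $\d$-algebraic Hardy field extension of $H$, so applying what was just shown with $H' := \Dx(H)$ gives $\Dx(H) \subseteq \Ginf$. The analytic version with $\Gom$ in place of $\Ginf$ is identical, invoking the second implication of Corollary \ref{realginfgom} instead of the first. There is no real obstacle here; the corollary is simply a packaging of Corollary \ref{realginfgom} applied elementwise to an arbitrary $\d$-algebraic Hardy field extension.
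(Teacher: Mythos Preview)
Your proof is correct and is precisely the argument the paper has in mind: the corollary is stated immediately after Corollary~\ref{realginfgom} with no proof, as an elementwise application of that result to a $\d$-algebraic Hardy field extension.
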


\noindent
In particular, $\Dx(\Q)\subseteq\Gom$~\cite[Theorems~14.3, 14.9]{Boshernitzan82}.
Let now $H$ be a $\Ginf$-Hardy field.  Then by Corollary~\ref{cor:Hardy field ext smooth},
$H$ is $\d$-maximal iff~$H$ has no proper $\d$-algebraic $\Ginf$-Hardy field extension; thus  every $\Ginf$-maximal Hardy field is $\d$-maximal, and $H$ has a $\d$-maximal $\d$-algebraic $\Ginf$-Hardy field extension.   The same remarks apply with $\omega$ in place of $\infty$.

\section{Application to Filling Holes in Hardy Fields}\label{secfhhf}

 \noindent
This section combines the analytic material above with results from~\cite{ADH4}, some of it summarized in
Lemma~\ref{lem:achieve strong splitting}.  {\em Throughout
$H$ is a Hardy field with $H\not\subseteq\R$, and ${r\in \N^{\ge 1}}$}.
Thus~$K:=H[\imag]\subseteq \Calinf[\imag]$ is an $H$-asymptotic extension of $H$.
(Later we impose extra assumptions on~$H$ like being real closed with asymptotic integration.) 
Note that~${v(H^\times)\ne \{0\}}$: take~$f\in H\setminus \R$; then $f'\ne 0$, and if $f\asymp 1$, then~$f'\prec 1$. 

\subsection*{Evaluating differential polynomials at germs} 
Any $Q\in K\{Y\}$ of order $\le r$ can be evaluated at any
germ $y\in \Calr[\imag]$ to give a germ $Q(y)\in \mathcal{C}[\imag]$,
with~$Q(y)\in \mathcal{C}$ for~$Q\in H\{Y\}$ of order $\le r$ and $y\in \Calr$. 
(See the beginning of Section~\ref{sec:smoothness}.)
Here is a variant that we shall need. Let~$\phi\in H^\times$; with~$\der$ denoting the derivation
of $K$, the derivation
of the differential field~$K^\phi$ is then~$\derdelta:= \phi^{-1}\der$. We also let $\derdelta$ denote its ex\-ten\-sion~$f\mapsto \phi^{-1}f'\colon \mathcal{C}^1[\imag] \to \mathcal{C}[\imag]$, which maps $\mathcal{C}^{n+1}[\imag]$ into $\c^n[\imag]$ and
$\c^{n+1}$ into~$\c^n$, for all~$n$. Thus for 
$j\le r$ we have the maps  
$$ \mathcal{C}^r[\imag]\ \xrightarrow{\ \  \derdelta\ \ }\ \mathcal{C}^{r-1}[\imag]\ \xrightarrow{\ \  \derdelta\ \ }\ \cdots\ \xrightarrow{\ \  \derdelta\ \ }\ \mathcal{C}^{r-j+1}[\imag]\ \xrightarrow{\ \  \derdelta\ \ }\  \mathcal{C}^{r-j}[\imag],$$
which by composition yield $\derdelta^j\colon \mathcal{C}^r[\imag]\to \mathcal{C}^{r-j}[\imag]$, mapping $\Calr$ into $\mathcal{C}^{r-j}$. This allows us to define for
$Q\in K^\phi\{Y\}$ of order $\le r$ and $y\in \mathcal{C}^r[\imag]$
the germ $Q(y)\in \mathcal{C}[\imag]$ by
$$Q(y) :=q\big(y, \derdelta(y),\dots, \derdelta^r(y)\big)\quad\text{ where $Q=q\big(Y,\dots, Y^{(r)})\in K^\phi\big[Y,\dots, Y^{(r)}\big]$.}$$ Note that  $H^\phi$ is a  differential subfield of $K^\phi$, and if $Q\in H^\phi\{Y\}$ is of order $\le r$ and~$y\in \Calr$, then $Q(y)\in \mathcal{C}$.

\begin{lemma}\label{lem:small derivatives of y}
Let $y\in\c^r[\imag]$ and $\fm\in K^\times$. Each of the following conditions implies~$y\in \c^r[\imag]^{\preceq}$:
\begin{enumerate}
\item[\textup{(i)}]   $\phi\preceq 1$ and 
$\derdelta^{0}(y),\dots,\derdelta^{r}(y)\preceq 1$; 
\item[\textup{(ii)}] $\fm\preceq 1$ and $y\in \fm\,\c^r[\imag]^{\preceq}$.
\end{enumerate}
Moreover, if $\fm\preceq 1$ and $(y/\fm)^{(0)},\dots,(y/\fm)^{(r)}\prec 1$, then  $y^{(0)},\dots,y^{(r)}\prec 1$.
\end{lemma}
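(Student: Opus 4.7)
The plan is as follows. Both assertions rest on the standard small derivation property of Hardy fields: for any $f$ in $H$ (or in $K=H[\imag]$) with $f\preceq 1$, iterated application of $f\preceq 1\Rightarrow f'\prec 1$ yields $f^{(i)}\preceq 1$ for all $i$ (cf.\ \cite[Corollary~1.1.12]{ADH4}). Applied to $\phi$ and $\fm$, this provides uniform bounds on the coefficients that will appear in the expansions below. Throughout one may work on a right half-line on which $\phi$ and $\fm$ are represented by nowhere-vanishing continuous functions, so that the germ $y/\fm$ below lies in $\c^r[\imag]$.

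For (i), I would expand $\der^j$ in terms of $\derdelta^0,\dots,\derdelta^j$ using the identity $\der=\phi\,\derdelta$. A direct induction on $j$ produces integer-coefficient polynomials $P_{j,k}$ in $\phi,\phi',\phi'',\dots$, with $P_{0,0}=1$ and the recurrence
\[
P_{j+1,k}\ =\ P_{j,k}'\ +\ \phi\, P_{j,k-1}\qquad (\text{with } P_{j,-1}:=0),
\]
such that $y^{(j)}=\sum_{k=0}^{j}P_{j,k}(\phi,\phi',\dots)\,\derdelta^k(y)$ for every $y\in\c^j[\imag]$. From $\phi\preceq 1$ and small derivation we get $\phi^{(i)}\preceq 1$ for all $i$, and hence $P_{j,k}(\phi,\phi',\dots)\preceq 1$ for all $j\le r$ and $k\le j$. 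Combined with the hypothesis $\derdelta^k(y)\preceq 1$ for $k=0,\dots,r$, this gives $y^{(j)}\preceq 1$ for $j=0,\dots,r$, proving $y\in\c^r[\imag]^{\preceq}$.

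For (ii), write $y=\fm z$ with $z\in\c^r[\imag]^{\preceq}$. The Leibniz rule yields
\[
y^{(j)}\ =\ \sum_{k=0}^{j}\binom{j}{k}\fm^{(j-k)}\, z^{(k)}\qquad (j=0,\dots,r).
\]
Small derivation applied to $\fm\preceq 1$ gives $\fm^{(i)}\preceq 1$ for all $i$, and $z^{(k)}\preceq 1$ by hypothesis, so each summand, and hence $y^{(j)}$, is $\preceq 1$. For the final ``moreover'' clause, set $z:=y/\fm\in\c^r[\imag]$ and reuse the same Leibniz expansion: the upgraded hypothesis $z^{(k)}\prec 1$ for $k=0,\dots,r$ makes every term $\fm^{(j-k)}z^{(k)}\prec 1$, whence $y^{(j)}\prec 1$ for $j=0,\dots,r$.

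No significant obstacle is anticipated: the only genuine input is the small-derivation fact for Hardy fields, and the rest is bookkeeping via the Leibniz rule together with the routine induction carried out in~(i).
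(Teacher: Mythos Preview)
Your proposal is correct and follows essentially the same approach as the paper: for (i) the paper invokes the transformation formulas in [ADH, 5.7] expressing iterates of $\der$ in terms of iterates of $\derdelta$ together with smallness of the derivation, which is exactly your explicit induction with the $P_{j,k}$, and for (ii) and the ``moreover'' clause the paper likewise uses the Product Rule and smallness of the derivation of $K$. Your write-up simply makes explicit what the paper's two-line sketch leaves to the cited references.
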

\begin{proof}
For (i), use the smallness of the derivation of $H$ and the transformation formulas in [ADH, 5.7] expressing the iterates of $\der$ in terms of iterates of $\derdelta$. 
For (ii) and the ``moreover'' part, set $y=\fm z$ with
$z=y/\fm$ and use the Product Rule and the smallness of the derivation of $K$.  
\end{proof}

\subsection*{Equations over Hardy fields and over their complexifications}
Let~${\phi>0}$ be active in~$H$.  We recall here from \cite[Section~3]{ADH5} how the
 the asymptotic field~$K^\phi=H[\imag]^\phi$ (with derivation $\derdelta=\phi^{-1}\der$) 
 is isomorphic to the asymptotic field $K^\circ:=H^\circ[\imag]$ for a certain Hardy field $H^\circ$: Let
$\ell\in\mathcal C^1$ be such that $\ell'=\phi$; then~$\ell>\R$, $\ell\in \mathcal{C}^{<\infty}$, and $\ell^{\inv}\in \mathcal{C}^{<\infty}$ for the compositional inverse~$\ell^{\inv}$ of $\ell$.
The $\C$-algebra automorphism~$f\mapsto f^\circ:= f\circ \ell^{\inv}$ of $\mathcal{C}[\imag]$ (with inverse $g\mapsto g\circ\ell$) maps~$\Caln[\imag]$ and~$\Caln$
onto themselves, and hence restricts to a $\C$-algebra automorphism of $\Calinf[\imag]$ and~$\Calinf$ mapping
$\Calinf$ onto itself. 
Moreover,
\begin{equation}\label{eq:derc}\tag{$\der, \circ,\derdelta$} (f^\circ)'\ =\  (\phi^\circ)^{-1}(f')^\circ\ =\ \derdelta(f)^\circ \qquad\text{for $f\in \mathcal{C}^1[\imag]$.}
\end{equation}
Thus we have an isomorphism $f\mapsto f^\circ: (\Calinf[\imag])^\phi\to\Calinf[\imag]$ of differential rings, and likewise with $\Calinf$ in place of $\Calinf[\imag]$.
Then
$$H^\circ\ :=\ \{h^\circ:h\in H \}\  \subseteq\ \Calinf$$ is a Hardy field,
and $f\mapsto f^\circ$ restricts to  an isomorphism $H^\phi \to H^\circ$ of pre-$H$-fields, and to
an isomorphism $K^\phi\to K^\circ$ of asymptotic fields.
We extend the latter to the isomorphism
$$Q\mapsto Q^\circ\ \colon\ K^\phi\{Y\} \to K^\circ\{Y\}$$
of differential rings given by $Y^\circ=Y$, which restricts to a differential ring isomorphism $H^\phi\{Y\}\to H^\circ\{Y\}$.  Using the identity \eqref{eq:derc} it is routine to check that
for~$Q\in K^\phi\{Y\}$ of order $\le r$ and $y\in \Calr[\imag]$, we have
$Q(y)^\circ\  =\  (Q^\circ)(y^\circ)$.
This allows us to translate algebraic differential equations over $K$ 
 into algebraic differential equations over~$K^\circ$: Let $P\in K\{Y\}$ have order $\le r$ and let $y\in \Calr[\imag]$.

\begin{lemma}\label{lem:as equ comp} $P(y)^\circ=P^\phi(y)^\circ=P^{\phi\circ}(y^\circ)$  where $P^{\phi\circ}:=(P^\phi)^\circ\in K^\circ\{Y\}$, hence
$$P(y)=0\ \Longleftrightarrow\ P^{\phi\circ}(y^\circ)=0.$$
\end{lemma}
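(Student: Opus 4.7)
The plan is to split the chain $P(y)^\circ = P^\phi(y)^\circ = P^{\phi\circ}(y^\circ)$ into its two component equalities and then derive the final biconditional from the injectivity of $f\mapsto f^\circ$ on $\c[\imag]$. So the first move is to observe that since $f \mapsto f^\circ$ is a $\C$-algebra isomorphism of $\c[\imag]$ onto itself, each of the two equalities is equivalent to a corresponding identity in $\c[\imag]$ without the $\circ$; in particular the biconditional $P(y)=0 \Leftrightarrow P^{\phi\circ}(y^\circ)=0$ is automatic once the chain of equalities is established.

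For the second equality $P^\phi(y)^\circ = P^{\phi\circ}(y^\circ)$, I would simply invoke the identity $Q(y)^\circ = Q^\circ(y^\circ)$ stated in the paragraph preceding the lemma, applied to $Q := P^\phi \in K^\phi\{Y\}$. This identity is labeled as routine in the text and rests on induction from \eqref{eq:derc}, giving $(\derdelta^j f)^\circ = (f^\circ)^{(j)}$ for $f\in \c^j[\imag]$ and $j \le r$; once this is available, writing $P^\phi = \sum_\i c_\i Y^\i$ and expanding both sides monomial by monomial yields the identity.

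The first equality $P(y)^\circ = P^\phi(y)^\circ$, equivalently $P(y) = P^\phi(y)$ in $\c[\imag]$, is the one requiring the algebraic passage from $\der$ to $\derdelta = \phi^{-1}\der$. By the standard transformation formulas \textup{[ADH, 5.7]}, iterated $\der$-derivatives can be expressed as $K$-polynomial combinations of iterated $\derdelta$-derivatives with coefficients in $\Q\{\phi\}$, and $P^\phi$ is \emph{defined} so that this rewriting applied to $P$ produces $P^\phi$. These are purely algebraic identities which hold in any extension admitting the relevant derivatives; applying them in $\c[\imag]$ with the element $y\in\c^r[\imag]$ substituted for the differential indeterminate gives the desired $P(y) = P^\phi(y)$.

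The only potential obstacle is a bookkeeping one: one must check that the formal identities from \textup{[ADH, 5.7]} remain valid when evaluated in $\c[\imag]$, where the derivations $\der$ and $\derdelta$ lose smoothness degree at each application; but since $y\in\c^r[\imag]$ we have enough regularity to apply up to $r$ iterates of either derivation, and nothing beyond this is needed. In sum, the lemma is essentially a combination of two transparent facts, and no analytic input is required beyond the observations already made preceding its statement.
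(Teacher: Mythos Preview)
Your proposal is correct and matches the paper's approach: the paper states the lemma without proof, having already noted that $Q(y)^\circ = Q^\circ(y^\circ)$ for $Q\in K^\phi\{Y\}$ follows routinely from \eqref{eq:derc}, and implicitly relying on the defining property $P(y)=P^\phi(y)$ of compositional conjugation from [ADH, 5.7]. Your write-up simply makes these two ingredients explicit and checks that the regularity $y\in\c^r[\imag]$ suffices, which is exactly the intended argument.
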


\noindent
Moreover, $y\prec \fm\ \Longleftrightarrow\ y^\circ \prec \fm^\circ$, for $\fm\in K^\times$, so asymptotic side conditions are automatically taken care of under this ``translation''. Also, if $\phi\preceq 1$ and $y^\circ\in\c^r[\imag]^{\preceq}$, then  $y\in\c^r[\imag]^{\preceq}$, by Lemma~\ref{lem:small derivatives of y}(i) and \eqref{eq:derc}.

\medskip\noindent
{\it In the rest of this section $H\supseteq\R$ is real closed with asymptotic integration.}\/
Then $H$ is an $H$-field, and $K=H[\imag]$ is the algebraic closure of $H$, a $\d$-valued field  with small derivation extending $H$,
constant field $\C$, and value group~$\Gamma:=v(K^\times)=v(H^\times)$.

\subsection*{Slots in Hardy fields and compositional conjugation}
{\it In this sub\-sec\-tion we let~${\phi>0}$ be active in $H$; as in the previous subsection we take $\ell\in\mathcal C^1$ such that~$\ell'=\phi$ and use the superscript $\circ$ accordingly: $f^\circ:= f\circ \ell^{\inv}$ for $f\in \c[\imag]$.}\/

Let  $(P,\fm,\hat a)$ be a slot in $K$ of order $r$, so $\hat a\notin K$ is an element of an immediate asymptotic extension $\hat K$ of~$K$ with~$P\in Z(K,\hat a)$ and $\hat a\prec\fm$.  
We associate to~$(P,\fm,\hat a)$ a slot in $K^\circ$ as follows:
choose an immediate asymptotic extension $\hat K^\circ$ of $K^\circ$ and an
isomorphism $\hat f\mapsto \hat f^\circ\colon\hat K^\phi\to \hat K^\circ$ of asymptotic fields
extending the isomorphism $f\mapsto f^\circ\colon K^\phi\to K^\circ$.
Then $(P^{\phi\circ},\fm^\circ,\hat a^\circ)$ is a slot in $K^\circ$ of the same complexity as~$(P,\fm,\hat a)$. The
equivalence class of   $(P^{\phi\circ},\fm^\circ,\hat a^\circ)$ does not depend on the choice of
$\hat K^\circ$ and the isomorphism $\hat K^\phi\to \hat K^\circ$. 
If $(P,\fm,\hat a)$ is a hole in $K$, then~$(P^{\phi\circ},\fm^\circ,\hat a^\circ)$ is a hole in~$K^\circ$, and likewise with ``minimal hole'' in place of ``hole''. By \cite[Lemmas~3.1.20, 3.3.20, 3.3.40]{ADH4} we have:

\begin{lemma} \label{lem:Pphicirc, 1}
If $(P,\fm,\hat a)$ is $Z$-minimal, then so is $(P^{\phi\circ},\fm^\circ,\hat a^\circ)$, and likewise with ``quasilinear'' and ``special'' in place of ``$Z$-minimal''. 
If $(P,\fm,\hat a)$ is steep and~${\phi\preceq 1}$, then $(P^{\phi\circ},\fm^\circ,\hat a^\circ)$ is steep,
and likewise with   ``deep'', ``normal'',  and ``strictly normal'' in place of ``steep''. 
\end{lemma}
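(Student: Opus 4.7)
The plan is to factor the passage $(P,\fm,\hat a) \leadsto (P^{\phi\circ},\fm^\circ,\hat a^\circ)$ through the intermediate slot $(P^\phi,\fm,\hat a)$ in $K^\phi$, and handle each of the two steps independently. Everything substantive is already isolated in the cited results, so the work here consists mainly of packaging.

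For the first step, compositional conjugation from $K$ to $K^\phi$, the substantive content is already recorded in the cited lemmas \cite[3.1.20, 3.3.20, 3.3.40]{ADH4}: they say that $Z$-minimality, quasilinearity and specialness are preserved unconditionally, while the geometric properties (steepness, deepness, normality, strict normality) are preserved under the hypothesis $\phi \preceq 1$. The hypothesis $\phi \preceq 1$ is indispensable for the second group because these properties are framed in terms of the span $\fv(L_{P_{\times\fm}})$ and $\prec_{\Delta(\fv)}$-estimates, and compositional conjugation rescales the derivation; only for $\phi$ not too large is the condition $\fv \prec^\flat 1$ preserved and the fine comparisons behind (strict) normality sharp enough to transfer. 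I would simply invoke these results, verifying in each case that our definitions in this section match the ones in \cite{ADH4}.

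For the second step, transfer from $K^\phi$ to $K^\circ$, recall from the preceding subsection that $f\mapsto f^\circ$ is an isomorphism of pre-$H$-fields $H^\phi \to H^\circ$ extending to an isomorphism of asymptotic fields $K^\phi \to K^\circ$. By the choice in the definition of $(P^{\phi\circ},\fm^\circ,\hat a^\circ)$, it further extends to an isomorphism $\hat K^\phi \to \hat K^\circ$ of immediate asymptotic extensions, and thence to a differential ring isomorphism $K^\phi\{Y\} \to K^\circ\{Y\}$ sending $Y$ to $Y$, under which $(P^\phi,\fm,\hat a)$ corresponds to $(P^{\phi\circ},\fm^\circ,\hat a^\circ)$. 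Each property listed in the lemma is defined purely in terms of this differential-valuation-theoretic data (complexity of $P$, the sets $Z(K,\hat a)$, the valuation $v$ and its convex subgroups, the span $\fv$, the numerical invariants $\ddeg$, $\ndeg$, $\wt$), so it transfers tautologically along any such isomorphism.

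Combining the two steps gives the lemma. The main obstacle is not in this proof at all: it lies entirely in the cited lemmas from \cite{ADH4}, whose bookkeeping under compositional conjugation is the genuine content. If any calibration is required, it will be to confirm that our notion of ``equivalence class of the slot $(P^{\phi\circ},\fm^\circ,\hat a^\circ)$ independent of choices'' is consistent with the formulations of the cited lemmas, which is straightforward once one writes out what each listed property only depends on, but worth checking case by case for specialness (where the convex subgroup of $\Gamma$ cofinal in $v(\hat a/\fm - K)$ must be matched up with the corresponding one in $v(K^{\circ\times})$ under the isomorphism $v \mapsto v^\circ$).
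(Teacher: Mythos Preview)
Your proposal is correct and matches the paper's approach: the paper simply cites \cite[Lemmas~3.1.20, 3.3.20, 3.3.40]{ADH4} as justification, and those lemmas handle exactly the compositional-conjugation step $(P,\fm,\hat a)\leadsto(P^\phi,\fm,\hat a)$ you describe, with the second step (transport along the isomorphism $K^\phi\to K^\circ$) being tautological as you say.
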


\noindent
Next, let $(P,\fm,\hat a)$ be a slot in $H$ of order $r$, so $\hat a\notin H$ is an element of an immediate asymptotic extension $\hat H$ of~$H$ with~$P\in Z(H,\hat a)$ and $\hat a\prec\fm$. 
We associate to~$(P,\fm,\hat a)$ a slot in $H^\circ$ as follows:
choose an immediate asymptotic extension $\hat H^\circ$ of $H^\circ$ and an
isomorphism $\hat f\mapsto \hat f^\circ\colon\hat H^\phi\to \hat H^\circ$ of asymptotic fields
extending the isomorphism~$f\mapsto f^\circ\colon H^\phi\to H^\circ$.
Then $(P^{\phi\circ},\fm^\circ,\hat a^\circ)$ is a slot in $H^\circ$ of the same complexity as~$(P,\fm,\hat a)$. The
equivalence class of   $(P^{\phi\circ},\fm^\circ,\hat a^\circ)$ does not depend on the choice of
$\hat H^\circ$ and the isomorphism $\hat H^\phi\to \hat H^\circ$. 
If $(P,\fm,\hat a)$ is a hole in $H$, then~$(P^{\phi\circ},\fm^\circ,\hat a^\circ)$ is a hole in~$H^\circ$, and likewise with ``minimal hole'' in place of ``hole''.
Lemma~\ref{lem:Pphicirc, 1} goes through in this setting. Also, recalling \cite[Lemma~4.5]{ADH5}, if~$H$ is Liouville closed and~$(P,\fm,\hat a)$
is ultimate, then $(P^{\phi\circ},\fm^\circ,\hat a^\circ)$ is ultimate. 

\medskip
\noindent
Moreover, by \cite[Lemmas~4.3.5, 4.3.28 , Corollaries~4.5.23, 4.5.39]{ADH4}: 

{\sloppy\samepage
\begin{lemma} \label{lem:Pphicirc, 2}
\mbox{}
\begin{enumerate}
\item[\textup{(i)}]
If $\phi\preceq 1$ and $(P,\fm,\hat a)$ is split-normal, then  $(P^{\phi\circ},\fm^\circ,\hat a^\circ)$ is split-normal;
likewise with ``split-normal'' replaced by   ``strongly split-nor\-mal''.
\item[\textup{(ii)}]
If $\phi\prec 1$ and  $(P,\fm,\hat a)$  is $Z$-minimal, deep, and repulsive-normal, then~$(P^{\phi\circ},\fm^\circ,\hat a^\circ)$ is repulsive-normal; likewise with ``repulsive-normal'' replaced by
 ``strongly re\-pul\-sive-nor\-mal''. 
\end{enumerate}
\end{lemma}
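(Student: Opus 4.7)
\emph{The strategy} is to reduce to a statement purely about compositional conjugation by $\phi$, and then quote the relevant normalization results from \cite{ADH4}. The map $f\mapsto f^\circ\colon H^\phi\to H^\circ$ (extending to $K^\phi\to K^\circ$ and, by choice, to their immediate asymptotic extensions) is an isomorphism of asymptotic differential fields. Every notion appearing in the definitions of split-normal, strongly split-normal, repulsive-normal, and strongly repulsive-normal is defined purely in terms of the valuation, derivation, and differential-polynomial structure, hence transfers verbatim along such an isomorphism. Thus $(P^{\phi\circ},\fm^\circ,\hat a^\circ)$ has one of the four properties in $K^\circ$ (resp.\ $H^\circ$) if and only if $(P^\phi,\fm,\hat a)$ has the corresponding property in $K^\phi$ (resp.\ $H^\phi$), and the problem reduces to showing that the property at hand is preserved under compositional conjugation by a $\phi$ satisfying the stated hypothesis.

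\emph{For part~(i)}, assume $(P_{\times\fm})_{\ge 1}=Q+R$ is a split-normal decomposition over $K$, set $L:=L_Q$, and let $\fv:=\fv(L_{P_{\times\fm}})$. Using $\der=\phi\derdelta$ and moving $\phi$ successively across lower-order factors, any splitting $L=f(\der-g_1)\cdots(\der-g_r)$ over $K$ rearranges into a splitting $L^\phi = \tilde f(\derdelta-\tilde g_1)\cdots(\derdelta-\tilde g_r)$ over $K^\phi$, with each $\tilde g_i$ an explicit expression in the $g_j$, $\phi$, and iterated $\derdelta$-derivatives of $\phi$. The decomposition $(P^\phi_{\times\fm})_{\ge 1} = Q^\phi + R^\phi$ retains the right degree/order structure, and the smallness condition $R\prec_{\Delta(\fv)}\fv^{w+1}(P_{\times\fm})_1$ transfers to $R^\phi$ because the valuation $v$, the convex subgroup $\Delta(\fv)$, and the dominance relation $\prec_{\Delta(\fv)}$ do not change under compositional conjugation. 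For the \emph{strong} variant we further require $\Re\tilde g_i\succeq \fv(L^\phi)^\dagger$; the discrepancy between $\fv(L^\phi)^\dagger$ and $\fv(L)^\dagger$, and between $\tilde g_i$ and $g_i$, is controlled by terms involving $\phi^\dagger$, which the hypothesis $\phi\preceq 1$ forces to be absorbed into the dominant quantities. This is the content of \cite[Lemmas~4.3.5, 4.3.28]{ADH4}.

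\emph{For part~(ii)}, we need in addition to preserve $\hat a/\fm$-repulsiveness of the splitting factors: for every $\gamma\in v(\hat a/\fm-K)\cap\Gamma^>$ and every $\fn\in K^\times$ with $v\fn=\gamma$, one must verify that $\Re\tilde g_i>0$ or $\Re\tilde g_i\succ\fn^\dagger$. Since $v(\hat a/\fm-K)$ is unchanged by conjugation, the issue is that $\tilde g_i$ differs from $g_i/\phi$ by correction terms involving $\phi^\dagger$ and $\derdelta$-derivatives of $\phi$; one shows via Lemma~\ref{lem:repulsive}, applied componentwise, that such corrections do not disturb repulsiveness provided they are dominated by the relevant $\fn^\dagger$. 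The strictly stronger hypothesis $\phi\prec 1$, combined with deepness of $(P,\fm,\hat a)$ (which gives $\fv\prec^\flat 1$ and hence sufficient room in $v(\hat a/\fm-K)$), precisely supplies the estimates needed. The $Z$-minimality enters to control the admissible scale of $\hat a/\fm-K$ and exclude degenerate arrangements. This is the content of \cite[Corollaries~4.5.23, 4.5.39]{ADH4}.

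\emph{The main obstacle} is the behavior of the real parts of the splitting factors under compositional conjugation: the bare algebraic/asymptotic content of normality (existence of a splitting, smallness of the remainder) transfers with only the mild hypothesis $\phi\preceq 1$, but the positivity conditions on real parts that distinguish the strong and repulsive variants require quantitative estimates on the $\phi^\dagger$-corrections, and this is what makes the $\phi\prec 1$ hypothesis indispensable in~(ii). After the reduction via $f\mapsto f^\circ$ sketched in the first paragraph, the proof is then essentially a citation of the four ADH4 results listed above.
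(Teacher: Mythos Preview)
Your proposal is correct and follows the same approach as the paper: the paper simply states the lemma as a consequence of \cite[Lemmas~4.3.5, 4.3.28, Corollaries~4.5.23, 4.5.39]{ADH4}, with the reduction via the isomorphism $f\mapsto f^\circ$ left implicit, and you make that reduction explicit before citing the same four results. Your additional explanatory sketch of what happens inside those cited results is reasonable but goes beyond what the paper provides.
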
 
}

\subsection*{Reformulations}
We reformulate here some results of Sections~\ref{sec:split-normal over Hardy fields} and~\ref{sec:smoothness}   to facilitate their use.
For $\fv\in K^\times$ with $\fv\prec 1$ we set:
$$\Delta(\fv)\ :=\ \big\{ \gamma\in\Gamma: \gamma=o(v\fv)\big\},$$ 
a proper convex subgroup of $\Gamma$. 
In the next lemma, $P\in  K\{Y\}$ has order $r$ and~$P=Q-R$, where $Q,R\in K\{Y\}$ and $Q$ is homogeneous of degree~$1$ and order~$r$.
We set~$w:=\wt(P)$, so $w\ge r\ge 1$.

\begin{lemma}\label{prop:as equ 1}  
Suppose that $L_Q$ splits strongly over $K$, $\fv(L_Q)\prec^\flat 1$, and 
$$R\prec_\Delta \fv(L_Q)^{w+1}Q, \quad \Delta := \Delta\big(\fv(L_Q)\big).$$ Then $P(y) =0$ and $y',\dots, y^{(r)} \preceq 1$
for some $y\prec \fv(L_Q)^w$ in~$\Calinf[\imag]$.  Moreover: \begin{enumerate}
\item[\textup{(i)}] if $P,Q\in H\{Y\}$, then there is such~$y$ in $\Calinf$;
\item[\textup{(ii)}] if $H\subseteq \Ginf$, then for any  $y\in\c^r[\imag]^{\preceq}$ with $P(y)=0$ we have $y\in \Ginf[\imag]$; likewise with $\c^\omega$ in place of $\c^{\infty}$.
\end{enumerate} 
\end{lemma}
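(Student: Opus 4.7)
The plan is to invoke the fixed-point machinery of Section~\ref{sec:split-normal over Hardy fields} to produce the germ $y$, and then appeal to Corollary~\ref{cor:ADE smooth} of Section~\ref{sec:smoothness} for the smoothness statements. First I would multiply $P$, $Q$, $R$ by the inverse of the leading coefficient of $Q$ (the coefficient of $Y^{(r)}$), which leaves the zero set of $P$ and the strong splitting of $L_Q=Q$ unchanged but lets me assume $Q = A = \der^r + f_1\der^{r-1}+\cdots+f_r$ is monic with $f_j\in K$. Replacing $\fv(L_Q)\in K^\times$ by $|\fv(L_Q)|\in H^>$ (an asymptotic replacement which does not affect the relevant span-estimates) gives a germ $\fv\in H^>$ with $\fv\prec 1$, positioning me in the exact setting of Section~\ref{sec:split-normal over Hardy fields}.

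Next I verify the hypotheses in force at the start of that section: (a) $A$ splits as $(\der-\phi_1)\cdots(\der-\phi_r)$ with $\Re\phi_j\succeq 1$, which uses strong splitting of $L_Q$ (giving $\Re\phi_j\succeq\fv^\dagger$) combined with $\fv\prec^\flat 1$ (yielding $\fv^\dagger\succeq 1$); (b) $\phi_j\preceq_\Delta\fv^{-1}$, which follows from the general bound~\eqref{bound on linear factors} on factors of a splitting; and (c) $R$ has order $\leq r$, weight $w(R)\leq w=\wt(P)$, and $R\prec_\Delta\fv^w$, the last of which follows from $R\prec_\Delta\fv^{w+1}Q$ and the fact that $Q$, being monic, has gaussian valuation $\leq 0$. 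Choosing $\nu\in\Q$ with $\nu>w$, $R\prec_\Delta\fv^{\nu}$, and $\nu\fv^\dagger\not\sim\Re\phi_j$ for each $j$, I can invoke Theorem~\ref{thm:fix}: for all sufficiently large $a$ the operator $\Xi_a$ has a fixed point $y\in\c_a^r[\imag]^{\b}$ with $\|y\|_{a;r}\leq 1/2$. By Lemma~\ref{bdua} this fixed point satisfies $A_a(y)=R(y)$, equivalently $P(y)=0$, and $y\preceq\fv^{\nu}\prec\fv^w$, while $\|y\|_{a;r}\leq 1/2$ directly yields $y',\dots,y^{(r)}\preceq 1$.

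For part (i), the hypothesis $P,Q\in H\{Y\}$ gives $R=Q-P\in H\{Y\}$, so the coefficient functions representing $P$, $Q$, $R$ are real-valued. Then I apply the real analogue Corollary~\ref{cor:fix} to $\Re\,\Xi_a$ (in place of $\Xi_a$); by Lemma~\ref{realbdua} its unique fixed point in the closed ball of $(\c_a^r)^{\b}$ is a real-valued solution $y\in\Calinf$ of $P(y)=0$ with the stated asymptotic properties. For part (ii), after the initial normalization $P$ has exactly the form required by Corollary~\ref{cor:ADE smooth}, namely $Y^{(r)}+f_1Y^{(r-1)}+\cdots+f_rY - R$ with $f_j\in H[\imag]$; the condition $R\prec_\Delta\fv^{w+1}Q$ combined with $\fv\prec 1$ and $Q$ monic implies in particular that each coefficient of $R$ is $\prec 1$, so $R_{\geq 1}\prec 1$. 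Corollary~\ref{cor:ADE smooth} then yields $y\in\Gi[\imag]$, with $y\in\Ginf[\imag]$ when $H\subseteq\Ginf$ and $y\in\Gom[\imag]$ when $H\subseteq\Gom$.

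The substantive step --- and the main thing to get right --- is the translation of the algebraic data (``$L_Q$ splits strongly'' and ``$\fv(L_Q)\prec^\flat 1$'') into the precise analytic hypotheses of Section~\ref{sec:split-normal over Hardy fields}: that the splitting factors $\phi_j$ satisfy $\Re\phi_j\succeq 1$ (so that the attractive/repulsive case of twisted integration applies and $B_{\phi_j}$ is a bounded operator on $\c_a[\imag]^{\b}$). Everything else is careful book-keeping: matching the polynomial $R$'s order, degree, weight, and $\Delta$-asymptotic size to the hypotheses of Theorem~\ref{thm:fix}, verifying that the smallness estimate $R\prec_\Delta\fv(L_Q)^{w+1}Q$ (formulated via the gaussian valuation on $K\{Y\}$) passes through the normalization, and checking that reality is preserved under $\Re\,\Xi_a$ when the input data are real.
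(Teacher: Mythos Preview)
Your approach is essentially the paper's proof, but there is a small gap in step~(c). From $R\prec_\Delta\fv^{w+1}Q$ and ``$Q$ monic has gaussian valuation $\leq 0$'' (i.e., $Q\succeq 1$) you cannot conclude $R\prec_\Delta\fv^w$: the inequality goes the wrong way, since $Q\succeq 1$ only gives $\fv^{w+1}Q\succeq\fv^{w+1}$, which does not bound $R$ above by $\fv^w$. The fix (which is what the paper does) is to observe that after dividing by the leading coefficient $f=a_r$ of $L_Q$, the dominant coefficient of the monic $A=f^{-1}L_Q$ is $a_m/a_r=\fv(L_Q)^{-1}$ by the very definition of the span, so the normalized $Q$ satisfies $Q\asymp\fv^{-1}$; hence $\fv^{w+1}Q\asymp\fv^w$ and $R\prec_\Delta\fv^w$ follows. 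The same correction is needed in your justification of $R_{\geq 1}\prec 1$ for part~(ii). With this repaired, your argument is correct and coincides with the paper's.
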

\begin{proof}
Set $\fv:= \abs{\fv(L_Q)}\in H^>$, so $\fv\asymp\fv(L_Q)$. Take $f\in K^{\times}$  such that $A:=f^{-1}L_Q$ is monic; then $\fv(A)=\fv(L_Q)\asymp\fv$ and
$f^{-1}R \prec_\Delta f^{-1}\fv^{w+1}Q \asymp \fv^{w}$.
We have~$A=(\der-\phi_1)\cdots(\der-\phi_r)$ where $\phi_j\in K$ and
$\Re\phi_j\succeq\fv^\dagger\succeq 1$ for $j=1,\dots,r$ by the strong splitting assumption. Also $\phi_1,\dots,\phi_r\preceq\fv^{-1}$ by \eqref{bound on linear factors}. 
The claims now follow from various results in Section~\ref{sec:split-normal over Hardy fields}  applied to the equation~${A(y)=f^{-1}R(y)}$, $y\prec 1$ in the role of \eqref{eq:ADE}, using also Corollary~\ref{cor:ADE smooth}. 
\end{proof}

\noindent
{\it In the next two lemmas $\phi$ is active in~$H$ with ${0<\phi\preceq 1}$.}\/
 
\begin{lemma}\label{dentsolver} 
Let $(P,\fn,\hat h)$ be a slot in $H$ of order~$r$ such that
$(P^\phi,\fn,\hat h)$ is strongly split-normal. Then for some~$y$ in~$\Calinf$,
$$P(y)\ =\ 0,\quad y\ \prec\ \fn,\quad y\in \fn\,(\c^r)^{\preceq}.$$
If $H\subseteq\c^\infty$, then there exists such $y$ in~$\c^\infty$, and likewise with $\c^\omega$ in place of $\c^\infty$.  
\end{lemma}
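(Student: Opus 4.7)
The plan is to reduce the statement to the analytic Lemma~\ref{prop:as equ 1} via compositional conjugation by $\phi$. Choose $\ell\in\Calinf$ with $\ell'=\phi$ and set $f^\circ:=f\circ\ell^{\inv}$, giving the Hardy field $H^\circ\subseteq\Calinf$ and $K^\circ=H^\circ[\imag]$; note that $H^\circ$ is again real closed with asymptotic integration. The natural isomorphism $f\mapsto f^\circ\colon H^\phi\to H^\circ$ of pre-$H$-fields transports the strongly split-normal slot $(P^\phi,\fn,\hat h)$ in $H^\phi$ to a strongly split-normal slot $(P^{\phi\circ},\fn^\circ,\hat h^\circ)$ in $H^\circ$.

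Unpacking strong split-normality, $(P^{\phi\circ})_{\times\fn^\circ}=Q-R$ in $H^\circ\{Y\}$ with $Q$ homogeneous of degree~$1$ and order~$r$, $L_Q$ splitting strongly over $K^\circ$, $\fv:=\fv(L_Q)\prec^\flat 1$, and $R\prec_{\Delta(\fv)}\fv^{w+1}Q$ (where $w:=\wt(P^{\phi\circ})$; here one uses that the degree-$1$ part of $(P^{\phi\circ})_{\times\fn^\circ}$ is $\asymp Q$). Applying Lemma~\ref{prop:as equ 1}(i) to $(P^{\phi\circ})_{\times\fn^\circ}$ (in the role of $P$ of that lemma) yields $z\in\Calinf$ with $(P^{\phi\circ})_{\times\fn^\circ}(z)=0$, $z\prec\fv^w\prec 1$, and $z',\dots,z^{(r)}\preceq 1$. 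Thus $z\in(\c^r)^{\preceq}$, and $y_0:=\fn^\circ z\in\Calinf$ satisfies $P^{\phi\circ}(y_0)=0$, $y_0\prec\fn^\circ$, and $y_0/\fn^\circ=z\in(\c^r)^{\preceq}$.

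Next, define $y:=y_0\circ\ell\in\Calinf$, so $y^\circ=y_0$ and $y=\fn\cdot(z\circ\ell)$. Lemma~\ref{lem:as equ comp} translates $P^{\phi\circ}(y_0)=0$ into $P(y)=0$, and since composition with $\ell$ (which is eventually increasing with $\ell\to\infty$) preserves $\prec$, $y\prec\fn$. It remains to verify $y\in\fn(\c^r)^{\preceq}$. Since $(y/\fn)^\circ=z$, iterating \eqref{eq:derc} gives $\derdelta^i(y/\fn)^\circ=z^{(i)}\preceq 1$ for $i=0,\dots,r$, whence $\derdelta^i(y/\fn)\preceq 1$. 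Because $\phi\preceq 1$, Lemma~\ref{lem:small derivatives of y}(i) then converts these $\derdelta$-bounds into $\der$-bounds, yielding $y/\fn\in\c^r[\imag]^{\preceq}\cap\c=(\c^r)^{\preceq}$, as required.

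For the smoothness assertions, if $H\subseteq\Ginf$ then $\phi\in\Ginf$, so $\ell,\ell^{\inv}\in\Ginf$, hence $H^\circ\subseteq\Ginf$; Lemma~\ref{prop:as equ 1}(ii) then upgrades $z$ to a $\Ginf$-germ, so $y=(\fn^\circ z)\circ\ell\in\Ginf$, and the $\Gom$-case is strictly analogous. The main technical point is the verification in the third paragraph, converting derivative bounds on $z$ (with respect to the standard derivation in the $\circ$-world) into $(\c^r)^{\preceq}$-bounds on $y/\fn$ in the original world; this is precisely where Lemma~\ref{lem:small derivatives of y}(i), together with the hypothesis $\phi\preceq 1$, is essential.
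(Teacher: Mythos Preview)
Your proof is correct and follows essentially the same approach as the paper: pass to $H^\circ$ via compositional conjugation, multiplicatively conjugate by $\fn^\circ$, apply Lemma~\ref{prop:as equ 1}(i), and pull the solution back using Lemma~\ref{lem:as equ comp} together with Lemma~\ref{lem:small derivatives of y}(i). The only organizational difference is that the paper first isolates the case $\phi=1$ (reducing to $\fn=1$ and applying Lemma~\ref{prop:as equ 1} directly in $H$) and then invokes this case for the slot $(P^{\phi\circ},\fn^\circ,\hat h^\circ)$ in $H^\circ$, whereas you inline this and apply Lemma~\ref{prop:as equ 1} directly in $H^\circ$; the paper also makes explicit (via Lemma~\ref{lem:fv of perturbed op}) the step $\fv(L_{P_{\times\fn}})\sim\fv(L_Q)$ that you compress into the parenthetical remark about the degree-$1$ part being $\asymp Q$.
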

\begin{proof}
First we consider the case $\phi=1$.
Replace $(P,\fn,\hat h)$ by $(P_{\times\fn},1,\hat h/\fn)$
to arrange $\fn=1$.  
Then $L_{P}$ has order~$r$, $\fv:=\fv(L_{P})\prec^{\flat} 1$, and $P =  Q-R$ where~$Q,R\in H\{Y\}$,
$Q$ is homogeneous of degree $1$ and order~$r$, $L_{Q}\in H[\der]$ splits strongly over~$K$, 
and~$R\prec_\Delta \fv^{w+1} P_1$, where~$\Delta:=\Delta(\fv)$ and $w:= \wt(P)$. 
Now $P_1=Q-R_1$, so~$\fv\sim \fv(L_{Q})$ by Lemma~\ref{lem:fv of perturbed op}(ii), and thus~$\Delta=\Delta\big(\fv(L_{Q})\big)$.
Lemma~\ref{prop:as equ 1} gives $y$ in~$\Calinf$ such that~$y \prec \fv^w\prec 1$, 
$P(y)=0$, and
$y^{(j)}\preceq 1$ for $j=1,\dots,r$.
Then $y$ has for $\fn=1$ the properties displayed in the lemma.  

Now suppose $\phi$ is  arbitrary. Employing $(\phantom{-})^\circ$ as explained earlier in this section, the
slot $(P^{\phi\circ},\fn^\circ,\hat h^\circ)$ in the Hardy field~$H^\circ$ is strongly split-normal, hence by the case $\phi=1$ we have $z\in\Calinf$ with 
$P^{\phi\circ}(z) = 0$, $z\prec\fn^\circ$, and $(z/\fn^\circ)^{(j)}\preceq 1$
for~${j=1,\dots,r}$.
Take $y\in\Calinf$ with $y^\circ=z$. Then
$P(y) = 0$, $y \prec \fn$, and $y\in \fn\,(\c^r)^{\preceq}$ by Lemma~\ref{lem:as equ comp} and a subsequent remark.  
Moreover, if $\phi,z\in\Ginf$, then $y\in\Ginf$, and likewise with $\Gom$ in place of~$\Ginf$.
\end{proof}

\noindent
In the next ``complex'' version, $(P, \fm, \hat a)$ is a slot in $K$ of order $r$ with $\fm\in H^\times$. 

\begin{lemma}\label{lemfillhole} 
Suppose
 the slot $(P^\phi,\fm,\hat a)$ in~$K^\phi$ is  strictly normal, and its linear part splits strongly
over $K^\phi$. Then for some~$y\in \Calinf[\imag]$ we have
$$P(y)\ =\ 0,\quad y\ \prec\ \fm,\quad y\in \fm\,\c^r[\imag]^{\preceq}.$$
If $H\subseteq\c^\infty$, then there is such $y$ in $\c^\infty[\imag]$.
If $H\subseteq\c^\omega$, then there is such $y$ in $\c^\omega[\imag]$. 
\end{lemma}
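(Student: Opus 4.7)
The plan is to mimic the proof of Lemma~\ref{dentsolver}: first dispatch the case $\phi = 1$, then reduce the general case by compositional conjugation along some $\ell \in \c^1$ with $\ell' = \phi$.

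For $\phi = 1$: replace $(P, \fm, \hat a)$ by the equivalent slot $(P_{\times\fm}, 1, \hat a/\fm)$ to arrange $\fm = 1$. The hypothesis now gives a decomposition $P = Q - R$ with $Q := P_1$ (homogeneous of degree $1$ and order $r$, as $L_Q = L_P$) and $R := -P_{\neq 1}$: strictness of the normality yields $R \prec_{\Delta(\fv)} \fv^{w+1} Q$ where $\fv := \fv(L_Q) = \fv(L_P) \prec^\flat 1$ and $w := \wt(P)$, while the hypothesis on $L_P$ says $L_Q$ splits strongly over $K$. Lemma~\ref{prop:as equ 1} then supplies $y \in \Calinf[\imag]$ with $P(y) = 0$, $y \prec \fv^w \prec 1$, and $y', \ldots, y^{(r)} \preceq 1$, so $y \in \c^r[\imag]^{\preceq}$. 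Undoing the reduction, $\fm y$ solves the original problem, and Lemma~\ref{prop:as equ 1}(ii) upgrades $y$ to $\Ginf[\imag]$ when $H \subseteq \Ginf$, and likewise for $\Gom$.

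For general $\phi$: the hypothesis ``strictly normal with $L_{P^\phi_{\times\fm}}$ splitting strongly over $K^\phi$'' translates directly into strong split-normality of $(P^\phi, \fm, \hat a)$ in $K^\phi$ (take the homogeneous degree-$1$ part of $P^\phi_{\times\fm}$ as $Q$ and its complement as $R$). Since $\phi \preceq 1$, Lemma~\ref{lem:Pphicirc, 2}(i) then gives that $(P^{\phi\circ}, \fm^\circ, \hat a^\circ)$ is strongly split-normal in $K^\circ = H^\circ[\imag]$. The $\phi = 1$ case applied in this new asymptotic field yields $z \in \Calinf[\imag]$ with $P^{\phi\circ}(z) = 0$, $z \prec \fm^\circ$, and $z/\fm^\circ \in \c^r[\imag]^{\preceq}$. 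Setting $y := z \circ \ell$ (so $y^\circ = z$), Lemma~\ref{lem:as equ comp} gives $P(y) = 0$, the relation $y \prec \fm$ is immediate from $z \prec \fm^\circ$, and $(y/\fm)^\circ = z/\fm^\circ \in \c^r[\imag]^{\preceq}$ implies $y/\fm \in \c^r[\imag]^{\preceq}$ by the remark after Lemma~\ref{lem:as equ comp}.

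For the smoothness in the general $\phi$ case one chooses $\ell$ in the relevant class: if $H \subseteq \Ginf$ then $\phi \in \Ginf$, so we can arrange $\ell, \ell^{\inv} \in \Ginf$; hence $H^\circ \subseteq \Ginf$, the $\phi = 1$ case provides $z \in \Ginf[\imag]$, and $y = z \circ \ell \in \Ginf[\imag]$ by closure of $\Ginf$ under composition. The same recipe works for $\Gom$. The main step that needs care is simply verifying that the two stated hypotheses (strictly normal, plus strongly splitting linear part) combine into exactly the strong-split-normality property used as input to Lemma~\ref{lem:Pphicirc, 2}(i); beyond that, the argument is bookkeeping around the machinery already developed for Lemma~\ref{dentsolver}.
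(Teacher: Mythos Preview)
Your proof is essentially correct and follows the same route as the paper's: handle $\phi=1$ by reducing to $\fm=1$, writing $P=P_1-R$ with $R=P_{\neq 1}$, and invoking Lemma~\ref{prop:as equ 1}; then reduce the general case via compositional conjugation $(\,)^\circ$.

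One correction in the general $\phi$ case: the notion ``strongly split-normal'' and Lemma~\ref{lem:Pphicirc, 2}(i) are formulated only for slots in $H$ (the definition requires $Q,R\in H\{Y\}$), not for slots in $K$, so you cannot invoke them here. Moreover, Lemma~\ref{lem:Pphicirc, 2}(i) has as hypothesis that $(P,\fm,\hat a)$ itself is split-normal, not $(P^\phi,\fm,\hat a)$. The right argument---which is what the paper means by ``as in the proof of Lemma~\ref{dentsolver}''---is simpler: since $f\mapsto f^\circ\colon K^\phi\to K^\circ$ is an isomorphism of asymptotic fields, the slot $(P^{\phi\circ},\fm^\circ,\hat a^\circ)$ in $K^\circ$ is strictly normal with linear part splitting strongly over $K^\circ$, directly inheriting these properties from $(P^\phi,\fm,\hat a)$ in $K^\phi$. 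Then apply the $\phi=1$ case in $K^\circ=H^\circ[\imag]$ and pull back as you describe. With this adjustment your argument matches the paper's.
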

\begin{proof}
Consider first the case $\phi=1$. Replacing~$(P,\fm,\hat a)$  by $(P_{\times\fm},1,\hat a/\fm)$
we arrange $\fm=1$.
Set $L:=L_{P}\in K[\der]$, $Q:=P_1$, and $R:=P-Q$. 
Since~$(P,1,\hat a)$ is strictly normal, we have $\order(L)=r$,  $\fv:=\fv(L)\prec^\flat 1$,
and $R\prec_\Delta \fv^{w+1}Q$  where~$\Delta:=\Delta(\fv)$,  $w := \wt(P)$. As $L$ splits  strongly over~$K$,   Lemma~\ref{prop:as equ 1} gives~$y$ in~$\Calinf[\imag]$ such that
$P(y)=0$, $y \prec \fv^w\prec 1$, and $y^{(j)}\preceq 1$
for~$j=1,\dots,r$. For the last part of the lemma, use the last part of  Lemma~\ref{prop:as equ 1}.  
The general case reduces to this special case as in the proof of Lemma~\ref{dentsolver}. 
\end{proof}

\subsection*{Finding germs in holes} {\em In this subsection $\hat H$ is an immediate asymptotic extension of $H$.}\/
This fits into the setting of \cite[Section~4.3]{ADH4}  on split-normal slots: 
$K=H[\imag]$ and~$\hat H$ have $H$ as a common asymptotic subfield and~$\hat{K}:=\hat{H}[\imag]$ as a common asymptotic extension, $\hat H$ is an $H$-field, and  $\hat K$ is $\d$-valued.

{\em Assume also that $H$ is $\upo$-free.}\/  Then $K$ is $\upo$-free by [ADH, 11.7.23].
{\em Let $(P,\fm,\hat a)$ with~$\fm\in H^\times$ and~$\hat a\in 
\hat K\setminus K$ be a minimal hole in $K$ of order~${r\geq 1}$}. 

\begin{prop}\label{propdeg>1}
Suppose  $\deg P>1$. 
Then for some $y\in \Calinf[\imag]$ we have
$$P(y)\ =\ 0,\quad y\ \prec\ \fm,\quad y\in \fm\,\c^r[\imag]^{\preceq}.$$
If $\fm\preceq 1$, then $y\in \c^r[\imag]^{\preceq}$ for such~$y$.
Moreover, if   $H\subseteq\c^\infty$, then we can take such $y$ in $\c^\infty[\imag]$, and 
if $H\subseteq\c^\omega$, then we can take such~$y$ in $\c^\omega[\imag]$. 
\end{prop}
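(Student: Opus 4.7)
The plan is to combine two already-established results: the normalization statement Lemma~\ref{lem:achieve strong splitting}, which handles all the delicate algebraic preparation of the hole, and the fixed-point existence result Lemma~\ref{lemfillhole}, which provides the analytic solution. The whole proof amounts to lining these up and then translating a solution of the refined equation back to a solution of the original one.

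First, since $(P,\fm,\hat a)$ is a minimal hole in $K$ of order $r\ge 1$ with $\fm\in H^\times$ and $\hat a\in\hat K\setminus K$, and $H$ is $\upo$-free, Lemma~\ref{lem:achieve strong splitting} applies. Using the hypothesis $\deg P>1$, it provides $a\in K$, $\fn\in H^\times$ with $\hat a-a\prec\fn\preceq\fm$, and an active $\phi>0$ in $H$ with $\phi\preceq 1$ such that the refined hole $(P_{+a}^\phi,\fn,\hat a-a)$ in $K^\phi$ is deep, strictly normal, and has linear part splitting strongly over $K^\phi$. These are exactly the hypotheses of Lemma~\ref{lemfillhole} applied to the slot $(P_{+a},\fn,\hat a-a)$ in $K$ (with this same $\phi$), and that lemma produces $z\in\Calinf[\imag]$ with
\[
P_{+a}(z)\ =\ 0, \qquad z\ \prec\ \fn, \qquad z\ \in\ \fn\,\c^r[\imag]^{\preceq},
\]
together with $z\in\Ginf[\imag]$ (resp.\ $z\in\Gom[\imag]$) whenever $H\subseteq\Ginf$ (resp.\ $H\subseteq\Gom$).

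I then set $y:=a+z\in\Calinf[\imag]$. Clearly $P(y)=P_{+a}(y-a)=P_{+a}(z)=0$; and from $\hat a\prec\fm$ together with $\hat a-a\prec\fn\preceq\fm$ we get $a\prec\fm$, hence $y=a+z\prec\fm$. For the conclusion $y\in\fm\,\c^r[\imag]^{\preceq}$ I will write $y/\fm=a/\fm+(\fn/\fm)(z/\fn)$: the germ $a/\fm$ lies in $K$ with $a/\fm\preceq 1$, so small derivation of $K$ yields inductively $(a/\fm)^{(j)}\preceq 1$ for $j=0,\dots,r$; similarly $\fn/\fm\in(\c^r)^{\preceq}$, and $z/\fn\in\c^r[\imag]^{\preceq}$ by construction, so the Leibniz rule puts the product in $\c^r[\imag]^{\preceq}$. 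The ``$\fm\preceq 1$'' clause then follows at once from Lemma~\ref{lem:small derivatives of y}(ii), and the smoothness clauses are immediate: $a\in K\subseteq \Ginf[\imag]$ (resp.\ $\Gom[\imag]$) when $H\subseteq\Ginf$ (resp.\ $\Gom$), and $z$ has the matching regularity by Lemma~\ref{lemfillhole}.

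There is no real obstacle here, because the two lemmas being invoked have already absorbed the difficult parts of the paper (the algebraic normalization theory from \cite{ADH4} on one side, and the Banach-fixed-point and smoothness machinery of Sections~\ref{sec:IHF}--\ref{sec:smoothness} on the other). The only points requiring care are (a) verifying that the conclusion of Lemma~\ref{lemfillhole} for the refined slot, once shifted by $a$, still lives in $\fm\,\c^r[\imag]^{\preceq}$ rather than merely in $\fm\,\c^0[\imag]^{\preceq}$, handled by the small-derivation bound above; and (b) noting that the assumption $\deg P>1$ is precisely what upgrades Lemma~\ref{lem:achieve strong splitting} from normality to \emph{strict} normality, which is what Lemma~\ref{lemfillhole} demands.
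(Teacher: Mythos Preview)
Your proof is correct and follows essentially the same route as the paper: invoke Lemma~\ref{lem:achieve strong splitting} (using $\deg P>1$ for strict normality), feed the refined slot into Lemma~\ref{lemfillhole} to obtain $z$, set $y:=a+z$, and verify $y/\fm\in\c^r[\imag]^{\preceq}$ via the decomposition $y/\fm=a/\fm+(\fn/\fm)(z/\fn)$ together with the small derivation of $K$ and the Product Rule. The paper packages the Leibniz step for $(\fn/\fm)(z/\fn)$ as a direct application of Lemma~\ref{lem:small derivatives of y}(ii) (with $z/\fm$, $\fn/\fm$ in the roles of $y$, $\fm$), but this is the same computation you wrote out.
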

\begin{proof} Lemma~\ref{lem:achieve strong splitting} gives
a refinement $(P_{+a},\fn,\hat a-a)$  of $(P,\fm,\hat a)$ with $\fn\in H^\times$ and an active $\phi$ in $H$ with $0<\phi\preceq 1$ such that 
the hole $(P^\phi_{+a},\fn,\hat a-a)$ in $K^\phi$ is  strictly normal  and its  linear part splits 
strongly over $K^\phi$. 
Lemma~\ref{lemfillhole} applied to $(P_{+a},\fn,\hat a-a)$ in place of $(P,\fm,\hat a)$
yields $z\in\Calinf[\imag]$ with $P_{+a}(z)=0$, $z\prec\fn$ and~$(z/\fn)^{(j)}\preceq 1$ for $j=1,\dots,r$.
Lem\-ma~\ref{lem:small derivatives of y}(ii) applied to~$z/\fm$,~$\fn/\fm$ in place of~$y$,~$\fm$, respectively, yields~$(z/\fm)^{(j)}\preceq 1$ for $j=0,\dots,r$. Also, $a\prec\fm$ (in $K$), hence~$(a/\fm)^{(j)}\prec 1$ for~$j=0,\dots,r$. Set $y:=a+z$; then $P(y)=0$, $y\prec\fm$, and~$(y/\fm)^{(j)}\preceq 1$ for~$j=1,\dots,r$.
For the rest use Lem\-ma~\ref{lem:small derivatives of y}(ii) and the last part of Lemma~\ref{lemfillhole}.
\end{proof}

\noindent
Next we treat the linear case:

\begin{prop} \label{prop:deg 1 analytic} 
Suppose $\deg P=1$. Then for some~$y\in \Calinf[\imag]$ we have
$$P(y)\ =\ 0,\quad y\ \prec\ \fm,\quad (y/\fm)'\ \preceq\ 1.$$
If  $\fm\preceq 1$, then $y\prec 1$ and $y'\preceq 1$ for each such~$y$. 
Moreover, if   $H\subseteq\c^\infty$, then we can take such $y$ in $\c^\infty[\imag]$, and 
if $H\subseteq\c^\omega$, then we can take such~$y$ in $\c^\omega[\imag]$.
\end{prop}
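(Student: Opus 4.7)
My strategy is to mirror the proof of Proposition~\ref{propdeg>1} with Lemma~\ref{lem:achieve strong splitting, d=1} replacing Lemma~\ref{lem:achieve strong splitting}. Since $H$ is $\upo$-free (hence $\upl$-free), and $(P,\fm,\hat a)$ is a minimal hole of positive order and degree $1$ in $K$, Lemma~\ref{lem:achieve strong splitting, d=1} (coupled with Remark~\ref{rem:achieve strong splitting, d=1} when the linear-slot relaxation is needed) yields $a \in K$, $\fn \in H^\times$ with $\hat a - a \prec \fn \preceq \fm$, and an active $\phi \in H$ with $0 < \phi \preceq 1$, such that the hole $(P^\phi_{+a}, \fn, \hat a - a)$ in $K^\phi$ is deep, strictly normal, and its linear part splits strongly over $K^\phi$.

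Next, I apply Lemma~\ref{lemfillhole} to the refined hole $(P_{+a}, \fn, \hat a - a)$ and this $\phi$: this produces $z \in \Calinf[\imag]$ satisfying $P_{+a}(z) = 0$, $z \prec \fn$, and $z \in \fn \cdot \c^r[\imag]^{\preceq}$. Set $y := a + z$, so that $P(y) = P_{+a}(z) = 0$. From $\hat a \prec \fm$ together with $\hat a - a \prec \fn \preceq \fm$ we obtain $a \prec \fm$; combined with $z \prec \fn \preceq \fm$ this gives $y \prec \fm$. To verify $(y/\fm)' \preceq 1$, write $y/\fm = a/\fm + (z/\fn)(\fn/\fm)$. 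The element $a/\fm \in K$ satisfies $a/\fm \prec 1$, and smallness of the derivation gives $(a/\fm)' \prec 1$. The factor $z/\fn$ lies in $\c^r[\imag]^{\preceq}$ with $r \geq 1$, so $(z/\fn)' \preceq 1$; the factor $\fn/\fm \in H$ is $\preceq 1$, so $(\fn/\fm)' \prec 1$ again by smallness of the derivation. The product rule then delivers $(y/\fm)' \preceq 1$.

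For the additional clauses, suppose $\fm \preceq 1$ and let $y \in \Calinf[\imag]$ be any solution satisfying $P(y)=0$, $y \prec \fm$, and $(y/\fm)' \preceq 1$. Then $y \prec \fm \preceq 1$ yields $y \prec 1$ directly, while the identity $y' = \fm(y/\fm)' + \fm'(y/\fm)$, combined with $\fm \preceq 1$, $(y/\fm)' \preceq 1$, $\fm' \preceq \fm$, and $y/\fm \prec 1$, forces $y' \preceq 1$. The smoothness assertions follow from Corollary~\ref{cor:ADE smooth}: since $\deg P = 1$, the nonlinear remainder $R_{\geq 1}$ appearing there vanishes after the multiplicative conjugation normalizing the leading coefficient, so the corollary applies to conclude $y \in \Ginf[\imag]$ when $H \subseteq \Ginf$, and $y \in \Gom[\imag]$ when $H \subseteq \Gom$.

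The main obstacle is securing the strong splitting of the linear part of $(P^\phi_{+a}, \fn, \hat a - a)$ over $K^\phi$, since this is exactly what makes the right-inverse operator of Section~\ref{sec:IHF} bounded and allows Lemma~\ref{lemfillhole} to apply. Once that normalization is in hand, the remaining construction of $y$ and the verifications of asymptotic and smoothness properties proceed along the same lines as in the proof of Proposition~\ref{propdeg>1}, the only real difference being that in the linear case one has $(y/\fm)' \preceq 1$ rather than the full statement $y/\fm \in \c^r[\imag]^{\preceq}$ --- which is why the conclusion here is correspondingly weaker than in Proposition~\ref{propdeg>1}.
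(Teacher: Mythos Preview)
Your argument has a genuine gap: Lemma~\ref{lem:achieve strong splitting, d=1} (and Remark~\ref{rem:achieve strong splitting, d=1}) require the hypotheses $\der K=K$ and $\I(K)\subseteq K^\dagger$, and these are \emph{not} among the standing assumptions on $H$ in this subsection. The standing assumptions only give that $H\supseteq\R$ is real closed with asymptotic integration and $\upo$-free; they do not imply $\der K=K$ or $\I(K)\subseteq K^\dagger$ (those would follow, for instance, from $H$ being Liouville closed via Propositions~\ref{prop:Li(H(R))} and~\ref{prop:I(K)}, but Liouville closedness is not assumed here). Remark~\ref{rem:achieve strong splitting, d=1} relaxes ``$\upo$-free, minimal hole'' to ``$\upl$-free, slot of order and degree~$1$'', but it does \emph{not} remove the hypotheses on $K$; the phrase ``Lemma~\ref{lem:achieve strong splitting, d=1} goes through'' means the full statement, including its opening ``If $\der K=K$ and $\I(K)\subseteq K^\dagger$''.

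The paper fixes this by first observing $r=1$ (via \cite[Corollary~3.2.8]{ADH4}), then reducing to the special case: pass to $H_1:=\Dx(H)$, so that $K_1:=H_1[\imag]$ satisfies $\der K_1=K_1$ and $\I(K_1)\subseteq K_1^\dagger$, and use Lemma~\ref{find zero of P} to transport $\hat a$ to an element $\hat a_1$ of a newtonian extension $\hat K_1$ of $K_1$. If $\hat a_1\in K_1$ one takes $y:=\hat a_1$ directly; otherwise $(P,\fm,\hat a_1)$ is a minimal hole in $K_1$ and one can now legitimately argue as in Proposition~\ref{propdeg>1} with Lemma~\ref{lem:achieve strong splitting, d=1} in place of Lemma~\ref{lem:achieve strong splitting}. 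Once this reduction is in place, the rest of your argument is essentially correct (though note a small slip: ``$\fm'\preceq\fm$'' is false in general---take $\fm=\ex^{-\ex^x}$---but what you actually need is $\fm'\preceq 1$, which does follow from $\fm\preceq 1$ and small derivation).
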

\begin{proof} 
We have $r=1$ by \cite[Corollary~3.2.8]{ADH4}.
If $\der K=K$ and $\I(K)\subseteq K^\dagger$, then Lemma~\ref{lem:achieve strong splitting, d=1} applies, and we can argue as in the proof of Proposition~\ref{propdeg>1},
using this lemma  instead of Lemma~\ref{lem:achieve strong splitting}.
We reduce the general case to this special case as follows: 
Set $H_1:=\operatorname{D}(H)$; then $H_1$ is an $\upo$-free Hardy field by \cite[Theorem~1.3.1]{ADH4}, and~$K_1:=H_1[\imag]$ satisfies $\der K_1=K_1$ and $\I(K_1)\subseteq K_1^\dagger$ by Propositions~\ref{prop:Li(H(R))} and~\ref{prop:I(K)}. Moreover, by Corollary~\ref{cor:Hardy field ext smooth}, if $H\subseteq\c^\infty$, then $H_1\subseteq\c^\infty$, and likewise with~$\c^\omega$ in place
of~$\c^\infty$.
The newtonization $\hat H_1$ of $H_1$ is an immediate asymptotic extension of~$H_1$, and~$\hat K_1:=\hat H_1[\imag]$ is   newtonian~[ADH, 14.5.7].  Lemma~\ref{find zero of P}  gives an embedding~$K\langle \hat a\rangle \to \hat K_1$ over $K$; let $\hat a_1$ be the image of $\hat a$ under this embedding.
If~$\hat a_1\in K_1$, then we are done by taking $y:=\hat a_1$, so we may assume~$\hat a_1\notin K_1$.   Then~$(P,\fm,\hat a_1)$ is a minimal hole in~$K_1$, and the above applies with~$H$,~$K$,~$\hat a$ replaced by $H_1$, $K_1$, $\hat a_1$, respectively.
\end{proof}

\noindent
We can improve on these results in a useful way:

\begin{cor}\label{improap} 
Suppose
$\hat a\sim a\in K$. Then  for some $y\in \Calinf[\imag]$ we have
$$P(y)\ =\ 0,\quad y\ \sim\ a,\quad (y/a)^{(j)}\ \prec\ 1\ 
\text{ for $j=1,\dots,r$.}$$
If   $H\subseteq\c^\infty$, then there is such $y$ in $\c^\infty[\imag]$.
If $H\subseteq\c^\omega$, then there is such~$y$ in $\c^\omega[\imag]$.
\end{cor}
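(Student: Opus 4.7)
The plan is to refine the given minimal hole at the point $a\in K$ and then directly invoke Proposition~\ref{propdeg>1} or Proposition~\ref{prop:deg 1 analytic}, depending on whether $\deg P>1$ or $\deg P=1$. Since $\hat a\in\hat K\setminus K$ and $a\in K$, we have $\hat a-a\in\hat K\setminus K$; combined with $\hat a\sim a$, this yields $v(\hat a-a)>va$ in $\Gamma=v(K^\times)=v(\hat K^\times)$ (as $\hat K/K$ is immediate). Since $H$ is real closed, $\Gamma$ is divisible and nontrivial, hence densely ordered, so I can pick $\fn\in H^\times$ with $va<v\fn<v(\hat a-a)$, i.e., $\hat a-a\prec\fn\prec a$. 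From $a\sim\hat a\prec\fm$ one obtains $\fn\prec\fm$, so $(P_{+a},\fn,\hat a-a)$ is a refinement of $(P,\fm,\hat a)$; since additive conjugation preserves complexity, it is again a minimal hole in $K$, of order $r$ and degree $\deg P$.

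Applying Proposition~\ref{propdeg>1} (when $\deg P>1$) or Proposition~\ref{prop:deg 1 analytic} (when $\deg P=1$, in which case $r=1$) to $(P_{+a},\fn,\hat a-a)$ produces $z\in\Calinf[\imag]$ with $P_{+a}(z)=0$, $z\prec\fn$, and $z\in\fn\,\c^r[\imag]^{\preceq}$. (In the degree-one subcase the conclusions $z\prec\fn$ and $(z/\fn)'\preceq 1$ together give $z/\fn\in\c^1[\imag]^{\preceq}$.) Under the smoothness hypothesis $H\subseteq\c^\infty$ (resp.\ $H\subseteq\c^\omega$) the proposition furnishes $z$ in the corresponding space. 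Now set $y:=a+z$. Then $P(y)=P_{+a}(z)=0$, and $y-a=z\prec\fn\prec a$ gives $y\sim a$. Furthermore, since $a\in K$ lies in $\c^\infty[\imag]$ or $\c^\omega[\imag]$ whenever $H$ does, the smoothness of $y$ matches that of $z$.

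It remains to verify the derivative bounds $(y/a)^{(j)}\prec 1$ for $j=1,\dots,r$. Since $y/a-1=z/a$, we must show $(z/a)^{(j)}\prec 1$ for $j\ge 1$. Factoring $z/a=(z/\fn)(\fn/a)$ and applying the Leibniz rule,
$$(z/a)^{(j)}\ =\ \sum_{k=0}^{j}\binom{j}{k}(z/\fn)^{(k)}(\fn/a)^{(j-k)}.$$
The factors $(z/\fn)^{(k)}$ for $0\le k\le r$ satisfy $(z/\fn)^{(k)}\preceq 1$ by $z\in\fn\,\c^r[\imag]^{\preceq}$. The germ $g:=\fn/a$ lies in $K^\times$ with $g\prec 1$, and since $K$ has small derivation, a straightforward induction yields $g^{(i)}\prec 1$ for every $i\ge 1$. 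Hence for $j-k\ge 1$ the summand is $(z/\fn)^{(k)}g^{(j-k)}\preceq g^{(j-k)}\prec 1$, while the $k=j$ summand equals $(z/\fn)^{(j)}g\preceq g\prec 1$. Summing, $(y/a)^{(j)}\prec 1$, as required.

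All substantive analytic content is already encapsulated in Propositions~\ref{propdeg>1} and~\ref{prop:deg 1 analytic}; the only real idea here is re-centering the minimal hole at $a$, and the only mild obstacle is locating $\fn\in H^\times$ strictly between $va$ and $v(\hat a-a)$ (handled by real-closedness of $H$) together with the small-derivation bookkeeping in the Leibniz expansion above.
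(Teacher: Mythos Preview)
Your proof is correct and follows essentially the same approach as the paper: refine the minimal hole, apply Propositions~\ref{propdeg>1} or~\ref{prop:deg 1 analytic} to the refinement, then recover the derivative bounds via the product rule using that $K$ has small derivation. The only difference is cosmetic: the paper refines at $b:=a+a_1$ with $a_1\in K$, $a_1\sim\hat a-a$, and $\fn\asymp\hat a-a$, whereas you refine directly at $a$ and instead choose $\fn$ with $v\fn$ strictly between $va$ and $v(\hat a-a)$, invoking density of $\Gamma$; both choices lead to the same Leibniz computation for $(z/a)^{(j)}=(z/\fn)^{(j)}\cdot(\fn/a)+\cdots$.
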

\begin{proof} Take $a_1\in K$ and $\fn\in H^\times$ such that 
 $\fn\asymp \hat a-a \sim a_1$, and set~${b:=a+a_1}$.
Then~$(P_{+b}, \fn, \hat a-b)$ is a refinement of $(P,\fm,\hat a)$, and
Propositions~\ref{propdeg>1} and~\ref{prop:deg 1 analytic} give~${z\in \Calinf[\imag]}$
with $P(b+z)=0$, $z\prec \fn$ and~$(z/\fn)^{(j)}\preceq 1$ for~$j=1,\dots,r$. 
We have~$(a_1/a)^{(j)}\prec 1$ for~$j=0,\dots,r$, since $K$ has small derivation.
Likewise, $(\fn/a)^{(j)}\prec 1$ for $j=0,\dots,r$, and hence
$(z/a)^{(j)}\prec 1$ for $j=0,\dots,r$, by~$z/a=(z/\fn)\cdot (\fn/a)$ and the Product Rule.
So $y:= b+z$ has the desired property. The rest follows from the ``moreover'' parts of these propositions.
\end{proof}

\begin{remarkNumbered}\label{rem:improap}  If we replace our standing assumption that $H$ is $\upo$-free and~$(P, \fm, \hat a)$ is a minimal hole in $K$ by the assumption that $H$ is $\upl$-free, $\der K=K$, ${\I(K)\subseteq K^\dagger}$, and~$(P, \fm, \hat a)$ is a slot in $K$ of order and degree $1$,
then Proposition~\ref{prop:deg 1 analytic} and Corollary~\ref{improap} go   through by Remark~\ref{rem:achieve strong splitting, d=1}.
\end{remarkNumbered}

\noindent 
Can we choose $y$ in Corollary~\ref{improap} such that additionally $\Re y$, $\Im y$ are hardian over~$H$?  At this stage we cannot claim this.  
In the next section we introduce weights and their corresponding norms as a more refined tool. This will allow us to
obtain Corollary~\ref{cor:approx y} as a key approximation result for later use.

\section{Weights}\label{sec:weights}

\noindent
In this section we prove Proposition~\ref{prop:notorious 3.6} to  strengthen
Lemma~\ref{lem:close}. 
This uses the material on repulsive-normal slots from \cite[Section~4.5]{ADH4}, but
we also need more refined norms for differentiable functions, to which we turn now.

The final result, Corollary~\ref{cor:approx y},  is the only part of this section used  towards our main result, Theorem~\ref{thm:char d-max}. But this use, in the proof of Theorem~\ref{46}, is essential, and obtaining this corollary requires the entire section.

\subsection*{Weighted spaces of differentiable functions}
{\it In this subsection we fix $r\in \N$ and a {\em weight}
function $\w\in\c_a[\imag]^\times$.}\/ 
For $f\in \Car[\imag]$ we set \label{p:absawt}
$$\dabs{f}_{a;r}^\w\ :=\ 
\max\big\{ \dabs{\w^{-1}f}_a, \dabs{\w^{-1}f'}_a, \dots, \dabs{\w^{-1}f^{(r)}}_a\big\} \ \in\  [0,+\infty],$$
and $\dabs{f}_{a}^\w:=\dabs{f}_{a;0}^\w$ for $f\in \c_a[\imag]$. 
\noindent
Then
$$\Car[\imag]^\w\ :=\ \big\{ f\in \Car[\imag]:\, \dabs{f}_{a;r}^\w < +\infty \big\}$$
is a $\C$-linear subspace of 
$$\c_a[\imag]^\w\ :=\ \Caz[\imag]^\w\ =\ 
\w\,\c_a[\imag]^{\b}\ =\ \big\{ f\in \c_a[\imag]:\, f\preceq \w\big\}.$$ 
Below we consider the $\C$-linear space~$\Car[\imag]^\w$ to be equipped with the norm 
$$f\mapsto \dabs{f}_{a;r}^\w.$$  
Recall from Section~\ref{sec:IHF} the convention $b\cdot\infty=\infty\cdot b=\infty$ for $b\in [0,\infty]$. Note that 
\begin{equation}\label{eq:weighted norm, 0}
\dabs{fg}_{a;r}^\w\ \leq\ 2^r \dabs{f}_{a;r}\,\dabs{g}_{a;r}^\w\quad\text{ for $f,g\in \Car[\imag]$,}
\end{equation}
so $\Car[\imag]^\w$ is a $\Car[\imag]^{\b}$-submodule of $\Car[\imag]$. 
Note also that $\dabs{1}_{a;r}^\w=\dabs{\w^{-1}}_{a}$, hence
$$\dabs{f}_{a;r}^\w\ \leq\ 2^r \dabs{f}_{a;r}\,\dabs{\w^{-1}}_{a}\quad\text{ for $f\in \Car[\imag]$}$$
and
$$\w^{-1}\in\c_a[\imag]^{\b} \quad\Longleftrightarrow\quad 1\in \Car[\imag]^{\w} \quad\Longleftrightarrow\quad \Car[\imag]^{\b} \subseteq \Car[\imag]^{\w}.$$
We have
\begin{equation}\label{eq:weighted norm, 1}
\dabs{f}_{a;r}\ \leq\  \dabs{f}^\w_{a;r}\,\dabs{\w}_{a}\quad\text{ for $f\in \Car[\imag]$,}
\end{equation}
and thus
\begin{equation}\label{eq:weighted norm, 2}
\w\in\c_a[\imag]^{\b} \quad\Longleftrightarrow\quad \Car[\imag]^{\b} \subseteq \Car[\imag]^{\w^{-1}} \quad\Longrightarrow\quad \Car[\imag]^{\w}\subseteq \Car[\imag]^{\b}.
\end{equation}
Hence if $\w,\w^{-1}\in \c_a[\imag]^{\b}$, then $\Car[\imag]^{\b} = \Car[\imag]^{\w}$, and the norms
 $\dabs{\,\cdot\,}_{a;r}^\w$ and  $\dabs{\,\cdot\,}_{a;r}$ on this $\C$-linear space are equivalent. 
 (In later use, $\w\in \c_a[\imag]^{\b}$, $\w^{-1}\notin  \c_a[\imag]^{\b}$.)
If~$\w\in \c_a[\imag]^{\b}$, then $\Car[\imag]^{\w}$ is an ideal of the commutative ring~$\Car[\imag]^{\b}$. From \eqref{eq:weighted norm, 0} and~\eqref{eq:weighted norm, 1} we obtain
$$\dabs{fg}_{a;r}^\w\ \leq\ 2^r \,\dabs{\w}_{a} \, \dabs{f}_{a;r}^\w\,\dabs{g}_{a;r}^\w\quad\text{ for $f,g\in \Car[\imag]$.}$$
For $f\in \Carm[\imag]^\w$ we have $\dabs{f}_{a;r}^\w, \dabs{f'}_{a;r}^\w \leq \dabs{f}_{a;r+1}^\w$.
From \eqref{eq:weighted norm, 1} and  \eqref{eq:weighted norm, 2}:

\begin{lemma}\label{lem:w-conv => b-conv}
Suppose $\w\in\c_a[\imag]^{\b}$ $($so $\Car[\imag]^{\w}\subseteq \Car[\imag]^{\b})$ and $f\in \Car[\imag]^{\w}$. If $(f_n)$ is a sequence in~$\Car[\imag]^\w$ and
$f_n\to f$ in $\Car[\imag]^\w$, then also $f_n\to f$ in $\Car[\imag]^{\b}$.
\end{lemma}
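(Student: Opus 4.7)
The plan is to derive this directly from the two inequalities \eqref{eq:weighted norm, 1} and \eqref{eq:weighted norm, 2} referenced in the preceding sentence, treating the statement as a straightforward estimate about how the weighted norm dominates the unweighted one up to the multiplicative constant $\dabs{\w}_{a}$.

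First I would use \eqref{eq:weighted norm, 2} to observe that since $\w\in\c_a[\imag]^{\b}$ we have $\Car[\imag]^{\w}\subseteq \Car[\imag]^{\b}$, so the hypothesis $f_n,f\in\Car[\imag]^{\w}$ guarantees that all the relevant $\dabs{\,\cdot\,}_{a;r}$-norms are finite and the statement ``$f_n\to f$ in $\Car[\imag]^{\b}$'' makes sense. The key quantitative step is then to apply \eqref{eq:weighted norm, 1} with $f_n-f\in\Car[\imag]^{\w}$ in place of $f$, giving
\[
\dabs{f_n-f}_{a;r}\ \leq\ \dabs{f_n-f}_{a;r}^{\w}\,\dabs{\w}_{a}.
\]
Since $\w\in\c_a[\imag]^{\b}$, the factor $\dabs{\w}_{a}$ is finite and independent of $n$, so the assumption $\dabs{f_n-f}_{a;r}^{\w}\to 0$ as $n\to\infty$ forces $\dabs{f_n-f}_{a;r}\to 0$, which is exactly convergence in $\Car[\imag]^{\b}$.

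There is no real obstacle here; the content of the lemma is just the elementary comparison of the two norms on the common space $\Car[\imag]^{\w}$. The only thing worth flagging is that the converse implication (convergence in $\Car[\imag]^{\b}$ implying convergence in $\Car[\imag]^{\w}$) would require $\w^{-1}\in\c_a[\imag]^{\b}$ as well, which is not assumed and indeed is expected to fail in the intended applications where $\w$ represents a germ with $\w\prec 1$.
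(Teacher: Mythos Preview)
Your proof is correct and is exactly the argument the paper has in mind: the paper does not give a separate proof but simply records the lemma as a consequence of \eqref{eq:weighted norm, 1} and \eqref{eq:weighted norm, 2}, and your write-up spells out precisely that derivation.
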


\noindent
This is used to show:

\begin{lemma}\label{lem:w-conv}
Suppose $\w\in\c_a[\imag]^{\b}$. Then
the $\C$-linear space $\Car[\imag]^\w$ equipped with the norm
$\dabs{\,\cdot\,}_{a;r}^\w$ is complete.
\end{lemma}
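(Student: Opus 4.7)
The plan is to reduce to the well-known completeness of $\c_a[\imag]^{\b}$ under $\|\cdot\|_a$. Let $(f_n)$ be a Cauchy sequence in $\Car[\imag]^\w$. Unpacking the definition of $\|\cdot\|_{a;r}^\w$, each of the $r+1$ sequences $(\w^{-1}f_n^{(j)})_n$, for $j=0,\dots,r$, is Cauchy in $\c_a[\imag]^{\b}$ with respect to $\|\cdot\|_a$, and hence converges uniformly on $[a,+\infty)$ to some $g_j\in \c_a[\imag]^{\b}$.

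Since $\w\in\c_a[\imag]^{\b}$, the bound $|f_n^{(j)}-\w g_j|\le\|\w\|_a\cdot\|\w^{-1}f_n^{(j)}-g_j\|_a$ shows that the sequence $(f_n^{(j)})_n$ converges uniformly on $[a,+\infty)$ to $\w g_j$ for each $j=0,\dots,r$ (this is an instance of Lemma~\ref{lem:w-conv => b-conv} applied to the $j$th derivatives). The standard theorem from calculus that uniform convergence of a sequence of $\c^1$-functions together with uniform convergence of their derivatives forces the limit to be $\c^1$ with the expected derivative (see, e.g., \cite[(8.6.3)]{Dieudonne}), applied inductively for $j=1,\dots,r$, yields that $f:=\w g_0\in\Car[\imag]$ with $f^{(j)}=\w g_j$ for $j=0,\dots,r$. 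In particular $\w^{-1}f^{(j)}=g_j\in\c_a[\imag]^{\b}$, so $f\in\Car[\imag]^\w$.

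Finally,
\[
\|f_n-f\|_{a;r}^\w\ =\ \max_{0\le j\le r}\,\|\w^{-1}f_n^{(j)}-g_j\|_a\ \longrightarrow\ 0\quad\text{as }n\to\infty,
\]
so $f_n\to f$ in $\Car[\imag]^\w$, proving completeness. There is no real obstacle here; the only point requiring $\w\in\c_a[\imag]^{\b}$ is the passage from weighted uniform convergence of the $f_n^{(j)}$ to ordinary uniform convergence of $(f_n^{(j)})$ and $(f_n^{(j-1)})$, which is what unlocks the classical ``differentiate the limit'' argument.
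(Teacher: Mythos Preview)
Your proof is correct and follows essentially the same approach as the paper's: reduce to completeness of $\c_a[\imag]^{\b}$, use $\w\in\c_a[\imag]^{\b}$ to pass from weighted to unweighted uniform convergence (Lemma~\ref{lem:w-conv => b-conv}), and invoke the classical theorem on differentiation under uniform limits. The only organizational difference is that the paper proceeds by induction on $r$ (treating $(f_n')$ as a Cauchy sequence in $\Carl[\imag]^\w$ and using the inductive hypothesis), whereas you extract the uniform limits of all $r+1$ derivative sequences at once and then run the induction on $j$ to identify $f^{(j)}=\w g_j$; these amount to the same argument.
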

\begin{proof}
We proceed by induction on $r$.
Let $(f_n)$ be a cauchy sequence in the normed space~$\c_a[\imag]^\w$. Then  
the sequence $(\w^{-1}f_n)$ in the Banach space 
$\Caz[\imag]^{\b}$ is cauchy, hence has a limit $g\in \c_a[\imag]^{\b}$, so
with $f:=\w g\in\c_a[\imag]^\w$ we have
$\w^{-1}f_n\to \w^{-1}f$ in~$\c_a[\imag]^{\b}$ and hence $f_n\to f$ in~$\c_a[\imag]^\w$. 
Thus the lemma holds for $r=0$. Suppose the lemma holds for a certain value of
$r$, and let $(f_n)$ be a cauchy sequence in $\Carm[\imag]^\w$.
Then~$(f_n')$ is a cauchy sequence in $\Car[\imag]^\w$ and hence has a limit
$g\in\Car[\imag]^\w$, by inductive hypothesis.
By Lemma~\ref{lem:w-conv => b-conv},    $f_n'\to g$ in
$\c_a[\imag]^{\b}$.
Now~$(f_n)$ is also a cauchy sequence in $\c_a[\imag]^\w$, hence has a limit
$f\in\c_a[\imag]^\w$ (by the case $r=0$), and
by Lemma~\ref{lem:w-conv => b-conv} again, $f_n\to f$ in $\c_a[\imag]^{\b}$.
Thus 
$f$ is differentiable and $f'=g$ by~\cite[(8.6.4)]{Dieudonne}.
This yields~$f_n\to f$ in~$\Carm[\imag]^{\w}$.
\end{proof}

\begin{lemma}\label{lem:Qnk}
Suppose $\w\in \Car[\imag]^{\b}$. If $f\in\Car[\imag]$ and $f^{(k)}\preceq \w^{r-k+1}$ for $k=0,\dots,r$, then $f\w^{-1} \in \Car[\imag]^{\b}$. \textup{(}Thus $\Car[\imag]^{\w^{r+1}}\subseteq \w\,\Car[\imag]^{\b}$.\textup{)}
\end{lemma}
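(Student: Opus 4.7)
The plan is to estimate each derivative $(f\w^{-1})^{(j)}$ for $j = 0, \ldots, r$ via the Leibniz rule
\[
(f\w^{-1})^{(j)}\ =\ \sum_{i=0}^{j} \binom{j}{i}\, f^{(i)} (\w^{-1})^{(j-i)},
\]
so the question reduces to bounding the product $f^{(i)} (\w^{-1})^{(j-i)}$ for $0 \le i \le j \le r$. The hypothesis gives $f^{(i)} \preceq \w^{r-i+1}$ directly, so the main work is to obtain an analogous bound for the derivatives of $\w^{-1}$.

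The key auxiliary claim is: $(\w^{-1})^{(m)} \preceq \w^{-(m+1)}$ for $m = 0, \ldots, r$. I would prove this by induction on $m$. The base case $m=0$ is tautological. For the inductive step, note that $\w \cdot \w^{-1} = 1$, so differentiating $m+1 \le r$ times via Leibniz and isolating the top-order term yields
\[
\w\cdot(\w^{-1})^{(m+1)}\ =\ -\sum_{i=1}^{m+1}\binom{m+1}{i}\w^{(i)}(\w^{-1})^{(m+1-i)}.
\]
By the inductive hypothesis $(\w^{-1})^{(m+1-i)} \preceq \w^{-(m-i+2)}$, and $\w^{(i)} \preceq 1$ since $\w \in \Car[\imag]^{\b}$. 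Hence each term on the right is $\preceq \w^{-(m-i+2)}$. Since $\w \preceq 1$ (from $\|\w\|_a < \infty$), and $m-i+2 \le m+1$ for $i \ge 1$, we have $\w^{-(m-i+2)} \preceq \w^{-(m+1)}$, which gives $(\w^{-1})^{(m+1)} \preceq \w^{-(m+2)}$, completing the induction.

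Combining the two bounds yields
\[
f^{(i)}(\w^{-1})^{(j-i)}\ \preceq\ \w^{r-i+1}\cdot \w^{-(j-i+1)}\ =\ \w^{r-j}\ \preceq\ 1
\]
for all $0 \le i \le j \le r$, using once more that $\w \preceq 1$ and $r-j \ge 0$. Thus $(f\w^{-1})^{(j)} \preceq 1$ for $j = 0, \ldots, r$, which means $f\w^{-1} \in \Car[\imag]^{\b}$. The parenthetical inclusion $\Car[\imag]^{\w^{r+1}}\subseteq \w\,\Car[\imag]^{\b}$ then follows by taking such an $f\in \Car[\imag]^{\w^{r+1}}$: by definition $\w^{-1}f^{(k)} \preceq \w^{r+1}\cdot\w^{-1} = \w^{r}$ wait —

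Actually, the implication for $\Car[\imag]^{\w^{r+1}}\subseteq \w\,\Car[\imag]^{\b}$ is a direct reformulation: if $g \in \Car[\imag]^{\w^{r+1}}$, then the definition gives $g^{(k)} \preceq \w^{r+1}$ for $k = 0,\ldots,r$, which in particular yields $g^{(k)} \preceq \w^{(r+1)-k} = \w^{r-k+1}$ for each such $k$ (since $\w \preceq 1$), and applying the main statement with $f = g$ gives $g\w^{-1}\in \Car[\imag]^{\b}$, i.e., $g\in \w\,\Car[\imag]^{\b}$. The main obstacle is just bookkeeping in the inductive bound on $(\w^{-1})^{(m)}$; there is no conceptual difficulty.
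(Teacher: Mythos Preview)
Your argument is correct and follows essentially the same route as the paper: the paper packages the computation into a single inductive formula $(f\w^{-1})^{(n)}=\sum_{k=0}^n Q^n_k(\w,\w',\dots,\w^{(n-k)})\,f^{(k)}\w^{k-n-1}$ with integer polynomials $Q^n_k$, then uses $Q^n_k(\w,\dots)\preceq 1$ and $f^{(k)}\w^{k-n-1}\preceq\w^{r-n}\preceq 1$, whereas you first prove $(\w^{-1})^{(m)}\preceq\w^{-(m+1)}$ separately and then apply Leibniz, which amounts to the same bookkeeping. One cosmetic point: you should clean up the ``wait ---'' aside before the parenthetical; your final resolution of $\Car[\imag]^{\w^{r+1}}\subseteq\w\,\Car[\imag]^{\b}$ is correct.
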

\begin{proof} 
An easy induction on $n\leq r$ shows that there are   $Q^n_k\in\Z[X_0,X_1,\dots,X_{n-k}]$  ($0\leq k\leq n$) 
such that for all $f\in \Car[\imag]$ and $n\le r$: 
$$(f\w^{-1})^{(n)}\ =\ \sum_{k=0}^n Q^n_k(\w,\w',\dots,\w^{(n-k)}) f^{(k)}\w^{k-n-1}.$$
Now use that $Q^n_k(\w,\w',\dots,\w^{(n-k)})\preceq 1$ for $n\le r$ and $k=0,\dots,n$. 
\end{proof}

\noindent
Next we generalize the inequality \eqref{eq:weighted norm, 0}:

\begin{lemma}\label{lem:weighted norm}
Let $f_1,\dots,f_{m-1},g\in \Car[\imag]$, $m\geq 1$; then
$$\dabs{f_1\cdots f_{m-1}g}_{a;r}^\w\ \leq\ m^r \,\dabs{f_1}_{a;r}\cdots\dabs{f_{m-1}}_{a;r} \,\dabs{g}_{a;r}^\w.$$
\end{lemma}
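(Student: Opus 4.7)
The plan is to proceed by a direct application of the generalized (multivariable) Leibniz rule for the $n$-fold derivative of a product of $m$ factors, and then bound the resulting coefficient sums via the multinomial identity $\sum_{n_1+\cdots+n_m=n}\binom{n}{n_1,\dots,n_m}=m^n$. Induction on $m$ starting from \eqref{eq:weighted norm, 0} would only give a bound of $(2(m-1))^r$ rather than the sharper $m^r$, so I would avoid that route.

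Concretely, set $h:=f_1\cdots f_{m-1}g\in\Car[\imag]$ and fix $n\in\{0,1,\dots,r\}$. The Leibniz rule applied iteratively (or directly to $m$ factors) gives
\[
h^{(n)}\ =\ \sum_{\substack{n_1,\dots,n_m\in\N\\ n_1+\cdots+n_m=n}}\binom{n}{n_1,\dots,n_m}\, f_1^{(n_1)}\cdots f_{m-1}^{(n_{m-1})}\, g^{(n_m)}.
\]
Multiplying by $\w^{-1}$, taking absolute values, and using the triangle inequality together with $\dabs{f_i^{(n_i)}}_a\le \dabs{f_i}_{a;r}$ for $n_i\le r$ and $\dabs{\w^{-1}g^{(n_m)}}_a\le \dabs{g}_{a;r}^\w$ for $n_m\le r$, I obtain
\[
\dabs{\w^{-1}h^{(n)}}_a\ \le\ \bigg(\sum_{n_1+\cdots+n_m=n}\binom{n}{n_1,\dots,n_m}\bigg)\dabs{f_1}_{a;r}\cdots\dabs{f_{m-1}}_{a;r}\dabs{g}_{a;r}^\w.
\]

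The multinomial identity then reduces the parenthesized sum to $m^n\le m^r$ (using $m\ge 1$), and taking the maximum over $n=0,\dots,r$ yields the desired inequality. The argument is essentially routine once the Leibniz formula is written down; there is no real obstacle beyond bookkeeping, and the sharp constant $m^r$ comes out directly from the multinomial identity rather than from iterated application of the $m=2$ case.
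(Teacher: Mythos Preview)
Your proof is correct and follows essentially the same approach as the paper: the paper's proof simply invokes the generalized Product Rule and the multinomial identity $\sum_{i_1+\cdots+i_m=n}\frac{n!}{i_1!\cdots i_m!}=m^n$, which is exactly what you have written out in detail.
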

\begin{proof}
Use the generalized Product Rule [ADH, p.~199] and the well-known identity 
$\sum \frac{n!}{i_1!\cdots i_m!}=m^n$ with the sum over all $(i_1,\dots,i_m)\in\N^m$ with $i_1+\cdots+i_m=n$.
\end{proof}

\noindent
With $\i$ ranging over $\N^{1+r}$,  let
$P=\sum_{\i} P_{\i} Y^{\i}$ (all $P_{\i}\in \c_a[\imag]$) be a polynomial in~$\c_a[\imag]\big[Y,Y',\dots,Y^{(r)}\big]$;
for $f\in \Car[\imag]$ we have  $P(f)=\sum_{\i} P_{\i} f^{\i}\in\c_a[\imag]$.
(See also the beginning of Section~\ref{sec:split-normal over Hardy fields}.)
We set
$$\dabs{P}_a\ :=\ \max_{\i} \, \dabs{P_{\i}}_a \in [0,\infty].$$
{\em In the rest of this subsection we assume $\dabs{P}_a<\infty$}, that is,  $P\in\c_a[\imag]^{\b}\big[Y,\dots,Y^{(r)}\big]$. Hence if~$\w\in \c_a[\imag]^{\b}$, $P(0)\in \c_a[\imag]^\w$, and $f\in\Car[\imag]^\w$, then $P(f)\in \c_a[\imag]^\w$.  
Here are weighted versions of Lemma~\ref{lem:bound on P(f)}
and~\ref{cor:bound on P(f)}: 

\begin{lemma}\label{lem:weighted bd}
Suppose $P$ is homogeneous of degree $d\ge 1$, and let $\w\in \c_a[\imag]^{\b}$ and~$f\in\Car[\imag]^\w$. Then
$$\dabs{P(f)}_a^\w\ \leq\ {d+r \choose r} \cdot \dabs{P}_a\cdot  \dabs{f}_{a;r}^{d-1} \cdot  \dabs{f}_{a;r}^\w.$$
\end{lemma}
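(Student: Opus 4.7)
The plan is to mimic the proof of the unweighted analogue (Lemma~\ref{lem:bound on P(f)}), modifying the pointwise estimate so that the weight $\w^{-1}$ is absorbed into exactly one factor of each monomial. Since $P$ is homogeneous of degree $d\geq 1$, I write $P=\sum_{\abs{\i}=d}P_{\i}Y^{\i}$, so that
$$P(f)\ =\ \sum_{\abs{\i}=d}P_{\i}\,f^{i_0}(f')^{i_1}\cdots(f^{(r)})^{i_r}$$
and the number of summands is $\binom{d+r}{r}$ by a standard stars-and-bars count.

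For each multi-index $\i=(i_0,\dots,i_r)$ with $\abs{\i}=d\geq 1$, I would fix some $j=j(\i)\in\{0,\dots,r\}$ with $i_j\geq 1$ and regroup
$$\w^{-1}f^{\i}\ =\ \bigl(\w^{-1}f^{(j)}\bigr)\cdot f^{i_0}(f')^{i_1}\cdots (f^{(j)})^{i_j-1}\cdots (f^{(r)})^{i_r},$$
where the second factor is a product of exactly $d-1$ derivatives $f^{(k)}$ (counted with multiplicity). Taking absolute values pointwise and then suprema on $[a,+\infty)$ gives
$$\dabs{f^{\i}}^{\w}_{a}\ =\ \dabs{\w^{-1}f^{\i}}_{a}\ \leq\ \dabs{\w^{-1}f^{(j)}}_{a}\cdot\prod_{k}\dabs{f^{(k)}}_{a}^{e_k}\ \leq\ \dabs{f}_{a;r}^{\w}\cdot\dabs{f}_{a;r}^{d-1},$$
since each $\dabs{\w^{-1}f^{(j)}}_{a}\leq\dabs{f}_{a;r}^{\w}$ and each $\dabs{f^{(k)}}_{a}\leq\dabs{f}_{a;r}$, and the exponents $e_k$ in the remaining product sum to $d-1$.

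Multiplying by $\dabs{P_{\i}}_{a}\leq\dabs{P}_{a}$, summing over the $\binom{d+r}{r}$ multi-indices $\i$ with $\abs{\i}=d$, and using the triangle inequality for $\dabs{\,\cdot\,}^{\w}_{a}$ then yields the claimed bound. There is no real obstacle here; the only conceptual point is that $d\geq 1$ is used precisely to guarantee the existence of an index $j$ with $i_j\geq 1$, so that $\w^{-1}$ can be absorbed into a single derivative factor without any need for the more elaborate generalized Leibniz bound of Lemma~\ref{lem:weighted norm} (which will be needed instead for the higher-order weighted estimate on $\dabs{P(f)}_{a;r}^{\w}$).
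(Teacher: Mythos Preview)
Your proof is correct and essentially the same as the paper's. The paper packages the key step by invoking Lemma~\ref{lem:weighted norm} with $m:=d$ and $r:=0$ (so the factor $m^r$ is $1$), which is exactly the pointwise inequality you write out directly; your closing remark that the full Leibniz-type bound is not needed here is accurate, since only the trivial $r=0$ case of that lemma is used.
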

\begin{proof} For $j=0,\dots,r$ we have $\dabs{f^{(j)}}_a\le \dabs{f}_{a;r}$ and $\dabs{f^{(j)}}_a^\w\le \dabs{f}_{a;r}^\w$.
Now $f^{\i}$, where $\i=(i_0,\dots,i_r)\in \N^{r+1}$ and $i_0+\cdots + i_r=d$, is a product of $d$ such factors~$f^{(j)}$, so
Lemma~\ref{lem:weighted norm} with $m:=d$, $r:=0$, gives
$$ \dabs{f^{\i}}_{a}^\tau\  \le\  \dabs{f}_{a;r}^{d-1}\cdot \dabs{f}_{a;r}^{\w}.$$
It remains to note that by  \eqref{eq:weighted norm, 0} we have $\dabs{P_{\i}f^{\i}}_a^\tau\le \dabs{P_{\i}}_a\cdot\dabs{f^{\i}}_a^\tau$. 
\end{proof}

\begin{cor}\label{cor:weighted bd} Let $1\le d \le e$ in $\N$ be such that $P_{\i}=0$ if~$\abs{\i}<d$ or $\abs{\i}>e$.
Then for $f\in\Car[\imag]^\w$ and $\w\in \c_a[\imag]^{\b}$ we have
$$\dabs{P(f)}_a^\w\ \leq\  D \cdot \dabs{P}_a\cdot   \big( \dabs{f}_{a;r}^{d-1}+\cdots+\dabs{f}_{a;r}^{e-1} \big)\cdot \dabs{f}_{a;r}^\w$$
where
$D\ =\ D(d,e,r)\ :=\  {e+r+1 \choose r+1}-{d+r\choose r+1}\in\N^{\geq 1}$.
\end{cor}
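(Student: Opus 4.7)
The plan is to reduce to the homogeneous case already handled by Lemma~\ref{lem:weighted bd}. First I would decompose $P$ according to degree: writing $P_k$ for the homogeneous part of $P$ of degree $k$, the hypothesis gives
\[
P\ =\ \sum_{k=d}^{e} P_k,\qquad \dabs{P_k}_a\ \leq\ \dabs{P}_a,
\]
since every coefficient of $P_k$ is one of the $P_{\i}$. By the triangle inequality for $\dabs{\,\cdot\,}_a^{\w}$ on $\c_a[\imag]^{\w}$ (a norm by the subsection on weighted spaces, since $\w\in\c_a[\imag]^{\b}$),
\[
\dabs{P(f)}_a^{\w}\ \leq\ \sum_{k=d}^{e} \dabs{P_k(f)}_a^{\w}.
\]

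Next I would apply Lemma~\ref{lem:weighted bd} to each $P_k$, yielding
\[
\dabs{P_k(f)}_a^{\w}\ \leq\ \binom{k+r}{r}\cdot \dabs{P}_a \cdot \dabs{f}_{a;r}^{k-1}\cdot \dabs{f}_{a;r}^{\w}\qquad (d\leq k\leq e).
\]
Factoring the common $\dabs{P}_a\,\dabs{f}_{a;r}^{\w}$ out of the sum then gives
\[
\dabs{P(f)}_a^{\w}\ \leq\ \dabs{P}_a \cdot \dabs{f}_{a;r}^{\w}\cdot \sum_{k=d}^{e}\binom{k+r}{r}\dabs{f}_{a;r}^{k-1}.
\]

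The combinatorial step is the hockey-stick identity $\sum_{j=0}^{n}\binom{j+r}{r}=\binom{n+r+1}{r+1}$, which telescopes to
\[
\sum_{k=d}^{e}\binom{k+r}{r}\ =\ \binom{e+r+1}{r+1}-\binom{d+r}{r+1}\ =\ D.
\]
In particular each individual binomial coefficient $\binom{k+r}{r}$ (for $d\leq k\leq e$) is at most $D$, since it is one of the nonnegative summands making up $D$; also $D\geq\binom{d+r}{r}\geq 1$, giving $D\in\N^{\geq 1}$. Replacing each coefficient $\binom{k+r}{r}$ by $D$ in the sum above yields
\[
\dabs{P(f)}_a^{\w}\ \leq\ D\cdot\dabs{P}_a\cdot\bigl(\dabs{f}_{a;r}^{d-1}+\cdots+\dabs{f}_{a;r}^{e-1}\bigr)\cdot \dabs{f}_{a;r}^{\w},
\]
as desired.

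There is no real obstacle: the corollary is an aggregation of the homogeneous-degree estimate from Lemma~\ref{lem:weighted bd} together with the standard hockey-stick binomial identity, paralleling the unweighted Corollary~\ref{cor:bound on P(f)}.
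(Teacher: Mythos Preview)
Your proof is correct and takes essentially the same approach the paper intends: the corollary is the weighted analogue of Corollary~\ref{cor:bound on P(f)}, obtained by decomposing $P$ into homogeneous parts, applying Lemma~\ref{lem:weighted bd} to each, and bounding the resulting binomial coefficients by their sum $D$ via the hockey-stick identity.
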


\subsection*{Doubly-twisted integration}
In this subsection we adopt the setting in  {\it Twisted integration}\/ of  Section~\ref{sec:IHF}.
Thus $\phi\in\c_a[\imag]$ and $\Phi=\der_a^{-1}\phi$.
Let $\w\in \Cao$ satisfy~${\w(s)>0}$ for $s\geq a$, and set $\widetilde\phi:=\phi-\w^\dagger\in \c_a[\imag]$  and $\widetilde\Phi:=\der_a^{-1}\widetilde\phi$. Thus
$$\widetilde\Phi(t)\ =\ \int_a^t (\phi-\w^\dagger)(s)\, ds\ =\ \Phi(t) - \log \w(t) + \log \w(a)\qquad\text{for $t\geq a$.}$$
Consider the right inverses
$B, \widetilde B\colon  \c_a[\imag] \to \Cao[\imag]$ 
to, respectively, $\der-\phi\colon\Cao[\imag]\to\c_a[\imag]$ and $\der-\widetilde\phi\colon\Cao[\imag]\to\c_a[\imag]$, 
given by
$$B\ :=\ \ex^\Phi \circ\, {\der_a^{-1}} \circ \ex^{-\Phi}, \quad
  \widetilde B\ :=\ \ex^{\widetilde\Phi} \circ\, {\der_a^{-1}} \circ \ex^{-\widetilde\Phi}.$$
For $f\in\c_a[\imag]$ and $t\geq a$ we have
\begin{align*}
\widetilde{B}f(t)\	&=\ \ex^{\widetilde\Phi(t)} \int_a^t \ex^{-\widetilde\Phi(s)}f(s)\,ds \\
			&=\ \w(t)^{-1}\w(a)\ex^{\Phi(t)} \int_a^t  \ex^{-\Phi(s)} \w(s)\w(a)^{-1} f(s)\,ds \\
	&=\ \w(t)^{-1}\ex^{\Phi(t)} \int_a^t \ex^{-\Phi(s)}\w(s) f(s)\,ds\ =\ \w^{-1}(t)\big( B(\w f)\big)(t)
\end{align*}
and so $\widetilde B=\w^{-1}\circ B\circ \w$.
Hence if  $\widetilde\phi$ is attractive, then $B_{\ltimes \w}:=\w^{-1}\circ B\circ \w$ maps~$\c_a[\imag]^{\b}$ into $\c_a[\imag]^{\b}\cap\Cao[\imag]$, and 
the operator $B_{\ltimes \w}\colon \c_a[\imag]^{\b}\to\c_a[\imag]^{\b}$ is continuous with $\dabs{B_{\ltimes \w}}_a \leq \big\| \frac{1}{\Re \widetilde\phi}\big\|_a$; if in addition $\widetilde\phi\in\Car[\imag]$, then $B_{\ltimes \w}$ maps $\c_a[\imag]^{\b}\cap\Car[\imag]$ into
$\c_a[\imag]^{\b}\cap\Carm[\imag]$.
Note that if $\phi\in\Car[\imag]$ and $\w\in\Carm$, then $\widetilde\phi\in\Car[\imag]$.

\medskip
\noindent
Next, suppose $\phi$, $\widetilde\phi$ are both repulsive. Then we have the $\C$-linear op\-er\-a\-tors $B, \widetilde B\colon  \c_a[\imag]^{\b} \to \Cao[\imag]$ given, for $f\in \c_a[\imag]^{\b}$ and $t\geq a$, by
$$Bf(t)\ :=\ \ex^{\Phi(t)} \int_\infty^t \ex^{-\Phi(s)}f(s)\,ds, \qquad 
\widetilde Bf(t)\ :=\ \ex^{\widetilde\Phi(t)} \int_\infty^t \ex^{-\widetilde\Phi(s)}f(s)\,ds.$$
Now assume  $\w\in\c_a[\imag]^{\b}$. Then
we have the $\C$-linear operator $$B_{\ltimes \w}\ :=\ {\w^{-1}\circ B\circ \w}\ \colon\ \c_a[\imag]^{\b} \to \Cao[\imag].$$
A computation as above shows $\widetilde B = B_{\ltimes \w}$; thus $B_{\ltimes \w}$ maps $\c_a[\imag]^{\b}$ in\-to~$\c_a[\imag]^{\b}\cap\Cao[\imag]$, and the operator $B_{\ltimes \w}\colon \c_a[\imag]^{\b}\to\c_a[\imag]^{\b}$ is continuous with $\dabs{B_{\ltimes \w}}_a \leq \big\| \frac{1}{\Re \widetilde\phi}\big\|_a$.
If~$\widetilde{\phi}\in\Car[\imag]$, then~$B_{\ltimes \w}$ maps $\c_a[\imag]^{\b}\cap\Car[\imag]$ into
$\c_a[\imag]^{\b}\cap\Carm[\imag]$. 

\subsection*{More on twists and right-inverses of linear operators over Hardy fields}
In this subsection we adopt the assumptions in force for Lemma~\ref{cri}, which we repeat here. 
Thus $H$ is a Hardy field, $K=H[\imag]$, $r\in\N^{\geq 1}$, and $f_1,\dots,f_r\in K$.
We fix $a_0\in\R$ and functions in $\c_{a_0}[\imag]$ representing the germs $f_1,\dots,f_r$,
denoted by the same symbols. We let~$a$ range over $[a_0,\infty)$, and we denote the restriction of each
$f\in\c_{a_0}[\imag]$ to  $[a,\infty)$ also by $f$.
For each $a$ we then have the $\C$-linear map~$A_a\colon \Car[\imag]\to\c_a[\imag]$ given by
$$  A_a(y)\ =\ y^{(r)}+f_1y^{(r-1)}+\cdots+f_ry .$$ 
We are in addition given a splitting $(\phi_1,\dots,\phi_r)$ of
the linear differential operator~$A=\der^r+f_1\der^{r-1}+\cdots+f_r\in K[\der]$ over $K$
with $\Re\phi_1,\dots,\Re\phi_r\succeq 1$, 
as well as functions in $\c_{a_0}^{r-1}[\imag]$ representing $\phi_1,\dots,\phi_r$, 
denoted by the same symbols and satisfying  
$\Re\phi_1,\dots,\Re\phi_r\in (\c_{a_0})^\times$.
This gives rise to the continuous $\C$-linear operators
$$B_j\ :=\ B_{\phi_j}\ \colon\ \c_a[\imag]^{\b}\to \c_a[\imag]^{\b} \qquad (j=1,\dots,r)$$
and the right-inverse
$$A_a^{-1} \ :=\  B_r\circ \cdots \circ B_1\ \colon\ \c_a[\imag]^{\b}\to \c_a[\imag]^{\b}$$
of $A_a$ with the properties stated in Lemma~\ref{cri}.

\medskip\noindent
Now let $\fm\in H^\times$ with $\fm\prec 1$, and set~$\widetilde{A}:=A_{\ltimes\fm}\in K[\der]$. Let $\w\in (\Cazr)^\times$ be
a representative of $\fm$. Then $\w\in (\Cazr)^{\b}$
and $\widetilde{\phi}_j:=\phi_j-\w^\dagger\in \Cazrl[\imag]$ for $j=1,\dots,r$.
We   have the $\C$-linear maps
$$\widetilde{A}_j\  :=\  \der-\widetilde{\phi}_j\ \colon\ \Caj[\imag]\to \Cajl[\imag] \qquad (j=1,\dots,r)$$
and for sufficiently large $a$ a factorization
$$\widetilde{A}_a\  =\  \widetilde{A}_1\circ\cdots\circ\widetilde{A}_r\ \colon\ \Car[\imag]\to \c_a[\imag].$$
Below we assume this holds for all $a$, as can be arranged by increasing $a_0$. 
We call~$f,g\in\c_a[\imag]$ 
{\bf alike} if  $f$, $g$ are both attractive or
both repulsive. In the same way we define when germs $f,g\in \c[\imag]$
are alike.\index{alike}\index{germ!alike}\index{function!alike} 
Suppose that  $\phi_j$, $\widetilde{\phi}_j$ are alike for~$j=1,\dots,r$.
Then we have   continuous $\C$-linear operators
$$\widetilde{B}_j\ :=\ B_{\widetilde{\phi}_j}\ \colon\ \c_a[\imag]^{\b}\to \c_a[\imag]^{\b} \qquad (j=1,\dots,r)$$
and the right-inverse 
$$\widetilde{A}_a^{-1} \ :=\  \widetilde{B}_r\circ \cdots \circ \widetilde{B}_1\ \colon\ \c_a[\imag]^{\b}\to \c_a[\imag]^{\b}$$
of $\widetilde{A}_a$, and the arguments in the previous subsection show that $\widetilde{B}_j=(B_j)_{\ltimes \w}=\w^{-1}\circ B_j\circ \w$ for $j=1,\dots,r$,  and hence
$\widetilde{A}_a^{-1}=\w^{-1}\circ A_a^{-1}\circ \w$. 
For $j=0,\dots,r$ we set, in analogy with~$A_j^\circ$ and $B_j^\circ$ from \eqref{eq:Ajcirc} and \eqref{eq:Bjcirc}, 
$$\widetilde{A}_j^{\circ}\ :=\ \widetilde{A}_1\circ \cdots \circ \widetilde{A}_j\ \colon\ \Caj[\imag]\to \c_a[\imag],\quad
\widetilde{B}_j^\circ\ :=\ \widetilde{B}_j\circ \cdots \circ \widetilde{B}_1\ \colon\ \c_a[\imag]^{\b}  \to\c_a[\imag]^{\b}.$$ 
Then $\widetilde{B}_j$ maps $\c_a[\imag]^{\b}$ into $\c_a[\imag]^{\b}\cap \Caj[\imag]$, $\widetilde{A}_j^\circ \circ \widetilde{B}_j^\circ$ is the identity on $\c_a[\imag]^{\b}$, and~$\widetilde{B}_j^\circ = \w^{-1}\circ B_j^\circ \circ \w$
by the above.

\subsection*{A weighted version of Proposition~\ref{uban}}
We adopt the setting of the subsection {\it Damping factors}\/ of Section~\ref{sec:IHF}, and
make the same assumptions as in the paragraph before Proposition~\ref{uban}.
Thus $H$, $K$, $A$, $f_1,\dots, f_r$, $\phi_1,\dots, \phi_r$, $a_0$ are as in the previous subsection,
$\fv\in \Cazr$ satisfies $\fv(t)>0$ for all $t\ge a_0$, and its germ $\fv$ is in $H$ with $\fv\prec 1$.  
As part of those assumptions we also have $\phi_1,\dots,\phi_r\preceq_\Delta \fv^{-1}$ in the asymptotic field $K$,  
for the convex subgroup 
 $$\Delta\ :=\ \big\{\gamma\in v(H^\times):\ \gamma=o(v\fv)\big\}$$ 
of $v(H^\times)=v(K^\times)$. Also $\nu\in\R^>$ and $u := \fv^\nu|_{[a,\infty)}\in (\Car)^\times$.

\medskip\noindent
To state a weighted version of Proposition~\ref{uban}, let $\fm\in H^\times$, $\fm\prec 1$, and let $\fm$ also denote a representative  in $(\Cazr)^\times$ of the germ $\fm$.  
Set $\w:=\fm|_{[a,\infty)}$, so we have~$\w\in (\Car)^\times\cap (\Car)^{\b}$ and thus $\Car[\imag]^\w\subseteq \Car[\imag]^{\b}$.
(Note that $\w$, like $u$, depends on~$a$, but we do not indicate this dependence notationally.)
With notations as in the previous subsection we assume that for all $a$ we have the factorization $$\widetilde{A}_a\  =\  \widetilde{A}_1\circ\cdots\circ\widetilde{A}_r\ \colon\ \Car[\imag]\to \c_a[\imag],$$ as can be arranged by increasing $a_0$ if necessary.


\begin{prop} \label{prop:1.5 weighted}
Assume $H$ is real closed, $\nu\in\Q$, $\nu>r$,
and the elements  $\phi_j$,  $\phi_j-\fm^\dagger$ of $\c_{a_0}[\imag]$ are alike for $j=1,\dots,r$.
Then:
\begin{enumerate}
\item[\rm{(i)}] the $\C$-linear operator $u A_a^{-1}\colon \c_a[\imag]^{\b} \to \c_a[\imag]^{\b}$ maps $\c_a[\imag]^{\w}$ into $\Car[\imag]^{\w}$; 
\item[\rm(ii)]  its restriction to a $\C$-linear map $\c_a[\imag]^{\w} \to \Car[\imag]^{\w}$ is continuous; and
\item[\rm(iii)] denoting this restriction also by $uA_a^{-1}$, we have $\|uA_a^{-1}\|^\w_{a;r}\to 0$ as $a\to \infty$.
\end{enumerate}
\end{prop}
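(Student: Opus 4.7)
The plan is to reduce the statement to Proposition~\ref{uban} itself, but applied to the twisted data via the identity $A_a^{-1}=\w\circ\widetilde{A}_a^{-1}\circ\w^{-1}$ established in the doubly-twisted integration subsection. Concretely, if $f\in\c_a[\imag]^{\w}$ and $g:=\w^{-1}f\in\c_a[\imag]^{\b}$, then $\dabs{g}_a=\dabs{f}_a^{\w}$ and $uA_a^{-1}(f)=u\w\,\widetilde{A}_a^{-1}(g)$. Our task is therefore to control the quantities $\w^{-1}\bigl[u\w\,\widetilde{A}_a^{-1}(g)\bigr]^{(i)}$ for $i=0,\dots,r$.

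The first key step is to apply Lemma~\ref{teq} to the twisted system using $u\w$ in the role of the damping factor and $(\widetilde\phi_1,\dots,\widetilde\phi_r)$ in the role of the splitting factors. The crucial algebraic observation is that
$$(u\w)^\dagger+\widetilde\phi_j\ =\ u^\dagger+\w^\dagger+\phi_j-\w^\dagger\ =\ u^\dagger+\phi_j,$$
so the coefficients generated by the recursion in Lemma~\ref{teq} agree \emph{verbatim} with the original $u_{i,j}\in K$ computed from $(u,\phi_1,\dots,\phi_r)$. Combined with the composition identity $\widetilde{B}_j^{\circ}=\w^{-1}\circ B_j^{\circ}\circ\w$ from the previous subsection, this yields the clean expansion
$$\w^{-1}\bigl[uA_a^{-1}(f)\bigr]^{(i)}\ =\ \sum_{j=r-i}^{r}u_{i,j}\cdot u\cdot\widetilde{B}_j^{\circ}(\w^{-1}f)\qquad(0\leq i\leq r).$$

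The second step is the analytic estimate. The \emph{alike} hypothesis guarantees that each $\widetilde\phi_j$ falls under the same (attractive or repulsive) case as $\phi_j$ from the doubly-twisted integration discussion, so $\widetilde{B}_j=B_{\widetilde\phi_j}\colon \c_a[\imag]^{\b}\to\c_a[\imag]^{\b}$ is continuous with $\|\widetilde{B}_j\|_a\leq\|1/\Re\widetilde\phi_j\|_a$, a quantity bounded uniformly in $a$ and in fact decreasing in $a$. Meanwhile the estimates from the proof of Proposition~\ref{uban} (using $\nu>r$, $\phi_j\preceq_\Delta\fv^{-1}$, whence $u_{i,j}\preceq_\Delta\fv^{-i}$) give $u_{i,j}u\prec 1$, and in particular $\|u_{i,j}u\|_a\to 0$ as $a\to\infty$. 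Applying these bounds termwise to the displayed expansion proves (i), (ii), and (iii) all at once: each summand on the right lies in $\c_a[\imag]^{\b}$, so $[uA_a^{-1}(f)]^{(i)}\in\w\c_a[\imag]^{\b}$ (giving (i)); the bounds are linear in $\dabs{f}_a^{\w}=\dabs{\w^{-1}f}_a$ (giving (ii)); and the coefficient factors $\|u_{i,j}u\|_a$ tend to zero (giving (iii)).

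The main obstacle I anticipate is the bookkeeping in the first step: verifying by induction on $i$ that when the recursion of Lemma~\ref{teq} is fed $(u\w,\widetilde\phi_1,\dots,\widetilde\phi_r)$ instead of $(u,\phi_1,\dots,\phi_r)$, the contributions of $\w^\dagger$ cancel at every level so that the resulting coefficients are literally the same $u_{i,j}$. Once this cancellation is in place, the rest of the argument is a direct transcription of the proof of Proposition~\ref{uban}, with no new analytic ingredients required.
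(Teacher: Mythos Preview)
Your proposal is correct and reaches exactly the same key formula as the paper,
\[
\w^{-1}\bigl[uA_a^{-1}(f)\bigr]^{(i)}\ =\ \sum_{j=r-i}^{r}u_{i,j}\,u\,\widetilde{B}_j^{\circ}(\w^{-1}f),
\]
followed by the same termwise bounds (with $\|u\,u_{i,j}\|_a\to 0$ from the proof of Proposition~\ref{uban} and $\|\widetilde B_j\|_a\le \|1/\Re\widetilde\phi_j\|_a$).

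The only difference is how you arrive at the displayed formula. The paper does it in one line: apply Lemma~\ref{teq} \emph{verbatim} to $uA_a^{-1}(f)$ (with the original $u$ and $\phi_j$), multiply both sides by $\w^{-1}$, and then recognize $\w^{-1}B_j^{\circ}(\w g)=\widetilde B_j^{\circ}(g)$ from the identity $\widetilde B_j^{\circ}=\w^{-1}\circ B_j^{\circ}\circ\w$. No re-running of the recursion with $(u\w,\widetilde\phi_j)$ is needed, and the ``main obstacle'' you anticipate---checking that the $\w^\dagger$ contributions cancel at every level---simply does not arise. Your route via the cancellation $(u\w)^\dagger+\widetilde\phi_j=u^\dagger+\phi_j$ is a valid alternative derivation of the same identity, just slightly more roundabout.
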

\begin{proof}
Let $f\in \c_a[\imag]^{\w}$, so $g:=\w^{-1}f\in\c_a[\imag]^{\b}$. Let $i\in\{0,\dots,r\}$; then with $\widetilde{B}^\circ_j$ as in the previous subsection and $u_{i,j}$ as  in Lemma~\ref{teq}, that lemma gives 
$$\w^{-1}\big(uA_a^{-1}(f)\big){}^{(i)}	= \sum_{j=r-i}^r u_{i,j} u\cdot \w^{-1}B^\circ_j(\w g)\  = 
\sum_{j=r-i}^r u_{i,j}u\widetilde{B}^\circ_j(g).$$
The proof of Proposition~\ref{uban} shows $u_{i,j} u\in\c_a[\imag]^{\b}$ with $\dabs{u_{i,j}u}_a\to 0$ as $a\to\infty$.
Set
$$\widetilde{c}_{i,a}\ :=\ \sum_{j=r-i}^r \|u\,u_{i,j}\|_a \cdot \|\widetilde{B}_j\|_a\cdots \|\widetilde{B}_1\|_a\in [0,\infty)  \qquad (i=0,\dots,r).$$
Then
$\big\|\w^{-1}\big[uA_a^{-1}(f)\big]{}^{(i)}\big\|_a\le \widetilde{c}_{i,a}\|g\|_a=\widetilde{c}_{i,a}\|f\|^\w_a$ 
where $\widetilde{c}_{i,a}\to 0$ as $a\to\infty$. This yields (i)--(iii).
\end{proof}

\subsection*{Weighted variants of results in Section~\ref{sec:split-normal over Hardy fields}}
In this subsection we adopt the hypotheses in force for Lemma~\ref{bdua}. To summarize those,   
$H$, $K$, $A$, $f_1,\dots, f_r$, $\phi_1,\dots, \phi_r$, $a_0$, $\fv$, $\nu$, $u$, $\Delta$ are as in the previous subsection,
$d,r\in \N^{\ge 1}$, $H$ is real closed, $R\in K\{Y\}$ has order~$\leq d$ and weight~$\leq w\in\N^{\geq r}$.
Also $\nu\in \Q$, $\nu>w$,
$R\prec_{\Delta}\fv^\nu$, $\nu\fv^\dagger\not\sim \Re \phi_j$ and
$\Re\phi_j - \nu\fv^\dagger\in (\c_{a_0})^\times$
for $j=1,\dots,r$. Finally, $\tilde A:=A_{\ltimes\fv^\nu}\in K[\der]$ and $\tilde A_a(y)=u^{-1}A_a(uy)$ for $y\in\Car[\imag]$.
Next, let  $\fm$,~$\w$ be as in the previous subsection.  As in Lemma~\ref{lem:2.1 summary} we consider the
continuous operator
$$\Phi_a\colon \Car[\imag]^{\b}\times \Car[\imag]^{\b}\to \Car[\imag]^{\b}$$ given by
$$\Phi_a(f,y)\ :=\ \Xi_a(f+y)-\Xi_a(f)\ =\  u\widetilde{A}_a^{-1}\!\left( u^{-1}\big( R(f+y)-R(f)\big) \right).$$
Here is our weighted version of Lemma~\ref{lem:2.1 summary}:

{\samepage
\begin{lemma}\label{lem:2.1 weighted}
Suppose the elements $\phi_j-\nu\fv^\dagger$,~$\phi_j-\nu\fv^\dagger-\fm^\dagger$ of $\c_{a_0}[\imag]$ are alike, for~$j=1,\dots,r$, and let $f\in\Car[\imag]^{\b}$. Then the operator~$y\mapsto \Phi_a(f,y)$
maps~$\Car[\imag]^\w$ into itself. Moreover, there are~$E_a,E_a^+\in\R^{\geq}$ such that for all $g\in \Car[\imag]^{\b}$ and~$y\in \Car[\imag]^{\w}$, 
$$\dabs{\Phi_a(f,y)}^\w_{a;r}  
 \ \le\ E_a\cdot \max\!\big\{1, \|f\|_{a;r}^d\big\}\cdot  \big( 1+\dabs{y}_{a;r}+\cdots+\dabs{y}_{a;r}^{d-1} \big)\cdot\dabs{y}_{a;r}^\w,$$
\vskip-1.5em
\begin{multline*}
\|\Phi_a(f,g+y)-\Phi_a(f,g) \|_{a;r}^\w\ \le\ \\ E_a^+\cdot \max\!\big\{1, \|f\|_{a;r}^d\big\}\cdot \max\!\big\{1, \|g\|_{a;r}^d\big\}\cdot \big( 1+\dabs{y}_{a;r}+\cdots+\dabs{y}_{a;r}^{d-1} \big)\cdot\dabs{y}_{a;r}^\w.
\end{multline*}
We can take these $E_a$, $E_a^+$ such that $E_a,E_a^+ \to 0$ as $a\to \infty$, and do so below. 
\end{lemma}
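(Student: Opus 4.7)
The plan is to mirror the proof of the unweighted Lemma~\ref{lem:2.1 summary}, replacing each estimate by its weighted counterpart. The backbone is an application of Proposition~\ref{prop:1.5 weighted} with $\widetilde{A}=A_{\ltimes u}$ in place of $A$: the twist $\widetilde{A}$ splits over $K$ via $(\phi_1-\nu\fv^\dagger,\dots,\phi_r-\nu\fv^\dagger)$, each such factor satisfies $\Re(\phi_j-\nu\fv^\dagger)\succeq 1$ and $\phi_j-\nu\fv^\dagger\preceq_\Delta \fv^{-1}$ (by the ambient setup), and the aliking hypothesis of the present lemma is precisely the one Proposition~\ref{prop:1.5 weighted} demands for this choice. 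That proposition then yields, after possibly increasing $a_0$, that $u\widetilde{A}_a^{-1}$ maps $\c_a[\imag]^\w$ continuously into $\Car[\imag]^\w$ with operator norm $\dabs{u\widetilde{A}_a^{-1}}_{a;r}^\w\to 0$ as $a\to\infty$.

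Next, I would expand $h:=u^{-1}\bigl(R(f+y)-R(f)\bigr)$ via the Taylor formula as
\[ h\ =\ \sum_{|\i|\geq 1} c_\i\,y^\i,\qquad c_\i\ :=\ \sum_\j \tbinom{\j}{\i}\,u^{-1}R_\j\,f^{\j-\i}\ \in\ \c_a[\imag]. \]
Since $R\prec_\Delta \fv^\nu=u$, each $u^{-1}R_\j\prec 1$, hence $\dabs{u^{-1}R_\j}_a$ is finite and tends to $0$ as $a\to\infty$. Using $\dabs{f^{\j-\i}}_a\leq\dabs{f}_{a;r}^{|\j-\i|}\leq\max\{1,\dabs{f}_{a;r}^d\}$ (valid because $|\j-\i|=|\j|-|\i|\leq d-1$), one gets $\max_\i\dabs{c_\i}_a\leq C_a\cdot\max\{1,\dabs{f}_{a;r}^d\}$ for some constant $C_a\to 0$. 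Applying Corollary~\ref{cor:weighted bd} to the polynomial $\sum_{|\i|\geq 1}c_\i Y^\i$, which has degrees from $1$ to $d$, then yields
\[ \dabs{h}_a^\w\ \leq\ DC_a\cdot \max\{1,\dabs{f}_{a;r}^d\}\cdot\bigl(1+\dabs{y}_{a;r}+\cdots+\dabs{y}_{a;r}^{d-1}\bigr)\cdot\dabs{y}_{a;r}^\w \]
for a combinatorial constant $D$.

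Composing with $u\widetilde{A}_a^{-1}$ proves the first inequality with $E_a:=\dabs{u\widetilde{A}_a^{-1}}_{a;r}^\w\cdot DC_a$, which tends to $0$, and simultaneously shows that $y\mapsto\Phi_a(f,y)$ maps $\Car[\imag]^\w$ into itself. The second inequality is immediate from the identity $\Phi_a(f,g+y)-\Phi_a(f,g)=\Phi_a(f+g,y)$ of~\eqref{eq:Phia difference} applied to the first inequality with $f+g$ in place of $f$, together with Lemma~\ref{lem:inequ power d} (which gives $\max\{1,\dabs{f+g}_{a;r}^d\}\leq 2^d\max\{1,\dabs{f}_{a;r}^d\}\max\{1,\dabs{g}_{a;r}^d\}$); take $E_a^+:=2^d E_a$.

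The main obstacle is not any new analytic content but the setup work preceding it: verifying that Proposition~\ref{prop:1.5 weighted} genuinely applies to $\widetilde{A}$ with weight $\fm$, in particular tracking the aliking condition through the $u$-twist and keeping straight the interplay between the three scales $\fv$, $u=\fv^\nu$, and $\w=\fm$. Once this is done, the argument runs in parallel to the proof of Lemma~\ref{lem:2.1 summary}, with the weighted Corollary~\ref{cor:weighted bd} playing the role of Corollary~\ref{cor:bound on P(f)}.
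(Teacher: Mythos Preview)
Your proposal is correct and follows essentially the same route as the paper: Taylor-expand $u^{-1}\bigl(R(f+y)-R(f)\bigr)$, bound the coefficients via $u^{-1}R_{\j}\prec 1$ and $\max\{1,\dabs{f}_{a;r}^d\}$, apply the weighted Corollary~\ref{cor:weighted bd}, and then feed through Proposition~\ref{prop:1.5 weighted} (with $\phi_j-\nu\fv^\dagger$ in the role of $\phi_j$) to get the first inequality and the mapping claim; the second inequality comes from \eqref{eq:Phia difference} and Lemma~\ref{lem:inequ power d}, with $E_a^+=2^dE_a$. The paper's proof is organized identically, down to the choice of constants.
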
}
\begin{proof}
Let $y\in \Car[\imag]^{\w}$. 
By Taylor expansion we have
$$R(f+y)-R(f)\ =\ \sum_{\abs{\i}>0} \frac{1}{\i !}R^{(\i)}(f)y^{\i}\ =\ \sum_{\abs{\i}>0} S_{\i}(f)y^{\i}
\quad\text{where $S_{\i}(f):= \frac{1}{\i !}\sum_{\j}R_{\j}^{(\i)}f^\j$,}$$
and $u^{-1}S_{\i}(f)\in\c_a[\imag]^{\b}$. So
$h:=u^{-1}\big(R(f+y)-R(f)\big) \in \c_a[\imag]^\w$, since~$\c_a[\imag]^\w$ is an ideal of  $\c_a[\imag]^{\b}$. 
Applying Proposition~\ref{prop:1.5 weighted}(i) with $\phi_j-\nu\fv^\dagger$ in the role of $\phi_j$ yields~$\Phi_a(f,y)=u\widetilde{A}_a^{-1}(h) \in \Car[\imag]^\w$, establishing the first claim. Next, let $g\in \Car[\imag]^{\b}$. Then $\Phi_a(f,g+y)-\Phi_a(f,g) = \Phi_a(f+g,y)$ by \eqref{eq:Phia difference}. Therefore,  
\begin{align*} \Phi_a(f,g+y)-\Phi_a(f,g)\ =\  u\widetilde{A}_a^{-1}(h),&\quad h\ :=\ u^{-1}\big(R(f+g+y)-R(f+g)\big), \text{ so}\\
\dabs{\Phi_a(f,g+y)-\Phi_a(f,g)}_{a;r}^\w\ &=\ \dabs{ u\widetilde{A}_a^{-1}(h) }_{a;r}^\w\ \leq\ \dabs{u\widetilde{A}_a^{-1}}_{a;r}^\w\cdot
\dabs{h}^\w_a.
\end{align*}
By Corollary~\ref{cor:weighted bd} we have 
$$\dabs{h}_a^\w\ \leq\ D
\cdot\max_{\abs{\i}>0}\, \dabs{u^{-1}S_{\i}(f+g)}_a\cdot \big( 1+\dabs{y}_{a;r}+\cdots+\dabs{y}_{a;r}^{d-1} \big)\cdot\dabs{y}_{a;r}^\w$$
where $D=D(d,r):=\big( \textstyle{d+r+1 \choose r+1}-1\big)$.
Let $D_a$ be as in the proof of Lem\-ma~\ref{bdua, bds}. Then $D_a\to 0$ as $a\to \infty$, and  Lemma~\ref{lem:inequ power d} gives for $\abs{\i}>0$,
\begin{align*} \dabs{u^{-1}S_{\i}(f)}_a\ &\leq\ D_a\cdot\max\!\big\{1,\dabs{f}_{a;r}^d\big\}\\
\dabs{u^{-1}S_{\i}(f+g)}_a\ &\leq\ D_a\cdot\max\!\big\{1,\dabs{f+g}_{a;r}^d\big\}\\
&\le\ 2^dD_a\cdot \max\!\big\{1,\dabs{f}_{a;r}^d\big\}\cdot\max\!\big\{1,\dabs{g}_{a;r}^d\big\}.
\end{align*}
This gives the desired result for 
$E_a:=\dabs{u\widetilde{A}_a^{-1}}_{a;r}^\w\cdot D\cdot D_a$ and $E_a^+:=2^d E_a$, using also Proposition~\ref{prop:1.5 weighted}(iii)
with $\phi_j-\nu\fv^\dagger$ in the role of $\phi_j$.
\end{proof}

\noindent
Lemma~\ref{lem:2.1 weighted} allows us to refine Theorem~\ref{thm:fix} as follows:

\begin{cor}\label{cor:2.3 weighted}
Suppose the elements $\phi_j-\nu\fv^\dagger$,~$\phi_j-\nu\fv^\dagger-\fm^\dagger$ of $\c_{a_0}[\imag]$ are alike, for $j=1,\dots,r$, and $R(0)\preceq \fv^\nu\fm$. Then
for sufficiently large $a$ the operator~$\Xi_a$ maps the closed ball
$B_a := \big\{f\in \Car[\imag]:\, \|f\|_{a;r}\le 1/2\big\}$
of the normed space~$\Car[\imag]^{\b}$ into itself, has a unique fixed point in $B_a$, and this fixed point 
lies in $\Car[\imag]^\w$.
\end{cor}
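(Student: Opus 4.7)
The plan is to run the Banach fixed point argument a second time, now inside the smaller weighted Banach space $\Car[\imag]^\w$, and then use uniqueness on the larger ball $B_a$ to identify the two fixed points. Theorem~\ref{thm:fix} already delivers, for all sufficiently large $a$, both the self-map property $\Xi_a(B_a)\subseteq B_a$ and a unique fixed point $f_\infty\in B_a$; the only new content is to show $f_\infty\in\Car[\imag]^\w$. Note that because $\fm\prec 1$, we have $\w\in (\Car)^\times\cap(\Car)^\b$, so by Lemma~\ref{lem:w-conv} the normed space $\bigl(\Car[\imag]^\w,\|\cdot\|_{a;r}^\w\bigr)$ is a Banach space, and by Lemma~\ref{lem:w-conv => b-conv} convergence in $\|\cdot\|_{a;r}^\w$ implies convergence in $\|\cdot\|_{a;r}$. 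Hence $B_a\cap\Car[\imag]^\w$ is a nonempty ($0$ is in it) closed subset of $\Car[\imag]^\w$, so it is complete under $\|\cdot\|_{a;r}^\w$.

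Next I verify that $\Xi_a$ maps $B_a\cap\Car[\imag]^\w$ into itself. The assumption $R(0)\preceq\fv^\nu\fm$ gives $u^{-1}R(0)\preceq \fm$, i.e., $u^{-1}R(0)\in\c_a[\imag]^\w$. Since by hypothesis $\phi_j-\nu\fv^\dagger$ and $\phi_j-\nu\fv^\dagger-\fm^\dagger$ are alike for $j=1,\dots,r$, Proposition~\ref{prop:1.5 weighted}(i) applied to $\widetilde A$ in place of $A$ yields $\Xi_a(0)=u\widetilde A_a^{-1}\!\bigl(u^{-1}R(0)\bigr)\in\Car[\imag]^\w$. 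Now, for any $y\in\Car[\imag]^\w\subseteq\Car[\imag]^\b$, the first assertion of Lemma~\ref{lem:2.1 weighted} (applied with $f=0$) says $\Phi_a(0,y)\in\Car[\imag]^\w$, so $\Xi_a(y)=\Xi_a(0)+\Phi_a(0,y)\in\Car[\imag]^\w$. This shows $\Xi_a$ sends $\Car[\imag]^\w$ into itself, and combined with $\Xi_a(B_a)\subseteq B_a$ it also sends $B_a\cap\Car[\imag]^\w$ into itself.

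Third, I establish weighted contractivity. For $f,g\in B_a\cap\Car[\imag]^\w$, the second estimate of Lemma~\ref{lem:2.1 weighted} applied with the first argument $0$ and with $y:=f-g$ (using $\|g\|_{a;r}\leq 1/2$ and $\|f-g\|_{a;r}\leq 1$) yields
\[
\|\Xi_a(f)-\Xi_a(g)\|_{a;r}^\w\ =\ \|\Phi_a(0,g+(f-g))-\Phi_a(0,g)\|_{a;r}^\w\ \leq\ E_a^+\cdot d\cdot \|f-g\|_{a;r}^\w.
\]
Since $E_a^+\to 0$ as $a\to\infty$, we may enlarge $a$ so that $E_a^+ d\leq 1/2$, obtaining a $\frac{1}{2}$-contraction on the complete metric space $B_a\cap\Car[\imag]^\w$.

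The Banach contraction principle then gives a unique fixed point of $\Xi_a$ in $B_a\cap\Car[\imag]^\w$; a fortiori this is a fixed point of $\Xi_a$ in $B_a$, so by the uniqueness clause of Theorem~\ref{thm:fix} it must equal $f_\infty$. Hence $f_\infty\in\Car[\imag]^\w$, as required. There is no real obstacle here beyond the bookkeeping already accomplished in Lemma~\ref{lem:2.1 weighted} and Proposition~\ref{prop:1.5 weighted}; the one subtle point to guard against is ensuring that both the self-map property and the contractivity hold \emph{simultaneously} in the weighted norm on $B_a\cap\Car[\imag]^\w$, and this is forced by the stronger decay $E_a,E_a^+\to 0$ together with the hypothesis $R(0)\preceq\fv^\nu\fm$, which is precisely what guarantees $\Xi_a(0)\in\Car[\imag]^\w$.
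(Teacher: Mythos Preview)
Your proof is correct and follows essentially the same approach as the paper: run the Banach contraction argument a second time in the weighted space, using Lemma~\ref{lem:2.1 weighted} for contractivity and Proposition~\ref{prop:1.5 weighted} together with $R(0)\preceq\fv^\nu\fm$ to place $\Xi_a(0)$ in $\Car[\imag]^\w$. The only cosmetic difference is the choice of domain: the paper works with the weighted ball $B_a^\w=\{f:\|f\|_{a;r}^\w\le 1/2\}$ (which sits inside $B_a$ once $\|\w\|_a\le 1$) and shows directly that $\Xi_a(B_a^\w)\subseteq B_a^\w$ via a bound $\|\Xi_a(0)\|_{a;r}^\w\le 1/4$, whereas you work with $B_a\cap\Car[\imag]^\w$ and obtain the self-map property by combining $\Xi_a(B_a)\subseteq B_a$ from Theorem~\ref{thm:fix} with $\Xi_a(\Car[\imag]^\w)\subseteq\Car[\imag]^\w$; both routes land on the same fixed point by uniqueness.
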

\begin{proof}
Take $a$ such that $\dabs{\w}_a\leq 1$. Then by \eqref{eq:weighted norm, 1}, $B_a$ contains the closed ball
$$B_a^\w\ :=\ \big\{f\in \Car[\imag]:\, \|f\|^\w_{a;r}\le 1/2\big\}$$
of the normed space $\Car[\imag]^\w$. Let $f,g\in B_a^\w$. Then
$\Xi_a(g)-\Xi_a(f)=\Phi_a(f,g-f)$ lies in $\Car[\imag]^\w$ by Lemma~\ref{lem:2.1 weighted}, and
 with $E_a$ as in that lemma,
\begin{align*}
\dabs{\Xi_a(f)-\Xi_a(g)}_{a;r}^\w\	&=\ \dabs{\Phi_a(f,g-f)}_{a;r}^\w \\
									& \leq\  E_a\cdot
									\max\!\big\{1, \|f\|_{a;r}^d\big\}\cdot  \big( 1+\cdots+\dabs{g-f}_{a;r}^{d-1} \big)\cdot\dabs{g-f}_{a;r}^\w \\
									& \leq\  E_a\cdot d\cdot \dabs{g-f}^\w_{a;r}. 
\end{align*}
Taking $a$ so that moreover $E_ad\leq\frac{1}{2}$ we obtain 
\begin{equation}\label{eq:2.3 weighted}
\dabs{\Xi_a(f)-\Xi_a(g)}_{a;r}^\w\leq\textstyle\frac{1}{2}\dabs{f-g}^\w_{a;r}\quad\text{ for all
$f,g\in B_a^\w$.}
\end{equation} 
Next we consider the case $g=0$. 
Our hypothesis $R(0)\preceq \fv^\nu \fm$ gives $u^{-1}R(0)\in \c_a[\imag]^\w$. Proposition~\ref{prop:1.5 weighted}(i),(ii)
with $\phi_j-\nu\fv^\dagger$ in the role of~$\phi_j$ gives~$\Xi_a(0)\in \Car[\imag]^\w$ and $\dabs{\Xi_a(0)}_{a;r}^\w \leq \dabs{u\widetilde{A}_a^{-1}}_{a;r}^\w\dabs{u^{-1}R(0)}_a^\w$.
Using Proposition~\ref{prop:1.5 weighted}(iii) we now take~$a$ so large that
$\dabs{\Xi_a(0)}_{a;r}^\w \leq \frac{1}{4}$. Then \eqref{eq:2.3 weighted} for $g=0$ gives $\Xi_a(B_a^\w)\subseteq B_a^\w$. 
By Lemma~\ref{lem:w-conv} the normed space~$\Car[\imag]^\w$ is complete,
hence~$\Xi_a$ has a unique fixed point in $B_a^\w$.
\end{proof}

\noindent
Now suppose in addition that $A\in H[\der]$ and $R\in H\{Y\}$.  Set
$$(\Car)^\w\ :=\ \big\{f\in\Car:\dabs{f}^\w_{a;r}<\infty\}\ =\ \Car[\imag]^\w\cap\Car,$$
a real Banach space with respect to $\dabs{\,\cdot\,}_{a;r}^\w$.
Increase $a_0$ as at the beginning of the subsection {\it Preserving reality}\/ of Section~\ref{sec:split-normal over Hardy fields}. Then we have the map 
$$\Re\Phi_a\colon(\Car)^{\b}\times(\Car)^{\b}\to(\Car)^{\b},\qquad (f,y)\mapsto \Re\!\big(\Phi_a(f,y)\big).$$
Suppose the elements $\phi_j-\nu\fv^\dagger$,~$\phi_j-\nu\fv^\dagger-\fm^\dagger$ are alike for  $j=1,\dots,r$, and let~$a$ and $E_a, E_a^{+}$  be as in
Lemma~\ref{lem:2.1 weighted}. Then this lemma yields:

\begin{lemma}\label{lem:2.1 weighted, real}
Let $f,g\in (\Car)^{\b}$ and $y\in(\Car)^\w$. Then $(\Re\Phi_a)(f,y)\in  (\Car)^\w$ and 
$$\dabs{\Re(\Phi_a)(f,y)}^\w_{a;r}  
 \ \le\ E_a\cdot \max\!\big\{1, \|f\|_{a;r}^d\big\}\cdot  \big( 1+\dabs{y}_{a;r}+\cdots+\dabs{y}_{a;r}^{d-1} \big)\cdot\dabs{y}_{a;r}^\w,$$
\begin{multline*}
\|(\Re\Phi_a)(f,g+y)-(\Re\Phi_a)(f,g) \|_{a;r}^\w\ \le\ \\ E_a^{+}\cdot \max\!\big\{1, \|f\|_{a;r}^d\big\}\cdot \max\!\big\{1, \|g\|_{a;r}^d\big\}\cdot \big( 1+\dabs{y}_{a;r}+\cdots+\dabs{y}_{a;r}^{d-1} \big)\cdot\dabs{y}_{a;r}^\w.
\end{multline*}
\end{lemma}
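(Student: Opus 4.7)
The plan is to derive this lemma directly from Lemma~\ref{lem:2.1 weighted} by taking real parts and exploiting that the weight function is real-valued. First, I would observe that $\w$ is a representative of the germ $\fm \in H^\times$, hence $\w$ takes real values, so $\w^{-1}$ is real-valued too. Consequently, for any $h \in \Car[\imag]^\w$ and every $j \in \{0,\dots,r\}$ one has $\w^{-1}(\Re h)^{(j)} = \Re(\w^{-1} h^{(j)})$, and therefore $|\w^{-1}(\Re h)^{(j)}| \leq |\w^{-1} h^{(j)}|$ pointwise on $[a,\infty)$, yielding the key monotonicity
\[
\|\Re h\|_{a;r}^{\w}\ \leq\ \|h\|_{a;r}^{\w}.
\]
In particular $\Re h \in (\Car)^\w$ whenever $h \in \Car[\imag]^\w$.

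Next, given $f \in (\Car)^{\b} \subseteq \Car[\imag]^{\b}$ and $y \in (\Car)^\w \subseteq \Car[\imag]^\w$, Lemma~\ref{lem:2.1 weighted} gives $\Phi_a(f,y) \in \Car[\imag]^\w$ together with the first displayed bound on $\|\Phi_a(f,y)\|_{a;r}^\w$. Applying the monotonicity above yields $(\Re \Phi_a)(f,y) = \Re\!\big(\Phi_a(f,y)\big) \in (\Car)^\w$ and the first inequality of the present lemma, with the same constant $E_a$.

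For the second inequality, I would use the linearity identity
\[
(\Re\Phi_a)(f,g+y) - (\Re\Phi_a)(f,g)\ =\ \Re\!\big(\Phi_a(f,g+y) - \Phi_a(f,g)\big),
\]
combined with the second estimate of Lemma~\ref{lem:2.1 weighted} applied to $f,g,y \in \Car[\imag]^{\b}$ (using that $\|f\|_{a;r}$ and $\|g\|_{a;r}$ are unchanged when the germs are viewed in either the real or complex space). The monotonicity $\|\Re(\cdot)\|_{a;r}^\w \leq \|\cdot\|_{a;r}^\w$ then transfers the bound, with the same constant $E_a^+$. Since $E_a, E_a^+ \to 0$ as $a \to \infty$ in Lemma~\ref{lem:2.1 weighted}, the analogous statement here is automatic. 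No genuine obstacle arises; the only point requiring care is the reality of $\w$, which is what makes the real part commute with multiplication by $\w^{-1}$ under the norm.
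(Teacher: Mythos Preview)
Your proposal is correct and takes essentially the same approach as the paper, which simply introduces the real version by the phrase ``Then this lemma yields:'' referring to Lemma~\ref{lem:2.1 weighted}. You have correctly fleshed out the implicit argument: the reality of the weight $\w$ gives $\|\Re h\|_{a;r}^\w \le \|h\|_{a;r}^\w$, and the rest is immediate from the complex case.
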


\noindent
The same way we derived Corollary~\ref{cor:2.3 weighted} from Lemma~\ref{lem:2.1 weighted}, this leads to: 

\begin{cor}\label{cor:2.3 weighted, real}  If $R(0)\preceq \fv^\nu\fm$, then
for sufficiently large $a$ the operator~$\Re\Xi_a$ maps the closed ball
$B_a := \big\{f\in \Car:\, \|f\|_{a;r}\le 1/2\big\}$
of the normed space $(\Car)^{\b}$ into itself, has a unique fixed point in $B_a$, and this fixed point 
lies in $(\Car)^\w$.
\end{cor}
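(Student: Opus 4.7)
The plan is to follow exactly the pattern by which Corollary~\ref{cor:2.3 weighted} was derived from Lemma~\ref{lem:2.1 weighted}, using now the real analogues: $(\Car)^\w$ in place of $\Car[\imag]^\w$, $\Re\Xi_a$ in place of $\Xi_a$, and Lemma~\ref{lem:2.1 weighted, real} in place of Lemma~\ref{lem:2.1 weighted}. First, I would observe that $(\Car)^\w = \Car[\imag]^\w\cap\Car$ is a closed real-linear subspace of the (complex) Banach space $\Car[\imag]^\w$ from Lemma~\ref{lem:w-conv}, hence itself a real Banach space under $\dabs{\,\cdot\,}_{a;r}^\w$. I would fix $a$ large enough that $\dabs{\w}_a\leq 1$, so that by \eqref{eq:weighted norm, 1} the weighted ball
\[ B_a^\w\ :=\ \big\{f\in(\Car):\ \dabs{f}_{a;r}^\w\leq 1/2\big\} \]
is contained in $B_a$.

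Next, using Lemma~\ref{lem:2.1 weighted, real} (which is applicable under the standing alikeness hypothesis inherited from the preceding lemma), for $f,g\in B_a^\w$ we have
\[ \dabs{(\Re\Xi_a)(f)-(\Re\Xi_a)(g)}_{a;r}^\w\ =\ \dabs{(\Re\Phi_a)(g,f-g)}_{a;r}^\w\ \leq\ E_a\cdot d\cdot \dabs{f-g}_{a;r}^\w, \]
since $\dabs{g}_{a;r}\leq\dabs{g}_{a;r}^\w\cdot\dabs{\w}_a\leq 1/2$ and similarly for the powers of $\dabs{f-g}_{a;r}$. Increase $a$ so that $E_ad\leq 1/2$, giving contractivity on $B_a^\w$ with ratio $1/2$. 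For the zero term, the hypothesis $R(0)\preceq\fv^\nu\fm$ gives $u^{-1}R(0)\in\c_a^\w$, and Proposition~\ref{prop:1.5 weighted}(i),(ii) applied with $\phi_j-\nu\fv^\dagger$ in the role of $\phi_j$ yields $\Xi_a(0)\in\Car[\imag]^\w$ with
\[ \dabs{\Re\Xi_a(0)}_{a;r}^\w\ \leq\ \dabs{\Xi_a(0)}_{a;r}^\w\ \leq\ \dabs{u\widetilde A_a^{-1}}_{a;r}^\w\cdot\dabs{u^{-1}R(0)}_a^\w. \]
By Proposition~\ref{prop:1.5 weighted}(iii), enlarging $a$ further makes this bound $\leq 1/4$. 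Combined with the contraction estimate applied at $g=0$, this yields $\Re\Xi_a(B_a^\w)\subseteq B_a^\w$.

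Applying the Banach fixed point theorem in the real Banach space $(\Car)^\w$ produces a unique fixed point $f^{**}\in B_a^\w\subseteq (\Car)^\w$. To finish, I would invoke Corollary~\ref{cor:fix} (for further enlarged $a$ if necessary) to conclude that $\Re\Xi_a$ already has a unique fixed point $f^*$ in the larger ball $B_a$ of $(\Car)^{\b}$; then $f^{**}=f^*$ by uniqueness there, and in particular $f^*\in (\Car)^\w$. The main conceptual obstacle is simply bookkeeping — ensuring that a single choice of $a$ simultaneously enforces invariance of $B_a^\w$, the contraction constant $\leq 1/2$ in the weighted norm, and the hypotheses of Corollary~\ref{cor:fix} — but all the relevant quantities tend to $0$ as $a\to\infty$ by Proposition~\ref{prop:1.5 weighted}(iii) and Lemma~\ref{lem:2.1 weighted, real}, so finitely many enlargements of $a_0$ suffice.
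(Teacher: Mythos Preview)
Your proposal is correct and follows essentially the same approach as the paper, which simply notes that the result is obtained ``the same way we derived Corollary~\ref{cor:2.3 weighted} from Lemma~\ref{lem:2.1 weighted}'' using the real analogues. Your explicit invocation of Corollary~\ref{cor:fix} to handle the unweighted part of the statement and then uniqueness to identify the two fixed points is exactly the intended argument.
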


\subsection*{Revisiting Lemma~\ref{lem:Psin, b}}
We adopt the setting of the
previous subsection. As usual, $a$ ranges over~$[a_0,\infty)$.
We continue the investigation of the differences~${f-g}$ between solutions~$f$,~$g$
of the equation \eqref{eq:ADE} on $[a_0,\infty)$ that began in Lemma~\ref{lem:close}, and so we
take $f$, $g$, $E$, $\epsilon$,  $h_a$, $\theta_a$ as in that lemma.
Recall that in the remarks preceding Lemma~\ref{lem:Psin, b} we defined continuous operators~$\Phi_a, \Psi_a\colon \Car[\imag]^{\b}\to \Car[\imag]^{\b}$ by
$$\Phi_a(y)\ :=\ \Phi_a(g,y)\ =\ \Xi_a(g+y)-\Xi_a(g), \quad \Psi_a(y)\ :=\ \Phi_a(y)+h_a\qquad(y\in\Car[\imag]^{\b}).$$
As in those remarks, we set $\rho:=\dabs{f-g}_{a_0;r}$ and
$$B_a\ :=\ \big\{ y\in\Car[\imag]^{\b}:\ \dabs{y-h_a}_{a;r} \leq 1/2\big\},$$
and take $a_1\ge a_0$ so that $\theta_a\in B_a$ for all $a\ge a_1$. Then by \eqref{eq:dabs(y)} we have~$\dabs{y}_{a;r}\leq 1+\rho$ 
for $a\geq a_1$ and $y\in B_a$.
Next, take $a_2\geq a_1$ as in Lemma~\ref{lem:Psin, b}; thus 
for~$a\geq a_2$ and~$y,z\in B_a$ we have
$\Psi_a(y) \in B_a$ and $\dabs{{\Psi_a(y)-\Psi_a(z)}}_{a;r} \leq \textstyle\frac{1}{2} \|y-z\|_{a;r}$.
As in the previous subsection,  $\fm\in H^\times$, $\fm\prec 1$, $\fm$ denotes also  a representative  in~$(\Cazr)^\times$ of the germ $\fm$, and $\w:=\fm|_{[a,\infty)}\in (\Car)^\times\cap (\Car)^{\b}$, so $\Car[\imag]^\w\subseteq \Car[\imag]^{\b}$.

\medskip
\noindent
{\it In the rest of this subsection
$\phi_1-\nu\fv^\dagger,\dots,\phi_r-\nu\fv^\dagger\in K$ are $\gamma$-repulsive for $\gamma:=v\fm\in v(H^\times)^>$,
and $h_a\in\Car[\imag]^\w$ for all $a\ge a_2$.}\/
Then Lemma~\ref{lem:repulsive} 
 gives~$a_3\geq a_2$ such that for all~$a\geq a_3$ and~$j=1,\dots,r$, the functions~$\phi_j-u^\dagger, \phi_j-(u\w)^\dagger\in\c_a[\imag]$ are alike and hence  $\Psi_a\big(\Car[\imag]^\w\big)\subseteq  \Car[\imag]^\w$
by Lem\-ma~\ref{lem:2.1 weighted}. 
Thus $\Psi_a^n(h_a) \in \Car[\imag]^\w$ for all~$n$ and $a\geq a_3$.  

For $a\geq a_2$ we have  $\lim_{n\to\infty} \Psi_a^n(h_a)=\theta_a$ in $\Car[\imag]^{\b}$ by Corollary~\ref{cor:Psin, b};
we now aim to strengthen this to ``in  $\Car[\imag]^{\w}$'' (possibly for a larger $a_2$).
Towards this:


\begin{lemma}\label{lem:Psia contract}
There exists $a_4\geq a_3$ such that  
$\dabs{{\Psi_a(y)-\Psi_a(z)}}^\w_{a;r} \leq \frac{1}{2} \|y-z\|^\w_{a;r}$ for all $a\ge a_4$ and $y,z\in B_a\cap \Car[\imag]^\w$.
\end{lemma}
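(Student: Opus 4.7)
My plan is to derive the desired contraction estimate directly from the second displayed inequality in Lemma~\ref{lem:2.1 weighted}, applied with $f$ there specialized to the fixed solution $g$ from our setup. Specifically, for $y, z \in B_a \cap \Car[\imag]^\w$, I would write
\[
\Psi_a(y) - \Psi_a(z)\ =\ \Phi_a(g,y) - \Phi_a(g,z)\ =\ \Phi_a\!\big(g,\, z + (y-z)\big) - \Phi_a(g,z),
\]
and apply Lemma~\ref{lem:2.1 weighted} (which we may invoke because $a \ge a_3$ ensures $\phi_j - \nu\fv^\dagger$ and $\phi_j - \nu\fv^\dagger - \fm^\dagger$ are alike on $[a,\infty)$, and because $h_a \in \Car[\imag]^\w$ guarantees the $\w$-norms make sense). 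This gives
\[
\|\Psi_a(y) - \Psi_a(z)\|_{a;r}^\w\ \le\ E_a^+ \cdot \max\!\big\{1,\|g\|_{a;r}^d\big\} \cdot \max\!\big\{1,\|z\|_{a;r}^d\big\} \cdot \big(1 + \|y-z\|_{a;r} + \cdots + \|y-z\|_{a;r}^{d-1}\big) \cdot \|y-z\|_{a;r}^\w.
\]

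The next step is to bound the three non-weighted factors uniformly in $a$. First, $\|g\|_{a;r} \le \|g\|_{a_0;r}$ is finite and $a$-independent. Second, $z \in B_a$ together with \eqref{eq:dabs(y)} gives $\|z\|_{a;r} \le 1 + \rho$. Third, since both $y$ and $z$ lie in the closed ball $B_a$ of radius $1/2$ around $h_a$, the triangle inequality yields $\|y-z\|_{a;r} \le 1$, so the geometric-sum factor is bounded by $d$. Combining, there is a constant
\[
M\ :=\ \max\!\big\{1, \|g\|_{a_0;r}^d\big\} \cdot (1+\rho)^d
\]
independent of $a$ such that
\[
\|\Psi_a(y) - \Psi_a(z)\|_{a;r}^\w\ \le\ E_a^+ \cdot M \cdot d \cdot \|y-z\|_{a;r}^\w.
\]

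To conclude, I would invoke the final assertion of Lemma~\ref{lem:2.1 weighted} that $E_a^+ \to 0$ as $a \to \infty$, and choose $a_4 \ge a_3$ large enough so that $E_a^+ \cdot M \cdot d \le 1/2$ for all $a \ge a_4$. The argument is essentially a bookkeeping exercise; the only real subtlety—and the main obstacle hidden in the setup—is verifying that all prerequisites of Lemma~\ref{lem:2.1 weighted} in the weighted setting genuinely hold, namely the alikeness of $\phi_j - \nu\fv^\dagger$ and $\phi_j - \nu\fv^\dagger - \fm^\dagger$ (supplied by the $\gamma$-repulsiveness hypothesis via Lemma~\ref{lem:repulsive}) and membership of the relevant iterates in $\Car[\imag]^\w$. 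Once these are in place, the estimate falls out immediately.
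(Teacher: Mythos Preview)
Your proof is correct and follows essentially the same approach as the paper: apply the second inequality of Lemma~\ref{lem:2.1 weighted} with $f=g$, bound $\|z\|_{a;r}\le 1+\rho$ via \eqref{eq:dabs(y)} and $\|y-z\|_{a;r}\le 1$ from the ball condition, then use $E_a^+\to 0$ to choose $a_4$. The paper's argument is identical up to notation.
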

\begin{proof}
For $a\geq a_3$ and $y,z\in\Car[\imag]^\w$, and with $E_a^+$ as in Lemma~\ref{lem:2.1 weighted} we have
\begin{multline*}
\|\Psi_a(y)-\Psi_a(z) \|_{a;r}^\w\ \le\ \\ 
E_a^+\cdot \max\!\big\{1, \|g\|_{a;r}^d\big\}\cdot \max\!\big\{1, \|z\|_{a;r}^d\big\}\cdot \big( 1+\dabs{y-z}_{a;r}+\cdots+\dabs{y-z}_{a;r}^{d-1} \big)\cdot\dabs{y-z}_{a;r}^\w.
\end{multline*}
For each $a\geq a_1$ and $y,z\in B_a$  we then have
$$\max\!\big\{1, \|z\|_{a;r}^d\big\}\cdot \big( 1+\dabs{y-z}_{a;r}+\cdots+\dabs{y-z}_{a;r}^{d-1} \big)\ \leq\ (1+\rho)^d\cdot d,$$
so taking $a_4\geq a_3$ with 
$$E_a^+\max\!\big\{1, \|g\|_{a_0;r}^d\big\} (1+\rho)^dd\le 1/2\quad\text{ for all $a\geq a_4$,}$$ 
we have $\dabs{{\Psi_a(y)-\Psi_a(z)}}^\w_{a;r} \leq \frac{1}{2} \|y-z\|^\w_{a;r}$ for all $a\ge a_4$ and $y,z\in B_a\cap \Car[\imag]^\w$.
\end{proof}

\noindent
Let $a_4$ be as in the previous lemma.

\begin{cor}\label{cor:small theta, 1}
Suppose $a\geq a_4$. Then $\theta_a\in \Car[\imag]^\w$ and $\lim_{n\to \infty} \Psi_a^n(h_a)=\theta_a$ in the normed space $\Car[\imag]^\w$. In particular,~$f-g,(f-g)',\dots, (f-g)^{(r)}\preceq\fm$.
\end{cor}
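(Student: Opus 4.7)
The plan is to run the Banach fixed-point argument a second time, now in the weighted norm $\dabs{\,\cdot\,}^\w_{a;r}$, and then identify the resulting fixed point with $\theta_a$ via uniqueness of limits. First I would check that the iteration stays in the right set. We have $h_a\in B_a\cap \Car[\imag]^\w$, since $\dabs{h_a-h_a}_{a;r}=0$ and $h_a\in \Car[\imag]^\w$ by the standing hypothesis of the subsection. By the remarks preceding Lemma~\ref{lem:Psia contract} (which use $\gamma$-repulsiveness together with Lemma~\ref{lem:repulsive} and the first claim of Lemma~\ref{lem:2.1 weighted}), the operator $\Psi_a$ maps $\Car[\imag]^\w$ into itself for every $a\geq a_3$; combined with $\Psi_a(B_a)\subseteq B_a$ from Lemma~\ref{lem:Psin, b}, a straightforward induction yields $\Psi_a^n(h_a)\in B_a\cap \Car[\imag]^\w$ for all $n$ and all $a\ge a_4\ge a_3$.

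Next, Lemma~\ref{lem:Psia contract} applied iteratively to successive pairs $\big(\Psi_a^{n+1}(h_a),\Psi_a^n(h_a)\big)$ in $B_a\cap\Car[\imag]^\w$ yields, for $a\ge a_4$ and all $n\ge 1$,
$$\dabs{\Psi_a^{n+1}(h_a)-\Psi_a^n(h_a)}_{a;r}^\w \ \le\ 2^{-n}\,\dabs{\Psi_a(h_a)-h_a}_{a;r}^\w.$$
Hence $\big(\Psi_a^n(h_a)\big)_n$ is Cauchy in the normed space $\Car[\imag]^\w$. Since $\w\in\Car[\imag]^{\b}$, Lemma~\ref{lem:w-conv} tells us that this normed space is in fact a Banach space, so the sequence converges in $\Car[\imag]^\w$ to some $\theta^\ast\in\Car[\imag]^\w$.

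Finally I would identify $\theta^\ast$ with $\theta_a$. By Lemma~\ref{lem:w-conv => b-conv} weighted convergence implies convergence in $\Car[\imag]^{\b}$, so $\Psi_a^n(h_a)\to\theta^\ast$ in $\Car[\imag]^{\b}$. But Corollary~\ref{cor:Psin, b} already gives $\Psi_a^n(h_a)\to\theta_a$ in $\Car[\imag]^{\b}$, and uniqueness of limits forces $\theta^\ast=\theta_a$. This simultaneously establishes $\theta_a\in\Car[\imag]^\w$ and the convergence $\Psi_a^n(h_a)\to\theta_a$ in $\Car[\imag]^\w$. For the final ``in particular'' clause: $\theta_a\in\Car[\imag]^\w$ unfolds to $\theta_a,\theta_a',\dots,\theta_a^{(r)}\preceq \w$, and since $\theta_a$ is the restriction of $f-g$ to $[a,\infty)$ and $\w$ is a representative of the germ $\fm$, passing to germs gives $f-g,(f-g)',\dots,(f-g)^{(r)}\preceq\fm$.

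There is no real obstacle intrinsic to this corollary: once Lemma~\ref{lem:Psia contract} provides the contraction constant $1/2$ on $B_a\cap\Car[\imag]^\w$ and Lemma~\ref{lem:w-conv} provides completeness, the Banach fixed-point machine runs automatically and the identification step is immediate from Lemma~\ref{lem:w-conv => b-conv}. The genuine work had already been done upstream, in transferring the $\gamma$-repulsiveness hypothesis through Lemma~\ref{lem:repulsive} to conclude that the shifted operators $\phi_j-u^\dagger$ and $\phi_j-(u\w)^\dagger$ are alike, which is what makes Lemma~\ref{lem:2.1 weighted} applicable in the weighted setting at all.
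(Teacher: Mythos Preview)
Your proof is correct and follows essentially the same route as the paper: establish that the iterates $\Psi_a^n(h_a)$ form a Cauchy sequence in $\Car[\imag]^\w$ via the contraction estimate of Lemma~\ref{lem:Psia contract}, invoke completeness (Lemma~\ref{lem:w-conv}) to obtain a limit, and identify it with $\theta_a$ using Corollary~\ref{cor:Psin, b} together with Lemma~\ref{lem:w-conv => b-conv}. Your write-up is slightly more detailed in spelling out why the iterates remain in $B_a\cap\Car[\imag]^\w$ and in unfolding the final clause, but the substance is identical.
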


\begin{proof} 
We have $\Phi_a(h_a)=\Psi_a(h_a)-h_a\in\Car[\imag]^\w$, so  $M:=\dabs{\Phi_a(h_a)}_{a;r}^\w<\infty$. Since~$\Psi_a(B_a)\subseteq B_a$, 
induction on $n$ using Lemma~\ref{lem:Psia contract} gives
$$\dabs{\Psi_a^{n+1}(h_a)-\Psi^n_a(h_a)}^\w_{a;r} \leq M/2^n\qquad\text{ for all $n$.}$$
Thus $\big( \Psi_a^n(h_a) \big)$ is a cauchy sequence in the normed space~$\Car[\imag]^\w$, and so converges in~$\Car[\imag]^\w$ by Lemma~\ref{lem:w-conv}.
In the normed space $\Car[\imag]^{\b}$ we have $\lim_{n\to\infty}\Psi_a^n(h_a)  =\theta_a$,
by Corollary~\ref{cor:Psin, b}.
Thus $\lim_{n\to \infty}\Psi_a^n(h_a)  =\theta_a$ in $\Car[\imag]^\w$ by Lem\-ma~\ref{lem:w-conv => b-conv}.
\end{proof}




\subsection*{An application to slots in $H$}
Here we adopt the setting of the subsection {\it An application to slots in $H$}\/ in Section~\ref{sec:ueeh}. Thus
$H\supseteq\R$ is a Liouville closed Hardy field,
$K:=H[\imag]$, 
 $\I(K)\subseteq K^\dagger$, and
$(P,1,\hat h)$ is a slot in $H$ of order~$r\geq 1$; we set~$w:= \wt(P)$, $d:= \deg P$. 
{\it Assume also that $K$ is  $1$-linearly surjective if $r\geq 3$.}\/

\begin{prop} \label{prop:notorious 3.6}  
Suppose $(P,1,\hat h)$ is special, ultimate, $Z$-minimal, deep,  and strongly repulsive-normal. 
Let $f,g\in \Calr[\imag]$ and $\fm\in H^\times$ be such that 
$$P(f)\ =\ P(g)\ =\ 0,\qquad f,g\ \prec\ 1, \qquad v\fm\in v(\hat h - H).$$
Then $(f-g)^{(j)}\preceq\fm$ for $j=0,\dots,r$.
\end{prop}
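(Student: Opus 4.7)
The plan is to invoke Corollary~\ref{cor:small theta, 1} with weight~$\w$ a representative of the germ~$\fm$ in $(\Cazr)^\times$; its conclusion translates directly into $(f-g)^{(j)} \preceq \fm$ for $j=0,\dots,r$ as germs. We may first reduce to the case $\fm \prec 1$: since $\hat h \prec 1$ we have $v\hat h \in v(\hat h - H) \cap \Gamma^>$, so if $v\fm \leq 0$ we replace $\fm$ by any $\fm' \in H^\times$ with $v\fm' = v\hat h$, noting $\fm' \preceq \fm$, and prove the result for $\fm'$ instead.

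To place ourselves in the framework of the preceding subsections, use strong repulsive-normality of $(P,1,\hat h)$ to decompose $P_{\geq 1} = Q + R$ in $H\{Y\}$, where $Q$ is homogeneous of degree~$1$ and order~$r$, the operator $A := L_Q \in H[\der]$ admits a strong $\hat h$-repulsive splitting $(\phi_1,\dots,\phi_r)$ over $K$, and $R \prec_{\Delta} \fv^{w+1} P_1$ with $\fv := \fv(A) \prec^\flat 1$, $w := \wt(P)$, $\Delta := \Delta(\fv)$. Choose $\nu \in \Q$ with $\nu > w$ and $\nu\fv^\dagger \not\sim \Re\phi_j$ for each~$j$, and pick $a_0$ together with function-representatives of all germs in play so as to realize the framework of Lemma~\ref{cri}, Proposition~\ref{uban}, and Lemma~\ref{bdua} for this $A$ and this splitting. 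Viewing the equation $P(y)=0$ as an instance of \eqref{eq:ADE} in this framework, Lemma~\ref{lem:close} produces, for each $a \geq a_0$, an element $h_a \in \c_a^r[\imag]^\b$ with $A_a(h_a) = 0$, $\theta_a - h_a \prec \fv^w$, and $h_a \prec 1$, where $\theta_a := (f-g)|_{[a,\infty)}$.

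Two hypotheses of Corollary~\ref{cor:small theta, 1} then need to be verified. \emph{First:} for each~$j$, the functions $\phi_j - \nu\fv^\dagger$ and $\phi_j - \nu\fv^\dagger - \fm^\dagger$ in $\c_{a_0}[\imag]$ are alike. Since $v\fm \in v(\hat h - H) \cap \Gamma^>$, the $\hat h$-repulsive splitting makes each $\phi_j$ $\gamma$-repulsive for $\gamma := v\fm$; combining $\Re\phi_j \succeq \fv^\dagger$ with the non-resonance condition $\nu\fv^\dagger \not\sim \Re\phi_j$, one verifies that $\phi_j - \nu\fv^\dagger$ inherits $\gamma$-repulsiveness, and Lemma~\ref{lem:repulsive} then yields the desired alikeness. \emph{Second:} for all sufficiently large $a$ the germ of $h_a$ lies in $\c_a^r[\imag]^\w$, i.e., $h_a^{(j)} \preceq \fm$ for $j = 0,\dots,r$. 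Since $A(h_a) = 0$ and $h_a \prec 1$ as germs, Corollary~\ref{cor:8.8 refined} applied to our slot $(P,1,\hat h)$---which is $Z$-minimal, deep, special, normal, and ultimate by hypothesis---delivers $h_a^{(j)} \prec \fm$ for each~$j$. The hypothesis $\dim_\C \ker_\Univ A = r$ needed there follows from the strong splitting of $A$ over $K$ via~\eqref{eq:2.4.8} when $r \geq 3$ (using the standing assumption that $K$ is $1$-linearly surjective in that case), via Lemma~\ref{lemvar12} when $r = 2$, and trivially when $r = 1$.

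With both conditions in place, Corollary~\ref{cor:small theta, 1} yields $\theta_a \in \c_a^r[\imag]^\w$ for all sufficiently large $a$, which, passing to germs, is the desired $(f-g)^{(j)} \preceq \fm$ for $j = 0,\dots,r$. The main obstacle I foresee is the verification of the first alikeness condition: the shift $-\nu\fv^\dagger$ applied to each $\phi_j$ must not disturb its $\gamma$-repulsive character, and this calls for a careful tracking of asymptotic sizes among $\Re\phi_j$, $\fv^\dagger$, and $\fm^\dagger$. Once this is settled, the second condition is a direct consequence of the ultimate hypothesis via Corollary~\ref{cor:8.8 refined}, and the rest of the argument amounts to threading together Lemma~\ref{lem:close} and Corollary~\ref{cor:small theta, 1}.
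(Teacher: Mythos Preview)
Your outline is correct and matches the paper's proof almost step for step: reduce to $\fm\prec 1$, extract $Q$, $R$, $A=L_Q$, and the strong $\hat h$-repulsive splitting $(\phi_1,\dots,\phi_r)$, put yourself in the framework of Section~\ref{sec:split-normal over Hardy fields}, obtain $h_a$ from Lemma~\ref{lem:close}, verify the two hypotheses preceding Lemma~\ref{lem:Psia contract}, and finish with Corollary~\ref{cor:small theta, 1}. Two minor slips: the decomposition furnished by \emph{strongly} repulsive-normal is $P=Q+R$, not $P_{\ge 1}=Q+R$ (the latter is only what plain repulsive-normality gives); and to fit the framework of Lemma~\ref{cri} you must rescale $P,Q,R$ by some $b\in H^\times$ to make $A$ monic, as the paper does.

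The one substantive divergence is how your ``main obstacle'' is handled. The paper does not track sizes of $\Re\phi_j$, $\fv^\dagger$, $\fm^\dagger$ directly; instead it uses specialness via Proposition~\ref{specialvariant} to get $v(\fv^\nu)\in v(\hat h-H)$ and then invokes \cite[Lemma~4.5.13(iv)]{ADH4} to conclude that $\phi_j-(\fv^\nu)^\dagger$ remains $\hat h$-repulsive, hence $\gamma$-repulsive for $\gamma=v\fm$. Your direct route also works but needs one observation you omitted: since $\fv\in H^{>}$ with $\fv\prec 1$, we have $\fv^\dagger<0$, so $-\nu\fv^\dagger>0$. Combined with $\Re\phi_j\succeq\fv^\dagger$ and $\Re\phi_j\not\sim\nu\fv^\dagger$ (giving $\Re\phi_j-\nu\fv^\dagger\asymp\Re\phi_j$), this shows that $\Re\phi_j>0$ forces $\Re\phi_j-\nu\fv^\dagger>0$, while $\Re\phi_j\succ\fm^\dagger$ forces $\Re\phi_j-\nu\fv^\dagger\succ\fm^\dagger$; either way $\phi_j-\nu\fv^\dagger$ is $\gamma$-repulsive, and Lemma~\ref{lem:repulsive} applies. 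Your verification of the second hypothesis via Corollary~\ref{cor:8.8 refined} coincides with the paper's.
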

\begin{proof}
We arrange~${\fm\prec 1}$. Let $\fv:=\abs{\fv(L_P)}\in H^>$, so $\fv\prec^\flat 1$, and set $\Delta:=\Delta(\fv)$.
Take $Q,R\in H\{Y\}$ where $Q$ is homogeneous of degree $1$ and order $r$, 
$A:=L_Q\in H[\der]$ has a strong $\hat h$-repulsive splitting over~$K$,
 $P=Q-R$, and $R\prec_\Delta \fv^{w+1}P_1$, so~$\fv(A)\sim \fv(L_P)$ by Lemma~\ref{lem:fv of perturbed op}. Multiplying $P$, $Q$, $R$ by some $b\in H^\times$ we
arrange that $A$ is monic, so $A=\der^r+ f_1\der^{r-1}+\cdots + f_r$ with $f_1,\dots, f_r\in H$ and~$R\prec_\Delta\fv^w$.
Let $(\phi_1,\dots,\phi_r)\in K^r$ be a strong $\hat h$-repulsive splitting of $A$ over $K$, so $\phi_1,\dots,\phi_r$ are $\hat h$-repulsive and
$$A\ =\ (\der-\phi_1)\cdots (\der-\phi_r), \qquad \Re\phi_1,\dots,\Re \phi_r\ \succeq\ \fv^\dagger\ \succeq\ 1. $$
By \eqref{bound on linear factors} we have $\phi_1,\dots,\phi_r\preceq\fv^{-1}$. Thus we can take $a_0\in \R$ and functions on~$[a_0,\infty)$ representing the germs $\phi_1,\dots, \phi_r$, $f_1,\dots, f_r$, $f$, $g$, as well as the~$R_{\j}$ with~${\j\in \N^{1+r}}$, $|\j|\le d$, $\|\j\|\le w$ (using the same symbols for the germs mentioned as for their chosen representatives)
so as to be in the situation described in the beginning of Section~\ref{sec:split-normal over Hardy fields}, with $f$ and $g$ solutions on $[a_0,\infty)$ of the differential equation~\eqref{eq:ADE} there. 
As there, we take $\nu\in\Q$ with $\nu > w$ so that~$R \prec_\Delta \fv^\nu$ and~$\nu\fv^\dagger\not\sim \Re\phi_j$
for $j=1,\dots,r$, and then increase $a_0$ to satisfy all assumptions for Lemma~\ref{bdua}. 
Proposition~\ref{specialvariant} gives
$v(\fv^\nu) \in v(\hat h-H)$, so
$\phi_j-\nu\fv^\dagger=\phi_j-(\fv^\nu)^\dagger$ ($j=1,\dots,r$)
is $\hat h$-repulsive by \cite[Lemma~4.5.13(iv)]{ADH4}, so $\gamma$-repulsive for $\gamma:=v\fm>0$. Now $A$ splits over~$K$, and $K$ is $1$-linearly surjective if $r\ge 3$,  hence 
$\dim_{\C}\ker_{\Univ} A =r$ by~\eqref{eq:2.4.8} and Lemma~\ref{lemvar12}. 
Thus 
by Corollary~\ref{cor:8.8 refined} we have $y,y',\dots,y^{(r)}\prec\fm$ for all~$y\in\Calr[\imag]$ with $A(y)=0$, $y\prec 1$. 
In particular, $\fm^{-1}h_a, \fm^{-1}h_a',\dots, \fm^{-1}h_a^{(r)}\prec 1$ for all $a\ge a_0$. Thus the assumptions on $\fm$ and the $h_a$ made just before Lemma~\ref{lem:Psia contract} are satisfied for a suitable choice of $a_2$,
so we can appeal to Corollary~\ref{cor:small theta, 1}.
\end{proof}

\noindent
The assumption that $K$ is $1$-linearly surjective for $r\ge 3$ was only used in the proof above to obtain $\dim_{\C}\ker_{\Univ} A =r$. So if $A$ as in this proof satisfies $\dim_{\C}\ker_{\Univ} A =r$, then we can drop this assumption about $K$, also in the next corollary. 

\begin{cor}\label{cor:notorious 3.6}
Suppose $(P,1,\hat h)$, $f$, $g$, $\fm$  are as in Proposition~\ref{prop:notorious 3.6}. Then 
$$f-g\in \fm\, \c^r[\imag]^{\preceq}.$$
\end{cor}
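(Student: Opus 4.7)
The plan is to apply Proposition~\ref{prop:notorious 3.6} to $\fm^{r+1}$ in place of $\fm$ and then invoke Lemma~\ref{lem:Qnk}. I would first reduce to the case $\fm \preceq 1$: if $\fm \succ 1$, pick any $\fn \in H^\times$ with $\fn \prec 1$ and $v\fn \in v(\hat h-H)$ (such $\fn$ exist, since by Proposition~\ref{specialvariant} applied to the deep, special, $Z$-minimal slot $(P,1,\hat h)$ the set $v(\hat h-H)$ contains elements larger than $nv\fv$ for every $n$); then $\fn/\fm \in H$ satisfies $\fn/\fm \prec 1$, and by the strong form of small derivation that holds in a Hardy field containing $\R$ (an element $u\preceq 1$ in $H$ has a limit in $\R\subseteq H$, so $u-\lim u\prec 1$ and hence $u'\prec 1$, iterated), all derivatives of $\fn/\fm$ are $\preceq 1$. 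Thus once the case $\fm=\fn$ gives $f-g\in\fn\,\c^r[\imag]^{\preceq}$, the Leibniz rule yields $f-g\in\fm\,\c^r[\imag]^{\preceq}$.

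Next I would verify that $(r+1)v\fm \in v(\hat h-H)$. The set $v(\hat h-H)$ is downward-closed in $\Gamma$: given $h\in H$ with $v(\hat h-h)=\alpha$ and any $\beta<\alpha$, picking $\tilde h\in H^\times$ with $v\tilde h=\beta$ yields $v\big(\hat h-(h+\tilde h)\big)=\beta$ by the ultrametric inequality. Combining downward closure with Proposition~\ref{specialvariant}, any element of $\Gamma$ bounded above by some element of $v(\hat h-H)$ already lies in $v(\hat h-H)$; this covers $(r+1)v\fm$ since $nv\fv>(r+1)v\fm$ for $n$ large. Thus Proposition~\ref{prop:notorious 3.6} applied to $\fm^{r+1}$ gives $(f-g)^{(j)}\preceq\fm^{r+1}$ for $j=0,\dots,r$, and since $\fm\preceq 1$ this implies the sharper staircase of bounds $(f-g)^{(k)}\preceq\fm^{r-k+1}$ for $k=0,\dots,r$.

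Finally, choosing representatives of $\fm$ and $f-g$ on a half-line $[a,+\infty)$ with $a$ large enough (so that $\fm$ does not vanish there and the asymptotic bounds above hold pointwise), I would apply Lemma~\ref{lem:Qnk} with $\w=\fm$ and $f=f-g$. The hypothesis $\fm\in\Car[\imag]^{\b}$ holds because $\fm\preceq 1$ in the Hardy field $H\supseteq\R$ forces $\fm^{(j)}\preceq 1$ for each $j\leq r$ by iterated smallness of derivation, and the required input $(f-g)^{(k)}\preceq\fm^{r-k+1}$ has just been established. The conclusion $(f-g)\fm^{-1}\in\Car[\imag]^{\b}$ translates, passing back to germs, to $(f-g)/\fm\in\c^r[\imag]^{\preceq}$, i.e., $f-g\in\fm\,\c^r[\imag]^{\preceq}$. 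The only nontrivial step is the valuation-theoretic verification that $(r+1)v\fm\in v(\hat h-H)$; once this is in hand, the corollary follows by feeding the strengthened bounds from Proposition~\ref{prop:notorious 3.6} into Lemma~\ref{lem:Qnk}.
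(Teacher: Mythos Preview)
Your overall strategy matches the paper's: for $\fm\prec 1$, apply Proposition~\ref{prop:notorious 3.6} with $\fm^{r+1}$ in place of $\fm$ and then feed the result into Lemma~\ref{lem:Qnk}. However, your justification that $(r+1)v\fm\in v(\hat h-H)$ has a gap. You assert that $nv\fv>(r+1)v\fm$ for $n$ large, but nothing forces $v\fm$ to be archimedean-bounded by $v\fv$: the hypothesis is only that $v\fm\in v(\hat h-H)$, and this set can contain elements far larger than every multiple of $v\fv$. Proposition~\ref{specialvariant} tells you that $v(\hat h-H)$ contains arbitrarily large multiples of $v\fv$, not that it is exhausted by them. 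The correct argument uses specialness directly: the nontrivial convex subgroup $\Delta$ cofinal in $v(\hat h-H)$ satisfies $\Delta\subseteq v(\hat h-H)$, and since $0<v\fm\le\delta$ for some $\delta\in\Delta$ (cofinality) and $\Delta$ is convex, $v\fm\in\Delta$; hence $(r+1)v\fm\in\Delta\subseteq v(\hat h-H)$. This is what the paper means by ``since $\hat h$ is special over $H$''.

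Your reduction for $\fm\succ 1$ via an auxiliary $\fn\prec 1$ is correct but heavier than necessary. The paper handles this case by first noting that $f-g\in\c^r[\imag]^{\preceq}$ (e.g.\ from Proposition~\ref{prop:notorious 3.6} applied with $1$ in place of $\fm$, using $0=v(\hat h-1)\in v(\hat h-H)$) and then invoking Lemma~\ref{lem:small derivatives of y}(ii) with $y=(f-g)/\fm$ and $1/\fm$ in the role of $\fm$.
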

\begin{proof}
If $\fm\succeq 1$,  then  Lemma~\ref{lem:small derivatives of y}(ii) applied with $y=(f-g)/\fm$ and $1/\fm$ in place of~$\fm$ gives what we want.
Now assume $\fm\prec 1$. 
Since $\hat h$ is special over $H$, Proposition~\ref{prop:notorious 3.6}
applies to $\fm^{r+1}$ in place of $\fm$, so $(f-g)^{(j)}\preceq\fm^{r+1}$ for $j=0,\dots,r$. Now apply Lemma~\ref{lem:Qnk} to suitable representatives of $f-g$ and~$\fm$.
\end{proof}

\noindent
Later in this section we use Proposition~\ref{prop:notorious 3.6} and its Corollary~\ref{cor:notorious 3.6} to
strengthen some results from Section~\ref{secfhhf}.

\subsection*{Weighted refinements of results in Section~\ref{secfhhf}}
We now adopt the setting of the subsection {\it Reformulations}\/ of Section~\ref{secfhhf}. Thus $H\supseteq\R$ is a real closed Hardy field
with asymptotic integration, and $K:=H[\imag]\subseteq \Calinf[\imag]$ is its algebraic closure,
with value group $\Gamma:=v(H^\times)=v(K^\times)$.
The next lemma and its corollary refine Lemma~\ref{prop:as equ 1}. Let~$P$,~$Q$,~$R$,~$L_Q$,~$w$ be as introduced before that lemma, set~$\fv:=\abs{\fv(L_Q)}\in H^>$,  and, in case $\fv\prec 1$,  $\Delta:=\Delta(\fv)$.

\begin{lemma}\label{abc}
Let $f\in K^\times$ and $\phi_1,\dots,\phi_r\in K$ be such that
$$ L_Q \ =\  f(\der-\phi_1)\cdots(\der-\phi_r),\qquad \Re\phi_1,\dots,\Re\phi_r\ \succeq\ 1.$$
Assume $\fv\prec 1$ and $R\prec_\Delta \fv^{w+1}Q$. Let $\fm\in H^\times$, 
$\fm\prec 1$, $P(0)\preceq \fv^{w+2}\fm Q$. Suppose that for $j=1,\dots,r$ and all~$\nu\in\Q$ with $w<\nu<w+1$, 
$\phi_j-(\fv^\nu)^\dagger$ and ${\phi_j-(\fv^\nu\fm)^\dagger}$  are alike.
Then $P(y)=0$ and $y,y',\dots,y^{(r)}\preceq\fm$ for some $y\prec\fv^w$ in $\Calinf[\imag]$.  If~$P,Q\in H\{Y\}$,
then there is such $y$ in $\Calinf$.
\end{lemma}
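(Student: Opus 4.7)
The statement is a weighted refinement of Lemma~\ref{prop:as equ 1}: instead of only bounding the derivatives of the solution by $1$, one asks for them to be bounded by the small factor $\fm$. The natural plan is to rerun the fixed-point construction of Section~\ref{sec:split-normal over Hardy fields} word for word, but with the unweighted Theorem~\ref{thm:fix} replaced by its weighted refinement Corollary~\ref{cor:2.3 weighted} (and, in the real case, Corollary~\ref{cor:2.3 weighted, real}). The weight on $[a,\infty)$ will be a representative $\w$ of $\fm$, and the alikeness hypothesis in the present lemma is exactly what powers the weighted corollary.

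First I would normalize as in the proof of Lemma~\ref{prop:as equ 1}: the operator $A := f^{-1}L_Q\in K[\der]$ is monic of order $r$ with $\fv(A)=\fv(L_Q)\asymp \fv$, and if one sets $\widetilde R := f^{-1}R\in K\{Y\}$, then $P(y)=0$ becomes $A(y)=\widetilde R(y)$. From $f^{-1}Q\asymp \fv^{-1}$ one gets $\widetilde R\prec_\Delta \fv^{w}$; and since $Q$ is homogeneous of degree $1$ we have $P(0)=-R(0)$, so the hypothesis $P(0)\preceq \fv^{w+2}\fm Q$ gives $\widetilde R(0)\preceq \fv^{w+1}\fm$. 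The splitting $A = (\der-\phi_1)\cdots(\der-\phi_r)$ with $\Re\phi_j\succeq 1$ and $\phi_j\preceq \fv^{-1}$ (by \eqref{bound on linear factors}) puts us in the setting of Section~\ref{sec:split-normal over Hardy fields}. Pick $\nu\in\Q$ with $w<\nu<w+1$ such that $\widetilde R\prec_\Delta \fv^\nu$ and $\nu\fv^\dagger\not\sim \Re\phi_j$ for $j=1,\dots,r$ (existence is guaranteed as in the paragraph before Lemma~\ref{bdua, bds}). Then also $\widetilde R(0)\preceq \fv^{w+1}\fm\preceq \fv^\nu\fm$ since $\fv\prec 1$.

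Next, using representatives in $\c_{a_0}[\imag]$ for $\phi_1,\dots,\phi_r$, $f_1,\dots,f_r$, the coefficients of $\widetilde R$, and a representative $\w\in (\Cazr)^\times$ for $\fm$, we are in the setup of the subsection ``Weighted variants of results in Section~\ref{sec:split-normal over Hardy fields}''. The hypothesis on alikeness in the statement says exactly that $\phi_j-\nu\fv^\dagger=\phi_j-(\fv^\nu)^\dagger$ and $\phi_j-\nu\fv^\dagger-\fm^\dagger=\phi_j-(\fv^\nu\fm)^\dagger$ are alike for every such $\nu$, which is the hypothesis of Corollary~\ref{cor:2.3 weighted}. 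Applying that corollary (after possibly enlarging $a_0$) yields, for all sufficiently large $a$, a fixed point $y$ of $\Xi_a$ on the closed ball of radius $1/2$ in $\Car[\imag]^\b$; moreover this fixed point lies in $\Car[\imag]^\w$. By Lemma~\ref{bdua} any such fixed point satisfies $A_a(y)=\widetilde R(y)$ (hence $P(y)=0$) and $y\preceq \fv^\nu\prec \fv^w$. Since $y\in\Car[\imag]^\w=\w\,\Car[\imag]^\b$, we have $y^{(j)}\preceq \fm$ for $j=0,\dots,r$. To upgrade regularity, apply Corollary~\ref{cor:ADE smooth} with the splitting decomposition $P=Q-R$: we obtain $y\in \c^\infty[\imag]$, which after passing to germs gives the required $y\in\Calinf[\imag]$. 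In the real case $P,Q\in H\{Y\}$, run the same argument with $\Re\Xi_a$ in place of $\Xi_a$ and invoke Corollary~\ref{cor:2.3 weighted, real} in place of Corollary~\ref{cor:2.3 weighted} to land in $(\Car)^\w$, thus producing such a $y$ in $\Calinf$.

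The main substantive checks, and the potential obstacle, are the ``bookkeeping'' steps (i) and (ii) above: verifying $\widetilde R\prec_\Delta \fv^\nu$ and $\widetilde R(0)\preceq \fv^\nu \fm$ with $\nu>w$, and then matching the technical alikeness hypothesis of Corollary~\ref{cor:2.3 weighted} to the one assumed in the statement. Once those are in place, the rest is a straightforward application of the weighted fixed-point machinery and the smoothness corollary.
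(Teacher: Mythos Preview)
Your proposal is correct and follows essentially the same route as the paper: normalize to the monic operator $A=f^{-1}L_Q$, deduce $f^{-1}R\prec_\Delta\fv^w$ and $f^{-1}R(0)\preceq\fv^\nu\fm$, pick $\nu\in\Q$ with $w<\nu<w+1$, then apply Corollary~\ref{cor:2.3 weighted} (respectively~\ref{cor:2.3 weighted, real}), Lemma~\ref{bdua}, and Corollary~\ref{cor:ADE smooth}. One small slip: the equality $\Car[\imag]^\w=\w\,\Car[\imag]^\b$ you wrote holds only for $r=0$; what you actually need (and what follows immediately from the definition of $\dabs{\,\cdot\,}_{a;r}^\w$) is that $y\in\Car[\imag]^\w$ implies $y^{(j)}\preceq\fm$ for $j=0,\dots,r$.
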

\begin{proof} Note that $\phi_1,\dots, \phi_r\preceq \fv^{-1}$ by \eqref{bound on linear factors}, and   $R\prec_{\Delta} \fv^{w+1}Q$ gives~$f^{-1}R\prec_{\Delta} \fv^w$.
Take~$\nu\in\Q$ such that $w<\nu<w+1$, $f^{-1}R
\prec_{\Delta} \fv^{\nu}$ and~$\nu\fv^\dagger\not\sim\Re\phi_j$
for~$j=1,\dots,r$. Set $A\ :=\ f^{-1}L_Q$. From $\nu < w+1$ and 
$$R(0)\ =\ -P(0)\ \prec_\Delta\ \fv^{w+2}\fm Q$$ 
we obtain $f^{-1}R(0) \prec_{\Delta} \fv^{\nu}\fm$.  Now apply successively Corollary~\ref{cor:2.3 weighted}, 
Lem\-ma~\ref{bdua}, and
Corollary~\ref{cor:ADE smooth} to the equation $A(y)=f^{-1}R(y)$, $y\prec 1$ in the role of \eqref{eq:ADE} in Section~\ref{sec:split-normal over Hardy fields} to obtain the first part.  For the real variant, use instead Corollary~\ref{cor:2.3 weighted, real} and
Lemma~\ref{realbdua}.
\end{proof}

\noindent
Lemma~\ref{abc} with $\fm^{r+1}$ for $\fm$ has the following consequence, using Lemma~\ref{lem:Qnk}: 

\begin{cor}\label{cor:abc}  Let  $f\in K^\times$ and $\phi_1,\dots,\phi_r\in K$ be such that
$$  L_Q \ =\  f(\der-\phi_1)\cdots(\der-\phi_r),\qquad \Re\phi_1,\dots,\Re\phi_r\ \succeq\ 1.$$
Assume $\fv\prec 1$ and $R\prec_\Delta \fv^{w+1}Q$. Let $\fm\in H^\times$, 
$\fm\prec 1$, $P(0)\preceq \fv^{w+2}\fm^{r+1} Q$. Suppose 
that  for~${j=1,\dots,r}$ and all $\nu\in\Q$ with $w<\nu<w+1$,
$\phi_j-(\fv^\nu)^\dagger$ and~${\phi_j-(\fv^\nu\fm^{r+1})^\dagger}$  are alike.
Then for some $y\prec\fv^w$ in $\Calinf[\imag]$ we have~${P(y)=0}$ and $y\in \fm\, \c^r[\imag]^{\preceq}$.
 If $P,Q\in H\{Y\}$,
then there is such $y$ in $\Calinf$.
\end{cor}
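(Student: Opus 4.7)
The plan is to apply Lemma~\ref{abc} verbatim with $\fm^{r+1}$ substituted for $\fm$, and then pass from the raw bounds $y^{(k)}\preceq\fm^{r+1}$ to the factorization $y\in\fm\,\c^r[\imag]^{\preceq}$ via Lemma~\ref{lem:Qnk}. Both steps are essentially bookkeeping once one observes that the hypotheses of Corollary~\ref{cor:abc} have been written precisely so as to match the input of Lemma~\ref{abc} under the substitution $\fm\rightsquigarrow\fm^{r+1}$.

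First I would verify that all the hypotheses of Lemma~\ref{abc} hold with $\fm^{r+1}$ in the role of $\fm$. The splitting data $f,\phi_1,\dots,\phi_r$ and the conditions $\Re\phi_j\succeq 1$, $\fv\prec 1$, and $R\prec_\Delta \fv^{w+1}Q$ are copied directly from the hypotheses of the corollary. Since $\fm\in H^\times$ with $\fm\prec 1$, we have $\fm^{r+1}\in H^\times$ and $\fm^{r+1}\prec 1$. The assumption $P(0)\preceq \fv^{w+2}\fm^{r+1}Q$ of the corollary is exactly $P(0)\preceq \fv^{w+2}(\fm^{r+1})\,Q$, the hypothesis of Lemma~\ref{abc} for $\fm^{r+1}$. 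Finally, the alikeness condition in Corollary~\ref{cor:abc} is phrased with $\fv^\nu\fm^{r+1}$ rather than $\fv^\nu\fm$, which is precisely the alikeness condition of Lemma~\ref{abc} after substitution. Lemma~\ref{abc} then produces some $y\prec \fv^w$ in $\Calinf[\imag]$ with $P(y)=0$ and $y,y',\dots,y^{(r)}\preceq \fm^{r+1}$; moreover, if $P,Q\in H\{Y\}$, then the real variant of Lemma~\ref{abc} yields such a $y$ in $\Calinf$.

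Next I would convert these bounds into the required factored form using Lemma~\ref{lem:Qnk}. Choose a representative $\w\in\c_a[\imag]$ of $\fm$ (eventually nonvanishing); since $\fm\prec 1$, we may take $\w\in\c_a[\imag]^{\b}$. Because $\fm\prec 1$, for each $k=0,\dots,r$ we have $\fm^{r+1}\preceq \fm^{r-k+1}$, and hence $y^{(k)}\preceq \fm^{r-k+1}=\w^{r-k+1}$ (as germs). Lemma~\ref{lem:Qnk} applied to (a representative of) $y$ and the weight $\w$ then gives $y\w^{-1}\in\c^r[\imag]^{\b}$, i.e., $y/\fm$ and all its derivatives up to order $r$ are $\preceq 1$. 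This is exactly the statement $y/\fm\in\c^r[\imag]^{\preceq}$, that is, $y\in \fm\,\c^r[\imag]^{\preceq}$, as required.

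There is no genuine obstacle here: the whole corollary is engineered as a repackaging of Lemma~\ref{abc} via the damping trick $\fm\rightsquigarrow\fm^{r+1}$, and Lemma~\ref{lem:Qnk} supplies the correct algebraic relationship between bounds on a function and bounds on the derivatives of its quotient by a weight. The only points that need a moment of care are the monotonicity $\fm^{r+1}\preceq \fm^{r-k+1}$ for $k\leq r$ (valid because $\fm\prec 1$) and the observation that $y\in\Calinf[\imag]$, hence also $y/\fm\in\Calinf[\imag]$, so that the conclusion of Lemma~\ref{lem:Qnk} may be upgraded from $\c^r[\imag]^{\b}$ to membership in $\fm\,\c^r[\imag]^{\preceq}$.
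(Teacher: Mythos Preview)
Your proposal is correct and follows exactly the paper's approach: apply Lemma~\ref{abc} with $\fm^{r+1}$ in place of $\fm$, then use Lemma~\ref{lem:Qnk} together with $\fm^{r+1}\preceq\fm^{r-k+1}$ (from $\fm\prec 1$) to pass from $y^{(k)}\preceq\fm^{r+1}$ to $y\in\fm\,\c^r[\imag]^{\preceq}$. One small slip: when invoking Lemma~\ref{lem:Qnk} you need $\w\in\Car[\imag]^{\b}$ (not just $\c_a[\imag]^{\b}$), but this holds since $\fm\in H$, $\fm\prec 1$, and $H$ has small derivation.
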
 

\begin{remark}
If $H$ is a $\c^{\infty}$-Hardy field, then Lem\-ma~\ref{abc} and Corollary~\ref{cor:abc} go through with $\Calinf[\imag]$, $\Calinf$ replaced by~$\mathcal{C}^{\infty}[\imag]$, $\mathcal{C}^{\infty}$, respectively. Likewise with $\c^\omega$ in place of $\c^\infty$. (Use Corollary~\ref{cor:ADE smooth}.)  
\end{remark}

\noindent
Next a  variant of Lemma~\ref{dentsolver}. {\it In the rest of this subsection $(P,\fn,\hat h)$ is a deep, strongly repulsive-normal, $Z$-minimal slot in $H$ of order~$r\ge 1$ and weight~$w:=\wt(P)$.  We assume also that $(P,\fn, \hat h)$ is special \textup{(}as will be the case if $H$ is $r$-linearly newtonian,  and $\upo$-free if $r>1$, by Lemma~\ref{lem:special dents}\textup{)}}. 


\begin{lemma}\label{bcd} 
Let $\fm\in H^\times$ be such that $v\fm\in v(\hat h-H)$, $\fm\prec\fn$,  and $P(0)\preceq\fv(L_{P_{\times\fn}})^{w+2}\,(\fm/\fn)^{r+1}\, P_{\times\fn}$.  
Then for some~$y\in \Calinf$,
$$P(y)\ =\ 0,\quad  y\in \fm\,(\c^r)^{\preceq}.$$
If $H\subseteq \c^\infty$, then there is such $y$ in $\c^\infty$; likewise with $\c^\omega$ in place of $\c^\infty$.
\end{lemma}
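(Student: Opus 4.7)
The plan is to reduce to an application of Corollary~\ref{cor:abc} after a multiplicative conjugation. First I would replace $(P,\fn,\hat h)$ by $(P_{\times\fn},1,\hat h/\fn)$, continuing to write~$P$ and~$\hat h$ and~$\fm$ for the transformed data (so the new $\fm$ is old $\fm/\fn$, still satisfying $v\fm\in v(\hat h-H)$, and now also $\fm\prec 1$). This preserves $Z$-minimality, deepness, specialness, and strong repulsive-normality, and the hypothesis on~$P(0)$ becomes $P(0)\preceq\fv^{w+2}\fm^{r+1}P_1$, where $\fv:=\abs{\fv(L_P)}\in H^>$. Strong repulsive-normality then gives a decomposition $P_{\ge 1}=Q+R$ with $Q,R\in H\{Y\}$, $Q$ homogeneous of degree~$1$ and order~$r$, $L_Q=f(\der-\phi_1)\cdots(\der-\phi_r)$ a strong $\hat h$-repulsive splitting over $K:=H[\imag]$ (so $\Re\phi_j\succeq\fv^\dagger\succeq 1$ and each $\phi_j$ is $\hat h$-repulsive), and $R\prec_{\Delta(\fv)}\fv^{w+1}P_1$; by Lemma~\ref{lem:fv of perturbed op} we have $\fv\sim\abs{\fv(L_Q)}$ and $P_1\sim_{\Delta(\fv)} Q$. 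Setting $\tilde{R}:=-P(0)-R$ we then obtain $P=Q-\tilde{R}$ with $\tilde{R}\prec_{\Delta(\fv)}\fv^{w+1}Q$ and $P(0)\preceq\fv^{w+2}\fm^{r+1}Q$.

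Next I would verify the alikeness hypothesis of Corollary~\ref{cor:abc}, namely that for $j=1,\dots,r$ and every rational $\nu\in(w,w+1)$ the germs
\[
\phi_j-(\fv^\nu)^\dagger\ =\ \phi_j-\nu\fv^\dagger \qquad\text{and}\qquad
\phi_j-(\fv^\nu\fm^{r+1})^\dagger\ =\ \phi_j-\nu\fv^\dagger-(r+1)\fm^\dagger
\]
are alike. Proposition~\ref{specialvariant} gives $v(\fv^\nu)\in v(\hat h-H)$, so by \cite[Lemma~4.5.13(iv)]{ADH4} (exactly as in the proof of Proposition~\ref{prop:notorious 3.6}) the germ $\phi_j-\nu\fv^\dagger$ inherits $\hat h$-repulsiveness from $\phi_j$. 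Specialness combined with Proposition~\ref{small P(a)} (applied with an $a\in H$ satisfying $\hat h-a\asymp\fm$) yields $b\in H$ with $\hat h-b\prec\fm^{r+1}$, so $v\fm^{r+1}=(r+1)v\fm\in v(\hat h-H)\cap\Gamma^>$. Hence $\phi_j-\nu\fv^\dagger$ is $v\fm^{r+1}$-repulsive, and since $\Re(\phi_j-\nu\fv^\dagger)\succeq 1$ (using $\Re\phi_j\succeq\fv^\dagger\succeq 1$ together with the standard strong splitting freedom to avoid $\Re\phi_j\sim\nu\fv^\dagger$, exactly as in the choice of $\nu$ in the proof of Proposition~\ref{prop:notorious 3.6}), Lemma~\ref{lem:repulsive} applied with $\fm^{r+1}$ in the role of $\fm$ shows that $\phi_j-\nu\fv^\dagger$ and $\phi_j-\nu\fv^\dagger-(\fm^{r+1})^\dagger$ are alike, as required.

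With alikeness in hand, all hypotheses of Corollary~\ref{cor:abc} hold: we have $\fv\prec 1$ (since $(P,1,\hat h)$ is deep), $\tilde{R}\prec_{\Delta(\fv)}\fv^{w+1}Q$, $\fm\prec 1$, and $P(0)\preceq\fv^{w+2}\fm^{r+1}Q$. Since $P,Q\in H\{Y\}$, the real version of Corollary~\ref{cor:abc} yields $y\in\Calinf$ with $P(y)=0$ and $y\in\fm\,(\c^r)^{\preceq}$. For the smoothness part, if $H\subseteq\Ginf$, then the Remark following Corollary~\ref{cor:abc} (or Corollary~\ref{realginfgom} applied to the produced~$y$) gives $y\in\Ginf$; likewise with $\Gom$ replacing $\Ginf$.

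The main obstacle is the alikeness verification in the middle paragraph: it is the only place where the full strength of the $\hat h$-repulsive splitting and of the specialness of $(P,\fn,\hat h)$ are used in concert, via the delicate interplay among \cite[Lemma~4.5.13(iv)]{ADH4}, Proposition~\ref{small P(a)}, and Lemma~\ref{lem:repulsive}. The other steps are routine bookkeeping, modeled on the reductions already carried out in the proofs of Lemma~\ref{dentsolver}, Lemma~\ref{abc}, and Proposition~\ref{prop:notorious 3.6}.
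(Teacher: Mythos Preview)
Your overall approach is the paper's: reduce to $\fn=1$, extract the strong $\hat h$-repulsive splitting from strong repulsive-normality, and verify the alikeness hypothesis of Corollary~\ref{cor:abc}. Two points, one cosmetic and one substantive.

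First, strongly repulsive-normal already yields a decomposition $P=Q+R$ (not merely $P_{\ge 1}=Q+R$), so your detour through $\tilde R:=-P(0)-R$ is unnecessary, though harmless.

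Second, your alikeness verification has a mis-justified step. Corollary~\ref{cor:abc} demands alikeness for \emph{every} rational $\nu\in(w,w+1)$, so you cannot appeal to any ``freedom to avoid $\Re\phi_j\sim\nu\fv^\dagger$'': in Proposition~\ref{prop:notorious 3.6} the analogous $\nu$ is \emph{chosen}, whereas here $\nu$ is universally quantified. For an exceptional $\nu$ with $\Re\phi_j\sim\nu\fv^\dagger$ your claimed bound $\Re(\phi_j-\nu\fv^\dagger)\succeq 1$ is not established by the argument you give, so your application of Lemma~\ref{lem:repulsive} with $f=\phi_j-\nu\fv^\dagger$ is not licensed at those $\nu$. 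The paper sidesteps this entirely by anchoring both applications of Lemma~\ref{lem:repulsive} at $f=\phi_j$ itself, for which $\Re\phi_j\succeq\fv^\dagger\succeq 1$ comes for free from the strong splitting: since both $v(\fv^\nu)$ and $v(\fv^\nu\fm^{r+1})$ lie in $v(\hat h-H)\cap\Gamma^{>}$ (the first by Proposition~\ref{specialvariant}, the second because $\hat h$ is special over $H$), Lemma~\ref{lem:repulsive} shows that $\phi_j$ is alike to each of $\phi_j-(\fv^\nu)^\dagger$ and $\phi_j-(\fv^\nu\fm^{r+1})^\dagger$, whence the latter two are alike to one another, for every such $\nu$. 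No detour through \cite[Lemma~4.5.13(iv)]{ADH4} or through $\Re(\phi_j-\nu\fv^\dagger)\succeq 1$ is needed.
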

\begin{proof} {\sloppy
Replace $(P,\fn,\hat h)$, $\fm$ by $(P_{\times\fn},1,\hat h/\fn)$, $\fm/\fn$, respectively, to arrange~${\fn=1}$. 
Then~$L_P$ has order $r$, $\fv(L_{P})\prec^{\flat} 1$, and $P =  Q-R$ where $Q,R\in H\{Y\}$,
$Q$ is homogeneous of degree~$1$ and order~$r$, $L_{Q}\in H[\der]$ has a  strong $\hat h$-repulsive splitting~${(\phi_1,\dots,\phi_r)\in K^r}$ over $K=H[\imag]$, 
and $R\prec_{\Delta^*} \fv(L_P)^{w+1} P_1$ with $\Delta^*:=\Delta\big(\fv(L_P)\big)$. 
By Lemma~\ref{lem:fv of perturbed op} we have $\fv(L_P)\sim \fv(L_Q)\asymp \fv$, so
$\Re \phi_j\succeq \fv^\dagger\succeq 1$ for~$j=1,\dots,r$, and~$\Delta = \Delta^*$.
Moreover, $P(0) \preceq \fv^{w+2}\fm^{r+1}Q$.
 Let $\nu\in\Q$, $\nu>w$, and~${j\in\{1,\dots,r\}}$. Then $0<v(\fv^\nu) \in v({\hat h-H})$ by Proposition~\ref{specialvariant}, so~$\phi_j$ is $\gamma$-repulsive for~${\gamma=v(\fv^\nu)}$, thus~$\phi_j$ and $\phi_j-(\fv^\nu)^\dagger$ are alike by Lemma~\ref{lem:repulsive}. 
 Likewise, $0 <v(\fv^\nu\fm^{r+1})\in v({\hat h-H})$ since $\hat h$ is special over $H$, so~$\phi_j$ and $\phi_j -(\fv^\nu \fm^{r+1})^\dagger$ are alike.
Therefore~${\phi_j-(\fv^\nu)^\dagger}$ and $\phi_j-(\fv^\nu\fm^{r+1})^\dagger$ are alike as well.   Corollary~\ref{cor:abc} now gives~${y\prec \fv^w}$ in~$\Calinf$ with~$P(y)=0$ and $y\in \fm\, (\c^r)^{\preceq}$. 
For the rest use the remark following that corollary.  }
\end{proof}

\begin{cor}\label{mfhc}
Suppose $\fn=1$,  and let  $\fm\in H^\times$ be such that $v\fm\in v(\hat h-H)$.  Then there are $h\in H$  and $y\in \Calinf$ such that:
$$  \hat h-h\ \preceq\ \fm,\qquad P(y)\ =\ 0,\qquad y\ \prec\ 1,\ y\in (\c^r)^{\preceq},\qquad
y-h\in \fm\,(\c^r)^{\preceq}.$$
If $H\subseteq \c^\infty$, then we have such $y\in\c^\infty$; likewise with $\c^\omega$ in place of $\c^\infty$. 
\end{cor}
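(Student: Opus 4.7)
The plan is to apply Lemma~\ref{bcd} not to $(P,1,\hat h)$ directly but to a shifted slot $(P_{+h}, 1, \hat h - h)$ for a carefully chosen $h \in H$ close to $\hat h$, and then take $y := h + z$ where $z \in \Calinf$ is the solution of $P_{+h}(z)=0$ produced by the lemma. By specialness of $(P,1,\hat h)$, a nontrivial convex subgroup $\Delta_0$ of $\Gamma$ is cofinal in $v(\hat h-H)$; since $v\fm > 0$ lies in $v(\hat h-H)$, convexity and cofinality force $v\fm \in \Delta_0$, so all multiples $Nv\fm$ remain in $\Delta_0$, and cofinality produces $h\in H$ with $\hat h - h \preceq \fm^{N}$ for any prescribed $N\in\N$. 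We pick such $h$ with $N$ large, to be fixed below; in particular $\hat h - h \preceq \fm$ as required by the conclusion.

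The triple $(P_{+h}, 1, \hat h - h)$ is then a refinement of $(P,1,\hat h)$ in the sense recalled in Section~\ref{sec:prelims}, hence has the same complexity. $Z$-minimality is preserved under additive translation (as $Z(H,\hat h-h)$ is the translate of $Z(H,\hat h)$), and specialness persists since $v\big((\hat h - h)-H\big)=v(\hat h - H)$. The properties \emph{deep} and \emph{strongly repulsive-normal} also survive for $h$ close enough to $\hat h$: by Lemma~\ref{lem:fv of perturbed op}, $L_{P_{+h}}$ has order $r$ and span $\sim \fv(L_P)$ since its coefficients differ from those of $L_P$ by errors lying in $\Delta(\fv)$; the $\hat h$-repulsive strong splitting of $L_P$ over $K=H[\imag]$ transfers to an $(\hat h - h)$-repulsive strong splitting of $L_{P_{+h}}$ via the stability results for repulsive germs in~\cite[Section~4.5]{ADH4}; and the decomposition $P=Q-R$ witnessing strong repulsive-normality of $(P,1,\hat h)$ becomes a decomposition $P_{+h}=Q_{+h}-R_{+h}$ inheriting $R_{+h}\prec_{\Delta(\fv)}\fv^{w+1}(P_{+h})_1$ from its analogue for $P$.

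It remains to verify the quantitative hypothesis of Lemma~\ref{bcd} applied to the shifted slot $(P_{+h},1,\hat h-h)$ with auxiliary element $\fm$, namely
\[ P_{+h}(0)\ =\ P(h)\ \preceq\ \fv(L_{P_{+h}})^{w+2}\,\fm^{r+1}\,P_{+h}. \]
Iterated application of Proposition~\ref{small P(a)} (justified by $Z$-minimality and specialness of $(P,1,\hat h)$) yields $P(h) \prec \fm^{N} P$ for any prescribed $N$; together with $\fv(L_{P_{+h}}) \sim \fv(L_P)$ and $P_{+h}$ having gaussian norm comparable to that of $P$, we need only $\fm^{N-r-1} \preceq \fv(L_P)^{w+2}$, which holds for $N$ sufficiently large. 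Lemma~\ref{bcd} then produces $z\in\Calinf$ with $P_{+h}(z)=0$ and $z\in\fm\,(\c^r)^{\preceq}$. Setting $y:=h+z$, we get $P(y)=P_{+h}(z)=0$ and $y-h=z\in\fm\,(\c^r)^{\preceq}$; since $\fm\prec 1$, Lemma~\ref{lem:small derivatives of y}(ii) gives $z\in(\c^r)^{\preceq}$, and $h\in H$ with $h\prec 1$ plus small derivation gives $h\in(\c^r)^{\preceq}$, so $y\in(\c^r)^{\preceq}$ and $y\prec 1$. The smoothness statements for $H\subseteq\c^\infty$ and $H\subseteq\c^\omega$ carry over from the corresponding parts of Lemma~\ref{bcd}. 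The main obstacle is the preservation of \emph{deep} and \emph{strongly repulsive-normal} under the additive shift from $P$ to $P_{+h}$ in the second paragraph: one must control how the strong repulsive splitting of $L_P$ perturbs as $P$ is shifted, using that the coefficient perturbations lie inside $\Delta(\fv)$ so as not to disturb the relevant asymptotic inequalities.
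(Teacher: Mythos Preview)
Your overall strategy matches the paper's (shift to $(P_{+h},1,\hat h-h)$ and invoke Lemma~\ref{bcd}), but there are two genuine gaps.

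First, you assert $v\fm>0$ without justification. The hypothesis $v\fm\in v(\hat h-H)$ permits $\fm\succeq 1$: for any $h\in H$ with $h\succ 1$ we have $\hat h-h\sim -h$, so $v(\hat h-h)=vh<0$ lies in $v(\hat h-H)$. When $\fm\succeq 1$, Lemma~\ref{bcd} is inapplicable since it requires $\fm\prec\fn=1$. The paper treats this case separately with $h:=0$: then $\hat h\prec 1\preceq\fm$ gives $\hat h-h\preceq\fm$ for free, Lemma~\ref{dentsolver} (with $\phi=\fn=1$) supplies $y\prec 1$ in $(\c^r)^{\preceq}$ with $P(y)=0$, and the Product Rule with $1/\fm\preceq 1$ in $H$ yields $y/\fm\in(\c^r)^{\preceq}$.

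Second, in the case $\fm\prec 1$ your estimate ``$\fm^{N-r-1}\preceq\fv(L_P)^{w+2}$ for $N$ sufficiently large'' can fail: nothing rules out $v\fm$ lying in a strictly smaller archimedean class than $v\fv$, in which case $Nv\fm<(w+2)v\fv$ for every $N$. The paper sidesteps this by choosing $h$ with $\hat h-h\prec(\fv\fm)^{(w+3)(r+1)}$ (legitimate via Proposition~\ref{specialvariant}, since both $v\fv$ and $v\fm$ lie in the convex subgroup witnessing specialness and hence so does any positive integer combination), so that Proposition~\ref{small P(a)} gives $P(h)\prec(\fv\fm)^{w+3}P$. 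The factor $\fv^{w+3}$ then delivers the required power of $\fv(L_{P_{+h}})$ directly, and $\fm^{w+3}\preceq\fm^{r+1}$ since $w\ge r$. Your preservation claims for ``deep'' and ``strongly repulsive-normal'' under the shift $P\mapsto P_{+h}$ are correct but should cite \cite[Corollary~3.3.8, Lemma~4.5.35]{ADH4} rather than be re-derived.
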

\begin{proof}
Suppose first that $\fm\succeq 1$, and let $h:=0$ and $y$ be as in Lemma~\ref{dentsolver} for~$\phi=\fn=1$. Then $y\prec 1$, 
$y\in (\c^r)^{\preceq}$, so $y\fm\prec 1$, $y/\fm\prec 1$, $y/\fm\in (\c^r)^{\preceq}$ by the Product Rule. 
Next assume~$\fm\prec 1$ and
set $\fv:=\abs{\fv(L_{P})}\in H^>$.
By Proposition~\ref{specialvariant} we can take $h\in H$ such that $\hat h-h\prec (\fv\fm)^{(w+3)(r+1)}$, and then by Proposition~\ref{small P(a)}  we have 
$$P_{+h}(0)\ =\ P(h)\ \prec\ (\fv\fm)^{w+3} P\ \preceq\ \fv^{w+3}\fm^{r+1}P_{+h}. $$
By \cite[Lemma~4.5.35]{ADH4}, $(P_{+h},1,\hat h-h)$ is strongly repulsive-normal, 
and by \cite[Corollary~3.3.8]{ADH4} it is   deep with $\fv(L_{P_{+h}}) \asymp_{\Delta(\fv)} \fv$. Hence Lemma~\ref{bcd} applies to the slot~${(P_{+h},1,\hat h-h)}$   in place of $(P,1,\hat h)$ to yield a $z\in\Calinf$ with
$P_{+h}(z)=0$ and $(z/\fm)^{(j)}\preceq 1$ for~$j=0,\dots,r$. 
  Lemma~\ref{lem:small derivatives of y}    gives $z^{(j)}\prec 1$ for~$j=0,\dots,r$. 
Set~$y:=h+z$; then $P(y)=0$, $y^{(j)} \prec 1$  and 
$\big( (y-h)/\fm\big){}^{(j)}\preceq 1$  for $j=0,\dots,r$. 
\end{proof}

\noindent
We now use the results above to approximate zeros of $P$ in $\Calinf$ by elements of~$H$:

\begin{cor}\label{cor:approx y} Suppose $H$ is Liouville closed, $\I(K)\subseteq K^\dagger$, $\fn=1$, and our slot~$(P,1,\hat h)$ in $H$ is ultimate. Assume also that $K$ is $1$-linearly surjective if~${r\geq 3}$. Let
$y\in\Calinf$ and $h\in H,\ \fm\in H^\times$ be such that~${P(y)=0}$, $y\prec 1$, and $\hat h-h\preceq\fm$.
Then  
$y-h\in \fm\,(\c^r)^{\preceq}$.
\end{cor}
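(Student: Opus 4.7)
The plan is to sandwich the given solution $y$ between the data and a well-controlled auxiliary solution $y'$ supplied by Corollary~\ref{mfhc}, and then invoke the nonlinear comparison of Corollary~\ref{cor:notorious 3.6}. The first step is to produce $\fm^*\in H^\times$ with $\fm^*\prec\fm$ and $v\fm^*\in v(\hat h-H)$. Since $\hat h\notin H$ lies in an immediate extension of $H$ and $H$ (being a Hardy field containing $\R$) has residue field $\R$, the set $v(\hat h-H)$ admits no maximum: for any $h_0\in H$ we have $v(\hat h-h_0)\in v\hat H=vH$, so there is $\fn\in H^\times$ with $\hat h-h_0\asymp\fn$; the nonzero residue of $(\hat h-h_0)/\fn$ is some $c\in\R^\times$, whence $h_0+c\fn\in H$ satisfies $v\bigl(\hat h-(h_0+c\fn)\bigr)>v(\hat h-h_0)$. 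Applying this with $h_0:=h$ (and using $v(\hat h-h)\ge v\fm$) produces $h^*\in H$ with $v(\hat h-h^*)>v\fm$, and I pick any $\fm^*\in H^\times$ with $v\fm^*=v(\hat h-h^*)$.

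Next, apply Corollary~\ref{mfhc} with $\fm^*$ in place of $\fm$; its hypotheses---$(P,1,\hat h)$ deep, strongly repulsive-normal, $Z$-minimal, special, together with $v\fm^*\in v(\hat h-H)$---are all satisfied, the first four from the standing in-force assumptions of this subsection. This yields $h'\in H$ and $y'\in\Calinf$ with
\[\hat h-h'\preceq\fm^*,\quad P(y')=0,\quad y'\prec 1,\quad y'\in(\c^r)^\preceq,\quad y'-h'\in\fm^*(\c^r)^\preceq.\]
Since additionally $(P,1,\hat h)$ is ultimate, Proposition~\ref{prop:notorious 3.6}---and hence Corollary~\ref{cor:notorious 3.6}---applies to the two real zeros $y,y'\prec 1$ of $P$ with weight $\fm^*$ (using $v\fm^*\in v(\hat h-H)$ and the hypothesis that $K$ is $1$-linearly surjective when $r\ge 3$), giving $y-y'\in\fm^*\c^r[\imag]^\preceq$; reality of $y-y'$ upgrades this to $\fm^*(\c^r)^\preceq$.

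It remains to assemble $y-h=(y-y')+(y'-h')+(h'-h)$. The first two summands lie in $\fm^*(\c^r)^\preceq$, and the third lies in $H$ with $h'-h\preceq\fm$ (from $\hat h-h'\preceq\fm^*\preceq\fm$ and $\hat h-h\preceq\fm$), so $(h'-h)/\fm\in\O_H$. Smallness of the derivation of $H$ gives $\der(\O_H)\subseteq\smallo_H\subseteq\O_H$, so $\O_H$ is $\der$-stable and every element of $\O_H$ lies in $(\c^r)^\preceq$; in particular $(h'-h)/\fm,\,\fm^*/\fm\in(\c^r)^\preceq$. Since $(\c^r)^\preceq$ is an $\R$-subalgebra of $\c^r$, the first two summands divided by $\fm$ equal $(\fm^*/\fm)$ times elements of $(\c^r)^\preceq$ and hence lie in $(\c^r)^\preceq$; summing, $(y-h)/\fm\in(\c^r)^\preceq$, as required. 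The genuinely hard input is Proposition~\ref{prop:notorious 3.6}: it marshals the Banach-space fixed-point setup of Section~\ref{sec:split-normal over Hardy fields}, the kernel-location analysis via the universal exponential extension in Section~\ref{sec:ueeh}, and Boshernitzan's uniform distribution results from Section~\ref{sec:udmod1}; the three-term telescoping above is pure bookkeeping against the standing normalization hypotheses.
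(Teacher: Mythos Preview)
Your proof is correct and follows essentially the same three-term telescoping strategy as the paper, invoking Corollary~\ref{mfhc} for the auxiliary solution and Corollary~\ref{cor:notorious 3.6} for the comparison of solutions. The only difference is an unnecessary detour: you introduce an auxiliary $\fm^*\prec\fm$ with $v\fm^*\in v(\hat h-H)$, whereas the paper applies both corollaries directly with $\fm$ itself. This works because $v(\hat h-H)$ is downward closed in $\Gamma$ (if $v(\hat h-a)=\gamma$ and $\gamma'<\gamma$ in $\Gamma$, pick $b\in H^\times$ with $vb=\gamma'$; then $v(\hat h-(a-b))=\gamma'$), and the hypothesis $\hat h-h\preceq\fm$ gives $v\fm\le v(\hat h-h)\in v(\hat h-H)$, hence $v\fm\in v(\hat h-H)$ already. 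Your extra refinement does no harm, but it is not needed.
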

\begin{proof}
Corollary~\ref{mfhc} gives  $h_1\in H$, $z\in\Calinf$ with $\hat h-h_1\preceq\fm$, $P(z)=0$, $z\prec 1$,  and $\big( {(z-h_1)/\fm}\big){}^{(j)} \preceq 1$ for $j=0,\dots,r$.  Now 
$$\frac{y-h}{\fm}\ =\ \frac{y-z}{\fm} + \frac{z-h_1}{\fm} + \frac{h_1-h}{\fm}$$
with 
${\big( (y-z)/\fm \big){}^{(j)} \preceq 1}$ for $j=0,\dots,r$ by Corollary~\ref{cor:notorious 3.6}.
Also $(h_1-h)/\fm\in H$ and $(h_1-h)/\fm\preceq 1$, so
$\big( (h_1-h)/\fm\big){}^{(j)}  \preceq 1$ for all $j\in\N$.
\end{proof}



\section{Asymptotic Similarity} \label{sec:asymptotic similarity}

\noindent 
Let $H$ be a Hausdorff field and $\hat{H}$ an immediate valued field extension of~$H$. 
Equip~$\hat H$ with the unique field ordering making it an ordered field extension of~$H$ such that $\mathcal O_{\hat H}$  is convex [ADH, 3.5.12]. 
Let~$f\in\c$ and~$\hat{f}\in \hat{H}$  be given. 

\begin{definition} \label{p:simH}
Call $f$  {\bf asymptotically similar\/} to $\hat{f}$ over $H$ (notation: $f\sim_{H}\hat{f}$) if $f\sim \phi$  in $\c$  and~$\phi\sim \hat{f}$ in $\hat{H}$ for some $\phi\in H$. 
(Note that then $f\in\c^\times$ and $\hat f\neq 0$.)\index{germ!asymptotically similar}  
\end{definition}

\noindent
Recall that the binary relations~$\sim$ on $\c^\times$ and~$\sim$ on $\hat H^\times$ are  equivalence relations which 
restrict to the same equivalence relation on $H^\times$. As a consequence, if
$f\sim_H\hat f$, then $f\sim \phi$ in~$\c$ for any~$\phi\in H$ with~$\phi\sim \hat f$ in  $\hat H$, and $\phi\sim\hat f$ in $\hat H$ for any~$\phi\in H$ with $f\sim\phi$ in $\c$. Moreover,
if $f\in H$, then $f \sim_H \hat f \Leftrightarrow f\sim\hat f$ in~$\hat H$, and if $\hat f\in H$, then $f \sim_H \hat f \Leftrightarrow f\sim\hat f$ in $\c$.

\begin{lemma}\label{lem:simH sim}
Let $f_1\in\c$, $f_1\sim f$, let $\hat f_1\in \hat H_1$ for an immediate valued field extension $\hat H_1$ of $H$, and suppose $\hat f\sim\theta$ in $\hat H$ and $\hat f_1\sim \theta$ in $\hat H_1$ for some $\theta\in H$. 
 Then:~$f\sim_H \hat f \Leftrightarrow f_1\sim_H\hat f_1$.
\end{lemma}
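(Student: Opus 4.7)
The plan is to unwind the definition of $\sim_H$ and observe that the relation is insensitive to replacement of $f$ within its $\sim$-equivalence class in $\c$ and replacement of $\hat f$ within its $\sim$-equivalence class in $\hat H$, provided that class meets $H$. The element $\theta\in H$ serves exactly to link the two $\sim$-classes across the two valued extensions $\hat H$ and $\hat H_1$.

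First I would prove the forward direction: assume $f\sim_H\hat f$, and take $\phi\in H$ with $f\sim\phi$ in $\c$ and $\phi\sim\hat f$ in $\hat H$. Since $\phi\sim\hat f$ and $\hat f\sim\theta$ in $\hat H$, transitivity of $\sim$ on $\hat H^\times$ gives $\phi\sim\theta$ in $\hat H$. Because $\phi,\theta\in H$ and the valuation on $\hat H$ restricts to that on $H$, this is equivalent to $\phi\sim\theta$ in $H$, and hence to $\phi\sim\theta$ in the immediate extension $\hat H_1$ of $H$ as well. Combining with the hypothesis $\hat f_1\sim\theta$ in $\hat H_1$ and transitivity in $\hat H_1^\times$, we get $\phi\sim\hat f_1$ in $\hat H_1$. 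On the analytic side, $f_1\sim f$ in $\c$ and $f\sim\phi$ in $\c$, so $f_1\sim\phi$ in $\c$ by transitivity of $\sim$ on $\c^\times$. Thus $\phi\in H$ witnesses $f_1\sim_H\hat f_1$.

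The reverse implication is entirely symmetric, obtained by exchanging the roles of $(f,\hat f,\hat H)$ and $(f_1,\hat f_1,\hat H_1)$: a witness $\phi_1\in H$ for $f_1\sim_H\hat f_1$ satisfies $\phi_1\sim\theta$ in $\hat H_1$, hence in $H$, hence in $\hat H$, and then $\phi_1\sim\hat f$ in $\hat H$; while $f\sim f_1\sim\phi_1$ in $\c$ gives $f\sim\phi_1$ in $\c$, so $\phi_1$ witnesses $f\sim_H\hat f$.

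There is no serious obstacle here; the only point requiring care is the interchange of the relation $\sim$ between $H$ and its valued extensions $\hat H$ and $\hat H_1$, which is immediate from the fact that $\hat H$ and $\hat H_1$ are immediate extensions of $H$ and that $\sim$ is defined in terms of the valuation. The remarks following Definition~\ref{p:simH} already record the compatibilities we need.
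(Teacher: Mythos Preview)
Your argument is correct and is exactly what the paper has in mind; in fact the paper states this lemma without proof, regarding it as immediate from the definition and the transitivity remarks following Definition~\ref{p:simH}. Your unwinding via the common element $\theta\in H$ is the intended verification.
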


\noindent
For $\fn\in H^\times$ we have $f\sim_H \hat{f}\Leftrightarrow \fn f\sim_H \fn \hat{f}$. Moreover, by \cite[Lemma~2.1]{ADH5}:

\begin{lemma}\label{lem:simH}
Let $g\in\c$, $\hat g\in\hat H$, and suppose $f\sim_H\hat f$ and $g\sim_H\hat g$. Then ${1/f\sim_H 1/\hat f}$ and $fg\sim_H\hat f\,\hat g$. Moreover,
$$f\preceq g \text{ in $\c$} \quad\Longleftrightarrow\quad \hat f\preceq \hat g \text{ in $\hat H$,}$$
and likewise with $\prec$, $\asymp$, or $\sim$ in place of $\preceq$.
\end{lemma}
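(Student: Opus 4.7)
The plan is to reduce everything to three facts already in the background: (a) $\sim$ is a multiplicative equivalence on $\c^\times$ and on $\hat H^\times$ that preserves $\preceq$, $\prec$, $\asymp$; (b) the valuation of $\hat H$ restricts to that of $H$, since $\hat H/H$ is immediate, so the asymptotic relations on $\hat H$ restricted to $H^\times$ coincide with those of $H$; and (c) by construction of the valuation on the Hausdorff field $H \subseteq \c$, the relations $\preceq$, $\prec$, $\asymp$, $\sim$ on $\c^\times$ restricted to $H^\times$ agree with the corresponding relations on $H$. The definition of $\sim_H$ is precisely the mechanism that transports comparisons through a common witness in $H$, so the whole lemma is essentially bookkeeping once (a)--(c) are in place.

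First I would dispatch the multiplicative assertions. Pick $\phi,\psi\in H$ with $f\sim\phi$ in $\c$ and $\phi\sim\hat f$ in $\hat H$, and $g\sim\psi$ in $\c$ and $\psi\sim\hat g$ in $\hat H$. Since $\sim$ is multiplicative on $\c^\times$, we have $fg\sim \phi\psi$ in $\c$ and $1/f\sim 1/\phi$ in $\c$; by multiplicativity of $\sim$ on $\hat H^\times$, $\phi\psi\sim\hat f\hat g$ and $1/\phi\sim 1/\hat f$ in $\hat H$. Since $\phi\psi,1/\phi\in H$, this exhibits $fg\sim_H\hat f\hat g$ and $1/f\sim_H 1/\hat f$.

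For the equivalences of asymptotic relations, I would use the same witnesses $\phi,\psi\in H$ and the preservation of $\preceq$ under $\sim$ on each side. Starting from $f\preceq g$ in $\c$ and using $f\sim\phi$, $g\sim\psi$ gives $\phi\preceq\psi$ in $\c$, hence in $H$ by (c); by (b) this is equivalent to $\phi\preceq\psi$ in $\hat H$; then $\hat f\sim\phi$, $\hat g\sim\psi$ in $\hat H$ together with preservation of $\preceq$ under $\sim$ in $\hat H$ yields $\hat f\preceq\hat g$ in $\hat H$. The reverse implication runs the same chain backwards. Verbatim the same argument works with $\prec$, $\asymp$, or $\sim$ in place of $\preceq$, since each of these relations is preserved by $\sim$ on $\c^\times$ and on $\hat H^\times$.

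The main obstacle—really the only place any thought is required—is confirming (c) and the compatibility in (b). For (c), this is immediate from the definition of the valuation ring $\mathcal O = \{f\in H:f\preceq 1\}$ as the convex hull of $\Z$ in $H$: the $\c$-relation $\preceq$ restricted to $H$ is by definition the $H$-relation $\preceq$. For (b), ``immediate'' gives $v(\hat H^\times)=v(H^\times)$, so the valuation of $\hat H$ restricted to $H$ is that of $H$, and the induced asymptotic relations agree on $H$. With these in hand, the proof is a short diagram chase through $\phi,\psi$.
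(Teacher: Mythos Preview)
Your proof is correct and is essentially the natural direct argument; the paper itself does not prove this lemma but simply cites \cite[Lemma~2.1]{ADH5}, so your proposal supplies precisely the kind of elementary verification that reference presumably contains.
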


\noindent
Lemma~\ref{lem:simH} readily yields:

{\samepage 
\begin{cor}\label{cor:simHt}
Suppose
$\hat f$ is transcendental over $H$ and  $Q(f)\sim_H Q(\hat f)$ for all~$Q\in H[Y]^{\neq}$. 
Then we have:
\begin{enumerate}
\item[\textup{(i)}]  a subfield $H(f)\supseteq H$ of $\c$ generated by $f$ over~$H$;
\item[\textup{(ii)}] a field isomorphism $\iota\colon H(f)\to H(\hat f)$ over $H$ with~$\iota(f)=\hat f$;  
\item[\textup{(iii)}] with $H(f)$ and $\iota$ as in \textup{(i)} and \textup{(ii)} we have $g\sim_H\iota(g)$ for all $g\in H(f)^{\times}$, hence for all $g_1,g_2\in H(f)$: 
$g_1\preceq g_2$  in~$\c$~$\Leftrightarrow$ $\iota(g_1)\preceq \iota(g_2)$  in~$\hat{H}$.
\end{enumerate}
Also, $\iota$ in \textup{(ii)} is unique and is an ordered field isomorphism, where  the ordering on~$H(f)$ is its ordering as a Hausdorff field.
\end{cor}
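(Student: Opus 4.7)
The plan is to bootstrap from the hypothesis on polynomial values to a complete field isomorphism preserving all asymptotic data. First I would establish (i) by observing that for every $Q\in H[Y]^{\neq}$, the relation $Q(f)\sim_H Q(\hat f)$ forces $Q(f)$ to be eventually nonzero, hence a unit in $\c$: indeed, $Q(f)\sim_H Q(\hat f)$ means $Q(f)\sim\phi$ in $\c$ for some $\phi\in H^\times$, so $Q(f)(t)/\phi(t)\to 1$ and $Q(f)\in \c^\times$. Consequently the ring homomorphism $H[Y]\to \c$, $Y\mapsto f$, has trivial kernel; its image $H[f]$ is isomorphic to $H[Y]$, and every nonzero element of $H[f]$ is invertible in $\c$. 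Therefore $H(f):=\{Q(f)/R(f):Q\in H[Y],\ R\in H[Y]^{\neq}\}$ is well-defined as a subfield of $\c$ containing $H$ and $f$.

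For (ii), transcendence of $\hat f$ gives the evaluation isomorphism $H[Y]\xrightarrow{\cong}H[\hat f]$ via $Y\mapsto\hat f$. Composing with the inverse of $H[Y]\xrightarrow{\cong}H[f]$ yields a ring isomorphism $H[f]\xrightarrow{\cong} H[\hat f]$ over $H$ sending $f$ to $\hat f$, which then extends uniquely to a field isomorphism $\iota\colon H(f)\to H(\hat f)$. Uniqueness is immediate, since any field isomorphism $H(f)\to H(\hat f)$ over $H$ with $f\mapsto\hat f$ is determined on $H[f]$ and hence on its fraction field $H(f)$.

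For (iii), let $g\in H(f)^{\times}$ and write $g=Q(f)/R(f)$ with $Q,R\in H[Y]^{\neq}$, so $\iota(g)=Q(\hat f)/R(\hat f)$. The hypothesis gives $Q(f)\sim_H Q(\hat f)$ and $R(f)\sim_H R(\hat f)$, and the multiplicativity and inversion properties of $\sim_H$ stated in Lemma~\ref{lem:simH} then yield $g\sim_H\iota(g)$. Applying Lemma~\ref{lem:simH} to $g_1\sim_H\iota(g_1)$ and $g_2\sim_H\iota(g_2)$ gives the $\preceq$-equivalence for $g_1,g_2\in H(f)^\times$; the cases $g_1=0$ or $g_2=0$ are trivial (since $g\preceq 0$ forces $g=0$). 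Order preservation is then immediate: if $g\in H(f)^\times$ is positive (eventually positive as a germ in $\c$) and $\phi\in H$ satisfies $g\sim\phi$ in $\c$ and $\phi\sim\iota(g)$ in $\hat H$, then $\phi>0$ in $H$, and since $H\hookrightarrow\hat H$ is an ordered field embedding with $\mathcal O_{\hat H}$ convex, the quotient $\iota(g)/\phi\in 1+\smallo_{\hat H}$ is positive in $\hat H$, forcing $\iota(g)>0$.

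There is no real obstacle here; the argument amounts to tracking asymptotic similarity through field operations, with Lemma~\ref{lem:simH} doing all the heavy lifting. The only point requiring a moment's care is the order-preservation step, where one invokes convexity of $\mathcal O_{\hat H}$ to transfer positivity across $\sim$, and the verification that the evaluation $Q\mapsto Q(f)$ really is a ring morphism into $\c$ (which is automatic since $\c$ is a commutative ring and polynomial evaluation is functorial).
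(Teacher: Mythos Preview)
Your proposal is correct and follows essentially the same approach as the paper: the paper states that (i)--(iii) follow readily from Lemma~\ref{lem:simH}, which you spell out explicitly, and for order preservation the paper invokes that $\iota$ is a valued field isomorphism by (iii) together with [ADH, 3.5.12] (uniqueness of the ordering with convex valuation ring on an immediate extension), which amounts to the same convexity argument you give directly.
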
}
\begin{proof} To see that $\iota$ is order preserving, use that $\iota$ is a valued field isomorphism by~(iii), 
and apply [ADH, 3.5.12].
\end{proof}

\noindent
Here is the analogue when $\hat f$ is algebraic over $H$: 

\begin{cor}\label{cor:simHa}
Suppose  $\hat f$ is algebraic over $H$ with minimum polynomial~$P$ over~$H$ of degree $d$, and
$P(f)=0$, $Q(f)\sim_H Q(\hat f)$ for all~$Q\in H[Y]^{\neq}$ of degree~$<d$.
Then we have:
\begin{enumerate}
\item[\textup{(i)}]  a subfield $H[f]\supseteq H$ of $\c$ generated by $f$ over~$H$;
\item[\textup{(ii)}] a field isomorphism $\iota\colon H[f]\to H[\hat f]$ over $H$ with~$\iota(f)=\hat f$;  
\item[\textup{(iii)}] with $H[f]$ and $\iota$ as in \textup{(i)} and \textup{(ii)} we have $g\sim_H\iota(g)$ for all $g\in H[f]^{\times}$, hence for all $g_1,g_2\in H[f]$: 
$g_1\preceq g_2$  in~$\c$~$\Leftrightarrow$ $\iota(g_1)\preceq \iota(g_2)$  in~$\hat{H}$.
\end{enumerate}
Also, $H[f]$ and $\iota$ in \textup{(i)} and \textup{(ii)} are unique and $\iota$ is an ordered field isomorphism, where  the ordering on $H(f)$ is its ordering as a Hausdorff field.
 \end{cor}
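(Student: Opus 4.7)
The plan is to mimic the proof of Corollary~\ref{cor:simHt}, replacing ``nonzero polynomial'' by ``nonzero polynomial of degree less than $d$'' throughout. First I would observe that for any $Q\in H[Y]^{\neq}$ with $\deg Q<d$, the minimality of $P$ gives $Q(\hat f)\neq 0$, so from the hypothesis $Q(f)\sim_H Q(\hat f)$ and the definition of $\sim_H$ one extracts a $\phi\in H^\times$ with $Q(f)\sim \phi$ in $\c$; in particular $Q(f)\in\c^\times$. Consider the evaluation map $\varepsilon\colon H[Y]\to\c$, $Q\mapsto Q(f)$. Since $P(f)=0$ we have $(P)\subseteq \ker\varepsilon$, and conversely any element of $\ker\varepsilon$ has a representative of degree $<d$ modulo $P$, which by the above must vanish, hence is zero. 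Thus $\varepsilon$ induces an injective $H$-algebra homomorphism $H[Y]/(P)\hookrightarrow\c$ whose image is a field containing $H$ and $f$; this image is the desired $H[f]$, proving (i), and its uniqueness is automatic (it is just the subring of $\c$ generated by $H\cup\{f\}$, which happens to be a field).

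For (ii), the analogous evaluation $H[Y]\to \hat H$ at $\hat f$ factors through $H[Y]/(P)$ to give an $H$-algebra isomorphism $H[Y]/(P)\to H[\hat f]$. Composing with the inverse of $H[Y]/(P)\xrightarrow{\sim} H[f]$ yields the $H$-algebra isomorphism $\iota\colon H[f]\to H[\hat f]$ with $\iota(f)=\hat f$, and uniqueness is clear since such a morphism is determined by its value at $f$.

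For (iii), each nonzero $g\in H[f]$ is of the form $Q(f)$ for a unique $Q\in H[Y]^{\neq}$ of degree $<d$, and then $\iota(g)=Q(\hat f)$; the hypothesis gives $g=Q(f)\sim_H Q(\hat f)=\iota(g)$. The equivalences $g_1\preceq g_2\Leftrightarrow \iota(g_1)\preceq\iota(g_2)$ (and likewise with $\prec$, $\asymp$, $\sim$) then follow from Lemma~\ref{lem:simH}, either applied to $g_1$ and $g_2$ directly or, after reducing to the case $g_2=1$, to the single element $g_1/g_2$. Finally, $\iota$ is order-preserving: by (iii) it is a valued field isomorphism, and the Hausdorff-field orderings on $H[f]$ and $H[\hat f]$ are each uniquely determined by the convexity of their respective valuation rings by [ADH, 3.5.12], exactly as at the end of the proof of Corollary~\ref{cor:simHt}.

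The argument is essentially bookkeeping; the one point requiring genuine input from the hypothesis is the identification $\ker\varepsilon=(P)$, and for this the decisive fact is that $Q(f)\sim_H Q(\hat f)\neq 0$ forces $Q(f)\neq 0$ whenever $Q\in H[Y]^{\neq}$ has degree $<d$. I do not anticipate a serious obstacle beyond verifying this cleanly.
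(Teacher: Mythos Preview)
The proposal is correct and follows exactly the approach the paper intends: the paper gives no explicit proof of Corollary~\ref{cor:simHa}, leaving it as the evident algebraic analogue of Corollary~\ref{cor:simHt}, and your argument carries out precisely that analogy (using the division with remainder by $P$ to reduce to polynomials of degree $<d$, and invoking Lemma~\ref{lem:simH} and [ADH, 3.5.12] just as in the proof of Corollary~\ref{cor:simHt}). One cosmetic remark: $H[\hat f]$ is not itself a Hausdorff field, so in your final paragraph it would be cleaner to say that its ordering is the one inherited from $\hat H$, rather than calling it a ``Hausdorff-field ordering''.
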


\noindent
If $\hat f\notin H$, then to show that $f-\phi\sim_H \hat f-\phi$ for all $\phi\in H$ it is enough to do this for $\phi$ arbitrarily close to $\hat f$: 

\begin{lemma}\label{lem:simH phi0}
Let $\phi_0\in H$ be such that $f-\phi_0\sim_H\hat f-\phi_0$. Then $f-\phi\sim_H \hat f-\phi$ for all $\phi\in H$ with
$\hat f-\phi_0 \prec \hat f-\phi$.
\end{lemma}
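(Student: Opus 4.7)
\medskip\noindent
\emph{Proof plan for Lemma~\ref{lem:simH phi0}.}
The strategy is to identify a single element of $H$ that dominates both $f-\phi$ and $\hat f-\phi$ (in $\c$ and $\hat H$ respectively), namely $\phi_0-\phi \in H$. The hypothesis will feed through once we compare $f-\phi_0$ and $\hat f-\phi_0$ against this element.

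First I would exploit the valuation-theoretic hypothesis on the $\hat H$-side. The decomposition
\[
\hat f - \phi \ =\ (\phi_0 - \phi) \,+\, (\hat f - \phi_0)
\]
combined with $\hat f - \phi_0 \prec \hat f - \phi$ forces $\hat f - \phi \sim \phi_0 - \phi$ in $\hat H$ by the standard ultrametric-style argument (a sum is asymptotic to a summand of strictly larger valuation). In particular $\phi_0 - \phi \ne 0$ and $\phi_0 - \phi \asymp \hat f - \phi$ in $\hat H$.

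Next I would transfer this information to $\c$ via the hypothesis $f-\phi_0 \sim_H \hat f-\phi_0$. Unpacking Definition~\ref{p:simH} yields $\psi_0 \in H^\times$ with $f - \phi_0 \sim \psi_0$ in $\c$ and $\psi_0 \sim \hat f - \phi_0$ in $\hat H$. Then in $\hat H$ we have
\[
\psi_0 \ \asymp\ \hat f - \phi_0 \ \prec\ \hat f - \phi\ \asymp\ \phi_0-\phi.
\]
Because $\psi_0$ and $\phi_0-\phi$ both lie in $H$, and since the dominance relations of both $\hat H$ and $\c$ restrict to the same relation on $H$ (the former because $\hat H$ is an immediate valued extension of $H$, the latter because $H$ is a Hausdorff subfield of $\c$), we conclude $\psi_0 \prec \phi_0 - \phi$ in $\c$. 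Therefore $f - \phi_0 \sim \psi_0 \prec \phi_0 - \phi$ in $\c$, and the decomposition
\[
f - \phi \ =\ (\phi_0 - \phi) \,+\, (f - \phi_0)
\]
yields $f - \phi \sim \phi_0 - \phi$ in $\c$.

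Finally, both sides of the desired relation are asymptotic (in their respective worlds) to the common element $\phi_0 - \phi \in H$, so $f - \phi \sim_H \hat f - \phi$ by Definition~\ref{p:simH}. There is no real obstacle here; the only point requiring care is the cross-comparison in the middle paragraph, where one must remember that $\prec$ on $H$ is unambiguous because $H$ inherits the same valuation from both $\c$ (through its Hausdorff field structure) and $\hat H$ (through the immediate extension).
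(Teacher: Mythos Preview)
Your proof is correct and follows essentially the same route as the paper: show $\hat f-\phi\sim\phi_0-\phi$ in $\hat H$, transfer $\hat f-\phi_0\prec\phi_0-\phi$ to $f-\phi_0\prec\phi_0-\phi$ in $\c$, and conclude $f-\phi\sim\phi_0-\phi$ in $\c$. The only cosmetic difference is that where you unpack Definition~\ref{p:simH} via an auxiliary $\psi_0\in H$ to move the relation $\prec$ from $\hat H$ to $\c$, the paper invokes Lemma~\ref{lem:simH} directly (applied to the pairs $(f-\phi_0,\hat f-\phi_0)$ and $(\phi_0-\phi,\phi_0-\phi)$) to obtain $\phi_0-\phi\succ f-\phi_0$ in one line.
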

\begin{proof}
Let $\phi\in H$ with $\hat f-\phi_0 \prec \hat f-\phi$. Then $\phi_0-\phi\succ \hat f-\phi_0$, so $\hat f-\phi=(\hat f-\phi_0)+(\phi_0-\phi)\sim\phi_0-\phi$. By Lemma~\ref{lem:simH} we also have $\phi_0-\phi\succ f-\phi_0$, and hence likewise
$f-\phi \sim\phi_0-\phi$. 
\end{proof}

\noindent
We define: $f\approx_H\hat f: \Leftrightarrow f-\phi \sim_H \hat f-\phi$ for all $\phi\in H$. \label{p:approxH}
If $f\approx_H\hat f$, then~$f\sim_H\hat f$ as well as $f,\hat f\notin H$, and 
$\fn f\approx_H \fn\hat f$ for all $\fn\in H^\times$. Hence
$f\approx_H \hat f$ iff $f, \hat f\notin H$ and the isomorphism $\iota\colon H+H f\to H+H\hat f$ of $H$-linear spaces that is the identity on~$H$ and sends $f$ to $\hat f$   satisfies~$g\sim_H \iota(g)$ for all nonzero $g\in H+Hf$.

\medskip
\noindent
Here is an easy consequence of Lemma~\ref{lem:simH phi0}: 

\begin{cor}\label{cor:simH phi0}
Suppose $\hat f\notin H$ and $f-\phi_0 \sim_H \hat f-\phi_0$ for all  $\phi_0\in H$ such that~$\phi_0\sim\hat f$. Then~${f\approx_H \hat f}$.
\end{cor}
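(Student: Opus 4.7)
\medskip
\noindent
\textbf{Proof plan.} The plan is to reduce the general case to the hypothesis by means of Lemma~\ref{lem:simH phi0}. Let $\phi\in H$; I need to show $f-\phi\sim_H\hat f-\phi$. The argument splits into two cases according to whether $\phi\sim\hat f$ in $\hat H$.

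The easy case is $\phi\sim\hat f$: then $\phi\ne 0$, and the hypothesis applied to $\phi_0:=\phi$ directly gives $f-\phi\sim_H\hat f-\phi$. The substantive case is $\phi\not\sim\hat f$. Here I first produce some $\phi_0\in H$ with $\phi_0\sim\hat f$: since $\hat f\ne 0$ (as $\hat f\notin H$) and $\hat H$ is an immediate extension of $H$, there is $\psi\in H^\times$ with $v\psi=v\hat f$, and then the residue of $\hat f/\psi$ in the common residue field of $H$ and $\hat H$ is a nonzero element represented by some $c\in\mathcal O_H$, so that $\phi_0:=c\psi\in H^\times$ satisfies $\hat f/\phi_0-1\prec 1$, i.e., $\phi_0\sim\hat f$. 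By hypothesis then $f-\phi_0\sim_H\hat f-\phi_0$.

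Next I verify the valuation inequality $\hat f-\phi_0\prec\hat f-\phi$ needed to invoke Lemma~\ref{lem:simH phi0}. From $\phi_0\sim\hat f$ we have $\hat f-\phi_0\prec\hat f$, so it suffices to check $\hat f\preceq\hat f-\phi$ under the assumption $\phi\not\sim\hat f$. This is routine: if $\phi=0$ it is trivial; if $\phi\prec\hat f$ then $\hat f-\phi\sim\hat f$; if $\phi\succ\hat f$ then $\hat f-\phi\asymp\phi\succ\hat f$; and if $\phi\asymp\hat f$ with $\phi\not\sim\hat f$ then $\hat f-\phi\asymp\hat f$. In every subcase $\hat f\preceq\hat f-\phi$, hence $\hat f-\phi_0\prec\hat f-\phi$. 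Lemma~\ref{lem:simH phi0} now yields $f-\phi\sim_H\hat f-\phi$, completing the proof.

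The only non-formal ingredient is the production of a companion $\phi_0\in H$ with $\phi_0\sim\hat f$; this is the step that genuinely uses that $\hat H$ is an immediate extension of $H$ (same value group and same residue field). Once this witness is in hand, the rest is a one-line dichotomy plus a direct appeal to Lemma~\ref{lem:simH phi0}.
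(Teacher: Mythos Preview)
Your proof is correct and follows essentially the same approach as the paper's: both split on whether $\phi\sim\hat f$ (equivalently, whether $\hat f-\phi\prec\hat f$), handle that case directly by hypothesis, and in the remaining case produce a $\phi_0\sim\hat f$ and invoke Lemma~\ref{lem:simH phi0} via the inequality $\hat f-\phi_0\prec\hat f\preceq\hat f-\phi$. Your version simply makes explicit the construction of $\phi_0$ from immediacy and the routine verification of $\hat f\preceq\hat f-\phi$, which the paper leaves tacit.
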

\begin{proof}
Take $\phi_0\in H$ with $\phi_0\sim\hat f$, and
let $\phi\in H$ be given. If~${\hat f-\phi\prec\hat f}$, then~${f-\phi \sim_H \hat f-\phi}$ by hypothesis;
otherwise we have~$\hat f-\phi\succeq\hat f\succ \hat f-\phi_0$, and then~${f-\phi} \sim_H \hat f-\phi$ by Lemma~\ref{lem:simH phi0}.
\end{proof}

\noindent
Lemma~\ref{lem:simH sim} yields an analogue  for $\approx_H$:

\begin{lemma}\label{lem:approxH sim}
Let $f_1\in\c$ be such that $f_1-\phi\sim f-\phi$ for all $\phi\in H$, 
and let $\hat f_1$ be an element of an immediate valued field extension of $H$  such that
$v(\hat f-\phi)=v(\hat f_1-\phi)$ for all $\phi\in H$. Then~$f\approx_H \hat f$ iff $f_1\approx_H\hat f_1$.
\end{lemma}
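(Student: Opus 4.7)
The plan is to reduce the statement to the earlier Lemma~\ref{lem:simH sim} by checking its hypotheses after translating by an arbitrary $\phi\in H$. By symmetry of the hypotheses it suffices to prove the forward direction, so assume $f\approx_H\hat f$; in particular $\hat f\notin H$. Since $v(\hat f_1-\phi)=v(\hat f-\phi)\neq\infty$ for every $\phi\in H$, we also have $\hat f_1\notin H$, so both sides of the putative equivalence make sense.

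Fix $\phi\in H$; we aim to show $f_1-\phi\sim_H\hat f_1-\phi$. From $f\approx_H\hat f$ we obtain some $\phi_0\in H$ with $f-\phi\sim\phi_0$ in $\c$ and $\hat f-\phi\sim\phi_0$ in $\hat H$. The first hypothesis of the lemma gives $f_1-\phi\sim f-\phi$ in $\c$, hence $f_1-\phi\sim\phi_0$ in $\c$. For the corresponding statement on the ``$\hat f_1$-side'' I would compute, using the second hypothesis applied to the element $\phi+\phi_0\in H$:
\[
v\big((\hat f_1-\phi)-\phi_0\big)\ =\ v\big(\hat f_1-(\phi+\phi_0)\big)\ =\ v\big(\hat f-(\phi+\phi_0)\big)\ =\ v\big((\hat f-\phi)-\phi_0\big),
\]
and this last quantity exceeds $v(\hat f-\phi)=v(\hat f_1-\phi)$ because $\hat f-\phi\sim\phi_0$ in $\hat H$. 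Hence $\hat f_1-\phi\sim\phi_0$ in $\hat H_1$.

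At this point the hypotheses of Lemma~\ref{lem:simH sim} are all in place, with $f-\phi$, $f_1-\phi$, $\hat f-\phi$, $\hat f_1-\phi$, $\phi_0$ playing the roles of $f$, $f_1$, $\hat f$, $\hat f_1$, $\theta$ there. That lemma then yields $f-\phi\sim_H\hat f-\phi\ \Leftrightarrow\ f_1-\phi\sim_H\hat f_1-\phi$, and since the left side holds by assumption, we conclude $f_1-\phi\sim_H\hat f_1-\phi$. As $\phi\in H$ was arbitrary, $f_1\approx_H\hat f_1$.

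I do not foresee any genuine obstacle: the whole argument is a bookkeeping exercise in the definitions of $\sim_H$ and $\approx_H$, and the only step that requires a short computation is the transfer of $\hat f-\phi\sim\phi_0$ from $\hat H$ to $\hat H_1$, which is immediate from the hypothesis $v(\hat f-\psi)=v(\hat f_1-\psi)$ applied at the well-chosen $\psi=\phi+\phi_0$.
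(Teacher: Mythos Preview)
Your proof is correct and follows exactly the approach the paper intends: the paper gives no explicit proof of this lemma, simply remarking that Lemma~\ref{lem:simH sim} yields it, and your argument spells out precisely how that reduction works.
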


\noindent
Let $g\in\c$ be eventually strictly increasing with $g(t)\to+\infty$ as $t\to+\infty$; we then have the Hausdorff field
$H\circ g=\{h\circ g:h\in H\}$, with ordered valued field isomorphism~$h\mapsto h\circ g\colon H\to H\circ g$. (See \cite[Section~2]{ADH5}.) Suppose  
$$\hat h\mapsto \hat h\circ g\ \colon\  \hat H\to\hat H\circ g$$  extends this isomorphism to
a valued field isomorphism, where $\hat H\circ g$ is an immediate valued field extension of the Hausdorff field $H\circ g$.
Then
$$f\sim_H \hat f\quad\Longleftrightarrow\quad f\circ g\sim_{H\circ g} \hat f\circ g,\qquad
f\approx_H \hat f\quad\Longleftrightarrow\quad f\circ g\approx_{H\circ g} \hat f\circ g.$$

\subsection*{The complex version}
We now assume in addition that~$H$ is real closed, with algebraic closure 
$K:=H[\imag]\subseteq\c[\imag]$. We take $\imag$  with $\imag^2=-1$ also as an element of a field
$\hat K:=\hat H[\imag]$ extending both $\hat{H}$ and $K$, and equip $\hat K$ with the unique valuation ring of $\hat K$ lying over
$\mathcal O_{\hat H}$. (See \cite[remarks following Lemma~4.1.2]{ADH4}.) 
Then~$\hat K$ is an immediate valued field extension of~$K$.
Let~$f\in\c[\imag]$ and~$\hat f\in \hat K$ below.  \label{p:simK}

\medskip
\noindent
Call~$f$  {\bf asymptotically similar\/}\index{germ!asymptotically similar} to $\hat{f}$ over~$K$ (notation: $f\sim_{K}\hat{f}$) if for some~${\phi\in K}$ we have $f\sim \phi$  in $\c[\imag]$  and~$\phi\sim \hat{f}$ in~$\hat{K}$. 
Then~$f\in\c[\imag]^\times$ and~$\hat f\neq 0$. As before, 
if~$f\sim_K\hat f$, then $f\sim \phi$ in~$\c[\imag]$ for any~${\phi\in K}$ for which~$\phi\sim \hat f$ in~$\hat K$, and~$\phi\sim\hat f$ in $\hat K$ for any $\phi\in K$ for which~$f\sim\phi$ in~$\c[\imag]$.
Moreover, if~$f\in K$, then~$f \sim_K \hat f$ reduces to $f \sim \hat f$ in $\hat K^\times$. Likewise, if $\hat f\in K$, then
$f \sim_K \hat f$ reduces to $f \sim \hat f$ in~$\c[\imag]^\times$.

\begin{lemma}\label{lem:simH sim K}
Let $f_1\in\c[\imag]$ with $f_1\sim f$. Let  $\hat H_1$ be an immediate valued field extension of $H$, let $\hat K_1:=\hat H_1[\imag]$ be the corresponding immediate valued field extension of $K$ obtained from $\hat H_1$ as $\hat K$ was obtained from $\hat H$. Let  $\hat f_1\in \hat K_1$, and $\theta\in K$ be such that 
$\hat f\sim\theta$ in $\hat K$ and $\hat f_1\sim \theta$ in~$\hat K_1$. Then~$f\sim_K \hat f$ iff $f_1\sim_K\hat f_1$.
\end{lemma}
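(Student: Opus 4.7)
The plan is to mirror the proof of Lemma~\ref{lem:simH sim} almost verbatim, using that the asymptotic equivalence $\sim$ is an equivalence relation both on $\c[\imag]^\times$ and on $\hat K^\times$, $\hat K_1^\times$, and that the valuation of $\hat K$ extends that of $K$ (so for elements of $K$ the relation $\sim$ has the same meaning whether interpreted in $K$, in $\hat K$, or in $\hat K_1$).

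Assume $f\sim_K \hat f$, so we have $\phi\in K$ with $f\sim\phi$ in $\c[\imag]^\times$ and $\phi\sim\hat f$ in $\hat K^\times$. From $\hat f\sim\theta$ in $\hat K$ and transitivity of $\sim$ on $\hat K^\times$ we get $\phi\sim\theta$ in $\hat K$; since $\phi,\theta\in K$ and $\hat K$ is a valued field extension of $K$, this is equivalent to $\phi\sim\theta$ in $K$. Consequently $\phi\sim\theta$ in $\hat K_1$ (viewing $K\subseteq\hat K_1$), and combining with the hypothesis $\hat f_1\sim\theta$ in $\hat K_1$ and transitivity there, we obtain $\phi\sim\hat f_1$ in $\hat K_1$. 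On the other hand, $f_1\sim f\sim\phi$ in $\c[\imag]^\times$ gives $f_1\sim\phi$ in $\c[\imag]^\times$. Thus $\phi\in K$ witnesses $f_1\sim_K \hat f_1$.

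The converse implication is proved by the same argument after swapping the roles of the pairs $(f,\hat f)$ and $(f_1,\hat f_1)$: the hypotheses are symmetric in these pairs, since $f_1\sim f$ implies $f\sim f_1$ and both $\hat f$ and $\hat f_1$ are $\sim\theta$ in their respective extensions. The one point to double-check is that whenever we invoke $\phi\sim\theta$ (for $\phi,\theta\in K$) passing between the three valued fields $K$, $\hat K$, $\hat K_1$, we are using only that they share the valuation of $K$, which is immediate from the construction of $\hat K=\hat H[\imag]$ and $\hat K_1=\hat H_1[\imag]$ given before the lemma. No genuine obstacle arises; the complex case is formally identical to the real case already handled in Lemma~\ref{lem:simH sim}.
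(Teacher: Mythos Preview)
Your proof is correct and is exactly what the paper intends: the lemma is stated without proof precisely because it is the verbatim complex analogue of Lemma~\ref{lem:simH sim}, and your argument carries over the equivalence-relation reasoning with $\c[\imag]$, $K$, $\hat K$, $\hat K_1$ in place of $\c$, $H$, $\hat H$, $\hat H_1$. One minor streamlining: by the remark immediately following the definition of $\sim_K$, from $f\sim_K\hat f$ and $\theta\sim\hat f$ in $\hat K$ you get directly $f\sim\theta$ in $\c[\imag]$, so $\theta$ itself can serve as the witness for $f_1\sim_K\hat f_1$; but your choice of $\phi$ works equally well.
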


\noindent
For~$\fn\in K^\times$ we have $f\sim_K \hat{f}\Leftrightarrow \fn f\sim_K \fn \hat{f}$, and~$f\sim_K \hat f\Leftrightarrow\overline{f} \sim_K \overline{\hat f}$ (complex conjugation).
Here is a useful observation relating $\sim_K$ and $\sim_H$:

\begin{lemma}\label{lem:simK Re Im} Suppose $f\sim_K \hat{f}$ and 
$\Re \hat f\succeq \Im\hat f$; then $$\Re f\ \succeq\ \Im f,\qquad \Re f\ \sim_H\ \Re \hat f.$$
\end{lemma}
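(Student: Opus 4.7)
My plan is to pick a witness $\phi = \alpha + \beta\imag \in K$ (with $\alpha,\beta \in H$) for $f\sim_K \hat f$: that is, $f\sim \phi$ in $\c[\imag]$ and $\phi\sim\hat f$ in $\hat K$. Writing $\hat f = \hat\alpha + \hat\beta\imag$ with $\hat\alpha,\hat\beta\in\hat H$, the hypothesis $\Re\hat f\succeq\Im\hat f$ reads $\hat\alpha\succeq\hat\beta$, so $\hat f\asymp\hat\alpha$ in $\hat K$. I would then exploit the fact that the valuation on $\hat K = \hat H[\imag]$ satisfies $v(a+b\imag) = \min(va, vb)$ for $a,b\in\hat H$ (from the discussion of the complexification in Section~\ref{sec:prelims} and~\cite[Section~4]{ADH4}).

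First I would extract information on the real/imaginary parts of $\phi$. From $\phi-\hat f\prec\hat f$ in $\hat K$ and $\hat f\asymp\hat\alpha$, I get $(\alpha-\hat\alpha) + (\beta-\hat\beta)\imag \prec \hat\alpha$ in $\hat K$, so both $\alpha-\hat\alpha\prec\hat\alpha$ and $\beta-\hat\beta\prec\hat\alpha$ in $\hat H$. The first yields $\alpha\sim\hat\alpha$ in $\hat H$; the second combined with $\hat\beta\preceq\hat\alpha$ gives $\beta\preceq\hat\alpha\asymp\alpha$ in $\hat H$, and since $\hat H$ is an immediate extension of $H$ this means $\beta\preceq\alpha$ in $H$. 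Thus $\Re\phi\succeq\Im\phi$ in $H$, and since $H$ is real closed I obtain $|\phi| = \sqrt{\alpha^2+\beta^2}\in H$ with $|\phi|\asymp|\alpha|$.

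Next I transfer this to $f$ using $f\sim\phi$ in $\c[\imag]$: as $|f-\phi|\prec|\phi|\asymp|\alpha|$ in $\c$, and since $|f-\phi|^2 = (\Re f-\alpha)^2 + (\Im f-\beta)^2$, I conclude $\Re f-\alpha\prec\alpha$ and $\Im f-\beta\prec\alpha$ in $\c$. The first gives $\Re f\sim\alpha$ in $\c$; the second combined with $\beta\preceq\alpha$ gives $\Im f\preceq\alpha\asymp\Re f$ in $\c$, proving $\Re f\succeq\Im f$. Finally, $\Re f\sim\alpha$ in $\c$ with $\alpha\in H$ and $\alpha\sim\hat\alpha = \Re\hat f$ in $\hat H$ is precisely the definition of $\Re f\sim_H\Re\hat f$.

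The argument is largely bookkeeping once the right witness $\phi$ is in play; the only real wrinkle will be justifying cleanly that $v$ on $\hat K$ behaves like the gaussian min on real and imaginary parts and that $|\phi|\in H$ with $|\phi|\asymp\max(|\alpha|,|\beta|)$ — both are available from the conventions adopted at the start of Section~\ref{sec:prelims} and from real-closedness of $H$, so no genuine obstacle is expected.
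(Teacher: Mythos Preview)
Your proof is correct and follows essentially the same route as the paper's: pick a witness $\phi\in K$ for $f\sim_K\hat f$, deduce $\Re\phi\succeq\Im\phi$ and $\Re\phi\sim\Re\hat f$ from $\phi\sim\hat f$ in $\hat K$, then transfer to $\Re f\succeq\Im f$ and $\Re f\sim\Re\phi$ using $f\sim\phi$ in $\c[\imag]$. The paper compresses the second step by writing $f=(1+\varepsilon)\phi$ with $\varepsilon\prec 1$, whereas you unpack it via $|f-\phi|^2=(\Re f-\alpha)^2+(\Im f-\beta)^2$; both amount to the same componentwise comparison.
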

\begin{proof} Let $\phi\in K$ be such that $f\sim \phi$ in $\c[\imag]$ and
$\phi\sim \hat f$ in $\hat K$. The latter yields~$\Re\phi\succeq \Im\phi$ in $H$ and $\Re\phi\sim \Re\hat f$ in $\hat H$. Using
that $f=(1+\varepsilon)\phi$ with~$\varepsilon\prec 1$ in~$\c[\imag]$
it follows easily that $\Re f\succeq \Im f$ and $\Re f\sim \Re \phi$ in $\c$. 
\end{proof}

\begin{cor}\label{corsimas}
Suppose $f\in\c$ and $\hat f\in\hat H$. Then $f\sim_H \hat f$ iff $f\sim_K \hat f$.
\end{cor}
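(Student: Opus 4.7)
The plan is to handle the two implications separately, with the forward direction being essentially trivial and the reverse direction following immediately from Lemma~\ref{lem:simK Re Im}.

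For the forward direction ($f \sim_H \hat f \Rightarrow f \sim_K \hat f$), I would simply observe that any witness $\phi \in H$ of the asymptotic similarity over $H$ is also an element of $K$, that the asymptotic relation $\sim$ on $\c$ is the restriction of the relation $\sim$ on $\c[\imag]$ (so $f \sim \phi$ in $\c$ implies $f \sim \phi$ in $\c[\imag]$), and that since $\hat K$ is a valued field extension of $\hat H$, the relation $\phi \sim \hat f$ transfers from $\hat H$ to $\hat K$. Hence $\phi \in K$ witnesses $f \sim_K \hat f$.

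For the reverse direction ($f \sim_K \hat f \Rightarrow f \sim_H \hat f$), I would apply Lemma~\ref{lem:simK Re Im}. The key observation is that the hypothesis of that lemma is automatic here: since $\hat f \in \hat H$, we have $\Im \hat f = 0$, while $\hat f \neq 0$ (as $f \sim_K \hat f$ requires $\hat f$ to be nonzero), so $\Re \hat f = \hat f \neq 0 = \Im \hat f$, giving $\Re \hat f \succeq \Im \hat f$ trivially. The lemma then yields $\Re f \sim_H \Re \hat f$. But $f \in \c$ means $\Re f = f$ and $\Im f = 0$, while $\Re \hat f = \hat f$, so $f \sim_H \hat f$.

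I expect no substantive obstacle here; the corollary is just the observation that when both $f$ and $\hat f$ happen to be ``real'' (lying in $\c$ and $\hat H$, respectively), passing through the complexification gives no additional flexibility, and this is precisely what Lemma~\ref{lem:simK Re Im} is designed to extract once one checks that its hypothesis $\Re \hat f \succeq \Im \hat f$ is vacuously satisfied.
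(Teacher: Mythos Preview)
Your proposal is correct and matches the paper's intended approach: the corollary is stated immediately after Lemma~\ref{lem:simK Re Im} with no proof given, precisely because the reverse direction follows from that lemma (with $\Im\hat f=0$) and the forward direction is trivial, exactly as you describe.
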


\noindent
Lemmas~\ref{lem:simH} and~\ref{lem:simH phi0} go through with~$\c[\imag]$,~$K$,~$\hat K$, and~$\sim_K$ in place of $\c$, $H$, $\hat H$,  and~$\sim_H$. 
We define:  $f\approx_K\hat f :\Leftrightarrow f-\phi \sim_K \hat f-\phi$ for all $\phi\in K$. 
Now Corollary~\ref{cor:simH phi0} goes through  with $K$, $\sim_K$, $\approx_K$ in place of~$H$,~$\sim_H$,~$\approx_H$. 


\begin{lemma}\label{lemsimimag}
Suppose $f\in\c$, $\hat f\in\hat H$, and $f\sim_H\hat f$.
Then $f+g\imag \sim_K \hat f+g\imag$ for all~$g\in H$.
\end{lemma}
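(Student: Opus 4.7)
The plan is to exhibit a single $K$-witness for $f+g\imag\sim_K\hat f+g\imag$, built from a witness $\phi\in H$ for the hypothesis $f\sim_H\hat f$. By definition of $\sim_H$, choose $\phi\in H$ with $f\sim\phi$ in $\c$ and $\phi\sim\hat f$ in $\hat H$, and set $\psi:=\phi+g\imag\in K$. It then suffices to verify (1) $f+g\imag\sim\psi$ in $\c[\imag]$ and (2) $\psi\sim\hat f+g\imag$ in $\hat K$.

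For (1), the difference is $(f+g\imag)-\psi=f-\phi$, and we must show $|f-\phi|/|\psi|\to 0$ in $\c$. Since $\phi\in H^\times$ is eventually nonzero as a germ and $|\psi|=\sqrt{\phi^{2}+g^{2}}\ge |\phi|$ eventually, the quotient is bounded above by $|f-\phi|/|\phi|$, which tends to $0$ by $f\sim\phi$ in $\c$. This step is routine.

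For (2), the work is in the valued field $\hat K=\hat H[\imag]$. The crux is the identity
$$v(a+b\imag)\ =\ \min(va,vb)\qquad\text{for all $a,b\in\hat H$.}$$
When $va\neq vb$ this follows by factoring out the term of smaller valuation and observing that the remaining factor is a unit; in the equal case $va=vb=\gamma$, one picks $t\in H^\times$ with $vt=\gamma$ and notes that the residue of $(a+b\imag)/t$ in the residue field $C[\imag]$ of $\hat K$ equals $\overline{a/t}+\overline{b/t}\,\imag$, which is nonzero since $\imag\notin C$ and at least one of the residues $\overline{a/t},\overline{b/t}\in C$ is nonzero. Granting this identity and using $v\phi=v\hat f$ (from $\phi\sim\hat f$), we obtain $v(\psi)=\min(v\phi,vg)=\min(v\hat f,vg)=v(\hat f+g\imag)$. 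The difference $(\hat f+g\imag)-\psi=\hat f-\phi$ then satisfies $v(\hat f-\phi)>v\hat f\ge\min(v\hat f,vg)=v(\hat f+g\imag)$, whence $\hat f-\phi\prec\hat f+g\imag$ in $\hat K$, as required.

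The main—indeed only—obstacle is the equal-valuation case of $v(a+b\imag)=\min(va,vb)$, and this relies on the fact that $C\subsetneq C[\imag]$ and the value group does not change passing from $\hat H$ to $\hat K$; both are immediate from the setup of $\hat K$ preceding the statement. With that identity in hand, (1) and (2) are formal verifications, and no deeper difficulty remains.
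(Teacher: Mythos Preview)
Your proof is correct and takes a cleaner route than the paper's. The paper fixes the same $\phi\in H$ with $f\sim\phi$ and $\phi\sim\hat f$, but then performs a three-way case split on whether $g\prec\phi$, $\phi\prec g$, or $g\asymp\phi$, in each case producing a different $K$-witness (namely $\phi$, $g\imag$, or $\phi(1+c\imag)$ with $c\in\R^\times$). You instead use the single uniform witness $\psi=\phi+g\imag$ and reduce everything to the identity $v(a+b\imag)=\min(va,vb)$ for $a,b\in\hat H$, which collapses the case analysis. Your approach is shorter and more conceptual; the paper's is more elementary in that it never needs to articulate that valuation identity or appeal to the structure of the residue field of $\hat K$, working entirely with the asymptotic relations $\prec,\sim$ at the level of germs and elements. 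Both lead to the same conclusion with no essential difficulty.
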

\begin{proof}
Let $g\in H$, and take $\phi\in H$ with $f\sim \phi$ in $\c$ and $\phi \sim \hat f$ in $\hat H$. 
Suppose first that $g\prec\phi$. Then $g\imag\prec\phi$, and together with $f-\phi\prec\phi$ this yields~${(f+g\imag)-\phi\prec\phi}$, that is,
$f+g\imag \sim \phi$ in $\c[\imag]$.  
Using likewise analogous properties of $\prec$ on $\hat K$ we obtain~$\phi\sim\hat f+g\imag$ in $\hat K$. If $\phi\prec g$, then~$f\preceq \phi\prec g\imag$ and thus $f+g\imag \sim g\imag$ in~$\c[\imag]$, and
likewise $\hat f+g\imag \sim g\imag$ in $\hat K$.
Finally, suppose $g\asymp\phi$. Take $c\in\R^\times$ and~$\varepsilon\in H$ with $g = c\phi(1+\varepsilon)$ and~$\varepsilon\prec 1$.
We have
$f=\phi(1+\delta)$ where $\delta\in\c$, $\delta\prec 1$, so~$f+g\imag = \phi(1+c\imag)(1+\rho)$ where~$\rho=(1+c\imag)^{-1}(\delta+c\imag\varepsilon) \prec 1$ in $\c[\imag]$, so~$f+g\imag\sim \phi(1+c\imag)$ in $\c[\imag]$.  
Likewise, $\hat f+g\imag\sim \phi(1+c\imag)$ in $\hat K$. 
\end{proof}
 
\begin{cor}\label{cor:approxH vs approxK}
Suppose $f\in\c$ and $\hat f\in\hat H$. Then $f\approx_H \hat f$ iff $f\approx_K\hat f$.
\end{cor}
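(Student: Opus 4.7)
\medskip

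The plan is to prove both implications by direct reduction to the two lemmas immediately preceding the corollary, namely Corollary~\ref{corsimas} (which handles the scalar equivalence $\sim_H \Leftrightarrow \sim_K$ for germs in $\c$ and $\hat H$) and Lemma~\ref{lemsimimag} (which adds an imaginary term from $H$ to cross from $\sim_H$ to $\sim_K$). I will split into the two directions, noting first that the definition of $\approx_H$ forces $f,\hat f\notin H$, and since any $\phi=\phi_1+\phi_2\imag\in K\setminus H$ has $\phi_2\neq 0$ and $\imag\notin\hat H$, the element $\hat f-\phi$ is nonzero for every $\phi\in K$, so the relation $f-\phi\sim_K\hat f-\phi$ is not vacuously problematic.

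For the forward direction, assume $f\approx_H \hat f$ and let $\phi=\phi_1+\phi_2\imag\in K$ with $\phi_1,\phi_2\in H$. Then $f-\phi_1\in\c$ and $\hat f-\phi_1\in\hat H$, and by hypothesis $f-\phi_1\sim_H \hat f-\phi_1$. Applying Lemma~\ref{lemsimimag} to $f-\phi_1$, $\hat f-\phi_1$, and $g:=-\phi_2\in H$ gives
\[
(f-\phi_1)-\phi_2\imag\ \sim_K\ (\hat f-\phi_1)-\phi_2\imag,
\]
i.e., $f-\phi\sim_K \hat f-\phi$, so $f\approx_K\hat f$.

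For the backward direction, assume $f\approx_K\hat f$ and let $\phi\in H\subseteq K$. Then $f-\phi\in\c$ and $\hat f-\phi\in\hat H$, and the hypothesis gives $f-\phi\sim_K\hat f-\phi$. Applying Corollary~\ref{corsimas} to $f-\phi$ and $\hat f-\phi$ yields $f-\phi\sim_H\hat f-\phi$. Since $\phi\in H$ was arbitrary, $f\approx_H\hat f$.

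Both steps are essentially one-line invocations of previously stated results, and no technical obstacle is expected: the only thing to keep track of is that the definitions of $\sim_H,\sim_K,\approx_H,\approx_K$ all implicitly require the relevant differences to be nonzero, which is handled as noted above.
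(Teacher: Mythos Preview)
Your proof is correct and follows essentially the same approach as the paper: for $\approx_K\Rightarrow\approx_H$ you restrict to $\phi\in H$ and invoke Corollary~\ref{corsimas}, and for $\approx_H\Rightarrow\approx_K$ you decompose $\phi=\phi_1+\phi_2\imag$ and apply Lemma~\ref{lemsimimag} with $g=-\phi_2$, exactly as the paper does (writing $\Re\phi$ for your $\phi_1$). Your preliminary remarks about nonvanishing are harmless extra care but not strictly needed, since the lemmas you invoke already guarantee the relevant similarities.
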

\begin{proof}
If $f\approx_K\hat f$, then for all $\phi\in H$ we have $f-\phi\sim_K \hat f-\phi$, so $f-\phi\sim_H \hat f-\phi$ by Corollary~\ref{corsimas},
hence $f\approx_H \hat f$. Conversely, suppose $f\approx_H \hat f$. Then for all $\phi\in K$
we have $f-\Re \phi \sim_H \hat f-\Re \phi$, so
$f-\phi \sim_K \hat f-\phi$ by Lemma~\ref{lemsimimag}.
\end{proof}

\noindent
Next we exploit that $K$ is algebraically closed:
 
\begin{lemma}
$f\approx_K \hat f \ \Longrightarrow\ Q(f) \sim_K Q(\hat f)$ for all $Q\in K[Y]^{\neq}$. 
\end{lemma}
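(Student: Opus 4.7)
The plan is to exploit the algebraic closedness of $K$ to reduce the statement about arbitrary $Q\in K[Y]^{\neq}$ to the case of linear polynomials, where the hypothesis $f\approx_K\hat f$ applies directly, and then recombine using the multiplicativity of $\sim_K$.

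First, observe that $f\approx_K\hat f$ forces $\hat f\notin K$: otherwise, taking $\phi:=\hat f\in K$ would require $f-\phi\sim_K 0$, which contradicts the definition of $\sim_K$ (both sides must be nonzero). In particular, $\hat f\neq\alpha$ in $\hat K$ for every $\alpha\in K$, so that $\hat f-\alpha\in\hat K^\times$, and correspondingly $f-\alpha\in\c[\imag]^\times$ since $f-\alpha\sim_K\hat f-\alpha$.

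Now let $Q\in K[Y]^{\neq}$. Since $K$ is algebraically closed, write
\[
Q\ =\ c\prod_{i=1}^{n}(Y-\alpha_i), \qquad c\in K^\times,\ \alpha_1,\dots,\alpha_n\in K.
\]
By hypothesis, $f-\alpha_i\sim_K\hat f-\alpha_i$ for $i=1,\dots,n$. The $K$-version of Lemma~\ref{lem:simH} (noted in the excerpt to go through with $\c[\imag]$, $K$, $\hat K$, $\sim_K$ replacing $\c$, $H$, $\hat H$, $\sim_H$) asserts that $\sim_K$ is preserved under products; an easy induction on $n$ therefore yields
\[
\prod_{i=1}^{n}(f-\alpha_i)\ \sim_K\ \prod_{i=1}^{n}(\hat f-\alpha_i).
\]
Finally, multiplication by the unit $c\in K^\times$ preserves $\sim_K$ (by the remark that $\fn g\sim_K\fn h$ whenever $g\sim_K h$ and $\fn\in K^\times$), giving $Q(f)\sim_K Q(\hat f)$, as required.

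There is no real obstacle: the only point requiring care is that all factors $f-\alpha_i$ and $\hat f-\alpha_i$ are genuinely nonzero so that $\sim_K$ is defined, but this is immediate from $\hat f\notin K$. The argument is essentially a formal consequence of algebraic closedness plus the multiplicativity of $\sim_K$ established in the earlier part of the section.
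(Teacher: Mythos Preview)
Your proof is correct and follows essentially the same approach as the paper's: factor $Q$ over the algebraically closed field $K$, apply the hypothesis $f\approx_K\hat f$ to each linear factor, and recombine via the multiplicativity of $\sim_K$ from the complex version of Lemma~\ref{lem:simH}. Your additional remark that $\hat f\notin K$ (ensuring all factors are nonzero) is a helpful clarification that the paper leaves implicit.
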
 
\begin{proof}
Factor $Q\in K[Y]^{\ne}$ as 
$$Q=a({Y-\phi_1})\cdots ({Y-\phi_n})\qquad\text{ where $a\in K^\times$ and~$\phi_1,\dots,\phi_n\in K$,}$$
and use $f-\phi_j \sim_K \hat f-\phi_j$  ($j=1,\dots,n$) and the complex version of Lem\-ma~\ref{lem:simH}.
\end{proof}

\noindent
This yields a more useful ``complex'' version of Corollary~\ref{cor:simHt}: 

\begin{cor}\label{complexsimHt} 
Suppose $f\approx_K\hat f$. Then $\hat f$ is transcendental over $K$, and:
\begin{enumerate}
\item[\textup{(i)}]  $f$ generates over $K$ a subfield $K(f)$ of $\c[\imag]$;
\item[\textup{(ii)}] we have a field isomorphism $\iota\colon K(f)\to K(\hat f)$ over $K$ with~$\iota(f)=\hat f$;  
\item[\textup{(iii)}]  $g\sim_K\iota(g)$ for all $g\in K(f)^{\times}$, hence  for all $g_1,g_2\in K(f)$:  $$g_1\preceq g_2 \text{  in }\c[\imag]\ \Longleftrightarrow\ \iota(g_1)\preceq \iota(g_2)  \text{ in }\hat{K}.$$
\end{enumerate}
\textup{(}Thus the restriction of the binary relation $\preceq$ on $\c[\imag]$ to~$K(f)$ is a dominance relation on the field $K(f)$ in the sense of
\textup{[ADH, 3.1.1].)}
\end{cor}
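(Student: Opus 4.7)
The plan is to mimic the proof of Corollary~\ref{cor:simHt}, adapted to the complex setting, leveraging the lemma stated immediately before Corollary~\ref{complexsimHt} (that $f\approx_K\hat f$ implies $Q(f)\sim_K Q(\hat f)$ for all $Q\in K[Y]^{\ne}$).

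First I would establish that $\hat f$ is transcendental over $K$. The relation $f\approx_K\hat f$ forces $\hat f\notin K$: if $\hat f=\phi\in K$, then $\hat f-\phi=0$ contradicts $f-\phi\sim_K\hat f-\phi$, since $\sim_K$ requires the right-hand side to be nonzero. Because $H$ is real closed, $K=H[\imag]$ is algebraically closed, so $\hat f\notin K$ means $\hat f$ is transcendental over $K$. In particular, $Q(\hat f)\ne 0$ for every $Q\in K[Y]^{\ne}$.

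Next, for (i), I would use the preceding lemma to show that $Q(f)$ is a unit in $\c[\imag]$ for every $Q\in K[Y]^{\ne}$. Indeed, unpacking $Q(f)\sim_K Q(\hat f)$: there is $\phi\in K^\times$ with $Q(f)\sim\phi$ in $\c[\imag]$, so $Q(f)/\phi\to 1$; hence $Q(f)$ is eventually nonzero and therefore invertible in $\c[\imag]$. In particular $f$ is transcendental over $K$ in $\c[\imag]$, and the subring
\[
K(f)\ :=\ \big\{P(f)Q(f)^{-1}:\ P,Q\in K[Y],\ Q\ne 0\big\}\ \subseteq\ \c[\imag]
\]
is well-defined and is a field. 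For (ii), since both $f$ and $\hat f$ are transcendental over $K$, the evaluation $K[Y]\to K[f]\subseteq K(f)$, $Y\mapsto f$, and its $\hat f$-analogue are isomorphisms of $K$-algebras, and composing them yields the unique field isomorphism $\iota\colon K(f)\to K(\hat f)$ over $K$ sending $f$ to $\hat f$.

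For (iii), every $g\in K(f)^\times$ can be written as $g=P(f)/Q(f)$ with $P,Q\in K[Y]^{\ne}$. By the preceding lemma, $P(f)\sim_K P(\hat f)$ and $Q(f)\sim_K Q(\hat f)$, and using the quotient/product rules from the complex analogue of Lemma~\ref{lem:simH} (noted after Lemma~\ref{lem:simH sim K}), we obtain
\[
g\ =\ P(f)/Q(f)\ \sim_K\ P(\hat f)/Q(\hat f)\ =\ \iota(g).
\]
The dominance equivalence then follows from the complex analogue of Lemma~\ref{lem:simH}: for nonzero $g_1,g_2\in K(f)$ we have $g_1\preceq g_2$ in $\c[\imag]$ iff $g_1/g_2\preceq 1$ in $\c[\imag]$, and since $g_1/g_2\sim_K \iota(g_1/g_2)=\iota(g_1)/\iota(g_2)$, this is equivalent to $\iota(g_1)\preceq\iota(g_2)$ in $\hat K$; the cases with $g_1=0$ or $g_2=0$ are immediate.

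There is no serious obstacle here; the result is essentially a translation of Corollary~\ref{cor:simHt}, with the only mild subtlety being that $\c[\imag]$ is a ring but not a field, so one must verify that each $Q(f)$ is a unit before forming $K(f)$ as a subring of $\c[\imag]$—and this is exactly what $\sim_K Q(\hat f)\ne 0$ delivers.
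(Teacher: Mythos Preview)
Your proof is correct and follows exactly the approach the paper intends: the corollary is stated as an immediate consequence of the preceding lemma (that $f\approx_K\hat f$ implies $Q(f)\sim_K Q(\hat f)$ for all $Q\in K[Y]^{\ne}$), to be read off in parallel with Corollary~\ref{cor:simHt} using the complex analogue of Lemma~\ref{lem:simH}. You have filled in precisely those details, including the one point the paper leaves implicit---that algebraic closedness of $K$ upgrades $\hat f\notin K$ to transcendence of $\hat f$ over $K$.
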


\noindent 
In the next lemma $f=g+h\imag$, $g,h\in \c$, and $\hat f=\hat g+\hat h \imag$, $\hat g, \hat h\in \hat H$. 
Recall from \cite[Lem\-ma~4.1.3]{ADH4} that if $\hat f\notin K$, then  $v(\hat g-H) \subseteq v(\hat h-H)$ or  $v(\hat h-H) \subseteq v(\hat g-H)$.

\begin{lemma}\label{lem:real part approx}
Suppose $f\approx_K \hat f$ and $v(\hat g-H) \subseteq v(\hat h-H)$. Then $g \approx_H  \hat g$.
\end{lemma}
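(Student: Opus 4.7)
The plan is to reduce the lemma to a single invocation of Lemma~\ref{lem:simK Re Im} by choosing, for each given $\phi \in H$, an element $\psi \in K$ whose imaginary part is calibrated so that the real part of $\hat f - \psi$ dominates its imaginary part in $\hat K$. By definition of $\approx_H$, it suffices to show $g - \phi \sim_H \hat g - \phi$ for every $\phi \in H$; fixing such a $\phi$, the hypothesis $v(\hat g - H) \subseteq v(\hat h - H)$ furnishes $\chi \in H$ with $v(\hat h - \chi) = v(\hat g - \phi)$, so $\hat h - \chi \asymp \hat g - \phi$ in $\hat H$. Take $\psi := \phi + \chi\imag \in K$; then the element
\[
\hat f - \psi\ =\ (\hat g - \phi) + (\hat h - \chi)\imag
\]
of $\hat K$ satisfies $\Re(\hat f - \psi) \succeq \Im(\hat f - \psi)$. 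Since $f \approx_K \hat f$ gives $f - \psi \sim_K \hat f - \psi$, Lemma~\ref{lem:simK Re Im} applied to this pair at once yields $g - \phi = \Re(f - \psi) \sim_H \Re(\hat f - \psi) = \hat g - \phi$, as required. As $\phi$ was arbitrary, this establishes $g \approx_H \hat g$.

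The only subtlety, and the main obstacle, is the preliminary verification that $\hat g \notin H$—a tacit requirement of the conclusion, since by the very definition of $\approx_H$ the assertion $g \approx_H \hat g$ implies $\hat g \notin H$. Were $\hat g$ to lie in $H$, the immediateness of $\hat H$ over $H$ forces $v(\hat g - H) = \Gamma$ (every value arises as $v(\hat g - \phi)$ for suitable $\phi \in H$), and hence $v(\hat h - H) = \Gamma$ by the hypothesis; probing $f \approx_K \hat f$ at $\psi := \hat g + \chi\imag$ for $\chi \in H$ chosen so that $\hat h - \chi$ is pseudo-negligible then yields a contradiction with $\hat f \notin K$ (which itself follows from $f \approx_K \hat f$). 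Once $\hat g \notin H$ is secured, the one-line calculation above completes the proof with no further estimates.
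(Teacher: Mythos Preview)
Your proof is correct and follows essentially the same route as the paper's: both arguments pick, for a given real shift, an imaginary shift $\chi\in H$ so that the real part of $\hat f-\psi$ dominates its imaginary part, then invoke Lemma~\ref{lem:simK Re Im}. The paper streamlines slightly by appealing to Corollary~\ref{cor:simH phi0} to restrict attention to $\rho\in H$ with $\rho\sim\hat g$, whereas you handle all $\phi\in H$ directly; either way works.

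One small point: your verification that $\hat g\notin H$ is more tangled than necessary, and the references to ``immediateness'' and ``pseudo-negligible'' are red herrings. The clean argument is: if $\hat g\in H$, then $\infty = v(\hat g-\hat g)\in v(\hat g-H)\subseteq v(\hat h-H)$, so $\hat h\in H$ and hence $\hat f=\hat g+\hat h\imag\in K$, contradicting $\hat f\notin K$ (which follows from $f\approx_K\hat f$). The paper leaves this step implicit.
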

\begin{proof}
Let $\rho\in H$ with $\rho\sim\hat g$; by Corollary~\ref{cor:simH phi0} it is enough to show that~${g-\rho} \sim_H \hat g-\rho$.
Take $\sigma\in H$ with $\hat g - \rho \succeq \hat h-\sigma$, and set $\phi:=\rho+\sigma\imag\in K$.
Then $$\Re(f-\phi)=g-\rho\quad\text{ and }\quad \Re(\hat f-\phi)=\hat g-\rho\succeq  \hat h-\sigma=\Im(\hat f-\phi),$$
and so by~$f-\phi\sim_H \hat f -\phi$ and Lemma~\ref{lem:simK Re Im} we have $g-\rho\sim_H \hat g-\rho$.
\end{proof}

\begin{cor}
If $f\approx_K \hat f$, then $\Re f \approx_H \Re \hat f$ or $\Im f\approx_H \Im\hat f$.
\end{cor}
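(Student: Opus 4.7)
The plan is to derive this corollary from the preceding Lemma~\ref{lem:real part approx} together with a rotation-by-$\imag$ trick. First I note that $f\approx_K\hat f$ forces $\hat f\notin K$: indeed, by the same reasoning given for $\approx_H$ just after its definition, $f\approx_K\hat f$ implies $f,\hat f\notin K$. Hence by \cite[Lemma~4.1.3]{ADH4} we have the dichotomy
\[
v(\hat g-H)\subseteq v(\hat h-H)\qquad\text{or}\qquad v(\hat h-H)\subseteq v(\hat g-H).
\]

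In the first case I simply apply Lemma~\ref{lem:real part approx} directly to conclude $\Re f = g \approx_H \hat g = \Re\hat f$.

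In the second case I reduce to the first case by multiplying through by $\imag\in K^\times$. As noted after the definition of $\sim_K$, multiplication by an element of $K^\times$ preserves $\sim_K$; the same argument (substituting $\phi=\imag\psi$) shows that $f\approx_K\hat f$ iff $\imag f\approx_K \imag\hat f$. Now $\imag f = -h + g\imag$ and $\imag\hat f = -\hat h + \hat g\imag$, so the real and imaginary parts are $-h,\,g$ and $-\hat h,\,\hat g$ respectively. The hypothesis of Lemma~\ref{lem:real part approx} applied to $\imag f,\imag\hat f$ is $v(-\hat h-H)\subseteq v(\hat g-H)$, which is exactly our case-two assumption (since $-H=H$). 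The lemma therefore yields $-h\approx_H -\hat h$, and since $\approx_H$ is preserved by multiplication by $\fn\in H^\times$ (take $\fn=-1$), we conclude $h\approx_H\hat h$, that is, $\Im f\approx_H \Im\hat f$.

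There is no real obstacle here: the corollary is a clean case split off Lemma~\ref{lem:real part approx}, with the only small verification being that $\approx_K$ is stable under multiplication by $\imag$ (which follows by the same substitution trick used for $\sim_K$) and that $\approx_H$ is stable under sign change (an instance of its $H^\times$-scaling invariance).
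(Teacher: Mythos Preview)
Your proof is correct and is essentially the argument the paper has in mind: the corollary is stated without proof immediately after Lemma~\ref{lem:real part approx}, being a direct consequence of that lemma together with the dichotomy from \cite[Lemma~4.1.3]{ADH4}. Your rotation-by-$\imag$ handling of the second case is exactly the intended reduction (one could equivalently use $-\imag$ to land on $h\approx_H\hat h$ without the final sign flip, but this is cosmetic).
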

 
\noindent
Let $g\in\c$ be eventually strictly increasing with $g(t)\to+\infty$ as $t\to+\infty$; we then have the subfield
$K\circ g=(H\circ g)[\imag]$ of $\c[\imag]$. Suppose the valued field isomorphism
$${h\mapsto h\circ g\colon H\to H\circ g}$$ is
extended to a valued field isomorphism $$\hat h\mapsto \hat h\circ g \colon  \hat H\to\hat H\circ g,$$ 
 where $\hat H\circ g$ is an immediate valued field extension of the Hausdorff field $H\circ g$. 
In the same way we took a common valued field extension $\hat K=\hat H[\imag]$ of $\hat H$ and~$K=H[\imag]$ we now take a common valued field extension $\hat K\circ g=(\hat H\circ g)[\imag]$ of $\hat H\circ g$ and~$K\circ g=(H\circ g)[\imag]$.
This makes $\hat K\circ g$ an immediate extension of $K\circ g$, and we have a 
unique valued field isomorphism~$y\mapsto y\circ g\colon \hat K\to \hat K\circ g$ extending the above
map~$\hat h\mapsto \hat h\circ g\colon \hat H\to\hat H\circ g$ and sending $\imag\in \hat{K}$ to $\imag\in \hat K\circ g$.
This map $\hat K\to \hat K\circ g$ also extends $f\mapsto f\circ g\colon K \to K\circ g$ and is the identity on $\C$. 
See the commutative diagram below, where the labeled arrows are valued field isomorphisms and all unlabeled arrows are natural inclusions.

\begin{equation*}
\xymatrix{%
&\hat{H}\circ g\ar@{->}[rr]\ar@{<-}[dl]_(0.6){\hat h\mapsto \hat h\circ g}  \ar@{<-}[dd]|!{[d];[d]}\hole && \hat{K}\circ g  \ar@{<-}[dd] \ar@{<-}[dl]_(0.6){y\mapsto y\circ g} \\
\hat{H}\ar@{->}[rr]\ar@{<-}[dd]&&\hat{K} \ar@{<-}[dd] \\ 
&H\circ g\ar@{->}[rr]|!{[r];[r]}\hole\ar@{<-}[dl]^(0.45){h\mapsto h\circ g} && K\circ g 
\ar@{<-}[dl]^(0.45){f\mapsto f\circ g} \\
H\ar@{->}[rr]&&K
}
\end{equation*}

\noindent
Now
we have
$$f\sim_K \hat f\quad\Longleftrightarrow\quad f\circ g\sim_{K\circ g} \hat f\circ g,\qquad
f\approx_K \hat f\quad\Longleftrightarrow\quad f\circ g\approx_{K\circ g} \hat f\circ g.$$
At various places in the next section we use this for a Hardy field $H$ and active~$\phi>0$ in $H$, with
$g=\ell^{\inv}$, $\ell\in \c^1,\ \ell'=\phi$. In that situation, $H^\circ:=H\circ g$, $\hat H^\circ:=\hat H\circ g$,
and $h^\circ:= h\circ g$, $\hat h^\circ := \hat h\circ g$ for $h\in H$ and
$\hat h\in \hat H$, and likewise with $K$ and $\hat K$ and their elements instead of $H$ and $\hat H$.

\section{Differentially Algebraic Hardy Field Extensions} \label{sec:d-alg extensions}

\noindent
In this section we are finally able to generate under reasonable conditions Hardy field extensions by solutions in $\Calinf$ of algebraic differential equations, culminating in the proof of our main theorem.
We begin with a generality about enlarging differential fields within an ambient differential ring.
Here, a {\em differential subfield\/} of a differential ring  $E$ is a differential subring of $E$ whose underlying ring is a field. 
 
\begin{lemma}\label{dsfe} Let $K$ be a differential field with irreducible $P\in K\{Y\}^{\ne}$ of order~$r\ge 1$, and $E$ a differential ring extension of $K$ with  $y\in E$ such that $P(y)=0$ and~$Q(y)\in E^\times$ for all $Q\in K\{Y\}^{\neq}$
of order $<r$. Then $y$ generates over $K$ a differential subfield $K\langle y \rangle\supseteq K$ of $E$. Moreover, $y$ has $P$ as a minimal annihilator over $K$ and~$K\langle y \rangle$ equals
$$\left\{\frac{A(y)}{B(y)}:\ A,B\in K\{Y\},\, \order A\leq r,\, \deg_{Y^{(r)}}A <\deg_{Y^{(r)}} P,\, B\ne 0,\, \order B < r\right\}.$$
\end{lemma}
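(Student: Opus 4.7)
The plan is to build $K\langle y\rangle$ in two stages inside $E$: first the purely transcendental extension $K_0 := K(y, y', \dots, y^{(r-1)})$, then the algebraic extension $K_1 := K_0(y^{(r)})$, and finally verify that $K_1$ is stable under the derivation of $E$.

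For the first stage, the hypothesis that $Q(y) \in E^\times$ for all nonzero $Q\in K\{Y\}$ of order $<r$ forces $y, y', \ldots, y^{(r-1)}$ to be algebraically independent over $K$: an algebraic relation would yield a nonzero $Q$ of order $<r$ with $Q(y) = 0$, which is not a unit. The evaluation map $K[Y, \ldots, Y^{(r-1)}]\to E$ is therefore injective, and inverting its nonzero image (contained in $E^\times$ by hypothesis) produces a subfield $K_0\subseteq E$ with $K$-algebra isomorphism $K(Y, \ldots, Y^{(r-1)}) \cong K_0$ sending $Y^{(i)}\mapsto y^{(i)}$.

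For the second stage, I would write $P=\sum_{i=0}^{d}P_i(Y^{(r)})^i$ with $P_i\in K[Y, \ldots, Y^{(r-1)}]$, $P_d\ne 0$, and $d:=\deg_{Y^{(r)}}P\ge 1$. Irreducibility of $P$ in $K\{Y\}$ (equivalently in $K[Y,\ldots,Y^{(r)}]$, since $P$ has finite order $r$), combined with Gauss's lemma applied to the UFD $K[Y, \ldots, Y^{(r-1)}]$, gives irreducibility of $P$ in $K(Y, \ldots, Y^{(r-1)})[Y^{(r)}]$. Transporting via the isomorphism of the first stage, $\tilde P := \sum P_i(y, \ldots, y^{(r-1)})Z^i \in K_0[Z]$ is irreducible of degree $d$ and annihilates $y^{(r)}$ in $E$, so the kernel of the evaluation $K_0[Z]\to E$, $Z\mapsto y^{(r)}$, is the maximal ideal $(\tilde P)$. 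Hence $K_1 := K_0[y^{(r)}]\cong K_0[Z]/(\tilde P)$ is a subfield of $E$ of degree $d$ over $K_0$; inversion of nonzero elements takes place in $E$ via Bezout relations in $K_0[Z]$ evaluated at $y^{(r)}$.

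Closure under $\der$ reduces to $y^{(r+1)}\in K_1$. Differentiating $P(y)=0$ in $E$ gives
\[ P^\partial(y) + \sum_{i=0}^{r-1}\tfrac{\partial P}{\partial Y^{(i)}}(y)\,y^{(i+1)} + S_P(y)\,y^{(r+1)}\ =\ 0, \]
where $P^\partial$ has the coefficients of $P$ with $\der$ applied and $S_P = \partial P/\partial Y^{(r)}$ is the separant. Since $\Q\subseteq K$, we have $\deg_{Y^{(r)}} S_P = d-1$ with nonzero leading coefficient $dP_d$, so $S_P(y)$ corresponds in $K_0[Z]$ to a polynomial of degree exactly $d-1$; being coprime to $\tilde P$, it is invertible in $K_1$, and solving for $y^{(r+1)}$ places it in $K_1$. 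Thus $K\langle y\rangle = K_1$. The description is now immediate: every element of $K_1$ reduces to a polynomial in $y^{(r)}$ over $K_0$ of degree $<d$, and clearing denominators of the $K_0$-coefficients gives the form $A(y)/B(y)$ in the advertised shape. Finally, for $P$ being a minimal annihilator: any nonzero annihilator $Q$ of $y$ has order $\ge r$ by hypothesis, and if $\order Q=r$, then its image in $K_0[Z]$ under $Y^{(i)}\mapsto y^{(i)}$ ($i<r$) is a nonzero polynomial (its leading coefficient, being nonzero of order $<r$, does not vanish at $y$) annihilating $y^{(r)}$, hence divisible by $\tilde P$, forcing $\deg_{Y^{(r)}}Q\ge d$. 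No step presents a serious obstacle; the main care is in transferring irreducibility via Gauss's lemma and tracking that the separant's leading $Y^{(r)}$-coefficient survives evaluation at $y$, both of which rest on the standing assumption $\Q\subseteq K$.
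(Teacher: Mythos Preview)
Your proof is correct and follows essentially the same route as the paper: both pass to $K(Y_0,\dots,Y_{r-1})[Y_r]$, invoke Gauss's lemma to keep $P$ irreducible there, and identify $K\langle y\rangle$ with the quotient by $(P)$. The paper is terser, deferring the closure under $\der$ and the explicit description to [ADH,~4.1.6], whereas you spell out the separant argument for $y^{(r+1)}\in K_1$ directly; this is a difference in level of detail, not in strategy.
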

\begin{proof} Let $p\in K[Y_0,\dots, Y_r]$ with distinct indeterminates $Y_0,\dots, Y_r$ be such that $P(Y)=p(Y, Y',\dots, Y^{(r)})$. The $K$-algebra morphism 
$K[Y_0,\dots, Y_r]\to E$ sending $Y_i$ to $y^{(i)}$ for $i=0,\dots,r$
extends to a $K$-algebra morphism $K(Y_0,\dots, Y_{r-1})[Y_r]\to E$  with $p$ in its kernel, and so induces a $K$-algebra morphism 
$$\iota\ :\ K(Y_0,\dots, Y_{r-1})[Y_r]/(p)\to E, \qquad (p)\ :=\ pK(Y_0,\dots, Y_{r-1})[Y_r].$$ 
Now $p$  as an element of $K(Y_0,\dots, Y_{r-1})[Y_r]$ remains irreducible \cite[Chapter~IV, \S{}2]{Lang}. Thus $K(Y_0,\dots, Y_{r-1})[Y_r]/(p)$ is a field, so $\iota$ is injective, and it is routine to check that the image of $\iota$ is $K\langle y \rangle$ as described; see also [ADH, 4.1.6]. 
\end{proof}

\noindent
{\it In the rest of this section 
$H$ is a real closed Hardy field with $H\supseteq \R$, 
and~$\hat{H}$ is an immediate $H$-field extension of $H$.}\/

\subsection*{Application to Hardy fields}  
 Let $f\in\Calinf$ and~$\hat f\in\hat H$.
Note that if~${Q\in H\{Y\}}$ and $Q(f)\sim_H Q(\hat f)$, then $Q(f)\in\c^\times$.
So by Lemma~\ref{dsfe}  with~$E=\Calinf$, $K=H$:

{\sloppy
\begin{lemma}\label{lem:dsfeh}
Suppose $\hat{f}$ is $\operatorname{d}$-algebraic over $H$ with minimal annihilator $P$ over~$H$ of order~$r\ge 1$,
and $P(f)=0$ and $Q(f)\sim_H Q(\hat f)$ for all $Q\in H\{Y\}\setminus H$ with~${\order Q < r}$. Then $f\notin H$ and: \begin{enumerate}
\item[\rm{(i)}] $f$ is hardian over $H$; 
\item[\rm{(ii)}] we have a \textup{(}necessarily unique\textup{)}  isomorphism $\iota\colon H\langle f\rangle \to H\langle\hat f\rangle$ of differential fields over $H$ such that $\iota(f)=\hat{f}$.
\end{enumerate}
\end{lemma}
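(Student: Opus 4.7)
The plan is to apply Lemma~\ref{dsfe} with $K:=H$, $E:=\Calinf$, and $y:=f$. As a minimal annihilator of a $\d$-algebraic element, $P$ is irreducible in $H\{Y\}$. To invoke Lemma~\ref{dsfe} I must verify that $Q(f)\in\c^{\times}$ for every $Q\in H\{Y\}^{\ne}$ of order $<r$. If $Q\in H^{\times}$ this is immediate since $H\subseteq\c^{\times}$. Otherwise $Q\in H\{Y\}\setminus H$, so by hypothesis $Q(f)\sim_{H} Q(\hat f)$; by the very definition of $\sim_{H}$ (see Definition~\ref{p:simH}) this entails $Q(f)\in\c^{\times}$, while $Q(\hat f)\neq 0$ follows from $\order Q<r=\order P$.

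Lemma~\ref{dsfe} then delivers a differential subfield $H\langle f\rangle$ of $\Calinf$, shows that $P$ is a minimal annihilator of $f$ over $H$, and gives the explicit description of the elements of $H\langle f\rangle$ as quotients $A(f)/B(f)$ with $A,B\in H\{Y\}$ subject to the stated bounds on order and degree. Because $H\langle f\rangle$ is a differential subfield of $\Calinf\subseteq\c$, every nonzero element has an inverse already in $\c$, so $H\langle f\rangle$ is a Hardy field; this proves~(i). Moreover $f\notin H$, since otherwise $Y-f\in H\{Y\}$ would annihilate $f$ with complexity smaller than that of $P$, contradicting $\order P=r\geq 1$.

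For~(ii) I apply Lemma~\ref{dsfe} a second time, now with $E:=\hat H$ and $y:=\hat f$: its hypotheses are met because $P(\hat f)=0$ and, $P$ being a minimal annihilator of $\hat f$, $Q(\hat f)\neq 0$ in the field $\hat H$ for every nonzero $Q\in H\{Y\}$ of order $<r$. The lemma produces the \emph{same} explicit description of $H\langle\hat f\rangle$, now in terms of $A(\hat f)/B(\hat f)$ with $A,B$ as before. Hence the map
\[
\iota\ \colon\ H\langle f\rangle\ \to\ H\langle\hat f\rangle,\qquad A(f)/B(f)\ \mapsto\ A(\hat f)/B(\hat f),
\]
is a well-defined bijection, and transfers the $H\{Y\}$-algebra structure faithfully because on each side the only relations between the $f^{(i)}$, respectively the $\hat f^{(i)}$, are those forced by $P(Y)=0$. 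Thus $\iota$ is an isomorphism of differential $H$-algebras sending $f$ to $\hat f$. Uniqueness is automatic: any differential $H$-algebra homomorphism $H\langle f\rangle\to H\langle\hat f\rangle$ with $\iota(f)=\hat f$ is forced on each $f^{(i)}$, hence on every element of the prescribed form.

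The only real obstacle is the verification in the first paragraph, which reduces to a careful unpacking of the meaning of $\sim_{H}$; the rest is a bookkeeping argument using the explicit description of $K\langle y\rangle$ supplied by Lemma~\ref{dsfe}.
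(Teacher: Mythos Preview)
Your proof is correct and takes essentially the same approach as the paper, which simply says ``by Lemma~\ref{dsfe} with $E=\Calinf$, $K=H$'' and leaves the details to the reader. You have filled in those details appropriately: verifying $Q(f)\in(\Calinf)^\times$ via the definition of $\sim_H$, invoking Lemma~\ref{dsfe} to get $H\langle f\rangle$ as a differential subfield of $\Calinf$ (hence a Hardy field) with $P$ as minimal annihilator of $f$, and obtaining the isomorphism in~(ii) from the fact that both $H\langle f\rangle$ and $H\langle\hat f\rangle$ are isomorphic over $H$ to $H(Y_0,\dots,Y_{r-1})[Y_r]/(p)$.
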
}

\noindent
With an extra assumption $\iota$ in Lemma~\ref{lem:dsfeh} is an isomorphism of $H$-fields:

{\sloppy
\begin{cor}\label{cor:dsfeh} 
Let $\hat f$, $f$, $P$, $r$, $\iota$ be as in Lem\-ma~\ref{lem:dsfeh}, and suppose also that~${Q(f)\sim_{H} Q(\hat{f})}$ for all~$Q\in H\{Y\}$ with $\order Q = r$ and $\deg_{Y^{(r)}} Q < \deg_{Y^{(r)}} P$.
Then  $g\sim_H \iota(g)$ for all~$g\in H\langle f\rangle^\times$,
hence for
${g_1,g_2\in H\langle f\rangle}$ we have $$g_1\preceq g_2 \text{  in }\c\ \Longleftrightarrow\
\iota(g_1)\preceq \iota(g_2)  \text{  in }\hat{H}.$$
Moreover, $\iota$ is an isomorphism of $H$-fields.
\end{cor}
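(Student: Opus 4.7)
The plan is to use Lemma~\ref{dsfe} to write any $g\in H\langle f\rangle^\times$ canonically as $g=A(f)/B(f)$ with $A,B\in H\{Y\}$, $\order A\le r$, $\deg_{Y^{(r)}}A<\deg_{Y^{(r)}}P$, $B\ne 0$, $\order B<r$, and then reduce $g\sim_H\iota(g)$ to the asymptotic similarity of the two factors by the multiplicativity of $\sim_H$ in Lemma~\ref{lem:simH}. Since $\iota$ is a differential-field isomorphism over $H$ sending $f$ to $\hat f$, we have $\iota(g)=A(\hat f)/B(\hat f)$, and it suffices to check $A(f)\sim_H A(\hat f)$ and $B(f)\sim_H B(\hat f)$.

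For $B$ the claim is trivial if $B\in H^\times$, and otherwise $B\in H\{Y\}\setminus H$ with $\order B<r$, which falls squarely under the hypothesis of Lemma~\ref{lem:dsfeh}. For $A$ I would again dispose of $A\in H^\times$ trivially, and then split by $\order A$: if $\order A<r$ (with $A\notin H$) I use Lemma~\ref{lem:dsfeh}'s hypothesis, while if $\order A=r$ I invoke the new hypothesis of Corollary~\ref{cor:dsfeh}. In each non-trivial case the constraints $\order A<r$, or $\order A=r$ and $\deg_{Y^{(r)}}A<\deg_{Y^{(r)}}P$, together with the minimality of $P$ as annihilator of $\hat f$, ensure $A(\hat f)\ne 0$, so that each asymptotic similarity is meaningful. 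This establishes $g\sim_H\iota(g)$ for every $g\in H\langle f\rangle^\times$, and the stated equivalence $g_1\preceq g_2\Leftrightarrow \iota(g_1)\preceq\iota(g_2)$ then follows by applying Lemma~\ref{lem:simH} to the quotient $g_1/g_2$ (the case $g_2=0$ being trivial).

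It will remain to verify that $\iota$ is an $H$-field isomorphism, i.e., that it preserves positivity. Given $g\in H\langle f\rangle^\times$, I would unravel $g\sim_H\iota(g)$ to obtain $\phi\in H^\times$ with $g\sim\phi$ in $\c$ and $\phi\sim\iota(g)$ in $\hat H$. In $\c$ the relation $g\sim\phi$ gives $g/\phi\to 1$, so $g$ is eventually of the same sign as $\phi$; in $\hat H$, $\iota(g)\sim\phi$ gives $\iota(g)/\phi=1+\epsilon$ with $\epsilon\prec 1$, and since $\hat H$ is an $H$-field this forces $|\epsilon|<1$ and hence $\iota(g)$ to have the same sign as $\phi$. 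Thus $g>0$ in the Hardy field $H\langle f\rangle$ iff $\phi>0$ in $H$ iff $\iota(g)>0$ in $H\langle\hat f\rangle$. No step should present genuine difficulty---this is really a bookkeeping argument assembling the two hypotheses around the canonical form of Lemma~\ref{dsfe}---and the only mildly delicate point is keeping track of why $A(\hat f)\ne 0$ in each subcase from the minimal-complexity property of $P$.
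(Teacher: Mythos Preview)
Your proposal is correct and follows essentially the same approach as the paper, which simply says the main claim ``follows from Lemmas~\ref{lem:simH} and~\ref{lem:dsfeh} and the description of~$H\langle f\rangle$ in Lemma~\ref{dsfe}.'' Your unpacking of this into the canonical fraction $A(f)/B(f)$ and case analysis on $A$ is exactly what is meant. For the final claim that $\iota$ is an $H$-field isomorphism, the paper cites [ADH, 10.5.8], which says that a valued field isomorphism over $H$ between immediate valued field extensions of an ordered valued field with convex valuation ring is automatically order-preserving; your direct argument via a witness $\phi\in H^\times$ with $g\sim\phi$ and $\iota(g)\sim\phi$ accomplishes the same thing elementarily and is a perfectly good substitute.
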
}
\begin{proof}
Most of this follows from Lemmas~\ref{lem:simH} and~\ref{lem:dsfeh} and the description of~$H\langle f\rangle$ in Lemma~\ref{dsfe}. For the last statement, use [ADH, 10.5.8]. 
\end{proof}

\subsection*{Analogues for $K$} We have the $\d$-valued extension~$K := H[\imag]\subseteq \Calinf[\imag]$ of $H$. As before we arrange that $\hat{K}=\hat{H}[\imag]$ is a $\d$-valued extension of $\hat{H}$ as well as an
an immediate extension of $K$. Let~${f\in\Calinf[\imag]}$ and~$\hat{f}\in \hat{K}$.
We now have the obvious ``complex'' analogues of Lemma~\ref{lem:dsfeh} and Corollary~\ref{cor:dsfeh}: 

\begin{lemma}\label{lem:dsfek}
Suppose $\hat{f}$ is $\d$-algebraic over $K$ with minimal annihilator $P$ over~$K$ of order~$r\ge 1$,
and $P(f)=0$ and $Q(f)\sim_K Q(\hat f)$ for all $Q\in K\{Y\}\setminus K$ with~${\order Q < r}$. Then \begin{enumerate}
\item[\rm{(i)}] $f$ generates over $K$ a differential subfield $K\langle f\rangle$ of $\Calinf[\imag]$;
\item[\rm{(ii)}] we have a \textup{(}necessarily unique\textup{)} isomorphism $\iota\colon K\langle f\rangle \to K\langle\hat f\rangle$ of differential fields over $K$ such that $\iota(f)=\hat{f}$.
\end{enumerate}
\end{lemma}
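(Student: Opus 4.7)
The plan is to proceed in exact parallel to the proof of Lemma~\ref{lem:dsfeh}, but applying Lemma~\ref{dsfe} in the differential ring $E=\Calinf[\imag]$ over the differential field $K=H[\imag]$ (and then again over $K$ inside $\hat K$).

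First I would verify the hypotheses of Lemma~\ref{dsfe} for $(E,K,P,y)=(\Calinf[\imag],K,P,f)$. Since $P$ is a minimal annihilator of $\hat f$ over $K$ we may take $P$ irreducible in $K\{Y\}$, and $P(f)=0$ by assumption. For any $Q\in K\{Y\}^{\ne}$ of order $<r$, minimality of $P$ gives $Q(\hat f)\ne 0$ in $\hat K$, and the assumption $Q(f)\sim_K Q(\hat f)$ then forces $Q(f)\in \c[\imag]^\times$; since $\Calinf[\imag]$ is a differential subring of $\c[\imag]$ and the reciprocal of a nowhere-zero germ in $\Calinf[\imag]$ is again in $\Calinf[\imag]$, we get $Q(f)\in\Calinf[\imag]^\times$. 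Hence Lemma~\ref{dsfe} yields (i): $f$ generates over $K$ a differential subfield $K\langle f\rangle\subseteq \Calinf[\imag]$ with $P$ as a minimal annihilator of $f$ over $K$, and
$$K\langle f\rangle\ =\ \left\{\tfrac{A(f)}{B(f)}:\ A,B\in K\{Y\},\, \order A\le r,\, \deg_{Y^{(r)}}A <\deg_{Y^{(r)}} P,\, B\ne 0,\, \order B< r\right\}.$$

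For (ii), apply Lemma~\ref{dsfe} again with $(E,K,P,y)=(\hat K,K,P,\hat f)$: the hypothesis $Q(\hat f)\in \hat K^\times$ for $Q\in K\{Y\}^{\ne}$ of order $<r$ is immediate from the minimal-annihilator property of $P$, so $K\langle \hat f\rangle$ is a differential subfield of $\hat K$ admitting the analogous description with $\hat f$ in place of $f$. Writing $p\in K[Y_0,\dots,Y_r]$ for the polynomial with $P(Y)=p(Y,Y',\dots,Y^{(r)})$, the construction inside the proof of Lemma~\ref{dsfe} yields two $K$-algebra isomorphisms
$$\iota_f:\ K(Y_0,\dots,Y_{r-1})[Y_r]/(p)\ \xrightarrow{\ \cong\ }\ K\langle f\rangle,\qquad \iota_{\hat f}:\ K(Y_0,\dots,Y_{r-1})[Y_r]/(p)\ \xrightarrow{\ \cong\ }\ K\langle\hat f\rangle,$$
sending $Y_i$ to $f^{(i)}$ and $\hat f^{(i)}$ respectively. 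The composition $\iota:=\iota_{\hat f}\circ\iota_f^{-1}$ is then a $K$-algebra isomorphism $K\langle f\rangle\to K\langle\hat f\rangle$ with $\iota(f)=\hat f$, and because both $\iota_f$ and $\iota_{\hat f}$ identify the class of $Y_{i+1}$ with the derivative of the class of $Y_i$, the isomorphism $\iota$ commutes with the derivations on source and target; thus $\iota$ is an isomorphism of differential fields over $K$. Uniqueness is automatic: $K\langle f\rangle$ is generated over $K$ by $f$ as a differential field, so any differential $K$-algebra morphism $K\langle f\rangle\to K\langle\hat f\rangle$ sending $f$ to $\hat f$ is determined on the generators and hence equals $\iota$.

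There is no real obstacle here; the work was done in Lemma~\ref{dsfe}, and the only point requiring a moment's thought is the observation that $\sim_K$ together with the minimality of $P$ upgrades $Q(\hat f)\ne 0$ in $\hat K$ to invertibility of $Q(f)$ inside $\Calinf[\imag]$ (not merely inside $\c[\imag]$).
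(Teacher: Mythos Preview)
Your proposal is correct and follows exactly the approach the paper has in mind: the paper presents Lemma~\ref{lem:dsfek} as the ``obvious complex analogue'' of Lemma~\ref{lem:dsfeh} without giving a separate proof, and Lemma~\ref{lem:dsfeh} itself is deduced from Lemma~\ref{dsfe} in the one-line manner you describe. Your observation that $Q(f)\sim_K Q(\hat f)$ forces $Q(f)\in\c[\imag]^\times$ and hence $Q(f)\in\Calinf[\imag]^\times$ is precisely the complex version of the remark the paper makes just before Lemma~\ref{lem:dsfeh}.
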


{\sloppy
\begin{cor}\label{cor:dsfek} 
Let $\hat f$, $f$, $P$, $r$, $\iota$ be as in Lem\-ma~\ref{lem:dsfek}, and   suppose also that~${Q(f)\sim_K Q(\hat{f})}$ for all $Q\in K\{Y\}$ with $\order Q = r$ and $\deg_{Y^{(r)}} Q < \deg_{Y^{(r)}} P$. 
Then $g\sim_K \iota(g)$ for all $g\in K\langle f\rangle^\times$, so for all $g_1,g_2\in K\langle f\rangle$ we have: 
$$g_1\preceq g_2  \text{ in }\c[\imag]\ \Longleftrightarrow\ \iota(g_1)\preceq \iota(g_2)  \text{ in }\hat{K}.$$
Thus the relation~$\preceq$ on $\c[\imag]$ restricts to a dominance relation on the field $K\langle f\rangle$.
\end{cor}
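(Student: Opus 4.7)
The plan is to mirror the proof of the real Corollary~\ref{cor:dsfeh}, relying on the explicit description of $K\langle f\rangle$ from Lemma~\ref{dsfe} together with the complex analogue of Lemma~\ref{lem:simH} noted in the paper (compatibility of $\sim_K$ with products, inverses, and the dominance relation).

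First I would show that $Q(f)\sim_K Q(\hat f)$ for \emph{every} $Q\in K\{Y\}$ of complexity strictly less than that of $P$, i.e., for every $Q\in K\{Y\}$ with either $\order Q<r$, or $\order Q=r$ and $\deg_{Y^{(r)}}Q<\deg_{Y^{(r)}}P$. For $Q\in K^\times$ this is trivial, for $Q\in K\{Y\}\setminus K$ with $\order Q<r$ it is the hypothesis of Lemma~\ref{lem:dsfek}, and for $\order Q=r$ with $\deg_{Y^{(r)}}Q<\deg_{Y^{(r)}}P$ it is the new hypothesis assumed in the corollary. Note also that for any such nonzero $Q$ we have $Q(\hat f)\neq 0$ by the minimality of $P$ as an annihilator of $\hat f$ over $K$, and hence $Q(f)\neq 0$ by the $\sim_K$-relation.

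Next, let $g\in K\langle f\rangle^\times$. By the description of $K\langle f\rangle$ in Lemma~\ref{dsfe} (applied with $E=\Calinf[\imag]$) I can write $g=A(f)/B(f)$ with $A,B\in K\{Y\}^{\ne}$, $\order A\leq r$, $\deg_{Y^{(r)}}A<\deg_{Y^{(r)}}P$, and $\order B<r$. By the previous step, $A(f)\sim_K A(\hat f)$ and $B(f)\sim_K B(\hat f)$; the complex version of Lemma~\ref{lem:simH} (products and inverses) then yields
\[ g\ =\ A(f)/B(f)\ \sim_K\ A(\hat f)/B(\hat f)\ =\ \iota(g), \]
where the last equality uses that $\iota$ is a differential ring morphism over $K$ with $\iota(f)=\hat f$. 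Thus $g\sim_K\iota(g)$ for all $g\in K\langle f\rangle^\times$.

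The equivalence $g_1\preceq g_2$ in $\c[\imag]$ iff $\iota(g_1)\preceq\iota(g_2)$ in $\hat K$, for $g_1,g_2\in K\langle f\rangle$, then follows from the ``moreover'' clause of the complex analogue of Lemma~\ref{lem:simH} applied to $g_1\sim_K\iota(g_1)$ and $g_2\sim_K\iota(g_2)$ (the case $g_1=0$ or $g_2=0$ being immediate). Finally, since $\preceq$ on $\hat K$ is a dominance relation (from the valuation ring of $\hat K$) and $\iota$ is a field isomorphism from $K\langle f\rangle$ onto $K\langle\hat f\rangle\subseteq\hat K$ that preserves $\preceq$ in both directions, the restriction of $\preceq$ to $K\langle f\rangle$ inherits the dominance axioms. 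I do not foresee a serious obstacle; the only bookkeeping point is checking that the combined hypotheses of Lemma~\ref{lem:dsfek} and of the corollary cover exactly the forms of $A$ and $B$ produced by Lemma~\ref{dsfe}, which they do precisely because complexity is measured first by order and then by $\deg_{Y^{(r)}}$.
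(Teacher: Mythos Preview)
Your proof is correct and follows essentially the same approach as the paper: the paper's proof of the real analogue (Corollary~\ref{cor:dsfeh}) simply cites Lemma~\ref{lem:simH}, Lemma~\ref{lem:dsfeh}, and the description of $H\langle f\rangle$ in Lemma~\ref{dsfe}, and the complex version is obtained by the obvious substitutions, exactly as you do.
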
}


\noindent
From $K$ being algebraically closed we obtain a useful variant of Corollary~\ref{cor:dsfek}: 

\begin{cor}\label{cor:dsfek, order 1}
Suppose $f\approx_K \hat f$, and $P\in K\{Y\}$ is irreducible with 
$$\order P\ =\ \deg_{Y'} P\  =\ 1, \quad P(f)=0\ \text{ in }\ \Calinf[\imag],\quad P(\hat f)=0\ \text{ in }\ \hat K.$$
Then $P$ is a minimal annihilator of $\hat f$ over~$K$, $f$ generates over $K$ a differential subfield~$K\<f\>=K(f)$ of~$\Calinf[\imag]$, and we have an isomorphism $\iota\colon K\<f\>\to K\<\hat f\>$
of differential fields over $K$ such that~${\iota(f)=\hat f}$ and~$g\sim_K\iota(g)$ for all $g\in K\langle f\rangle^\times$. Thus for all $g_1,g_2\in K\langle f\rangle$: 
$g_1\preceq g_2  \text{ in }\c[\imag]\ \Longleftrightarrow\ \iota(g_1)\preceq \iota(g_2)  \text{ in }\hat{K}.$
\end{cor}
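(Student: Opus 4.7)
The plan is to reduce the statement to Corollary~\ref{complexsimHt} together with Lemma~\ref{dsfe}. From $f\approx_K\hat f$, Corollary~\ref{complexsimHt} immediately supplies: $\hat f$ is transcendental over $K$, $f$ generates a subfield $K(f)\subseteq\c[\imag]$, and there is a field isomorphism $\iota\colon K(f)\to K(\hat f)$ over $K$ with $\iota(f)=\hat f$ and $g\sim_K\iota(g)$ for every $g\in K(f)^\times$. What remains is (a)~to verify minimality of $P$ as an annihilator of $\hat f$, (b)~to identify $K\<f\>$ (inside the differential ring $\Calinf[\imag]$) with $K(f)$, and (c)~to upgrade $\iota$ to an isomorphism of differential fields.

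For (a), I would argue as follows. Transcendence of $\hat f$ rules out order-$0$ annihilators, so a minimal annihilator $P_0$ of $\hat f$ must satisfy $\order P_0=\deg_{Y'}P_0=1$. Writing $P=A(Y)Y'+B(Y)$ and $P_0=A_0(Y)Y'+B_0(Y)$ with $A,A_0\in K[Y]^{\neq}$ and $B,B_0\in K[Y]$, the combination $A_0P-AP_0=A_0B-AB_0\in K[Y]$ vanishes at $\hat f$, so by transcendence $A_0B=AB_0$ identically in $K[Y]$. Irreducibility of $P$ together with $\deg_{Y'}P=1$ forces $\gcd_{K[Y]}(A,B)\in K^\times$, whence $A\mid A_0$ in $K[Y]$: writing $A_0=CA$ with $C\in K[Y]$, one obtains $B_0=CB$ and thus $P_0=CP$. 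Since $\deg P_0\leq \deg P$ by minimality, $C\in K^\times$, so $P$ is itself a minimal annihilator of $\hat f$ over $K$.

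For (b) and (c), I would first invoke the lemma preceding Corollary~\ref{complexsimHt}: it yields $Q(f)\sim_K Q(\hat f)$ for every nonzero $Q\in K[Y]$, so transcendence of $\hat f$ gives $Q(f)\in\Calinf[\imag]^\times$. Lemma~\ref{dsfe} then applies to $K$, $P$, $\Calinf[\imag]$, $f$ and furnishes the differential subfield $K\<f\>\subseteq\Calinf[\imag]$; its explicit description, combined with the constraint $\deg_{Y'}A<\deg_{Y'}P=1$, forces $K\<f\>=K(f)$. (Alternatively, $P(f)=0$ reads $A(f)f'+B(f)=0$ with $A(f)\in\c[\imag]^\times$, so $f'=-B(f)/A(f)\in K(f)$.) To upgrade $\iota$ to a differential isomorphism I would note that $\hat f'=-B(\hat f)/A(\hat f)$ holds in $\hat K$ for the same reason; hence
\[\iota(f')\ =\ \iota\!\big({-}B(f)/A(f)\big)\ =\ -B(\hat f)/A(\hat f)\ =\ \hat f'.\]
Since $\iota$ is a $K$-algebra homomorphism which commutes with $\der$ on $K\cup\{f\}$, it commutes with $\der$ throughout $K(f)$. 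The remaining asymptotic assertions are inherited directly from Corollary~\ref{complexsimHt}(iii).

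The main obstacle is really the minimality argument in (a); once it is settled, (b) and (c) reduce to short applications of already-established machinery, with no further analytic input needed beyond what is encoded in $f\approx_K\hat f$.
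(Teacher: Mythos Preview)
Your proof is correct and follows essentially the same route as the paper: obtain transcendence of $\hat f$ and the field isomorphism with the $\sim_K$ property from Corollary~\ref{complexsimHt}, then identify $K\langle f\rangle$ with $K(f)$ via Lemma~\ref{dsfe}. The only real difference is in step~(a): the paper disposes of minimality in one line by citing [ADH,~4.1.6], whereas you give a direct $\gcd$ argument in $K[Y]$. Your argument is sound (irreducibility of $P$ does force $\gcd(A,B)\in K^\times$, and the complexity comparison then pins $C$ down to a scalar) and has the virtue of being self-contained; the citation is shorter but requires the external reference. Your step~(c), verifying that $\iota$ respects $\der$ by computing $\iota(f')=\hat f'$, makes explicit what the paper leaves to the reader in ``Now use Lemma~\ref{dsfe} and Corollary~\ref{complexsimHt}''; an alternative is to note that both $K\langle f\rangle$ and $K\langle\hat f\rangle$ are, by the proof of Lemma~\ref{dsfe}, isomorphic over $K$ to the same differential $K$-algebra $K(Y_0)[Y_1]/(p)$, so the unique field isomorphism sending $f$ to $\hat f$ is automatically differential.
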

\begin{proof} By Corollary~\ref{complexsimHt}, $\hat f$ is transcendental over $K$, so $P$ is a minimal annihilator
of $\hat f$ over $K$ by [ADH, 4.1.6]. Now use Lemma~\ref{dsfe} and Corollary~\ref{complexsimHt}. 
\end{proof}

\noindent
This corollary leaves open whether $\Re f$ or $\Im f$ is hardian over $H$. This issue is critical for us and we treat a special case in Proposition~\ref{prop:Re or Im} below.  The example following  Corollary~5.24 in \cite{ADH5}   shows that that there is a 
differential subfield of~$\Calinf[\imag]$ such that the
binary relation~$\preceq$ on~$\c[\imag]$ restricts to a dominance relation on it, but which is not contained in $F[\imag]$ for any
Hardy field $F$.

\subsection*{Sufficient conditions for asymptotic similarity}
Let~$\hat h$ be an element of our immediate $H$-field extension $\hat H$ of $H$.
Note that in the next variant of [ADH, 11.4.3] we use $\operatorname{ddeg}$ instead of $\ndeg$.

\begin{lemma}\label{1142} Let $Q\in H\{Y\}^{\neq}$, $r:=\order Q$, $h\in H$, and
$\fv\in H^\times$ be such that~${\hat h-h\prec \fv}$ and $\ddeg_{\prec \fv} Q_{+h}=0$.  
Assume $y\in \Calinf$ and $\fm\in H^\times$  satisfy
$$y-h\preceq\fm\prec\fv,\qquad \left(\frac{y-h}{\fm}\right)',\dots,\left(\frac{y-h}{\fm}\right)^{(r)} \preceq 1.$$ Then $Q(y)\sim Q(h)$ in $\Calinf$ and $Q(h)\sim Q(\hat h)$ in $\hat H$; in particular, $Q(y)\sim_H Q(\hat h)$.
\end{lemma}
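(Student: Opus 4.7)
The plan is to set $P := Q_{+h} \in H\{Y\}$, so that $P(0) = Q(h)$, $P(\hat h - h) = Q(\hat h)$ evaluated in $\hat H$, and $P(y - h) = Q(y)$ evaluated in $\Calinf$. The two claims $Q(h) \sim Q(\hat h)$ in $\hat H$ and $Q(y) \sim Q(h)$ in $\Calinf$ both reduce to showing that the constant term $P(0)$ strictly dominates the evaluation of $P$ at a suitably small perturbation. The whole argument is driven by unpacking the hypothesis $\ddeg_{\prec \fv}P = 0$: in the multiplicative conjugate $P_{\times \fv} = \sum_{\j}(P_{\times \fv})_{\j}\,Y^{\j}$, the constant term $(P_{\times \fv})_{0} = P(0)$ strictly dominates each $(P_{\times \fv})_{\j}$ with $|\j|\geq 1$, and by monotonicity of $\ddeg_{\preceq (\cdot)}$ under shrinking of the width the same holds with $\fm$ in place of $\fv$ (since $\fm \prec \fv$): every coefficient $(P_{\times \fm})_{\j}\in H$ with $|\j|\geq 1$ satisfies $(P_{\times \fm})_{\j} \prec P(0) = Q(h)$.

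For the first claim, I apply this dominance in the immediate asymptotic extension $\hat H$. Expanding $P(\hat h - h)$ as a finite sum of monomials in $\hat h - h$ and its derivatives, every term of positive degree is dominated by $P(0)$ thanks to $\hat h - h \prec \fv$ and the coefficient estimates above; this yields $P(\hat h - h) \sim P(0)$ in $\hat H$, i.e., $Q(\hat h) \sim Q(h)$. This is precisely the argument of \textup{[ADH, 11.4.3]} but with $\ddeg$ replacing $\ndeg$; the transfer of the estimates to $\hat H$ is automatic, since $\hat H$ is an immediate extension of $H$ and the inequalities $(P_{\times \fv})_{\j}\prec P(0)$ live in $H$.

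For the second claim, set $z := (y - h)/\fm \in \Calinf$, so that $z, z', \ldots, z^{(r)} \preceq 1$ as germs by hypothesis on $y$. Then
\[
Q(y) - Q(h)\ =\ P(\fm z) - P(0)\ =\ P_{\times \fm}(z) - (P_{\times \fm})_{0}\ =\ \sum_{|\j|\geq 1} (P_{\times \fm})_{\j}\, z^{\j},
\]
where $z^{\j} = z^{i_0}(z')^{i_1}\cdots (z^{(r)})^{i_r} \preceq 1$ as a germ in $\c$. Hence, as germs,
\[
Q(y) - Q(h)\ \preceq\ \max_{|\j|\geq 1}\bigl|(P_{\times \fm})_{\j}\bigr|\ \prec\ Q(h),
\]
so $Q(y) \sim Q(h)$ in $\Calinf$.

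Combining the two estimates via the intermediate element $Q(h)\in H^\times$ yields $Q(y)\sim_H Q(\hat h)$, as desired. The only really delicate point in the execution will be justifying the coefficient inequality $(P_{\times \fm})_{\j} \prec P(0)$ for $\fm \prec \fv$ as a consequence of the hypothesis $\ddeg_{\prec \fv}P = 0$, i.e., the monotonicity of $\ddeg_{\preceq (\cdot)}$ under shrinking of the width; everything else is a direct bookkeeping exercise with the Product Rule and the evident dominance estimates, both in the valued field $\hat H$ and among germs in $\Calinf$.
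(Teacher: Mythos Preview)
Your proof is correct and takes the same approach as the paper: the paper sets $u:=(y-h)/\fm$, writes $Q_{+h,\times\fm}=Q(h)+R$ with $R(0)=0$, deduces $R\prec Q(h)$ from $\ddeg Q_{+h,\times\fm}=0$, and bounds $R(u)\preceq R\prec Q(h)$ using $u,u',\dots,u^{(r)}\preceq 1$. One small correction: the hypothesis $\ddeg_{\prec\fv}Q_{+h}=0$ gives $\ddeg Q_{+h,\times\fn}=0$ for each $\fn\prec\fv$, not for $\fn=\fv$ itself; so for the $\hat H$ claim you should (as the paper does) enlarge $\fm$ so that $\hat h-h\preceq\fm\prec\fv$, rather than invoking coefficient estimates for $P_{\times\fv}$.
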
 
\begin{proof} We have $y=h+\fm u$ with $u=\frac{y-h}{\fm}\in \Calinf$ and
$u, u',\dots, u^{(r)}\preceq 1$.
Now 
$$Q_{+h,\times \fm}\ =\ Q(h) + R\qquad\text{ with }R\in H\{Y\},\ R(0)=0,$$ 
which in view of $\operatorname{ddeg} Q_{+h,\times \fm}=0$ gives $R\prec Q(h)$.
Thus 
$$Q(y)\ =\ Q_{+h,\times\fm}(u)\ =\ Q(h) + R(u),\qquad R(u)\ \preceq\ R\ \prec\ Q(h),$$ so $Q(y)\sim Q(h)$ in $\Calinf$. Increasing $|\fm|$ if necessary we arrange $\hat h - h\preceq \fm$, and then a similar computation with $\hat h$ instead of $y$ gives $Q(h)\sim Q(\hat h)$ in $\hat H$.  
\end{proof}

\noindent
{\it In the remainder of this subsection  we assume that $H$ is ungrounded and $H\ne \R$.}\/
 
\begin{cor}  \label{cor:iso hat h, d-alg, weak}
Suppose $\hat h$ is $\d$-algebraic over $H$ with minimal annihilator $P$ over~$H$ of order $r\geq 1$, and let~${y\in\Calinf}$ satisfy $P(y)=0$. Suppose for all~$Q$ in~$H\{Y\}\setminus H$ of order $< r$  there are $h\in H$, $\fm,\fv\in H^\times$, and an active~$\phi>0$ in~$H$ such that~$\hat h-h\preceq\fm\prec\fv$,  $\ddeg_{\prec\fv} Q^\phi_{+h}=0$, and
$$ \derdelta^j\!\left(\frac{y-h}{\fm}\right)\preceq 1\qquad\text{ for }j=0,\dots,r-1\text{ and }\derdelta:=\phi^{-1}\der.$$
Then $y\notin H$ and $y$ is hardian over $H$.
\end{cor}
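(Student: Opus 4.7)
The plan is to reduce the corollary to Lemma~\ref{lem:dsfeh}. That lemma requires two inputs: $P(y)=0$ (which is given) and $Q(y)\sim_H Q(\hat h)$ for every $Q\in H\{Y\}\setminus H$ with $\order Q < r$. Granting these, Lemma~\ref{lem:dsfeh} directly yields that $y$ is hardian over $H$ together with an isomorphism $\iota\colon H\<y\>\to H\<\hat h\>$ of differential fields over $H$ sending $y$ to $\hat h$; since the minimal annihilator of $\hat h$ has positive order, $\hat h$ is not algebraic over $H$, and in particular $\hat h\notin H$, so that $y\notin H$ as well (otherwise $\hat h=\iota(y)=y\in H$).

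So the task is to verify $Q(y)\sim_H Q(\hat h)$ for each $Q\in H\{Y\}\setminus H$ with $s:=\order Q<r$. Fix such a $Q$ and take $h$, $\fm$, $\fv$, $\phi$ as supplied by the hypothesis. The idea is to eliminate $\phi$ by passing to the compositional conjugate $H^\circ$ through the map $f\mapsto f^\circ:=f\circ\ell^{\inv}$, where $\ell\in\c^1$ satisfies $\ell'=\phi$, as set up in the subsection \emph{Equations over Hardy fields and over their complexifications} of Section~\ref{secfhhf}. Then $H^\circ$ is a Hardy field that inherits from $H$ the properties of being ungrounded and different from $\R$, the immediate extension $\hat H\to\hat H^\circ$ of this isomorphism gives an immediate $H^\circ$-field extension of $H^\circ$ containing $\hat h^\circ$, and $Q^{\phi\circ}\in H^\circ\{Y\}\setminus H^\circ$ still has order $s$. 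Because $f\mapsto f^\circ$ is a valued field isomorphism, the conditions $\hat h-h\preceq\fm\prec\fv$ and $\ddeg_{\prec\fv}Q^\phi_{+h}=0$ translate to $\hat h^\circ-h^\circ\preceq\fm^\circ\prec\fv^\circ$ and $\ddeg_{\prec\fv^\circ}(Q^{\phi\circ})_{+h^\circ}=0$. Iterating~\eqref{eq:derc} gives $\bigl((y^\circ-h^\circ)/\fm^\circ\bigr)^{(j)}=\bigl(\derdelta^j\bigl((y-h)/\fm\bigr)\bigr)^\circ\preceq 1$ for $j=0,\dots,s$, using $s\leq r-1$.

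These are precisely the hypotheses of Lemma~\ref{1142} in the Hardy field $H^\circ$ with $\hat H^\circ$, $y^\circ\in\Calinf$, $h^\circ$, $\fm^\circ$, $\fv^\circ$ in the respective roles. That lemma therefore produces $Q^{\phi\circ}(y^\circ)\sim Q^{\phi\circ}(h^\circ)$ in $\Calinf$ and $Q^{\phi\circ}(h^\circ)\sim Q^{\phi\circ}(\hat h^\circ)$ in $\hat H^\circ$. By Lemma~\ref{lem:as equ comp}, together with its evident extension to the immediate extension at hand, we have $Q(y)^\circ=Q^{\phi\circ}(y^\circ)$, $Q(h)^\circ=Q^{\phi\circ}(h^\circ)$, and $Q(\hat h)^\circ=Q^{\phi\circ}(\hat h^\circ)$; pulling back by the inverse map $f\mapsto f\circ\ell$, which also preserves the asymptotic relation $\sim$, we obtain $Q(y)\sim Q(h)$ in $\Calinf$ and $Q(h)\sim Q(\hat h)$ in $\hat H$, that is, $Q(y)\sim_H Q(\hat h)$ with $Q(h)\in H^\times$ as witness. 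Lemma~\ref{lem:dsfeh} then finishes the proof.

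The substantive content lies in Lemma~\ref{1142}, which does the real asymptotic work; the obstacle in this corollary is purely one of bookkeeping under compositional conjugation, namely verifying cleanly that each asymptotic ingredient (order of $Q$, the dominant-degree condition, the valuation inequalities, and the smoothness-plus-derivative estimates) transports correctly under $(\cdot)^\circ$ so that Lemma~\ref{1142}, whose hypothesis involves only the \emph{standard} derivation on a Hardy field, becomes directly applicable. All these transports are routine consequences of the formalism already developed in Sections~\ref{sec:hardy} and~\ref{secfhhf}.
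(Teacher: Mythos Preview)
Your proof is correct and follows essentially the same approach as the paper: reduce to Lemma~\ref{lem:dsfeh} by verifying $Q(y)\sim_H Q(\hat h)$ for each $Q$ of order $<r$, pass to $H^\circ$ via compositional conjugation so that Lemma~\ref{1142} applies directly, and transport the conclusion back. One small redundancy: Lemma~\ref{lem:dsfeh} already asserts $f\notin H$ in its conclusion, so your separate argument for $y\notin H$ via the isomorphism $\iota$ is unnecessary (though correct).
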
 
\begin{proof}
Let $Q\in H\{Y\}\setminus H$ have order $< r$, and take $h$, $\fm$, $\fv$, $\phi$ as in the statement of the corollary. 
By Lemma~\ref{lem:dsfeh} it is enough to show that  then $Q(y) \sim_H Q(\hat h)$. We use $(\phantom{-})^\circ$ as explained at the beginning of Section~\ref{secfhhf}. Thus we have the  Hardy field~$H^\circ$ and the  $H$-field isomorphism $h\mapsto h^\circ\colon H^\phi\to H^\circ$, extended  to an $H$-field isomorphism $\hat f\mapsto \hat f^\circ\colon\hat H^\phi\to\hat H^\circ$, for an immediate $H$-field extension $\hat H^\circ$ of~$H^\circ$. 
Set $u:=(y-h)/\fm\in \Calinf$. We have $\ddeg_{\prec\fv^\circ} Q^{\phi\circ}_{+h^\circ}=0$ and  $(u^\circ)^{(j)}\preceq 1$ for~$j=0,\dots,r-1$;
hence $Q^{\phi\circ}(y^\circ)\sim_{H^\circ} Q^{\phi\circ}(\hat h^\circ)$ by Lemma~\ref{1142}. Now $Q^{\phi\circ}(y^\circ)=Q(y)^\circ$ in~$\Calinf$ and~$Q^{\phi\circ}(\hat h^\circ)=Q(\hat h)^\circ$ in $\hat H^\circ$, hence $Q(y) \sim_H Q(\hat h)$.
\end{proof}

\noindent
Using Corollary~\ref{cor:dsfeh} instead of Lemma~\ref{lem:dsfeh} we  show likewise:

{\sloppy
\begin{cor}\label{cor:iso hat h, d-alg}
Suppose $\hat h$ is $\d$-algebraic over $H$ with minimal annihilator $P$ over~$H$ of order $r\geq 1$, and let~${y\in\Calinf}$ satisfy $P(y)=0$. Suppose   that for all~$Q$ in~$H\{Y\}\setminus H$ with $\order Q\leq r$ and $\deg_{Y^{(r)}} Q<\deg_{Y^{(r)}} P$ there are~$h\in H$, ${\fm,\fv\in H^\times}$, and an active~$\phi>0$ in $H$ with $\hat h-h\preceq\fm\prec\fv$,  $\ddeg_{\prec\fv} Q^\phi_{+h}=0$, and
$$ \derdelta^j\!\left(\frac{y-h}{\fm}\right)\preceq 1\qquad\text{ for }j=0,\dots,r \text{ and }  \derdelta:=\phi^{-1}\der.$$
Then $y$ is hardian over $H$ and there is an isomorphism $H\langle y\rangle \to H\langle \hat h\rangle$ of $H$-fields over
$H$ sending~$y$ to $\hat h$.
\end{cor}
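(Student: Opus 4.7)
The plan is to adapt the argument used for Corollary~\ref{cor:iso hat h, d-alg, weak} by replacing the invocation of Lemma~\ref{lem:dsfeh} at the end with Corollary~\ref{cor:dsfeh}. Observe that the class of $Q\in H\{Y\}\setminus H$ with $\order Q \leq r$ and $\deg_{Y^{(r)}} Q < \deg_{Y^{(r)}} P$ subsumes the class of $Q$ of order $<r$ (for which $\deg_{Y^{(r)}} Q = 0 < \deg_{Y^{(r)}} P$), so it suffices to establish $Q(y)\sim_H Q(\hat h)$ for every $Q$ in the larger class; then all hypotheses of Corollary~\ref{cor:dsfeh} will be in force and it will produce the desired $H$-field isomorphism $\iota\colon H\langle y\rangle \to H\langle\hat h\rangle$ over $H$ with $\iota(y)=\hat h$.

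So fix such a $Q$ and take $h\in H$, $\fm,\fv\in H^\times$, and an active $\phi>0$ in $H$ witnessing the hypothesis: $\hat h-h \preceq \fm \prec \fv$, $\ddeg_{\prec\fv} Q^{\phi}_{+h} = 0$, and $\derdelta^j((y-h)/\fm)\preceq 1$ for $j=0,\dots,r$, where $\derdelta=\phi^{-1}\der$. Following the proof of Corollary~\ref{cor:iso hat h, d-alg, weak}, choose $\ell\in\c^1$ with $\ell'=\phi$ and use the superscript $\circ$ for composition with $\ell^{\inv}$. This yields the Hardy field $H^\circ$, the $H$-field isomorphism $h\mapsto h^\circ\colon H^\phi\to H^\circ$, and an extension of it to an $H$-field isomorphism $\hat f\mapsto \hat f^\circ\colon \hat H^\phi\to \hat H^\circ$ for an immediate $H$-field extension $\hat H^\circ$ of $H^\circ$.

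Set $u:=(y-h)/\fm\in\Calinf$. By the identity \eqref{eq:derc}, $(u^\circ)^{(j)}=\derdelta^j(u)^\circ\preceq 1$ for $j=0,\dots,r$, and $Q^{\phi\circ}\in H^\circ\{Y\}$ has order at most $r$ with $\ddeg_{\prec\fv^\circ} Q^{\phi\circ}_{+h^\circ}=0$. Since $\order Q^{\phi\circ}\leq r$, the hypothesis on derivatives of $u^\circ$ supplies exactly what Lemma~\ref{1142} requires when applied with $Q^{\phi\circ}, h^\circ, \fm^\circ, \fv^\circ, \hat h^\circ, y^\circ$ in the roles of $Q, h, \fm, \fv, \hat h, y$ (and over the Hardy field $H^\circ$). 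We conclude $Q^{\phi\circ}(y^\circ)\sim_{H^\circ} Q^{\phi\circ}(\hat h^\circ)$; since $Q^{\phi\circ}(y^\circ)=Q(y)^\circ$ in $\Calinf$ and $Q^{\phi\circ}(\hat h^\circ)=Q(\hat h)^\circ$ in $\hat H^\circ$, transporting back via $\circ$ yields $Q(y)\sim_H Q(\hat h)$, as needed.

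The only real subtlety compared with the weak version is keeping track of the orders in Lemma~\ref{1142}: that lemma requires control of derivatives of $u$ up to its own order $r:=\order Q$, which after compositional conjugation means control of $(u^\circ)^{(j)}$ for $j$ up to $\order Q^{\phi\circ}\leq r$. This is precisely why the corollary strengthens the hypothesis of its weak counterpart from $j\leq r-1$ to $j\leq r$, and this strengthening is exactly what permits us to handle the new $Q$'s of order $r$ (with $\deg_{Y^{(r)}} Q<\deg_{Y^{(r)}} P$) and hence to upgrade Lemma~\ref{lem:dsfeh} to Corollary~\ref{cor:dsfeh} in the final step.
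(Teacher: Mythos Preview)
Your proof is correct and follows exactly the approach the paper indicates: the paper merely says ``Using Corollary~\ref{cor:dsfeh} instead of Lemma~\ref{lem:dsfeh} we show likewise,'' and you have carried this out in detail, correctly tracking that Lemma~\ref{1142} (applied in $H^\circ$) only needs derivatives of $u^\circ$ up to $\order Q^{\phi\circ}\le r$, which the strengthened hypothesis provides.
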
}

\noindent
In the next subsection we use Corollary~\ref{cor:iso hat h, d-alg} to fill  in certain kinds of holes in Hardy fields.

\subsection*{Generating immediate $\d$-algebraic Hardy field extensions}
{\it In this subsection $H$ is Liouville closed, $(P, \fn, \hat h)$ is a special $Z$-minimal slot 
in $H$ of order~$r\ge 1$,  $K:=H[\imag]\subseteq\Calinf[\imag]$,  $\I(K)\subseteq K^\dagger$, and $K$ is $1$-linearly surjective  if~${r\ge 3}$.}\/ We first treat the case where~$(P, \fn, \hat h)$ is a hole in $H$ (not just a slot):

\begin{theorem} \label{46}
Assume $(P, \fn, \hat h)$ is a deep, ultimate, and strongly repulsive-normal hole in $H$, and $y\in \Calinf$, $P(y)=0$, $y\prec \fn$.
Then $y$ is hardian over~$H$, and there is an
isomorphism $H\langle y\rangle \to H\langle \hat h\rangle$ of $H$-fields over $H$ sending $y$ to $\hat h$.
\end{theorem}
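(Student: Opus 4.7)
The plan is to apply Corollary \ref{cor:iso hat h, d-alg} to $y\in\Calinf$ and $\hat h\in\hat H$. Since $(P,\fn,\hat h)$ is a minimal hole, it is $Z$-minimal, so $P$ is a minimal annihilator of $\hat h$ over $H$ of order $r$. First I would perform multiplicative conjugation, replacing $(P,\fn,\hat h)$ by $(P_{\times\fn},1,\hat h/\fn)$ and $y$ by $y/\fn$, reducing to the case $\fn=1$; this transformation preserves being deep, ultimate, and strongly repulsive-normal, together with $P(y)=0$ and $y\prec 1$. In this reduced setup the slot is also special (either inherited from the hypothesis or via Lemma \ref{lem:special dents}, since deep slots are quasilinear and $H$ is Liouville closed).

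To invoke Corollary \ref{cor:iso hat h, d-alg}, I need to verify that for each $Q\in H\{Y\}\setminus H$ with $\order Q\leq r$ and $\deg_{Y^{(r)}} Q < \deg_{Y^{(r)}} P$, there are $h\in H$, $\fm,\fv\in H^\times$, and an active $\phi>0$ in $H$ with $\phi\preceq 1$ such that $\hat h-h\preceq \fm\prec \fv$, $\ddeg_{\prec \fv} Q^\phi_{+h}=0$, and $\derdelta^j((y-h)/\fm)\preceq 1$ for $j=0,\dots,r$ (where $\derdelta=\phi^{-1}\der$). The central analytic input is Corollary \ref{cor:approx y}: because the slot is deep, ultimate, $Z$-minimal, strongly repulsive-normal, and special, and because $\I(K)\subseteq K^\dagger$ with $K$ being $1$-linearly surjective when $r\geq 3$, whenever $\hat h-h\preceq \fm$ with $v\fm\in v(\hat h-H)$, we obtain $y-h\in \fm\,(\c^r)^{\preceq}$. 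This yields $((y-h)/\fm)^{(j)}\preceq 1$ for $j=0,\dots,r$, and via the formulas expressing $\derdelta^j$ in terms of $\der^k$ with $k\leq j$ and bounded coefficients when $\phi\preceq 1$ (cf.\ Lemma \ref{lem:small derivatives of y} and [ADH, 5.7]), we deduce $\derdelta^j((y-h)/\fm)\preceq 1$ for $j=0,\dots,r$.

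For the dominant-degree condition, since $P$ is a minimal annihilator of $\hat h$ we have $Q(\hat h)\neq 0$, and by specialness (Proposition \ref{specialvariant}) we can make $\hat h-h$ arbitrarily small in valuation so that $Q(h)\sim Q(\hat h)$ and Proposition \ref{small P(a)} controls $Q(h)$ relative to $Q$ itself. The requirement $\ddeg_{\prec\fv} Q^\phi_{+h}=0$ amounts to $Q(h)$ dominating all higher-degree contributions of $Q^\phi_{+h}$ on inputs $\prec\fv$. Choosing an active $\phi$ with $0<\phi\preceq 1$ so that $Q^\phi_{+h}$ is in sufficiently ``flat'' compositional-conjugate form, and then $\fv\in H^\times$ small enough in the convex subgroup $\Delta(\fv(L_{P_{\times\fn}}))$ controlling the slot, arranges this, after which any $h$ with $\hat h-h\prec\fv$ (which specialness provides) works. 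I expect the main obstacle to be the coordination of the four parameters $h,\fm,\fv,\phi$ so that simultaneously (i) the $\ddeg$ condition on $Q^\phi_{+h}$ holds, (ii) the asymptotic squeeze $\hat h-h\preceq \fm\prec \fv$ lies in the specialness range $v(\hat h-H)$, and (iii) the conclusion of Corollary \ref{cor:approx y} applies to this $\fm$ so as to supply the derivative bounds. Once these are arranged, Corollary \ref{cor:iso hat h, d-alg} simultaneously yields that $y$ is hardian over $H$ and the desired $H$-field isomorphism $H\langle y\rangle\to H\langle\hat h\rangle$ over $H$ sending $y$ to $\hat h$.
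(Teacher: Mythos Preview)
Your overall strategy—reducing to $\fn=1$, invoking Corollary~\ref{cor:iso hat h, d-alg}, and using Corollary~\ref{cor:approx y} as the analytic input—matches the paper's. But there is a genuine gap in how you bridge the two. You apply Corollary~\ref{cor:approx y} to the slot $(P,1,\hat h)$ in $H$ itself, obtaining $((y-h)/\fm)^{(j)}\preceq 1$ for $j=0,\dots,r$ with respect to the derivation $\der$, and then claim to deduce $\derdelta^j((y-h)/\fm)\preceq 1$ for $\derdelta=\phi^{-1}\der$ with $\phi\preceq 1$, citing ``bounded coefficients.'' This has the direction wrong: the transformation formulas in [ADH,~5.7] and Lemma~\ref{lem:small derivatives of y}(i) express $\der^j$ in terms of $\derdelta^k$ with coefficients built from $\phi$ and its derivatives, which are indeed $\preceq 1$; but expressing $\derdelta^j$ in terms of $\der^k$ brings in powers of $\phi^{-1}\succeq 1$, and for $\phi\prec 1$ these are unbounded. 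Since an active $\phi\prec 1$ is in general required to turn $\ndeg_{\prec\fv}Q_{+h}=0$ into the needed $\ddeg_{\prec\fv}Q^\phi_{+h}=0$, you cannot simply take $\phi=1$.

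The paper handles this by passing to $H^\circ$ via compositional conjugation: by Lemmas~\ref{lem:Pphicirc, 1} and~\ref{lem:Pphicirc, 2}(ii), the hole $(P^{\phi\circ},1,\hat h^\circ)$ in $H^\circ$ remains special, $Z$-minimal, deep, ultimate, and strongly repulsive-normal, so Corollary~\ref{cor:approx y} applies \emph{there} to $y^\circ,h^\circ,\fm^\circ$, yielding $((y^\circ-h^\circ)/\fm^\circ)^{(j)}\preceq 1$ in $H^\circ$. The identity $(f^\circ)'=\derdelta(f)^\circ$ then translates this back to $\derdelta^j((y-h)/\fm)\preceq 1$ directly, with no conversion needed. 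A secondary point: your route to the $\ddeg$ condition (via $Q(\hat h)\neq 0$, specialness, and Proposition~\ref{small P(a)}) is vague; the paper simply uses $Z$-minimality to get $Q\notin Z(H,\hat h)$, which by definition yields $h,\fv$ with $\ndeg_{\prec\fv}Q_{+h}=0$, then inserts $\fw$ with $\fm\prec\fw\prec\fv$ and picks $\phi$ so that $\ddeg Q^\phi_{+h,\times\fw}=0$.
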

\begin{proof} 
Replacing $(P, \fn, \hat h)$, $y$ by $(P_{\times \fn}, 1, \hat h/\fn)$, $y/\fn$ we arrange $\fn=1$. 
Let $Q$ in~$H\{Y\}\setminus H$, $\order Q\le r$, and
$\deg_{Y^{(r)}} Q< \deg_{Y^{(r)}} P$. Then $Q\notin Z(H,\hat h)$, so we have
$h\in H$  and $\fv\in H^\times$ such that $h-\hat h \prec \fv$ and 
$\ndeg_{\prec \fv} Q_{+h}=0$.  Take any $\fm\in H^\times$ with $\hat h - h\preceq \fm\prec \fv$. 
Take $\fw\in H^\times$ with $\fm\prec \fw\prec\fv$. Then~$\ndeg Q_{+h,\times \fw}=0$, so we have active $\phi$ in $H$, $0<\phi\prec 1$, with $\ddeg Q_{+h, \times \fw}^\phi =0$, and hence $\ddeg_{\prec \fw} Q_{+h}^\phi=0$. Thus renaming $\fw$ as $\fv$ we have arranged $\ddeg_{\prec \fv} Q_{+h}^\phi=0$. 

Set $\derdelta:=\phi^{-1}\der$;
by Corollary~\ref{cor:iso hat h, d-alg} it is enough to show that 
$\derdelta^j\big(\frac{y-h}{\fm}\big)\preceq 1$
for~$j=0,\dots, r$. 
Now using~$(\phantom{-})^\circ$ as before, the hole~$(P^{\phi\circ},1,\hat h^\circ)$ in~$H^\circ$ is special, $Z$-minimal, deep, ultimate, and strongly repulsive-normal,
 by Lemmas~\ref{lem:Pphicirc, 1} and~\ref{lem:Pphicirc, 2}.  It remains to apply
 Corollary~\ref{cor:approx y} to this hole in~$H^\circ$  
with $h^\circ$, $\fm^\circ$, $y^\circ$ in place of~$h$,~$\fm$,~$y$.
\end{proof}

\begin{cor}  \label{cor:46}
Let $\phi$ be active in $H$, $0<\phi\preceq 1$, and suppose the slot $(P^\phi,\fn,\hat h)$ in $H^\phi$ is deep, ultimate, and strongly split-normal.
Then $P(y)=0$ and $y\prec\fn$ for some $y\in\Calinf$. If 
$(P^\phi,\fn,\hat h)$ is strongly repulsive-normal, then any such $y$ is hardian over $H$ with $y\notin H$.  
\end{cor}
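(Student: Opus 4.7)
The plan is to derive the existence of $y$ directly from Lemma~\ref{dentsolver}, and then to obtain the hardian conclusion by transferring the situation via compositional conjugation to a Hardy field $H^\circ$ in which the given $\phi$ becomes trivial, and applying Theorem~\ref{46} there.

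For the first assertion, strong split-normality of $(P^\phi,\fn,\hat h)$ lets me invoke Lemma~\ref{dentsolver} with the slot $(P,\fn,\hat h)$ and the active $\phi$; this produces $y\in\Calinf$ with $P(y)=0$, $y\prec\fn$, and $y\in\fn\,(\c^r)^{\preceq}$.

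For the second assertion, assume $(P^\phi,\fn,\hat h)$ is strongly repulsive-normal and let $y\in\Calinf$ satisfy $P(y)=0$, $y\prec\fn$. First I would use $Z$-minimality: by \cite[Lemma~3.2.14]{ADH4}, after replacing $\hat h$ by an equivalent element in a possibly different immediate $H$-field extension, I may arrange that $(P,\fn,\hat h)$ is a $Z$-minimal \emph{hole} in $H$; all slot properties of $(P^\phi,\fn,\hat h)$ at issue depend only on its equivalence class, so this replacement is harmless. Next I pick $\ell\in H$ with $\ell'=\phi$ and $\ell>\R$, and pass via $(\cdot)^\circ$ to the Liouville closed real closed Hardy field $H^\circ\supseteq\R$. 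The conditions $\I(K^\circ)\subseteq(K^\circ)^\dagger$ and $1$-linear surjectivity of $K^\circ$ (when $r\geq 3$) follow from their analogues for $K$ because under the derivation change $\der\mapsto\phi^{-1}\der$ both $\I$ and the group of logarithmic derivatives get rescaled by the same factor $\phi^{-1}$; these properties then transfer to $K^\circ$ via the differential field isomorphism $K^\phi\to K^\circ$. The transferred slot $(P^{\phi\circ},\fn^\circ,\hat h^\circ)$ is a special $Z$-minimal, deep, ultimate, strongly repulsive-normal hole in $H^\circ$ of order $r$, with $P^{\phi\circ}(y^\circ)=P(y)^\circ=0$ and $y^\circ\prec\fn^\circ$. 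Theorem~\ref{46} applied in $H^\circ$ therefore yields that $y^\circ$ is hardian over $H^\circ$ and gives an $H^\circ$-field isomorphism $H^\circ\langle y^\circ\rangle\to H^\circ\langle\hat h^\circ\rangle$ sending $y^\circ$ to $\hat h^\circ$.

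To transfer back, take a Hardy field $E^\circ\supseteq H^\circ$ with $y^\circ\in E^\circ$ and set $E:=\{g\circ\ell:g\in E^\circ\}\subseteq\Calinf$. Then $E$ contains $H$ and $y$, is closed under $\derdelta=\phi^{-1}\der$ by the identity $(f^\circ)'=(\derdelta f)^\circ$, and hence also under $\der$ since $\phi\in H\subseteq E$; thus $E$ is a Hardy field witnessing that $y$ is hardian over $H$. Precomposing the $H^\circ$-field isomorphism with $\ell$ likewise descends to an $H$-field isomorphism $H\langle y\rangle\to H\langle\hat h\rangle$ sending $y$ to $\hat h$ (the differential-field structures under $\der$ and $\derdelta$ coincide on both sides because $\phi$ is present). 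In particular $y\notin H$, for otherwise this isomorphism would force $\hat h\in H$, contradicting $\hat h\in\hat H\setminus H$. The only non-routine point in this plan is the bookkeeping around compositional conjugation---verifying that the standing hypotheses of the subsection and the slot properties transfer cleanly between $(P^\phi,\fn,\hat h)$ in $H^\phi$ and $(P^{\phi\circ},\fn^\circ,\hat h^\circ)$ in $H^\circ$---but this is entirely in the same spirit as the $(\cdot)^\circ$ manipulations already carried out in the proof of Theorem~\ref{46}.
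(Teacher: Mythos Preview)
Your proposal is correct and follows essentially the same approach as the paper's proof: apply Lemma~\ref{dentsolver} for existence, use \cite[Lemma~3.2.14]{ADH4} to upgrade the slot to a hole, pass to $H^\circ$ via compositional conjugation, invoke Theorem~\ref{46} there, and transfer back. You supply more detail on the transfer of the standing hypotheses to $K^\circ$ and on the mechanism for descending hardian-ness from $y^\circ$ to $y$, but the strategy is the paper's.
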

\begin{proof}
Lemma~\ref{dentsolver}  gives $y\in\Calinf$ with~$P(y)=0$, $y\prec\fn$.
Now suppose $(P^\phi,\fn,\hat h)$ is strongly repulsive-normal, and 
$y\in\Calinf$, $P(y)=0$, $y\prec\fn$.  Use \cite[Lemma 3.2.14]{ADH4} to arrange that~$(P,\fn,\hat h)$  is a hole in $H$. Then the hole~$(P^{\phi\circ},\fn^\circ,\hat h^\circ)$ in~$H^\circ$ is
special, $Z$-minimal, deep, ultimate, and
strongly repulsive-normal. Theorem~\ref{46} with~$H^\circ$,~$(P^{\phi\circ}, \fn^\circ, \hat h^\circ)$,~$y^\circ$
in place of~$H$,~$(P,\fn,\hat h)$,~$y$ now shows
 that $y^\circ$ is hardian over~$H^\circ$ with $y^\circ\notin H^\circ$.
Hence $y$ is hardian over $H$ and $y\notin H$. 
\end{proof}

\subsection*{Achieving $1$-linear newtonianity}  For the proof of our main theorem we need to show first that for any $\d$-maximal Hardy field $H$ the corresponding $K=H[\imag]$ is $1$-linearly newtonian, the latter being a key hypothesis in Lemma~\ref{53} below. In this subsection we take this vital step: Corollary~\ref{cor:dmax K 1-linnewt}. 

\begin{lemma}\label{prop:dmax 1-newt}
Every $\d$-maximal Hardy field is $1$-newtonian.
\end{lemma}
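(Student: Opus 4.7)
The plan is to argue by contradiction: suppose $H$ is $\d$-maximal but not $1$-newtonian, and construct a proper $\d$-algebraic Hardy field extension of~$H$, violating $\d$-maximality. From $\d$-maximality and Proposition~\ref{prop:Li(H(R))} we have that $H$ is Liouville closed with $C_H=\R$; Theorem~\ref{upo} gives that $H$ is $\upo$-free; and Proposition~\ref{prop:I(K)} yields $\I(K)\subseteq K^\dagger$ for $K:=H[\imag]$. Failure of $1$-newtonianity combined with $\upo$-freeness produces, via \cite[Lemma~3.2.1]{ADH4}, a hole $(P,\fm,\hat h)$ in~$H$ with $\order P=1$; among all such, pick one of minimal complexity.

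First I rule out the linear subcase $\deg P=1$. After dividing by the leading coefficient, $P(Y)=Y'+bY+c$ for some $b,c\in H$. Liouville closedness furnishes $y_0\in H$ with $y_0'+by_0+c=0$ and $v:=\exp(-\int b)\in H^\times$ with $v^\dagger=-b$. A direct computation gives $((\hat h-y_0)/v)'=0$, so $(\hat h-y_0)/v\in C_{\hat H}$; since $\hat H$ is an immediate $H$-field extension of $H$, we have $C_{\hat H}=C_H=\R\subseteq H$, forcing $\hat h\in H$, a contradiction.

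In the case $\deg P>1$ I follow the normalization strategy of \cite[Part~4]{ADH4}: through suitable refinements, a compositional conjugation by an active $\phi\in H^>$ with $\phi\preceq 1$, and Proposition~\ref{prop:achieve ultimate}, convert $(P,\fm,\hat h)$ into a deep, ultimate, strongly repulsive-normal hole in~$H^\phi$ of order~$1$. (Because the order is $1$ the linear part of the conjugated and refined differential polynomial has a single splitting factor $\der-\phi_1$ over $K^\phi$, and the required $\hat h$-repulsivity is extracted from its strong splitting by the sign analysis of Lemma~\ref{lem:repulsive} together with \cite[Section~4.5]{ADH4}.) With this normalization in place, Corollary~\ref{cor:46} in its real form---guaranteed by the \emph{Preserving reality} subsection of Section~\ref{sec:split-normal over Hardy fields} since the hole is in $H$---produces $y\in\Calinf$ with $P(y)=0$, $y\prec\fn$, and $y$ hardian over $H$ with $y\notin H$. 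The resulting Hardy field $H\<y\>\supsetneq H$ is $\d$-algebraic over $H$ by minimality of $P$, contradicting $\d$-maximality.

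The main obstacle is the \emph{strongly repulsive-normal} upgrade at order~$1$ without invoking Theorem~\ref{c4543}, whose hypothesis that $K$ be $1$-linearly newtonian is precisely what Corollary~\ref{cor:dmax K 1-linnewt} will derive \emph{from} the present lemma. One must therefore perform the repulsivity upgrade directly at order~$1$ by exploiting the single-factor nature of the splitting, which reduces the matter to arranging the sign of $\Re\phi_1$ compatibly with the cut $v(\hat h-H)$. A subsidiary point is ensuring the resulting $y$ is real rather than merely complex-valued; this is handled by the real variant of the fixed-point theorem (Corollary~\ref{cor:fix}) in Section~\ref{sec:split-normal over Hardy fields}.
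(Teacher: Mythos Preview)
Your overall contradiction strategy matches the paper's, but there is a genuine gap: you never establish that the hole is \emph{special}, and this is a standing hypothesis in the subsection containing Corollary~\ref{cor:46} (see the sentence ``In this subsection $H$ is Liouville closed, $(P,\fn,\hat h)$ is a special $Z$-minimal slot in $H$ of order $r\ge 1$\dots''). Without specialness, the approximation machinery behind Corollary~\ref{cor:46} (via Proposition~\ref{specialvariant} and Corollary~\ref{cor:approx y}) does not apply. The paper obtains specialness from Lemma~\ref{lem:special dents}, which in turn needs $H$ to be $1$-linearly newtonian; this is derived at the outset from Liouville closedness, $\upo$-freeness, and $\I(K)\subseteq K^\dagger$ via \cite[Corollary~1.7.29]{ADH4}. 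You have all three ingredients but never invoke this step. Your separate handling of the case $\deg P=1$ is correct but does not substitute for $1$-linear newtonianity: it only rules out linear holes, whereas specialness is needed for the remaining nonlinear hole of order~$1$.

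Your identification of the ``main obstacle'' is also off target. You worry about circularity through Theorem~\ref{c4543}, but the paper does not use that theorem here: after refining to a quasilinear hole (\cite[Lemma~3.2.26]{ADH4}) and obtaining specialness, it invokes \cite[Corollary~4.5.42]{ADH4} directly to arrange that $(P^\phi,\fn,\hat h)$ is deep, ultimate, and strongly repulsive-normal. No appeal to $1$-linear newtonianity of $K$ is needed for that normalization, so there is no circularity to avoid, and your ad~hoc sign analysis of the single splitting factor is unnecessary. Finally, Corollary~\ref{cor:46} already concerns slots in $H$ and already produces a real $y\in\Calinf$; there is no separate ``real form'' to secure.
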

\begin{proof} Let $H$ be a $\d$-maximal Hardy field. Then $H$ is $\upo$-free by Theorem~\ref{upo}, and~$H$ is Liouville closed with $\I(K)\subseteq K^\dagger$ for $K:=H[\imag]$
by 
\cite[Corollary~6.12]{ADH5}.  Hence
$H$ is $1$-linearly newtonian by \cite[Corollary~1.7.29]{ADH4}. 
Towards a contradiction, assume that~$H$ is not $1$-newtonian. Then
\cite[Lemma~3.2.1]{ADH4} yields a minimal  hole
$(P,\fn,\hat h)$ in $H$ of order~$r=1$. Using~\cite[Lem\-ma~3.2.26]{ADH4}, replace
$(P,\fn,\hat h)$ by a refinement  to  arrange that~$(P,\fn,\hat h)$ is quasilinear.  Then $(P,\fn,\hat h)$  is special,
by Lemma~\ref{lem:special dents}.
Using~\cite[Corollary~4.5.42]{ADH4}, further refine 
$(P,\fn,\hat h)$  to arrange that~$(P^\phi,\fn,\hat h)$ is eventually deep, ultimate, and strongly repulsive-normal.
Now Corollary~\ref{cor:46} gives a proper $\d$-algebraic Hardy field extension of $H$, contradicting $\d$-maximality of $H$.
\end{proof}

\noindent
{\it In the rest of this subsection $H$ has asymptotic integration.}\/  We have the $\d$-valued extension $K:=H[\imag]\subseteq \Calinf[\imag]$ of $H$ and as before we arrange that~$\hat K=\hat H[\imag]$ is a $\d$-valued extension of $\hat H$ as well as an immediate $\d$-valued extension of $K$.

\begin{lemma}\label{lem:deg 1 order 1 approx}
Suppose $H$ is Liouville closed and $\I(K)\subseteq K^\dagger$.
Let $(P,\fn,\hat f)$ be an ultimate minimal hole in $K$ of order $r\geq 1$ with $\deg P=1$, where $\hat f\in\hat K$, such that~$\dim_{\C}\ker_{\Univ} L_P=r$.
Assume also that $K$ is $\upo$-free if $r\ge 2$. Let $f\in\Calinf[\imag]$ be such that $P(f)=0$, $f\prec\fn$.
Then~$f\approx_K \hat f$.
\end{lemma}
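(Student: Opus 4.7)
The strategy is to invoke the complex version of Corollary~\ref{cor:simH phi0}: since $\hat f\in\hat K\setminus K$, it suffices to show $f-\phi_0\sim_K\hat f-\phi_0$ for every $\phi_0\in K$ with $\phi_0\sim\hat f$ in $\hat K$. First I would multiplicatively conjugate to reduce to $\fn=1$, replacing $(P,\fn,\hat f)$ and $f$ by $(P_{\times\fn},1,\hat f/\fn)$ and $f/\fn$; this preserves all relevant hypotheses (minimal hole in $K$, order $r$, degree $1$, ultimateness, and $\dim_\C\ker_\Univ$ of the new linear part equal to $r$), and $f\approx_K\hat f$ is equivalent to the corresponding statement for the transformed data.

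Fix $\phi_0\in K$ with $\phi_0\sim\hat f$. Using that $\hat K$ is an immediate valued extension of $K$, pick $\psi\in K^\times$ with $\hat f-\phi_0\sim\psi$ in $\hat K$; then $\psi\prec 1$. Since $\hat f-\phi_0\prec 1$, the triple $(P_{+\phi_0},1,\hat f-\phi_0)$ is an additive refinement of $(P,1,\hat f)$, hence again a minimal hole in $K$ of the same complexity; because $\deg P=1$, additive conjugation leaves the linear part unchanged, so $L_{P_{+\phi_0}}=L_P$ and in particular $\dim_\C\ker_\Univ L_{P_{+\phi_0}}=r$. Ultimateness is inherited by refinements, so $(P_{+\phi_0},1,\hat f-\phi_0)$ is also ultimate. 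From $\hat f-\phi_0\sim\psi\in K$, Corollary~\ref{improap} applied to this refined hole (with $a:=\psi$) yields $y_0\in\Calinf[\imag]$ with $P_{+\phi_0}(y_0)=0$ and $y_0\sim\psi$ in $\c[\imag]$. For $r=1$ I would invoke Remark~\ref{rem:improap} (valid since $H$ is Liouville closed, hence $\upl$-free, with $\der K=K$ and $\I(K)\subseteq K^\dagger$); for $r\ge 2$ Corollary~\ref{improap} applies directly, the underlying $\upo$-freeness of $H$ being equivalent, in the present setting, to the assumed $\upo$-freeness of $K$.

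Writing $P(Y)=L_P(Y)+P(0)$ (using $\deg P=1$), we have $P_{+\phi_0}(Y)=L_P(Y)+P(\phi_0)$. From $P_{+\phi_0}(f-\phi_0)=P(f)=0=P_{+\phi_0}(y_0)$ we obtain $L_P(f-\phi_0-y_0)=0$. As $f\prec 1$, $\phi_0\prec 1$, and $y_0\sim\psi\prec 1$, the germ $z:=f-\phi_0-y_0\in\Calinf[\imag]$ satisfies $L_P(z)=0$ and $z\prec 1$, so the complex version of Lemma~\ref{lem3.9 linear} (which uses ultimateness of $(P,1,\hat f)$ and $\dim_\C\ker_\Univ L_P=r$) gives $z\prec\fm$ for all $\fm\in K^\times$ with $v\fm\in v(\hat f-K)$; in particular $z\prec\psi$. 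Thus $f-\phi_0=y_0+z\sim\psi$ in $\c[\imag]$ (since $y_0\sim\psi$ and $z\prec\psi$), while $\hat f-\phi_0\sim\psi$ in $\hat K$, which gives $f-\phi_0\sim_K\hat f-\phi_0$ with $\psi$ as witness, as required.

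The key idea is to produce, via Corollary~\ref{improap}, a zero $y_0\in\Calinf[\imag]$ of the shifted polynomial $P_{+\phi_0}$ already asymptotically similar over $K$ to $\hat f-\phi_0$, and then to use linearity ($\deg P=1$) to recast the residual discrepancy $f-\phi_0-y_0$ as a solution of the homogeneous equation $L_P(Y)=0$, to which Lemma~\ref{lem3.9 linear} (drawing its force from the ultimate condition) applies to force the discrepancy to be negligible relative to $\psi$. The hard part will be the bookkeeping for preserving minimality, ultimateness, and the linear part $L_P$ across the additive refinement by $\phi_0$ and the multiplicative reduction to $\fn=1$; all of this is made possible by $\deg P=1$, which renders $L_P$ invariant under additive conjugation.
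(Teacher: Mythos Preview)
Your argument is correct and essentially the same as the paper's. Both reduce to $\fn=1$, produce a second zero of the (refined) equation via Proposition~\ref{prop:deg 1 analytic}/Remark~\ref{rem:improap}, and then use the complex version of Lemma~\ref{lem3.9 linear} to show that the difference between $f$ and this second zero is negligible, finishing with the complex analogue of Corollary~\ref{cor:simH phi0}. The only organizational difference is that the paper first proves the auxiliary claim $f\sim_K\hat f$ (using the refinement $(P_{+\theta},\theta,\hat f-\theta)$ and Proposition~\ref{prop:deg 1 analytic} directly) and then reapplies this claim to the refined hole $(P_{+\theta},1,\hat f-\theta)$, whereas you collapse these two steps into one by invoking Corollary~\ref{improap} on $(P_{+\phi_0},1,\hat f-\phi_0)$, which already packages the deeper refinement needed to obtain a solution $y_0\sim\psi$ rather than merely $y_0\prec 1$.
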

\begin{proof}
Replacing $(P,\fn,\hat f)$, $f$ by $(P_{\times\fn},1,\hat f/\fn)$, $f/\fn$ we arrange $\fn=1$.
Let $\theta\in K^\times$ be such that $\theta\sim\hat f$; we claim that $f\sim\theta$ in $\c[\imag]$ (and so $f\sim_K \hat f$).
Applying Proposition~\ref{prop:deg 1 analytic} and Remark~\ref{rem:improap}  to the linear minimal hole~$(P_{+\theta},\theta,\hat f-\theta)$ in~$K$ gives $g\in\Gi[\imag]$
such that $P_{+\theta}(g)=0$ and $g\prec\theta$. Then~${P(\theta+g)=0}$ and~${\theta+g \prec 1}$, thus 
$L_P(y)=0$ and $y\prec 1$ for~$y:=f-(\theta+g)\in\Calinf[\imag]$.
Hence~$y\prec\theta$ by the  version of Lemma~\ref{lem3.9 linear} for slots in~$K$; see the remark following Corollary~\ref{cor:8.8 refined}. There\-fore~$f-\theta=y+g\prec\theta$  and so~$f\sim\theta$, as claimed. 

The refinement $(P_{+\theta},1,\hat f-\theta)$ of $(P,1,\hat f)$ is ultimate thanks to the remarks before Proposition~\ref{prop:achieve ultimate}, and $L_{P_{+\theta}}=L_P$, so we can apply the claim to $(P_{+\theta},1,\hat f-\theta)$ instead of~$(P,1,\hat f)$
and $f-\theta$ instead of $f$ to give $f-\theta \sim_K \hat f-\theta$.
Since this holds for all~$\theta\in K$ with $\theta\sim\hat f$, the sentence preceding Lemma~\ref{lemsimimag} 
then yields~$f\approx_K \hat f$.
\end{proof}

\begin{cor}\label{cor:embed K<hatf>}
Let $(P,\fn,\hat f)$ be a hole of order $1$ in $K$ with $\deg P =1$. \textup{(}We do not assume here that $\hat f\in \hat{K}$.\textup{)} Then there is an embedding $\iota\colon K\langle\hat f\rangle \to \Calinf[\imag]$ of differential $K$-algebras such that $\iota(g)\sim_K g$ for all~$g\in  K\langle\hat f\rangle^\times$. 
\end{cor}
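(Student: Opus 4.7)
The plan is to exhibit $f \in \Calinf[\imag]$ with $P(f) = 0$ and $f \approx_K \hat f$; then the inverse of the $K$-isomorphism $K\langle f\rangle \to K\langle \hat f\rangle$ supplied by Corollary~\ref{cor:dsfek, order 1} is the desired embedding. Its hypotheses are met because $K$ is algebraically closed and $\hat f \notin K$, which forces the irreducible $P$ (of order and degree $1$) to be a minimal annihilator of $\hat f$ over $K$.

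First I use Zorn to extend $H$ to a $\d$-maximal Hardy field $H^*$. By Theorem~\ref{upo}, Proposition~\ref{prop:Li(H(R))}, and Proposition~\ref{prop:I(K)}, $H^*$ is $\upo$-free (hence $\upl$-free) and Liouville closed---so $\der K^* = K^*$---and $\I(K^*) \subseteq (K^*)^\dagger$, where $K^* := H^*[\imag] \subseteq \Calinf[\imag]$. Suppose first that $\hat f$ is realized in $K^*$: some $f^* \in K^*$ with $P(f^*)=0$ induces a valued $K$-isomorphism $K\langle\hat f\rangle \to K\langle f^*\rangle$ sending $\hat f$ to $f^*$. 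Then I take $f := f^*$; since the valuations on $K\langle\hat f\rangle$ and $K\langle f\rangle$ agree under this isomorphism and $K\langle\hat f\rangle/K$ is immediate, $f \approx_K \hat f$ is automatic.

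Otherwise $(P,\fn,\hat f)$ is a bona fide hole in $K^*$ of order and degree $1$, and automatically minimal because any hole in the algebraically closed field $K^*$ has complexity at least $(1,2,1)$. Proposition~\ref{prop:achieve ultimate} yields an ultimate refinement $(P_{+a}, \fn', \hat f - a)$ of the same order and degree in $K^*$; by replacing $\fn'$ by an element of $H^{*\times}$ of the same $v$-class I arrange $\fn' \in H^{*\times}$. Then Remark~\ref{rem:improap}, applicable because $H^*$ is $\upl$-free, $\der K^* = K^*$, and $\I(K^*) \subseteq (K^*)^\dagger$, produces $g \in \Calinf[\imag]$ with $P_{+a}(g)=0$ and $g \prec \fn'$. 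I set $f := g + a$, so $P(f)=0$ in $\Calinf[\imag]$. Lemma~\ref{lem:deg 1 order 1 approx}, whose hypotheses on the ultimate minimal hole $(P_{+a}, \fn', \hat f - a)$ in $K^*$ are met (the condition $\dim_\C \ker_{\Univ} L_{P_{+a}} = 1$ is automatic since $L_{P_{+a}} = L_P$ has order $1$), then yields $g \approx_{K^*} \hat f - a$.

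The principal obstacle is descending from $\approx_{K^*}$ to $\approx_K$. Given $\phi \in K$, applying $g \approx_{K^*} \hat f - a$ with $\phi - a \in K^*$ produces $\psi^* \in K^*$ with $f - \phi = g - (\phi - a) \sim \psi^*$ in $\c[\imag]$ and $\psi^* \sim (\hat f - a) - (\phi - a) = \hat f - \phi$ in a common extension. The approximating $\psi^*$ lies in $K^*$, not in $K$, and the extension $H \subseteq H^*$ is typically very far from immediate. The key rescue is that the original hole $(P,\fn,\hat f)$ lives in $K$, so $K\langle\hat f\rangle/K$ is immediate; hence some $\psi \in K$ satisfies $\psi \sim \hat f - \phi$ in $\hat K$, and transitivity gives $\psi \sim \psi^*$ and so $f - \phi \sim \psi$ in $\c[\imag]$. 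This establishes $f - \phi \sim_K \hat f - \phi$ for every $\phi \in K$, whence $f \approx_K \hat f$, and Corollary~\ref{cor:dsfek, order 1} concludes.
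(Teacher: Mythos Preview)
Your overall strategy matches the paper's, but there is a genuine gap in your Case~2. You assert that if $\hat f$ is not ``realized'' in $K^*$ then $(P,\fn,\hat f)$ is a hole in~$K^*$. By definition, however, a hole in $K^*$ requires $\hat f$ to lie in an immediate asymptotic extension of $K^*$, whereas all you know is that $\hat f$ lies in an immediate asymptotic extension of the (possibly much smaller) original~$K$---and not even in the fixed~$\hat K$, as the corollary's parenthetical emphasizes. So your dichotomy is not exhaustive as stated. Without placing $\hat f$ inside an extension of $K^*$, Proposition~\ref{prop:achieve ultimate} and Lemma~\ref{lem:deg 1 order 1 approx} do not apply (the latter explicitly requires $\hat f\in\hat K^*=\hat H^*[\imag]$ for an immediate $H$-field extension $\hat H^*$ of~$H^*$), and your ``transitivity'' step has no common valued field in which to compare $\psi^*\in K^*$ with~$\hat f-\phi$.

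The paper closes precisely this gap: with $H_1$ a maximal Hardy field extension of $H$ (your~$H^*$), it passes to the newtonization $\hat H_1$ of $H_1$, so that $\hat K_1:=\hat H_1[\imag]$ is newtonian, and then invokes Lemma~\ref{find zero of P} to obtain an embedding $K\langle\hat f\rangle\to\hat K_1$ of valued differential fields over~$K$. Letting $\hat f_1\in\hat K_1$ be the image of $\hat f$ now legitimately puts $\hat f_1$ in an immediate extension of $K_1$ of the required form; either $\hat f_1\in K_1$ (your Case~1) or $(P,\fn,\hat f_1)$ is a genuine hole in $K_1$. After this reduction everything lives in the single tower $K\subseteq K_1\subseteq\hat K_1$, and the remainder of your argument---including the descent from $\sim_{K^*}$ to $\sim_K$ via immediateness of $K\langle\hat f\rangle/K$---goes through as you wrote it.
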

\begin{proof}
 Note that $(P,\fn,\hat f)$ is minimal. We first show how to arrange that $H$ is Liouville closed and $\upo$-free with $\I(K)\subseteq K^\dagger$ and $\hat f\in\hat K$. 
Let $H_1$ be a maximal Hardy field extension of~$H$.
Then $H_1$ is Liouville closed and $\upo$-free, with $\I(K_1)\subseteq K_1^\dagger$ for~$K_1:=H_1[\imag]\subseteq \Calinf[\imag]$. 
Let~$\hat H_1$ be the newtonization of $H_1$; then $\hat K_1:=\hat H_1[\imag]$ is   newtonian [ADH, 14.5.7]. 
Lemma~\ref{find zero of P} gives an embedding $K\langle \hat f\rangle \to \hat K_1$ of valued differential fields over $K$; let $\hat f_1$ be the image  of $\hat f$ under this embedding. If~${\hat f_1\in K_1\subseteq\Calinf[\imag]}$, then we are done, so assume $\hat f_1\notin K_1$.   Then $(P,\fn,\hat f_1)$ is a  hole in~$K_1$, and we replace $H$, $K$, $(P,\fn,\hat f)$ by
 $H_1$, $K_1$, $(P,\fn,\hat f_1)$, and $\hat K$ by $\hat K_1$,  to arrange that~$H$ is Liouville closed and $\upo$-free with $\I(K)\subseteq K^\dagger$
 and $\hat f\in \hat K$. 
 
Replacing $(P,\fn,\hat f)$ by a refinement we also arrange that~$(P,\fn,\hat f)$ is ultimate and~$\fn\in H^\times$, by Proposition~\ref{prop:achieve ultimate}. Then
Proposition~\ref{prop:deg 1 analytic} yield an $f\in\Calinf[\imag]$ with~$P(f)=0$, $f\prec \fn$.
Now Lemma~\ref{lem:deg 1 order 1 approx} gives~$f\approx_K\hat f$, and it remains to appeal to Corollary~\ref{cor:dsfek, order 1}.
\end{proof}

\begin{prop}\label{prop:Re or Im}
Suppose $H$ is $\upo$-free and $1$-newtonian.
Let   $(P,\fn,\hat f)$ be a hole in $K$ of order $1$ with $\deg P=1$, $\hat f\in \hat K$. 
Let $f\in\Calinf[\imag]$, $P(f)=0$,
and~$f\approx_K\hat f$.
Then~$\Re f$ or $\Im f$ generates a proper $\d$-algebraic Hardy field extension of~$H$.
\end{prop}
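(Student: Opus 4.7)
The plan is to decompose $f=g+h\imag$ with $g,h\in\Calinf$ and $\hat f=\hat g+\hat h\imag$ with $\hat g,\hat h\in\hat H$, and then identify which of $g$, $h$ generates the desired Hardy field extension. By \cite[Lemma~4.1.3]{ADH4}, either $v(\hat g-H)\subseteq v(\hat h-H)$ or the reverse inclusion holds; possibly swapping the names of the pairs $(g,\hat g)$ and $(h,\hat h)$, I would arrange the former, so that Lemma~\ref{lem:real part approx} gives $g\approx_H\hat g$. Since $\hat f\notin K$, the inclusion $v(\hat g-H)\subseteq v(\hat h-H)$ forces $\hat g\notin H$, and it then suffices to show that $g$ is hardian over $H$ and that the resulting $\d$-algebraic extension $H\langle g\rangle$ of $H$ is proper.

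Next, unwind $P$: write $P=aY'+bY+c$ with $a,b,c\in K$, $a\neq 0$, and set $\alpha=-b/a=\alpha_1+\alpha_2\imag$, $\beta=-c/a=\beta_1+\beta_2\imag$ with $\alpha_i,\beta_i\in H$. Then $f'=\alpha f+\beta$ splits into the coupled first-order linear system $g'=\alpha_1 g-\alpha_2 h+\beta_1$, $h'=\alpha_2 g+\alpha_1 h+\beta_2$, and the same system holds for $(\hat g,\hat h)$ in $\hat H$. If $\alpha_2=0$, then $\hat g'=\alpha_1\hat g+\beta_1$ is a linear first-order annihilator of $\hat g$ over $H$; after translating $\hat g$ by an element of $H$ (using immediacy of $\hat H/H$) to arrange $\hat g\prec 1$, this produces a hole of order $1$ in $H$, contradicting that $H$ is $\upo$-free and $1$-newtonian via \cite[Lemma~3.2.1]{ADH4}. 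Hence $\alpha_2\neq 0$, and eliminating $\hat h$ yields a linear order-$2$ ODE $\tilde Q(\hat g)=0$ over $H$; the same computation applied to $(g,h)$ gives $\tilde Q(g)=0$. The $1$-newtonianity argument also rules out any order-$1$ minimal annihilator of $\hat g$, so the minimal annihilator $Q$ of $\hat g$ over $H$ has order exactly $2$; by complexity minimization it must be linear of order $2$ (any difference of two $H$-linearly independent such annihilators would have strictly lower order), and hence an $H$-scalar multiple of $\tilde Q$, so $Q(g)=0$.

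I would then apply Corollary~\ref{cor:iso hat h, d-alg} with $Q$, $\hat g$, $g$ in place of $P$, $\hat h$, $y$. For any $Q_1\in H\{Y\}\setminus H$ with $\order Q_1\leq 2$ and $\deg_{Y''}Q_1<\deg_{Y''}Q=1$ (so $\order Q_1\leq 1$), minimality of $Q$ gives $Q_1\notin Z(H,\hat g)$, yielding $h_0\in H$, $\fv\in H^\times$ with $\hat g-h_0\prec\fv$ and $\ndeg_{\prec\fv}Q_{1,+h_0}=0$, an active $\phi>0$ in $H$ with $\ddeg_{\prec\fv}Q_{1,+h_0}^\phi=0$, and a choice of $\fm\in H^\times$ with $\hat g-h_0\preceq\fm\prec\fv$. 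The condition $(g-h_0)/\fm\preceq 1$ is immediate from $g\approx_H\hat g$. For the higher-derivative estimates $\derdelta^j((g-h_0)/\fm)\preceq 1$ ($j=1,2$, with $\derdelta=\phi^{-1}\der$), I would select $\tilde h_0\in H$ with $\hat h-\tilde h_0\preceq\hat g-h_0$ (possible because $v(\hat g-H)\subseteq v(\hat h-H)$) and apply Lemma~\ref{lem:simK Re Im} to $f-(h_0+\tilde h_0\imag)\sim_K\hat f-(h_0+\tilde h_0\imag)$ to obtain $h-\tilde h_0\preceq g-h_0$ in $\c$ along with $g-h_0\sim_H\hat g-h_0$. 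Expanding $g'-h_0'$ using the coupled system and $g''-h_0''$ using $Q(g)=0$ then represents both differences as $H$-linear combinations of $g-h_0$, $h-\tilde h_0$, and elements of $H$, whose sizes are pinned down by the analogous bounds for $\hat g-h_0,\hat h-\tilde h_0$ in $\hat H$ (which hold automatically by small derivation) and the scaling provided by $\phi$ via $\ddeg_{\prec\fv}Q_{1,+h_0}^\phi=0$. Once verified, Corollary~\ref{cor:iso hat h, d-alg} yields an $H$-field isomorphism $H\langle g\rangle\to H\langle\hat g\rangle$ over $H$ sending $g$ to $\hat g$; since $\hat g\notin H$ the extension is proper, and $\d$-algebraicity is built in.

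The main obstacle will be the second-derivative estimate $\derdelta^2((g-h_0)/\fm)\preceq 1$: the $\phi^2\fm$ scale must absorb every $H$-term in the expansion of $g''-h_0''$, and although the corresponding bound in $\hat H$ is automatic from small derivation together with $\hat g-h_0\preceq\fm$, transferring it faithfully to $\c$ requires a delicate combination of $g\approx_H\hat g$, the $\c$-estimate $h-\tilde h_0\preceq g-h_0$ from Lemma~\ref{lem:simK Re Im}, and the scaling of $\phi$ via the $\ddeg$-condition on $Q_{1,+h_0}^\phi$.
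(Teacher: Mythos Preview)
Your overall strategy matches the paper's: reduce to the case $v(\hat g-H)\subseteq v(\hat h-H)$, show $g\approx_H\hat g$ via Lemma~\ref{lem:real part approx}, produce a linear order-$2$ annihilator $Q$ of both $\hat g$ and $g$ over $H$, rule out order~$1$ via $1$-newtonianity, and then invoke a corollary of the form~\ref{cor:iso hat h, d-alg, weak}/\ref{cor:iso hat h, d-alg}. Your explicit construction of $Q$ by eliminating $h$ from the coupled system is correct and amounts to what the paper gets from \cite[Example~1.1.7, Remark~1.1.9]{ADH4}.

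The genuine gap is in the derivative estimate. Your plan is to expand $(g-h_0)'$ via the system $g'=\alpha_1 g-\alpha_2 h+\beta_1$, giving $\derdelta\big((g-h_0)/\fm\big)$ as an $H$-linear combination of $(g-h_0)/\fm$, $(h-\tilde h_0)/\fm$, and an element of $H$, and then bound term by term. But you only know $(h-\tilde h_0)/\fm\preceq 1$ in $\c$, and there is no reason the coefficient $\phi^{-1}\alpha_2$ should be $\preceq 1$: the active $\phi$ is chosen from the $\ddeg$ condition on the test polynomial $Q_1$, which carries no information about $\alpha_2$. In $\hat H^\phi$ the full expression is $\prec 1$ by small derivation, but that may involve cancellation between terms, and such cancellation does not transfer to $\c$ because you lack $h\approx_H\hat h$. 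So the appeal to ``the scaling provided by $\phi$ via $\ddeg_{\prec\fv}Q_{1,+h_0}^\phi=0$'' does not do the work you need.

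The paper handles this differently, and the move is worth noting. First, it uses Corollary~\ref{cor:iso hat h, d-alg, weak} rather than Corollary~\ref{cor:iso hat h, d-alg}, so only $j=0,1$ are required; the conclusion of the proposition only asks for a proper $\d$-algebraic Hardy field extension, not the isomorphism, so the weak version suffices and your ``main obstacle'' $j=2$ never arises. Second, for $j=1$ the paper works in the compositionally conjugated setting and invokes Corollary~\ref{cor:dsfek, order 1}: from $f^\circ\approx_{K^\circ}\hat f^\circ$ one gets a differential $K^\circ$-algebra isomorphism $K^\circ\langle f^\circ\rangle\to K^\circ\langle\hat f^\circ\rangle$ with the $\sim_{K^\circ}$ property on all nonzero elements, hence $(f^\circ/\fm^\circ)'\approx_{K^\circ}(\hat f^\circ/\fm^\circ)'$. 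Then \cite[Lemma~4.1.4]{ADH4} together with Lemma~\ref{lem:real part approx} extracts real parts to give $(g^\circ/\fm^\circ)'\approx_{H^\circ}(\hat g^\circ/\fm^\circ)'$, whence $\big((g-h_0)^\circ/\fm^\circ\big)'\sim_{H^\circ}\big((\hat g-h_0)^\circ/\fm^\circ\big)'\prec 1$. This structural route avoids any term-by-term bounding and is the missing ingredient in your argument.
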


\begin{proof}
Let $\hat g:=\Re\hat f$ and $\hat h:=\Im\hat f$.
By \cite[Lemma~4.1.3]{ADH4} we have~${v(\hat g-H)} \subseteq {v(\hat h-H)}$ or $v(\hat h-H)\subseteq v(\hat g-H)$.
Below we assume $v(\hat g-H)\subseteq v(\hat h-H)$ (so~$\hat g\in \hat H\setminus H$) and show that then $g:=\Re f$ generates a proper $\d$-algebraic Hardy field extension of~$H$. (If $v(\hat h-H)\subseteq v(\hat g-H)$ one shows likewise that $\Im f$ generates a proper $\d$-algebraic Hardy field extension of $H$.)
The hole  $(P,\fn,\hat f)$ in~$K$ is minimal, and by arranging~$\fn\in H^\times$ we see that $\hat g$ is $\d$-algebraic over~$H$, by a remark preceding \cite[Lemma~4.3.7]{ADH4}. 
Every element of $Z(H,\hat g)$ has order~$\geq 2$, by \cite[Corollary~3.2.16]{ADH4} and $1$-newtonianity of~$H$. We arrange that the linear part~$A$ of~$P$ is monic, so~${A=\der-a}$ with $a\in K$,  $A(\hat f)=-P(0)$ and $A(f)=-P(0)$.
Then \cite[Example~1.1.7 and Remark~1.1.9]{ADH4}  applied to
$F=\Calinf$ yields $Q\in H\{Y\}$ with~$1\le \order Q\le 2$ and~$\deg Q=1$ such that~$Q(\hat g)=0$ and $Q(g)=0$. 
Hence~$\order Q=2$ and
$Q$ is a minimal annihilator of~$\hat g$ over $H$.

Towards applying Corollary~\ref{cor:iso hat h, d-alg, weak} to $Q$,~$\hat g$,~$g$ in the role of $P$,~$\hat h$,~$y$ there, let $R$ in~$H\{Y\}\setminus H$ have order $< 2$. Then $R\notin Z(H,\hat g)$, so we have $h\in H$ and $\fv\in H^\times$
such that~$\hat g-h\prec\fv$ and $\ndeg_{\prec\fv} R_{+h}=0$. Take any~$\fm\in H^\times$ with~$\hat g-h\preceq\fm\prec\fv$.
By Lemma~\ref{lem:real part approx} we have $g\approx_H\hat g$ and thus~$g-h\preceq\fm$.
After changing $\fv$ as in the proof of Theorem~\ref{46} we obtain an active $\phi$ in $H$, $0<\phi\preceq 1$, such that~$\ddeg_{\prec\fv} R^\phi_{+h}=0$. Set $\derdelta:=\phi^{-1}\der$;
 by Corollary~\ref{cor:iso hat h, d-alg, weak} it is now enough to show that~${\derdelta\big( (g-h)/\fm \big) \preceq 1}$. 
 
 Towards this and using~$(\phantom{-})^\circ$ as before, we have $f^\circ \approx_{K^\circ}\hat f^\circ$,
 and $g^\circ\approx_{H^\circ} \hat g^\circ$
 by the facts about composition in Section~\ref{sec:asymptotic similarity}.
 Moreover, $(g - h)^\circ\preceq \fm^\circ$, and
 $H^\circ$ is $\upo$-free and $1$-newtonian, hence closed under integration by [ADH, 14.2.2].
We now apply Corollary~\ref{cor:dsfek, order 1} with $H^\circ$,~$K^\circ$,~$P^{\phi\circ}$,~$f^\circ$,~$\hat f^\circ$ in the role of
$H$,~$K$,~$P$,~$f$,~$\hat f$ to give~$\big(f^\circ/\fm^\circ\big)'   \approx_{K^\circ}   \big(\hat f^\circ/\fm^\circ\big)'$,   
hence
$(g^\circ/\fm^\circ)' \approx_{H^\circ} (\hat g^\circ/\fm^\circ)'$ by \cite[Lemma~4.1.4]{ADH4} and Lemma~\ref{lem:real part approx}. Therefore,
$$\big((g-h)^\circ/\fm^\circ\big)'\ =\ (g^\circ/\fm^\circ)'-(h^\circ/\fm^\circ)' \ \sim_H\ (\hat g^\circ/\fm^\circ)'-(h^\circ/\fm^\circ)' \ =\  \big((\hat g-h)^\circ/\fm^\circ\big)'.$$
Now $(\hat g-h)^\circ/\fm^\circ \preceq 1$, so $ \big((\hat g-h)^\circ/\fm^\circ\big)' \prec 1$, hence $\big((g-h)^\circ/\fm^\circ\big)' \prec 1$ by the last display, and thus ${\derdelta\big( (g-h)/\fm \big) \prec 1}$, which is more than enough.
\end{proof}

\noindent
If $K$ has a hole of order and degree $1$, then $K$ has a proper $\d$-algebraic
differential field extension inside  $\Calinf[\imag]$, by Corollary~\ref{cor:embed K<hatf>}. Here is a Hardy version:

\begin{lemma}\label{linhole1}
Suppose $K$ has a hole of order and degree $1$.
Then~$H$ has a proper $\d$-algebraic Hardy field extension.
\end{lemma}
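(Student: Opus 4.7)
The plan is to argue by contraposition: assume $H$ has no proper $\d$-algebraic Hardy field extension, that is, $H$ is $\d$-maximal, and derive a contradiction. Under this hypothesis, combined with the standing assumption that $H\supseteq\R$ is a real closed Hardy field with asymptotic integration, the full structural machinery of the subsection becomes available. Indeed, Proposition~\ref{prop:Li(H(R))} guarantees that $H$ is Liouville closed with constant field $\R$, Theorem~\ref{upo} gives that $H$ is $\upo$-free, Proposition~\ref{prop:I(K)} yields $\I(K)\subseteq K^\dagger$, and the recently established Lemma~\ref{prop:dmax 1-newt} supplies $1$-newtonianity of $H$. These are exactly the structural features needed downstream.

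Next, I will invoke the assumed hole $(P,\fn,\hat f)$ in $K$ of order and degree $1$, with $\hat f$ in some immediate asymptotic extension of $K$. Since the immediate $H$-field extension $\hat H$ is at our disposal, I choose it so that $\hat f\in\hat K=\hat H[\imag]$. Then Corollary~\ref{cor:embed K<hatf>} furnishes an embedding $\iota\colon K\langle\hat f\rangle\to\Calinf[\imag]$ of differential $K$-algebras with $\iota(g)\sim_K g$ for every $g\in K\langle\hat f\rangle^\times$; setting $f:=\iota(\hat f)\in\Calinf[\imag]$, we have $P(f)=\iota(P(\hat f))=0$, and for each $\phi\in K$ the relation $f-\phi=\iota(\hat f-\phi)\sim_K \hat f-\phi$ yields $f\approx_K\hat f$.

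All hypotheses of Proposition~\ref{prop:Re or Im} are now in place: $H$ is $\upo$-free and $1$-newtonian, $(P,\fn,\hat f)$ is a hole in $K$ of order $1$ with $\deg P=1$ and $\hat f\in\hat K$, and $f\in\Calinf[\imag]$ satisfies $P(f)=0$ and $f\approx_K\hat f$. The proposition therefore produces $\Re f$ or $\Im f$ as a germ generating a proper $\d$-algebraic Hardy field extension of $H$, contradicting $\d$-maximality and completing the proof.

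The main point here is conceptual rather than computational: the argument is a clean assembly of Corollary~\ref{cor:embed K<hatf>} (which converts the formal hole $\hat f$ into an analytic realization $f\in\Calinf[\imag]$ satisfying $f\approx_K\hat f$) and Proposition~\ref{prop:Re or Im} (which descends from $f$ to a real hardian germ generating a proper $\d$-algebraic Hardy field extension of $H$). The crux is that the $1$-newtonianity of $H$, obtained as a nontrivial consequence of $\d$-maximality through Lemma~\ref{prop:dmax 1-newt}, is precisely what is needed to activate Proposition~\ref{prop:Re or Im}; without it the whole strategy breaks down.
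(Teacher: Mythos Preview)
Your proof is correct and essentially the same as the paper's: both reduce to the $\d$-maximal case and then feed Proposition~\ref{prop:Re or Im} an $f\in\Calinf[\imag]$ with $P(f)=0$ and $f\approx_K\hat f$. The only difference is that you invoke Corollary~\ref{cor:embed K<hatf>} directly to produce such an $f$, whereas the paper unpacks that corollary into its constituent steps (the remark preceding Lemma~\ref{lem:achieve strong splitting} to arrange $\hat f\in\hat K$, Proposition~\ref{prop:achieve ultimate} to refine to ultimate, Proposition~\ref{prop:deg 1 analytic} to construct $f$, and Lemma~\ref{lem:deg 1 order 1 approx} to obtain $f\approx_K\hat f$).
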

\begin{proof} If $H$ is not $\d$-maximal, then $H$ has indeed a proper $\d$-algebraic Hardy field extension, and if $H$ is $\d$-maximal, then 
$H$ is  Liouville closed,  $\upo$-free, $1$-newtonian, and $\I(K)\subseteq K^\dagger$, 
by Proposition~\ref{prop:Li(H(R))}, \cite[Corollary~6.12]{ADH5},  Theorem~\ref{upo}, and Lemma~\ref{prop:dmax 1-newt}.  So assume below that $H$ is  Liouville closed,  $\upo$-free, $1$-newtonian, 
and~${\I(K)\subseteq K^\dagger}$, and $(P,\fn,\hat f)$ is a hole of order and degree $1$  in $K$. 
Using a remark preceding Lemma~\ref{lem:achieve strong splitting} we arrange that~$\hat f\in \hat K:= \hat H[\imag]$, 
$\hat H$  an immediate $\upo$-free newtonian $H$-field extension of $H$. 
Then $\hat K$ is also newtonian by [ADH, 14.5.7]. 
By Proposition~\ref{prop:achieve ultimate} we can replace $(P,\fn,\hat f)$ by a refinement to arrange that $(P,\fn,\hat f)$ is ultimate and~$\fn\in H^\times$. 
Proposition~\ref{prop:deg 1 analytic} now yields $f\in\Gi[\imag]$ with~$P(f)=0$ and~$f\prec \fn$.
Then~$f\approx_K\hat f$ by Lemma~\ref{lem:deg 1 order 1 approx}, and so 
$\Re f$ or $\Im f$ generates a proper $\d$-algebraic Hardy field extension of $H$, by Proposition~\ref{prop:Re or Im}.
\end{proof}

\begin{cor}\label{cor:dmax K 1-linnewt}
If $H$ is $\d$-maximal, then $K$ is $1$-linearly newtonian.
\end{cor}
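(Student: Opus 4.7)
Suppose $H$ is $\d$-maximal. By Proposition~\ref{prop:Li(H(R))}, Theorem~\ref{upo}, and Proposition~\ref{prop:I(K)}, $H$ is Liouville closed, $\upo$-free, and satisfies $\I(K)\subseteq K^\dagger$; by [ADH, 11.7.23] the algebraic closure $K=H[\imag]$ is $\upo$-free as well. The plan is to argue by contradiction: assuming $K$ is not $1$-linearly newtonian, extract a hole in $K$ of order and degree $1$, and then invoke Lemma~\ref{linhole1} to produce a proper $\d$-algebraic Hardy field extension of $H$, contradicting $\d$-maximality.

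Concretely, suppose towards a contradiction that $K$ is not $1$-linearly newtonian. By the linear refinement of \cite[Lemma~3.2.1]{ADH4} (applicable since $K$ is $\upo$-free), failure of $1$-linear newtonianity yields a hole $(P,\fn,\hat f)$ in $K$ with $P$ linear, i.e.\ $\deg P=1$, and $\order P\le 1$. If $\order P = 0$, then $P=aY+b$ with $a\in K^\times$, $b\in K$, which forces $\hat f=-b/a\in K$, contrary to the definition of a hole; hence $\order P = 1$. Thus $(P,\fn,\hat f)$ is a hole in $K$ of order and degree $1$.

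Lemma~\ref{linhole1} now applies and produces a proper $\d$-algebraic Hardy field extension of $H$, which contradicts the $\d$-maximality of $H$. Therefore $K$ must be $1$-linearly newtonian.

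The main obstacle is the extraction of the linear hole from the failure of $1$-linear newtonianity; this is exactly the linear analogue of the equivalence recalled in the proof of Lemma~\ref{prop:dmax 1-newt}, and relies crucially on the $\upo$-freeness of $K$ established at the start. Once this hole is in hand, the argument reduces to a direct citation of Lemma~\ref{linhole1}, so no further analytic or normalization work is needed here---all the heavy lifting has already been done in the preceding subsections.
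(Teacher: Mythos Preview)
Your proof is correct and follows essentially the same approach as the paper: establish that $K$ is $\upo$-free, then argue by contradiction that failure of $1$-linear newtonianity produces a hole in $K$ of order and degree~$1$, and invoke Lemma~\ref{linhole1}. Two minor points: the paper cites \cite[Lemma~3.2.5]{ADH4} directly for the extraction of the linear hole (which already gives order~$=1$, making your separate argument ruling out order~$0$ unnecessary), and the extra properties of $H$ you record (Liouville closed, $\I(K)\subseteq K^\dagger$) are not needed here---they are established and used inside the proof of Lemma~\ref{linhole1} itself.
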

\begin{proof} Assume $H$ is $\d$-maximal. Then $K$ is $\upo$-free
by Theorem~\ref{upo} and [ADH, 11.7.23].
If $K$ is not $1$-linearly newtonian, 
then it has a hole of order and degree $1$, by \cite[Lemma~3.2.5]{ADH4}, and so 
$H$ has a proper $\d$-algebraic Hardy field extension, by Lemma~\ref{linhole1}, contradicting $\d$-maximality of~$H$.
\end{proof}

 \subsection*{Finishing the story} With one more lemma we will be done.


\begin{lemma}\label{53} Suppose $H$ is Liouville closed, $\upo$-free, not newtonian, and $K:=H[\imag]$ is $1$-linearly newtonian. Then $H$ has a proper $\d$-algebraic Hardy field extension.
\end{lemma}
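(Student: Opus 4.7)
The plan is to assemble Lemma~\ref{53} directly from the Normalization Theorem~\ref{c4543} and the production result Corollary~\ref{cor:46}; everything needed has already been set up in the excerpt.

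First I would verify that the hypotheses of Theorem~\ref{c4543} are in force. Since $H$ is Liouville closed it contains~$\R$ (being an $H$-field), so its constant field is $\R$, which is archimedean. By assumption $H$ is $\upo$-free, $K=H[\imag]$ is $1$-linearly newtonian, and $H$ is not newtonian. Theorem~\ref{c4543} then produces a $Z$-minimal special hole $(Q,1,\hat b)$ in $H$ with $\order Q\ge 1$ and an active $\phi>0$ in $H$ with $\phi\preceq 1$ such that the hole $(Q^{\phi},1,\hat b)$ in $H^{\phi}$ is deep, strongly repulsive-normal, and ultimate.

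Next I would feed this data into Corollary~\ref{cor:46}, taking $P:=Q$, $\fn:=1$, $\hat h:=\hat b$, and the $\phi$ above. The implication diagram preceding Theorem~\ref{c4543} shows that strongly repulsive-normal implies strongly split-normal, so the slot $(Q^{\phi},1,\hat b)$ is deep, ultimate, and strongly split-normal, exactly as required for the first assertion of Corollary~\ref{cor:46}. That assertion delivers some $y\in\Calinf$ with $Q(y)=0$ and $y\prec 1$. Moreover, since $(Q^{\phi},1,\hat b)$ is in fact strongly repulsive-normal, the second assertion of Corollary~\ref{cor:46} applies to this very $y$, giving $y\notin H$ and $y$ hardian over~$H$.

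Finally, $H\langle y\rangle$ is a Hardy field extension of $H$ (because $y$ is hardian over $H$), it is proper because $y\notin H$, and it is $\d$-algebraic over $H$ because $y$ is annihilated by $Q\in H\{Y\}\setminus H$. This is the required extension. No real obstacle arises: the entire proof is a matter of checking that the output of the Normalization Theorem feeds verbatim into the hypothesis of Corollary~\ref{cor:46}, and the truly hard work (the analytic fixed-point construction, asymptotic similarity, and the ultimate/repulsive-normal machinery) has all been discharged earlier in the paper.
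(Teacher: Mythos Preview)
Your approach is the same as the paper's, but you skipped a necessary verification. Corollary~\ref{cor:46} lives in a subsection whose standing hypotheses (italicized at its start) include $\I(K)\subseteq K^\dagger$ and, if $r\ge 3$, that $K$ is $1$-linearly surjective. You never check these. The paper does: from the hypothesis that $K$ is $1$-linearly newtonian it invokes \cite[Proposition~1.7.28]{ADH4} to obtain both conditions. Without this step Corollary~\ref{cor:46} is not formally available, so the argument has a gap, albeit one that is easy to fill once noticed.

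A minor side remark: your claim that ``$H$ is Liouville closed'' forces $H\supseteq\R$ is not how this is secured here; rather, $H\supseteq\R$ is already part of the standing assumptions for Section~\ref{sec:d-alg extensions}.
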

\begin{proof} 
Theorem~\ref{c4543}  yields a $Z$-minimal special hole $(Q,  1, \hat b)$ in $H$ of order $r\ge 1$ and an active $\phi$ in $H$ 
with $0<\phi\preccurlyeq 1$ such that  the hole $(Q^{\phi}, 1, \hat b)$ in~$H^\phi$
is deep, strongly repulsive-normal, and ultimate.  By \cite[Proposition 1.7.28]{ADH4}, $K$ is $1$-linearly surjective and $\I(K)\subseteq K^\dagger$.
Then Corollary~\ref{cor:46} applied to~$(Q,1,\hat b)$ in the role of~$(P,\fn,\hat h)$ gives a $y\in \Calinf\setminus H$   
that is hardian over $H$ with $Q(y)=0$.  Hence $H\langle y\rangle$ is a proper $\d$-algebraic Hardy field extension of $H$. 
\end{proof}

\noindent
Recall from the introduction that an {\it $H$-closed field}\/ is an $\upo$-free newtonian Liouville closed $H$-field. Recall also that
Hardy fields containing $\R$ are $H$-fields.  The main result of this paper now follows in a few lines: 

\begin{theorem}\label{thm:char d-max}
A Hardy field  is $\d$-maximal iff it contains $\R$ and is $H$-closed.
\end{theorem}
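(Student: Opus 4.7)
The plan is to establish both implications, with essentially all the content of the paper concentrated in the forward direction.

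For the forward implication, suppose $H$ is $\d$-maximal. Since $\d$-maximality entails $\d$-perfection, Proposition~\ref{prop:Li(H(R))} gives $H\supseteq\R$ and that $H$ is a Liouville closed $H$-field. By Theorem~\ref{upo}, $H$ admits a $\d$-algebraic $\upo$-free Hardy field extension, which by $\d$-maximality coincides with $H$; thus $H$ is $\upo$-free. Corollary~\ref{cor:dmax K 1-linnewt} then yields that $K:=H[\imag]$ is $1$-linearly newtonian. The remaining, and essential, task is to prove $H$ is newtonian: were it not, Lemma~\ref{53}, whose hypotheses are exactly those already established, would furnish a proper $\d$-algebraic Hardy field extension of $H$, contradicting $\d$-maximality. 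Hence $H$ is $H$-closed.

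For the backward implication, suppose $H\supseteq\R$ is $H$-closed. Any $\d$-algebraic Hardy field extension $E$ of $H$ is in particular a $\d$-algebraic $H$-field extension with small derivation; since $H$ is an $\upo$-free newtonian Liouville closed $H$-field, $H$ has no hole by \cite[Lemma~3.2.1]{ADH4}, and the general theory of [ADH, Chapter~16] shows that $H$-closed fields admit no proper $\d$-algebraic $H$-field extension with small derivation. Hence $E = H$, establishing $\d$-maximality of $H$.

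The main obstacle, which absorbs essentially the entire paper, is Lemma~\ref{53}: from the algebraic input that $H$ is Liouville closed, $\upo$-free, not newtonian, and $K=H[\imag]$ is $1$-linearly newtonian, one must construct a genuinely new hardian germ. The Normalization Theorem~\ref{c4543} supplies a $Z$-minimal special hole $(Q,1,\hat b)$ in $H$ together with an active $\phi \preceq 1$ such that $(Q^\phi,1,\hat b)$ is deep, strongly repulsive-normal, and ultimate. Corollary~\ref{cor:46}, resting on Theorem~\ref{46}, the fixed-point and weighted-norm analysis of Sections~\ref{sec:split-normal over Hardy fields}--\ref{sec:weights}, and the asymptotic similarity machinery of Section~\ref{sec:asymptotic similarity}, then converts the abstract zero $\hat b$ into a concrete hardian zero of $Q$ in $\Calinf$, producing the needed proper $\d$-algebraic Hardy field extension.
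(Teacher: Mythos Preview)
Your proposal is correct and follows essentially the same route as the paper: for the forward direction you invoke Proposition~\ref{prop:Li(H(R))}, Theorem~\ref{upo}, Corollary~\ref{cor:dmax K 1-linnewt}, and Lemma~\ref{53} in exactly the order the paper does. For the backward direction the paper simply cites [ADH, 16.0.3] (which states that an $H$-closed field has no proper $\d$-algebraic $H$-field extension with the same constant field); your argument is the same in spirit, though the appeal to \cite[Lemma~3.2.1]{ADH4} is superfluous, and the relevant constraint on the extension is ``same constant field'' (automatic for Hardy fields over $\R$) rather than ``small derivation''.
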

\begin{proof}
The ``if'' part is a special case of [ADH, 16.0.3]. Suppose $H$ is $\d$-maximal.
By Proposition~\ref{prop:Li(H(R))}  and Theorem~\ref{upo},  $H\supseteq\R$ and $H$ is
Liouville closed and $\upo$-free. By Corollary~\ref{cor:dmax K 1-linnewt},
$K$ is $1$-linearly newtonian, so~$H$ is newtonian by Lemma~\ref{53}.
\end{proof}

\noindent
Theorem~\ref{thm:char d-max} and Corollary~\ref{cor:Hardy field ext smooth} yield Theorem~B  in a refined form:

\begin{cor}\label{thm:extend to H-closed}
Any Hardy field $F$ has a $\d$-algebraic $H$-closed Hardy field extension.
If $F$ is a $\Ginf$-Hardy field, then so is any such extension, and likewise with~$\Gom$ in place of $\Ginf$.
\end{cor}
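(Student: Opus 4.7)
The plan is to derive this corollary directly from Theorem~\ref{thm:char d-max} together with the existence of $\d$-maximal $\d$-algebraic extensions and the smoothness transfer recorded in Corollary~\ref{cor:Hardy field ext smooth}. The work has really been done; what remains is assembly.

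First I would invoke Zorn's Lemma in the form already discussed after Proposition~\ref{prop:Li(H(R))}: the class of $\d$-algebraic Hardy field extensions of $F$ is closed under unions of chains, so there exists a $\d$-maximal $\d$-algebraic Hardy field extension $H\supseteq F$. (Here $\d$-maximality of $H$ means $H$ has no proper $\d$-algebraic Hardy field extension; since any $\d$-algebraic Hardy field extension of $H$ is also a $\d$-algebraic Hardy field extension of $F$, such $H$ is in particular $\d$-maximal as a Hardy field.) By Theorem~\ref{thm:char d-max} this $H$ contains $\R$ and is $H$-closed, giving the required $\d$-algebraic $H$-closed Hardy field extension of $F$.

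For the smoothness part, suppose $F\subseteq\Ginf$ and let $E$ be any $\d$-algebraic $H$-closed Hardy field extension of $F$. Then $E$ is, in particular, a $\d$-algebraic Hardy field extension of the $\Ginf$-Hardy field $F$, so $E\subseteq\Ginf$ by Corollary~\ref{cor:Hardy field ext smooth}. The same argument works with $\Gom$ in place of $\Ginf$, using the corresponding assertion of Corollary~\ref{cor:Hardy field ext smooth}.

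There is no genuine obstacle here: the essential content has already been packaged into Theorem~\ref{thm:char d-max} (providing the $H$-closed extension) and Corollary~\ref{cor:Hardy field ext smooth} (propagating smoothness along $\d$-algebraic extensions). The one subtlety worth flagging is the consistency of the two notions of $\d$-maximality in the $\Ginf$-setting: a priori one might worry that a $\Ginf$-maximal Hardy field fails to be $\d$-maximal as a Hardy field, but this is exactly what Corollary~\ref{cor:Hardy field ext smooth} rules out, and hence the construction above, when carried out starting from $F\subseteq\Ginf$, can equally well be performed inside $\Ginf$.
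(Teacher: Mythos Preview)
Your proof is correct and follows essentially the same route as the paper: the paper simply records that Theorem~\ref{thm:char d-max} and Corollary~\ref{cor:Hardy field ext smooth} yield the corollary, and your argument spells out precisely this assembly.
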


\section{Transfer Theorems}\label{sec:transfer}

\noindent
From [ADH, 16.3] we recall the notion of a {\it pre-$\HLO$-field~$\boldsymbol  H=(H,\I,\Lambda,\Omega)$}\/: this is a pre-$H$-field $H$ equipped with a {\it $\HLO$-cut $(\I,\Lambda,\Omega)$}\/ of $H$. 
A {\it $\HLO$-field\/} is a pre-$\HLO$-field~$\boldsymbol  H=(H;\dots)$ where $H$ is an $H$-field. 
 If~$\boldsymbol  M=(M;\dots)$ is a pre-$\HLO$-field and~$H$ is a pre-$H$-subfield of~$M$, then $H$ has a unique expansion to a  pre-$\HLO$-field~$\boldsymbol  H$ such that $\boldsymbol  H\subseteq\boldsymbol  M$.   
 By~[ADH, 16.3.19 and remarks before it], a pre-$H$-field~$H$  has exactly one or exactly two $\HLO$-cuts, and $H$
has a unique $\HLO$-cut iff  
\begin{enumerate}
\item $H$ is grounded; or
\item there exists $b\asymp 1$ in $H$ such that $v(b')$ is a gap in $H$;  or
\item $H$ is $\upo$-free.
\end{enumerate}
In particular, each  $\d$-maximal Hardy field $M$ (being $\upo$-free) has a  unique expansion to a pre-$\HLO$-field~$\boldsymbol  M$,
namely $\boldsymbol  M=\big(M;\I(M),\Upl(M),\omega(M)\big)$,
and then $\boldsymbol  M$ is a $\HLO$-field with constant field $\R$. Below we always view any $\d$-maximal Hardy field
as an $\HLO$-field in this way.

\begin{lemma}\label{lem:canonical HLO}
Let $H$ be a Hardy field. Then $H$ has an expansion to a pre-$\HLO$-field~$\boldsymbol H$ such that $\boldsymbol H\subseteq\boldsymbol M$ for every $\d$-maximal Hardy field $M\supseteq H$.
\end{lemma}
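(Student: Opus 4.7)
The plan is to use Theorem~\ref{upo} to pin down a canonical pre-$\HLO$-field structure $\boldsymbol H$ on $H$, then reduce the inclusion $\boldsymbol H\subseteq \boldsymbol M$ for an arbitrary $\d$-maximal $M\supseteq H$ to an amalgamation inside an $\upo$-free $H$-field.

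First I would apply Theorem~\ref{upo} to obtain a $\d$-algebraic $\upo$-free Hardy field extension $E$ of $H$. Being $\upo$-free, by the characterization recalled just before the statement of the lemma, $E$ has a unique expansion to a pre-$\HLO$-field $\boldsymbol E=(E;\I(E),\Upl(E),\omega(E))$, and I take $\boldsymbol H$ to be the pre-$\HLO$-field expansion of $H$ with $\boldsymbol H\subseteq\boldsymbol E$, that is, with $\HLO$-cut $(\I(E)\cap H,\Upl(E)\cap H,\omega(E)\cap H)$. Independence of $\boldsymbol H$ from the auxiliary choice of $E$ will be a consequence of the main claim (applied to a $\d$-maximal Hardy field extending $E$).

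Now fix any $\d$-maximal Hardy field $M$ with $M\supseteq H$. By Theorem~\ref{thm:char d-max}, $M$ is $H$-closed, so in particular $\upo$-free, and therefore has a unique pre-$\HLO$-field expansion $\boldsymbol M$. To prove $\boldsymbol H\subseteq\boldsymbol M$, I plan to produce an $\upo$-free $H$-field $N$ containing both $E$ and $M$ as pre-$H$-subfields over $H$, using the model theory of $H$-fields developed in [ADH] (in particular, that $H$-fields with a given $\HLO$-cut can be amalgamated inside an $\upo$-free, even $H$-closed, $H$-field). Once such $N$ is in hand, its unique pre-$\HLO$-field expansion $\boldsymbol N$ sits above both $\boldsymbol E$ and $\boldsymbol M$ by uniqueness of the $\HLO$-cut on $\upo$-free $H$-fields, and restricting to $H$ gives
\[\boldsymbol E\cap H\ =\ \boldsymbol N\cap H\ =\ \boldsymbol M\cap H,\]
which is precisely the desired inclusion $\boldsymbol H\subseteq\boldsymbol M$.

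The main obstacle is the construction of the common extension $N$: $E$ and $M$ are two $H$-fields over $H$, and the class of $\upo$-free $H$-fields does not admit amalgamation over arbitrary pre-$H$-subfields—the obstruction is precisely that $H$ may carry two distinct $\HLO$-cuts. To overcome this I would argue that for Hardy fields the relevant $\HLO$-cut is already determined by the analytic structure, specifically by the embedding $H\subseteq\Calinf$: for $a\in H$ and any $\upo$-free Hardy field extension $H^{\ast}\supseteq H$, membership in $\I(H^{\ast})\cap H$, $\Upl(H^{\ast})\cap H$, and $\omega(H^{\ast})\cap H$ admits an intrinsic description in terms of germ-level data (existence of bounded antiderivatives, logarithmic derivatives, and Riccati-type expressions in $\Calinf$) depending only on $H$. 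With this, $E$ and $M$ automatically induce the same $\HLO$-cut on $H$, amalgamation is available, and the argument of the previous paragraph goes through.
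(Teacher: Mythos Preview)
Your proposal identifies the right phenomenon in its last paragraph---that for Hardy fields the $\HLO$-cut induced by any $\upo$-free Hardy field extension is determined intrinsically---but the architecture around it is confused. The amalgamation step is redundant: if you can show that for any two $\upo$-free Hardy field extensions $E,M$ of $H$ the induced $\HLO$-cuts on $H$ coincide, then you are done immediately (take $\boldsymbol H:=\boldsymbol E\cap H$ and observe $\boldsymbol M\cap H=\boldsymbol H$), and there is no need to build a common extension $N$. Conversely, the amalgamation of $E$ and $M$ over $H$ \emph{requires} that they induce the same $\HLO$-cut on $H$, so you cannot use amalgamation to establish that fact. The detour through $N$ adds nothing and obscures that all the work lies in your final ``I would argue that\dots'' sentence, which you leave unproved.

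The paper's proof is a direct execution of that last sentence, with one simplifying reduction: since every $\d$-maximal Hardy field extension of $H$ contains the $\d$-perfect hull $\operatorname{D}(H)$, one may replace $H$ by $\operatorname{D}(H)$ and assume $H$ is Liouville closed. Then the intrinsic descriptions are available off the shelf: $\I(H)=\I(M)\cap H$ by [ADH,~11.8.2], $\Upl(H)=\Upl(M)\cap H$ by [ADH,~11.8.6], and $\omega(H)=\bar\omega(H)=\bar\omega(M)\cap H=\omega(M)\cap H$ by \cite[Corollary~6.2]{ADH5}. The last of these is nontrivial (for general $H$-fields $\omega\neq\bar\omega$), and it is precisely the kind of Hardy-field-specific fact your sketch gestures at but does not supply. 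So: drop the amalgamation, reduce to $\operatorname{D}(H)$, and cite these three results.
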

\begin{proof}
Since every $\d$-maximal Hardy field containing $H$ also contains $\operatorname{D}(H)$, it suffices to show this
for $\operatorname{D}(H)$ in place of $H$. So we assume $H$ is $\d$-perfect,
and thus a  Liouville closed $H$-field.
For each $\d$-maximal Hardy field~$M\supseteq H$ we now have
$\I(H)=\I(M)\cap H$ by [ADH, 11.8.2], $\Upl(H)=\Upl(M)\cap H$ by [ADH, 11.8.6], and~$\omega(H)=\bar{\omega}(H)=\bar{\omega}(M)\cap H=\omega(M)\cap H$ by \cite[Corollary~6.2]{ADH5}, as required.
\end{proof}

\noindent
Given a  Hardy field $H$, we call the unique expansion $\boldsymbol H$ of $H$ to a pre-$\HLO$-field  with the property stated in the previous lemma the {\bf canonical $\HLO$-expansion} of~$H$.\index{Hardy field!canonical $\HLO$-expansion}

\begin{cor}\label{cor:canonical HLO}
Let $H$, $H^*$ be Hardy fields, with their canonical $\HLO$-ex\-pan\-sions~$\boldsymbol H$ and~$\boldsymbol H^*$, respectively, such that $H\subseteq H^*$. Then $\boldsymbol H\subseteq\boldsymbol H^*$.
\end{cor}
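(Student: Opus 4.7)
The plan is to use a common $\d$-maximal Hardy field extension as a pivot, since the canonical $\HLO$-expansion is, by definition, the one that embeds into every $\d$-maximal Hardy field extension. Explicitly, I would first invoke Zorn's lemma to obtain a $\d$-maximal Hardy field $M^*$ with $M^*\supseteq H^*$; such an $M^*$ exists because $\d$-algebraic Hardy field extensions form an inductive class under inclusion. Then $M^*\supseteq H$ as well, so $M^*$ is simultaneously a $\d$-maximal Hardy field extension of both $H$ and $H^*$.

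Next I would apply the defining property of the canonical $\HLO$-expansion (supplied by Lemma~\ref{lem:canonical HLO}) twice: once to $H$ to conclude $\boldsymbol H\subseteq\boldsymbol{M^*}$, and once to $H^*$ to conclude $\boldsymbol{H^*}\subseteq\boldsymbol{M^*}$. Writing $\boldsymbol{M^*}=\bigl(M^*;\I(M^*),\Upl(M^*),\omega(M^*)\bigr)$, this means the $\HLO$-cut of $\boldsymbol H$ is the restriction to $H$ of the $\HLO$-cut of $\boldsymbol{M^*}$, and the $\HLO$-cut of $\boldsymbol{H^*}$ is the restriction to $H^*$ of the $\HLO$-cut of $\boldsymbol{M^*}$.

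Since $H\subseteq H^*\subseteq M^*$, restriction to $H$ factors through restriction to $H^*$, so the $\HLO$-cut of $\boldsymbol H$ coincides with the restriction of the $\HLO$-cut of $\boldsymbol{H^*}$ to $H$. This is exactly the statement $\boldsymbol H\subseteq\boldsymbol{H^*}$. There is no genuine obstacle here: the work has already been done in Lemma~\ref{lem:canonical HLO}, which guarantees that every $\d$-maximal Hardy field extension of a given Hardy field induces the same $\HLO$-cut on it; the corollary is simply the functoriality statement that falls out once one has a common pivot above both Hardy fields.
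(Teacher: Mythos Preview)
Your proposal is correct and follows essentially the same approach as the paper: take a $\d$-maximal Hardy field $M^*\supseteq H^*$, observe that $\boldsymbol H\subseteq\boldsymbol M^*$ and $\boldsymbol H^*\subseteq\boldsymbol M^*$ by the defining property of the canonical $\HLO$-expansion, and conclude $\boldsymbol H\subseteq\boldsymbol H^*$. The paper's proof is just a terser version of what you wrote.
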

\begin{proof}
Let $M^*$ be any $\d$-maximal Hardy field extension of $H^*$. Then $\boldsymbol H\subseteq\boldsymbol M^*$ as well as $\boldsymbol H^*\subseteq\boldsymbol M^*$, hence $\boldsymbol H\subseteq\boldsymbol H^*$.
\end{proof}

\noindent
{\it In the rest of this section $\mathcal L=\{0,1,{-},{+},{\,\cdot\,},{\der},{\leq},{\preceq}\}$ is the language of ordered valued differential rings}\/ [ADH, p.~678]. We view 
each ordered valued differential field as an $\mathcal L$-structure in the natural way. Given an ordered valued differential field~$H$
and a subset~$A$ of~$H$ we let $\mathcal L_A$ be $\mathcal L$ augmented by names for the elements of~$A$, and expand the $\mathcal L$-structure $H$ 
to an $\mathcal L_A$-structure by interpreting the name of any $a\in A$ as the element $a$ of $H$; cf.~[ADH, B.3].
Let~$H$ be a Hardy field and $\sigma$ be an $\mathcal L_H$-sentence.
We now have our Hardy field analogue of the ``Tarski principle''~[ADH, B.12.14] in real algebraic geometry promised in the introduction:

\begin{theorem}\label{thm:transfer}
The following are equivalent:
\begin{enumerate}
\item[\textup{(i)}] $M\models\sigma$ for some $\d$-maximal Hardy field $M\supseteq H$;
\item[\textup{(ii)}] $M\models\sigma$ for every $\d$-maximal Hardy field $M\supseteq H$;
\item[\textup{(iii)}] $M\models\sigma$ for every maximal Hardy field $M\supseteq H$;
\item[\textup{(iv)}] $M\models\sigma$ for some maximal Hardy field $M\supseteq H$.
\end{enumerate}
\end{theorem}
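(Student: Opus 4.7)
The plan is to reduce the equivalences to a model-theoretic transfer statement over the canonical $\HLO$-expansion of $H$. First I would handle the easy implications. Since every maximal Hardy field is $\d$-maximal, (ii)~$\Rightarrow$~(iii) is immediate. Every Hardy field embeds into some maximal Hardy field (by Zorn), so (iii)~$\Rightarrow$~(iv). And (iv)~$\Rightarrow$~(i) is immediate, since a maximal Hardy field extension of $H$ is in particular a $\d$-maximal one. Thus all four conditions collapse to each other once we prove the crucial direction (i)~$\Rightarrow$~(ii).

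For (i)~$\Rightarrow$~(ii), let $M_1, M_2$ be $\d$-maximal Hardy fields containing $H$. By Theorem~\ref{thm:char d-max}, each $M_i$ contains $\R$ and is $H$-closed, and each has small derivation (inherited from $\Calinf$) and constant field $\R$. Being $\upo$-free, each $M_i$ carries a unique $\HLO$-cut, giving its canonical $\HLO$-expansion $\boldsymbol M_i$, and by Corollary~\ref{cor:canonical HLO} the canonical $\HLO$-expansion $\boldsymbol H$ of $H$ is a pre-$\HLO$-substructure of both $\boldsymbol M_1$ and $\boldsymbol M_2$. Thus $\boldsymbol M_1$ and $\boldsymbol M_2$ are models of the theory $T^{\text{nl}}_{\text{small}}$ of $H$-closed $\HLO$-fields with small derivation and constant field $\R$, both containing the common pre-$\HLO$-substructure $\boldsymbol H$.

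The main step is then to invoke the result from [ADH, 16.6.3] (combined with the quantifier elimination for a natural extension by definitions of $T_H$ recalled in the introduction) to conclude that any two models of $T^{\text{nl}}_{\text{small}}$ containing $\boldsymbol H$ as a common pre-$\HLO$-substructure are elementarily equivalent over $\boldsymbol H$: that is, $\boldsymbol M_1 \equiv_{\boldsymbol H} \boldsymbol M_2$ in the $\HLO$-language. Since $\mathcal L\subseteq \mathcal L_{\HLO}$, every $\mathcal L_H$-sentence is (trivially) an $\mathcal L_{\HLO,\boldsymbol H}$-sentence, and hence $\sigma$ has the same truth value in $M_1$ and $M_2$. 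This establishes (i)~$\Leftrightarrow$~(ii) and completes the cycle.

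The main obstacle is ensuring that both $\boldsymbol M_1$ and $\boldsymbol M_2$ contain $\boldsymbol H$ as a pre-$\HLO$-substructure, which is precisely why Lemma~\ref{lem:canonical HLO} and Corollary~\ref{cor:canonical HLO} were set up — without the canonical choice of $\HLO$-cut, different maximal extensions of $H$ could in principle induce different $\HLO$-cuts on $H$, and the transfer would fail. Once that uniqueness is in hand the argument is formal model theory applied to the deep model-completeness results of [ADH].
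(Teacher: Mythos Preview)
Your proposal is correct and follows essentially the same route as the paper: handle the trivial implications (ii)$\Rightarrow$(iii)$\Rightarrow$(iv)$\Rightarrow$(i), then for (i)$\Rightarrow$(ii) use Theorem~\ref{thm:char d-max} to see that both $\d$-maximal extensions are $H$-closed, invoke Lemma~\ref{lem:canonical HLO} and Corollary~\ref{cor:canonical HLO} to get a common $\HLO$-substructure $\boldsymbol H$, and apply quantifier elimination from [ADH] to conclude $\boldsymbol M_1\equiv_{\boldsymbol H}\boldsymbol M_2$. The only minor imprecisions are in the citations: the relevant QE result is [ADH, 16.0.1] for the theory $T^{\operatorname{nl},\iota}_{\HLO}$ in the language $\mathcal L^\iota_{\HLO}$ (not $T^{\text{nl}}_{\text{small}}$, which is the $\mathcal L$-theory and only gives completeness via [ADH, 16.6.3]), and the passage from QE to elementary equivalence over a common substructure is [ADH, B.11.6].
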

\begin{proof}
The implications (ii)~$\Rightarrow$~(iii)~$\Rightarrow$~(iv)~$\Rightarrow$~(i) are obvious, since ``maximal~$\Rightarrow$~$\d$-maximal'';
  it remains to show (i)~$\Rightarrow$~(ii).
Let $M$, $M^*$ be $\d$-maximal Hardy field extensions of $H$.
By Lemma~\ref{lem:canonical HLO} and Corollary~\ref{cor:canonical HLO}
expand $M$, $M^*$, $H$ to pre-$\HLO$-fields $\boldsymbol M$, $\boldsymbol M^*$, $\boldsymbol H$,   such that
$\boldsymbol H\subseteq\boldsymbol M$ and $\boldsymbol H\subseteq \boldsymbol M^*$. 
In~[ADH, introduction to Chapter~16] we extended $\mathcal{L}$ to a language $\mathcal L^\iota_{\HLO}$, and explained in~[ADH, 16.5] how each
pre-$\HLO$-field~$\boldsymbol K$ is construed as an  $\mathcal L^\iota_{\HLO}$-structure in such a way that
every extension~${\boldsymbol K\subseteq\boldsymbol L}$  of pre-$\HLO$-fields corresponds to an extension of the associated~$\mathcal L^\iota_{\HLO}$-structures.
By~[ADH, 16.0.1], the  $\mathcal L^\iota_{\HLO}$-theory~$T^{\operatorname{nl},\iota}_{\HLO}$ of $H$-closed
$\HLO$-fields  eliminates quantifiers, and  $\mathbf M,\mathbf M^*\models T^{\operatorname{nl},\iota}_{\HLO}$ by
Theorem~\ref{thm:char d-max}.
Hence~$\boldsymbol M \equiv_H \boldsymbol M^*$~[ADH, B.11.6], so if~$\boldsymbol M  \models \sigma$, then~$\boldsymbol M^*  \models \sigma$.   
\end{proof}

\noindent
Corollaries~\ref{cor:elem equiv} and~\ref{cor:systems, 1} from the introduction are special cases of
Theorem~\ref{thm:transfer}.
By Corollary~\ref{realginfgom},
$\Ginf$-maximal and $\Gom$-maximal Hardy fields are $\d$-maximal,  so Theorem~\ref{thm:transfer} also yields Corollary~\ref{cor:systems, 2} from the introduction in a stronger form:

\begin{cor}
If $H\subseteq\Ginf$ and
$M\models\sigma$ for some $\d$-maximal Har\-dy field extension~$M$ of $H$, then
$M\models\sigma$ for every $\Ginf$-maximal Hardy field~${M\supseteq H}$. 
Likewise with $\Gom$ in place of $\Ginf$.
\end{cor}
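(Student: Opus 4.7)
The plan is to derive this as an essentially immediate consequence of Theorem~\ref{thm:transfer}, combined with the observation recorded in the paragraph preceding the statement that every $\Ginf$-maximal Hardy field is $\d$-maximal (and likewise for $\Gom$), which in turn rests on Corollary~\ref{realginfgom}.

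First I would assume $H\subseteq\Ginf$ and that $\sigma$ holds in some $\d$-maximal Hardy field extension $M_0$ of $H$. Applying Theorem~\ref{thm:transfer} with this $H$ and $\sigma$ (specifically the implication (i)~$\Rightarrow$~(ii)), one concludes that $\sigma$ holds in \emph{every} $\d$-maximal Hardy field extension of $H$. Next, I would let $M$ be any $\Ginf$-maximal Hardy field extension of $H$; by Corollary~\ref{realginfgom} (together with Corollary~\ref{cor:Hardy field ext smooth}, which ensures that $\d$-algebraic extensions of a $\Ginf$-Hardy field remain inside $\Ginf$), such an $M$ is automatically $\d$-maximal, so $\sigma$ holds in $M$ by the previous step. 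The $\Gom$-case is entirely parallel, using the $\Gom$-version of Corollary~\ref{realginfgom} and of Corollary~\ref{cor:Hardy field ext smooth}.

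There is no genuine obstacle here: the substantive content has already been supplied by Theorem~\ref{thm:transfer} (whose proof invokes quantifier elimination for $T^{\operatorname{nl},\iota}_{\HLO}$ together with the main Theorem~\ref{thm:char d-max}) and by the smoothness results of Section~\ref{sec:smoothness} that collapse $\Ginf$- and $\Gom$-maximality into $\d$-maximality. The only small point to check in passing is that a $\Ginf$-maximal (resp.\ $\Gom$-maximal) Hardy field extension of $H$ exists at all, which follows by a routine Zorn argument since $H\subseteq\Ginf$ (resp.\ $H\subseteq\Gom$).
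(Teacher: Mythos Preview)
Your proposal is correct and follows essentially the same route as the paper: the corollary is presented there as an immediate consequence of Theorem~\ref{thm:transfer} together with the fact (recorded just before the statement, via Corollary~\ref{realginfgom} and Corollary~\ref{cor:Hardy field ext smooth}) that $\Ginf$-maximal and $\Gom$-maximal Hardy fields are $\d$-maximal. Your extra remark about existence of $\Ginf$-maximal extensions is harmless but unnecessary, since the conclusion is a universal statement over such $M$.
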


\subsection*{The structure induced on $\R$}
In the next corollary $H$ is a Hardy field and~$\varphi(x)$ is an $\mathcal L_H$-formula where $x=(x_1,\dots,x_n)$ and
$x_1,\dots, x_n$ are distinct variables. 
Also, $\mathcal L_{\operatorname{OR}}=\{0,1,{-},{+},{\,\cdot\,},{\leq}\}$ is the language of ordered rings, and the ordered field $\R$ of real numbers is interpreted as an $\mathcal L_{\operatorname{OR}}$-structure in the obvious way. 
By Theorem~\ref{thm:char d-max}, $\d$-maximal Hardy fields are $H$-closed fields, so from [ADH, 16.6.7, B.12.13] 
in combination with Theorem~\ref{thm:transfer} we  obtain:

\begin{cor}\label{cor:sa}
There is a quantifier-free
 $\mathcal L_{\operatorname{OR}}$-formula $\varphi_{\operatorname{OR}}(x)$ such that for all
$\d$-maximal Hardy fields $M\supseteq H$ and $c\in\R^n$:
$\ M \models \varphi(c)\ \Leftrightarrow\ \R\models \varphi_{\operatorname{OR}}(c)$.
\end{cor}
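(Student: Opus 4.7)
The plan is to fix an arbitrary $\d$-maximal Hardy field extension $M_0$ of $H$, produce $\varphi_{\operatorname{OR}}$ so that the desired equivalence holds in $M_0$, and then invoke Theorem~\ref{thm:transfer} to conclude that the same $\varphi_{\operatorname{OR}}$ works uniformly across all $\d$-maximal Hardy field extensions of $H$.

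First I would appeal to Theorem~\ref{thm:char d-max} and Proposition~\ref{prop:Li(H(R))} to note that every $\d$-maximal Hardy field extension $M\supseteq H$ is an $H$-closed field with constant field $C_M=\R$; in particular, $M_0$ is $H$-closed. The cited result [ADH, 16.6.7] on induced structure on the constant field then applies to the $\mathcal L_{M_0}$-formula $\varphi(x)$: it ensures that the trace
$$S\ :=\ \{c\in\R^n : M_0 \models \varphi(c)\}$$
is definable in the ordered field $\R=C_{M_0}$ (with parameters from $\R$). Applying Tarski's quantifier elimination [ADH, B.12.13] for real closed fields to such a definition yields a quantifier-free $\mathcal L_{\operatorname{OR}}$-formula $\varphi_{\operatorname{OR}}(x)$ with $S=\{c\in\R^n:\R\models\varphi_{\operatorname{OR}}(c)\}$; this is the candidate formula.

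The remaining point is to verify that for every other $\d$-maximal Hardy field extension $M$ of $H$, the trace of $\varphi$ on $\R^n$ computed in $M$ coincides with $S$. The only subtlety is bookkeeping of parameters: for $c\in\R^n$ the sentence $\varphi(c)$ has parameters from $H\cup\R$, which need not lie in $H$. However, $H(\R)$ is itself a Hardy field (as recalled in the introduction), and any $\d$-maximal Hardy field extending $H$ automatically extends $H(\R)$ by Proposition~\ref{prop:Li(H(R))}; so $\varphi(c)$ is naturally an $\mathcal L_{H(\R)}$-sentence. Theorem~\ref{thm:transfer} applied with $H(\R)$ in place of $H$ then yields $M\models \varphi(c)\Leftrightarrow M_0\models\varphi(c)\Leftrightarrow\R\models\varphi_{\operatorname{OR}}(c)$ for every such $M$ and every $c\in\R^n$, as required. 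No serious obstacle is expected: the genuine work has already been done to establish Theorem~\ref{thm:char d-max}, and the argument above is merely a model-theoretic packaging of its consequences together with the results in [ADH, 16.6.7] and [ADH, B.12.13].
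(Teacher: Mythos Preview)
Your proposal is correct and follows essentially the same approach as the paper: the paper's justification is the single sentence preceding the corollary, citing Theorem~\ref{thm:char d-max}, [ADH, 16.6.7, B.12.13], and Theorem~\ref{thm:transfer}, and you have simply unpacked how these combine. Your explicit observation that one should apply Theorem~\ref{thm:transfer} with $H(\R)$ in place of $H$ (since $\varphi(c)$ has parameters in $H\cup\R$ and every $\d$-maximal $M\supseteq H$ contains $H(\R)$) is a useful clarification that the paper leaves implicit.
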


\noindent
This yields Corollary~\ref{cor:parametric systems} from the introduction. 
The first of the examples after that corollary is   covered by~[ADH, 5.1.18, 11.8.25, 11.8.26];
for the details of the second example we refer to \cite[Section~7.1]{ADHmax}.

\subsection*{Uniform finiteness}
We now let $H$ be a Hardy field and $\varphi(x,y)$ and $\theta(x)$ be   $\mathcal L_H$-for\-mu\-las, where $x=(x_1,\dots,x_m)$ and $y=(y_1,\dots,y_n)$.

\begin{lemma}
There is a $B=B(\varphi)\in\N$ such that for all $f\in H^m$: if 
for some $\d$-maximal Hardy field extension $M$ of $H$ there are more than $B$ tuples $g\in M^n$ with $M\models\varphi(f,g)$, then 
for every $\d$-maximal Hardy field extension $M$ of $H$ there are infinitely many $g\in M^n$ with~$M\models\varphi(f,g)$. 
\end{lemma}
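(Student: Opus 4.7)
The plan is to combine a uniform finiteness principle for the theory $T^{\operatorname{nl},\iota}_{\HLO}$ of $H$-closed $\HLO$-fields with the transfer theorem just established. The first ingredient, which we take from~\cite{ADHdim}, produces a $B = B(\varphi) \in \N$ such that for every model $\boldsymbol M \models T^{\operatorname{nl},\iota}_{\HLO}$ and every $f \in M^m$, the solution set
\[ S_f(\boldsymbol M)\ :=\ \big\{g \in M^n : \boldsymbol M \models \varphi(f,g)\big\} \]
is either of cardinality at most $B$ or infinite. This dichotomy is the substantive model-theoretic input; once it is in hand, transfer between distinct $\d$-maximal extensions of $H$ is routine.

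So suppose $f \in H^m$ and some $\d$-maximal Hardy field extension $M$ of $H$ satisfies $|S_f(M)| > B$. By Theorem~\ref{thm:char d-max} and Lemma~\ref{lem:canonical HLO}, the canonical $\HLO$-expansion $\boldsymbol M$ of $M$ is a model of $T^{\operatorname{nl},\iota}_{\HLO}$, so the dichotomy forces $S_f(M)$ to be infinite. Accordingly, for each $N \geq 1$ the $\mathcal L_H$-sentence
\[ \sigma_N\ :=\ \exists y_1 \cdots \exists y_N \Big( \bigwedge_{1 \leq i<j \leq N} y_i \neq y_j \ \wedge\ \bigwedge_{i=1}^N \varphi(f, y_i) \Big) \]
(where each $y_i$ abbreviates an $n$-tuple of distinct variables) holds in $M$.

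Finally, let $M^*$ be an arbitrary $\d$-maximal Hardy field extension of $H$. Applying Theorem~\ref{thm:transfer} separately to each~$\sigma_N$ yields $M^* \models \sigma_N$ for every $N$, and hence $S_f(M^*)$ is infinite. Thus, modulo the uniform finiteness citation from~\cite{ADHdim}, the argument reduces to the standard device of expressing ``infinite'' as the countable family $(\sigma_N)_{N \geq 1}$ of first-order $\mathcal L_H$-sentences and invoking Theorem~\ref{thm:transfer} for each individually; the main obstacle lies entirely on the~\cite{ADHdim} side, where uniform bounds of this kind for $T^{\operatorname{nl},\iota}_{\HLO}$ are extracted from a dimension-theoretic analysis of definable sets in $H$-closed fields.
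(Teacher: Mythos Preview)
Your proof is correct and takes essentially the same approach as the paper's: cite the uniform finiteness bound from \cite{ADHdim} and then transfer via Theorem~\ref{thm:transfer}. The paper's version is marginally more economical, fixing a single $\d$-maximal extension $M^*$ of $H$ for which \cite[Proposition~6.4]{ADHdim} yields the bound $B$, and then invoking Theorem~\ref{thm:transfer} without spelling out the sentences $\sigma_N$.
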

\begin{proof}
Fix a $\d$-maximal Hardy field extension $M^*$ of $H$. By \cite[Proposition~6.4]{ADHdim} we have $B=B(\varphi)\in\N$ such that
for all $f\in (M^*)^m$: if $M^*\models\varphi(f,g)$ for more than~$B$ many $g\in (M^*)^n$, then
$M^*\models\varphi(f,g)$ for infinitely many $g\in (M^*)^n$. Now use Theorem~\ref{thm:transfer}. 
\end{proof}

\noindent
In the proof of the next lemma we use that $\mathcal C$ has the cardinality $\mathfrak c=2^{\aleph_0}$ of the continuum, hence
$\abs{H}=\mathfrak c$ if $H\supseteq\R$.

\begin{lemma}
Suppose $H$ is $\d$-maximal and $S:=\big\{f\in H^m:H\models\theta(f)\big\}$ is infinite. Then $\abs{S}=\mathfrak c$.
\end{lemma}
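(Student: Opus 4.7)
First I would establish the upper bound $|S|\le\mathfrak c$ trivially: by Theorem~\ref{thm:char d-max}, $H\supseteq \R$, and since $H\subseteq \mathcal C^{<\infty}$ and $|\mathcal C|=\mathfrak c$, we get $|H|=\mathfrak c$, hence $|S|\le |H|^m=\mathfrak c$. The content of the lemma is the matching lower bound.

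I would reduce to the case $m=1$ by induction on $m$. If $m\ge 2$, let $\pi\colon H^m\to H$ be projection onto the first coordinate, $\pi(S)=\{a\in H:\exists g\in H^{m-1}\,H\models\theta(a,g)\}$ is $\mathcal L_H$-definable, and for each $a\in\pi(S)$ the fibre $S_a=\{g\in H^{m-1}:H\models\theta(a,g)\}$ is $\mathcal L_{H}$-definable (indeed $\mathcal L_{H\cup\{a\}}$-definable, with $a\in H$). If some fibre $S_a$ is infinite, apply the inductive hypothesis to~$S_a$ and note $|S|\ge |S_a|=\mathfrak c$. Otherwise all fibres are finite; then $S=\bigcup_{a\in\pi(S)}(\{a\}\times S_a)$ is infinite forces $\pi(S)$ to be infinite, and the $m=1$ case applied to~$\pi(S)$ gives $|\pi(S)|=\mathfrak c$, hence $|S|\ge\mathfrak c$.

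For the base case $m=1$, I would expand $H$ to its canonical $\HLO$-expansion $\mathbf H$ (Lemma~\ref{lem:canonical HLO}); by Theorem~\ref{thm:char d-max} this is an $H$-closed $\HLO$-field, so by the quantifier elimination [ADH, 16.0.1] for $T^{\operatorname{nl},\iota}_{\HLO}$ the set $S$ is quantifier-free definable in the language~$\mathcal L^\iota_{\HLO,H}$. Thus $S$ is a finite Boolean combination of sets cut out by equalities $P(y)=0$ and by (asymptotic or order) inequalities on differential polynomials over~$H$, together with the primitives for $\I,\Lambda,\Omega$. Now split into two cases: if some disjunct of the defining formula for $S$ imposes no equation $P(y)=0$ with $P\ne 0$, then that disjunct defines a nonempty open (in the order topology) subset $U\subseteq S$, which therefore contains an interval $(a,b)_H$ with $a<b$ in~$H$; mapping $c\mapsto a+c(b-a)$ injects $[0,1]\subseteq\R\subseteq H$ into $U\subseteq S$, giving $|S|\ge\mathfrak c$. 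Otherwise every point of $S$ lies in the zero set $Z_P:=\{y\in H:P(y)=0\}$ of some fixed nonzero $P\in H\{Y\}$ (one of finitely many from the disjuncts that meet $S$), and $S\cap Z_P$ is infinite.

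The main obstacle is the zero-set case: I must show that an infinite intersection $S\cap Z_P$ has cardinality $\mathfrak c$. For this I would invoke [ADH, 16.6.11] (the result cited in the paragraph preceding Corollary~\ref{cor:uniform finiteness} and used in the uniform-finiteness lemma), according to which, in an $H$-closed field with constant field $\R$, any infinite definable subset of the zero set of a differential polynomial already contains continuum many elements, parametrized via the real constants through the fundamental-system description of solutions (cf.\ the basis of $\ker_{\Univ} A$ in Lemma~\ref{lem:basis of kerUA} and Corollary~\ref{cor:factorization intro}). Combining this with the previous paragraphs yields $|S|\ge\mathfrak c$ in all cases, completing the proof.
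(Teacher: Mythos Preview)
Your overall strategy mirrors the paper's: separate the case where $S$ has nonempty interior from a ``zero-set'' case. The paper does this in two lines via the dimension theory of \cite{ADHdim}: if $\dim S>0$ then some coordinate projection $\pi_i(S)\subseteq H$ has nonempty interior \cite[Corollary~3.2]{ADHdim}, giving $|S|=|H|=\mathfrak c$; if $\dim S=0$ then the remarks following \cite[Proposition~6.4]{ADHdim} give $|S|=|\R|=\mathfrak c$ directly. No reduction to $m=1$ or explicit QE is needed.

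Your argument has a genuine gap in the zero-set case. The reference [ADH, 16.6.11] does \emph{not} say that an infinite definable subset of $Z_P$ has continuum many elements; it says that a definable subset of $H$ is discrete in the order topology precisely when some nonzero differential polynomial vanishes on it. That is a characterization of discreteness, not a cardinality bound. The statement you actually need---that an infinite zero-dimensional definable set has size $\mathfrak c$---is what the paper extracts from \cite{ADHdim}: such a set is, definably, in bijection with a definable subset of some $C^n=\R^n$, and an infinite semialgebraic subset of $\R^n$ has cardinality $\mathfrak c$. Your appeal to ``parametrization via real constants through fundamental systems'' is not a substitute: that picture applies to the kernel of a \emph{linear} operator, not to the zero set of an arbitrary $P$, and in any case does not by itself show that an infinite definable subset of $\ker A$ has size $\mathfrak c$.

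A secondary point: the claim that a disjunct with no equation defines an \emph{open} set is not justified, since neither $\preceq$ nor the $\HLO$-predicates cut out open conditions. What is true (and what the dimension theory supplies) is that a definable $S\subseteq H$ not contained in any $Z_P$ has nonempty interior; but this is a theorem about the theory, not a syntactic consequence of QE.
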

\begin{proof}
Let $d:=\dim(S)$ be the dimension of the definable set $S\subseteq H^m$ as introduced in \cite{ADHdim}.
If $d=0$, then $\abs{S}=\abs{\R}=\mathfrak c$ by remarks following  \cite[Pro\-po\-si\-tion~6.4]{ADHdim}.
Suppose $d>0$, and for $g=(g_1,\dots,g_m)\in H^m$ and $i\in\{1,\dots,m\}$, let~$\pi_i(g):=g_i$. 
Then for some $i\in\{1,\dots,m\}$, the subset $\pi_i(S)$ of $H$ has nonempty interior, by~\cite[Corollary~3.2]{ADHdim}, and hence 
  $\abs{S}=\abs{H}=\mathfrak c$.
\end{proof}

\noindent
The two lemmas above together now yield Corollary~\ref{cor:uniform finiteness} from the introduction. 

\subsection*{Transfer between maximal Hardy fields and transseries}
Let   $\boldsymbol T$ be the unique expansion of  $\mathbb T$ to a pre-$\HLO$-field, so~$\boldsymbol T$ is an $H$-closed 
$\HLO$-field with small derivation and constant field $\R$.

\begin{lemma}\label{lem:unique HLO-expansion, 1}
Let $H$ be a pre-$H$-subfield of $\mathbb T$ with $H\not\subseteq \R$.
Then $H$ has a unique expansion to a pre-$\HLO$-field.
\end{lemma}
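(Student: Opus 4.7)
By the dichotomy recalled at the start of this section, it suffices to show that $H$ satisfies one of: (1) $H$ is grounded, (2) there exists $b\asymp 1$ in $H$ with $v(b')$ a gap in $H$, or (3) $H$ is $\upo$-free. My plan is to verify this by splitting on the standard trichotomy for pre-$H$-fields: $H$ is either grounded, admits a gap, or has asymptotic integration. The hypothesis $H\not\subseteq\R$ ensures the derivation on $H$ is nontrivial and so keeps us out of degenerate situations where the dichotomy could fail to apply in a useful way.

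If $H$ is grounded, condition~(1) holds directly. If $H$ admits a gap, then by basic results on pre-$H$-fields (cf.~[ADH, 9.2]), the gap is necessarily of the form $v(b')$ for some $b\asymp 1$ in $H$, so condition~(2) holds. Both of these cases are thus handled by immediate citation.

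It remains to handle the case where $H$ has asymptotic integration; here the plan is to deduce $\upo$-freeness of $H$ from the $\upo$-freeness of $\boldsymbol T$. Specifically, assuming for contradiction that $H$ is not $\upo$-free, the characterization of $\upo$-freeness via $\upl$-type pseudo-Cauchy sequences (cf.~[ADH, 11.7]) produces a witness sequence $(\upl_\rho)$ in $H$. Viewed inside $\mathbb T$, this sequence remains pseudo-Cauchy with the same $\upl$-type structure, since the inclusion $H\subseteq\mathbb T$ preserves valuation, derivation, and logarithmic derivatives. Its pseudo-limit in $\mathbb T$, which exists because $\mathbb T$ is an $\upo$-free $H$-field extension of $H$ with asymptotic integration, would then witness the failure of $\upo$-freeness of $\mathbb T$—a contradiction. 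The main obstacle I anticipate is making this transfer step fully rigorous: it requires careful bookkeeping that the $\upl$-type conditions on the sequence are genuinely preserved under the inclusion and that the relevant pseudo-limits in $\mathbb T$ interact correctly with the cut-theoretic definition of $\upo$-freeness. I expect this to reduce to a short application of the material on $\upo$-free extensions in [ADH, Ch.~11], and perhaps to a result there explicitly stating that $\upo$-freeness descends along pre-$H$-field inclusions in which both fields have asymptotic integration.
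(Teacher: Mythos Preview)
Your proposal has genuine gaps in both nontrivial cases.

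In the gap case, the claim that any gap in a pre-$H$-field is necessarily of the form $v(b')$ for some $b\asymp 1$ is false; [ADH, 9.2] does not assert this. The whole point of listing condition~(2) separately in [ADH, 16.3.19] is that a pre-$H$-field can have a gap \emph{not} of this form, and precisely those fields carry two distinct $\HLO$-cuts. The paper does not try to verify~(2) here; instead it shows the gap case cannot occur at all: invoking the proof of [ADH, 10.6.19] with $\Delta:=v(H^\times)$, it shows that any non-grounded pre-$H$-subfield of $\mathbb T$ already has asymptotic integration.

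In the asymptotic-integration case, the descent principle you hope for---that $\upo$-freeness passes from $\mathbb T$ down to any pre-$H$-subfield with asymptotic integration---does not exist, because it is false: non-$\upo$-free $H$-fields with asymptotic integration routinely embed into $\upo$-free ones (indeed, Theorem~\ref{upo} produces such embeddings). The transfer of a witnessing $\upl$-sequence you sketch fails for the same reason: its pseudolimit behaviour in $\mathbb T$ is governed by $\Psi_{\mathbb T}$, not by $\Psi_H$. The missing hypothesis is \emph{cofinality}. The paper supplies it concretely: starting from $h_0\succ 1$ in $H$ with $h_0'\asymp 1$, it builds a logarithmic sequence $(h_n)$ in $H$ as in [ADH, 11.5] with $h_n\asymp\ell_n$ for all $n$, so $\Gamma_H^<$ is cofinal in $\Gamma_{\mathbb T}^<$; then [ADH, remark before 11.7.20] yields $\upo$-freeness of $H$. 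This single construction is what simultaneously rules out the gap case and delivers the cofinality needed for descent.
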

\begin{proof} If $H$ is grounded, this follows from [ADH, 16.3.19].
Suppose $H$ is not grounded. Then $H$ has asymptotic integration by the proof of [ADH, 10.6.19] applied to $\Delta:= v(H^\times)$. Starting with an $h_0\succ 1$ in $H$
with $h_0'\asymp 1$ we construct a logarithmic sequence $(h_n)$ in $H$ as in [ADH, 11.5], so $h_n\asymp\ell_n$ for all $n$.
Hence $\Gamma^<$ is cofinal in $\Gamma_{\mathbb T}^<$, so $H$ is $\upo$-free by [ADH, remark before 11.7.20].
Now use [ADH, 16.3.19] again. 
\end{proof}


\noindent
{\em In the rest of this section $H$ is a Hardy field with canonical $\HLO$-ex\-pan\-sion~$\boldsymbol H$, and $\iota\colon H\to \mathbb T$ is an embedding of ordered differential fields, and thus of pre-$H$-fields}.  

\begin{cor}\label{lem:unique HLO-expansion, 2}
The map $\iota$ is an embedding~$\boldsymbol H\to\boldsymbol T$ of pre-$\HLO$-fields.
\end{cor}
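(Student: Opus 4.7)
The plan is to compare two natural pre-$\HLO$-expansions of the pre-$H$-field $\iota(H)\subseteq\mathbb T$: on the one hand, the restriction $\boldsymbol H_T$ of $\boldsymbol T$ to $\iota(H)$; on the other, the transport $\iota_{*}\boldsymbol H$ of the canonical expansion $\boldsymbol H$ along the ordered differential field isomorphism $\iota\colon H\to\iota(H)$. The statement is equivalent to $\iota_{*}\boldsymbol H = \boldsymbol H_T$.

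In the main case $H\not\subseteq\R$, injectivity of $\iota$ together with $\iota\circ\der=\der\circ\iota$ forces $\iota(H)\not\subseteq C_{\mathbb T}=\R$: any $h\in H$ with $h'\ne 0$ satisfies $\iota(h)'=\iota(h')\ne 0$. Then Lemma~\ref{lem:unique HLO-expansion, 1} applies to the pre-$H$-subfield $\iota(H)$ of $\mathbb T$ and gives that $\iota(H)$ has a \emph{unique} pre-$\HLO$-expansion. Since $\iota_{*}\boldsymbol H$ and $\boldsymbol H_T$ are both pre-$\HLO$-expansions of $\iota(H)$, they must coincide.

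The remaining degenerate case is $H\subseteq\R$, where the derivation on $H$ is trivial and $\iota(H)\subseteq C_{\mathbb T}=\R$; here I would verify $\iota_{*}\boldsymbol H=\boldsymbol H_T$ by direct computation. Fixing a $\d$-maximal Hardy field $M\supseteq H$ (which exists by Corollary~\ref{thm:extend to H-closed}), the proof of Lemma~\ref{lem:canonical HLO} identifies $\I(\boldsymbol H)=\I(M)\cap H$, $\Lambda(\boldsymbol H)=\Upl(M)\cap H$, and $\Omega(\boldsymbol H)=\omega(M)\cap H$. Unwinding the definitions of $\I$, $\Upl$, $\omega$ from [ADH,~11.8], one checks that each of these restricts trivially to the constants --- a nonzero real constant in $M$ is not the derivative of any infinitesimal, and does not satisfy the second-order Riccati identities underlying $\Upl$ and $\omega$ --- and the same analysis inside $\mathbb T$ yields trivial components for $\boldsymbol H_T$, so $\iota(0)=0$ secures the equality. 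The only real work is this bookkeeping in the degenerate case; the main case is then a one-line invocation of Lemma~\ref{lem:unique HLO-expansion, 1}.
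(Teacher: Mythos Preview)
Your treatment of the main case $H\not\subseteq\R$ is correct and matches the paper exactly: transport the canonical expansion along $\iota$, restrict $\boldsymbol T$ to $\iota(H)$, and invoke Lemma~\ref{lem:unique HLO-expansion, 1} to conclude the two expansions coincide.

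For $H\subseteq\R$, however, your direct computation has a genuine gap. The assertion that the $\Lambda$- and $\Omega$-components ``restrict trivially to the constants'' is false. In any $H$-closed field $K\supseteq\R$ with small derivation the pseudo-limits of the $\uplambda$- and $\upomega$-sequences are positive infinitesimals, so $\Upl(K)\cap\R=\omega(K)\cap\R=\R^{\le 0}$, not $\{0\}$; for instance $-1=-(e^{e^x})^{\dagger\dagger}\in\Upl(K)$. Thus ``$\iota(0)=0$ secures the equality'' is not enough: you need $\iota$ to be the identity on all of $H$ (which it is, since an ordered field embedding of a subfield of $\R$ into $\R$ is the inclusion), and you need to verify that these nontrivial restrictions agree in $M$ and in $\T$. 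That verification can be done, but it requires either the explicit computation above or an appeal to $M\equiv_{\R}\T$---neither of which your ``Riccati identities'' remark supplies.

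The paper avoids all of this by reducing the degenerate case to the main one: since $\iota$ is the identity on $H\subseteq\R$, it extends to the embedding $\R(x)\to\T$ that is the identity on $\R$ and sends the germ $x$ to $x\in\T$; now $\R(x)\not\subseteq\R$, so the main case shows this extension is a pre-$\HLO$-embedding for the canonical expansion of $\R(x)$, and Corollary~\ref{cor:canonical HLO} (compatibility of canonical expansions under Hardy field inclusion) gives $\boldsymbol H\subseteq\boldsymbol{\R(x)}$, whence restricting to $H$ yields the claim.
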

\begin{proof}
If $H\not\subseteq\R$, then this follows from Lemma~\ref{lem:unique HLO-expansion, 1}.
Suppose $H\subseteq\R$. Then $\iota$ is the identity on $H$, so extends to the embedding
$\R(x)\to \mathbb T$ that is the identity on~$\R$ and sends the germ $x$ to $x\in \T$. Now use that $\R(x)\not\subseteq \R$ and Corollary~\ref{cor:canonical HLO}. 
\end{proof}


\noindent
Recall from~[ADH, B.4] that for any $\mathcal L_H$-sentence $\sigma$ we obtain an $\mathcal L_{\mathbb T}$-sentence $\iota(\sigma)$ by replacing the name of each~$h\in H$ occurring in $\sigma$ with the name of $\iota(h)$.

\begin{cor}\label{cor:transfer T, 1}
Let  $\sigma$ be an $\mathcal L_H$-sentence. Then \textup{(i)}--\textup{(iv)} in Theorem~\ref{thm:transfer} are also equivalent to:
\begin{enumerate}
\item[\textup{(v)}] $\mathbb T\models\iota(\sigma)$.
\end{enumerate}
\end{cor}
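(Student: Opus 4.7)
The strategy is to retrace the proof of Theorem~\ref{thm:transfer} with $\boldsymbol T$ playing the role of the second $\d$-maximal Hardy field extension~$\boldsymbol M^*$. Since the equivalence of (i)--(iv) is already in hand, it suffices to establish the equivalence of (i) and (v); fix therefore any $\d$-maximal Hardy field extension $M$ of $H$ and aim to show $M\models\sigma \Longleftrightarrow \mathbb T\models\iota(\sigma)$.

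First I would assemble the ambient $\HLO$-structures. Endow $M$ with its canonical $\HLO$-expansion $\boldsymbol M$, so Corollary~\ref{cor:canonical HLO} gives $\boldsymbol H\subseteq\boldsymbol M$. By Theorem~\ref{thm:char d-max} the Hardy field $M$ contains $\R$ and is $H$-closed, so $\boldsymbol M$ is an $H$-closed $\HLO$-field with small derivation, hence a model of the theory $T^{\operatorname{nl},\iota}_{\HLO}$ from [ADH, 16.0.1]; the same holds for $\boldsymbol T$. Corollary~\ref{lem:unique HLO-expansion, 2} tells us that $\iota$ is an embedding $\boldsymbol H\to\boldsymbol T$ of pre-$\HLO$-fields. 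Construing each pre-$\HLO$-field as an $\mathcal L^\iota_{\HLO}$-structure as in [ADH, 16.5], the inclusion $\boldsymbol H\subseteq\boldsymbol M$ and the map $\iota\colon\boldsymbol H\to\boldsymbol T$ both become $\mathcal L^\iota_{\HLO}$-embeddings.

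The punchline is then quantifier elimination. Applying [ADH, B.11.6] to the quantifier elimination for $T^{\operatorname{nl},\iota}_{\HLO}$ in [ADH, 16.0.1], any two models of this theory that share a common $\mathcal L^\iota_{\HLO}$-substructure are elementarily equivalent over it. Viewing $\boldsymbol H$ simultaneously as a substructure of $\boldsymbol M$ and (through $\iota$) of $\boldsymbol T$, this yields $\boldsymbol M\equiv_{\boldsymbol H}\boldsymbol T$ in the usual sense that for every $\mathcal L_H$-sentence $\sigma$ (which a fortiori is an $\mathcal L^\iota_H$-sentence), $\boldsymbol M\models\sigma \Longleftrightarrow \boldsymbol T\models\iota(\sigma)$. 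Since $\sigma$ and $\iota(\sigma)$ only involve symbols from $\mathcal L$, this translates back to $M\models\sigma \Longleftrightarrow \mathbb T\models\iota(\sigma)$, as required.

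The only real point to check is that the $\mathcal L^\iota_{\HLO}$-embedding framework applies to $\iota$; but this is the same issue that arose for the embedding $\boldsymbol H\to\boldsymbol M^*$ in the proof of Theorem~\ref{thm:transfer}, and it is handled identically by Corollary~\ref{lem:unique HLO-expansion, 2} together with the interpretation of $\mathcal L^\iota_{\HLO}$ on pre-$\HLO$-fields in [ADH, 16.5]. Once that is in place, no further analysis is needed beyond invoking the quantifier elimination.
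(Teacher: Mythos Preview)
Your proposal is correct and follows essentially the same approach as the paper's proof, which simply says to mimick the proof of (i)~$\Rightarrow$~(ii) in Theorem~\ref{thm:transfer} using Corollary~\ref{lem:unique HLO-expansion, 2}. You have spelled out the details that the paper leaves implicit; one tiny quibble is that $\boldsymbol H\subseteq\boldsymbol M$ comes directly from the defining property of the canonical $\HLO$-expansion (Lemma~\ref{lem:canonical HLO}) rather than from Corollary~\ref{cor:canonical HLO}, but this does not affect the argument.
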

\begin{proof}
Let $M$ be a $\d$-maximal Hardy field extension of $H$; it suffices to show that~$M\models\sigma$ iff $\mathbb T\models\iota(\sigma)$. For this, mimick the proof of (i)~$\Rightarrow$~(ii) in Theorem~\ref{thm:transfer}, using Corollary~\ref{lem:unique HLO-expansion, 2}. 
\end{proof}

\noindent
Corollary~\ref{cor:transfer T, 1} yields the first part of Corollary~\ref{cor:systems, 3} from the introduction, even in a stronger
form.  
We prove the second part of that corollary in Section~\ref{sec:embeddings into T}: Corollary~\ref{cor:transfer T, 2}. There we also use:

\begin{lemma}\label{lemhr} $\iota$ extends uniquely to an embedding $H(\R)\to \mathbb{T}$ of pre-$H$-fields. 
\end{lemma}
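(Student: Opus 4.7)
The plan is to show that $\iota$ necessarily fixes $C_H := H\cap\R$ pointwise, construct $\tilde\iota$ as a differential field embedding via linear disjointness of $H$ and $\R$ over $C_H$, and finally verify order-preservation on a chain of intermediate extensions.

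First, I would observe that $\iota(C_H)\subseteq C_{\T}=\R$, and an order-preserving embedding of a subfield of $\R$ into $\R$ fixing $\Q$ is the identity (by density of $\Q$), so $\iota|_{C_H} = \mathrm{id}$. Then I would show that $C_H$ is relatively algebraically closed in $H$: if $y\in H$ is algebraic over $C_H$ with minimum polynomial $p\in C_H[X]$, differentiating $p(y)=0$ yields $p'(y)\,y'=0$, and since $\gcd(p,p')=1$ in characteristic~$0$ gives $p'(y)\neq 0$, we conclude $y'=0$ and hence $y\in C_H$. By a standard criterion in commutative algebra (e.g.\ \cite[Chapter~VIII]{Lang}), $H$ and $\R$ are then linearly disjoint over $C_H$ inside $H(\R)$, so that multiplication yields a ring isomorphism $H\otimes_{C_H}\R \xrightarrow{\sim} H[\R]\subseteq H(\R)$ whose fraction field is $H(\R)$. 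Applying the same reasoning to $\iota(H)\subseteq \T$ (whose constant field is again $C_H$), the composition $H\otimes_{C_H}\R \xrightarrow{\iota\otimes\mathrm{id}} \iota(H)\cdot\R \hookrightarrow \T$ is an injective ring homomorphism extending uniquely to a field embedding $\tilde\iota\colon H(\R)\to\T$ with $\tilde\iota|_H=\iota$ and $\tilde\iota|_\R = \mathrm{id}$. Since $\R$ consists of constants in both the source and the target, $\tilde\iota$ is automatically a differential embedding.

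To verify that $\tilde\iota$ is an embedding of pre-$H$-fields (i.e.\ preserves the ordering and $\preceq$), I would proceed by building up $H(\R)$ through a transfinite chain $H \subseteq H(r_1) \subseteq H(r_1,r_2)\subseteq\cdots$, adjoining one real number $r\in\R\setminus C_H$ at a time. For each such $r$, one of two cases occurs. If $r$ is transcendental over $C_H$ (hence over $H$ by linear disjointness), the Hardy-field ordering locates the constant $r$ in the unique Dedekind cut of $C_H$ determined by the real number $r$, and this is the same cut that determines the position of $r\in\R\subseteq\T$ relative to $\iota(H)$; hence the extension to $H(r)\to\T$ is order-preserving. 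If instead $r$ is algebraic over $C_H$, then its minimum polynomial over $H$ coincides with its minimum polynomial $m$ over $C_H$ (by algebraic closedness of $C_H$ in $H$), and $r$ is distinguished among the real roots of $m$ by its position in $C_H$, again preserved by $\tilde\iota$. Uniqueness of the extension is immediate: any pre-$H$-field embedding extending $\iota$ must carry $\R\subseteq C_{H(\R)}$ into $C_{\T}=\R$ order-preservingly fixing $\Q$, hence act as the identity on $\R$; combined with $\iota|_H$ this determines $\tilde\iota$ on all of $H(\R)$.

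The main obstacle I anticipate is the order-preservation step in the transcendental case, since the Hardy-field ordering on $H(r)$ is not determined purely by the differential-field structure but rather by the interaction of asymptotic behavior in $H$ with the position of the constant $r$ in $\R$; the key point to exploit is that $\iota$, being a pre-$H$-field embedding, preserves this asymptotic (valuation-theoretic) data on $H$, while the added constant $r$ is placed identically in both $H(r)$ and in $\T$ via its Dedekind cut in $C_H$, which is fixed by $\tilde\iota$.
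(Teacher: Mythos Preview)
Your approach differs from the paper's, which is a two-line appeal to universal properties from [ADH]: first extend $\iota$ to the $H$-field hull $\hat H$ of $H$ inside $H(\R)$ via [ADH, 10.5.13], then extend the constant field of the $H$-field $\hat H$ to $\R$ via [ADH, 10.5.16]. Your attempt to bypass the $H$-field hull is more hands-on, and the construction of $\tilde\iota$ as a differential field embedding is essentially correct (though linear disjointness of $H$ and $\R$ over $C_H$ is best justified directly by the Wronskian criterion in the differential field $H(\R)$, not via relative algebraic closedness alone).

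There is, however, a genuine gap in your order-preservation argument, and it is precisely the case the $H$-field hull is designed to absorb. The cut of $r$ in $C_H$ does \emph{not} determine the cut of $r$ in $H$: if $h\in H$ is bounded with $\lim_{t\to\infty} h(t)=r\notin C_H$ (for instance $h=\arctan x$ in $H=\Q(x,\arctan x)$ with $r=\pi/2$), then $h-r\prec 1$ in $H(\R)$ and you must show that $\iota(h)-r$ has the same sign in $\T$. Knowing only that $\iota$ preserves the cut of $r$ in $C_H$ (or even in $\Q$) tells you that $\iota(h)-r\prec 1$, but not its sign. The paper resolves this by first adjoining all such limits $r$ to form $\hat H$; once $\hat H$ is an $H$-field, every bounded element has its limit already in the constant field, so the problematic case no longer occurs when one then enlarges $C_{\hat H}$ to $\R$. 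Your route can be repaired by observing that in any $H$-field, $f\prec 1$ and $f>0$ force $f'<0$ (apply axiom (H1) to $1/f$), so the sign of $h-r$ is the opposite of $\operatorname{sign}(h')$, which $\iota$ preserves; but this argument is missing from your sketch, and ``the Dedekind cut in $C_H$'' does not supply it.
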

\begin{proof}  Let $\hat{H}$ be the $H$-field hull of $H$ in $H(\R)$. Then $\iota$ extends uniquely to an $H$-field embedding $\hat{\iota}: \hat{H}\to \T$ by [ADH, 10.5.13]. By [ADH, remark before 4.6.21] and [ADH, 10.5.16] $\hat{\iota}$ extends uniquely to an embedding $H(\R)\to \T$ of $H$-fields. 
\end{proof} 

\noindent
We now derive Theorem~A from the introduction (in stronger form):

\begin{cor}\label{divpcor}
If $P\in H\{Y\}$, $f<g$ in $H$, and $P(f)<0<P(g)$, then each $\d$-maximal Hardy field extension of $H$ contains a
$y$ with $f<y<g$ and $P(y)=0$.
\end{cor}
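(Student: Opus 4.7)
The plan is to derive this corollary from Theorem~\ref{thm:char d-max} by a short model-theoretic transfer from grid-based transseries. First, let $M$ be an arbitrary $\d$-maximal Hardy field extension of $H$. By Theorem~\ref{thm:char d-max} we have $M\supseteq\R$ and $M$ is $H$-closed; since the derivation on any Hardy field is small, $M$ is a model of the $\mathcal L$-theory $T_H$ of $H$-closed fields with small derivation. By [ADH, 16.6.3] this theory $T_H$ is complete.

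Next I would invoke the main results of \cite{JvdH}: the ordered differential field $\T_{\text{g}}$ of grid-based transseries is an $H$-closed field with small derivation (so $\T_{\text{g}}\models T_H$) and satisfies the intermediate value property for every differential polynomial over $\T_{\text{g}}$. The IVP restricted to differential polynomials of bounded order and degree is a single $\mathcal L$-sentence, so the full IVP schema (one sentence per pair $(r,d)$) holds in $\T_{\text{g}}$. By completeness of $T_H$, this schema transfers to $M$, i.e., $M$ itself has the intermediate value property for all differential polynomials over $M$.

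Finally, I would apply the relevant instance of the IVP inside $M$ to the data $P\in H\{Y\}\subseteq M\{Y\}$ and $f<g$ in $H\subseteq M$: since $P(f)<0<P(g)$ holds in $H$ and hence in $M$, we obtain $y\in M$ with $f<y<g$ and $P(y)=0$, as required. The main obstacle for this corollary has already been surmounted, namely Theorem~\ref{thm:char d-max} itself, whose proof occupies essentially all of Sections~\ref{sec:prelims}--\ref{sec:d-alg extensions}. Once that theorem, the completeness of $T_H$, and the IVP for $\T_{\text{g}}$ are in hand, no further analytic or differential-algebraic work is needed here: Theorem~A drops out, as the introduction puts it, as a genuine byproduct of more fundamental results.
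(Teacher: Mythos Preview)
Your proof is correct and uses exactly the same ingredients as the paper: Theorem~\ref{thm:char d-max}, completeness of $T_H$ from [ADH,~16.6.3], and the DIVP for $\T_{\text{g}}$ from \cite{JvdH}. The only cosmetic difference is that the paper first transfers DIVP from $\T_{\text{g}}$ to $\T$ and then invokes Corollary~\ref{cor:transfer T, 1} (using the running embedding $\iota\colon H\to\T$ to push the parameters $P,f,g$ into $\T$), whereas you observe that DIVP is a parameter-free $\mathcal L$-schema, transfer it directly to $M$ by completeness, and then instantiate at $P,f,g\in H\subseteq M$; your route is marginally more direct and does not rely on the existence of $\iota$.
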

\begin{proof}
By  \cite{JvdH}, 
the ordered differential field $\mathbb T_{\text{g}}$ of grid-based transseries is $H$-closed with
small derivation and has the differential intermediate value property  (DIVP).
Hence $\mathbb T$ also has  DIVP, by completeness of $T_H$ (see the introduction). Now use Corollary~\ref{cor:transfer T, 1}.
\end{proof}

\begin{cor}\label{cor:odd degree}
For every $P\in H\{Y\}$ of odd degree there is an $H$-hardian germ~$y$   with $P(y)=0$.
\end{cor}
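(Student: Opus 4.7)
The plan is to apply Corollary~\ref{divpcor} after reducing to a sign-change problem. I would first pass to a maximal Hardy field $M\supseteq H(\R)$; by Theorem~\ref{thm:char d-max}, $M$ is $H$-closed with constant field $\R$. Since $M$ is its own (unique) $\d$-maximal Hardy field extension, it suffices to exhibit $y_1<y_2$ in $M$ with $P(y_1)<0<P(y_2)$, after which Corollary~\ref{divpcor}, applied with $M$ in place of $H$, furnishes $y\in M$ with $P(y)=0$; since $H\subseteq M$, this $y$ is hardian over $H$ as required.

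To produce $y_1,y_2$ I would use a homogeneity trick. Write $P=\sum_{j=0}^d P_j$ with $P_j$ homogeneous of degree $j$; by hypothesis $P_d\ne 0$. Provided I can produce $y_0\in M$ with $P_d(y_0)\ne 0$, the identity $(ty_0)^{\boldsymbol{i}}=t^{\abs{\boldsymbol{i}}}y_0^{\boldsymbol{i}}$ for $t\in\R$ gives
$$P(ty_0)\ =\ \sum_{j=0}^{d}t^j P_j(y_0)\ \in\ M,$$
an ordinary polynomial in the real parameter $t$ over $M$ of exact degree $d$ with nonzero leading coefficient $P_d(y_0)$. Because $d$ is odd, the dominant term $t^dP_d(y_0)$ flips sign as $t$ sweeps from $-\infty$ to $+\infty$, so choosing $t_1<t_2$ in $\R$ of large absolute value and suitable sign produces $y_1=t_1y_0<y_2=t_2y_0$ with $P(y_1)<0<P(y_2)$ (after possibly replacing $y_0$ by $-y_0$ to arrange $y_0>0$, noting that $P_d(-y_0)=(-1)^d P_d(y_0)\ne 0$).

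The main obstacle is the existence of $y_0\in M$ with $P_d(y_0)\ne 0$. A naive one-shot choice such as $y_0=\exp(\lambda x)$ or $y_0=cx^r/r!$ can fail because of cancellations among the coefficients of $P_d$---for example, $P_d=YY''-(Y')^2$ vanishes identically on $\{\exp(\lambda x):\lambda\in\R\}$. My preferred route is transfer: $M$ is a model of the complete theory $T_H$, to which $\T$ also belongs; in $\T$ the evaluation map $y\mapsto(y,y',\dots,y^{(r)})$ has Zariski-dense image in $\T^{r+1}$, because iterated exponentials of sufficient height provide $y\in\T$ whose first $r+1$ derivatives are algebraically independent over $\R$. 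Hence no nonzero differential polynomial of order $\le r$ and degree $d$ over $\T$ vanishes identically on $\T$, and the corresponding first-order scheme (one sentence per pair $(r,d)$, universally quantified over the coefficients) transfers to $M$ by completeness of $T_H$, supplying the required $y_0$.
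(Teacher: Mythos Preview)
Your sign-change step has a genuine gap. You write $P(ty_0)=\sum_{j=0}^{d}t^{j}P_j(y_0)$ and assert that the term $t^{d}P_d(y_0)$ dominates for $|t|$ large in $\R$. But the coefficients $P_j(y_0)$ lie in the non-archimedean ordered field $M$: some $P_j(y_0)$ with $j<d$ may satisfy $P_j(y_0)\succ P_d(y_0)$, and then no real $t$ can make the top term win. Concretely, take $H$ Liouville closed with $\ex^{x}\in H$, set $P=Y^{3}-\ex^{3x}$, and choose $y_0=1$ (so $P_3(y_0)=1\neq 0$, satisfying your only requirement on $y_0$); then $P(t\cdot 1)=t^{3}-\ex^{3x}<0$ in $M$ for every $t\in\R$, and there is no sign change along the family $\{ty_0:t\in\R\}$. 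You cannot rescue this by allowing $t\in M\setminus\R$, since then $t'\neq 0$ and the identity $P(ty_0)=\sum_j t^{j}P_j(y_0)$ fails.

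The paper avoids this pitfall entirely. Its primary proof embeds $H$ in an $H$-closed Hardy field via Corollary~\ref{thm:extend to H-closed} and invokes [ADH, 14.5.3], which directly supplies a zero of any odd-degree differential polynomial over such a field---no sign-change argument is needed. The alternative proof via Corollary~\ref{divpcor} first arranges $H$ Liouville closed and then cites a result from \cite{ADH5} producing $f<g$ in $H$ with $P(f)<0<P(g)$; the mechanism there is to take $f,g$ themselves of suitably extreme magnitude in $H$ so that the degree-$d$ homogeneous part of $P$ dominates asymptotically, rather than real scalings of a fixed element. Your transfer argument for $P_d(y_0)\neq 0$ is fine but does not control the relative sizes of the $P_j(y_0)$, which is what is actually needed.
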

\begin{proof}
This follows from Theorem~\ref{thm:extend to H-closed} and [ADH, 14.5.3]. Alternatively, use Corollary~\ref{divpcor}: 
with Proposition~\ref{prop:Li(H(R))}, arrange $H\supseteq\R$ and $H$ is Liouville closed, and appeal to
 the example following Corollary~1.9 in \cite{ADH5}.
\end{proof}

\noindent
Note that if $H\subseteq\Ginf$, then in the previous two corollaries we have $H\langle y\rangle\subseteq\Ginf$, by Corollary~\ref{cor:Hardy field ext smooth}; likewise with $\Gom$ in place if $\Ginf$. 

\medskip
\noindent
The following contains Corollaries~\ref{cor:zeros in complexified Hardy field extensions} and~\ref{cor:factorization intro} from the introduction.
In~[ADH] we defined a differential field $F$  to be {\it weakly $\d$-closed}\/ if every $P\in F\{Y\}\setminus F$ has a zero in $F$. 
If $F$ is  weakly $\d$-closed, then $F$ is clearly linearly surjective, and also linearly closed  by \textup{[ADH, 5.8.9]}.

\begin{cor}\label{cor:d-max weakly d-closed}
If $H$ is $\d$-maximal, then $K:=H[\imag]$ is weakly $\d$-closed.
\end{cor}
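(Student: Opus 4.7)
The approach is to reduce this to a direct consequence of Theorem~\ref{thm:char d-max} combined with results in [ADH, Chapter~14]. The key point is that weak $\d$-closedness of $K$ is a structural property that follows from $H$ being an $H$-closed field, once one knows how the various first-order properties (Liouville closedness, $\upo$-freeness, newtonianity) propagate from~$H$ to its algebraic closure~$K=H[\imag]$.

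First I would invoke Theorem~\ref{thm:char d-max} to conclude that $H$ is an $\upo$-free newtonian Liouville closed $H$-field with $H\supseteq \R$. Then I would verify that~$K$ inherits the corresponding properties relevant for weak $\d$-closure: $K$ is algebraically closed (since $H$ is real closed), $\upo$-free (by [ADH, 11.7.23]), has divisible value group $\Gamma$ and algebraically closed constant field $\C$, and---this is the substantive point---$K$ is newtonian. Newtonianity of $K$ given newtonianity of the underlying $H$-closed~$H$ is exactly the content of a standard result from [ADH, Chapter~14] (cf.\ the statement cited as [ADH, 14.5.8] earlier in this introduction, which already yields splitting of linear operators over $K$).

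Once $K$ is a newtonian, $\upo$-free, algebraically closed $\d$-valued field of $H$-type with divisible value group, algebraically closed constant field and asymptotic integration, the desired conclusion is immediate from the corresponding weak $\d$-closedness theorem in [ADH, Chapter~14]: every $P\in K\{Y\}\setminus K$ has a zero in $K$. (For $P$ of order $0$ this is just algebraic closedness of $K$; for $P$ of positive order it is the combination of newtonianity with the richness of the linear theory over an algebraically closed newtonian asymptotic field---every linear differential operator over $K$ splits over $K$, and one can apply newtonianity repeatedly along such a splitting.)

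The only genuinely delicate step is the verification that newtonianity of the real $H$-closed field~$H$ transfers to newtonianity of its algebraic closure~$K$; but this is precisely the content of the cited [ADH] result, so no new analytic or valuation-theoretic work is required here. The rest is bookkeeping, and the present corollary is essentially a reformulation of Theorem~\ref{thm:char d-max} together with standard closure properties of $H$-closed fields established in [ADH].
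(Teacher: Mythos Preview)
Your proposal is correct and follows essentially the same route as the paper: invoke Theorem~\ref{thm:char d-max} to get that $H$ is newtonian, transfer newtonianity to $K=H[\imag]$ via [ADH, 14.5.7], and then deduce weak $\d$-closedness of $K$ from [ADH, 14.5.3]. The paper's proof is just the two-line version of what you wrote; the specific citations you want are [ADH, 14.5.7] for the transfer of newtonianity to $K$ and [ADH, 14.5.3] for the passage from newtonian to weakly $\d$-closed, rather than [ADH, 14.5.8].
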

\begin{proof}
By our main Theorem~\ref{thm:char d-max}, if $H$ is $\d$-maximal, then $H$ is newtonian, and thus
 $K$ is  weakly $\d$-closed by~[ADH, 14.5.7, 14.5.3].
\end{proof}

\noindent
The remarks after Corollary~\ref{cor:factorization intro} in the introduction concerning fundamental systems of solutions to scalar linear differential equations over $K$  follow from
Lemma~\ref{lem:basis of kerUA} and Corollary~\ref{cor:d-max weakly d-closed} in combination with the equivalence \eqref{eq:2.4.8}.

\section{Embeddings into Transseries and Maximal Hardy Fields}\label{sec:embeddings into T}

\noindent
We first derive a fact about ``Newton-Liouville closure''  (as defined in~[ADH]). 
Let $\boldsymbol H$ be a   $\HLO$-field with underlying $H$-field $H$. 
Then $\mathbf H$ has an $\upo$-free newtonian Liouville closed $\HLO$-field extension, called a {\it Newton-Liouville closure}\/ of~$\mathbf H$,   that embeds over~$\mathbf H$ into any $\upo$-free newtonian Liouville closed $\HLO$-field extension of~$\mathbf H$~[ADH, 16.4.8]. Any two   Newton-Liouville closures of $\mathbf H$ are isomorphic over~$\mathbf H$ [ADH, 16.4.9], and this permits us to speak of {\em the\/}
 Newton-Liouville closure of $\mathbf H$.    
By~[ADH, 14.5.10, 16.4.1, 16.4.8], the constant field of the Newton-Liouville closure of $\boldsymbol H$ is the 
real closure  of $C:= C_{H}$.
Let $\boldsymbol M=(M;\dots)$ be an $H$-closed $\HLO$-field extension of $\boldsymbol H$ and
$\boldsymbol H^{\operatorname{da}}:=(H^{\operatorname{da}};\dots)$ be the $\HLO$-subfield of~$\mathbf M$ with~$H^{\operatorname{da}}:=\{f\in M:\text{$f$ is $\d$-algebraic over $H$}\}$.

\begin{prop}\label{prop:embed into H-closed}   
Let $\boldsymbol H^*$ be a $\d$-algebraic $\HLO$-field extension of $\boldsymbol H$ such that the constant field of $\boldsymbol H^*$ is algebraic over $C$. Then there is an embedding~$\boldsymbol H^*\to\boldsymbol M$
over $\boldsymbol H$, and the image of any such embedding is contained in~$\boldsymbol H^{\operatorname{da}}$.
\end{prop}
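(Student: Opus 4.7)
The plan is to reduce to the case where $\boldsymbol H^*$ is itself $H$-closed, and then build an embedding $\boldsymbol H^* \to \boldsymbol M$ element-by-element via a Zorn argument, using Lemma~\ref{find zero of P} (hole-filling in newtonian extensions) together with the quantifier elimination of $T^{\operatorname{nl},\iota}_{\HLO}$ [ADH, 16.0.1] at each step.

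For the reduction, let $\boldsymbol F$ be a Newton-Liouville closure of $\boldsymbol H^*$ as produced by [ADH, 16.4.8]. Then $\boldsymbol F$ is an $H$-closed $\HLO$-field, $\d$-algebraic over $\boldsymbol H^*$ (by construction via iterated $\d$-algebraic operations) and thus over $\boldsymbol H$, with constant field equal to the real closure of $C_{\boldsymbol H^*}$, which remains algebraic over $C$. Any embedding $\boldsymbol F \to \boldsymbol M$ over $\boldsymbol H$ restricts to one of $\boldsymbol H^* \subseteq \boldsymbol F$, so we may replace $\boldsymbol H^*$ by $\boldsymbol F$ and assume $\boldsymbol H^*$ is itself $H$-closed.

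Now apply Zorn to the set of pairs $(\boldsymbol E, \iota)$ with $\boldsymbol H \subseteq \boldsymbol E \subseteq \boldsymbol H^*$ a $\HLO$-subfield and $\iota\colon \boldsymbol E \to \boldsymbol M$ an embedding of $\HLO$-fields over $\boldsymbol H$, ordered by extension, with unions at limit stages. Pick a maximal pair, identify $\boldsymbol E$ with $\iota(\boldsymbol E) \subseteq \boldsymbol M$, and suppose for contradiction $\boldsymbol E \neq \boldsymbol H^*$. Take $a \in \boldsymbol H^* \setminus \boldsymbol E$: then $a$ is $\d$-algebraic over $\boldsymbol E$, with minimal annihilator $P \in \boldsymbol E\{Y\}^{\neq}$, and for suitable $\fm \in \boldsymbol E^\times$ the triple $(P, \fm, a)$ is a hole in $\boldsymbol E$ with $a$ lying in an immediate asymptotic extension of $\boldsymbol E$ inside $\boldsymbol H^*$. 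Replacing the hole by a $Z$-minimal one using the reductions of [ADH, Chapter 16], Lemma~\ref{find zero of P}---applied with $\boldsymbol E$ in the role of $K$ and $\boldsymbol M$ in the role of the newtonian extension---produces $a' \in \boldsymbol M$ with $P(a') = 0$ and $(P, \fm, a')$ equivalent to $(P, \fm, a)$, hence a valued differential field embedding $\boldsymbol E\langle a\rangle \to \boldsymbol M$ over $\boldsymbol E$ sending $a$ to $a'$. Since the $\HLO$-predicates $\I, \Lambda, \Omega$ are quantifier-free definable in the expanded language, QE of $T^{\operatorname{nl},\iota}_{\HLO}$ ensures this embedding respects them, giving an $\HLO$-embedding extending $\iota$ and contradicting maximality.

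The final assertion, that any embedding $\boldsymbol H^* \to \boldsymbol M$ over $\boldsymbol H$ has image in $\boldsymbol H^{\operatorname{da}}$, is immediate: differential embeddings send $\d$-algebraic elements to $\d$-algebraic elements, and every element of $\boldsymbol H^*$ is $\d$-algebraic over $\boldsymbol H$. The main obstacle in the argument above is ensuring that the hole $(P, \fm, a)$ in $\boldsymbol E$ can be reduced to a $Z$-minimal one so that Lemma~\ref{find zero of P} applies, and then verifying that the resulting valued differential embedding is in fact a $\HLO$-embedding---both points lean on the QE from [ADH, 16.0.1] and the cut-analysis machinery of [ADH, Chapter 16], since the $\HLO$-cut is not preserved by arbitrary valued differential embeddings.
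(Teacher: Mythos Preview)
Your reduction step is exactly right and already contains the key idea, but you stop one observation short of finishing. Having taken a Newton-Liouville closure $\boldsymbol F$ of $\boldsymbol H^*$, note that $\boldsymbol F$ is a $\d$-algebraic $H$-closed $\HLO$-field extension of $\boldsymbol H$ with constant field algebraic over $C$; by [ADH,~16.0.3] this makes $\boldsymbol F$ a Newton-Liouville closure of $\boldsymbol H$ as well. The defining universal property [ADH,~16.4.8] then gives an embedding $\boldsymbol F\to\boldsymbol M$ over $\boldsymbol H$ directly, with no further work. That is the paper's proof.

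The Zorn argument you run instead is not only unnecessary but has genuine gaps. For $(P,\fm,a)$ to be a hole in $\boldsymbol E$ in the sense used here, $a$ must lie in an \emph{immediate} asymptotic extension of $\boldsymbol E$; an arbitrary $a\in\boldsymbol H^*\setminus\boldsymbol E$ need not, since $\boldsymbol E$ and $\boldsymbol H^*$ may have different value groups or residue fields. Moreover, Lemma~\ref{find zero of P} requires the base field to be $\d$-valued, $\upo$-free, with divisible value group, and an arbitrary intermediate $\HLO$-subfield $\boldsymbol E$ has no reason to satisfy these. The vague appeal to ``the reductions of [ADH, Chapter~16]'' does not address either point. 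Finally, even granting a valued differential field embedding $\boldsymbol E\langle a\rangle\to\boldsymbol M$, promoting it to a $\HLO$-embedding is not automatic from quantifier elimination alone: QE applies to models of $T^{\operatorname{nl},\iota}_{\HLO}$, and $\boldsymbol E\langle a\rangle$ is not one.
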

\begin{proof}  The image of any embedding $\boldsymbol H^*\to\boldsymbol M$
over $\boldsymbol H$ is $\d$-algebraic over $H$ and thus contained in $\boldsymbol H^{\operatorname{da}}$. 
For existence, take a Newton-Liouville closure $\boldsymbol M^*$ of $\boldsymbol H^*$. Then $\boldsymbol M^*$ is also a
Newton-Liouville closure of $\boldsymbol H$, by [ADH, 16.0.3],  and thus embeds into $\boldsymbol M$ over $\boldsymbol H$.
\end{proof}

\noindent
Let  $\mathcal L$  be the language of ordered valued differential rings, as  in Section~\ref{sec:transfer}.
The second part of Corollary~\ref{cor:systems, 3} in the introduction now follows from the next result: 
 
\begin{cor}\label{cor:transfer T, 2}
Let $H$ be a Hardy field, $\iota\colon H\to\mathbb T$ an ordered differential field embedding, and
$H^*$ a $\d$-maximal $\d$-algebraic Hardy field extension of $H$. Then $\iota$ extends to an ordered valued differential field embedding $H^*\to\mathbb T$, and so for any  $\mathcal L_H$-sentence $\sigma$,  $H^*\models \sigma$ iff $\mathbb T\models \iota(\sigma)$.
\end{cor}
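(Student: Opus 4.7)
The plan is to assemble previously established ingredients: Theorem~\ref{thm:char d-max} forces $H^*$ to be $H$-closed and to contain $\R$, and Proposition~\ref{prop:embed into H-closed} then yields an embedding of any $\d$-algebraic $\HLO$-extension of the base into any $H$-closed $\HLO$-extension. The only mild subtlety is that Proposition~\ref{prop:embed into H-closed} demands matching constant fields (up to algebraic extension), which forces me to first enlarge the base from $H$ to $H(\R)$.

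Concretely, I would first invoke Theorem~\ref{thm:char d-max} to see that $H^*$ is $H$-closed and contains $\R$; in particular $H^*\supseteq H(\R)$ and $C_{H^*}=\R$. By Lemma~\ref{lemhr}, $\iota$ extends uniquely to an embedding of pre-$H$-fields $\iota\colon H(\R)\to\mathbb T$, which I continue to denote by $\iota$. Equip the Hardy fields $H(\R)$ and $H^*$ with their canonical $\HLO$-expansions $\boldsymbol{H(\R)}$ and $\boldsymbol{H^*}$; by Corollary~\ref{cor:canonical HLO} we have $\boldsymbol{H(\R)}\subseteq\boldsymbol{H^*}$, while Corollary~\ref{lem:unique HLO-expansion, 2} upgrades $\iota$ to an embedding $\boldsymbol{H(\R)}\to\boldsymbol T$ of pre-$\HLO$-fields, where $\boldsymbol T$ is the pre-$\HLO$-expansion of $\mathbb T$ introduced before Lemma~\ref{lem:unique HLO-expansion, 1}. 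Via $\iota$ I regard $\boldsymbol T$ as an $H$-closed $\HLO$-field extension of $\boldsymbol{H(\R)}$. Since $\boldsymbol{H^*}$ is $\d$-algebraic over $H$, hence over $H(\R)$, and $C_{H^*}=\R=C_{H(\R)}$, Proposition~\ref{prop:embed into H-closed} delivers an embedding $\boldsymbol{H^*}\to\boldsymbol T$ of $\HLO$-fields over $\boldsymbol{H(\R)}$. Forgetting the $\HLO$-enrichment and restricting yields the desired ordered valued differential field embedding $j\colon H^*\to\mathbb T$ extending $\iota$.

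For the statement about sentences, note that both $j(H^*)$ and $\mathbb T$ are $H$-closed fields with small derivation, i.e., models of the theory $T_H$, which is model complete in $\mathcal L$ by~[ADH, 16.2]. Since $j(H^*)$ is an $\mathcal L$-substructure of $\mathbb T$ that is also a model of $T_H$, the inclusion $j(H^*)\hookrightarrow\mathbb T$ is elementary. The isomorphism $j\colon H^*\to j(H^*)$ sends each name of $h\in H$ (interpreted in $H^*$ via $H\subseteq H^*$) to $\iota(h)=j(h)\in j(H^*)$, so it transports the $\mathcal L_H$-sentence $\sigma$ to the $\mathcal L_{\iota(H)}$-sentence $\iota(\sigma)$, giving
\[
H^*\models\sigma \quad\iff\quad j(H^*)\models\iota(\sigma) \quad\iff\quad \mathbb T\models\iota(\sigma),
\]
as required. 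The only ``obstacle'' here is the bookkeeping with the canonical $\HLO$-expansions; all the actual work has already been done in Theorem~\ref{thm:char d-max} and the abstract embedding Proposition~\ref{prop:embed into H-closed}, so no new analytic or algebraic argument is needed.
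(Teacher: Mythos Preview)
Your proof is correct and follows essentially the same approach as the paper: extend $\iota$ to $H(\R)$ via Lemma~\ref{lemhr}, pass to canonical $\HLO$-expansions, and apply Proposition~\ref{prop:embed into H-closed} with $\boldsymbol T$ as the $H$-closed target. The paper phrases the first step as ``arrange $H\supseteq\R$'' rather than keeping $H$ and $H(\R)$ separate, and leaves the sentence-transfer clause implicit (it already follows from Corollary~\ref{cor:transfer T, 1}, or equivalently from model completeness as you argue), but these are cosmetic differences.
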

\begin{proof} We have $H(\R)\subseteq H^*$, and so by Lemma~\ref{lemhr} we arrange that $H\supseteq\R$. 
Let~$\boldsymbol H$,~$\boldsymbol H^*$ be the canonical $\HLO$-expansions of $H$, $H^*$, respectively, and let $\boldsymbol T$ be the expansion of~$\mathbb T$ to a $\HLO$-field.  Then $\boldsymbol H\subseteq\boldsymbol H^*$, and by Lemma~\ref{lem:unique HLO-expansion, 2}, $\iota$ is an
embedding $\boldsymbol H\to\boldsymbol T$.  
By Proposition~\ref{prop:embed into H-closed}, $\iota$ extends to an embedding $\boldsymbol H^*\to\boldsymbol T$. 
\end{proof}

\noindent
Consider the Hardy field $H:=\R(\ell_0, \ell_1, \ell_2,\dots)\subseteq\Gom$ where   $\ell_0=x$ and $\ell_{n+1}=\log\ell_n$ for each $n$, 
and  mimick this
in $\mathbb T$ by setting $\ell_0:=x$ and $\ell_{n+1}:=\log\ell_n$ in $\mathbb T$. This yields the unique ordered differential
field embedding $H\to \T$ over $\R$ sending~${\ell_n\in H}$ to~$\ell_n \in \T$ for all $n$. Its image is the $H$-subfield
$\R(\ell_0,\ell_1,\dots)$ of $\mathbb T$. 
Since the sequence~$(\ell_n)$ in~$\T$ is coinitial in $\T^{>\R}$, each ordered differential subfield of $\mathbb T$
containing~$\R(\ell_0,\ell_1,\dots)$ is an $\upo$-free $H$-field, by the remark preceding [ADH, 11.7.20].
 From Lemma~\ref{lem:unique HLO-expansion, 1} and Proposition~\ref{prop:embed into H-closed} we obtain:

\begin{cor}\label{cor:embed into H-closed, 1}
If $H\supseteq \R$ is an $\upo$-free $H$-subfield of $\mathbb T$ and $H^*$ is 
a $\d$-algebraic $H$-field extension of $H$ with constant field $\R$, then there exists an $H$-field embedding~$H^*\to\mathbb T$ over $H$. 
\end{cor}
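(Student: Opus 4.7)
The strategy is to apply Proposition~\ref{prop:embed into H-closed} with the target $H$-closed $\HLO$-field taken to be $\boldsymbol{T}$, the unique $\HLO$-expansion of $\mathbb{T}$. Since $H$ is $\upo$-free it has asymptotic integration, so $H\neq\R$, and Lemma~\ref{lem:unique HLO-expansion, 1} endows $H$ with a unique pre-$\HLO$-expansion $\boldsymbol H$. Now $H$ is also a pre-$H$-subfield of $\mathbb{T}$ with $H\not\subseteq\R$, so $\boldsymbol T$ restricts to a pre-$\HLO$-expansion of $H$, and by the same uniqueness this restriction must be $\boldsymbol H$; in particular $\boldsymbol H\subseteq\boldsymbol T$. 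Also $C_{\boldsymbol H}=\R$, since $H\supseteq\R$ and $C_{\mathbb{T}}=\R$.

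The heart of the proof is to equip $H^*$ with a pre-$\HLO$-expansion $\boldsymbol H^*$ which extends $\boldsymbol H$. For this I would first extend $H^*$ to an $H$-closed $H$-field $\widetilde H$ with $C_{\widetilde H}=\R$; such $\widetilde H$ is provided by [ADH] (e.g., by passing to a Liouville closure and then to a newtonization, both of which preserve the constant field, as $\R$ is already real closed). Being $H$-closed, $\widetilde H$ is $\upo$-free, and since $\widetilde H\supseteq H\not\subseteq\R$ Lemma~\ref{lem:unique HLO-expansion, 1} gives a unique pre-$\HLO$-expansion $\widetilde{\boldsymbol H}$ of $\widetilde H$. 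Restricting $\widetilde{\boldsymbol H}$ to the pre-$H$-subfield $H^*$ produces a pre-$\HLO$-expansion $\boldsymbol H^*$ of $H^*$; restricting further to $H$ yields a pre-$\HLO$-expansion of $H$ which, by uniqueness, must equal $\boldsymbol H$. Hence $\boldsymbol H\subseteq\boldsymbol H^*\subseteq\widetilde{\boldsymbol H}$, and $\boldsymbol H^*$ is a $\HLO$-field extension of $\boldsymbol H$.

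With these compatible expansions in hand, $\boldsymbol H^*$ is a $\d$-algebraic $\HLO$-field extension of $\boldsymbol H$ whose constant field $\R$ is (trivially) algebraic over $C_{\boldsymbol H}=\R$. Proposition~\ref{prop:embed into H-closed}, applied with $\boldsymbol M:=\boldsymbol T$, then yields an embedding $\boldsymbol H^*\to\boldsymbol T$ of $\HLO$-fields over $\boldsymbol H$, which in particular is the desired $H$-field embedding $H^*\to\mathbb{T}$ over $H$. The only genuine obstacle is the coherent $\HLO$-expansion step in the second paragraph---the application of Proposition~\ref{prop:embed into H-closed} is then immediate.
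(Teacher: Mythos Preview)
Your argument is correct and follows the same strategy as the paper---reduce to Proposition~\ref{prop:embed into H-closed} with $\boldsymbol M=\boldsymbol T$ after equipping $H$ and $H^*$ with compatible $\HLO$-expansions. Two small points: first, your citation of Lemma~\ref{lem:unique HLO-expansion, 1} for $\widetilde H$ is misplaced, since that lemma applies only to pre-$H$-subfields of $\mathbb T$; the uniqueness you need for $\widetilde H$ comes instead from its $\upo$-freeness via [ADH, 16.3.19] (stated just before Lemma~\ref{lem:canonical HLO}). Second, the detour through $\widetilde H$ is unnecessary: since $H$ is $\upo$-free it has a \emph{unique} $\HLO$-cut, so \emph{any} $\HLO$-cut on $H^*$ (and one exists by [ADH, 16.3.19]) automatically restricts to $\boldsymbol H$, giving $\boldsymbol H\subseteq\boldsymbol H^*$ directly.
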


\noindent
Corollary~\ref{cor:embed into H-closed, 1} goes through
with $\mathbb T$ replaced by its  $H$-subfield 
$$\mathbb T^{\operatorname{da}}\ :=\  \big\{ f\in\mathbb T : \text{$f$ is $\d$-algebraic (over $\Q$)}\big\}.$$
We now apply this observation to o-minimal structures. The {\it Pfaffian closure}\/ of an expansion of the ordered field of real numbers is its smallest expansion  that is closed under taking Rolle leaves of definable $1$-forms of class~$\c^1$. See Speissegger~\cite{Speissegger} for complete definitions, and the proof that 
the Pfaffian closure of an o-minimal expansion of  the ordered field of reals remains o-minimal.  

\begin{cor}\label{cor:embed into H-closed, 2}
The Hardy field $H$ of the Pfaffian closure of the ordered field of real numbers embeds as an $H$-field over $\R$  into $\mathbb T^{\operatorname{da}}$.
\end{cor}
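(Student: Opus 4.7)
The plan is to reduce to the $\T^{\operatorname{da}}$-variant of Corollary \ref{cor:embed into H-closed, 1} (noted in the remark just following that corollary) by exhibiting a sufficiently rich $\upo$-free $H$-subfield of $H$ that already embeds into $\T^{\operatorname{da}}$. The natural choice is $H_0 := \R(\ell_0, \ell_1, \ell_2, \dots) \subseteq H$, with $\ell_0 = x$ and $\ell_{n+1} = \log \ell_n$ interpreted as germs in the Hardy field of the Pfaffian closure. These germs do lie in $H$, since $\exp$ (hence $\log$) is a Pfaffian function over $\R$ and is therefore definable in the Pfaffian closure.

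First, I would define the map $\iota_0 \colon H_0 \to \T^{\operatorname{da}}$ by sending the germ $\ell_n \in H_0$ to the transseries $\ell_n \in \T$; each such transseries lies in $\T^{\operatorname{da}}$ because iterated logarithms satisfy polynomial ODEs over $\Q$. As explained in the paragraph preceding Corollary \ref{cor:embed into H-closed, 1}, this $\iota_0$ is the unique ordered differential field embedding extending $\mathrm{id}_\R$ and sending each germ $\ell_n$ to the transseries $\ell_n$; by the same remark, its image is an $\upo$-free $H$-subfield of $\T$, because the sequence $(\ell_n)$ is coinitial in $\T^{>\R}$.

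Next, I would verify the remaining hypotheses for applying (the $\T^{\operatorname{da}}$-version of) Corollary \ref{cor:embed into H-closed, 1} to extend $\iota_0$: that $H$ is a d-algebraic $H$-field extension of $H_0$ with constant field $\R$. The constant field of the Hardy field of any o-minimal expansion of the real field is $\R$, so the latter is immediate. For d-algebraicity, I would appeal to the inductive construction of the Pfaffian closure: it is obtained by iteratively adjoining Rolle leaves of definable $\c^1$ one-forms on open subsets of $\R^n$, and such a leaf is (by its defining Pfaffian equation) a component of a solution to a system of polynomial first-order ODEs whose coefficients are definable in the previous stage. Restricting to one-variable germs at $+\infty$ and iterating through the stages of the closure shows that every element of $H$ is d-algebraic over $\R$, hence over $H_0$.

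With these hypotheses in place, the $\T^{\operatorname{da}}$-version of Corollary \ref{cor:embed into H-closed, 1}, applied to the $\upo$-free $H$-subfield $\iota_0(H_0) \subseteq \T^{\operatorname{da}}$ and to the d-algebraic $H$-field extension $H \supseteq H_0$ with constant field $\R$, produces an $H$-field embedding $H \to \T^{\operatorname{da}}$ over $H_0$, which is in particular an embedding over $\R$. The main obstacle is the d-algebraicity assertion for all germs in $H$: this does not follow from definability in a general o-minimal structure, and it hinges on the specific inductive shape of Speissegger's Pfaffian closure, together with the fact that Rolle leaves of definable one-forms satisfy first-order ODEs over the preceding stage.
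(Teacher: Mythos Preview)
Your approach is essentially the same as the paper's: embed $H_0=\R(\ell_0,\ell_1,\dots)$ into $\T^{\operatorname{da}}$, argue that $H$ is $\d$-algebraic over $\R$, and invoke the $\T^{\operatorname{da}}$-variant of Corollary~\ref{cor:embed into H-closed, 1}. The only substantive difference is in how the $\d$-algebraicity step---which you rightly flag as the main obstacle---is handled. Rather than arguing inductively through the stages of the Pfaffian closure, the paper invokes \cite[Theorem~3]{LMS}: for every $f\colon\R\to\R$ definable in the Pfaffian closure there exist $r\in\N$, a semialgebraic $g\colon\R^{r+2}\to\R$, and $a\in\R$ with $f^{(r+1)}(t)=g\big(t,f(t),\dots,f^{(r)}(t)\big)$ for $t>a$; since the graph of $g$ is semialgebraic it lies in the zero set of some nonzero polynomial, and this yields an algebraic differential equation for the germ of $f$ over $\R$. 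Your inductive sketch is the right intuition, but passing from ``Rolle leaves satisfy first-order Pfaffian systems'' to ``every definable one-variable germ is $\d$-algebraic'' requires navigating cell decompositions and projections, which is precisely the work packaged into the cited result of Lion--Miller--Speissegger.
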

\begin{proof}
Let $f\colon\R\to\R$ be definable in the Pfaffian closure of the ordered field of real numbers.
The proof of \cite[Theorem~3]{LMS} gives  $r\in\N$,  semialgebraic $g\colon\R^{r+2}\to\R$, and  $a\in\R$ 
such that $f|_{(a,\infty)}$ is $\c^{r+1}$ and $f^{(r+1)}(t)=g\big(t,f(t),\dots,f^{(r)}(t)\big)$ for all~$t>a$. Take $P\in \R[Y_1,\dots,Y_{r+3}]^{\neq}$  vanishing identically on the graph of $g$; see~[ADH, B.12.18].
Then $P\big(t,f(t),\dots,f^{(r+1)}(t)\big)=0$ for $t>a$. Hence the germ of $f$ is $\d$-algebraic over $\R$, and so
 $H$ is $\d$-algebraic over $\R$. As $H$ contains the $\upo$-free Hardy field $\R(\ell_0, \ell_1,\dots)$,  we can use the remark following Corollary~\ref{cor:embed into H-closed, 1}.
\end{proof}

\begin{question}
Let $H$ be the Hardy field of an o-minimal expansion of the ordered field of reals, and 
let $H^*\supseteq H$ be the Hardy field of the Pfaffian closure of this expansion.
Does every embedding $H\to\mathbb T$ extend to an embedding $H^*\to\mathbb T$?
\end{question}

\noindent
We mentioned in the introduction that  an embedding $H\to\mathbb T$ as in Corollaries~\ref{cor:transfer T, 2} and~\ref{cor:embed into H-closed, 2}  can be viewed as an {\it expansion operator}\/ for the Hardy field $H$
and  its inverse as a {\it summation operator.}\/
The corollaries above concern the existence of expansion operators; this relied on the $H$-closedness of $\mathbb T$. Likewise, Theorem~\ref{thm:char d-max} and Proposition~\ref{prop:embed into H-closed}  also give rise to summation operators: 
\begin{cor}\label{cor:embed into H-closed, 3}
Let $H$ be an $\upo$-free $H$-field and let $H^*$ be
a $\d$-algebraic $H$-field extension of $H$ with $C_{H^*}$ algebraic over $C_H$.
Then any $H$-field embedding~$H\to M$ into a $\d$-maximal Hardy field  extends to an $H$-field embedding~$H^*\to M$.
\end{cor}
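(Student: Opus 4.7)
The plan is to reduce this to Proposition~\ref{prop:embed into H-closed} by lifting $\iota$ to an embedding of $\HLO$-fields and invoking Theorem~\ref{thm:char d-max} to identify $M$ as an $H$-closed $\HLO$-field. The structural work is essentially packaged in those two results; what is required here is to check that all the $\HLO$-data line up, and for this the key is $\upo$-freeness.

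First, since $M$ is $\d$-maximal, Theorem~\ref{thm:char d-max} gives $\R\subseteq M$ and $M$ is $H$-closed, so $M$ carries its canonical $\HLO$-expansion $\boldsymbol M$, which is an $H$-closed $\HLO$-field. Next, as $H$ is $\upo$-free, [ADH, 16.3.19] (cited in the remarks before Lemma~\ref{lem:canonical HLO}) says $H$ has a \emph{unique} expansion to a pre-$\HLO$-field, which we call $\boldsymbol H$; since $H$ is an $H$-field, $\boldsymbol H$ is an $\HLO$-field. Now pick any expansion $\boldsymbol H^*$ of $H^*$ to an $\HLO$-field; such an expansion exists because any pre-$H$-field admits at least one $\HLO$-cut. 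The restriction of $\boldsymbol H^*$ to $H$ is an $\HLO$-expansion of $H$, so by uniqueness it equals $\boldsymbol H$. Thus $\boldsymbol H\subseteq\boldsymbol H^*$, and $\boldsymbol H^*$ is a $\d$-algebraic $\HLO$-field extension of $\boldsymbol H$.

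The next step is to promote $\iota\colon H\to M$ to an embedding $\boldsymbol H\to\boldsymbol M$ of $\HLO$-fields. The image $\iota(H)$ is $H$-field isomorphic to $H$, hence $\upo$-free, and hence has a unique pre-$\HLO$-expansion. The $\HLO$-structure that $\iota(H)$ inherits from $\boldsymbol M$ must therefore be this unique one; pulling back along the $H$-field isomorphism $\iota\colon H\to\iota(H)$ gives back the unique $\HLO$-expansion $\boldsymbol H$ of $H$. Consequently $\iota$ is an $\HLO$-field embedding $\boldsymbol H\to\boldsymbol M$.

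Finally, we apply Proposition~\ref{prop:embed into H-closed} to the $\HLO$-field $\boldsymbol H$, its $\d$-algebraic $\HLO$-extension $\boldsymbol H^*$ (whose constant field is algebraic over $C_H$ by hypothesis), and the $H$-closed $\HLO$-field $\boldsymbol M$ viewed as an extension of $\boldsymbol H$ via $\iota$: this yields an embedding $\boldsymbol H^*\to\boldsymbol M$ of $\HLO$-fields over $\boldsymbol H$. Forgetting $\HLO$-structure gives the desired $H$-field embedding $H^*\to M$ over $H$ extending $\iota$. There is no real obstacle beyond this bookkeeping; the substantive content sits in Proposition~\ref{prop:embed into H-closed} (via Newton-Liouville closures and [ADH, 16.4.8]) and in our main theorem, which is what ensures that a $\d$-maximal Hardy field is a genuine $H$-closed $\HLO$-field into which one can map.
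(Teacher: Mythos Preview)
Your proof is correct and follows exactly the approach the paper intends: the paper states the corollary as an immediate consequence of Theorem~\ref{thm:char d-max} and Proposition~\ref{prop:embed into H-closed} without further details, and you have supplied precisely the routine $\HLO$-bookkeeping (via uniqueness of the $\HLO$-cut on the $\upo$-free $H$-field $H$) needed to connect them.
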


\noindent
Hence any $\mathcal{L}$-isomorphism between  an ordered differential subfield $H\supseteq\R(\ell_0,\ell_1,\dots)$ of $\mathbb T$  and
a Hardy field $F$ extends to an $\mathcal{L}$-isomorphism between the ordered differential subfield~$H^*:=\{f\in\mathbb T:\text{$f$ is $\d$-algebraic over $H$}\}$ of $\mathbb T$ and a Hardy field extension of $F$. For~$H=\R(\ell_0,\ell_1,\dots)\subseteq \mathbb T$ (so $H^*=\mathbb T^{\operatorname{da}}$) we recover the main result of~\cite{vdH:hfsol}:

\begin{cor} \label{cor:transserial}
The $H$-field $\mathbb T^{\operatorname{da}}$ is $\mathcal{L}$-isomorphic to a Hardy field $\supseteq \R(\ell_0, \ell_1,\dots)$. 
\end{cor}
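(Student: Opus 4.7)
The plan is to specialize Corollary~\ref{cor:embed into H-closed, 3} to $H = \R(\ell_0, \ell_1, \dots)$, exactly as the remark ``For $H=\R(\ell_0,\ell_1,\dots)\subseteq \mathbb T$ (so $H^*=\mathbb T^{\operatorname{da}}$)\dots'' preceding Corollary~\ref{cor:transserial} already suggests. Two preliminary observations suffice.

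First, I would check that $H \subseteq \mathbb{T}^{\operatorname{da}}$, so that $H^* := \{f \in \mathbb{T} : f \text{ is $\d$-algebraic over } H\}$ coincides with $\mathbb{T}^{\operatorname{da}}$. This is an easy induction: $\ell_0 = x$ satisfies $Y' = 1$ and hence is $\d$-algebraic over $\Q$, and each $\ell_{n+1}$ satisfies the linear differential equation $(\ell_0 \ell_1 \cdots \ell_n)\, Y' = 1$ over $\Q(\ell_0, \dots, \ell_n)$, so is $\d$-algebraic over $\Q$ as well. Since $\d$-algebraicity is transitive, $H^* = \mathbb{T}^{\operatorname{da}}$.

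Second, I would exhibit an $\mathcal L$-isomorphism $\iota_0$ between the ordered differential subfield $\R(\ell_0, \ell_1, \dots)$ of $\mathbb{T}$ and the Hardy field $\R(\ell_0, \ell_1, \dots)$, sending each $\ell_n \in \mathbb{T}$ to the corresponding germ $\ell_n$. On both sides the $\ell_n$ are algebraically independent over $\R$ and obey the same recursive identities $\ell_0' = 1$, $\ell_{n+1}' = 1/(\ell_0 \ell_1 \cdots \ell_n)$, so $\iota_0$ is a well-defined differential field isomorphism over $\R$. The ordering of a purely transcendental $\R$-extension with these generators is determined by the cofinal chain $\R < \cdots < \ell_2 < \ell_1 < \ell_0$ together with the nested comparabilities $\ell_0 \succ \ell_1 \succ \ell_2 \succ \cdots$, and the relation $\preceq$ is $\mathcal L$-definable from $\leq$ and the field operations in either of these pre-$H$-fields; on both sides these data match, so $\iota_0$ preserves $\leq$ and $\preceq$.

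With these in hand, pick any maximal Hardy field $M$ extending $\R(\ell_0, \ell_1, \dots)$; by Theorem~\ref{thm:char d-max}, $M$ is a $\d$-maximal, $\upo$-free, $H$-closed Hardy field. The field $\R(\ell_0, \ell_1, \dots)$ is itself $\upo$-free (as noted in the paragraph preceding Corollary~\ref{cor:embed into H-closed, 1}), so Corollary~\ref{cor:embed into H-closed, 3} applies and extends the $H$-field embedding $\iota_0 : \R(\ell_0, \ell_1, \dots) \hookrightarrow M$ to an $H$-field embedding $\iota : \mathbb{T}^{\operatorname{da}} \to M$. Setting $F := \iota(\mathbb{T}^{\operatorname{da}})$, the map $\iota : \mathbb{T}^{\operatorname{da}} \to F$ is the required $\mathcal L$-isomorphism onto a Hardy field $F \supseteq \R(\ell_0, \ell_1, \dots)$. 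The substantive content of this argument — solving arbitrary $\d$-algebraic equations over $H$ inside a Hardy field while controlling asymptotics — is entirely absorbed into Corollary~\ref{cor:embed into H-closed, 3}, which rests on Theorem~\ref{thm:char d-max} and Proposition~\ref{prop:embed into H-closed}; the only ``obstacle'' here is the elementary check that $\iota_0$ is an $\mathcal L$-isomorphism.
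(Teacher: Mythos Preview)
Your proposal is correct and follows exactly the approach the paper intends: the sentence immediately preceding Corollary~\ref{cor:transserial} already frames the corollary as the special case $H=\R(\ell_0,\ell_1,\dots)$ of Corollary~\ref{cor:embed into H-closed, 3}, and you have simply spelled out the details. One small remark: your verification that $\iota_0$ preserves $\leq$ and $\preceq$ is a bit informal, but the paper already records (in the paragraph before Corollary~\ref{cor:embed into H-closed, 1}) that this map is an ordered differential field embedding over~$\R$, and since both sides are pre-$H$-fields containing $\R$ the dominance relation $\preceq$ is recovered from the ordering via $f\preceq 1 \Leftrightarrow \abs{f}\le c$ for some $c\in\R$, so preservation of $\preceq$ is automatic.
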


\noindent
Any Hardy field that is $\mathcal{L}$-isomorphic to~$\mathbb T^{\operatorname{da}}$ is $H$-closed  by [ADH,~16.6] and thus $\d$-maximal by Theorem~\ref{thm:char d-max}, so contains the Hardy field $\operatorname{D}(\Q)$.
Thus we have an $\mathcal{L}$-embedding $e\colon \Dx(\Q)\to \mathbb T^{\operatorname{da}}$, which we can view as an expansion operator for~$\Dx(\Q)$.
We suspect that  $e(\Dx(\Q))$ is independent of the choice of $e$.

\medskip
\noindent
In the remainder of this section we use the results above to determine the universal theory of Hardy fields.
First, some generalities on valued differential fields.

\subsection*{Valued differential fields with very small derivation}
Let $K$ be  a valued differential field with derivation $\der$. Recall that if $K$ has small derivation (that is, $\der\smallo\subseteq \smallo$), then also~${\der\mathcal O\subseteq\mathcal O}$ by [ADH, 4.4.2],  so we have a unique derivation on the residue field~$\k:=\mathcal O/\smallo$ that makes the residue morphism $\mathcal O\to \k$ into a morphism of differential rings (and we call $\k$ with this induced derivation the differential residue field of $K$). We say that $\der$ is {\bf very small}\index{very small derivation}\index{valued differential field!very small derivation} if $\der\mathcal O\subseteq\smallo$. So $K$ has  very small derivation iff~$K$ has small derivation and the induced derivation on $\k$ is trivial. If  $K$ has small derivation and~$\mathcal O=C+\smallo$, then $K$ has very small derivation. If $K$ has very small derivation, then so does every valued differential subfield of $K$, and if~$L$ is a valued differential field extension of~$K$ with small derivation and $\k_L=\k$,  then $L$ has very small derivation. Moreover:

\begin{lemma}\label{lem:very small der alg ext}
Let $L$ be a valued differential field extension of $K$, algebraic over~$K$, and suppose $K$ has very small derivation. Then $L$ also has very small derivation.
\end{lemma}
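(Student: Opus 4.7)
The plan is to split the statement into two parts: first, that $L$ has small derivation (so that the residue map $\mathcal{O}_L\to\k_L$ induces a derivation on $\k_L$), and second, that this induced derivation is trivial. The first part follows from  standard results on algebraic extensions of valued differential fields with small derivation in characteristic zero (see, e.g., the algebraic extension results in [ADH, \S6]); note that the unique extension of $\der$ from $K$ to $L$ respects the valuation in a controlled enough way to send $\mathcal{O}_L$ into itself. Granted this, the induced derivation $\bar\der$ on $\k_L$ restricts to the derivation on $\k_K$, which by hypothesis is trivial.

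For the second part, I would use that $\k_L/\k_K$ is algebraic. This is automatic: since $L/K$ is algebraic as a field extension, every residue $\bar a\in\k_L$ is algebraic over $\k_K$ by standard valuation theory (lift $\bar a$ to $a\in\mathcal{O}_L$, take a minimal polynomial over $K$ with coefficients in $\mathcal{O}_K$ using that $a$ is integral over $\mathcal{O}_K$, and reduce mod $\smallo_K$). Now I would invoke the fact that in characteristic zero, any derivation on a field extends uniquely to an algebraic extension. Applied to the trivial derivation on $\k_K$, whose trivial extension to $\k_L$ is already one such extension, the unique-extension principle forces $\bar\der$ to be trivial on $\k_L$. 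Hence $\der\mathcal{O}_L\subseteq\smallo_L$, which is exactly what it means for $L$ to have very small derivation.

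The main obstacle is merely the verification (or correct citation) that small derivation passes to algebraic extensions; everything else is a clean application of uniqueness of extensions of derivations in characteristic zero and of the fact that residue extensions inherit algebraicity. If a direct argument is preferred over a citation, one can proceed element by element: given $a\in\mathcal{O}_L$ with minimal polynomial $P\in\mathcal{O}_K[Y]$, the identity $a'=-P^\der(a)/P'(a)$ (where $P^\der$ denotes coefficient-wise differentiation) puts the numerator in $\smallo_L$ since each coefficient of $P^\der$ lies in $\smallo_K$; combined with $P'(a)\ne 0$ (characteristic zero, minimality of $P$) and the standard control of $v(P'(a))$ afforded by the algebraic extension, this yields $a'\in\smallo_L$ directly.
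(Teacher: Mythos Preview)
Your main argument is correct and essentially identical to the paper's proof, which cites [ADH, 6.2.1] for small derivation passing to algebraic extensions and [ADH, 3.1.9] for algebraicity of $\k_L$ over $\k$, then concludes by uniqueness of derivation extensions in characteristic zero. One caveat: the alternative element-by-element argument you sketch at the end is incomplete as stated, since the ``standard control of $v(P'(a))$'' is not automatic---in a ramified extension $P'(a)$ can lie in $\smallo_L$, so from $P^\der(a)\in\smallo_L$ alone you cannot conclude $a'=-P^\der(a)/P'(a)\in\smallo_L$; stick with the citation-based route.
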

\begin{proof}
By [ADH, 6.2.1], $L$ has small derivation. The derivation of $\k$ is trivial and~$\k_L$ is algebraic over $\k$ [ADH, 3.1.9], so the derivation of $\k_L$ is also trivial.
\end{proof}

\noindent
Next we focus on  pre-$\d$-valued fields with very small derivation. First an easy observation about asymptotic couples:

\begin{lemma}\label{lem:small deriv char}
Let $(\Gamma,\psi)$ be an asymptotic couple;  then  
$$\text{$(\Gamma,\psi)$ has gap $0$}\quad\Longleftrightarrow\quad  \text{$(\Gamma,\psi)$ has small derivation and $\Psi\subseteq\Gamma^{<}$.}$$
In particular, if $(\Gamma,\psi)$ has small derivation and does not have gap $0$, then each asymptotic couple extending $(\Gamma,\psi)$ has small derivation.
\end{lemma}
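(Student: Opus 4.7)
The plan is to unpack definitions for the first equivalence, and then to invoke the axiom (AC3) of asymptotic couples for the ``in particular'' clause.

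For the first equivalence, I would recall that by definition an element $\beta\in\Gamma$ is a gap in $(\Gamma,\psi)$ exactly when $\Psi<\beta<(\Gamma^{>})'$, where $(\Gamma^{>})':=\{\gamma+\psi(\gamma):\gamma\in\Gamma^{>}\}$. Specializing to $\beta=0$, the left inequality $\Psi<0$ is literally $\Psi\subseteq\Gamma^{<}$, and the right inequality $0<(\Gamma^{>})'$ says that $\gamma+\psi(\gamma)>0$ for every $\gamma>0$, which is exactly the definition of small derivation at the level of the asymptotic couple. So the first equivalence is immediate from unpacking.

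For the second assertion, suppose $(\Gamma,\psi)$ has small derivation but $0$ is not a gap. By the equivalence just obtained, the failure of ``$0$ a gap'' forces $\Psi\not\subseteq\Gamma^{<}$, so I can fix $\gamma_{0}\in\Gamma^{\neq}$ with $\psi(\gamma_{0})\geq 0$. Now let $(\Gamma_{1},\psi_{1})$ be any asymptotic couple extending $(\Gamma,\psi)$, and pick an arbitrary $\gamma_{1}\in\Gamma_{1}^{>}$. Applying axiom (AC3) in $(\Gamma_{1},\psi_{1})$ to the pair $(\gamma_{1},\gamma_{0})$ yields
\[
\gamma_{1}+\psi_{1}(\gamma_{1}) \ >\ \psi_{1}(\gamma_{0}) \ =\ \psi(\gamma_{0}) \ \geq\ 0,
\]
using that $\psi_{1}$ extends $\psi$ on $\Gamma^{\neq}$. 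Hence $(\Gamma_{1},\psi_{1})$ has small derivation, as desired.

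There is really no substantive obstacle: both parts are short consequences of the definition of a gap and the axioms of an asymptotic couple, and the role of the extra hypothesis ``not gap $0$'' is precisely to produce a $\psi$-value $\geq 0$ that (AC3) can then leverage uniformly in every extension.
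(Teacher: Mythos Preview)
Your argument is correct and is exactly the intended one: the paper states this lemma as ``an easy observation about asymptotic couples'' and gives no proof, so you have simply filled in the routine unpacking of the definitions of gap and small derivation, together with the standard axiom $\Psi<(\Gamma^{>})'$, which is precisely your (AC3).
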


\begin{cor}
Suppose $K$ is pre-$\d$-valued   with   small derivation, and suppose~$0$ is not a gap in $K$. Then $K$ has very small derivation.
\end{cor}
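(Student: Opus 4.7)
The plan is to show $\der\mathcal{O}\subseteq\smallo$ by splitting into the two cases $f\prec 1$ and $f\asymp 1$ for $f\in\mathcal{O}$. The first case is immediate from the assumed small derivation, since $\der\smallo\subseteq\smallo$. Thus everything reduces to showing $v(f')>0$ whenever $v(f)=0$.

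For that, I would invoke the pre-$\d$-valued property of $K$, which (with small derivation in force) amounts to the inequality
\[
v(f')\ >\ \Psi\qquad\text{whenever } v(f)=0,
\]
where $\Psi:=\psi(\Gamma^{\neq})\subseteq\Gamma$. Given this, it suffices to exhibit a single element of $\Psi$ that is nonnegative, since then $v(f')>\psi(\gamma_0)\geq 0$ would force $f'\prec 1$. Producing such an element is exactly what Lemma~\ref{lem:small deriv char} supplies: applied to the asymptotic couple $(\Gamma,\psi)$ of $K$, the hypotheses ``small derivation'' and ``$0$ is not a gap'' together rule out $\Psi\subseteq\Gamma^{<}$, so some $\psi(\gamma_0)\in\Psi$ lies in $\Gamma^{\geq}$.

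The proof is short and I do not anticipate any real obstacle; the only point of care is identifying the correct pre-$\d$-valued characterization to invoke --- namely, $v(f')>\Psi$ whenever $v(f)=0$ --- and matching the ``$0$ is not a gap'' condition on $K$ with the same condition on its associated asymptotic couple $(\Gamma,\psi)$.
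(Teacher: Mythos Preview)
Your proposal is correct and follows essentially the same approach as the paper. The paper's proof is slightly more streamlined: rather than splitting into the cases $f\prec 1$ and $f\asymp 1$, it directly invokes the pre-$\d$-valued axiom for all $f\preceq 1$, using the element $g\not\asymp 1$ with $g^\dagger\preceq 1$ furnished by Lemma~\ref{lem:small deriv char} to get $f'\prec g^\dagger\preceq 1$ in one stroke; but the underlying idea is identical to yours.
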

\begin{proof}
The  previous lemma gives $g\in K^\times$ with $g\not\asymp 1$ and $g^\dagger\preceq 1$. Then for each~$f\in K$ with  $f\preceq 1$ we have $f'\prec g^\dagger\preceq 1$.
\end{proof}

\begin{cor}\label{cor:dv(K) very small der}
Suppose $K$ is pre-$\d$-valued of $H$-type with very small derivation. Then the $\d$-valued hull $\operatorname{dv}(K)$ of $K$ has small derivation.
\end{cor}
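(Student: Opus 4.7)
The plan is to establish $\der\mathcal{O}_L\subseteq\mathcal{O}_L$ for $L:=\operatorname{dv}(K)$ by splitting into two cases according to whether $0$ is a gap in the asymptotic couple $(\Gamma,\psi)$ of $K$. Since $L$ is a pre-$\d$-valued $H$-asymptotic extension of $K$, its asymptotic couple $(\Gamma_L,\psi_L)$ extends $(\Gamma,\psi)$, and we are free to use this extension in reading off valuation-theoretic consequences.

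If $0$ is \emph{not} a gap in $(\Gamma,\psi)$, the argument is immediate from the ``in particular'' clause of Lemma~\ref{lem:small deriv char}: since $K$ has very small (hence small) derivation and $0$ is not a gap, every asymptotic couple extending $(\Gamma,\psi)$ has small derivation, so $(\Gamma_L,\psi_L)$ does. As $L$ is obtained from $K$ by adjoining constants (each contributing zero derivative), small derivation on the asymptotic couple transfers to $\der\mathcal{O}_L\subseteq\mathcal{O}_L$.

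If $0$ \emph{is} a gap in $(\Gamma,\psi)$, then Lemma~\ref{lem:small deriv char} does not apply directly, and one must analyze the construction of the $\d$-valued hull explicitly (as in [ADH, Chapter~10]). Presenting $L$ as an iterated one-step adjunction of constants $c$ chosen so that $f-c\prec 1$ for witnesses $f\in\mathcal{O}_K\setminus(C_K+\smallo_K)$, each step introduces a new value $\gamma_0:=v(f-c)$ with
\[
\psi_L(\gamma_0)\ =\ v\big((f-c)^\dagger\big)\ =\ v(f')-\gamma_0,
\]
using $c'=0$. The very small derivation hypothesis gives $f'\prec 1$, and the gap hypothesis on $\Psi$ can be leveraged to compare $v(f')$ with $\gamma_0$, yielding $\psi_L(\gamma_0)\leq 0$; iterating one thus maintains small derivation throughout.

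The main obstacle is the second case: one has to establish the key inequality $v(f')\leq v(f-c)$ at each adjunction, which requires a careful analysis of how the adjoined constant $c$ is chosen relative to the ``closeness'' of $f$ to $C_K$ and how the valuation is extended to $K(c)$. This uses the gap structure in $(\Gamma,\psi)$ in an essential way, together with specific properties of the $\d$-valued hull construction from [ADH, Chapter~10]. The first case is routine given the lemma.
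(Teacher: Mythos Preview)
Your two-case split matches the paper, and Case~1 is correct: if $0$ is not a gap, Lemma~\ref{lem:small deriv char} shows every asymptotic couple extending $(\Gamma,\psi)$ has small derivation, and for an asymptotic field this is equivalent to $\der\smallo\subseteq\smallo$. (Your aside about ``adjoining constants'' is unneeded here; the equivalence is a general fact about asymptotic fields and has nothing to do with how $\operatorname{dv}(K)$ is built.)

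Case~2 has a genuine gap that you yourself flag: you never establish the inequality $v(f')\leq v(f-c)$, only assert that the gap hypothesis ``can be leveraged.'' The paper sidesteps this entirely. The key observation is that very small derivation gives: no $b\asymp 1$ in $K$ satisfies $b'\asymp 1$ (since $b\preceq 1\Rightarrow b'\prec 1$). This is exactly the hypothesis of [ADH,~10.3.2(ii)], which yields $\Gamma_{\operatorname{dv}(K)}=\Gamma$ directly. Since the asymptotic couple of $\operatorname{dv}(K)$ extends $(\Gamma,\psi)$ and has the same underlying group, it equals $(\Gamma,\psi)$; hence $0$ remains a gap in $\operatorname{dv}(K)$, and gap~$0$ implies small derivation by Lemma~\ref{lem:small deriv char}. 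No step-by-step tracking of adjunctions, and no inequality of the sort you propose, is needed; in fact your worry about ``new values $\gamma_0$'' evaporates once one knows the value group does not grow.
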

\begin{proof}
By Lemma~\ref{lem:small deriv char}, if $0$ is not a gap in $K$, then  every pre-$\d$-valued field extension of $K$ has small derivation. If $0$ is a gap in $K$, then no $b\asymp 1$ in~$K$  satisfies~$b'\asymp 1$, since $K$ has  very small derivation. Thus $\Gamma_{\operatorname{dv}(K)}=\Gamma$ by [ADH,~10.3.2(ii)], so $0$ remains a gap in $\operatorname{dv}(K)$. In both cases, $\operatorname{dv}(K)$ has small derivation.
\end{proof}

\noindent
If $K$ is pre-$\d$-valued and ungrounded, then for each $\phi\in K$ which is active  in $K$,   the pre-$\d$-valued field $K^\phi$ (with derivation $\derdelta=\phi^{-1}\der$) has very small derivation.

\subsection*{The universal theory of Hardy fields}
 Let
$\mathcal L^\iota$ be the language $\mathcal L$ of
ordered valued differential rings from above augmented by a new unary function symbol~$\iota$.
We view each pre-$H$-field~$H$ as an $\mathcal L^\iota$-structure by interpreting the symbols
from~$\mathcal L$ in the natural way and
$\iota$ by the function $\iota\colon H\to H$ given by $\iota(a):=a^{-1}$ for $a\in H^\times$ and~$\iota(0):=0$. 
Since every Hardy field extends to a maximal one, each universal 
 $\mathcal L^\iota$-sentence which holds in every maximal Hardy field also  holds in every   Hardy field.  
From \cite[Section~1.1]{ADH4} recall  that a valued differential field $K$   has {\it very small derivation}\/
if  for each $f\in K$: $f\preceq 1\Rightarrow f'\prec 1$.
We now use  Corollary~\ref{cor:transserial} to show:

\begin{prop}\label{prop:univ theory}
Let $\Sigma$ be the set of universal $\mathcal L^\iota$-sentences true in all   Hardy fields. Then the models of $\Sigma$ are the  pre-$H$-fields with very small derivation.
\end{prop}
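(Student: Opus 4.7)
My plan is to prove the proposition by showing the two inclusions separately.

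For the easy inclusion (every model of $\Sigma$ is a pre-$H$-field with very small derivation), I would first verify that the axioms defining the class of pre-$H$-fields with very small derivation can all be rendered as universal $\mathcal L^\iota$-sentences: field axioms (using $\iota$ to handle inversion via the universal clause $x\neq 0 \to x\cdot\iota(x)=1$), ordered field axioms, axioms for $\preceq$ being a dominance relation of $H$-type, convexity of $\mathcal O$, the pre-$\d$-valued axiom and the pre-$H$-field compatibility condition ($f>0 \wedge f\succ 1 \Rightarrow f'>0$), together with very small derivation ($f\preceq 1 \Rightarrow f'\prec 1$). Each of these holds in every Hardy field: the nontrivial point is very small derivation, which in a Hardy field $H$ follows from the fact that any $f \in H$ with $f \preceq 1$ is eventually monotone and bounded, hence has a limit $\ell \in \R$, and $f-\ell \prec 1$ then forces $f' \to 0$ because $f'$ is itself eventually monotone. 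Consequently these universal axioms lie in $\Sigma$ and every model of $\Sigma$ is a pre-$H$-field with very small derivation.

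For the hard inclusion, let $K$ be a pre-$H$-field with very small derivation; I need to show $K\models\Sigma$. The plan is to embed $K$ as an $\mathcal L^\iota$-substructure into an $H$-closed field $L$ with small derivation, then leverage the completeness of $T_H=T^{\operatorname{nl}}_{\operatorname{small}}$. To build $L$: pass first to the $\d$-valued hull $\operatorname{dv}(K)$, which by Corollary~\ref{cor:dv(K) very small der} has small derivation, and whose ordering is uniquely determined by that of $K$ via convexity of $\mathcal O_{\operatorname{dv}(K)}$, so that $\operatorname{dv}(K)$ is a pre-$H$-field (in fact an $H$-field) extending $K$ in $\mathcal L^\iota$. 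Next pass to the real closure (algebraic, hence preserves small derivation by [ADH, 6.2.1]). Then use the standard [ADH] machinery to adjoin an $\upo$-free $H$-field extension with the same constant field and then form a Newton-Liouville closure via [ADH, 16.4.8]; each step preserves small derivation and the $\mathcal L^\iota$-structure. The result is an $H$-closed field $L \supseteq K$ with small derivation.

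To finish, by Zorn take a $\d$-maximal Hardy field $M$; by Theorem~\ref{thm:char d-max}, $M$ is $H$-closed with $M \supseteq \R$, so both $L$ and $M$ are models of $T_H = T^{\operatorname{nl}}_{\operatorname{small}}$, which is complete by [ADH, 16.6.3]. Hence $L \equiv M$ as $\mathcal L$-structures, and since $\iota$ is $\mathcal L$-definable on the field, this lifts to $\mathcal L^\iota$-elementary equivalence. Every $\sigma \in \Sigma$ holds in $M$ (which is a Hardy field), hence in $L$; being universal, $\sigma$ then transfers down to the $\mathcal L^\iota$-substructure $K$, yielding $K \models \Sigma$.

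The principal obstacle is the construction of $L$: at each extension step (especially the adjunction of an $\upo$-free extension and the newtonization) one must check that small derivation is preserved and that the ordering extends compatibly, so that $L$ is genuinely an $H$-closed field with small derivation and genuinely an $\mathcal L^\iota$-extension of $K$. The key enabling result here is Corollary~\ref{cor:dv(K) very small der}, which converts the hypothesis of very small derivation on $K$ into small derivation on $\operatorname{dv}(K)$ and thereby brings us into the scope of the standard closure operations developed in [ADH].
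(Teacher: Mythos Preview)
Your approach is essentially the paper's: axiomatize pre-$H$-fields with very small derivation by universal $\mathcal L^\iota$-sentences (easy direction), and for the hard direction embed $K$ into an $H$-closed field $L$ with small derivation, then use completeness of $T^{\operatorname{nl}}_{\operatorname{small}}$ together with a Hardy field model. The paper packages the embedding step as Lemma~\ref{lem:very small der extend} and uses Corollary~\ref{cor:transserial} for the Hardy field model where you use Theorem~\ref{thm:char d-max}; either works.

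However, the ``principal obstacle'' you flag is genuine, and your plan does not resolve it. After passing to the $H$-field hull, you may still have gap~$0$ (Corollary~\ref{cor:dv(K) very small der} shows exactly that gap~$0$ can persist), and in that case small derivation is \emph{not} automatic in further pre-$H$-field extensions; your assertion that ``each step preserves small derivation'' through the $\upo$-free and Newton--Liouville stages is unjustified as stated. The paper's fix, carried out in the proof of Lemma~\ref{lem:very small der extend}, is short but essential: if the asymptotic couple has gap~$0$, adjoin an element $y$ with $y'=1$, $y\succ 1$ via [ADH,~10.5.11]; then $y^\dagger=1/y\prec 1$, so $\Psi^{\geq 0}\neq\emptyset$, and now by Lemma~\ref{lem:small deriv char} \emph{every} pre-$H$-field extension has small derivation. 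After this maneuver [ADH,~14.5.11] delivers the $H$-closed extension with small derivation, and the rest of your argument goes through.
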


\noindent
For this we need  a refinement of [ADH, 14.5.11]:

\begin{lemma}\label{lem:very small der extend}
Let $H$ be a pre-$H$-field with very small derivation. Then $H$ extends to an $H$-closed field with small derivation.
\end{lemma}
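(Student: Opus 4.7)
The plan is to reduce to the existing extension theorem [ADH, 14.5.11] by first arranging that we work with a $\d$-valued $H$-field whose asymptotic couple has no gap at $0$, so that Lemma~\ref{lem:small deriv char} propagates small derivation through every further asymptotic extension.

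First I pass to the $\d$-valued hull $\operatorname{dv}(H)$: by Corollary~\ref{cor:dv(K) very small der} this hull has small derivation, and being $\d$-valued of $H$-type with an induced extension of the ordering on $H$, it is in fact an $H$-field. I may therefore replace $H$ by $\operatorname{dv}(H)$ and assume throughout that $H$ is an $H$-field with small derivation. If $0$ is not a gap in the asymptotic couple $(\Gamma,\psi)$ of $H$, then Lemma~\ref{lem:small deriv char} already guarantees that every extending asymptotic couple has small derivation, so any $H$-closed extension $M\supseteq H$ supplied by [ADH, 14.5.11] automatically has small derivation, and we are done.

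The remaining case is when $0$ is a gap in $H$. Here I first resolve the gap by a controlled $H$-field extension. Using the standard constructions of [ADH, Chapter~10] for passing across a gap (adjoining an element $\lambda$ with $\lambda^\dagger=s$ for a carefully chosen $s\asymp 1$ in $H$), I obtain an $H$-field extension $H_1\supseteq H$ in which $\psi$ now takes the value $0$ on the new element $v\lambda$, so that $0\in\Psi_{H_1}$ and $0$ is no longer a gap. The very small (as opposed to merely small) derivation of the original $H$ provides the extra slack needed so that this adjunction can be performed while preserving the small-derivation inequality $\psi(\gamma)>-\gamma$ for $\gamma>0$ in the enlarged asymptotic couple. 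Once this is done, I apply [ADH, 14.5.11] to $H_1$ to obtain an $H$-closed extension $M\supseteq H_1\supseteq H$, and Lemma~\ref{lem:small deriv char} again yields that $M$ has small derivation.

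The main obstacle is the gap-resolution step: one must verify that the $H$-field extension eliminating the gap at $0$ can be carried out so as to preserve small derivation, rather than producing some new $\gamma>0$ with $\psi(\gamma)=-\gamma$ (which would sit exactly on the boundary of small derivation and amount to the gap reappearing in disguise after a single integration). This is where the strengthened "very small" hypothesis on $H$ is used in an essential way, furnishing the room to resolve the gap "upwards" via an adjoined logarithm with $\psi$-value precisely $0$, so that small derivation passes to $H_1$ and then, via Lemma~\ref{lem:small deriv char}, to the full $H$-closed extension $M$.
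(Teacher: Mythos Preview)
Your overall architecture matches the paper's exactly: pass to the $H$-field hull via Corollary~\ref{cor:dv(K) very small der}, split on whether $0$ is a gap, and in the non-gap case invoke Lemma~\ref{lem:small deriv char} together with [ADH, 14.5.11].

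Where you diverge is the gap-resolution step, and here your account is both different and somewhat muddled. You propose adjoining $\lambda$ with $\lambda^\dagger=s$ for some $s\asymp 1$, placing $0$ into $\Psi$; the paper instead invokes [ADH, 10.5.11] with $s:=1$ to adjoin an antiderivative $y$ of $1$, obtaining $y\succ 1$ and $y^\dagger=1/y\prec 1$, hence $\psi(vy)=-vy>0$ and $\Psi^{\geq 0}\neq\emptyset$. This is cleaner: it is the standard ``cross the gap by integration'' move, and the cited lemma hands you the $H$-field extension with the stated properties outright, so there is nothing further to verify about small derivation in the enlarged field. Your exponential-integration variant would also work in principle, but you do not pin down the relevant result from [ADH] that produces such an $H$-field extension when $vs$ sits exactly at the gap, and your final paragraph openly flags this as unresolved.

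You also mislocate the role of ``very small''. That hypothesis is consumed entirely in step~1 (Corollary~\ref{cor:dv(K) very small der}); once you have an $H$-field with small derivation, the gap step needs no further appeal to very smallness---the paper simply cites [ADH, 10.5.11] and moves on. Your suggestion that very smallness supplies ``extra slack'' for the gap adjunction is not how the argument actually runs.
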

\begin{proof}
By Corollary~\ref{cor:dv(K) very small der}, replacing $H$ by its $H$-field hull, we first arrange that $H$ is an $H$-field.
Let   $(\Gamma,\psi)$ be the asymptotic couple of $H$. Then 
$\Psi^{\geq 0}\neq\emptyset$ or~$(\Gamma,\psi)$ has gap~$0$.
Suppose $(\Gamma,\psi)$ has gap $0$. Let $H(y)$ be the $H$-field extension from [ADH, 10.5.11] for $K:=H$, $s:=1$.
Then $y\succ 1$ and $y^\dagger=1/y\prec 1$, so replacing $H(y)$ by~$H$
we can arrange that $\Psi^{\geq 0}\neq\emptyset$.
Then
every pre-$H$-field extension of $H$ has small derivation, and so we are done by [ADH, 14.5.11].
\end{proof}

\begin{proof}[Proof of Proposition~\ref{prop:univ theory}]
The natural axioms for pre-$H$-fields with very small derivation formulated in $\mathcal L^\iota$ are universal, so all models of $\Sigma$ are
 pre-$H$-fields with very small derivation. Conversely,   any pre-$H$-field $H$  with very small derivation   is a model of $\Sigma$: use Lemma~\ref{lem:very small der extend}  to extend $H$ to an $H$-closed field
 with small derivation, and note that the $\mathcal L^\iota$-theory of $H$-closed fields with small derivation is complete by [ADH, 16.6.3] and has a Hardy field model by Corollary~\ref{cor:transserial}.
\end{proof}

\noindent
Similar arguments 
allow us to settle a conjecture from \cite{AvdD2}, in slightly strengthened form.
For this, let $\mathcal L_x^\iota$ be $\mathcal L^\iota$ augmented by a constant symbol $x$.
We view each Hardy field containing the germ of the identity function on $\R$ as an
$\mathcal L_x^\iota$-structure by interpreting the symbols from~$\mathcal L^\iota$ as described
at the beginning of this subsection and the symbol $x$ by the germ  of the identity function
on $\R$, which we also denote by $x$ as usual. Each universal $\mathcal L_x^\iota$-sentence which holds in every maximal Hardy field also holds in every  Hardy field containing $x$.

\begin{prop}\label{prop:univ theory x}
Let $\Sigma_x$ be the set of universal $\mathcal L_x^\iota$-sentences true in all  
Hardy fields that contain $x$. Then the models of $\Sigma_x$ are the pre-$H$-fields with
distinguished element $x$  satisfying $x'=1$ and $x\succ 1$.
\end{prop}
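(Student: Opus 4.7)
The proof will follow the template of Proposition~\ref{prop:univ theory}, with the constant symbol $x$ handled by a completeness result for $T_H$ together with its canonical substructure $\Q(x)$. First I would dispose of the easy direction: the axioms of pre-$H$-field are expressible by universal $\mathcal L^\iota$-sentences, $x'=1$ is universal, and $x\succ 1$ is equivalent to $1\preceq x\wedge\neg(x\preceq 1)$, which is also universal in $\mathcal L_x^\iota$. All these sentences hold in every Hardy field that contains the germ of the identity function; hence they belong to $\Sigma_x$, and so every model of $\Sigma_x$ is a pre-$H$-field with distinguished $x$ satisfying $x'=1$ and $x\succ 1$.

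For the converse, let $H$ be a pre-$H$-field with $x\in H$ such that $x'=1$ and $x\succ 1$. I would first observe that $H$ automatically has very small derivation: from $x\succ 1$ and $x'=1$ we get $x^\dagger=1/x\prec 1$, and the pre-$d$-valued axiom then forces $f'\prec x^\dagger\prec 1$ for each $f\in\mathcal O_H$. Lemma~\ref{lem:very small der extend} then gives an $H$-closed $H^*\supseteq H$ with small derivation, and $x\in H^*$ still satisfies $x'=1$ and $x\succ 1$. On the other hand, any maximal Hardy field $M$ is $H$-closed with small derivation by Theorem~\ref{thm:char d-max} and contains the germ $x$ of the identity function satisfying the same two axioms.

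The crux is to show $H^*\equiv_{\mathcal L_x^\iota} M$. In any model of $T_H+ "x'=1"+"x\succ 1"$, the constant $x$ generates (as an ordered valued differential subfield) a copy of $\Q(x)$: $x$ is transcendental over $\Q$ because $x'=1\neq 0$ and $C_{\Q\<x\>}\subseteq \Q$, the derivation on $\Q(x)$ is forced by $x'=1$, the ordering is determined by $x>\Q$, and the valuation is determined by $vx<0$ (i.e.\ $v(p(x)/q(x))=(\deg q-\deg p)\cdot(-vx)$). Moreover, $\Q(x)$ is grounded, with $\Psi=\{-vx\}$, so by [ADH, 16.3.19] it carries a unique $\HLO$-expansion. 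Hence $\Q(x)$ is a common $\mathcal L^\iota_{\HLO}$-substructure of $H^*$ and $M$, and the quantifier elimination for $T_H$ in the extended language $\mathcal L^\iota_{\HLO}$ [ADH, 16.0.1] yields $H^*\equiv_{\mathcal L_x^\iota} M$. For any $\sigma\in\Sigma_x$ we then have $M\models\sigma$, hence $H^*\models\sigma$, and since $\sigma$ is universal and $H\subseteq H^*$ we conclude $H\models\sigma$.

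The main obstacle is the completeness step, i.e.\ verifying that $\Q(x)$ really is a common substructure in the language relevant to the quantifier elimination of $T_H$, and that its $\HLO$-expansion is indeed forced to be the same inside $H^*$ and $M$. This requires a careful inspection of the definitional extension of $\mathcal L$ used in [ADH, 16.0.1]—in particular that the definable operators $\I,\Lambda,\omega$ take unique values on the grounded pre-$\HLO$-field $\Q(x)$. The rest of the argument is then routine, paralleling Proposition~\ref{prop:univ theory}.
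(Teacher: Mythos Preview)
Your proposal is correct and follows essentially the same route as the paper. The paper packages your completeness step into a separate lemma (Lemma~\ref{lem:complete x}), citing [ADH, 10.2.2, 10.5.11] for the isomorphism of the two copies of $\Q(x)$, whereas you argue this directly; and the paper invokes [ADH, 14.5.11] rather than Lemma~\ref{lem:very small der extend} to obtain the $H$-closed extension (note that $\psi(vx)=-vx>0$ already gives $\Psi^{\geq 0}\neq\emptyset$, so either reference works). Your explicit verification that $x'=1$ and $x\succ 1$ force very small derivation is a nice touch the paper leaves implicit. One small point: to conclude $x>\Q$ (not merely $|x|>\Q$) you should invoke the pre-$H$-field axiom that $a>\mathcal O$ implies $a'>0$, which rules out $x<-\mathcal O$ since that would give $-x'>0$.
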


\noindent
This follows from [ADH, 14.5.11] and the next lemma just like Proposition~\ref{prop:univ theory} followed from
Lemma~\ref{lem:very small der extend}  and [ADH, 16.6.3].

\begin{lemma}\label{lem:complete x}
The  $\mathcal L_x^\iota$-theory of $H$-closed fields  with distinguished element $x$
satisfying $x'=1$ and $x\succ 1$ is complete.
\end{lemma}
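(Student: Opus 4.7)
The plan is to establish completeness by showing any two models of this theory are elementarily equivalent. Let $(K_1, x_1)$ and $(K_2, x_2)$ be $H$-closed fields with small derivation, each with a distinguished element $x_i$ satisfying $x_i' = 1$ and $x_i \succ 1$. Since $x_i' \neq 0$, each $x_i$ is transcendental over $\Q \subseteq C_{K_i}$, and the differential subring $\Q[x_i]$ together with its fraction field $\Q(x_i) \subseteq K_i$ carry a pre-$H$-field structure whose derivation, ordering, and valuation are entirely determined by the requirements $x_i' = 1$, $x_i > \Q$, and $x_i \succ 1$. Hence the assignment $x_1 \mapsto x_2$ extends uniquely to an isomorphism $\Q(x_1) \to \Q(x_2)$ of pre-$H$-fields; this is in fact an isomorphism of $H$-fields, since the standard verification shows that $\Q(x_i)$ satisfies the $H$-field axioms.

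Because $\Q(x_i)$ embeds as an ordered differential field into $\T$ via $x_i \mapsto x \in \T$ and is not contained in $\R$, Lemma~\ref{lem:unique HLO-expansion, 1} endows $\Q(x_i)$ with a unique pre-$\HLO$-expansion $\boldsymbol Q(x_i)$, which by uniqueness coincides with the $\HLO$-structure $\Q(x_i)$ inherits from the canonical $\HLO$-expansion $\boldsymbol K_i$ of $K_i$. The above isomorphism is therefore an iso $\boldsymbol Q(x_1) \to \boldsymbol Q(x_2)$ of $\HLO$-fields. I next apply Proposition~\ref{prop:embed into H-closed} with $\boldsymbol H := \boldsymbol Q(x_i)$, $\boldsymbol M := \boldsymbol K_i$, and $\boldsymbol H^* := \boldsymbol L_i$ a Newton-Liouville closure of $\boldsymbol Q(x_i)$; its constant field (the real closure of $\Q$) is algebraic over $\Q = C_{\boldsymbol H}$, so the proposition yields an embedding $\boldsymbol L_i \hookrightarrow \boldsymbol K_i$ over $\boldsymbol Q(x_i)$. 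Using uniqueness of Newton-Liouville closures over the base together with the iso $\boldsymbol Q(x_1) \cong \boldsymbol Q(x_2)$, I obtain an iso $\boldsymbol L_1 \cong \boldsymbol L_2$ extending $x_1 \mapsto x_2$.

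The pre-$H$-field $\boldsymbol Q(x_i)$ has very small derivation (the derivation on its constant field $\Q$ is trivial, and one checks directly that every element of its valuation ring is a sum of a rational constant and an element of the maximal ideal whose derivative is infinitesimal), and this property is inherited by the Newton-Liouville closure along the lines of [ADH, 14.5.11]; so each $\boldsymbol L_i$ is a model of $T^{\operatorname{nl},\iota}_{\HLO}$ with small derivation. The quantifier elimination of $T^{\operatorname{nl},\iota}_{\HLO}$ in $\mathcal L^\iota_{\HLO}$ from [ADH, 16.0.1] persists after adjoining the universal axiom of small derivation, so the embeddings $\boldsymbol L_i \hookrightarrow \boldsymbol K_i$ are $\mathcal L^\iota_{\HLO}$-elementary. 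Identifying $\boldsymbol L_1$ with $\boldsymbol L_2$ via the above iso then gives $(\boldsymbol K_1, x_1) \equiv (\boldsymbol K_2, x_2)$ in $\mathcal L^\iota_{\HLO}$, and a fortiori $(K_1, x_1) \equiv (K_2, x_2)$ in $\mathcal L_x^\iota$. The main obstacle I foresee is the clean verification that small derivation is inherited by $\boldsymbol L_i$ from $\boldsymbol Q(x_i)$ through the Newton-Liouville closure construction; once that is in place the QE results of [ADH] carry the argument.
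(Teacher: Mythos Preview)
Your argument is correct, but it takes an unnecessary detour and the ``main obstacle'' you flag is a non-issue. The paper's proof is considerably more direct: once $\Q(x_1)$ and $\Q(x_2)$ are identified as pre-$H$-fields, the paper simply observes that $\Q(x_1)$ is \emph{grounded} (its value group is $\Z\cdot v x_1$, so $\Psi$ is a singleton), hence carries a unique $\HLO$-cut by [ADH,~16.3.19]; this expansion therefore agrees with the one induced from each $\boldsymbol K_i$, making $\boldsymbol Q(x_1)$ a common $\mathcal L^\iota_{\HLO}$-substructure of $\boldsymbol K_1$ and $\boldsymbol K_2$. Quantifier elimination for $T^{\operatorname{nl},\iota}_{\HLO}$ then gives $\boldsymbol K_1\equiv_{\Q(x_1)}\boldsymbol K_2$ immediately via [ADH,~B.11.6]. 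No Newton--Liouville closures are needed: QE already yields elementary equivalence over any common substructure, not just over submodels.

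Your route---building Newton--Liouville closures $\boldsymbol L_i$, embedding them into $\boldsymbol K_i$, and using model completeness---also works, but is strictly more effort. More importantly, your concern about small derivation is misplaced: the theory $T^{\operatorname{nl},\iota}_{\HLO}$ in [ADH,~16.0.1] does \emph{not} include the small-derivation axiom, and the lemma as stated makes no such assumption on the $K_i$. Since both the $\boldsymbol K_i$ (being $H$-closed) and the $\boldsymbol L_i$ (being Newton--Liouville closures) are models of $T^{\operatorname{nl},\iota}_{\HLO}$, the embeddings $\boldsymbol L_i\hookrightarrow\boldsymbol K_i$ are elementary without any appeal to small derivation. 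You can simply delete that paragraph. Also, invoking Lemma~\ref{lem:unique HLO-expansion, 1} via an embedding into $\T$ is valid but roundabout; groundedness of $\Q(x_i)$ gives the unique $\HLO$-cut directly.
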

\begin{proof}
Let $K_1$, $K_2$ be  models of this theory, and let $x_1, x_2$ be the interpretations of $x$ in $K_1$, $K_2$.
Then [ADH, 10.2.2, 10.5.11] gives an isomorphism $\Q(x_1)\to\Q(x_2)$ of valued ordered differential fields sending $x_1$ to $x_2$.
To show that $K_1\equiv K_2$ as $\mathcal L_x^\iota$-structures, identify $\Q(x_1)$ with $\Q(x_2)$ via this isomorphism. 
View  $\HLO$-fields as $\mathcal L^\iota_{\HLO}$-structures where $\mathcal L^\iota_{\HLO}$ extends $\mathcal L^\iota$ as specified
in~[ADH, Chapter~16]. (See also the proof of Theorem~\ref{thm:transfer}.)
The $\upo$-free $H$-fields $K_1$, $K_2$ uniquely expand to  $\HLO$-fields $\boldsymbol{K}_1$, $\boldsymbol{K}_2$. The $H$-subfield $\Q(x_1)$ of~$K_1$ is grounded, so expands also uniquely to an $\HLO$-field,
and this $\HLO$-field is a common substructure of both $\boldsymbol{K}_1$ and~$\boldsymbol{K}_2$. 
Hence~$\boldsymbol{K} _1\equiv_{\Q(x_1)} \boldsymbol{K}_2$ by [ADH, 16.0.1, B.11.6]. 
This yields the claim.
\end{proof}

\noindent
A. Robinson~\cite{Robinson73} raised the issue of axiomatizing $\Sigma_x$. Proposition~\ref{prop:univ theory x} provides a finite axiomatization.  The completeness of the $\mathcal L^\iota$-theory of $H$-closed fields with small derivation together with Lemma~\ref{lem:complete x} and Theorem~\ref{thm:char d-max} yield:

\begin{cor}
The set $\Sigma$ of   universal $\mathcal L^\iota$-sentences true in all   Hardy fields is decidable, and so is
the set $\Sigma_x$ of universal $\mathcal L^\iota_x$-sentences true in all   Hardy fields
containing $x$. 
\end{cor}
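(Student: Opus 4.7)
The plan is to show that a universal $\mathcal{L}^\iota$-sentence $\sigma$ belongs to $\Sigma$ if and only if $\sigma$ is a theorem of the $\mathcal{L}^\iota$-theory $T := T^{\operatorname{nl}}_{\operatorname{small}}$ of $H$-closed fields with small derivation, and then to invoke that $T$ is complete and recursively axiomatizable, hence decidable. The same strategy handles $\Sigma_x$ with $T$ replaced by the $\mathcal{L}^\iota_x$-theory of $H$-closed fields with small derivation equipped with a distinguished element $x$ satisfying $x'=1$ and $x\succ 1$, using Lemma~\ref{lem:complete x} for completeness.

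For the forward direction of the equivalence, if $\sigma\in\Sigma$, then $\sigma$ holds in every maximal Hardy field; by Theorem~\ref{thm:char d-max} such a Hardy field is an $H$-closed field with small derivation, hence a model of $T$. Since $T$ is complete, $T\vdash\sigma$. For the converse, let $\sigma$ be a universal $\mathcal{L}^\iota$-sentence with $T\vdash\sigma$, and let $H$ be an arbitrary Hardy field. Then $H$ is a pre-$H$-field with very small derivation, so by Lemma~\ref{lem:very small der extend} it embeds into an $H$-closed field $M$ with small derivation. Then $M\models T$, hence $M\models\sigma$, and since $\sigma$ is universal it is preserved under passage to substructures, giving $H\models\sigma$. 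Thus $\Sigma=\{\sigma : \sigma \text{ is universal and } T\vdash\sigma\}$.

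Decidability now follows by the standard argument that a complete, recursively axiomatizable $\mathcal{L}^\iota$-theory is decidable: the axioms of $T$ as given in~[ADH, Chapter~16] form a recursive set (the axioms for $H$-closed fields with small derivation are finite in number once the schemes for $\upo$-freeness and newtonianity are spelled out, and both schemes are recursively enumerable), so the set of $T$-theorems is recursively enumerable; completeness then makes it recursive. Membership in $\Sigma$ therefore reduces to first checking that $\sigma$ is universal (decidable by inspection of the syntax) and then checking whether $T\vdash\sigma$.

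The argument for $\Sigma_x$ runs in parallel: Proposition~\ref{prop:univ theory x} tells us that pre-$H$-fields with a distinguished element $x$ satisfying $x'=1$ and $x\succ 1$ are exactly the models of $\Sigma_x$. Extending such a structure to an $H$-closed field with small derivation via Lemma~\ref{lem:very small der extend} preserves $x$ and the relations $x'=1$, $x\succ 1$, so the same substructure argument shows that $\Sigma_x$ coincides with the set of universal $\mathcal{L}^\iota_x$-theorems of the $\mathcal{L}^\iota_x$-theory of $H$-closed fields with small derivation containing such $x$; by Lemma~\ref{lem:complete x} this theory is complete, and its axiomatization remains recursive, so $\Sigma_x$ is decidable. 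There is no real obstacle here beyond chasing the recursive axiomatizability of $T$ through the definitions in~[ADH]; the substantive inputs—Theorem~\ref{thm:char d-max}, Lemma~\ref{lem:very small der extend}, the completeness of $T$, and Lemma~\ref{lem:complete x}—are all in hand.
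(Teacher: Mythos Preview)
Your proof is correct and follows essentially the same approach the paper indicates in its one-line justification (completeness of $T^{\operatorname{nl}}_{\operatorname{small}}$, Lemma~\ref{lem:complete x}, Theorem~\ref{thm:char d-max}, plus recursive axiomatizability). One minor simplification: in the converse direction you can avoid Lemma~\ref{lem:very small der extend} by simply extending the Hardy field $H$ to a maximal Hardy field $M$ via Zorn and then invoking Theorem~\ref{thm:char d-max} to see $M\models T$; this keeps the argument entirely within Hardy fields and matches the ingredients the paper actually cites.
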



\newlength\templinewidth
\setlength{\templinewidth}{\textwidth}
\addtolength{\templinewidth}{-2.25em}

\patchcmd{\thebibliography}{\list}{\printremarkbeforebib\list}{}{}

\let\oldaddcontentsline\addcontentsline
\renewcommand{\addcontentsline}[3]{\oldaddcontentsline{toc}{section}{References}}

\def\printremarkbeforebib{\bigskip\hskip1em The citation [ADH] refers to our book \\

\hskip1em\parbox{\templinewidth}{
M. Aschenbrenner, L. van den Dries, J. van der Hoeven,
\textit{Asymptotic Differential Algebra and Model Theory of Transseries,} Annals of Mathematics Studies, vol.~195, Princeton University Press, Princeton, NJ, 2017.
}

\bigskip

} 

\bibliographystyle{amsplain}

\end{document}